\documentclass[11pt, reqno]{amsart}

\usepackage{amssymb}
\usepackage{mathrsfs}
\usepackage{amsmath}
\usepackage{amsthm}
\usepackage{amsfonts}
\usepackage{latexsym}

\usepackage{color}
\usepackage{txfonts}
\usepackage{enumerate}

\usepackage[colorlinks=true,
linkcolor=blue,
citecolor=red,
urlcolor=magenta,
backref=page
]{hyperref}

\usepackage[english]{babel}

\usepackage{txfonts}

\usepackage{anysize}

\allowdisplaybreaks

\newtheorem{theorem}{Theorem}[section]
\newtheorem{proposition}[theorem]{Proposition}
\newtheorem{corollary}[theorem]{Corollary}
\newtheorem{lemma}[theorem]{Lemma}

\theoremstyle{definition}
\newtheorem{definition}[theorem]{Definition}
\newtheorem{example}[theorem]{Example}

\theoremstyle{remark}
\newtheorem{remark}[theorem]{Remark}

\numberwithin{equation}{section}

\newcommand{\A}{\mathscr A}

\newcommand{\C}{\mathbb C}

\newcommand{\D}{\mathscr D}

\newcommand{\N}{\mathbb N}

\newcommand{\R}{\mathbb R}

\newcommand{\Z}{\mathbb Z}

\newcommand{\bddlin}{\mathscr L}

\newcommand{\fx}{\mathfrak e}
\newcommand{\fy}{\mathfrak u}
\newcommand{\fz}{\mathfrak v}
\newcommand{\fX}{\mathfrak X}
\newcommand{\fY}{\mathfrak Y}
\newcommand{\fZ}{\mathfrak Z}

\newcommand{\eps}{\varepsilon}

\newcommand{\BMO}{\operatorname{BMO}}

\newcommand{\esssup}{\operatornamewithlimits{ess\,sup}}
\newcommand{\essinf}{\operatornamewithlimits{ess\,inf}}

\newcommand{\one}{\mathbf{1}}
\newcommand{\supp}{\operatorname{supp}}
\newcommand{\loc}{\mathrm{loc}}

\newcommand{\ave}[1]{\langle #1\rangle}
\newcommand{\aveL}{\textit{\L}}

\newcommand{\pair}[2]{\langle #1, #2 \rangle}

\newcommand{\Bpair}[2]{\Big\langle #1, #2 \Big\rangle}

\newcommand{\abs}[1]{|#1|}
\newcommand{\Babs}[1]{\Big|#1\Big|}
\newcommand{\Norm}[2]{\|#1\|_{#2}}
\newcommand{\bNorm}[2]{\big\|#1\big\|_{#2}}
\newcommand{\BNorm}[2]{\Big\|#1\Big\|_{#2}}

\newcommand{\so}{\operatorname{so}}
\newcommand{\HT}{\mathcal{H}}
\newcommand{\floor}[1]{\lfloor #1 \rfloor}
\newcommand{\ceil}[1]{\lceil #1 \rceil}

\begin{document}


\title[Operator-valued $A_p$ weights in Banach spaces]
{Real-variable theory of function spaces with\\ operator-valued $A_p$ weights in Banach spaces}

\author[T. P. Hyt\"onen]{Tuomas P.\ Hyt\"onen}
\address{Department of Mathematics and Systems Analysis,
Aalto University, P.O. Box 11100 (Otakaari~1), \mbox{FI-00076} Aalto, Finland}
\curraddr{}
\email{tuomas.hytonen@aalto.fi}

\author[Y. Li]{Yinqin Li}
\address{Laboratory of Mathematics and Complex Systems (Ministry of Education of China),
School of Mathematical Sciences, Beijing Normal University, Beijing 100875, The People's Republic of China}
\curraddr{}
\email{yinqli@mail.bnu.edu.cn}

\author[D. Yang]{Dachun Yang}
\address{Laboratory of Mathematics and Complex Systems (Ministry of Education of China),
School of Mathematical Sciences, Institute for Advanced Study,
Beijing Normal University, Beijing 100875, The People's Republic of China}
\curraddr{}
\email{dcyang@bnu.edu.cn}

\author[W. Yuan]{Wen Yuan}
\address{Laboratory of Mathematics and Complex Systems (Ministry of Education of China),
School of Mathematical Sciences, Beijing Normal University, Beijing 100875, The People's Republic of China}
\curraddr{}
\email{wenyuan@bnu.edu.cn}

\thanks{This project is partially supported by the National
Natural Science Foundation of China (Grant Nos. 12431006, 12371093, and 124B2004),
the Beijing Natural Science Foundation (Grant No. 1262011),
the Fundamental Research Funds for the Central Universities (Grant No. 2253200028),
and the Research Council of Finland (Grant Nos. 346314, 364208, and 371637).}


\subjclass[2010]{Primary 42B25, 46E40;
Secondary 42B35, 46E35, 42B20, 42C40}

\keywords{
operator weight,
Banach-valued function,
Besov space,
Triebel--Lizorkin space,
commutator,
$K$-convex,
UMD,
$\varphi$-transform,
almost diagonal operator,
Calder\'on--Zygmund operator,
trace.
}


\begin{abstract}
While the theory of matrix-weighted function spaces is well established, the majority of previous results in the infinite-dimensional operator-valued setting deal with ``no go'' theorems, showing the impossibility of some prospective generalizations. However, we show that a complete real-variable theory of Besov and Triebel--Lizorkin spaces with operator-valued Muckenhoupt $A_p$ weights can still be developed, once correctly formulated. This covers operator-weighted extensions of results like the $\varphi$-transform characterization in terms of discrete sequence spaces, the boundedness of almost diagonal operators, and applications to the $T(1)$ theorem and trace/extension theorems. A key tool is a version of the reverse H\"older inequality, which is weak enough to follow from the operator-valued $A_p$ condition (unlike a variant that had to be imposed as an additional assumption in some previous works), yet strong enough to be used much like its classical counterpart.

In contrast to the established scalar and matrix-weighted theories, our approach cannot build on operator-weighted $L^p$ results, as these fail in a very definite way. We also strengthen the existing ``no go'' statements in Hilbert spaces, showing (among other counterexamples) that every infinite-dimensional Banach space has an operator-valued $A_p$ weight $V$ for which the Hilbert transform is unbounded on $L^p(V)$. This is nontrivial, since the lack of Hilbert space structure also complicates the construction of $A_p$ weights. Building on results from Banach space theory, we achieve this unboundedness by combining two distinct methods in two different classes of spaces (so-called $K$-convex ones and those that are {\em not} UMD), whose union covers all Banach spaces.
\end{abstract}

\maketitle

\tableofcontents

\section{Introduction}

Since its discovery in the 1970's, Muckenhoupt's theory of $A_p$ weights has become one of the central topics in harmonic analysis with diverse applications. Originally dealing with scalar-valued weight functions, the extension of this theory to matrix $A_p$ weights is also well developed since the 1990's.

The theme of this paper is operator-valued $A_p$ weights, a further extension of the matrix $A_p$ theory to possibly infinite-dimensional target spaces. Much less is known about this area, and much of what is known concerns ``no go'' theorems pointing to the failure of natural extensions of familiar results from finite dimensions, even in the supposedly simplest case of $p=2$ and a Hilbert space target, not to mention other values of $p$ and more general Banach spaces, where relatively little has been said so far.

Nevertheless, at least a definition of $A_p$ weights is readily available in great generality. While much of the recent literature on matrix $A_p$ weights has been based on an equivalent (in finite dimensions) condition given by Roudenko \cite[Lemma 1.3]{Rou:03}, the original definition by Nazarov--Treil \cite[p.~21]{NT:hunt} and Volberg \cite[p.~450]{Volberg:97}, stated in terms of dual norms, makes perfect sense for an arbitrary Banach space $\fX$. In a way, this generality even clarifies the picture, since one can no longer identify $\fX$ with its dual $\fX^*$, making it necessary to distinguish between norms on the original space and on the dual space. Some aspects of this general framework were already developed by Lauzon \cite{Lauzon}, but many of his results are only proved under additional postulates concerning versions of the reverse H\"older inequality that are not implied by the $A_p$ condition in general; for $p=2$ and Hilbert targets, such theory under reverse H\"older postulates was first developed by Pott \cite{Pott:07}.

The aim of this article is to show that the natural $A_p$ condition for operator-valued weights, acting on a general Banach space,
\begin{enumerate}[\rm(i)]
  \item\label{it:Ap} retains several key features of the classical $A_p$ condition;
  \item\label{it:Lp} is necessary but {\em insufficient} for a weighted theory in $L^p$;
  \item\label{it:BTL} is nevertheless sufficient for developing substantial parts of the theory of function spaces of Besov and Triebel--Lizorkin type.
\end{enumerate}
We will elaborate on each of these points shortly, but let us first provide some more context.

In both the scalar-weighted and the matrix-weighted cases, the resolution of the basic problems in $L^p$ (like the boundedness of the maximal operator and the Hilbert transform) came first, and served as stepping stones for the weighted theory of other function spaces. For scalar weights, the said $L^p$ results were obtained in \cite{Muck:72} and \cite{HMW}, respectively, and for matrix weights, in \cite{NT:hunt,TV:angle,Volberg:97} and \cite{CG:01,Gold:03}, in the reverse order. The scalar-weighted theory of Besov and Triebel--Lizorkin spaces seems to have started with \cite{Bui:82}, the matrix-weighted theory of Besov spaces with \cite{Rou:03}, and a systematic theory of matrix-weighted Triebel--Lizorkin spaces with \cite{FR:21}.

Against this background, the fact that we obtain a reasonable theory of Besov and Triebel--Lizorkin spaces with operator weights, even in the lack of a corresponding $L^p$ theory, might seem surprising. However, a natural point of analogy is found in the (unweighted) theory of singular integrals of Banach space valued functions. The problem of the $L^p$ boundedness of basic singular operators is understood in this situation, and known to be characterised by the UMD property of the target space $\fX$; this is a celebrated result of Burkholder \cite{Burk:83} and Bourgain \cite{Bou:83}; see also \cite[Theorem 5.1.1]{HNVW1} for a detailed treatment. However, the boundedness on (unweighted) Besov and Triebel--Lizorkin spaces is available much more generally, without any restrictions on $\fX$ (see \cite[Chapter 14]{HNVW3}). What we obtain for operator-weighted spaces is essentially the same, with the only difference that we do not have the characterisation of the weights for the $L^p$ boundedness. But even in the unweighted theory, the known characterisation of spaces by the UMD property does not play any role in the development of the theory of Besov and Triebel--Lizorkin spaces with values in arbitrary spaces.

The rest of this Introduction is divided into three sections, where we elaborate on each of the points \eqref{it:Ap}, \eqref{it:Lp}, and \eqref{it:BTL} above, indicating our main results and providing some further background. The rest of the paper is then divided into four parts, where we develop the theory on each of these points in full detail. The content and the order of both Sections \ref{ss:Ap}, \ref{ss:Lp}, and \ref{ss:BTL} of this Introduction, and Parts \ref{part:Ap}, \ref{part:Lp}, and \ref{part:BTL} of the rest of the paper, is the same as that of points \eqref{it:Ap}, \eqref{it:Lp}, and \eqref{it:BTL} above. The final Part \ref{part:discretizable} continues further with point \eqref{it:BTL} within a new axiomatic framework.

\subsection{Key properties of $A_p$ weights}\label{ss:Ap}

Concerning point \eqref{it:Ap} above, some key lemmas for scalar and
matrix weights that we are able to extend to our general setting are
\begin{enumerate}[\rm(a)]
  \item reverse H\"older inequalities (Proposition \ref{prop:RHI});
  \item\label{it:QvsR} quantitative doubling properties (Lemma~\ref{lem:QvsR}).
\end{enumerate}

The role of reverse H\"older inequalities for both scalar and matrix weights is well established since \cite{CF:74} and \cite{CG:01,Gold:03}, respectively.
For operator-valued weights, as already mentioned, Pott \cite{Pott:07} (for $p=2$ and Hilbert targets) and Lauzon \cite{Lauzon} (more generally) showed that many results can still be extended, {\em provided that versions of reverse H\"older inequalities are assumed}, but these are genuine additional assumptions on the weights, no longer consequences of the $A_p$ condition as they were in finite dimensions.

We wish to make the claim that, in the operator-weighted setting, the notion of a reverse H\"older inequality should be (only slightly) revised from the matrix-valued version of \cite{CG:01,Gold:03}. This revised formulation has the benefit of being implied by the operator-valued $A_p$ condition (as we show in Proposition \ref{prop:RHI}) while still strong enough to serve the role of the classical reverse H\"older inequality in several applications, notably in the development of the theory of Besov and Triebel--Lizorkin spaces with operator weights further below.

The role of various doubling properties is also well established in analysis. In point \eqref{it:QvsR} above, we have especially in mind types of estimates that were introduced into the analysis of matrix-weighted Besov spaces by Roudenko \cite[Definition 1.3]{Rou:04}, and that have played a significant role in related developments ever since; cf.\ \cite[Definition 2.1]{FR:21} and \cite[Section 2.1]{BHYY:I}. In the matrix-weighted theory, these estimates take the form
\begin{equation}\label{eq:AQAR}
  \Norm{A_Q A_R^{-1}}{}\leq c(Q,R),
\end{equation}
where $A_Q$ and $A_R$ are {\em reducing operators} of the matrix weight, and $c(Q,R)$ has moderate growth in terms of the size ratio and the relative distance of the cubes $Q$ and $R$. A reducing operator is a matrix inducing an equivalent Euclidean norm for a given norm on $\R^m$ or $\C^m$; it is produced by the John ellipsoid theorem \cite{John:48}, a strictly finite-dimensional tool.

While we have no analogue for reducing operators in infinite-dimensional study, we can still find useful substitutes for bounds of the form \eqref{eq:AQAR} with the left-hand side replaced by the ratio of the operator-weight-induced norms over the cubes $Q$ and $R$. Indeed, such a result is proved in Lemma \ref{lem:QvsR}, and then successfully applied in the theory of operator-weighted Besov and Triebel--Lizorkin spaces in Part \ref{part:BTL} of the paper.

\subsection{$L^p$ results}\label{ss:Lp}

Concerning operator-weighted $L^p$ spaces, we are going to show that the operator-valued $A_p$ condition remains
\begin{enumerate}[\rm(a)]
  \item necessary and sufficient for the weighted $L^p$ boundedness of basic averaging operators (Corollary \ref{cor:ave}; previously proved for $p=2$ and Hilbert targets by Aleman--Constantin \cite[Lemma 2.2]{AC:12});
  \item\label{it:sparse} sufficient for the weighted $L^p$ boundedness of certain sparse operators (Corollary \ref{cor:sparse}; previously proved for $p=2$ and Hilbert targets by Limani--Pott \cite[Theorem 2.1]{LP:22}); but
  \item\label{it:Hunbd} {\em insufficient} for the weighted $L^p$ boundedness of the Hilbert transform $\HT$ (Theorem \ref{thm:Hunbd}; previously proved for $p=2$ and Hilbert targets by Gillespie--Pott--Treil--Volberg \cite[Theorem 1.3]{GPTV:00}, with a sharper version in \cite[Theorem 1.1]{GPTV:04});
  \item\label{it:commuUnbd} as a tool for \eqref{it:Hunbd} but with independent interest, we show that certain operator-valued bounded mean oscillation (BMO) conditions are similarly insufficient for the boundedness of the commutator $[B,\HT]$ on the {\em unweighted} $L^p(\R;\fX)$ (Theorem \ref{thm:commuUnbd}; previously proved for $p=2$ and Hilbert targets by Nazarov--Pisier--Treil--Volberg \cite[Theorem 1.2]{NPTV:02} and Gillespie--Pott--Treil--Volberg \cite[Theorem 1.2]{GPTV:00}, \cite[Theorem 1.3]{GPTV:04}; see also \cite{GPTV:01} for related results on martingale transforms.).
\end{enumerate}

Related to point \eqref{it:commuUnbd}, we should also mention the classical works of Sarason \cite{Sarason:67} and Page \cite{Page:70} on the factorisation of Hardy spaces of operator-valued functions and the boundedness of Hankel operators with operator-valued symbols, both of which are known to be essentially equivalent to the boundedness of the commutators $[B,\HT]$ with operator-valued $B$. In contrast to the negative results of point \eqref{it:commuUnbd},  \cite{Sarason:67} and \cite{Page:70} achieve characterisations in infinite-dimensional Hilbert spaces, but these are expressed in terms of quotient norms whose relation to the real-variable theory that we develop remains unclear.

The {\em sparse operators} appearing in \eqref{it:sparse} have become instrumental in the scalar-weighted theory since the work of Lerner \cite{Lerner:simple}, and also in the matrix-weighted theory since the work of Nazarov--Petermichl--Treil--Volberg \cite{NPTV:17}, thanks to the fact they dominate classical operators in a way that is compatible with both scalar and matrix weighted estimates. In contrast to this, the positive result about the operator-weighted boundedness of sparse operators in \eqref{it:sparse}, together with the negative result for the Hilbert transform in \eqref{it:Hunbd}, shows that Calder\'on-Zygmund operators are no longer dominated by sparse operators in the infinite-dimensional setting, making sparse operators less relevant objects in this theory than in finite dimensions. For $p=2$ and Hilbert targets, this was already pointed out by Limani--Pott \cite{LP:22}.

In some sense, points \eqref{it:Hunbd} and \eqref{it:commuUnbd} are perhaps expected; after all, if things go wrong even in a Hilbert space, as already shown in \cite{GPTV:00,GPTV:04,NPTV:02}, how could they possibly behave better in more complicated situations. However, the ``good'' properties of a Hilbert space also ensure the existence of a plenty of operators (such as bounded projections onto any closed subspace, certainly not available in a general Banach space), therefore also a plenty of operator-valued $A_p$ weights (resp.\ BMO functions), making a condition over all $A_p$ weights (resp.\ BMO functions) more difficult to be satisfied than in a setting with few $A_p$ weights (resp.\ BMO functions) only. In another extreme, there are Banach spaces with ``very few operators'', in the sense that every bounded linear operator has the form $\lambda\cdot I+K$ of a scalar-multiple of the identity plus a compact operator $K$; the first such example is a celebrated construction of Argyros and Haydon \cite{AH:11}. In a space with this scalar-plus-compact property, one might imagine that operator $A_p$ weights (resp.\ BMO functions) are ``not far'' from matrix $A_p$ weights (resp.\ BMO functions), making it plausible that the positive results of the finite-dimensional theory might still be pushed to hold. This suggests that achieving the negative results in \eqref{it:Hunbd} and \eqref{it:commuUnbd} for any given Banach space $\fX$ requires some effort.

Our constructions of the counterexamples in both \eqref{it:Hunbd} and \eqref{it:commuUnbd} split into two cases using different properties of the Banach space under consideration. The first case is when $\fX$ is {\em not} a UMD space, in which case the Hilbert transform is unbounded on the unweighted $L^p(\R;\fX)$. The other case is when $\fX$ is $K$-convex, in which case the counterexamples are obtained by carefully embedding a sequence of the finite dimensional Hilbertian counterexamples of \cite{NPTV:02} into $\fX$ in a quantitatively controlled manner, using an elaboration of Dvoretzky's theorem in $K$-convex spaces due to Figiel and Tomczak-Jaegermann \cite{FTJ:79} as a key tool. Since every UMD space is $K$-convex, but not conversely, these two cases cover all Banach spaces, even with some overlap.

\subsection{Besov and Triebel--Lizorkin spaces}\label{ss:BTL}

In contrast to the predominantly ``negative'' results about the spaces $L^p(V)$, we will show that Besov and Triebel--Lizorkin spaces $\dot{A}^s_{p,q}$
with operator $A_p$ weights can be meaningfully defined so that they enjoy various good properties.
To be precise, we introduce pointwise operator-weighted space $\dot{A}^s_{p,q}(V)$
and average operator weighted space $\dot{A}^s_{p,q}(\rho)$
(see Definition \ref{def:space}) and obtain the following:
\begin{enumerate}[\rm(a)]
  \item the equivalence
  between $\dot{A}^s_{p,q}(V)$ and $\dot{A}^s_{p,q}(\rho)$
  for Besov spaces in the full parameter range
  (Theorem \ref{thm:equiB}; extending some results in \cite{Rou:03} and \cite{FR:04})
  and for Triebel--Lizorkin spaces under some upper bound on $q$
  (Theorem \ref{thm:equiB} and Lemma \ref{lem:dualNorm},
  extending \cite[Theorem 3.5]{FR:21} and Proposition \ref{prop:FV<Frho,q<p}, partially extending \cite[Corollary 3.9]{FR:21});
  surprisingly, we find that in infinite-dimensional case,
  the aforementioned equivalence for Triebel--Lizorkin spaces
  does not hold when $q=\infty$ (see Proposition \ref{prop:f}).
  Although the optimal range ensuring this equivalence is still unclear,
  we achieve a satisfactory theory of both the
  average-weighted space $\dot A^s_{p,q}(\rho)$ in the full parameter range
  and the pointwise-weighted space $\dot A^s_{p,q}(V)$ under the about upper bound on $q$, covering:
  \item the equivalence of the operator-weighted space with the
  corresponding discrete sequence space $\dot a^s_{p,q}$ via the $\varphi$ transform
  (Theorems \ref{thm:phi} and \ref{thm:phiV}; extending matrix-weighted analogues \cite[Theorem 1.4]{Rou:03} and \cite[Corollary 3.2 and Theorem 3.4]{FR:04} in the Besov case with $p\in(1,\infty)$ resp.\ $p\in(0,1]$ and \cite[Theorem 2.3]{FR:21} in the Triebel--Lizorkin case);
  \item\label{it:operBd} the boundedness of various operators, including the Hilbert transform (Corollary \ref{cor:Hilbert}) and almost diagonal operators (Theorems
  \ref{prop:AD} and \ref{bdd-AD-ple1};
  extending a matrix-weighted version in \cite[Theorem 2.6]{FR:21}); in particular,
  for the pointwise operator-weighted spaces, we show that our
  assumptions on almost diagonal operators are optimal in some sense
  (see Remark \ref{rem:ADV});
  \item\label{it:sandwich} a ``sandwich'' embedding
  $\dot F^0_{p,1}(\rho)\subset L^p(V)\subset F^0_{p,\infty}(\rho)$
  of $L^p(V)$ between two Triebel--Lizorkin spaces with the same smoothness and integrability (Propositions \ref{prop:Lp<F} and \ref{prop:F<Lp}; this is a substitute of the equality $\dot F^0_{p,2}(V)=L^p(V)$ for matrix $A_p$ weights \cite[Section 4]{FR:21}, and an extension of a similar sandwich relation in the unweighted but Banach space valued case \cite[Proposition 14.6.13]{HNVW3}).
\end{enumerate}

Once we have laid down these foundations of the theory of operator-weighted Besov and Triebel--Lizorkin spaces in Part \ref{part:BTL}, it turns out that extensions of many more results from the matrix-weighted versions of these spaces can be developed in an axiomatic framework, taking only a few of the key properties established in Part \ref{part:BTL} as postulates of an abstract class of {\em discretizable spaces} that we study in Part \ref{part:discretizable}. The results obtained in this abstract setting include molecular and wavelet characterizations of these spaces and their applications to the $T(1)$ theorem for Calder\'on--Zygmund operators.
We also apply these results to the boundedness of trace and extension operators in operator-weighted Besov and Triebel--Lizorkin spaces. The results of Part \ref{part:discretizable} extend several earlier results in the matrix-weighted setting from \cite{FR:04,FR:08,FR:21,Rou:03}.

In the classical theory, Triebel--Lizorkin spaces arise as an {\em extension} of the scale of $L^p(\R^n)=\dot F^0_{p,2}(\R^n)$ for $p\in(1,\infty)$, and this relation persists even for matrix-$A_p$-weighted versions, as shown in \cite[Section 4]{FR:21}. In view of \eqref{it:operBd} and \eqref{it:sandwich} above, while recalling \eqref{it:Hunbd} of Section \ref{ss:Lp}, the operator-weighted Triebel--Lizorkin spaces should rather be seen as a {\em substitute} for $L^p(V)$, being relatively close to $L^p(V)$ according to \eqref{it:sandwich}, yet sufficiently different so that good properties that fail in $L^p(V)$ remain true in $\dot F^s_{p,q}(\rho)$. This phenomenon is already present in the unweighted but Banach space -valued versions of these spaces (cf.\ \cite[p.~294]{HNVW3}); and we extend it to the general operator $A_p$ weighted setting.

Matrix-weighted Besov spaces were extensively studied in \cite{FR:04,FR:08,Rou:03,Rou:04} and a systematic study of matrix-weighted Triebel--Lizorkin spaces was started in \cite{FR:21}. This has been followed by numerous extensions by many authors \cite{BCHYY,BHYY:I,BHYY:II,BHYY:III,BHYY:SCM,byyz26,BX:study,BX:Iran,LYY:24,WYZ:24}.
However, all these works deal with finite-dimensional spaces, where operators are just finite matrices.

Despite the various ``no go'' theorems around the Hilbert transform in operator-weighted $L^p$ spaces mentioned above, an important indication for us, that positive results could still be achieved in other spaces, was a work of Aleman and Constantin \cite{AC:12}, where they characterised the boundedness of the Bergman projection on operator-weighted $L^2$ spaces in terms of an operator-valued B\'ekoll\'e--Bonami $B_2$ condition---a well-known relative of the Muckenhoupt $A_2$ condition in the Bergman context. While their precise setting is different, it also has important similarities with the theory of Besov and Triebel--Lizorkin spaces. Notably, the arguments of \cite{AC:12} make critical use of the mean value property of analytic functions, which bears some similarity with versions of the Calder\'on reproducing formula commonly applied in the context of Besov and Triebel--Lizorkin spaces; indeed, the components of the Littlewood--Paley decomposition are analytic functions! Building on this analogy, we are able to extend a lot of the recent matrix-weighted theory of Besov and Triebel--Lizorkin spaces to the operator-weighted context, achieving the types of results listed above.

While the settings are not directly comparable, our results go arguably beyond mere analogues of those of \cite{AC:12} in the following sense: In \cite{AC:12}, only $L^2$ spaces of Hilbert space-valued functions and the corresponding $B_2$ weights are considered, while we treat the full scale of Besov and Triebel--Lizorkin spaces $\dot A^s_{p,q}$ of Banach space-valued distributions and the corresponding Muckenhoupt classes $A_p$ for all $p\in(0,\infty)$. (Recall that, already in the matrix-weighted setting, the single scalar-valued $A_1$ condition splits into a scale of weight classes $A_p$ with $p\in(0,1]$, first introduced by Frazier and Roudenko \cite{FR:04}.) This, in turn, leads to the following qualitative difference: In the Hilbertian $L^2$ case, the reducing operator can be computed explicitly by
\begin{equation}\label{eq:redopL2}
\begin{split}
  \fint_Q\Norm{W^{\frac 12}(x)\fx}{}^2\, dx
  &=\fint_Q(W^{\frac12}(x)\fx,W^{\frac12}(x)\fx)\, dx \\
  &=\fint_Q(W(x)\fx,\fx) \,dx =(\ave{W}_Q\fx,\fx)
  =\Norm{\ave{W}_Q^{\frac12}\fx}{}^2,
\end{split}
\end{equation}
and these identities remain perfectly valid in infinite-dimensional Hilbert spaces. Thus, even in the lack of John's ellipsoid theorem, the methods around reducing operators, familiar from the matrix-weighted theory, are still available for operator weights in $L^2$ with Hilbert targets, but not if {\em either} $L^2$ is replaced by $L^p$ or the Hilbert target by a Banach target. In our investigation, we make {\em both} these replacements, so that \eqref{eq:redopL2} completely fails, and only the expression on the left, or its $L^p$ analogue, remains for us to work with. Thus, while \cite{AC:12} made the important opening move in positive results for the boundedness of singular operators on operator-weighted spaces under Muckenhoupt-style assumptions without side conditions, our investigation takes it a step further still from the established framework of the matrix-weighted theory.

\part{Basic properties of operator weights}\label{part:Ap}

In this part, we develop the relevant background for operator-valued weights to be used in the study of operator-weighted $L^p$ spaces in Part \ref{part:Lp} and operator-weighted Besov and Triebel--Lizorkin spaces in Part \ref{part:BTL}.

\section{$L^p$ spaces of operator-valued functions}

Let $\Omega$
be a measure space and $\fX$ a Banach space.
A function $f:\Omega\to\fX$ is said to be {\em simple}
if $f=\sum_{i=1}^n \fx_i\one_{A_i}$ with $\fx_i\in\fX$ and each 
$A_i\subseteq\Omega$ is measurable, and {\em strongly measurable} if it is the pointwise limit of simple functions. Then, in particular, $\Norm{f(\cdot)}{\fX}$ is a pointwise limit of nonnegative simple functions, and hence measurable as a scalar-valued function.

 For all $p\in[0,\infty]$, we define
\begin{equation*}
  L^p(\fX):=
  L^p(\Omega;\fX):=
  \begin{cases}\{f:\Omega\to\fX\text{ strongly measurable}\}, & p=0, \\
    \{f\in L^0(\Omega;\fX);\ \ \Norm{f}{L^p(\Omega;\fX)}<\infty\}, & p\in(0,\infty], \end{cases}
\end{equation*}
where we may use the shorter notation $L^p(\fX)$ if $\Omega$ is understood from the context and where
\begin{equation*}
  \Norm{f}{L^p(\Omega;\fX)}:=\begin{cases}\displaystyle
  \Big(\int_\Omega\Norm{f(\omega)}{\fX}^p d\omega\Big)^{\frac1p}, & p\in(0,\infty), \\
  \esssup_{\omega\in\Omega}\Norm{f(\omega)}{\fX}, & p=\infty\end{cases}
\end{equation*}
is a norm for $p\in[1,\infty]$, a $p$-norm ($\Norm{\fy+\fz}{}^p\leq\Norm{\fy}{}^p+\Norm{\fz}{}^p$) for $p\in(0,1]$, and hence a quasi-norm ($\Norm{\fy+\fz}{}\leq c(\Norm{\fy}{}+\Norm{\fz}{})$) for all $p\in(0,\infty]$. The space $L^p(\Omega;\fX)$ are complete for all $p\in(0,\infty]$.
For $p\in[1,\infty]$, a detailed treatment of these spaces can be found in \cite[Chapter 1]{HNVW1}.

Let $\bddlin(\fX)$ be the space of bounded linear operators on $\fX$. Since $\bddlin(\fX)$ is another Banach space, the previous notion could be applied to $F:\Omega\to\bddlin(\fX)$, but this would often result in an unnecessarily stringent requirement. The following notion is more flexible: A function $F:\Omega\to\bddlin(\fX)$ is said to be {\em strongly measurable in the strong-operator sense} if $F(\cdot)\fx:\Omega\to\fX$ is strongly measurable for each $\fx\in\fX$. Then
\begin{equation*}
  L^p_{\so}(\Omega;\bddlin(\fX)):=\Big\{F:\Omega\to\bddlin(\fX); F(\cdot)\fx\in L^p(\Omega;\fX)\ \forall\ \fx\in\fX\Big\}.
\end{equation*}

If $f:\Omega\to\fX$ and $F:\Omega\to\bddlin(\fX)$, one can define the pointwise product $Ff:\Omega\to\fX$ in the obvious way by $(Ff)(\omega):=F(\omega)f(\omega)$. The above notions of measurability for $\fX$-valued and $\bddlin(\fX)$-valued functions are compatible in the following sense:

\begin{lemma}\label{lem:prod}
If $f\in L^0(\Omega;\fX)$ and $F\in L^0_{\operatorname{so}}(\Omega;\bddlin(\fX))$, then $Ff\in L^0(\Omega;\fX)$.
\end{lemma}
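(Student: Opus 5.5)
The plan is to approximate $f$ by simple functions and use the strong measurability of $F(\cdot)\fx$ for each fixed vector. Since $f\in L^0(\Omega;\fX)$, by definition there are simple functions $f_k=\sum_{i=1}^{n_k}\fx_{k,i}\one_{A_{k,i}}$ with $f_k(\omega)\to f(\omega)$ for every $\omega\in\Omega$. We may take the sets $A_{k,i}$, $i=1,\dots,n_k$, pairwise disjoint for each fixed $k$; this is always possible by refining to the atoms of the finite algebra they generate.

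The first step is to show that each $Ff_k$ is strongly measurable. By disjointness, $(Ff_k)(\omega)=\sum_{i}\one_{A_{k,i}}(\omega)F(\omega)\fx_{k,i}$. Each $F(\cdot)\fx_{k,i}$ is strongly measurable by the hypothesis $F\in L^0_{\so}(\Omega;\bddlin(\fX))$. So choose simple $g_j\to F(\cdot)\fx_{k,i}$ pointwise. Then $\one_{A_{k,i}}g_j$ is simple and converges pointwise to $\one_{A_{k,i}}F(\cdot)\fx_{k,i}$. Finite sums of strongly measurable functions are strongly measurable, because sums of pointwise-approximating simple sequences approximate the sum. Hence $Ff_k\in L^0(\Omega;\fX)$.

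The second step is pointwise convergence. For each fixed $\omega$, $F(\omega)\in\bddlin(\fX)$ is continuous, so $F(\omega)f_k(\omega)\to F(\omega)f(\omega)$. No uniformity in $\omega$ is needed, which is why only the strong-operator measurability of $F$ suffices.

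The remaining step, and the only genuine obstacle, is that $Ff$ is now a pointwise limit of strongly measurable (not simple) functions. We must show that this class is closed under pointwise limits. I would invoke the Pettis measurability theorem \cite[Chapter 1]{HNVW1}, in the form: a function is strongly measurable if and only if it is separably valued (up to the relevant convention) and the preimages of open sets are measurable. A pointwise limit of strongly measurable functions takes values in the closure of the countable union of separable ranges, which is separable. Moreover, for any $\fx^*\in\fX^*$ the scalar functions $\pair{Ff_k}{\fx^*}$ are measurable and converge pointwise, so $\pair{Ff}{\fx^*}$ is measurable. Pettis's theorem then gives strong measurability of $Ff$.

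Alternatively, one can avoid Pettis and argue directly. Fix a countable dense set $\{\fy_m\}$ in the separable closed span $\fY$ of the ranges. Let $s_N(\omega)$ be the first $\fy_m$, $m\le N$, minimizing $\Norm{Ff(\omega)-\fy_m}{}$. The sets where a given index is chosen are measurable, since each $\omega\mapsto\Norm{Ff(\omega)-\fy_m}{}$ is a pointwise limit of the measurable functions $\Norm{Ff_k(\omega)-\fy_m}{}$. Each $s_N$ is simple, and $s_N\to Ff$ pointwise by density.
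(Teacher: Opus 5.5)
Your proof is correct and follows the same route as the paper: you verify strong measurability of $Ff$ for simple $f$ using $F(\cdot)\fx\in L^0(\Omega;\fX)$ and linearity, and then pass to the pointwise limit $Ff=\lim_k Ff_k$ using the continuity of each $F(\omega)$. The only difference is that the paper takes for granted that $L^0(\Omega;\fX)$ is closed under pointwise limits, whereas you justify this step explicitly, via the Pettis measurability theorem or via the direct nearest-point approximation; both justifications are valid.
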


\begin{proof}
If $f=\one_E \fx$ for some measurable set $E$ and vector $\fx\in\fX$, then $Ff=\one_E F\fx\in L^0(\Omega;\fX)$ by the defining condition of $V\in L^0_{\operatorname{so}}(\Omega;\bddlin(\fX))$. The case of simple $f$ follows by linearity. For general $f\in L^0(\Omega;\fX)$, the defining condition of strong measurability implies that $f$ is the pointwise limit $f=\lim_{k\to\infty}f_k$ of some simple $f_k$. Then $Ff_k\in L^0(\Omega;\fX)$ for each $k$, and $Ff=\lim_{k\to\infty}Ff_k\in L^0(\Omega;\fX)$ as the pointwise limit of functions in this space.
\end{proof}

The following lemma shows that $L^p_{\operatorname{so}}(\Omega;\bddlin(\fX))$ also comes with a natural quasi-norm; however, it is not complete in general (see \cite[bottom of p.~64]{HNVW1}).

\begin{lemma}\label{lem:LpVso}
Let $(\fX,\Norm{\ }{})$ be a Banach space, $\Omega$ be a measure space, $p\in(0,\infty]$, and $F\in L^p_{\so}(\Omega;\bddlin(\fX))$.
Then $\fx\mapsto F(\cdot)\fx$ is continuous from $\fX$ to $L^p(\Omega;\fX)$, and hence
\begin{equation*}
  \Norm{F}{L^p_{\so}(\Omega;\bddlin(\fX))}:=\sup_{\Norm{\fx}{\fX}\leq 1}\Norm{F(\cdot)\fx}{L^p(\Omega;\fX)}
\end{equation*}
is finite.
\end{lemma}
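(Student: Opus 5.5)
The approach is the closed graph theorem applied to the linear map $T:\fX\to L^p(\Omega;\fX)$, $T\fx:=F(\cdot)\fx$. By the definition of $L^p_{\so}(\Omega;\bddlin(\fX))$, $T$ is well defined, and it is clearly linear. For $p\in(0,1)$ the target is only a complete $p$-normed (quasi-Banach) space, not a Banach space. So I will use the closed graph theorem for complete metrizable topological vector spaces (F-spaces). Both $\fX$ and $L^p(\Omega;\fX)$ are F-spaces; the latter has the translation-invariant complete metric $d(f,g)=\Norm{f-g}{L^p}^{\min(p,1)}$. The closed graph theorem holds in this generality, since its proof via Baire category and the open mapping theorem only uses completeness and metrizability.

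The key step is to check that the graph of $T$ is closed. Suppose $\fx_k\to\fx$ in $\fX$ and $T\fx_k\to g$ in $L^p(\Omega;\fX)$. Convergence in $L^p$ (including $p=\infty$) gives a subsequence with $F(\omega)\fx_{k_j}\to g(\omega)$ for almost every $\omega$; this is the standard Riesz--Fischer subsequence argument applied to the scalar functions $\Norm{T\fx_k-g}{\fX}$. On the other hand, each $F(\omega)$ is a bounded operator on $\fX$. Hence $F(\omega)\fx_{k_j}\to F(\omega)\fx$ for every $\omega$. Therefore $g=F(\cdot)\fx=T\fx$ almost everywhere, so the graph is closed and $T$ is continuous.

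Finiteness of the norm then follows from continuity and homogeneity. By continuity at $0$, there is $\delta>0$ with $\Norm{T\fy}{L^p}\le1$ whenever $\Norm{\fy}{\fX}\le\delta$. Since $\Norm{\cdot}{L^p}$ is positively homogeneous even for $p<1$, we get $\Norm{T\fx}{L^p}\le\delta^{-1}$ for all $\Norm{\fx}{}\le1$. Hence $\Norm{F}{L^p_{\so}}\le\delta^{-1}<\infty$.

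The only delicate point is justifying the closed graph theorem in the quasi-Banach case $p<1$. If one prefers to avoid invoking it, a direct Baire argument does the same job. Set $E_n:=\{\fx:\Norm{F(\cdot)\fx}{L^p}\le n\}$. These sets are closed: if $\fx_k\to\fx$ with $\fx_k\in E_n$, then $F(\omega)\fx_k\to F(\omega)\fx$ pointwise, and Fatou's lemma (or the pointwise bound when $p=\infty$) gives $\fx\in E_n$. They are symmetric, and their union is $\fX$. By the Baire category theorem some $E_n$ contains a ball $B(\fx_0,r)$. Then for $\Norm{\fy}{}<r$, the quasi-triangle inequality gives $\Norm{F\fy}{}\le c(\Norm{F(\fx_0+\fy)}{}+\Norm{F\fx_0}{})\le 2cn$. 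Homogeneity then yields the bound on the unit ball, and the same estimate gives continuity. This Fatou/Baire route is the one I would actually write out, since it handles all $p\in(0,\infty]$ uniformly.
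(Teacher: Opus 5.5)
Your closed-graph argument is the paper's proof. The paper also treats $L^p(\Omega;\fX)$ as an F-space. It shows the graph of $\fx\mapsto F(\cdot)\fx$ is closed by extracting an a.e.\ convergent subsequence and using the continuity of each $F(\omega)$, and then applies the closed graph theorem for F-spaces (Rudin, 2.15). For finiteness of the supremum it argues by contradiction with $\frac1n\fx_n\to0$ rather than via your $\delta$-neighbourhood of $0$, which is only a cosmetic difference.

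The Baire/Fatou route you say you would actually write out is correct and genuinely different. The sets $E_n$ are closed because $\fx\mapsto\Norm{F(\cdot)\fx}{L^p(\Omega;\fX)}$ is lower semicontinuous by Fatou's lemma. The rest is the standard Gelfand-lemma (uniform boundedness) argument; your estimate correctly uses that the centre $\fx_0$ of the ball itself lies in $E_n$.

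What this route buys is self-containedness. It needs Baire category only in the domain $\fX$. It uses neither the completeness of $L^p(\Omega;\fX)$, nor the a.e.\ subsequence step, nor the F-space closed graph theorem.

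What the paper's route buys is reuse. The F-space set-up is exactly what Lemma~\ref{lem:equifx} needs for the converse bound $\Norm{\fx}{}\lesssim\rho_{L^p(\Omega,F)}(\fx)$ via the open mapping theorem. There, completeness of the target is essential and a Fatou-type lower semicontinuity argument would not suffice.
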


\begin{proof}
We note that $L^p(\Omega;\fX)$ is a complete quasi-normed space; in particular, it is an F-space in the sense of \cite[1.8(e)]{Rudin:FA}, i.e., a topological vector space whose topology is induced by a complete metric that is invariant under vector addition: $d(\fx,\fy)=d(\fx+\fz,\fy+\fz)$ for all $\fx,\fy,\fz\in\fX$.

The map $\fx\mapsto F(\cdot)\fx$ from $\fX$ to $L^p(\Omega;\fX)$ is obviously well defined and linear. We check that it is closed. For this, suppose that $\fx_n\to\fx$ in $\fX$ and $F(\cdot)\fx_n\to f$ in $L^p(\Omega;\fX)$. Passing to a subsequence, it follows that $F(\cdot)\fx_n\to f$ almost everywhere, but also $F(\cdot)\fx_n\to F(\cdot)\fx$, since each $F(\omega)$ is continuous on $\fX$. By the uniqueness of the limit in $\fX$, it follows that $f=F(\cdot)\fx$ almost everywhere, and hence $\fx\mapsto F(\cdot)\fx$ is closed, as claimed. Now the Closed Graph Theorem for F-spaces, \cite[2.15]{Rudin:FA}, implies that the said map is continuous.

Suppose for contradiction that $\Norm{F}{L^p_{\so}(\Omega;\bddlin(\fX))}=\infty$. Then we can find some $\fx_n\in\fX$ of norm one such that $\Norm{F(\cdot)\fx_n}{L^p(\Omega;\fX)}>n$. But then $\frac1n\fx_n\to 0$ in $\fX$, while $\Norm{F(\cdot)\frac1n\fx_n}{L^p(\Omega;\fX)}>1$, which contradicts the fact that these norms should converge to $0$ by the proven continuity.
\end{proof}

\begin{lemma}\label{lem:equifx}
Under the assumptions of Lemma \ref{lem:LpVso}, suppose in addition that $F(\omega)$ is invertible at almost every $\omega\in\Omega$. Then the quasi-norm
\begin{equation*}
  \rho_{L^p(\Omega,F)}(\fx):=\Norm{F(\cdot)\fx}{L^p(\Omega;\fX)}
\end{equation*}
is qualitatively equivalent to $\Norm{\fx}{}$, i.e., the positive equivalence constants are allowed to depend on $p,\Omega,F$ in an unspecified way.
\end{lemma}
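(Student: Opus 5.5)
Two inequalities are needed: $\rho_{L^p(\Omega,F)}(\fx)\lesssim\Norm{\fx}{}$ and $\Norm{\fx}{}\lesssim\rho_{L^p(\Omega,F)}(\fx)$. The upper bound is exactly Lemma \ref{lem:LpVso}, since $\rho_{L^p(\Omega,F)}(\fx)\leq\Norm{F}{L^p_{\so}(\Omega;\bddlin(\fX))}\Norm{\fx}{}$ with a finite constant. The real content is therefore the lower bound. For it I plan to use the open mapping theorem, applied not to $\fX$ itself but to a suitable completion of the map $\fx\mapsto F(\cdot)\fx$.

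Before that I need positivity, i.e. that $\rho$ is a genuine quasi-norm. If $\rho_{L^p(\Omega,F)}(\fx)=0$, then $F(\omega)\fx=0$ for almost every $\omega$. By invertibility at almost every point this forces $\fx=0$, provided $\Omega$ has positive measure; I assume this tacitly, since otherwise $L^p(\Omega;\fX)=\{0\}$ and the statement is vacuous. So the map $T\fx:=F(\cdot)\fx$ is injective, linear and continuous from $\fX$ into the F-space $L^p(\Omega;\fX)$. It remains to show that $T$ has closed range. Once that holds, $T(\fX)$ is an F-space, and the open mapping (inverse mapping) theorem for F-spaces \cite[2.12]{Rudin:FA} makes $T^{-1}:T(\fX)\to\fX$ continuous. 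That continuity then yields $\Norm{\fx}{}\leq C\rho(\fx)$, by the same scaling contradiction argument as at the end of the proof of Lemma \ref{lem:LpVso}.

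The main obstacle is the closed range step. Take $\fx_n$ with $F(\cdot)\fx_n\to g$ in $L^p$. Passing to a subsequence gives $F(\omega)\fx_n\to g(\omega)$ almost everywhere, and hence $\fx_n=F(\omega)^{-1}F(\omega)\fx_n\to F(\omega)^{-1}g(\omega)$ for almost every $\omega$, because each $F(\omega)^{-1}$ is bounded. In particular $(\fx_n)$ converges in $\fX$ to some $\fx$, which makes sense after fixing a single such $\omega$. Continuity of $T$ then gives $F(\cdot)\fx_n\to F(\cdot)\fx$ in $L^p$, so $g=F(\cdot)\fx$ and the range is closed. This pointwise inversion is the crucial trick. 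Everything else is abstract nonsense.

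A cleaner variant of the same argument avoids the open mapping theorem altogether and proves the lower bound by contradiction. Suppose there are unit vectors $\fx_n$ with $\rho(\fx_n)<2^{-n}$. Then $\sum_n\Norm{F(\cdot)\fx_n}{L^p}^{\min(p,1)}<\infty$, so $F(\omega)\fx_n\to0$ for almost every $\omega$. Fixing one such $\omega$, we get $\fx_n=F(\omega)^{-1}F(\omega)\fx_n\to0$, which contradicts $\Norm{\fx_n}{}=1$. I would present this second version, as it is self-contained and gives $\Norm{\fx}{}\leq C\rho(\fx)$ with an unspecified constant, exactly as the statement allows.
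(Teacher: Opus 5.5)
Your proposal is correct, and your first version is essentially the paper's proof. The paper also shows that the image $F\fX=\{F(\cdot)\fx:\fx\in\fX\}$ is closed in $L^p(\Omega;\fX)$ by passing to an a.e.\ convergent subsequence and applying $F(\omega)^{-1}$ pointwise. It concludes directly that $F(\omega)^{-1}f(\omega)\equiv\fx$ a.e., so it does not need the continuity of $\fx\mapsto F(\cdot)\fx$ at that step. It then obtains the lower bound from the open mapping theorem for F-spaces \cite[2.12]{Rudin:FA}.

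The second version, which you intend to present, is a genuinely different and more elementary route. It keeps the same key idea of inverting $F(\omega)$ at a single good point. However, it replaces the closed-range and open-mapping machinery by a Borel--Cantelli-type summability argument. By Tonelli, $\int_\Omega\sum_n\Norm{F(\omega)\fx_n}{}^p\,d\omega=\sum_n\rho_{L^p(\Omega,F)}(\fx_n)^p<\infty$ for $p<\infty$. For $p=\infty$ one simply discards the countably many exceptional null sets. Either way, $F(\omega)\fx_n\to0$ almost everywhere.

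What your route buys is self-containedness. The lower bound then uses neither the completeness of $\fX$ or of $L^p(\Omega;\fX)$, nor Lemma \ref{lem:LpVso}, which is needed only for the upper bound. You also make explicit the assumption that $\Omega$ has positive measure, which the paper uses silently when asserting injectivity. The paper's route, in exchange, parallels the closed graph argument of Lemma \ref{lem:LpVso}, so both bounds appear as consequences of the same F-space principle.
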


\begin{proof}
We already know from Lemma \ref{lem:LpVso} that $\rho_{L^p(\Omega,F)}(\fx)\lesssim_{p,\Omega,F}\Norm{\fx}{}$, so it remains to prove the converse.

We claim that the image $F\fX:=\{F(\cdot)\fx\in L^p(\Omega;\fX):\fx\in\fX\}$ is a closed subspace of $L^p(\Omega;\fX)$. Indeed, suppose that $F(\cdot)\fx_n\to f$ in $L^p(\Omega;\fX)$. Then a subsequence converges in $\fX$ at almost every $\omega\in\Omega$. Removing another null set and applying $F(\omega)^{-1}$ on both sides, we conclude that $\fx_n\to F(\omega)^{-1}f(\omega)$ at almost every $\omega\in\Omega$. Since $\fx_n$ is independent of $\omega$, this shows that $F(\omega)^{-1}f(\omega)\equiv\fx$ at almost every $\omega\in\Omega$. Hence $f=F(\cdot)\fx\in F\fX$, which proves the closedness of $F\fX$.

As a closed subspace of the complete space $L^p(\Omega;\fX)$, the space $F\fX$ itself is complete. The map $\fx\mapsto F(\cdot)\fx$ is obviously surjective from $\fX$ to $F\fX$. By the pointwise invertibility of $F$, it is also injective. Now the Open Mapping Theorem for F-spaces, \cite[2.12]{Rudin:FA}, shows that $F^{-1}:F\fX\to \fX:F(\cdot)\fx\mapsto\fx$ is also continuous, which means that
\begin{equation*}
  \Norm{\fx}{}\lesssim_{p,\Omega,F} \rho_{L^p(\Omega,F)}(\fx).\qedhere
\end{equation*}
\end{proof}

For the sake of easy reference, we record the following well-known fact about the inverse and the adjoint:

\begin{remark}
Let $\fX,\fY$ be Banach spaces and $A\in\bddlin(\fX,\fY)$ have inverse $A^{-1}\in\bddlin(\fY,\fX)$. Then one easily checks that the adjoint of the inverse, $(A^{-1})^*\in\bddlin(\fX^*,\fY^*)$, is the inverse of the adjoint $A^*\in\bddlin(\fY^*,\fX^*)$. Thus, without ambiguity, we can simplify notation to write simply
\begin{equation*}
  A^{-*}:=(A^{-1})^*=(A^*)^{-1}\in\bddlin(\fX^*,\fY^*).
\end{equation*}
\end{remark}

\section{Operator-valued weights}

Let us start with a brief background for the setting that we are going to adopt.

A scalar-valued weight $0<w\in L^1_{\loc}(\R^n)$ may be equivalently seen as a modifier of the underlying measure, or of the function in the weighted $L^p$ space, via the trivial correspondence
\begin{equation}\label{eq:wv}
  \Big(\int_{\R^n}\abs{f(x)}^p w(x)\,dx\Big)^{\frac1p}
  =\Big(\int_{\R^n}\abs{v(x)f(x)}^p \, dx\Big)^{\frac1p},
\end{equation}
where
\begin{equation*}
    v:=w^{\frac1p}\in L^p_{\loc}(\R^n).
\end{equation*}

If $f:\R^n\to\C^m$ and we would like to weigh its value by a matrix-valued weight, only the right-hand side of \eqref{eq:wv} gives a meaningful definition. Due to the tradition of viewing $w$ in \eqref{eq:wv} as ``the weight'', much of the matrix-weighted theory has been developed under the assumption that $W\in L^1_{\loc}(\R^n;\bddlin(\C^m))$ is an almost-everywhere positive-definite-valued function, and by writing formulas in terms of the pointwise $p$-th root $V(x):=W(x)^{\frac1p}$, which is defined via the functional calculus of self-adjoint operators. This still makes perfect sense for operator weights on an infinite-dimensional Hilbert space as in \cite{AC:12,Pott:07}. However, already in the matrix-weighted theory, one can see that it is {\em de facto} $V$, and not $W$, that is the primary object of study, and the repeated appearance of $p$-th roots in the formulas is mostly a notational nuisance only caused by the insistence of writing expression in terms of $W$ instead of the more natural $V$.
This point becomes even more prominent when we pass to a general Banach space $\fX$.

Let us then motivate a notion of $\bddlin(\fX)$-valued weights $V$ and weighted spaces $L^p(V)$ of $\fX$-valued functions, modelled after the right-hand side of \eqref{eq:wv}. It seems reasonable to require that
\begin{enumerate}
  \item\label{it:LpVsubL0} $L^p(V)\subseteq L^0(\fX)$;
  \item\label{it:LpVsupSimple} $L^p(V)\supseteq\{f\in L^0(\fX)\text{ simple and boundedly supported}\}$;
  \item\label{it:LpVtest} given $f\in L^0(\fX)$, we have $f\in L^p(V)$ if and only if $Vf\in L^p(\fX)$;
  \item\label{it:Lp2LpV} given $g\in L^p(\fX)$, we have $V^{-1}g\in L^p(V)$.
\end{enumerate}
Let $Q$ be a cube and $\fx\in\fX$. Since $\one_Q\fx$ is simple and boundedly supported, it follows that
$\one_Q\fx\in L^p(V)$ by \eqref{it:LpVsupSimple}, hence $\one_Q V\fx \subseteq L^p(\fX)$ by \eqref{it:LpVtest}.
Being true for all cubes $Q$ and $\fx\in\fX$, this shows that $V\in L^p_{\operatorname{so},\loc}(\bddlin(\fX))$.

On the other hand, we also have $\one_Q\fx\in L^p(\fX)$.
Hence $\one_QV^{-1}\fx\in L^p(V)$ by \eqref{it:Lp2LpV}, thus $\one_QV^{-1}\fx\in L^0(\fX)$ by \eqref{it:LpVsubL0}.
Being true for all cubes $Q$ and $\fx\in\fX$, this shows that $V^{-1}\in L^0_{\operatorname{so}}(\bddlin(\fX))$.

Based on this, we give:

\begin{definition}\label{def:weight}
Let $\fX$ be a Banach space and $V:\R^n\to\bddlin(\fX)$.
Then $V$ is called a {\em weight} if
\begin{enumerate}[\rm(i)]
  \item\label{it:VL0} $V\in L^0_{\operatorname{so}}(\R^n;\bddlin(\fX))$,
  \item\label{it:Vinv} $V(x)$ is invertible at a.e.\ $x\in\R^n$, and
  \item\label{it:invL0} $V^{-1}\in L^0_{\operatorname{so}}(\R^n;\bddlin(\fX))$.
\end{enumerate}
\end{definition}

The following result clarifies the role of condition \eqref{it:invL0} in Definition \ref{def:weight}:

\begin{proposition}
Let $\fX$ be a Banach space and $V:\R^n\to\bddlin(\fX)$ be a function that satisfies \eqref{it:VL0} and \eqref{it:Vinv} of Definition \ref{def:weight}.
\begin{enumerate}[\rm(i)]
  \item\label{it:sep} If $\fX$ is separable, then $V$ also satisfies \eqref{it:invL0} of Definition \ref{def:weight}.
  \item\label{it:nonsep} Without separability, the previous conclusion fails, in general.
\end{enumerate}
\end{proposition}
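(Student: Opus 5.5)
For part \eqref{it:sep} I will show that $x\mapsto V(x)^{-1}\fx$ is strongly measurable for each fixed $\fx\in\fX$. By the Pettis measurability theorem, in a separable Banach space strong measurability is equivalent to weak measurability. Since $\fX$ is separable, it therefore suffices to show that $x\mapsto\pair{V(x)^{-1}\fx}{\fx^*}$ is measurable for every $\fx^*\in\fX^*$, or even only for $\fx^*$ in a norming sequence.

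The key step is to express $V(x)^{-1}\fx$ through countably many measurable quantities. Fix a countable dense set $D=\{\fy_k\}\subseteq\fX$. Each map $x\mapsto V(x)\fy_k$ is strongly measurable, so $x\mapsto\Norm{V(x)\fy_k-\fx}{}$ is measurable. For $x$ outside a null set, $V(x)$ is a bijection, and $V(x)^{-1}\fx$ is the unique $\fy$ with $V(x)\fy=\fx$.

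I will approximate $V(x)^{-1}\fx$ by selecting elements of $D$ measurably. For $j\in\N$, let $k_j(x)$ be the least index $k$ with
\begin{equation*}
\Norm{V(x)\fy_k-\fx}{}<\tfrac1j .
\end{equation*}
Such an index exists because $V(x)D$ is dense: $V(x)$ is continuous and surjective. The function $k_j$ is measurable, so $g_j(x):=\fy_{k_j(x)}$ is countably valued and measurable, hence strongly measurable. Then
\begin{equation*}
\Norm{g_j(x)-V(x)^{-1}\fx}{}\le\Norm{V(x)^{-1}}{}\cdot\tfrac1j\to0 ,
\end{equation*}
and $\Norm{V(x)^{-1}}{}<\infty$ by the bounded inverse theorem. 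Thus $V^{-1}\fx$ is an a.e.\ pointwise limit of strongly measurable functions, and is therefore strongly measurable. This argument in fact bypasses Pettis entirely. The delicate point is that the bound $\Norm{V(x)^{-1}}{}$ is finite pointwise even though it is not uniform in $x$, and pointwise finiteness is all the convergence needs.

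For part \eqref{it:nonsep}, a counterexample must exploit the failure of separability of ranges. I would take a nonseparable Hilbert space, for instance $\fX=\ell^2(\R)$ with orthonormal basis $\{e_t\}_{t\in\R}$, working in dimension $n=1$ for simplicity. I would build $V(x)$ as a unitary (or bounded invertible) operator, chosen so that $V(x)\fx$ has separable, measurable behaviour for every $\fx$ while $V(x)^{-1}\fx$ does not. One natural attempt is to let $V(x)$ be the operator that swaps $e_x$ with a fixed vector such as $e_0$, or rotates in the plane spanned by $e_x$ and some fixed $u$. The hope is that for each fixed $\fx$, only countably many $t$ have $\pair{\fx}{e_t}\ne0$, so $V(\cdot)\fx$ differs from a fixed vector only on a null set and is strongly measurable. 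The inverse must then be engineered asymmetrically, so that $V(x)^{-1}u$ traces out $\{e_x\}$, a nonseparable range, which is not strongly measurable. The asymmetry is needed because a plain swap is its own inverse.

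The main obstacle is producing this asymmetry. A candidate is a non-self-adjoint operator such as
\begin{equation*}
V(x)=I+e_x\otimes u^*
\end{equation*}
with $\pair{e_x}{u}=0$. Here $V(x)\fx=\fx+\pair{\fx}{u}e_x$, and it is this map, not the inverse, that ranges over $\{e_x\}$. So I would instead take
\begin{equation*}
V(x)=I+u\otimes e_x^*
\end{equation*}
and check the properties. For fixed $\fx$, $V(x)\fx=\fx+\pair{\fx}{e_x}u$, which equals $\fx$ for all but countably many $x$, so $V(\cdot)\fx$ is strongly measurable. The inverse is
\begin{equation*}
V(x)^{-1}=I-u\otimes e_x^*+\cdots .
\end{equation*}
If I take $u=e_0$, this does not obviously give a nonseparable range either, so some iteration or a different rank-one perturbation of the form $I+a\otimes b^*$ will be needed. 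A possible choice is to perturb by two rank-one terms arranged so that the inverse contains the term $e_x\otimes u^*$. Verifying this algebra is where I expect the real work.
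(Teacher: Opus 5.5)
Your part (i) is correct, and it follows a genuinely different route from the paper.

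The paper treats $(x,\fx)\mapsto V(x)\fx$ as a Carath\'eodory function and deduces joint measurability. It then writes $\{x:V(x)^{-1}\fy\in B\}$ as the $\R^n$-projection of the set $\{(x,\fx)\in\R^n\times B:V(x)\fx=\fy\}$. Finally it invokes the measurable projection theorem to get Borel, hence strong, measurability; this is where the Polish structure of $\fX$ and the completeness of the Lebesgue $\sigma$-algebra enter.

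You instead make an explicit measurable selection from a countable dense set. The key input is the pointwise finiteness of $\Norm{V(x)^{-1}}{}$, which turns $\Norm{V(x)\fy_{k_j(x)}-\fx}{}<1/j$ into $\fy_{k_j(x)}\to V(x)^{-1}\fx$. This is more elementary: the appeal to Pettis is unnecessary, beyond the standard fact that a.e.\ limits of strongly measurable functions are strongly measurable. It also isolates what is really going on. If $V(\cdot)\fx$ is strongly measurable for every $\fx$, then $V(\cdot)^{-1}\fy$ is strongly measurable as soon as it takes values, a.e., in \emph{some} separable closed subspace $\fZ$; just run your selection with a dense sequence of $\fZ$. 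In exchange, the paper's argument is an instance of a general principle about measurable solutions of $\Phi(x,\fx)=\fy$.

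Part (ii) has a genuine gap: no counterexample is given, and the route you sketch cannot produce one. Your candidate $V(x)=I+e_0\otimes e_x^*$ has inverse exactly $I-e_0\otimes e_x^*$ for $x\neq0$, which is as measurable as $V$. Adding further rank-one terms does not help, for the following reason.

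\begin{itemize}
\item If $V(x)-I$ has finite rank, it maps $\fX$ into a finite-dimensional invariant subspace, so it is annihilated by a polynomial.
\item Since $V(x)$ is invertible, you can cancel powers of $\lambda$, so $V(x)$ satisfies a polynomial identity with nonzero constant term. Hence $V(x)^{-1}=p_x(V(x))$ for some polynomial $p_x$, and $V(x)^{-1}\fy\in\operatorname{span}\{V(x)^k\fy\}_{k\ge0}$.
\item Each $V(\cdot)^k\fy$ is strongly measurable by iterating Lemma~\ref{lem:prod}. So all these values lie in one separable subspace $\fZ$, and the observation above makes $V(\cdot)^{-1}\fy$ strongly measurable.
\end{itemize}

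In your concrete setting, where $V(x)$ differs from $I$ only on $\operatorname{span}\{u,e_x\}$, this is simply the following. Measurability of $V(\cdot)u$ forces $\pair{V(x)u}{e_x}=0$ for a.e.\ $x$. So $V(x)$ maps $\operatorname{span}\{u\}$ injectively, hence onto, itself, and then so does $V(x)^{-1}$.

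The missing idea is to let the fixed vectors span an \emph{infinite-dimensional} subspace on which $V(x)$ acts injectively but not surjectively. Then only the inverse needs the moving vectors to reach the missing direction. This is the paper's example.

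\begin{itemize}
\item Take $\ell^2([0,\infty))$ with orthonormal basis $(\fy_s)_{s\ge0}$. For $t\in(0,1)$, let $V(t)$ shift the chain $\cdots\mapsto\fy_{t+1}\mapsto\fy_t\mapsto\fy_0\mapsto\fy_1\mapsto\cdots$ and fix every other basis vector.
\item Fix $\fx=\sum_s x_s\fy_s$; it has countable support. For almost every $t$, $V(t)\fx=\fx+\sum_{n\ge0}x_n(\fy_{n+1}-\fy_n)$, which is independent of $t$: it is the unilateral shift on the $\N$-indexed part.
\item By contrast, $V(t)^{-1}\fy_0=\fy_t$, so $V(\cdot)^{-1}\fy_0$ has nonseparable range.
\end{itemize}

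Note that $\fy_t\notin\overline{\operatorname{span}}\{V(t)^k\fy_0\}_{k\ge0}=\overline{\operatorname{span}}\{\fy_k\}_{k\ge0}$. This is exactly how the example escapes the obstruction above.
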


\begin{proof}
\eqref{it:sep}: For a separable Banach space $\fX$, we recall (see \cite[Corollary 1.1.10]{HNVW1}) that a function $f:\R^n\to\fX$ is {\em strongly measurable} (a pointwise limit of simple functions) if and only if it is {\em measurable} (i.e., the inverse image $f^{-1}(B)$ is a measurable subset of $\R^n$ for every $B$ in $\operatorname{Bor}(\fX)$, the \emph{Borel $\sigma$-algebra} generated by the open subsets of $\fX$). The argument that follows is more conveniently expressed via the latter notion of inverse images.

Let $\fX$ and $V$ be as in the assumptions. We consider the map
\begin{equation*}
  \Phi:\R^n\times\fX\to\fX,(x,\fx)\mapsto V(x)\fx.
\end{equation*}
This is a so-called {\em Carath\'eodory function} \cite[Definition 14.74]{AB:book}:
\begin{enumerate}[\rm(i)]
  \item $\Phi(\cdot,\fx)=V(\cdot)\fx:\R^n\to\fX$ is measurable for every $\fx\in\fX$. Indeed, this is the assumption that $V\in L^0_{\operatorname{so}}(\R^n;\fX)$, combined with the above remark about measurability and strong measurability.
  \item $\Phi(x,\cdot)=V(x):\fX\to\fX$ is continuous. This is clear from $V(x)\in\bddlin(\fX)$.
\end{enumerate}
By \cite[Lemma 14.75]{AB:book}, Carath\'eodory functions are {\em jointly measurable}, i.e., for every closed $F\subseteq\fX$, the pre-image $\Phi^{-1}(F)$ satisfies
\begin{equation*}
  \Phi^{-1}(F):=\{(x,\fx)\in\R^n\times\fX: \Phi(x,\fx)\in F\}
  \in \operatorname{Leb}(\R^n)\times\operatorname{Bor}(\fX),
\end{equation*}
where the right-hand side is the product $\sigma$-algebra. In particular, we can take $F=\{\fy\}$ with $\fy\in\fX$. Then also
\begin{equation}\label{eq:setS}
   S:=\{(x,\fx)\in\R^n\times B: \Phi(x,\fx)\in \{\fy\}\}
   =\Phi^{-1}(\{\fy\})\cap(\R^n\times B)
\end{equation}
belongs to $\operatorname{Leb}(\R^n)\times\operatorname{Bor}(\fX)$ for every $\fy\in\fX$ and $B\in\operatorname{Bor}(\fX)$.

Since $\fX$ is (in particular) a complete separable metric space, \cite[Theorem 12.3.4]{KT:book} guarantees that, for every $S\in \operatorname{Leb}(\R^n)\times\operatorname{Bor}(\fX)$, its $\R^n$-projection
\begin{equation*}
   \pi(S):=\{x\in\R^n:\exists\fx\in\fX,(x,\fx)\in S\}
\end{equation*}
belongs to the completion (called the \emph{Lebesgue extension} in \cite{KT:book}) of the $\sigma$-algebra $\operatorname{Leb}(\R^n)$. Since $\operatorname{Leb}(\R^n)$ is already complete, it follows that
\begin{equation*}
  \pi(S)\in \operatorname{Leb}(\R^n).
\end{equation*}
For the concrete $S$ in \eqref{eq:setS}, we observe that
\begin{equation*}
  \Phi(x,\fx)\in\{\fy\}\quad\Leftrightarrow \quad
  V(x)\fx=\fy  \quad\Leftrightarrow \quad
  \fx=V(x)^{-1}\fy,
\end{equation*}
hence
\begin{equation*}
  \exists\fx\in B:\Phi(x,\fx)\in\{\fy\}\quad\Leftrightarrow \quad
  \exists\fx\in B:V(x)^{-1}\fy=\fx\quad\Leftrightarrow \quad
  V(x)^{-1}\fy\in B,
\end{equation*}
and thus
\begin{equation*}
\begin{split}
  \pi(S) &=\{x\in\R^n: \exists\fx\in B,\Phi(x,\fx)\in\{\fy\}\} \\
   &=\{x\in\R^n: V(x)^{-1}\fy\in B\} = (V(\cdot)^{-1}\fy)^{-1}(B)
\end{split}
\end{equation*}
is the inverse image of $B\in\operatorname{Bor}(\fX)$ under $V(\cdot)^{-1}\fy:\R^n\to\fX$.
Since these sets are measurable for all $B\in\operatorname{Bor}(\fX)$, it means that the function $V(\cdot)^{-1}\fy:\R^n\to\fX$ is measurable and hence strongly measurable. Valid for every $\fy\in\fX$, it means that $V(\cdot)^{-1}\in L^0_{\operatorname{so}}(\R^n;\fX)$, which proves assertion \eqref{it:sep}.

\eqref{it:nonsep}:
Consider the non-separable Hilbert space $\fX=\ell^2([0,\infty))$ with the uncountable complete orthonormal system $(\fy_t)_{t\in[0,\infty)}$. For each $t\in(0,1)$, we define $V(t)\in\bddlin(\fX)$ via its action on this orthonormal system as follows:
\begin{equation*}
  V(t):\begin{cases} \fy_{t+n+1}\mapsto \fy_{t+n}, & n\in\N:=\{0,1,2,\ldots\}, \\
    \fy_t\mapsto \fy_0, \\
    \fy_n\mapsto \fy_{n+1}, & n\in\N
  \end{cases}
\end{equation*}
and $V(t):\fy_s\mapsto \fy_s$ for all other $s\in[0,\infty)$. Since $V(t)$ is a permutation of an orthonormal system, it is clearly invertible. For a function on $\R$, we can simply take $V(t)=I$ for $t\in\R\setminus(0,1)$.

To check that $V\in L^0_{\operatorname{so}}(\R;\bddlin(\fX))$, let $\fx=\sum_{s\in[0,\infty)}x_s \fy_s\in\fX$. Note that at most countably many $x_s$ are non-zero. Then
\begin{equation*}
\begin{split}
  V(t)\fx &=\sum_{n=0}^\infty x_{t+n+1}\fy_{t+n}+x_t \fy_0+\sum_{n=0}^\infty x_n\fy_{n+1}+\sum_{s\in[0,\infty)\setminus(\N\cup(t+\N))}x_s\fy_s \\
  &=\sum_{n=0}^\infty (x_{t+n+1}-x_{t+n})\fy_{t+n}+x_t \fy_0+\sum_{n=0}^\infty x_n(\fy_{n+1}-\fy_n)  +\fx.
\end{split}
\end{equation*}
The key observation is that, for all $n\in\N$, we have $x_{t+n}=0$ for a.e.\ $t\in(0,1)$, by the fact that at most countably many $x_s$ are non-zero. Hence, indeed, several terms in the expansion of $V(t)\fx$ vanish almost everywhere, and we arrive at the simpler formula
\begin{equation*}
  V(t)\fx = \sum_{n=0}^\infty x_n(\fy_{n+1}-\fy_n)  +\fx\quad \text{for a.e.}\quad t\in(0,1).
\end{equation*}
Thus, being a constant almost everywhere, $V(t)\fx$ is clearly measurable.

On the other hand, the inverse $V(t)^{-1}$ satisfies $V(t)^{-1}\fy_0=\fy_t$. Thus the range of $V(\cdot)^{-1}\fy_0$ contains the uncountable orthonormal system $(\fy_t)_{t\in(0,1)}$, which is clearly non-separable. Every strongly measurable function has a separable range, so $V(\cdot)^{-1}\fy_0\notin L^0(\fX)$, and hence $V(\cdot)^{-1}\notin L^0_{\operatorname{so}}(\bddlin(\fX))$.
\end{proof}

\begin{definition}
A weight is called a {\em $p$-weight}, where $p\in(0,\infty]$, if
\begin{equation*}
  V\in L^p_{\loc,\so}(\R^n;\bddlin(\fX))
  :=
  \left.\begin{cases} F\in L^0_{\operatorname{so}}(\R^n;\bddlin(\fX)); &\one_Q F\in L^p_{\so}(\R^n;\bddlin(\fX)) \\
  &\text{ for all cubes }Q\subset\R^n\end{cases}\right\}
\end{equation*}
\end{definition}

\begin{definition}\label{def:LpV}
Let $\fX$ be a Banach space, $p\in(0,\infty]$, and $V:\R^n\to\bddlin(\fX)$ be a $p$-weight. Then
\begin{equation}\label{eq:LpV}
  L^p(V):=\{f\in L^0(\R^n;\fX): Vf\in L^p(\R^n;\fX)\}
\end{equation}
with norm
\begin{equation*}
  \Norm{f}{L^p(V)}:=
  \Norm{f}{L^p(V;\fX)}:=\Norm{Vf}{L^p(\R^n;\fX)}.
\end{equation*}
\end{definition}

\begin{remark}
Lemma \ref{lem:prod} provides the following elaboration of the meaning of the defining condition \eqref{eq:LpV}: Given $f\in L^0(\R^n;\fX)$, we automatically know that $Vf\in L^0(\R^n;\fX)$. Hence, the second condition that $Vf\in L^p(\R^n;\fX)$ does not impose any additional measurability restrictions on $f$; it only amounts to checking a size condition on the measurable function $Vf$.
\end{remark}

\begin{remark}
Specialising back to a Hilbert space $\fX=H$, we note that
\begin{equation*}
  \Norm{A\fx}{H}^2=(A\fx,A\fx)=(A^*A\fx,\fx)=((A^*A)^{\frac12}\fx,(A^*A)^{\frac12}\fx)
  =\Norm{(A^*A)^{\frac12}\fx}{H}^2,
\end{equation*}
where $(A^*A)^{\frac12}$ is a non-negative operator. Hence the general weighted norm
\begin{equation*}
   \Norm{f}{L^p(V;H)}=\Norm{f}{L^p((V^*V)^{\frac12};H)}
\end{equation*}
always reduces to a norm with respect to a non-negative weight; moreover, the condition that $V$ is invertible almost everywhere implies that $(V^*V)^{\frac12}$ is (strictly) positive-definite almost everywhere (as typically assumed in the Hilbert space theory).
\end{remark}

The following result, collecting some basic facts about the $L^p(V)$ spaces just introduced, is relatively routine, but illustrates the role of the different conditions that we imposed.

\begin{proposition}
Let $\fX$ be a Banach space, $p\in(0,\infty]$, and $V:\R^n\to\bddlin(\fX)$ be a $p$-weight. Then
\begin{enumerate}[\rm(i)]
  \item\label{it:LpVisomLp} $L^p(V)$ is isometrically isomorphic to $L^p(\R^n;\fX)$, where the isomorphisms are given by pointwise multiplication by $V:L^p(V)\to L^p(\R^n;\fX)$ and its pointwise inverse $V^{-1}:L^p(\R^n;\fX)\to L^p(V)$.
  \item\label{it:LpVBanach} $L^p(V)$ is a Banach space if $p\geq 1$, or a quasi-Banach space if $p<1$.
  \item\label{it:LpVdense} $\{f\in L^0(\fX)\text{ simple, boundedly supported}\}\subseteq L^p(V)$; it is dense if $p<\infty$.
\end{enumerate}
\end{proposition}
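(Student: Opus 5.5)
The plan is to prove \eqref{it:LpVisomLp} first and then transfer everything else from $L^p(\R^n;\fX)$ through the isometry.

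For \eqref{it:LpVisomLp}, take $f\in L^p(V)$. By Lemma~\ref{lem:prod}, $Vf\in L^0(\fX)$, and by \eqref{eq:LpV} it lies in $L^p(\fX)$ with $\Norm{Vf}{L^p(\fX)}=\Norm{f}{L^p(V)}$. So multiplication by $V$ is a linear isometry $L^p(V)\to L^p(\fX)$.

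Conversely, let $g\in L^p(\fX)$. Condition \eqref{it:invL0} of Definition~\ref{def:weight} says $V^{-1}\in L^0_{\so}$, so Lemma~\ref{lem:prod} again gives $V^{-1}g\in L^0(\fX)$. Since $V(V^{-1}g)=g$ almost everywhere, we get $V^{-1}g\in L^p(V)$. The two maps are pointwise inverse to each other almost everywhere, using \eqref{it:Vinv}.

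This is the only place where the inverse measurability condition \eqref{it:invL0} is genuinely used. One should also note that, because $V$ is invertible a.e., $\Norm{f}{L^p(V)}=0$ forces $f=0$ a.e., so this is a genuine (quasi-)norm on equivalence classes.

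Part \eqref{it:LpVBanach} follows immediately from \eqref{it:LpVisomLp}: completeness and the norm, respectively $p$-norm, properties are inherited through an isometric isomorphism from $L^p(\R^n;\fX)$. For the latter we use the completeness recalled in Section~2.

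For the inclusion in \eqref{it:LpVdense}, take $f=\sum_i\fx_i\one_{A_i}$ with every $A_i$ contained in a cube $Q$. Then $Vf=\sum_i\one_{A_i}\one_QV\fx_i$, and each term lies in $L^p(\fX)$ because $V$ is a $p$-weight, so $\one_QV\fx_i\in L^p(\fX)$.

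Density for $p<\infty$ is the main step. I would proceed as follows.
\begin{enumerate}
\item Fix $f\in L^p(V)$ and set $g=Vf\in L^p(\fX)$. Since simple boundedly supported functions are dense in $L^p(\fX)$, and truncating to large cubes costs little by dominated convergence, we may reduce to approximating $f\one_Q$. So assume $f$ is supported in a cube $Q$.
\item Use strong measurability to choose simple $f_k\to f$ pointwise a.e. on $Q$. The difficulty is that $Vf_k\to Vf$ pointwise does not by itself give $L^p$ convergence, since we need domination.
\item To obtain domination, truncate in size: replace $f_k$ by $\tilde f_k=f_k\one_{\{\Norm{f_k}{}\le 2\Norm{f}{}\}}$, which is still simple and still converges to $f$ pointwise. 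This alone does not control $\Norm{V\tilde f_k}{}$ by $\Norm{Vf}{}$, so we also restrict to the sets $E_N=\{x\in Q:\Norm{V(x)}{}\le N,\ \Norm{f(x)}{}\le N\}$. On these sets $\Norm{V\tilde f_k}{}\le 2N^2$, so dominated convergence on $Q$ gives $V\tilde f_k\one_{E_N}\to Vf\one_{E_N}$ in $L^p$. Then $Vf\one_{E_N}\to Vf$ in $L^p$ as $N\to\infty$.
\item The obstacle is that $\Norm{V(x)}{}$ need not be measurable for non-separable $\fX$. The fix is that $f$ has essentially separable range, so we can replace $\Norm{V(x)}{}$ by $\sup_j\Norm{V(x)\fy_j}{}/\Norm{\fy_j}{}$ over a countable dense set $(\fy_j)$ of a separable closed subspace containing the ranges of $f$ and all $f_k$. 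This supremum is measurable, and it suffices for the bound $\Norm{V(x)\tilde f_k(x)}{}\le 2N^2$.
\end{enumerate}
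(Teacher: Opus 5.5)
Your proof is correct. Parts (i), (ii) and the inclusion in (iii) follow the paper essentially verbatim, but your density argument takes a genuinely different route.

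\textbf{The paper's route.} It fixes separable subspaces $\fY\subseteq\fZ$, with $\fZ$ chosen so that $Vh\in L^0(\fZ)$ for every $h\in L^0(\fY)$. It then truncates to $E_\lambda=\{\abs{x}\le\lambda,\ \Norm{V(x)|_{\fZ}}{}\le\lambda,\ \Norm{V(x)^{-1}|_{\fZ}}{}\le\lambda\}$, on which the weighted and unweighted $L^p$ norms are comparable up to a factor $\lambda$. Finally it uses the density of simple functions in the unweighted $L^p(\fY)$ as a black box. The bound on $V^{-1}$ is what shows $\one_{E_\lambda}f\in L^p(\fY)$ in the first place.

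\textbf{Your route.} You cap the size of $f$ directly through the condition $\Norm{f(x)}{}\le N$ in $E_N$. You build the approximants $\tilde f_k\one_{E_N}$ by hand from a pointwise simple approximating sequence, using the cutoff $\Norm{f_k}{}\le 2\Norm{f}{}$, and conclude with two applications of dominated convergence.

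\textbf{Comparison.} Your argument is more elementary. Only the upper bound on $\Norm{V(x)|_{\fZ}}{}$ is needed, and $\fZ$ need only contain the finite ranges of the $f_k$, so no invariance property of $\fZ$ under $V$ is required. Neither $V^{-1}$ nor condition (iii) of Definition~\ref{def:weight} enters the density step, so you obtain density under weaker hypotheses. Your remark that (iii) is used only in part (i) is accurate for your proof, though not for the paper's. In exchange, the paper's route yields the more structural local two-sided comparison of $\Norm{h}{L^p(V)}$ and $\Norm{h}{L^p(\fX)}$ on $E_\lambda$.

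\textbf{A cosmetic point.} The first sentence of your Step 1, about density of simple functions in $L^p(\fX)$ applied to $g=Vf$, plays no role and could not help by itself. Pulling a simple approximant of $Vf$ back through $V^{-1}$ does not produce a simple function. The cube truncation by dominated convergence is all you actually use there.
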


\begin{proof}
Let $f\in L^p(V)$. It follows directly from Definition \ref{def:LpV} of $L^p(V)$ that $Vf\in L^p(\fX)$ and $\Norm{Vf}{L^p(\fX)}=\Norm{f}{L^p(V)}$. Hence $V:L^p(V)\to L^p(\fX)$ is an isometry.

Conversely, let $g\in L^p(\fX)\subseteq L^0(\fX)$. Since $V^{-1}\in L^0_{\operatorname{so}}(\bddlin(\fX))$ by Definition \ref{def:weight}\eqref{it:invL0} of a weight, it follows from Lemma \ref{lem:prod} that $f:=V^{-1}g\in L^0(\fX)$. This function satisfies $Vf=VV^{-1}g=g\in L^p(\fX)$ by our assumption on $g$, and hence $f\in L^p(V)$ by Definition \ref{def:LpV} of $L^p(V)$. Moreover, $\Norm{f}{L^p(V)}:=\Norm{Vf}{L^p(\fX)}=\Norm{g}{L^p(\fX)}$, which shows that $V^{-1}:L^p(\fX)\to L^p(V)$ is also an isometry.

It is evident that both $VV^{-1}$ and $V^{-1}V$ are identities, showing that the said isometries are isomorphisms between the spaces in question. This completes the proof of \eqref{it:LpVisomLp}. Then \eqref{it:LpVBanach} follows from the corresponding well-known properties of the unweighted $L^p(\fX)$ spaces via the established isomorphism.

\eqref{it:LpVdense}: Let $f=\one_E\fx$, where $E$ is measurable and boundedly supported and $\fx\in\fX$. Then $Vf=\one_EV\fx\in L^p(\fX)$ by the assumption that $V$ is a $p$-weight. The claimed containment ``$\subseteq$'' then follows by linearity.

For density, consider some $f\in L^p(V)\subseteq L^0(\fX)$. As a strongly measurable function, it has separable range, so indeed $f\in L^0(\fY)$ for some separable subspace $\fY\subseteq\fX$. Let $(\fy_k)_{k=1}^\infty$ be a dense sequence in $\fY$. Then each $V\fy_k\in L^0(\fX)$ also has a separable range, and hence all of these countably many functions take values in some common separable subspace $\fZ\subseteq\fX$. By choosing a larger $\fZ$ if necessary, we may assume without loss of generality that $\fY\subseteq\fZ\subseteq\fX$. For all $\fy\in\fY$, the function $V\fy$ is the pointwise limit of some $V\fy_k$, so they also take values in $\fZ$. By linearity, the same is true for $Vh$ for all simple $h\in L^0(\fY)$ and, by another limit,
\begin{equation}\label{eq:VhL0Z}
  Vh\in L^0(\fZ)\qquad\forall\ h\in L^0(\fY).
\end{equation}

Since $\fZ$ is separable, and we can choose a countable dense sequence $(\fz_k)_{k=1}^\infty$ in the unit sphere of $\fZ$. For all $A\in\bddlin(\fX)$, it then follows that
\begin{equation*}
  \Norm{A|_{\fZ}}{}
  :=\sup\{ \Norm{A\fz}{\fZ}:\fz\in\fZ,\Norm{\fz}{\fX}\leq 1\}
  =\sup_{k\geq 1}\Norm{A\fz_k}{\fZ}.
\end{equation*}
Thus, both
\begin{equation*}
  \Norm{V(\cdot)|_{\fZ}}{}=\sup_{k\geq 1}\Norm{V(\cdot)\fz_k}{\fX},\qquad
  \Norm{V(\cdot)^{-1}|_{\fZ}}{}=\sup_{k\geq 1}\Norm{V(\cdot)^{-1}\fz_k}{\fX}
\end{equation*}
are measurable real-valued functions. In particular, they are finite almost everywhere.
Hence the sets
\begin{equation*}
  E_\lambda:=\{x\in\R^n: \abs{x}\leq\lambda,\Norm{V(x)|_{\fZ}}{}\leq\lambda,\Norm{V(x)^{-1}|_{\fZ}}{}\leq\lambda\}
\end{equation*}
are measurable, and $\one_{E_\lambda}(x)\uparrow 1$ almost everywhere as $\lambda\to\infty$.

From dominated convergence, it follows that $\one_{E_\lambda}f\to f$ in $L^p(V)$; indeed, this is equivalent to $\one_{E_\lambda}Vf\to Vf$ in $L^p(\fX)$, which follows from the usual dominated convergence in $L^p(\fX)$. Given $\eps>0$, we choose $\lambda$ large enough so that $g:=\one_{E_\lambda} f$ satisfies
\begin{equation}\label{eq:approx1}
   \Norm{f-g}{L^p(V)}<\eps.
\end{equation}

For all functions of the form $h=\one_{E_\lambda}h\in L^0(\fY)$, we observe that
\begin{equation*}
  \Norm{h}{L^p(V)}=
  \Norm{Vh}{L^p(\fX)}
  \leq\lambda\Norm{h}{L^p(\fX)},
\end{equation*}
since $h(x)\in\fY\subseteq\fZ$ and $\Norm{V(x)|_{\fZ}}{}\leq\lambda$ for $x\in E_\lambda$, and that
\begin{equation*}
  \Norm{h}{L^p(\fX)}
  =\Norm{V^{-1}Vh}{L^p(\fX)}
  \leq\lambda\Norm{Vh}{L^p(\fX)}
  =\lambda\Norm{h}{L^p(V)},
\end{equation*}
since $(Vh)(x)\in\fZ$ for $h\in L^0(\fY)$ by \eqref{eq:VhL0Z} and since $\Norm{V(x)^{-1}|_{\fZ}}{}\leq\lambda$ for $x\in E_\lambda$.

Summarising the last two estimates, we have
\begin{equation}\label{eq:LpVsimLp}
  \lambda^{-1}\Norm{h}{L^p(V)}\leq\Norm{h}{L^p(\fX)}\leq \lambda\Norm{h}{L^p(V)}\qquad
  \forall\ h=\one_{E_\lambda}h\in L^0(\fY).
\end{equation}
Applying this estimate to $g:=\one_{E_\lambda}f\in L^p(V)\cap L^0(\fY)$ in place of $h$, we see that $g\in L^p(\fY)$. By the density of simple functions in this unweighted space we can find a simple $s\in L^p(\fY)$ with $\Norm{g-s}{L^p(\fY)}<\eps/\lambda$. Replacing $s$ by $\one_{E_\lambda}s$ is necessary, we can assume that $s=\one_{E_\lambda}s$.

Then we apply \eqref{eq:LpVsimLp} to $h=g-s$ to see that
\begin{equation}\label{eq:approx2}
  \Norm{g-s}{L^p(V)}\leq\lambda\Norm{g-s}{L^p}<\lambda\frac{\eps}{\lambda}=\eps.
\end{equation}
Combining \eqref{eq:approx1} and \eqref{eq:approx2}, for any given $f\in L^p(V)$ and $\eps>0$, we have found a simple, boundedly supported $s\in L^0(\fX)$ such that $\Norm{f-s}{L^p(V)}\lesssim \eps$, which proves the claimed density.
\end{proof}

\section{Operator-valued $A_p$ weights}

Having discussed operator-valued weights at large, we next investigate more specific assumptions to be imposed on these weights in analogy with the Muckenhoupt $A_p$ conditions that are well studied for scalar and matrix-valued weights. Our setting is modelled after that of \cite{NT:hunt,Volberg:97}, which is phrased in terms of certain integral (quasi-)norms induced by the weights. We start with some generalities concerning these objects:

\begin{definition}
Let $\Omega$ be a measure space, $\fX$ be a Banach space, $u\in(0,\infty]$, and $V\in L_{\so}^u(\Omega;\bddlin(\fX))$. Then, for $\fx\in\fX$, we define the \emph{quasi-norm}
\begin{equation*}
  \rho_{L^u(\Omega,V)}(\fx):=\Norm{V(\cdot)\fx}{L^u(\Omega;\fX)}.
\end{equation*}
\end{definition}

If, in addition, $V$ is invertible almost everywhere and
$$V^{-*}\in L_{\so}^v(\Omega;\bddlin(\fX^*)),$$ we can apply
the same definition to $\fx^*\in\fX^*$ to obtain a \emph{quasi-norm}
\begin{equation*}
  \rho_{L^v(\Omega,V^{-*})}(\fx^*):=\Norm{V(\cdot)^{-*}\fx^*}{L^v(\Omega;\fX^*)}.
\end{equation*}

\begin{lemma}\label{lem:uvHolder}
If $\Omega$ is a probability space, then for all $u,v\in(0,\infty]$, we have
\begin{equation}\label{eq:uvHolder}
  \abs{\pair{\fx^*}{\fx}}
   \leq\rho_{L^{u}(\Omega,V)}(\fx)\rho_{L^{v}(\Omega,V^{-*})}(\fx^*).
\end{equation}
\end{lemma}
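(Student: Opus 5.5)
The idea is to insert $V(\omega)^{-1}V(\omega)$ pointwise and then apply H\"older's inequality with exponents small enough to reach every $u,v\in(0,\infty]$. Since $V(\omega)$ is invertible for almost every $\omega\in\Omega$, at each such point we can write
\begin{equation*}
  \pair{\fx^*}{\fx}=\pair{\fx^*}{V(\omega)^{-1}V(\omega)\fx}=\pair{V(\omega)^{-*}\fx^*}{V(\omega)\fx},
\end{equation*}
using the definition of the adjoint and the remark that $V^{-*}=(V^{-1})^*$. Hence, for almost every $\omega$,
\begin{equation*}
  \abs{\pair{\fx^*}{\fx}}\leq \Norm{V(\omega)^{-*}\fx^*}{\fX^*}\Norm{V(\omega)\fx}{\fX}.
\end{equation*}
The left-hand side is a constant independent of $\omega$. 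Both factors on the right are measurable scalar functions of $\omega$ (for the second factor this holds by the strong measurability in the definitions).

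Next we average the inequality over the probability space with a small power. Choose $r\in(0,\infty]$ by $\frac1r=\frac1u+\frac1v$. Raising to the power $r$ and integrating over $\Omega$ gives, since $\Omega$ has total mass one,
\begin{equation*}
  \abs{\pair{\fx^*}{\fx}}=\Big(\int_\Omega \abs{\pair{\fx^*}{\fx}}^r\,d\omega\Big)^{\frac1r}
  \leq \BNorm{\Norm{V(\cdot)^{-*}\fx^*}{\fX^*}\Norm{V(\cdot)\fx}{\fX}}{L^r(\Omega)}.
\end{equation*}
When $u=v=\infty$ we have $r=\infty$, and the estimate is simply the essential supremum of the pointwise bound.

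Finally, we apply the generalized H\"older inequality $\Norm{fg}{L^r}\leq\Norm{f}{L^u}\Norm{g}{L^v}$ with $\frac1r=\frac1u+\frac1v$. This holds for all exponents in $(0,\infty]$, including the quasi-normed range, because it reduces to the ordinary H\"older inequality applied to $\abs f^r,\abs g^r$ with exponents $u/r$ and $v/r$, which are conjugate. The right-hand side is exactly $\rho_{L^{u}(\Omega,V)}(\fx)\rho_{L^{v}(\Omega,V^{-*})}(\fx^*)$.

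There is no genuine obstacle here. The only points needing care are that $u,v$ are not assumed conjugate, which is why we pass through the auxiliary exponent $r$ rather than using the plain H\"older inequality, and that the quantities may be infinite, in which case the inequality holds trivially.
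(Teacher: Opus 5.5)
Your proof is correct and follows the paper's argument: the pointwise identity $\pair{\fx^*}{\fx}=\pair{V(\omega)^{-*}\fx^*}{V(\omega)\fx}$, then raising to the power $r=\frac{uv}{u+v}$ and applying H\"older with the conjugate exponents $u/r$ and $v/r$. This is exactly the paper's choice of $s$ and $q$. The only difference is cosmetic: the paper first reduces to finite $u,v$ using $\rho_{L^u}\leq\rho_{L^\infty}$ on a probability space, whereas you allow infinite exponents directly in the generalized H\"older inequality.
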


\begin{proof}
Since $\rho_{L^{u}(\Omega,V)}(\fx)\leq\rho_{L^{\infty}(\Omega,V)}(\fx)$ by H\"older's inequality and similarly for the second term on the right, it suffices to consider $u,v\in(0,\infty)$.
At all points of invertibility of $V$, we have
\begin{equation*}
  \abs{\pair{\fx^*}{\fx}}
  =\abs{\pair{V(\omega)^{-*}\fx^*}{V(\omega)\fx}}.
\end{equation*}
Raising this to power $s\in(0,\infty)$, integrating over $\omega\in\Omega$ (minus a possible null set, where $V$ fails to be invertible, but this has no effect on the result), and applying H\"older's inequality with exponent $q\in(1,\infty)$, it follows that
\begin{equation}\label{eq:sqHolder2}
   \abs{\pair{\fx^*}{\fx}}
   \leq\rho_{L^{sq}(\Omega,V)}(\fx)\rho_{L^{sq'}(\Omega,V^{-*})}(\fx^*).
\end{equation}
Given $u,v\in(0,\infty)$, take $s:=\frac{uv}{u+v}\in(0,\infty)$ and $q:=\frac{u+v}{v}\in(1,\infty)$ so that $q'=\frac{u+v}{u}$. Then $sq=u$ and $sq'=v$, so that \eqref{eq:sqHolder2} reduces to the claimed~\eqref{eq:uvHolder}.
\end{proof}

For any quasi-norm $\rho$ on $\fX$, we define the \emph{dual norm} on $\fX^*$ by
\begin{equation}\label{eq:dualDef}
  \rho^*(\fx^*):=\sup_{\fx\in\fX\setminus\{0\}}\frac{\abs{\pair{\fx^*}{\fx}}}{\rho(\fx)}.
\end{equation}

\begin{remark}
The ``dual norm'' $\rho^*$ is indeed a norm, even if the original $\rho$ is just a quasi-norm. This is immediate from the defining formula.
\end{remark}

Likewise, for a quasi-norm on $\fX^*$, its dual norm is defined on $\fX^{**}$, but can in particular be restricted to $\fX\subseteq\fX^{**}$ via the canonical embedding.

Directly from \eqref{eq:uvHolder} and \eqref{eq:dualDef}, it follows:

\begin{corollary}\label{cor:easyAp}
For all $u,v\in(0,\infty]$,
\begin{equation*}
\begin{split}
    \rho_{L^u(\Omega,V)}^*(\fx^*) &\leq\rho_{L^{v}(\Omega,V^{-*})}(\fx^*),\qquad\fx^*\in\fX^*, \\
    \rho_{L^{v}(\Omega,V^{-*})}^*(\fx) &\leq\rho_{L^u(\Omega,V)}(\fx),\qquad\fx\in\fX.
\end{split}
\end{equation*}
\end{corollary}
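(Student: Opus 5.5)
Both inequalities follow by unwinding the definition \eqref{eq:dualDef} of the dual norm and inserting the H\"older-type bound \eqref{eq:uvHolder} of Lemma \ref{lem:uvHolder}. Throughout, $\Omega$ is the probability space of that lemma, and $V$ satisfies its hypotheses: $V\in L^u_{\so}$, $V$ is a.e.\ invertible, and $V^{-*}\in L^v_{\so}$. These hypotheses make both quasi-norms finite, by Lemma \ref{lem:LpVso}.

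For the first inequality, fix $\fx^*\in\fX^*$ and take any $\fx\in\fX\setminus\{0\}$. Lemma \ref{lem:equifx}, applied with $F=V$, shows that $\rho_{L^u(\Omega,V)}(\fx)>0$, so the quotient below is well defined. Dividing \eqref{eq:uvHolder} by $\rho_{L^u(\Omega,V)}(\fx)$ gives
\begin{equation*}
  \frac{\abs{\pair{\fx^*}{\fx}}}{\rho_{L^u(\Omega,V)}(\fx)}\leq \rho_{L^v(\Omega,V^{-*})}(\fx^*).
\end{equation*}
Taking the supremum over $\fx$ yields $\rho^*_{L^u(\Omega,V)}(\fx^*)\leq\rho_{L^v(\Omega,V^{-*})}(\fx^*)$.

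For the second inequality, the dual norm of the quasi-norm $\rho_{L^v(\Omega,V^{-*})}$ on $\fX^*$ lives on $\fX^{**}$, and we restrict it to $\fX$ via the canonical embedding. So for $\fx\in\fX$ we have
\begin{equation*}
  \rho^*_{L^v(\Omega,V^{-*})}(\fx)=\sup_{\fx^*\in\fX^*\setminus\{0\}}\frac{\abs{\pair{\fx^*}{\fx}}}{\rho_{L^v(\Omega,V^{-*})}(\fx^*)}.
\end{equation*}
Lemma \ref{lem:equifx} again gives nonvanishing of the denominator, now applied to $F=V^{-*}$ on $\fX^*$, which is invertible a.e.\ with inverse $V^*$. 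Dividing \eqref{eq:uvHolder} by $\rho_{L^v(\Omega,V^{-*})}(\fx^*)$ and taking the supremum over $\fx^*$ gives $\rho^*_{L^v(\Omega,V^{-*})}(\fx)\leq\rho_{L^u(\Omega,V)}(\fx)$.

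The only point needing any care is the possible vanishing of the denominators. If one prefers not to rely on Lemma \ref{lem:equifx} for this, the issue can be handled directly from \eqref{eq:uvHolder}. Suppose $\rho_{L^u(\Omega,V)}(\fx)=0$. Then \eqref{eq:uvHolder} forces $\pair{\fx^*}{\fx}=0$ for every $\fx^*$, because the right-hand factor $\rho_{L^v(\Omega,V^{-*})}(\fx^*)$ is finite. By Hahn--Banach this gives $\fx=0$, so such $\fx$ never appears in the supremum. The same reasoning handles the denominator in the second inequality, where the conclusion is $\fx^*=0$. Beyond this, the argument is purely formal.
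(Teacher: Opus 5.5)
Your proposal is correct and matches the paper, which derives the corollary directly from \eqref{eq:uvHolder} and the definition \eqref{eq:dualDef} of the dual norm, exactly as you do. Your extra check that the denominators cannot vanish, either via Lemma~\ref{lem:equifx} or directly from \eqref{eq:uvHolder} plus Hahn--Banach, is a harmless refinement that the paper leaves implicit.
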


When $\Omega=Q\subset\R^n$ is a cube equipped with the normalised measure $dx/\abs{Q}$, we denote $L^p(\Omega,V)=\aveL^p(Q,V)$. In this situation, it is of interest to be able to reverse the inequalities of Corollary \ref{cor:easyAp}, which leads to:

\begin{definition}\label{def:Ap}
Let $p\in(0,\infty)$ and $\fX$ be a Banach space. Let $V:\R^n\to\bddlin(\fX)$.
We say that
\begin{equation*}
   V\in\A^p(\R^n;\bddlin(\fX)),
\end{equation*}
or that $V$ satisfies the $\A_p$ condition if $V$ is a $p$-weight and the following condition holds with some finite constant $C$ and for all cubes $Q\subset\R^n$:
\begin{enumerate}[\rm(i)]
  \item\label{it:Ap>1} if $p\in(1,\infty)$, then also $V^{-*}$ is a $p'$-weight and
\begin{equation*}
  \rho_{\aveL^{p'}(Q,V^{-*})}(\fx^*)\leq C\cdot\rho_{\aveL^{p}(Q,V)}^*(\fx^*)\qquad\text{for all }\fx^*\in\fX^*,
\end{equation*}
  \item\label{it:Ap<1} if $p\in(0,1]$, then
\begin{equation*}
  \esssup_{y\in Q}\rho_{\aveL^p(Q,V)}(V(y)^{-1}\fx)\leq C\Norm{\fx}{}\quad\text{for all }\fx\in\fX.
\end{equation*}
\end{enumerate}
In either case, we denote the smallest such $C$ by $[V]_{\A_p}$.
\end{definition}

\begin{remark}\label{rem:defAp}
For $p\in(1,\infty)$, Definition \ref{def:Ap}\eqref{it:Ap>1} agrees with the definition of the matrix $A_p$ condition in both \cite[Eq.~$(\mathcal A_{p,q})$ on p.~21]{NT:hunt} and \cite[Definition on p.~450]{Volberg:97}. For $p\in(0,1]$, Definition \ref{def:Ap}\eqref{it:Ap<1} is a natural extension of \cite[Eq.~(1.2)]{FR:04}. Literally, their defining condition reads as
\begin{equation}\label{eq:FRorig}
  \esssup_{y\in Q}\Big(\fint_Q\Norm{V(x)V(y)^{-1}}{\bddlin(\fX)}^p \,dx\Big)^{\frac1p}\lesssim 1,
\end{equation}
where $\fX=\C^m$. However, in this finite-dimensional case, we have the norm equivalence $$\Norm{A}{\bddlin(\C^m)}\approx\sum_{i=1}^m\Norm{A\fy_k}{\C^m},$$ where $(\fy_k)_{k=1}^m$ is a fixed orthonormal basis, from which it easily follows that \eqref{eq:FRorig} is equivalent to
\begin{equation}\label{eq:FRequiv}
    \esssup_{y\in Q}\Big(\fint_Q\Norm{V(x)V(y)^{-1}\fx}{\fX}^p \,dx\Big)^{\frac1p}\lesssim\Norm{\fx}{\fX}
\end{equation}
for all $\fx\in\fX=\C^m$. And this is precisely Definition \ref{def:Ap}\eqref{it:Ap<1}. Thus, our definition is obtained by writing a condition equivalent to \cite[Eq.~(1.2)]{FR:04} in $\C^m$, and then extending this equivalent definition to general $\fX$. Note that the difference of \eqref{eq:FRorig} and \eqref{eq:FRequiv} is similar to various other situations encountered in harmonic analysis of operator-valued functions, where useful conditions often involve integral norms of the operator-valued function evaluated at a fixed vector $\fx$ as in \eqref{eq:FRequiv}, instead of operator norms as in \eqref{eq:FRorig}; cf.\ \cite[Definition 11.2.1 and the Notes on page 71]{HNVW3}.
\end{remark}

We next present some equivalent forms and consequences of the $\A_p$ conditions. For the case $p\leq 1$, the following lemma will be useful:

\begin{lemma}\label{lem:GH}
Let $\rho$ be a $p$-norm on $\fX$ with $p\in(0,\infty)$ and let $G,H\in L^0_{\operatorname{so}}(\R^n;\bddlin(\fX))$. Let $Q\subseteq\R^n$ be measurable.
If for every $\fx\in\fX$, we have
\begin{equation}\label{eq:GHe}
  \rho(G(y)\fx)\leq C\Norm{H(y)\fx}{}\quad\text{for a.e. }y\in Q,
\end{equation}
then for every $f\in L^0(\R^n;\fX)$, we have
\begin{equation}\label{eq:GHf}
  \rho(G(y)f(y))\leq C\Norm{H(y)f(y)}{}\quad\text{for a.e. }y\in Q.
\end{equation}
\end{lemma}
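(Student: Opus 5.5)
The plan is the same pattern as in the proof of Lemma~\ref{lem:prod}: pass from constant vectors to simple functions, then to general strongly measurable functions by pointwise limits. The one subtlety is that \eqref{eq:GHe} holds only off an exceptional null set that depends on $\fx$. We therefore work with only countably many vectors and use continuity to reach the rest.

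First, $f$ is strongly measurable, so it is the pointwise limit of simple functions $f_k=\sum_{i}\fx_{k,i}\one_{A_{k,i}}$. The countable family of vectors $\{\fx_{k,i}\}_{k,i}$ has a union $N$ of exceptional null sets from \eqref{eq:GHe}, and $N$ is still null. For $y\in Q\setminus N$, we have $f_k(y)=\fx_{k,i}$ for the unique $i$ with $y\in A_{k,i}$ (or $f_k(y)=0$, which is trivial). Applying \eqref{eq:GHe} to that single vector gives
\[
\rho(G(y)f_k(y))\leq C\Norm{H(y)f_k(y)}{}
\]
for all $k$ and all $y\in Q\setminus N$. No linearity of \eqref{eq:GHe} is needed here, because a simple function takes a single value at each point.

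Next, we let $k\to\infty$ at a fixed $y\in Q\setminus N$. Since $G(y),H(y)\in\bddlin(\fX)$ and $f_k(y)\to f(y)$ in $\fX$, we get $H(y)f_k(y)\to H(y)f(y)$ and $G(y)f_k(y)\to G(y)f(y)$ in norm, so the right-hand side converges. For the left-hand side we need $\rho$ to be continuous, or at least lower semicontinuous, with respect to norm convergence. This is the main point to check.

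The bare hypothesis that $\rho$ is a $p$-norm gives, via $\abs{\rho(\fy)^p-\rho(\fz)^p}\leq\rho(\fy-\fz)^p$, that $\rho$ is continuous in its own topology. That suffices once $\rho\lesssim\Norm{\cdot}{}$, which holds in the intended applications: there $\rho=\rho_{\aveL^p(Q,V)}$, which is dominated by the norm by Lemma~\ref{lem:LpVso}. I would state this as the implicit standing assumption, that $\rho$ is a $p$-norm bounded by a multiple of $\Norm{\cdot}{}$. With it, $\rho(G(y)f_k(y))\to\rho(G(y)f(y))$, and \eqref{eq:GHf} holds for all $y\in Q\setminus N$.

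If one prefers to avoid that assumption in the concrete case $\rho=\rho_{\aveL^p(Q,V)}$, Fatou's lemma applied to $\int\Norm{V(x)G(y)f_k(y)}{}^p\,dx$ gives lower semicontinuity directly. Lower semicontinuity is all the limit passage requires, since only the left-hand side has to be controlled from below by its limit.
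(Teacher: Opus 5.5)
Your argument is correct and is essentially the paper's. The paper also reduces to countably many vectors, using a dense sequence in the separable range of $f$ where you use the values of the simple approximants. It then takes the union of the exceptional null sets and passes to the limit by continuity of $\fx\mapsto\rho(G(y)\fx)$ and $\fx\mapsto\Norm{H(y)\fx}{}$.

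The paper asserts that continuity without comment. Your remark that it needs $\rho\lesssim\Norm{\cdot}{}$ (true for $\rho_{\aveL^p(Q,V)}$ by Lemma~\ref{lem:LpVso}), or at least lower semicontinuity via Fatou, is a fair clarification rather than a gap in your proof.
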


\begin{proof}
A strongly measurable function has a separable range by \cite[Theorem 1.1.6]{HNVW1}. Hence, we can pick a sequence $(\fx_n)_{n=1}^\infty$ that is dense in $\{f(y):y\in Q\}$. By the assumed condition \eqref{eq:GHe}, we have
\begin{equation*}
  \rho(G(y)\fx_n)\leq C\Norm{H(y)\fx_n}{}\quad\text{for all $n$ and all }y\in Q\setminus N_n,
\end{equation*}
where $N_n$ has measure zero. In particular, we have
\begin{equation*}
  \rho(G(y)\fx_n)\leq C\Norm{H(y)\fx_n}{}\quad\text{for all $n$ and all }y\in Q\setminus N,
\end{equation*}
where $N:=\bigcup_{n=1}^\infty N_n$ also has measure zero. For each $y\in Q\setminus N$, we can choose a subsequence of $\fx_n$ such that $\fx_n\to f(y)$. Since both $\fx\mapsto\rho(G(y)\fx)$ and $\fx\mapsto\Norm{H(y)\fx}{}$ are continuous, it follows that
\begin{equation*}
  \rho(G(y)f(y))\leq C\Norm{H(y)f(y)}{}\quad\text{for all }y\in Q\setminus N.
\end{equation*}
This is exactly the claim \eqref{eq:GHf}
\end{proof}

\begin{proposition}\label{prop:Ap<1}
Let $p\in(0,1]$ and $V:\R^n\to\bddlin(\fX)$ be a $p$-weight. Then the following conditions, each to hold for some finite constant $C$ and all cubes $Q\subset\R^n$, are equivalent:
\begin{enumerate}[\rm(i)]
  \item\label{it:Ap<1sup} $V\in\A_p$ in the sense of Definition \ref{def:Ap}\eqref{it:Ap<1}, i.e., for all $\fx\in \fX$, we have
\begin{equation*}
   \esssup_{y\in Q}\rho_{\aveL^p(Q,V)}(V(y)^{-1}\fx)\leq C\Norm{\fx}{\fX}.
\end{equation*}
  \item\label{it:Ap<1inf} For all $\fx\in \fX$, we have
\begin{equation*}
   \rho_{\aveL^p(Q,V)}(\fx)\leq C\essinf_{y\in Q}\Norm{V(y)\fx}{\fX}.
\end{equation*}
  \item\label{it:Ap<1,f(y)} For all $f\in L^0(\R^n;\fX)$, we have
\begin{equation*}
   \rho_{\aveL^p(Q,V)}(f(y))\leq C\Norm{V(y)f(y)}{\fX}\quad\text{for a.e. } y\in Q.
\end{equation*}
\end{enumerate}
The minimal $C$ in all these conditions are equal to each other and to $[V]_{\A_p}$.
\end{proposition}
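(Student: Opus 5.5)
The plan is to prove the cycle of implications (i) $\Rightarrow$ (iii) $\Rightarrow$ (ii) $\Rightarrow$ (i), and to check at each step that the constant $C$ carries over unchanged. That gives both the equivalence and the equality of the minimal constants, and the common value is $[V]_{\A_p}$ by Definition \ref{def:Ap}\eqref{it:Ap<1}. Throughout, $\rho:=\rho_{\aveL^p(Q,V)}$. For $p\in(0,1]$ this is a $p$-norm, and it is continuous on $\fX$ by Lemma \ref{lem:LpVso}, since $\one_Q V\in L^p_{\so}$ because $V$ is a $p$-weight. This continuity is what Lemma \ref{lem:GH} needs.

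\emph{(i) $\Rightarrow$ (iii).} Fix $\fx\in\fX$ and put $\fy:=V(y)\fx$. Condition (i) holds for every vector, but the exceptional null set depends on the vector, so we cannot simply substitute $V(y)\fx$ for each $y$. I will instead rewrite (i) in the form required by Lemma \ref{lem:GH}, with $G=I$ and $H=V$. The claim is that (i) implies $\rho(\fx)\le C\Norm{V(y)\fx}{}$ for a.e.\ $y\in Q$, for each fixed $\fx$. To show this, take a countable dense set $(\fz_k)$ in the separable range of $V(\cdot)\fx$ on $Q$. Condition (i) gives $\rho(V(y)^{-1}\fz_k)\le C\Norm{\fz_k}{}$ for all $k$ and all $y$ outside a common null set $N$.

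Applying this with $V(y)^{-1}\fz_k\to V(y)^{-1}V(y)\fx=\fx$ along $\fz_k\to V(y)\fx$ uses continuity of $V(y)^{-1}$ and of $\rho$, and yields $\rho(\fx)\le C\Norm{V(y)\fx}{}$ for $y\in Q\setminus N$. Here $V(y)^{-1}$ is bounded by invertibility, via the bounded inverse theorem. Lemma \ref{lem:GH} with $G=I$, $H=V$ and $f$ arbitrary then gives (iii) with the same constant $C$.

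\emph{(iii) $\Rightarrow$ (ii).} Apply (iii) to the constant function $f\equiv\fx$. This gives $\rho(\fx)\le C\Norm{V(y)\fx}{}$ for a.e.\ $y\in Q$, and that is exactly the essential infimum bound in (ii).

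\emph{(ii) $\Rightarrow$ (i).} Condition (ii) says, for each $\fx$, that $\rho(\fx)\le C\Norm{V(y)\fx}{}$ for a.e.\ $y\in Q$. By Lemma \ref{lem:GH}, again with $G=I$ and $H=V$, this extends to $\rho(f(y))\le C\Norm{V(y)f(y)}{}$ a.e.\ for every $f\in L^0(\R^n;\fX)$. Choose $f:=V^{-1}\fx$, which is strongly measurable because $V^{-1}\in L^0_{\so}$ by Definition \ref{def:weight}\eqref{it:invL0}. Then $Vf=\fx$, so $\rho(V(y)^{-1}\fx)\le C\Norm{\fx}{}$ for a.e.\ $y\in Q$, which is (i).

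The one delicate point is the measure-theoretic handling of vector-dependent null sets in (i) $\Rightarrow$ (iii). This is resolved by the separability and density argument above, which is essentially the proof of Lemma \ref{lem:GH} adapted to the substitution $\fx\mapsto V(y)\fx$. It relies on the continuity of $\rho$ and on the pointwise boundedness of $V(y)^{-1}$.
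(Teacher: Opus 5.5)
Your proof is correct and follows the paper's argument. Like you, the paper uses Lemma \ref{lem:GH} with $G=I$, $H=V$ for (ii)$\Rightarrow$(iii), constant functions for (iii)$\Rightarrow$(ii), and the choice $f=V^{-1}\fx$ for (iii)$\Rightarrow$(i). Your density argument for (i)$\Rightarrow$(iii) is just an instance of Lemma \ref{lem:GH} with $G=V^{-1}$, $H=I$ applied to the function $Vf\in L^0(\R^n;\fX)$ (Lemma \ref{lem:prod}), which is how the paper obtains (iii) from (i) in one step.
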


\begin{proof}
We apply Lemma \ref{lem:GH} with the $p$-norm $\rho=\rho_{\aveL^p(Q,V)}$ and different choices of $G$ and $H$:

\eqref{it:Ap<1inf} $\Rightarrow$ \eqref{it:Ap<1,f(y)}: The condition in \eqref{it:Ap<1inf} is \eqref{eq:GHe} with $G=I$ and $H=V$. The condition in \eqref{it:Ap<1,f(y)} is \eqref{eq:GHf} with these same $G$ and $H$. Hence the implication follows from Lemma \ref{lem:GH} with these choices.

\eqref{it:Ap<1,f(y)} $\Rightarrow$ \eqref{it:Ap<1inf}: This is immediate by considering all constant functions $f\equiv\fx$.

\eqref{it:Ap<1sup} $\Rightarrow$ \eqref{it:Ap<1,f(y)}: The condition in \eqref{it:Ap<1sup} is \eqref{eq:GHe} with $G=V^{-1}$ (which belongs to $L^0_{\operatorname{so}}(\R^n;\bddlin(\fX))$, since $V$ is assumed to be a weight) and $H=I$, the identity. On the other hand, the condition in \eqref{it:Ap<1,f(y)} is \eqref{eq:GHf} with these same $G$ and $H$, and $Vf\in L^0(\R^n;\fX)$ (by Lemma \ref{lem:prod}) in place of $f$. Hence the implication follows from Lemma \ref{lem:GH} with these choices.

\eqref{it:Ap<1,f(y)} $\Rightarrow$ \eqref{it:Ap<1sup}: This follows by considering all functions of the form $f=V(\cdot)^{-1}\fx$ (which belong to $L^0(\R^n;\fX)$, since $V$ is assumed to be a weight).
\end{proof}

As explained in Remark \ref{rem:defAp}, motivated by the earlier definitions for matrix weights, we have stated Definition \ref{def:Ap} in terms of norms of $\fx^*\in\fX^*$ for $p\in(1,\infty)$, and (quasi)norms of $\fx\in\fX$ for $p\in(0,1]$. However, we next show that the case $p\in(1,\infty)$ can also be equivalently expressed in terms of norms of $\fx\in\fX$. The following result is an extension of \cite[Proposition 1.2(2)]{Volberg:97}, where the case $\fX=\C^m\simeq\fX^*$ is treated. The extension is not entirely straightforward, owing some subtleties of the duality of $L^p$ spaces of Banach space-valued functions.

\begin{proposition}\label{prop:Ap}
Let $p\in(1,\infty)$, and let $V:\R^n\to\bddlin(\fX)$ be a $p$-weight such that $V^{-*}$ is a $p'$-weight. Then the following conditions, each to hold with some finite constant $C$ and all cubes $Q\subset\R^n$, are equivalent:
\begin{enumerate}[\rm(i)]
  \item\label{it:Ap>1orig} $V\in\A_p$ in the sense of Definition \ref{def:Ap}\eqref{it:Ap>1}, i.e., for all $\fx^*\in\fX^*$,
\begin{equation*}
  \rho_{\aveL^{p'}(Q,V^{-*})}(\fx^*)\leq C\cdot\rho_{\aveL^{p}(Q,V)}^*(\fx^*).
\end{equation*}
  \item\label{it:Ap>1new}  For all $\fx\in\fX$,
\begin{equation*}
   \rho_{\aveL^p(Q,V)}(\fx)\leq C\cdot\rho_{\aveL^{p'}(Q,V^{-*})}^*(\fx).
\end{equation*}
\end{enumerate}
The best constants in both conditions coincide with $[V]_{\A_p}$.
\end{proposition}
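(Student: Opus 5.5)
The idea is to treat both conditions as dual statements about the two norms
\begin{equation*}
  \rho:=\rho_{\aveL^{p}(Q,V)}\ \text{on }\fX,\qquad \sigma:=\rho_{\aveL^{p'}(Q,V^{-*})}\ \text{on }\fX^*,
\end{equation*}
for a fixed cube $Q$. Since $p,p'\geq 1$, both are genuine norms. By Lemma \ref{lem:equifx}, $\rho$ is equivalent to $\Norm{\cdot}{\fX}$ with constants depending on $Q,V$, and the same holds for $\sigma$ on $\fX^*$. Condition \eqref{it:Ap>1orig} reads $\sigma\leq C\rho^*$, and condition \eqref{it:Ap>1new} reads $\rho\leq C\,\sigma^*|_{\fX}$. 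Throughout I will use the elementary monotonicity of duality: if $\alpha\leq C\beta$, then $\beta^*\leq C\alpha^*$. This is immediate from \eqref{eq:dualDef}.

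For \eqref{it:Ap>1orig}$\Rightarrow$\eqref{it:Ap>1new}, fix $\fx\in\fX$. From $\sigma\leq C\rho^*$ we get
\begin{equation*}
  \sigma^*(\fx)=\sup_{\fx^*\neq0}\frac{\abs{\pair{\fx^*}{\fx}}}{\sigma(\fx^*)}\geq \frac1C\sup_{\fx^*\neq0}\frac{\abs{\pair{\fx^*}{\fx}}}{\rho^*(\fx^*)}=\frac1C\rho^{**}(\fx).
\end{equation*}
Because $\rho$ is a norm equivalent to the original one, Hahn--Banach gives $\rho^{**}(\fx)=\rho(\fx)$. Hence $\rho\leq C\sigma^*$, with the same constant.

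For \eqref{it:Ap>1new}$\Rightarrow$\eqref{it:Ap>1orig}, the same computation turns $\rho\leq C\sigma^*|_{\fX}$ into $\rho^*(\fx^*)\geq C^{-1}(\sigma^*|_{\fX})^*(\fx^*)$. It therefore suffices to prove the bipolar-type identity
\begin{equation*}
  \sigma(\fx^*)=\sup_{\fx\in\fX\setminus\{0\}}\frac{\abs{\pair{\fx^*}{\fx}}}{\sigma^*(\fx)},\qquad \fx^*\in\fX^*.
\end{equation*}
The inequality ``$\geq$'' is trivial. By the bipolar theorem, ``$\leq$'' is equivalent to the unit ball of $\sigma$ being weak$^*$-closed, i.e.\ to $\sigma$ being weak$^*$ lower semicontinuous. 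This is the main obstacle, and it is exactly where $\fX\neq\fX^*$ matters.

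My plan is to represent $\sigma$ as a supremum of weak$^*$-continuous seminorms. Let $h=\sum_i\one_{A_i}\fx_i$ be simple with $A_i\subseteq Q$. Then
\begin{equation*}
  \fint_Q\pair{V(x)^{-*}\fx^*}{V(x)h(x)}\,dx=\Bpair{\fx^*}{\fint_Q h},
\end{equation*}
and $\fint_Q h\in\fX$, so the right-hand side is weak$^*$ continuous in $\fx^*$. (I deliberately pair against $Vh$ rather than a general simple $g$, because $V^{-1}g$ need not be integrable.) By Hölder, the ratio $\abs{\pair{\fx^*}{\fint_Q h}}/\Norm{Vh}{\aveL^p(Q;\fX)}$ is at most $\sigma(\fx^*)$. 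To get equality in the supremum over $h$, I will combine two facts:
\begin{enumerate}[\rm(a)]
  \item the norming property of $L^p(Q;\fX)$ for strongly measurable elements of $L^{p'}(Q;\fX^*)$ (the standard result, e.g.\ \cite[Proposition 1.3.1]{HNVW1}), applied to $V^{-*}\fx^*$, which is strongly measurable because $V^{-*}$ is a $p'$-weight;
  \item the density of $\{Vh: h\text{ simple, supported in }Q\}$ in $L^p(Q;\fX)$, which follows from the density statement in the proposition on basic properties of $L^p(V)$ together with the isometry $V:L^p(V)\to L^p(\fX)$.
\end{enumerate}
Taking $\fx:=\fint_Q h$, we have $\sigma^*(\fx)\leq\rho(\fx)$ by Corollary \ref{cor:easyAp}, but that bound is not directly what is needed. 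What the argument actually uses is that the supremum above makes $\sigma$ weak$^*$ lower semicontinuous, which then yields the displayed identity via the bipolar theorem.

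This gives $\sigma\leq C\rho^*$ with the same $C$. Since both implications preserve the constant, the optimal constants coincide, and by Definition \ref{def:Ap} they equal $[V]_{\A_p}$.
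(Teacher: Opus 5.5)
Your proposal is correct, and the core of the hard direction \eqref{it:Ap>1new}$\Rightarrow$\eqref{it:Ap>1orig} is the same idea as in the paper. You compute $\sigma(\fx^*)$ by testing only against functions $f$ for which $V^{-1}f$ is Bochner integrable (for you, $f=Vh$ with $h$ simple). The pairing then becomes $\pair{\fx^*}{\fx}$ with $\fx=\fint_Q V^{-1}f\in\fX$, rather than an element of $\fX^{**}$. The packaging differs in three places.

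First, the paper reaches this restricted supremum by truncating an arbitrary $f\in\aveL^p(Q;\fX)$ to the sets where $\Norm{V^{-1}f}{\fX}\le k$ and applying dominated convergence. You instead import the density of simple functions in $L^p(V)$; both are fine.

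Second, the paper concludes directly, while you go through weak$^*$ lower semicontinuity and the bipolar theorem. That detour is valid but unnecessary. The bound you were looking for is not $\sigma^*(\fx)\le\rho(\fx)$ but
\begin{equation*}
  \sigma^*\Big(\fint_Q h\Big)\le\Norm{Vh}{\aveL^p(Q;\fX)}.
\end{equation*}
This is immediate from $\abs{\pair{\fy^*}{\fint_Q h}}=\abs{\fint_Q\pair{V^{-*}\fy^*}{Vh}}\le\sigma(\fy^*)\Norm{Vh}{\aveL^p(Q;\fX)}$. It gives $\abs{\pair{\fx^*}{\fint_Q h}}\le(\sigma^*|_{\fX})^*(\fx^*)\Norm{Vh}{\aveL^p(Q;\fX)}$, and taking the supremum over $h$ in your representation of $\sigma$ yields $\sigma\le(\sigma^*|_{\fX})^*$ at once. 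That is exactly your bipolar-type identity, with no appeal to the bipolar theorem.

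Third, in the easy direction your identity $\rho^{**}|_{\fX}=\rho$ replaces the paper's explicit construction of a functional from a norming $g\in\aveL^{p'}(Q;\fX^*)$. Your version is slightly cleaner. It needs only $\rho\lesssim\Norm{\cdot}{\fX}$ from Lemma \ref{lem:LpVso}, which guarantees that the Hahn--Banach functional lies in $\fX^*$.

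What your formulation buys is a conceptual explanation of the asymmetry between the two conditions. The implication \eqref{it:Ap>1new}$\Rightarrow$\eqref{it:Ap>1orig} is precisely the weak$^*$ lower semicontinuity of $\sigma$. This is automatic when $\fX$ is reflexive, since $\sigma$ is norm-continuous and convex, but it has to be proved in general.
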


\begin{proof}
\eqref{it:Ap>1orig} $\Rightarrow$ \eqref{it:Ap>1new}:
By the elementary duality of vector-valued $L^p$ spaces,
\begin{equation*}
\begin{split}
  \rho_{\aveL^p(Q,V)}(\fx)
  &=\Norm{V(\cdot)\fx}{\aveL^p(Q;\fX)} \\
  &=\sup\Big\{\Babs{\fint_Q \pair{V(\cdot)\fx}{g}}; \Norm{g}{\aveL^{p'}(Q;\fX^*)}\leq 1\Big\}.
\end{split}
\end{equation*}
For any fixed $g$ as above, the mapping $\fx\mapsto
\fint_Q \pair{V(\cdot)\fx}{g}$
defines an element of $\fX^*$, which we denote by $\fx^*$.
Thus
\begin{equation*}
\begin{split}
  \Babs{\fint_Q \pair{V(\cdot)\fx}{g}}
  =\abs{\pair{\fx}{\fx^*}}
  &\leq \rho_{\aveL^{p'}(Q,V^{-*})}^*(\fx) \cdot\rho_{\aveL^{p'}(Q,V^{-*})}(\fx^*) \\
  &\leq\rho_{\aveL^{p'}(Q,V^{-*})}^*(\fx) \cdot [V]_{\A_p}\rho_{\aveL^{p}(Q,V)}^*(\fx^*)
\end{split}
\end{equation*}
by the assumed $\A_p$ property.
Applying the definition of dual norms and the specific choice of $\fx^*$,
we obtain, for any $\fy\in\fX$,
\begin{align*}
  \abs{\pair{\fy}{\fx^*}}
  =\Babs{\fint_Q\pair{V(\cdot)\fy}{g}}
  \leq\Norm{V(\cdot)\fy}{\aveL^p(Q;\fX)}\Norm{g}{\aveL^{p'}(Q;\fX^*)}\le
  \rho_{\aveL^p(Q,V)}(\fy),
\end{align*}
and hence $\rho_{\aveL^{p}(Q,V)}^*(\fx^*)\leq 1$,
which concludes the proof of \eqref{it:Ap>1new}.

\eqref{it:Ap>1new} $\Rightarrow$ \eqref{it:Ap>1orig}:
This argument follows the same broad outline, but some details are slightly trickier.
By duality, again,
\begin{equation}\label{eq:trickyDual}
\begin{split}
  \rho_{\aveL^{p'}(Q,V^{-*})}(\fx^*)
  &=\Norm{V(\cdot)^{-*}\fx^*}{\aveL^{p'}(Q;\fX^*)} \\
  &=\sup\Big\{\Babs{\fint_Q \pair{V(\cdot)^{-*}\fx^*}{f}}; \Norm{f}{\aveL^{p}(Q;\fX)}\leq 1\Big\},
\end{split}
\end{equation}
but we wish to show that the supremum can be restricted to a somewhat smaller set.

Recall that $V^{-1}\in L^0_{\operatorname{so}}(\R^n;\bddlin(\fX))$ (as part of the definition that $V$ is a weight). Hence, for $f\in L^p(Q;\fX)\subseteq L^0(Q;\fX)$, we have $V^{-1}f\in L^0(Q;\fX)$ by Lemma \ref{lem:prod}. In particular, the sets $E_k:=\{x\in Q:\Norm{V(x)f(x)}{\fX}\leq k\}$ are measurable and exhaust $Q$ as $k\to\infty$. Since $\pair{V(\cdot)^{-*}\fx^*}{f}\in L^1(Q)$ (as the pointwise pairing of $V(\cdot)^{-*}\fx^*\in L^{p'}(Q;\fX^*)$ and $f\in L^p(Q;\fX)$), dominated convergence guarantees that
\begin{equation}\label{eq:trickyLim}
  \fint_Q \pair{V(\cdot)^{-*}\fx^*}{f}
  =\lim_{k\to\infty}\fint_Q \pair{V(\cdot)^{-*}\fx^*}{\one_{E_k}f}.
\end{equation}
Like the original $f$, the truncated functions $f_k:=\one_{E_k}f$ satisfy $\Norm{f_k}{\aveL^{p}(Q;\fX)}\leq 1$, but also, by construction, the pointwise bound $\Norm{V^{-1}f_k}{\fX}\leq k$. We already observed that measurability $V^{-1}f\in L^0(\R^n;\fX)$, and hence $V^{-1}f_k\in L^0(\R^n;\fX)$. Combined with the pointwise bound, it follows that $V^{-1}f_k\in L^\infty(Q;\fX)\subseteq L^1(Q;\fX)$.

Thanks to the limit \eqref{eq:trickyLim}, the supremum in \eqref{eq:trickyDual} is still reached if we only consider the truncated $f_k$ in place of $f$. Denoting these simply by $f$ again, we see that
\begin{equation}\label{eq:trickyDual2}
\begin{split}
  &\rho_{\aveL^{p'}(Q,V^{-*})}(\fx^*) \\
&\quad=\sup\Big\{\Babs{\fint_Q \pair{V(\cdot)^{-*}\fx^*}{f}}; \Norm{f}{\aveL^{p}(Q;\fX)}\leq 1, V^{-1}f\in L^1(Q;\fX)\Big\}.
\end{split}
\end{equation}

For $f$ as in \eqref{eq:trickyDual2}, we can write
\begin{equation}\label{eq:trickyDual3}
  \fint_Q \pair{V(\cdot)^{-*}\fx^*}{f}
  =\fint_Q \pair{\fx^*}{V^{-1}f}=\Bpair{\fx^*}{\fint_Q V^{-1}f}=:\pair{\fx^*}{\fx},
\end{equation}
where the reduction from \eqref{eq:trickyDual} to \eqref{eq:trickyDual2} was needed to guarantee the existence of the integral $\fx:=\fint_Q V^{-1}f\in\fX$. (Arguing in a simpler way as in the first part of the proof, we would have only obtained an element $\fx^{**}$ of the second dual $\fX^{**}$, which would not be sufficient for what follows, since our assumption \eqref{it:Ap>1new} only involved vectors $\fx\in\fX$.)

Continuing from \eqref{eq:trickyDual3} and using assumption \eqref{it:Ap>1new}, we obtain
\begin{equation*}
\begin{split}
  \Babs{\fint_Q \pair{V(\cdot)^{-*}\fx^*}{f}}
  =\abs{\pair{\fx^*}{\fx}}
  &\leq\rho_{\aveL^p(Q,V)}^*(\fx^*)\rho_{\aveL^p(Q,V)}(\fx) \\
  &\leq \rho_{\aveL^p(Q,V)}^*(\fx^*)\cdot C\rho_{\aveL^{p'}(Q,V^{-*})}^*(\fx).
\end{split}
\end{equation*}
Applying the definition of the dual norm to estimate the last factor, for $\fy^*\in\fX^*$, we have
\begin{equation*}
\begin{split}
  \abs{\pair{\fy^*}{\fx}}
  &=\Babs{\fint_Q\pair{\fy^*}{V^{-1}f}}
  =\Babs{\fint_Q\pair{V(\cdot)^{-*}\fy^*}{f}} \\
  &\leq\Norm{V(\cdot)^{-*}\fy^*}{\aveL^{p'}(Q;\fX^*)}\Norm{f}{\aveL^p(Q;\fX)}
  \leq\rho_{\aveL^{p'}(Q,V^{-*})}(\fy^*)\cdot 1.
\end{split}
\end{equation*}
Hence
\begin{equation*}
  \rho_{\aveL^{p'}(Q,V^{-*})}^*(\fx)=\sup_{\fy^*\in\fX^*\setminus\{0\}}
  \frac{\abs{\pair{\fy^*}{\fx}}}{\rho_{\aveL^{p'}(Q,V^{-*})}(\fy^*)}\leq 1,
\end{equation*}
thus
\begin{equation*}
  \Babs{\fint_Q \pair{V(\cdot)^{-*}\fx^*}{f}}
  \leq C\rho_{\aveL^p(Q,V)}^*(\fx^*),
\end{equation*}
and substitution into \eqref{eq:trickyDual2} shows that
\begin{equation*}
  \rho_{\aveL^{p'}(Q,V^{-*})}(\fx^*)\leq C\rho_{\aveL^p(Q,V)}^*(\fx^*).
\end{equation*}
This is condition \eqref{it:Ap>1orig} that we wanted to prove.
\end{proof}

\begin{corollary}\label{cor:Ap}
Let $p\in(0,\infty)$ and $V$ be a $p$-weights. If $p\in(1,\infty)$, we also assume that $V^{-*}$ is a $p'$-weight. Then $V\in\A_p$ if and only if there is a finite positive constant $C$ such that, for all cubes $Q\subset\R^n$ and $\fx\in\fX$,
\begin{equation*}
  \rho_{\aveL^p(Q,V)}(\fx)
  \leq C\begin{cases} \rho^*_{\aveL^{p'}(Q,V^{-*})}(\fx), & p\in(1,\infty), \\
    \displaystyle\essinf_{y\in Q}\Norm{V(y)\fx}{\fX}, & p\in(0,1].\end{cases}
\end{equation*}
The best constant $C$ is equal to $[V]_{\A_p}$.
\end{corollary}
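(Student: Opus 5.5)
This corollary is a direct assembly of the two preceding propositions, so the plan is to split according to the range of $p$ and quote the appropriate equivalence in each case, keeping track of the constants so that the claim about the best constant comes out as well.

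For $p\in(1,\infty)$, the standing hypotheses (that $V$ is a $p$-weight and $V^{-*}$ is a $p'$-weight) are exactly those of Proposition \ref{prop:Ap}. By Definition \ref{def:Ap}\eqref{it:Ap>1}, $V\in\A_p$ means that condition \eqref{it:Ap>1orig} of that proposition holds with some finite $C$. Proposition \ref{prop:Ap} says this is equivalent to condition \eqref{it:Ap>1new}, which is literally the displayed inequality $\rho_{\aveL^p(Q,V)}(\fx)\leq C\rho^*_{\aveL^{p'}(Q,V^{-*})}(\fx)$. It also says the optimal constants in \eqref{it:Ap>1orig} and \eqref{it:Ap>1new} both equal $[V]_{\A_p}$, which gives the statement about the best constant in this range.

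For $p\in(0,1]$, Definition \ref{def:Ap}\eqref{it:Ap<1} is condition \eqref{it:Ap<1sup} of Proposition \ref{prop:Ap<1}. That proposition shows it is equivalent to condition \eqref{it:Ap<1inf}, namely $\rho_{\aveL^p(Q,V)}(\fx)\leq C\essinf_{y\in Q}\Norm{V(y)\fx}{\fX}$, with the same minimal constant $[V]_{\A_p}$. This is exactly the second case of the corollary.

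There is no genuine obstacle, since all the analytic content (the duality argument and the separability argument of Lemma \ref{lem:GH}) has already been handled in the two propositions. The only point needing care is bookkeeping: the hypothesis that $V^{-*}$ is a $p'$-weight is required only when $p>1$, which matches the hypotheses of Proposition \ref{prop:Ap}, and the best constants coincide in each case as noted above.
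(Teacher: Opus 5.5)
Your proposal is correct and follows the paper's proof exactly: the case $p\in(1,\infty)$ is Proposition~\ref{prop:Ap}, and the case $p\in(0,1]$ is the equivalence of \eqref{it:Ap<1sup} and \eqref{it:Ap<1inf} in Proposition~\ref{prop:Ap<1}. In both propositions the best constant is $[V]_{\A_p}$, as you note.
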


\begin{proof}
For $p\in(1,\infty)$, this is Proposition \ref{prop:Ap}. For $p\in(0,1]$, this is contained in Proposition \ref{prop:Ap<1}.
\end{proof}

\begin{corollary}\label{cor:Ap1}
If $p,r\in(0,\infty)$ and $V\in\mathscr{A}_p$, then,
for any $Q\in\mathscr{D}$ and $\fx\in\fX$,
\begin{align*}
\rho_{\aveL^p(Q,V)}(\fx)\lesssim
\rho_{\aveL^r(Q,V)}(\fx).
\end{align*}
\end{corollary}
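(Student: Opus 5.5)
If $r\geq p$, the claim is immediate from H\"older's inequality on the probability space $(Q,dx/\abs{Q})$, with implicit constant $1$. So the content is the case $r<p$. There the plan is to compare both sides with a quantity that the $\A_p$ condition makes small, and then show that this quantity is dominated by $\rho_{\aveL^r(Q,V)}(\fx)$ for any $r>0$. The natural tool is Corollary \ref{cor:Ap}, which bounds $\rho_{\aveL^p(Q,V)}(\fx)$ by $[V]_{\A_p}$ times either $\essinf_{y\in Q}\Norm{V(y)\fx}{}$ (if $p\le1$) or $\rho^*_{\aveL^{p'}(Q,V^{-*})}(\fx)$ (if $p>1$). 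We split into these two cases.

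\emph{Case $p\in(0,1]$.} By Corollary \ref{cor:Ap},
\begin{equation*}
\rho_{\aveL^p(Q,V)}(\fx)\leq[V]_{\A_p}\essinf_{y\in Q}\Norm{V(y)\fx}{}.
\end{equation*}
The essential infimum of a nonnegative function is at most any of its $L^r$ averages, so
\begin{equation*}
\essinf_{y\in Q}\Norm{V(y)\fx}{}\leq\Big(\fint_Q\Norm{V(y)\fx}{}^r\,dy\Big)^{\frac1r}=\rho_{\aveL^r(Q,V)}(\fx).
\end{equation*}
This gives the claim with constant $[V]_{\A_p}$.

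\emph{Case $p\in(1,\infty)$.} By Corollary \ref{cor:Ap},
\begin{equation*}
\rho_{\aveL^p(Q,V)}(\fx)\leq[V]_{\A_p}\,\rho^*_{\aveL^{p'}(Q,V^{-*})}(\fx).
\end{equation*}
It remains to bound this dual norm by $\rho_{\aveL^r(Q,V)}(\fx)$. This is exactly Corollary \ref{cor:easyAp}, applied with $u=r$ and $v=p'$, which holds for all $u,v\in(0,\infty]$. That corollary rests on the H\"older-type inequality of Lemma \ref{lem:uvHolder}, valid for arbitrary exponents, and does not require $r\ge1$. Hence
\begin{equation*}
\rho^*_{\aveL^{p'}(Q,V^{-*})}(\fx)\leq\rho_{\aveL^r(Q,V)}(\fx),
\end{equation*}
and the claim follows with constant $[V]_{\A_p}$.

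One point needs checking: the hypotheses of Corollary \ref{cor:easyAp}. It asks that $V\in L^r_{\so}$ on $Q$ and that $V^{-*}\in L^{p'}_{\so}$ on $Q$. The second is part of the definition of $\A_p$. The first follows on a cube from $V$ being a $p$-weight, since $L^p\subseteq L^r$ on a probability space when $r\le p$. Even without it, the inequality would hold with the value $+\infty$ allowed on the right.

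With this check in place, I expect no real obstacle. The whole argument amounts to observing that the "easy direction" bounds of Corollary \ref{cor:easyAp} hold for every exponent, including $r<1$, while the $\A_p$ condition reverses them at exponent $p$. The final constant is $\max\{1,[V]_{\A_p}\}$, uniform over $Q$ and $\fx$.
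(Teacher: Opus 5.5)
Your proof is correct and is essentially the paper's argument. For $p\in(0,1]$ you bound $\rho_{\aveL^p(Q,V)}(\fx)$ by $[V]_{\A_p}\essinf_{y\in Q}\Norm{V(y)\fx}{\fX}$ via Corollary \ref{cor:Ap}, and for $p\in(1,\infty)$ you combine Corollary \ref{cor:Ap} with Corollary \ref{cor:easyAp} (taking $u=r$, $v=p'$), exactly as the paper does; your preliminary H\"older reduction to $r<p$ is harmless but not needed.
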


\begin{proof}
If $p\in(0,1]$, then, by Corollary \ref{cor:Ap}, we immediately
find
\begin{align*}
\rho_{\aveL^p(Q,V)}(\fx)
\lesssim\essinf_{y\in Q}\Norm{V(y)\fx}{\fX}
\le\rho_{\aveL^r(Q,V)}(\fx).
\end{align*}
On the other hand, if $p\in(1,\infty)$, then, combining
Corollaries \ref{cor:Ap} and \ref{cor:easyAp}, we obtain
\begin{align*}
\rho_{\aveL^p(Q,V)}(\fx)
\lesssim\rho^*_{\aveL^{p'}(Q,V^{-*})}(\fx)
\lesssim\rho_{\aveL^r(Q,V)}(\fx).
\end{align*}
Hence, we finish the proof of Corollary \ref{cor:Ap1}.
\end{proof}

\section{Reverse H\"older properties}

In this section, we explore versions of reverse H\"older inequalities satisfied by operator $\A_p$ weights. This may at first seem to contradict \cite{Lauzon,Pott:07}, where counterexamples are given to show that the $\A_2$ condition does {\em not} imply a reverse H\"older inequality for operator-valued weights in infinite dimensions. The example sketched in \cite[paragraph after Corollary 4.7]{Lauzon} is very simple; \cite[Theorem 5.1]{Pott:07} provides a more elaborate example to show that even the weighted $L^2$ boundedness of the Hilbert transform, which is known to be strictly stronger than the $\A_2$ condition in infinite dimensions, still does not imply a reverse H\"older inequality. However, both \cite{Lauzon,Pott:07} interpret the notion of a reverse H\"older inequality in the sense introduced in \cite[Eq.~(2.1)]{CG:01} for matrix weights; notwithstanding its established merits in the finite-dimensional theory, we wish to argue that this is no longer the ``right'' definition in infinite dimensions. The support for this argument comes, on the one hand, from the counterexamples of \cite{Lauzon,Pott:07} just mentioned and, on the other hand, from our proposal of a substitute notion and the results that we are able to obtain by this alternate version.

\begin{proposition}\label{prop:RHI}
If $V\in\A_p$, then there are $\eps,\eta>0$ such that
\begin{equation*}
\rho_{\aveL^{p+\eps}(Q,V)}(\fx)\lesssim\rho_{\aveL^p(Q,V)}(\fx)
\end{equation*}
and
\begin{equation*}
\rho_{\aveL^{p'+\eta}(Q,V^{-*})}(\fx^*)\lesssim\rho_{\aveL^{p'}(Q,V^{-*})}(\fx^*)
\end{equation*}
for all cubes $Q\subset\R^n$ and all $\fx\in\fX$ and $\fx^*\in\fX^*$.
\end{proposition}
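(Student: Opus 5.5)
The plan is to reduce, for each fixed vector, to a scalar weight and invoke the classical reverse H\"older inequality. The key observation is that the statement is stated vector by vector. Accordingly, for fixed $\fx\in\fX$ I consider the scalar function $w_\fx(x):=\Norm{V(x)\fx}{\fX}^p$, and for fixed $\fx^*\in\fX^*$ the scalar function $\sigma_{\fx^*}(x):=\Norm{V(x)^{-*}\fx^*}{\fX^*}^{p'}$. Then $\rho_{\aveL^{p+\eps}(Q,V)}(\fx)=\ave{w_\fx^{(p+\eps)/p}}_Q^{1/(p+\eps)}$, so the first claim is exactly a scalar reverse H\"older inequality for $w_\fx$ with exponent $1+\eps/p$. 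The goal is therefore to show that $w_\fx$ is a scalar $A_\infty$ weight with characteristic bounded uniformly in $\fx$. The classical Coifman--Fefferman reverse H\"older inequality then gives $\eps$ and the implicit constant depending only on that characteristic (and $n,p$), uniformly in $\fx$.

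For $p\in(0,1]$, Proposition \ref{prop:Ap<1}\eqref{it:Ap<1inf} gives directly $\ave{w_\fx}_Q\le C^p\essinf_Q w_\fx$. This is the scalar $A_1$ condition with constant $[V]_{\A_p}^p$, hence $A_\infty$, and we are done. For $p\in(1,\infty)$, I would aim for the scalar $A_\infty$ condition in the form
\[
\ave{w_\fx}_Q\lesssim\exp\ave{\log w_\fx}_Q,
\]
or equivalently for a uniform bound $|E|\ge\frac12|Q|\Rightarrow \ave{w_\fx}_Q\lesssim\ave{w_\fx\one_E}_Q$. The latter seems most natural. Let $E\subseteq Q$ with $|E|\ge\frac12|Q|$, and test the duality in Proposition \ref{prop:Ap}\eqref{it:Ap>1new}: $\rho_{\aveL^p(Q,V)}(\fx)\le C\rho^*_{\aveL^{p'}(Q,V^{-*})}(\fx)$. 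For any $\fx^*$, H\"older on $E$ gives
\[
|\pair{\fx^*}{\fx}|\le \frac{|Q|}{|E|}\fint_Q\one_E|\pair{V^{-*}\fx^*}{V\fx}|\le 2\,\Big(\fint_Q\one_E w_\fx\Big)^{1/p}\rho_{\aveL^{p'}(Q,V^{-*})}(\fx^*).
\]
Hence $\rho^*_{\aveL^{p'}(Q,V^{-*})}(\fx)\le 2(\fint_Q\one_E w_\fx)^{1/p}$, and the $\A_p$ condition yields $\ave{w_\fx}_Q\le (2C)^p\ave{w_\fx\one_E}_Q$. This holds for every $E$ of at least half measure. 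Applying it to $E=\{w_\fx\le\lambda\}$ for suitable $\lambda$ gives the standard $A_\infty$ property: it prevents $w_\fx$ from concentrating on small sets, i.e.\ $|F|\le\frac12|Q|\Rightarrow w_\fx(F)\le(1-(2C)^{-p})w_\fx(Q)$. By the Gehring/Coifman--Fefferman argument, via a Calder\'on--Zygmund decomposition on dyadic subcubes, this yields reverse H\"older with exponent depending only on $C,n,p$.

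The dual estimate is symmetric. Corollary \ref{cor:easyAp} together with Definition \ref{def:Ap}\eqref{it:Ap>1} gives the same structure for $\sigma_{\fx^*}$. For $E$ with $|E|\ge\frac12|Q|$ and any $\fx$, the same H\"older computation gives $\rho^*_{\aveL^p(Q,V)}(\fx^*)\le 2(\fint_Q\one_E\sigma_{\fx^*})^{1/p'}$. The original $\A_p$ condition then gives $\ave{\sigma_{\fx^*}}_Q\le(2C)^{p'}\ave{\sigma_{\fx^*}\one_E}_Q$, and the classical reverse H\"older inequality produces $\eta$. In the statement the second inequality is only asserted for $V^{-*}$; for $p\le1$ one reads $p'=\infty$ and it is vacuous, or simply treats $p>1$.

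The main obstacle is the step where the scalar reverse H\"older theory must be invoked with constants uniform in $\fx$. The route I would take is to quote the version: if $w(F)\le\beta w(Q)$ whenever $F\subseteq Q$, $|F|\le\alpha|Q|$, for all cubes, then $w\in RH_{1+\delta}$ with $\delta$ and constant depending only on $\alpha,\beta,n$. One must also check that $w_\fx$ is locally integrable and positive a.e. This holds since $V$ is a $p$-weight and invertible a.e.; if $\fx=0$ the claim is trivial.
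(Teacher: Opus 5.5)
Your argument is correct, but it takes a somewhat different route than the paper. Both proofs use the same basic device: the pairing identity $\pair{\fx^*}{\fx}=\pair{V(y)^{-*}\fx^*}{V(y)\fx}$ plus H\"older's inequality to test the dual norm. Both then reduce to classical scalar theory, uniformly in $\fx$.

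The paper applies H\"older on the whole cube, with an exponent $u<p$ on the $V\fx$ side. That is, it uses $\rho^*_{\aveL^{p'}(Q,V^{-*})}(\fx)\le\rho_{\aveL^u(Q,V)}(\fx)$ from \eqref{eq:uvHolderDual}, or the trivial bound $\essinf_{y\in Q}\Norm{V(y)\fx}{}\le\rho_{\aveL^u(Q,V)}(\fx)$ when $p\le1$. Together with Corollary~\ref{cor:Ap}, this gives in one line $\rho_{\aveL^p(Q,V)}(\fx)\le[V]_{\A_p}\rho_{\aveL^u(Q,V)}(\fx)$. This is a reverse H\"older inequality for $\Norm{V(\cdot)\fx}{}^u$ with exponent $p/u>1$, and Gehring's lemma then self-improves it past $p$. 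The dual side is handled symmetrically.

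You instead restrict the pairing to a set $E$ of half measure. This yields the measure-theoretic $A_\infty$ condition with the explicit constant $\beta=1-(2C)^{-p}$. You then invoke the Coifman--Fefferman theorem, in its quantitative form with $\delta$ and the constant depending only on $n,\alpha,\beta$.

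The comparison is as follows.
\begin{itemize}
\item The paper's route is shorter. It treats $p\le1$ and $p>1$ by the same one-line bound and needs only Gehring's lemma as a black box.
\item Your route makes explicit that every $\Norm{V(\cdot)\fx}{}^p$, and every $\Norm{V(\cdot)^{-*}\fx^*}{}^{p'}$, is a scalar $A_\infty$ weight with constants independent of the vector. This is conceptually transparent.
\item The two viewpoints meet: letting $u\to0$ in the paper's inequality gives exactly $\ave{w_\fx}_Q\le[V]_{\A_p}^p\exp\ave{\log w_\fx}_Q$, the form of $A_\infty$ you mentioned first.
\end{itemize}

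A minor point: your aside about choosing $E=\{w_\fx\le\lambda\}$ is unnecessary. Taking $E=Q\setminus F$ gives the non-concentration property directly, which is what you actually use.
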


\begin{proof}
We note that the second inequality is trivial for $p'=\infty=p'+\eta$, so it is enough to consider $p\in(1,\infty)$ in the part of the proof involving norms on $\fX^*$.

From Lemma \ref{lem:uvHolder} and the definition of the dual norm, it follows that
\begin{equation}\label{eq:uvHolderDual}
\begin{split}
   \rho_{\aveL^{v}(Q,V^{-*})}^*(\fx)
   &\leq\rho_{\aveL^u(Q,V)}(\fx), \\
   \rho_{\aveL^{u}(Q,V)}^*(\fx^*)
   &\leq\rho_{\aveL^v(Q,V^{-*})}(\fx^*)
\end{split}
\end{equation}
for all $u,v\in(0,\infty]$ and all cubes $Q\subset\R^n$.
Then, from $V\in\A_p$ we deduce the following:
\begin{equation}\label{eq:preRHI}
\begin{split}
  \rho_{\aveL^p(Q,V)}(\fx)
  &\leq[V]_{\A_p}\left.\begin{cases} \rho_{\aveL^{p'}(Q,V^{-*})}^*(\fx), & \text{if $p>1$} \\
    \displaystyle\essinf_{y\in Q}\Norm{V(y)\fx}{\fX}, & \text{if $p\leq 1$}\end{cases}\right\}
    \quad\text{by Corollary \ref{cor:Ap}} \\
  &\leq [V]_{\A_p}\cdot \rho_{\aveL^{u}(Q,V)}(\fx) \\
\end{split}
\end{equation}
by \eqref{eq:uvHolderDual} in the first case and a trivial estimate in the second one.
Similarly, for $\fx^*\in\fX^*$, where we only consider $p\in(1,\infty)$, it follows that
\begin{equation*}
\begin{split}
  \rho_{\aveL^{p'}(Q,V^{-*})}(\fx^*)
  &\leq[V]_{\A_p}\cdot\rho_{\aveL^{p}(Q,V)}^*(\fx^*)\quad\text{by Definition \ref{def:Ap}} \\
  &\leq[V]_{\A_p}\cdot\rho_{\aveL^v(Q,V^{-*})}(\fx^*)\quad\text{by \eqref{eq:uvHolderDual}}. \\
\end{split}
\end{equation*}

Let us now fix some $u\in(0,p)$ and $v\in(0,p')$. Spelling out the previous inequalities, they say that, for all cubes $Q\subset\R^n$,
\begin{equation*}
\begin{split}
  \Big(\fint_Q\Norm{V(y)\fx}{}^p dy\Big)^{\frac1p}
  &\leq  [V]_{\A_p}\Big(\fint_Q\Norm{V(y)\fx}{}^u dy\Big)^{\frac1u}, \\
  \Big(\fint_Q\Norm{V^{-*}(y)\fx^*}{}^{p'} dy\Big)^{\frac{1}{p'}}
  &\leq  [V]_{\A_p}\Big(\fint_Q\Norm{V^{-*}(y)\fx^*}{}^v dy\Big)^{\frac1v}.
\end{split}
\end{equation*}
These are classical reverse H\"older inequalities for the functions $\Norm{V(\cdot)\fx}{}^u$ and $\Norm{V^{-*}(\cdot)\fx^*}{}^v$ with exponents $p/u>1$ and $p'/v$, respectively. By the classical Gehring lemma \cite[Lemma~3]{Gehring}, the same functions satisfy the reverse H\"older inequality with slightly larger exponents, which we may write in the form $(p+\eps)/u$ and $(p'+\eta)/v$ with $\eps,\eta>0$. This means that
\begin{equation*}
\begin{split}
  \Big(\fint_Q\Norm{V(y)\fx}{}^{p+\eps} dy\Big)^{\frac{1}{p+\eps}}
  &\lesssim\Big(\fint_Q\Norm{V(y)\fx}{}^{p} dy\Big)^{\frac{1}{p}}, \\
  \Big(\fint_Q\Norm{V^{-*}(y)\fx^*}{}^{p'+\eta} dy\Big)^{\frac{1}{p'+\eta}}
  &\lesssim\Big(\fint_Q\Norm{V^{-*}(y)\fx}{}^{p'} dy\Big)^{\frac{1}{p'}},
\end{split}
\end{equation*}
which are precisely the claimed inequalities.
\end{proof}

\section{Comparing quasi-norms at different cubes}

Starting with \cite{Rou:04}, a significant role in the analysis of matrix-weighted Besov and Triebel--Lizorkin spaces has been played by estimates for expressions of the type
\begin{equation*}
  \Norm{A_Q A_R^{-1}}{},
\end{equation*}
where $A_Q,A_R$ are the reducing operators related to the John ellipsoids of the (quasi)norms $\rho_{\aveL^p(Q,V)}$ and $\rho_{\aveL^p(R,V)}$. In our framework, where the reducing operators are not available, these will be replaced by analogous control of the ratios
\begin{equation*}
  \frac{\rho_{\aveL^p(Q,V)}(\fx)}{\rho_{\aveL^p(R,V)}(\fx)},
\end{equation*}
uniformly over $\fx\in\fX\setminus\{0\}$. This has some general similarity with \cite[e.g. Eq. (4.1)]{Lauzon}, but our considerations will almost immediately take a different directions from those of \cite{Lauzon}.

\begin{lemma}\label{lem:elemDb}
For cubes $Q\subseteq S$, we have
\begin{equation}\label{eq:elemDb1}
   \Big(\frac{\abs{Q}}{\abs{S}}\Big)^{\frac1p}
   \leq   \frac{\rho_{\aveL^p(S,V)}(\fx)}{\rho_{\aveL^p(Q,V)}(\fx)}
   \leq   [V]_{\A_p}\Big(\frac{\abs{S}}{\abs{Q}}\Big)^{\frac{1}{p'}},
\end{equation}
where the first estimate does not require that $V\in\A_p$. If $V\in\A_p$ and $\eps,\eta>0$ are as in Proposition \ref{prop:RHI}, we also have
\begin{equation}\label{eq:elemDb2}
   \Big(\frac{\abs{Q}}{\abs{S}}\Big)^{\frac{1}{p+\eps}}
   \lesssim   \frac{\rho_{\aveL^p(S,V)}(\fx)}{\rho_{\aveL^p(Q,V)}(\fx)}
   \lesssim   \Big(\frac{\abs{S}}{\abs{Q}}\Big)^{\frac{1}{p'+\eta}},
\end{equation}
where the implied positive constants may depend on $[V]_{\A_p}$.
\end{lemma}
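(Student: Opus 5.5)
The lower bounds come from monotonicity of the integral, the upper bounds from the $\A_p$ condition (in the form of Corollary \ref{cor:Ap}) combined with Hölder, applied to the average over $Q$.

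For the left inequality in \eqref{eq:elemDb1}, I will use $Q\subseteq S$ to write
\begin{equation*}
\rho_{\aveL^p(S,V)}(\fx)^p=\frac{1}{\abs{S}}\int_S\Norm{V\fx}{}^p\geq\frac{\abs{Q}}{\abs{S}}\fint_Q\Norm{V\fx}{}^p=\frac{\abs{Q}}{\abs{S}}\rho_{\aveL^p(Q,V)}(\fx)^p .
\end{equation*}
This uses no $\A_p$ assumption.

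For the right inequality in \eqref{eq:elemDb1}, I will apply Corollary \ref{cor:Ap} on the big cube $S$.
\begin{itemize}
\item If $p>1$, this gives $\rho_{\aveL^p(S,V)}(\fx)\leq[V]_{\A_p}\rho^*_{\aveL^{p'}(S,V^{-*})}(\fx)$.
\item Next I compare the dual norms. Since $\rho_{\aveL^{p'}(S,V^{-*})}(\fx^*)\geq(\abs{Q}/\abs{S})^{1/p'}\rho_{\aveL^{p'}(Q,V^{-*})}(\fx^*)$ by the same monotonicity argument, taking duals reverses it:
\begin{equation*}
\rho^*_{\aveL^{p'}(S,V^{-*})}(\fx)\leq(\abs{S}/\abs{Q})^{1/p'}\rho^*_{\aveL^{p'}(Q,V^{-*})}(\fx).
\end{equation*}
\item Finally, Corollary \ref{cor:easyAp} (with $u=p$, $v=p'$) bounds $\rho^*_{\aveL^{p'}(Q,V^{-*})}(\fx)\leq\rho_{\aveL^p(Q,V)}(\fx)$.
\item If $p\leq1$, so that $p'=\infty$ and the power is $1$, I use $\rho_{\aveL^p(S,V)}(\fx)\leq[V]_{\A_p}\essinf_{S}\Norm{V\fx}{}$. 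Then $\essinf_S\leq\essinf_Q\leq\rho_{\aveL^p(Q,V)}(\fx)$, and this case is done.
\end{itemize}

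For \eqref{eq:elemDb2}, I will run the same two arguments with the larger exponents from Proposition \ref{prop:RHI}.
\begin{itemize}
\item Lower bound: by Hölder, $\rho_{\aveL^p(Q,V)}\leq\rho_{\aveL^{p+\eps}(Q,V)}$. Monotonicity at exponent $p+\eps$ gives $\rho_{\aveL^{p+\eps}(Q,V)}\leq(\abs{S}/\abs{Q})^{1/(p+\eps)}\rho_{\aveL^{p+\eps}(S,V)}$. Proposition \ref{prop:RHI} on $S$ gives $\rho_{\aveL^{p+\eps}(S,V)}\lesssim\rho_{\aveL^p(S,V)}$.
\item Upper bound, $p>1$: I repeat the dual argument at exponent $p'+\eta$. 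The chain is $\rho_{\aveL^p(S,V)}(\fx)\lesssim\rho^*_{\aveL^{p'}(S,V^{-*})}(\fx)$. Next, $\rho^*_{\aveL^{p'}(S,V^{-*})}\lesssim\rho^*_{\aveL^{p'+\eta}(S,V^{-*})}$, obtained by dualizing the reverse Hölder inequality of Proposition \ref{prop:RHI} for $V^{-*}$ on $S$. Monotonicity on $Q\subseteq S$ at exponent $p'+\eta$, dualized, gives $\rho^*_{\aveL^{p'+\eta}(S,V^{-*})}\leq(\abs{S}/\abs{Q})^{1/(p'+\eta)}\rho^*_{\aveL^{p'+\eta}(Q,V^{-*})}$. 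Corollary \ref{cor:easyAp} with $u=p$, $v=p'+\eta$ closes the chain by $\rho_{\aveL^p(Q,V)}(\fx)$.
\item Upper bound, $p\leq1$: here $1/(p'+\eta)=0$, and the estimate is already the $p\leq1$ case of \eqref{eq:elemDb1}.
\end{itemize}

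The only delicate point is the direction of inequalities when passing to dual norms. If $\rho_1\geq c\rho_2$, then $\rho_1^*\leq c^{-1}\rho_2^*$, which is immediate from \eqref{eq:dualDef}. I will state this once and use it throughout.
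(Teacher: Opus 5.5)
Your proof is correct and follows the paper's argument: the lower bounds come from monotonicity of the integral, together with H\"older and Proposition \ref{prop:RHI} on $S$; the upper bounds come from Corollary \ref{cor:Ap} on $S$, a dualized comparison of the $V^{-*}$-norms on $Q\subseteq S$, and Corollary \ref{cor:easyAp} on $Q$, with the $p\le 1$ case handled through the essential infimum. The only cosmetic difference is in the upper bound of \eqref{eq:elemDb2}: the paper first proves the primal estimate $\rho_{\aveL^{p'}(Q,V^{-*})}\lesssim(\abs{S}/\abs{Q})^{1/(p'+\eta)}\rho_{\aveL^{p'}(S,V^{-*})}$, dualizes once and closes with Corollary \ref{cor:easyAp} at $v=p'$, whereas you dualize the reverse H\"older and monotonicity steps separately and close at $v=p'+\eta$, which amounts to the same thing.
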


\begin{proof}
We prove both \eqref{eq:elemDb1} and \eqref{eq:elemDb2} at once, writing out the details for \eqref{eq:elemDb2}. For \eqref{eq:elemDb1}, it suffices to take $\eps=\eta=0$ and observe that some steps are then avoided.

The first inequality in \eqref{eq:elemDb2} follows from the elementary calculation
\begin{equation}\label{eq:QvsSelem}
\begin{split}
  \rho_{\aveL^p(Q,V)}(\fx)
  &=\Big(\fint_Q\Norm{V(\cdot)\fx}{}^p\Big)^{\frac1p}
  \le\Big(\fint_Q\Norm{V(\cdot)\fx}{}^{p+\eps}\Big)^{\frac{1}{p+\eps}} \\
  &\leq\Big(\frac{\abs{S}}{\abs{Q}} \fint_S\Norm{V(\cdot)\fx}{}^{p+\eps}\Big)^{\frac{1}{p+\eps}}
  =\Big(\frac{\abs{S}}{\abs{Q}}\Big)^{\frac{1}{p+\eps}}\rho_{\aveL^{p+\eps}(S,V)}(\fx) \\
  &\lesssim\Big(\frac{\abs{S}}{\abs{Q}}\Big)^{\frac{1}{p+\eps}}\rho_{\aveL^{p}(S,V)}(\fx)
\end{split}
\end{equation}
by Proposition \ref{prop:RHI} in the last step; this step is redundant if $\eps=0$.

Turning to the second inequality in \eqref{eq:elemDb2}, let first $p\in(0,1]$. Then Corollary \ref{cor:Ap} followed by trivial estimates shows that
\begin{equation*}
\begin{split}
  \rho_{\aveL^p(S,V)}(\fx)
  &\leq[V]_{\A_p}\essinf_{y\in S}\Norm{V(y)\fx}{} \\
  &\leq[V]_{\A_p}\essinf_{y\in Q}\Norm{V(y)\fx}{}
  \leq[V]_{\A_p}\rho_{\aveL^p(Q,V)}(\fx),
\end{split}
\end{equation*}
which proves the second inequality in \eqref{eq:elemDb2} with $p'=p'+\eta=\infty$.

Let then $p\in(1,\infty)$. Similarly to \eqref{eq:QvsSelem}, but with $V^{-*}$ and $p'$ in place of $V$ and $p$, we obtain
\begin{equation*}
  \rho_{\aveL^{p'}(Q,V^{-*})}(\fx^*)
  \lesssim\Big(\frac{\abs{S}}{\abs{Q}}\Big)^{\frac{1}{p'+\eta}}
  \rho_{\aveL^{p'}(S,V^{-*})}(\fx^*),
\end{equation*}
where the implied positive constant is $1$ if $\eta=0$.

Dualising, it follows that
\begin{equation}\label{eq:SvsRdual}
\begin{split}
  \rho_{\aveL^{p'}(S,V^{-*})}^*(\fx)
  &=\sup_{\fx^*\in\fX^*\setminus\{0\}} \frac{\abs{\pair{\fx^*}{\fx}}}
  {\rho_{\aveL^{p'}(S,V^{-*})}(\fx^*)} \\
  &\lesssim\Big(\frac{\abs{S}}{\abs{Q}}\Big)^{\frac{1}{p'+\eta}}
  \sup_{\fx^*\in\fX^*\setminus\{0\}} \frac{\abs{\pair{\fx^*}{\fx}}}{\rho_{\aveL^{p'}(Q,V^{-*})}(\fx^*)} \\
  &=\Big(\frac{\abs{S}}{\abs{Q}}\Big)^{\frac{1}{p'+\eta}}\rho_{\aveL^{p'}(Q,V^{-*})}^*(\fx).
\end{split}
\end{equation}

Hence, if $V\in\A_p$ and $p\in(1,\infty)$, then
\begin{equation*}
\begin{split}
  \rho_{\aveL^p(S,V)}(\fx)
  &\leq [V]_{A_p}\rho_{\aveL^{p'}(S,V^{-*})}^*(\fx)\qquad \text{by Corollary \ref{cor:Ap}} \\
  &\lesssim [V]_{A_p}\Big(\frac{\abs{S}}{\abs{Q}}\Big)^{\frac{1}{p'+\eta}}\rho_{\aveL^{p'}(Q,V^{-*})}^*(\fx)
  \qquad\text{by \eqref{eq:SvsRdual}} \\
  &\leq [V]_{A_p}\Big(\frac{\abs{S}}{\abs{Q}}\Big)^{\frac{1}{p'+\eta}}\rho_{\aveL^{p}(Q,V)}(\fx)\qquad
  \text{by \eqref{eq:uvHolderDual}}.
\end{split}
\end{equation*}
Recalling that we already handled the case $p\in(0,1]$, we have completed the proof of Lemma \ref{lem:elemDb}.
\end{proof}

The upper bounds in Lemma \ref{lem:elemDb} motivate:

\begin{definition}\label{def:dbDim}
For $p\in(0,\infty)$, we say that $V$ is a {\em $(p,\beta)$-doubling weight}
if $V$ is a $p$-weight and for all cubes $Q\subset S$ there holds
\begin{equation*}
   \frac{\rho_{\aveL^p(S,V)}(\fx)}{\rho_{\aveL^p(Q,V)}(\fx)}\lesssim \Big(\frac{\ell(S) }{ \ell(Q) }\Big)^{\frac{\beta-n}{p}}.
\end{equation*}
Here $\beta$ is called the \emph{doubling dimension} of $V$.
\end{definition}

In the language of Definition \ref{def:dbDim}, part of Lemma \ref{lem:elemDb} may be stated~as:

\begin{corollary}\label{cor:ApDb}
Every $V\in\A_p$ is $(p,\beta)$-doubling with $\beta=n\max(1,p)$. If $p>1$, then it is also $(p,\beta)$-doubling for some $\beta\in[n,np)$.
\end{corollary}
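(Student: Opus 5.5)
We derive the statement directly from Lemma~\ref{lem:elemDb} by converting measure ratios into side-length ratios. For cubes $Q\subset S$ we have $\abs{S}/\abs{Q}=(\ell(S)/\ell(Q))^n$, so any bound of the form $(\abs{S}/\abs{Q})^{\gamma}$ equals $(\ell(S)/\ell(Q))^{n\gamma}$. Definition~\ref{def:dbDim} requires the exponent $(\beta-n)/p$. Matching exponents means solving $n\gamma=(\beta-n)/p$, that is, $\beta=n(1+p\gamma)$.

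For the first claim we use the upper bound in \eqref{eq:elemDb1}, namely
\[
\rho_{\aveL^p(S,V)}(\fx)/\rho_{\aveL^p(Q,V)}(\fx)\le[V]_{\A_p}(\abs{S}/\abs{Q})^{1/p'}.
\]
We split into two cases.
\begin{enumerate}[\rm(i)]
\item If $p\in(0,1]$, then $1/p'=0$. The ratio is bounded by a constant, which corresponds to $\beta=n=n\max(1,p)$. This uses the case $p\le1$ of Lemma~\ref{lem:elemDb}, where $p'=\infty$.
\item If $p>1$, then $\gamma=1/p'=1-1/p$. This gives $\beta=n(1+p-1)=np=n\max(1,p)$.
\end{enumerate}
The definition's $\lesssim$ absorbs $[V]_{\A_p}$. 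We should also note that $V$ being a $p$-weight is part of $V\in\A_p$.

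For the second claim, with $p>1$, we use the upper bound in \eqref{eq:elemDb2} with the $\eta>0$ from Proposition~\ref{prop:RHI}. Here $\gamma=1/(p'+\eta)$, so
\[
\beta=n\Big(1+\frac{p}{p'+\eta}\Big).
\]
Since $0<1/(p'+\eta)<1/p'$, we get $n<\beta<n(1+p/p')=np$. In particular $\beta\in[n,np)$. There is no real obstacle here; the only point needing care is the bookkeeping of the exponent identity $p/p'=p-1$ and the strict inequality coming from $\eta>0$.
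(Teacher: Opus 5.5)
Your proposal is correct and matches the paper: the corollary is just a restatement of the upper bounds in Lemma~\ref{lem:elemDb}. As you do, one writes $(\abs{S}/\abs{Q})^{\gamma}=(\ell(S)/\ell(Q))^{n\gamma}$ and solves $(\beta-n)/p=n\gamma$, with $\gamma=1/p'$ (where $1/p'=0$ for $p\le 1$) for the first claim and $\gamma=1/(p'+\eta)$ for the second.
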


\begin{remark}
We have chosen the potentially strange-looking normalisation of the exponent for consistency with the notion of the doubling exponent introduced in \cite[Definition 1.5]{Rou:03} and used in several subsequent papers. Note that an equivalent condition is
\begin{equation}\label{eq:dbRou}
  \int_S\Norm{V(y)\fx}{\fX}^p dy\lesssim\Big(\frac{\ell(S) }{ \ell(Q) }\Big)^{\beta} \int_Q\Norm{V(y)\fx}{\fX}^p dy,
\end{equation}
showing the connection to the doubling property of the measures $$\Norm{V(y)\fx}{\fX}^p dy,$$ uniformly over $\fx\in\fX$.

Definition \ref{def:dbDim} still differs slightly from \cite[Definition 1.5]{Rou:03} as follows: Defining (as usual) the doubling constant $c$ as the best constant in the inequality
\begin{equation*}
  \int_{2B}\Norm{V(y)\fx}{\fX}^p dy\leq c\int_B\Norm{V(y)\fx}{\fX}^p dy,
\end{equation*}
over all balls $B$ and $\fx\in\fX$,  \cite[Definition 1.5]{Rou:03} {\em defines} the doubling exponent as $\hat\beta:=\log_2 c$. It is elementary to see that this specific value gives an {\em upper bound} for our $\beta$, i.e., $\beta\leq\hat\beta$. However, in general the inequality can be strict.

To see this, it suffices to take a scalar weight $V=v$ such that, for every $k\in\Z^n$, we have $v|_{[0,1)^n+k}\equiv 1$ or $N$ if $\sum_{i=1}^n k_i$ is even or odd, respectively. Then it is easy to see that $\int_S v^p\lesssim N^p\cdot \frac{\abs{S}}{\abs{Q}}\int_Q v^p$, so that only the implied positive constant, but not the exponent in \eqref{eq:dbRou} is affected by $N$. On the other hand, the doubling constant satisfies $c\approx N^p$. Hence $\beta=n$ while $\hat\beta\approx p\log N$ can be arbitrarily large.

On the other hand, it is clear that it is the rate of growth described by the exponent $\beta$ in \eqref{eq:dbRou} that matters, not the multiplicative constant in this inequality.
\end{remark}

\begin{remark}
The constant $\beta$ in Definition \ref{def:dbDim} satisfies $\beta\in[n,\infty)$;
see, for instance, \cite[Remark 2.20]{BHYY:I}.
\end{remark}

\begin{lemma}\label{lem:QvsRdb}
Let $V$ be a $(p,\beta)$-doubling weight.
Then for all $\fx\in\fX$ and all cubes $Q,R\subset\R^n$
we have
\begin{equation*}
\frac{\rho_{\aveL^p(Q,V)}(\fx)}{\rho_{\aveL^p(R,V)}(\fx)}
   \lesssim B_{\frac{n}{p},\frac{\beta-n}{p},\frac{\beta}{p}}(Q,R),
\end{equation*}
where
\begin{equation}\label{eq:BabD}
  B_{a,b,c}(Q,R)
   :=\Big(1+\frac{\ell(R)}{\ell(Q)}\Big)^{a}\Big(1+\frac{\ell(Q)}{\ell(R)}\Big)^{b}
   \Big(1+\frac{\abs{x_Q-x_R}}{\max\{\ell(Q),\ell(R)\}}\Big)^c.
\end{equation}
In particular, for all cubes $Q,R$ of equal size,
\begin{equation*}
   \frac{\rho_{L^p(Q,V)}(\fx)}{\rho_{L^p(R,V)}(\fx)}
   \lesssim
   \Big(1+\frac{\abs{x_Q-x_R}}{\ell(Q)}\Big)^{\frac{\beta}{p}}.
\end{equation*}
\end{lemma}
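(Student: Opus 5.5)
The plan is to compare both $\rho_{\aveL^p(Q,V)}$ and $\rho_{\aveL^p(R,V)}$ with the quasi-norm over a common large cube $S$ containing $Q\cup R$. The upper doubling bound of Definition \ref{def:dbDim} handles $Q\subset S$, and the trivial lower bound from Lemma \ref{lem:elemDb}\eqref{eq:elemDb1} handles $R\subset S$.

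Concretely, I choose $S$ to be a cube concentric with $R$ (or with $Q$; either works up to constants), of side length
\begin{equation*}
\ell(S)\approx \max\{\ell(Q),\ell(R)\}+\abs{x_Q-x_R},
\end{equation*}
with a large enough absolute multiple so that $Q\cup R\subseteq S$. Then I write
\begin{equation*}
\frac{\rho_{\aveL^p(Q,V)}(\fx)}{\rho_{\aveL^p(R,V)}(\fx)}
=\frac{\rho_{\aveL^p(Q,V)}(\fx)}{\rho_{\aveL^p(S,V)}(\fx)}\cdot\frac{\rho_{\aveL^p(S,V)}(\fx)}{\rho_{\aveL^p(R,V)}(\fx)}.
\end{equation*}
For the first factor I use the left inequality of \eqref{eq:elemDb1}, which needs no $\A_p$ assumption. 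It gives $\rho_{\aveL^p(Q,V)}(\fx)\le(\abs{S}/\abs{Q})^{1/p}\rho_{\aveL^p(S,V)}(\fx)$, so this factor is at most $(\ell(S)/\ell(Q))^{n/p}$. For the second factor, $(p,\beta)$-doubling with $R\subset S$ gives the bound $(\ell(S)/\ell(R))^{(\beta-n)/p}$. So the ratio is at most
\begin{equation*}
\Big(\frac{\ell(S)}{\ell(Q)}\Big)^{\frac np}\Big(\frac{\ell(S)}{\ell(R)}\Big)^{\frac{\beta-n}{p}}.
\end{equation*}

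The remaining task is to check that this product is dominated by $B_{\frac np,\frac{\beta-n}p,\frac\beta p}(Q,R)$. Write $M:=\max\{\ell(Q),\ell(R)\}$ and $D:=1+\abs{x_Q-x_R}/M$, so that $\ell(S)\approx MD$. Then
\begin{equation*}
\frac{\ell(S)}{\ell(Q)}\approx\frac{M}{\ell(Q)}\,D\lesssim\Big(1+\frac{\ell(R)}{\ell(Q)}\Big)D,
\qquad
\frac{\ell(S)}{\ell(R)}\approx\frac{M}{\ell(R)}\,D\lesssim\Big(1+\frac{\ell(Q)}{\ell(R)}\Big)D.
\end{equation*}
Raising these to the powers $n/p$ and $(\beta-n)/p$ respectively, the powers of $D$ add up to $\beta/p$. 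This yields exactly the factors $a=n/p$, $b=(\beta-n)/p$, $c=\beta/p$. Nonnegativity of the exponents uses $\beta\ge n$, as in the Remark after Definition \ref{def:dbDim}.

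The equal-size case follows by setting $\ell(Q)=\ell(R)$: the first two factors of $B$ are then constants.

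I expect no serious obstacle. The only care needed is in the bookkeeping to confirm that the chosen $S$ really contains both cubes, and that the comparison constants depend only on $n$.
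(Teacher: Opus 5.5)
Your proposal is correct and is essentially the paper's own proof. The paper also factors the ratio through a cube $S$ containing $Q\cup R$ (it takes the minimal such cube rather than one concentric with $R$). It bounds $\rho_{\aveL^p(Q,V)}/\rho_{\aveL^p(S,V)}$ by the elementary lower estimate of Lemma \ref{lem:elemDb} and $\rho_{\aveL^p(S,V)}/\rho_{\aveL^p(R,V)}$ by the doubling condition, and then compares $\ell(S)/\max\{\ell(Q),\ell(R)\}$ with $1+\abs{x_Q-x_R}/\max\{\ell(Q),\ell(R)\}$ exactly as you do.
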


\begin{proof}
Let $S$ be the minimal cube that contains both $Q$ and $R$. Then
\begin{equation}\label{eq:QvsRstart}
\begin{split}
    \frac{\rho_{\aveL^p(Q,V)}(\fx)}{\rho_{\aveL^p(R,V)}(\fx)}
    =\frac{\rho_{\aveL^p(Q,V)}(\fx)}{\rho_{\aveL^p(S,V)}(\fx)}
    \frac{\rho_{\aveL^p(S,V)}(\fx)}{\rho_{\aveL^p(R,V)}(\fx)}
   \lesssim\Big(\frac{\ell(S)}{\ell(Q)}\Big)^{\frac np}
   \Big(\frac{\ell(S)}{\ell(R)}\Big)^{\frac{\beta-n}{p}}
\end{split}
\end{equation}
by the first estimate of Lemma \ref{lem:elemDb} (which does not need $\A_p$) for the first factor and Definition \ref{def:dbDim} for the second.

Then we observe that
\begin{equation*}
  \Big(\frac{\ell(S)}{\ell(Q)}\Big)^{\frac np}\Big(\frac{\ell(S)}{\ell(R)}\Big)^{\frac{\beta-n}{p}}
  \approx\Big(1+\frac{\ell(R)}{\ell(Q)}\Big)^{\frac np}\Big(1+\frac{\ell(Q)}{\ell(R)}\Big)^{\frac{\beta-n}{p}}\Big(\frac{\ell(S)}{\ell(Q)\vee \ell(R)}\Big)^{\frac{\beta}{p}},
\end{equation*}
and
\begin{equation*}
  \frac{\ell(S)}{\ell(Q)\vee \ell(R)}\approx 1+\frac{\abs{x_Q-x_R}}{\ell(Q)\vee\ell(R)}.
\end{equation*}
This finishes the proof of Lemma \ref{lem:QvsRdb}.
\end{proof}

\begin{lemma}\label{lem:QvsR}
Let $V\in\A_p$, and let $\eps,\eta\geq 0$ be numbers for which $V$ satisfies the conclusions of Proposition \ref{prop:RHI}.
Then for all $\fx\in\fX$ and all cubes $Q,R\subset\R^n$ we have
\begin{equation}\label{eq:QvsR}
\frac{\rho_{\aveL^p(Q,V)}(\fx)}{\rho_{\aveL^p(R,V)}(\fx)}
   \lesssim B_{a,b,c}(Q,R),
\end{equation}
where $B_{a,b,c}$ is defined in \eqref{eq:BabD} and
\begin{equation}\label{eq:abc}
  (a,b,c)=(\frac{n}{p+\eps},\frac{n}{p'+\eta},\frac{n}{p+\eps}+\frac{n}{p'+\eta}).
\end{equation}
\end{lemma}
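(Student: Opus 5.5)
The plan follows the proof of Lemma \ref{lem:QvsRdb}, using the sharper two-sided bounds \eqref{eq:elemDb2} of Lemma \ref{lem:elemDb} in place of the crude first estimate and the doubling definition. Let $S$ be the minimal cube containing both $Q$ and $R$, so that $\ell(S)\approx \ell(Q)\vee\ell(R)+\abs{x_Q-x_R}$. We factor the ratio as
\begin{equation*}
\frac{\rho_{\aveL^p(Q,V)}(\fx)}{\rho_{\aveL^p(R,V)}(\fx)}
=\frac{\rho_{\aveL^p(Q,V)}(\fx)}{\rho_{\aveL^p(S,V)}(\fx)}\cdot
\frac{\rho_{\aveL^p(S,V)}(\fx)}{\rho_{\aveL^p(R,V)}(\fx)}.
\end{equation*}
For the first factor, with $Q\subseteq S$, we use the left inequality of \eqref{eq:elemDb2}. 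It bounds the factor by $(\abs{S}/\abs{Q})^{1/(p+\eps)}=(\ell(S)/\ell(Q))^{n/(p+\eps)}$. This relies on Proposition \ref{prop:RHI}, and when $\eps=0$ the bound is just the elementary one. For the second factor, with $R\subseteq S$, the right inequality of \eqref{eq:elemDb2} gives the bound $(\ell(S)/\ell(R))^{n/(p'+\eta)}$. For $p\le1$ this is to be read as $p'=\infty$, giving a bounded factor, as in the proof of Lemma \ref{lem:elemDb}.

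It then remains to rewrite $(\ell(S)/\ell(Q))^{a}(\ell(S)/\ell(R))^{b}$ with $a=n/(p+\eps)$ and $b=n/(p'+\eta)$ in the form $B_{a,b,a+b}(Q,R)$. Writing $M:=\ell(Q)\vee\ell(R)$, we have
\begin{equation*}
\frac{\ell(S)}{\ell(Q)}=\frac{M}{\ell(Q)}\cdot\frac{\ell(S)}{M}\approx\Big(1+\frac{\ell(R)}{\ell(Q)}\Big)\frac{\ell(S)}{M},
\qquad
\frac{\ell(S)}{\ell(R)}\approx\Big(1+\frac{\ell(Q)}{\ell(R)}\Big)\frac{\ell(S)}{M}.
\end{equation*}
We also use $\ell(S)/M\approx 1+\abs{x_Q-x_R}/M$. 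Combining these, the product becomes $(1+\ell(R)/\ell(Q))^a(1+\ell(Q)/\ell(R))^b(1+\abs{x_Q-x_R}/M)^{a+b}$, which is exactly \eqref{eq:QvsR} with the exponents \eqref{eq:abc}.

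There is no serious obstacle here. The only points needing care are the geometric comparability $\ell(S)\approx M+\abs{x_Q-x_R}$ and the uniform treatment of the endpoint cases $\eps=0$ or $\eta=0$ and $p\le 1$. All of these are already covered by Lemma \ref{lem:elemDb} together with the convention $1/(p'+\eta)=0$ when $p'=\infty$. The implied constants depend only on $n$, $p$, $[V]_{\A_p}$ and the reverse H\"older constants, and not on $\fx$, $Q$ or $R$.
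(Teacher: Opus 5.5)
Your proposal is correct and is essentially the paper's proof. Like the paper, you factor the ratio through the minimal cube $S$ containing $Q$ and $R$, bound the two factors by the left and right inequalities of \eqref{eq:elemDb2} in Lemma \ref{lem:elemDb}, and convert $(\ell(S)/\ell(Q))^{a}(\ell(S)/\ell(R))^{b}$ into $B_{a,b,a+b}(Q,R)$ by the same geometric comparisons used in the proof of Lemma \ref{lem:QvsRdb}.
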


\begin{proof}
The proof is like that of Lemma \ref{lem:QvsRdb}, but replacing the initial estimate \eqref{eq:QvsRstart} by
\begin{equation*}
  \frac{\rho_{\aveL^p(Q,V)}(\fx)}{\rho_{\aveL^p(S,V)}(\fx)}
  \frac{\rho_{\aveL^p(S,V)}(\fx)}{\rho_{\aveL^p(R,V)}(\fx)}
   \lesssim\Big(\frac{\ell(S)}{\ell(Q)}\Big)^{\frac{n}{p+\eps}}
   \Big(\frac{\ell(S)}{\ell(R)}\Big)^{\frac{n}{p'+\eta}},
\end{equation*}
where the estimates of both factors are those in Lemma \ref{lem:elemDb} under the assumption that $V\in\A_p$.
\end{proof}

\begin{remark}
For many purposes, any growth of the type \eqref{eq:QvsR} is enough.
Lemma \ref{lem:QvsR} shows that, for $V\in\A_p$, we can always take some $a<\frac{n}{p}$ and $c<\frac{n}{\min(1,p)}$. Moreover, $b<\frac{n}{p'}$, unless $p'=\infty$, in which case we have the equality $b=\frac{n}{p'}=0$. We refer the reader to \cite{BHYY:I} for several examples of weights, showing that, except for the restrictions just indicated, all possible growth profiles can appear already for scalar and matrix valued weights.
\end{remark}

\begin{corollary}\label{cor:QvsR}
Let $V\in\A_p$, and let $\eps,\eta\geq 0$ be numbers for which $V$ satisfies the conclusions of Proposition \ref{prop:RHI}.
Then for all $\fx^*\in\fX^*$ and all cubes $Q,R\subset\R^n$ with minimal containing cube $S$, we have
\begin{equation*}
   \frac{\rho_{\aveL^{p'}(Q,V^{-*})}(\fx^*)}{\rho_{\aveL^{p'}(R,V^{-*})}(\fx^*)}
   \lesssim \Big(\frac{\abs{S}}{\abs{Q}}\Big)^{\frac{1}{p'+\eta}}\Big(\frac{\abs{S}}{\abs{R}}\Big)^{\frac{1}{p+\eps}}
   \approx B_{b,a,c}(Q,R),
\end{equation*}
where the last expression is defined as in \eqref{eq:BabD}, with $a$, $b$, and $c$ as in \eqref{eq:abc} (but note the swap of two parameters in the bound above compared to Lemma \ref{lem:QvsR}). In particular, for all cubes $Q,R$ of equal size,
\begin{equation*}
   \frac{\rho_{\aveL^{p'}(Q,V^{-*})}(\fx^*)}{\rho_{\aveL^{p'}(R,V^{-*})}(\fx^*)}
   \lesssim
   \Big(1+\frac{\abs{x_Q-x_R}}{\ell(Q)}\Big)^c
   \leq\Big(1+\frac{\abs{x_Q-x_R}}{\ell(Q)}\Big)^n.
\end{equation*}
Here all the implicit positive constants are independent of $Q$, $R$, and $\fx^*$.
\end{corollary}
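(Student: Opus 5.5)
We factor the ratio through the minimal cube $S$ containing $Q$ and $R$, exactly as in the proofs of Lemmas \ref{lem:QvsRdb} and \ref{lem:QvsR}, but now for the quasi-norms on $\fX^*$ induced by $V^{-*}$:
\begin{equation*}
  \frac{\rho_{\aveL^{p'}(Q,V^{-*})}(\fx^*)}{\rho_{\aveL^{p'}(R,V^{-*})}(\fx^*)}
  =\frac{\rho_{\aveL^{p'}(Q,V^{-*})}(\fx^*)}{\rho_{\aveL^{p'}(S,V^{-*})}(\fx^*)}
  \cdot\frac{\rho_{\aveL^{p'}(S,V^{-*})}(\fx^*)}{\rho_{\aveL^{p'}(R,V^{-*})}(\fx^*)}.
\end{equation*}
When $p\in(0,1]$ we have $p'=\infty$, $\eta$ irrelevant, and $\frac1{p'+\eta}=0$. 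In that case the first factor is trivially at most $1$, since an $L^\infty$ norm over $Q\subseteq S$ is dominated by that over $S$. For $p\in(1,\infty)$, the first factor is bounded exactly as in \eqref{eq:QvsSelem}, with $V^{-*}$ and $p'$ in place of $V$ and $p$, using the second inequality of Proposition \ref{prop:RHI}. This gives $(\abs{S}/\abs{Q})^{1/(p'+\eta)}$, and that estimate was already recorded inside the proof of Lemma \ref{lem:elemDb}.

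For the second factor, we need a lower bound for $\rho_{\aveL^{p'}(R,V^{-*})}$ in terms of $\rho_{\aveL^{p'}(S,V^{-*})}$, and we obtain it by duality from the $V$-side. If $p>1$, Definition \ref{def:Ap} and \eqref{eq:uvHolderDual} give
\begin{equation*}
  \rho_{\aveL^{p'}(S,V^{-*})}(\fx^*)\le[V]_{\A_p}\rho^*_{\aveL^p(S,V)}(\fx^*)
  \quad\text{and}\quad
  \rho^*_{\aveL^p(R,V)}(\fx^*)\le\rho_{\aveL^{p'}(R,V^{-*})}(\fx^*).
\end{equation*}
It therefore suffices to show $\rho^*_{\aveL^p(S,V)}\lesssim(\abs{S}/\abs{R})^{1/(p+\eps)}\rho^*_{\aveL^p(R,V)}$. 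This follows by taking the supremum in \eqref{eq:dualDef} after using the first inequality of \eqref{eq:elemDb2} with $R\subseteq S$, namely $\rho_{\aveL^p(R,V)}(\fx)\lesssim(\abs{S}/\abs{R})^{1/(p+\eps)}\rho_{\aveL^p(S,V)}(\fx)$. The dualisation is the same as in \eqref{eq:SvsRdual}.

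If $p\le1$, we argue directly. By Proposition \ref{prop:Ap<1}\eqref{it:Ap<1,f(y)} (or Corollary \ref{cor:Ap}) together with \eqref{eq:elemDb2}, for a.e.\ $y\in S$ and all $\fx$ we have
\begin{equation*}
  \rho_{\aveL^p(R,V)}(\fx)\lesssim(\abs{S}/\abs{R})^{1/(p+\eps)}\,[V]_{\A_p}\Norm{V(y)\fx}{}.
\end{equation*}
Substituting $\fx=V(y)^{-1}\fy$, pairing with $\fx^*$, and using Lemma \ref{lem:uvHolder} on $R$ with $u=p$, $v=\infty$, we get
\begin{equation*}
  \abs{\pair{V(y)^{-*}\fx^*}{\fy}}\le\rho_{\aveL^\infty(R,V^{-*})}(\fx^*)\,\rho_{\aveL^p(R,V)}(V(y)^{-1}\fy).
\end{equation*}
Taking the supremum over $\Norm{\fy}{}\le1$ and then the essential supremum over $y\in S$ gives the second factor. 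The main delicate point is the null sets: the exceptional set must not depend on $\fy$. We handle this by restricting to a countable dense set of $\fy$ in the separable range, in the manner of Lemma \ref{lem:GH}.

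Multiplying the two factors gives the first bound. The equivalence with $B_{b,a,c}(Q,R)$ and the equal-size specialisation then follow from the same elementary geometry as in Lemma \ref{lem:QvsRdb}, namely $\ell(S)/\max\{\ell(Q),\ell(R)\}\approx1+\abs{x_Q-x_R}/\max\{\ell(Q),\ell(R)\}$. For the last inequality we also need $c\le n$, which holds because $\frac1{p+\eps}+\frac1{p'+\eta}\le\frac1p+\frac1{p'}=1$ when $p>1$. For $p\le1$ this is the step where we might fail: there $c=n/(p+\eps)$, so $c\le n$ needs $p+\eps\ge1$, and the bound as stated may require adjusting in that range.
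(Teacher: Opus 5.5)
Your proposal is correct and follows the paper's proof, which is Lemma \ref{lem:QvsR} transposed to the dual side. On the factor over $Q$ it uses reverse H\"older for $V^{-*}$; on the factor over $R$ it uses the $\A_p$ condition, the dualised $V$-side estimate from \eqref{eq:elemDb2}, and \eqref{eq:uvHolderDual}. Your direct argument for $p\le1$ fills in a case the paper leaves implicit. For the null sets there, take a countable subset of the unit ball of $\fX$ that norms the essentially separable range of $V(\cdot)^{-*}\fx^*$; a countable dense subset of the ball need not exist when $\fX$ is non-separable.

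Your worry at the end is justified. For $p<1$ one has $c=n/(p+\eps)$, and the final bound by $(1+\abs{x_Q-x_R}/\ell(Q))^n$ genuinely fails. Take the scalar weight $V(x)=\abs{x}^{-\alpha/p}$ with $pn<\alpha<n$; it lies in $\A_p$ because $\abs{x}^{-\alpha}\in A_1$. For unit cubes $Q$ far from the origin, $\rho_{\aveL^{\infty}(Q,V^{-*})}(1)/\rho_{\aveL^{\infty}([0,1)^n,V^{-*})}(1)\approx\abs{x_Q}^{\alpha/p}$, which is not $O(\abs{x_Q}^n)$. So that last inequality is valid in general only for $p\ge1$, which is the only range in which the paper invokes this corollary.
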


\begin{proof}
The proof is entirely analogous to that of Lemma \ref{lem:QvsR}, replacing each step by its analogue on the dual side.
\end{proof}

Let us also record the simple:

\begin{lemma}
Let $\fX$ be a Banach space, and let $\rho_Q$ be quasi-norms indexed by cubes $Q\in\R^n$ that satisfy
\begin{equation*}
  \frac{\rho_Q(\fx)}{\rho_R(\fx)}\lesssim B_{a,b,c}(Q,R)\qquad\forall\ \fx\in\fX.
\end{equation*}
Then the dual norms $\rho_Q^*$ on $\fX^*$ satisfy
\begin{equation*}
  \frac{\rho_Q^*(\fx^*)}{\rho_R^*(\fx^*)}\lesssim B_{b,a,c}(Q,R)\qquad\forall\ \fx^*\in\fX^*.
\end{equation*}
\end{lemma}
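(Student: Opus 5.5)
The plan is to dualise the hypothesis directly, using the symmetry of $B_{a,b,c}$ under swapping its two cube arguments. From \eqref{eq:BabD},
\begin{equation*}
  B_{a,b,c}(R,Q)=\Big(1+\frac{\ell(Q)}{\ell(R)}\Big)^{a}\Big(1+\frac{\ell(R)}{\ell(Q)}\Big)^{b}
   \Big(1+\frac{\abs{x_Q-x_R}}{\max\{\ell(Q),\ell(R)\}}\Big)^c=B_{b,a,c}(Q,R).
\end{equation*}
This holds because the last factor is symmetric in $Q$ and $R$, while the first two factors interchange.

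With this identity in hand, I apply the hypothesis with the roles of $Q$ and $R$ interchanged. This gives, for every $\fx\in\fX$,
\begin{equation*}
\rho_R(\fx)\lesssim B_{a,b,c}(R,Q)\,\rho_Q(\fx)=B_{b,a,c}(Q,R)\,\rho_Q(\fx),
\end{equation*}
with an implied constant independent of $\fx$, $Q$ and $R$.

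Next I plug this into the definition \eqref{eq:dualDef} of the dual norm. For $\fx^*\in\fX^*$ and $\fx\neq0$,
\begin{equation*}
  \frac{\abs{\pair{\fx^*}{\fx}}}{\rho_Q(\fx)}\lesssim B_{b,a,c}(Q,R)\frac{\abs{\pair{\fx^*}{\fx}}}{\rho_R(\fx)}\le B_{b,a,c}(Q,R)\,\rho_R^*(\fx^*).
\end{equation*}
Taking the supremum over $\fx\neq0$ yields $\rho_Q^*(\fx^*)\lesssim B_{b,a,c}(Q,R)\,\rho_R^*(\fx^*)$. This is the claim whenever $\rho_R^*(\fx^*)\in(0,\infty)$.

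The only point needing care is degenerate values of the dual norms, since the quasi-norms are not assumed equivalent to $\Norm{\cdot}{}$. I do not expect this to be a real obstacle. If $\rho_R^*(\fx^*)=\infty$, the inequality is trivial. If $\rho_R^*(\fx^*)=0$, the displayed bound forces $\rho_Q^*(\fx^*)=0$ as well. The ratio is then interpreted in the usual way; alternatively, I state the result in the product form $\rho_Q^*\lesssim B_{b,a,c}(Q,R)\,\rho_R^*$, which I treat as the intended meaning. Note that the argument uses no property of $\rho_Q$ beyond positivity on $\fX\setminus\{0\}$, so quasi-norms cause no difficulty.
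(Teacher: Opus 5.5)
Your proof is correct and is essentially the paper's argument. The paper dualises the hypothesis first, obtaining $\rho_R^*(\fx^*)\lesssim B_{a,b,c}(Q,R)\,\rho_Q^*(\fx^*)$, and then swaps $Q$ and $R$ via $B_{a,b,c}(R,Q)=B_{b,a,c}(Q,R)$, whereas you swap before dualising, which is an immaterial reordering; your remark on degenerate values of $\rho^*$ is a harmless extra that the paper does not address.
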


\begin{proof}
By assumption, we have
\begin{equation*}
  \frac{1}{\rho_R(\fx)}\lesssim\frac{1}{\rho_Q(\fx)}B_{a,b,c}(Q,R).
\end{equation*}
By the definition of the dual norm, it follows that
\begin{equation*}
\begin{split}
  \rho_R^*(\fx^*) &=\sup_{\fx\in\fX\setminus\{0\}}\frac{\abs{\pair{\fx^*}{\fx}}}{\rho_R(\fx)} \\
  &\lesssim \sup_{\fx\in\fX\setminus\{0\}}\frac{\abs{\pair{\fx^*}{\fx}}}{\rho_Q(\fx)}B_{a,b,c}(Q,R)
  =\rho_Q^*(\fx^*)B_{a,b,c}(Q,R).
\end{split}
\end{equation*}
The claim follows by swapping the names of $Q$ and $R$ and noting that $B_{a,b,c}(R,Q)=B_{b,a,c}(Q,R)$ by definition \eqref{eq:BabD}.
\end{proof}

\section{Stability of weights under some operations}

In this section, we record two lemmas of relatively elementary nature that will be helpful in streamlining some considerations further below. Since these results have a somewhat technical nature and we only need them for $p\in(1,\infty)$, we restrict our considerations to this range for brevity, leaving the extension to $p\in(0,1]$ for the interested reader.

\begin{lemma}\label{lem:ApIsom}
Let $p\in(1,\infty)$, let $V\in\A_p(\R^n;\fY)$, and let $\iota:\fX\to\fY$ be an isomorphism. Then $\iota^{-1}V\iota\in\A_p(\R^n;\fX)$ and
\begin{equation*}
  [\iota^{-1}V\iota]_{\A_p(\R^n;\fX)}\leq\Norm{\iota^{-1}}{}\Norm{\iota}{}[V]_{\A_p(\R^n;\fY)}.
\end{equation*}
\end{lemma}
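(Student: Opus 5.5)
Write $\tilde V:=\iota^{-1}V\iota$, so $\tilde V(x)\in\bddlin(\fX)$. The plan is to check Definition \ref{def:Ap}\eqref{it:Ap>1} directly by transporting everything through $\iota$. The key computation gives $\tilde V^{-*}=\iota^{*}V^{-*}\iota^{-*}$, where $\iota^{-*}:\fX^*\to\fY^*$.

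First I verify that $\tilde V$ is a weight, that it is a $p$-weight, and that $\tilde V^{-*}$ is a $p'$-weight. Strong measurability of $\tilde V\fx=\iota^{-1}(V(\iota\fx))$ follows because $\iota^{-1}$ is continuous and maps simple functions to simple functions. Invertibility a.e.\ is clear, with $\tilde V^{-1}=\iota^{-1}V^{-1}\iota$, and its measurability follows the same way. The local $L^p$ and $L^{p'}$ bounds follow from the norms of $\iota^{\pm1}$ and their adjoints.

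Next come the comparisons of the quasi-norms. Writing $\tilde V(y)\fx=\iota^{-1}V(y)\iota\fx$, we get for all $\fx\in\fX$
\begin{equation*}
  \Norm{\iota}{}^{-1}\rho_{\aveL^p(Q,V)}(\iota\fx)\leq\rho_{\aveL^p(Q,\tilde V)}(\fx)\leq\Norm{\iota^{-1}}{}\rho_{\aveL^p(Q,V)}(\iota\fx).
\end{equation*}
Similarly, since $\tilde V^{-*}\fx^*=\iota^*V^{-*}\iota^{-*}\fx^*$, we get
\begin{equation*}
  \rho_{\aveL^{p'}(Q,\tilde V^{-*})}(\fx^*)\leq\Norm{\iota}{}\rho_{\aveL^{p'}(Q,V^{-*})}(\iota^{-*}\fx^*).
\end{equation*}
For the dual norm, $\pair{\fx^*}{\fx}=\pair{\iota^{-*}\fx^*}{\iota\fx}$, and the lower bound on $\rho_{\aveL^p(Q,\tilde V)}$ gives
\begin{equation*}
  \rho^*_{\aveL^p(Q,\tilde V)}(\fx^*)\geq\Norm{\iota^{-1}}{}^{-1}\sup_{\fx}\frac{\abs{\pair{\iota^{-*}\fx^*}{\iota\fx}}}{\rho_{\aveL^p(Q,V)}(\iota\fx)}=\Norm{\iota^{-1}}{}^{-1}\rho^*_{\aveL^p(Q,V)}(\iota^{-*}\fx^*).
\end{equation*}
Here I use surjectivity of $\iota$, so that $\iota\fx$ ranges over all of $\fY$.

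Finally I chain these estimates:
\begin{equation*}
  \rho_{\aveL^{p'}(Q,\tilde V^{-*})}(\fx^*)\leq\Norm{\iota}{}[V]_{\A_p}\rho^*_{\aveL^p(Q,V)}(\iota^{-*}\fx^*)\leq\Norm{\iota}{}\Norm{\iota^{-1}}{}[V]_{\A_p}\rho^*_{\aveL^p(Q,\tilde V)}(\fx^*),
\end{equation*}
which is exactly the claimed bound. The only delicate point is bookkeeping: I need to pick the right side of each two-sided comparison so that exactly one factor of $\Norm{\iota}{}$ and one of $\Norm{\iota^{-1}}{}$ appear, and to use $\Norm{\iota^*}{}=\Norm{\iota}{}$ along with the identity $(\iota^{-1}V^{-1}\iota)^*=\iota^*V^{-*}\iota^{-*}$.
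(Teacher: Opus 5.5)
Your proof is correct and follows the paper's argument essentially line by line. You bound $\rho_{\aveL^{p'}(Q,\tilde V^{-*})}(\fx^*)\leq\Norm{\iota}{}\rho_{\aveL^{p'}(Q,V^{-*})}(\iota^{-*}\fx^*)$, apply the $\A_p$ condition for $V$, and then use $\rho^*_{\aveL^p(Q,V)}(\iota^{-*}\fx^*)\leq\Norm{\iota^{-1}}{}\rho^*_{\aveL^p(Q,\tilde V)}(\fx^*)$; your explicit check of the qualitative weight conditions is something the paper leaves implicit. One wording slip: the dual-norm step uses the upper bound $\rho_{\aveL^p(Q,\tilde V)}(\fx)\leq\Norm{\iota^{-1}}{}\rho_{\aveL^p(Q,V)}(\iota\fx)$, not the lower one as you wrote, although the inequality you actually derived is right.
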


\begin{proof}
Noting that $(\iota^{-1})^*=(\iota^*)^{-1}=:\iota^{-*}$, we see that $\iota^*:\fY^*\to\fX^*$ is also an isomorphism. For $\fx^*\in\fX^*$, we have
\begin{equation*}
\begin{split}
  \rho_{\aveL^{p'}(Q,(\iota^{-1}V\iota)^{-*})}(\fx^*)
  &=\rho_{\aveL^{p'}(Q, \iota^{*}V^{-*}\iota^{-*} )}(\fx^*)\leq\Norm{\iota^*}{} \rho_{\aveL^{p'}(Q,V^{-*})}(\iota^{-*}\fx^*)
  \leq\Norm{\iota}{} [V]_{\A_p}\rho_{\aveL^{p}(Q,V)}^*(\iota^{-*}\fx^*).
\end{split}
\end{equation*}
For $\fy\in\fY$,
\begin{equation*}
\begin{split}
  \abs{\pair{\iota^{-*}\fx^*}{\fy}}
  &=\abs{\pair{\fx^*}{\iota^{-1}\fy}} \\
  &\leq\rho_{\aveL^p(Q,\iota^{-1}V\iota)}^*(\fx^*)\rho_{\aveL^p(Q,\iota^{-1}V\iota)}(\iota^{-1}\fy) \\
  &\leq\rho_{\aveL^p(Q,\iota^{-1}V\iota)}^*(\fx^*)\Norm{\iota^{-1}}{}\rho_{\aveL^p(Q,V)}(\fy).
\end{split}
\end{equation*}
Hence
\begin{equation*}
  \rho_{\aveL^p(Q,V)}^*(\iota^{-*}\fx^*)
  \leq\Norm{\iota^{-1}}{}\rho^*_{\aveL^p(Q,\iota^{-1}V\iota)}(\fx^*),
\end{equation*}
and thus
\begin{equation*}
    \rho_{\aveL^{p'}(Q,(\iota^{-1}V\iota)^{-*})}(\fx^*)
    \leq\Norm{\iota}{} [V]_{\A_p}\Norm{\iota^{-1}}{}\rho^*_{\aveL^p(Q,\iota^{-1}V\iota)}(\fx^*),
\end{equation*}
which is equivalent to the claim of the lemma.
\end{proof}

\begin{lemma}\label{lem:ApDirSum}
Let $p\in(1,\infty)$, let $\fX=\fX_1\oplus\fX_2$ and $V\in L^p_{\loc,\so}(\R^n;\fX)$ with
$$V^{-*}\in L^{p'}_{\loc,\so}(\R^n;\fX^*)$$ be a weight of the form $V=V_1\oplus V_2$ with respect to the above splitting of $\fX$. Then
\begin{equation*}
  [V]_{\A_p(\R^n;\fX)}\approx\sum_{i=1}^2[V_i]_{\A_p(\R^n;\fX_i)}
\end{equation*}
with the positive equivalence constants independent of $V$, $V_1$, and $V_2$.
\end{lemma}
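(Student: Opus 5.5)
We work with the equivalent characterisation of Proposition \ref{prop:Ap}: $[V]_{\A_p}$ is the best constant in $\rho_{\aveL^p(Q,V)}(\fx)\leq C\rho^*_{\aveL^{p'}(Q,V^{-*})}(\fx)$. Fix a norm on $\fX=\fX_1\oplus\fX_2$, say $\Norm{(\fx_1,\fx_2)}{}=\Norm{\fx_1}{}+\Norm{\fx_2}{}$. Any other admissible direct sum norm is equivalent with constants depending only on the splitting, and Lemma \ref{lem:ApIsom} transfers the estimate between the two norms up to harmless constants. Then $\fX^*=\fX_1^*\oplus\fX_2^*$ with the max norm, and $V^{-*}=V_1^{-*}\oplus V_2^{-*}$. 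The basic pointwise facts are
\begin{equation*}
\Norm{V(y)\fx}{}=\Norm{V_1(y)\fx_1}{}+\Norm{V_2(y)\fx_2}{},\qquad
\Norm{V(y)^{-*}\fx^*}{}=\max_i\Norm{V_i(y)^{-*}\fx_i^*}{}.
\end{equation*}
Hence, up to constants depending only on $p$,
\begin{equation*}
\rho_{\aveL^p(Q,V)}(\fx)\approx\sum_i\rho_{\aveL^p(Q,V_i)}(\fx_i),\qquad
\rho_{\aveL^{p'}(Q,V^{-*})}(\fx^*)\approx\max_i\rho_{\aveL^{p'}(Q,V_i^{-*})}(\fx_i^*).
\end{equation*}
The first comes from the triangle inequality in $L^p$ and the bound $a+b\le 2\max(a,b)$; the second follows similarly.

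For the direction $\lesssim$, take $\fx=(\fx_1,\fx_2)$. The plan is to compute the dual norm of a max of quasi-norms. If $\rho(\fx^*)\approx\max_i\rho_i(\fx_i^*)$, then
\begin{equation*}
\rho^*(\fx)\approx\sup\frac{\sum_i\abs{\pair{\fx_i^*}{\fx_i}}}{\max_i\rho_i(\fx_i^*)}=\sum_i\rho_i^*(\fx_i).
\end{equation*}
The lower bound comes from choosing each $\fx_i^*$ to be nearly extremal and normalised so that $\rho_i(\fx_i^*)=1$. Thus
\begin{equation*}
\rho^*_{\aveL^{p'}(Q,V^{-*})}(\fx)\approx\sum_i\rho^*_{\aveL^{p'}(Q,V_i^{-*})}(\fx_i).
\end{equation*}
Applying $\A_p$ of each $V_i$ via Proposition \ref{prop:Ap} then gives
\begin{equation*}
\rho_{\aveL^p(Q,V)}(\fx)\lesssim\sum_i[V_i]_{\A_p}\rho^*_{\aveL^{p'}(Q,V_i^{-*})}(\fx_i)\lesssim\Big(\sum_i[V_i]_{\A_p}\Big)\rho^*_{\aveL^{p'}(Q,V^{-*})}(\fx).
\end{equation*}

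For the direction $\gtrsim$, restrict to $\fx=(\fx_1,0)$. Then $\rho_{\aveL^p(Q,V)}(\fx)=\rho_{\aveL^p(Q,V_1)}(\fx_1)$, and the dual-norm computation above yields $\rho^*_{\aveL^{p'}(Q,V^{-*})}(\fx)\approx\rho^*_{\aveL^{p'}(Q,V_1^{-*})}(\fx_1)$. Therefore $[V_1]_{\A_p}\lesssim[V]_{\A_p}$, and likewise for $V_2$.

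It also has to be checked that each $V_i$ is a weight with the right integrability. Measurability, invertibility and the $L^p_{\loc,\so}$ and $L^{p'}_{\loc,\so}$ properties all pass to and from the components by restricting to $\fX_i\times\{0\}$. The only step needing care is the identification of the dual of the direct sum and the two-sided estimate for the dual of a max of quasi-norms. The remaining work is bookkeeping of the constants, which depend only on $p$ and on the projections of the splitting.
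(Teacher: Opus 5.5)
Your proof is correct and follows essentially the same route as the paper: split the pointwise norms into their components, identify the dual of a sum/max of component quasi-norms with the sum of the component dual norms, and compare the resulting ratios in each direction. The only differences are cosmetic. The paper runs the argument on the dual side of Definition \ref{def:Ap}, bounding $\rho_{\aveL^{p'}(Q,V^{-*})}(\fx^*)/\rho^*_{\aveL^{p}(Q,V)}(\fx^*)$, whereas you use the equivalent primal form of Proposition \ref{prop:Ap}. You also fix the $\ell^1$-sum norm to get exact identities, where the paper carries $\approx$ throughout.
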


\begin{proof}
We also have $\fX^*=\fX_1^*\oplus\fX_2^*$ and $V^{-*}=V_1^{-*}\oplus V_2^{-*}$. Thus
\begin{equation*}
  \rho_{\aveL^{p'}(Q,V^{-*})}(\fx^*)
  \approx\sum_{i=1}^2\rho_{\aveL^{p'}(Q,V_i^{-*})}(\fx_i^*)
\end{equation*}
and
\begin{equation*}
  \rho_{\aveL^{p}(Q,V)}(\fx)
  \approx\sum_{i=1}^2\rho_{\aveL^{p}(Q,V_i)}(\fx_i).
\end{equation*}
For the dual norm, we argue that
\begin{equation}\label{eq:ApDirSum}
\begin{split}
   \rho_{\aveL^{p}(Q,V)}^*(\fx^*)
   &\approx\sup_{\fx=(\fx_1,\fx_2)\in\fX\setminus\{0\}}\frac{\abs{\sum_{i=1}^2\pair{\fx_i}{\fx_i^*}}}{\sum_{i=1}^2\rho_{\aveL^{p}(Q,V_i)}(\fx_i)} \\
   &\approx\sum_{i=1}^2\sup_{\fx_i\in\fX_i\setminus\{0\}}\frac{\abs{\pair{\fx_i}{\fx_i^*}}}{\rho_{\aveL^{p}(Q,V_i)}(\fx_i)}
   =\sum_{i=1}^2\rho_{\aveL^p(Q,V_i)}^*(\fx_i);
\end{split}
\end{equation}
indeed, in the last ``$\approx$'', the ``$\gtrsim$'' follows by taking either one of $\fx_i=0$, while ``$\leq$'' is the general bound, for arbitrary positive functions $\lambda_i,\mu_i$,
\begin{equation*}
  \sum_{i=1}^2 \lambda_i(\fx_i)\leq\sum_{i=1}^2 C_i \mu_i(\fx_i)
  \leq(C_1+C_2)\sum_{i=1}^2\mu_i(\fx_i),\quad C_i:=\sup\frac{\lambda_i(\fx_i)}{\mu_i(\fx_i)},
\end{equation*}
specialised to $\lambda_i=\fx_i^*$ and $\mu_i=\rho_{\aveL^p(Q,V_i)}$.
Thus
\begin{equation*}
\begin{split}
  \sup_{\fx^*\in\fX^*\setminus\{0\}}
  \frac{\rho_{\aveL^{p'}(Q,V^{-*})}(\fx^*)}{\rho_{\aveL^p(Q,V)}^*(\fx^*)}
  &\approx\sup_{(\fx_1^*,\fx_2^*)\in\fX^*\setminus\{0\}}
  \frac{\sum_{i=1}^2\rho_{\aveL^{p'}(Q,V_i^{-*})}(\fx_i^*)}
  {\sum_{i=1}^2\rho_{\aveL^p(Q,V_i)}^*(\fx_i^*)} \\
  &\approx\sum_{i=1}^2\sup_{\fx_i^*\in\fX_i^*\setminus\{0\}}
  \frac{\rho_{\aveL^{p'}(Q,V_i^{-*})}(\fx_i^*)}{\rho_{\aveL^p(Q,V_i)}^*(\fx_i^*)},
\end{split}
\end{equation*}
where the last ``$\approx$'' follows in the same way as the last ``$\approx$'' in \eqref{eq:ApDirSum}. The lemma follows by taking the supremum over all cubes $Q$.
\end{proof}

\part{Operator-weighted $L^p(V)$ spaces}\label{part:Lp}

\section{Averaging and sparse operators}

The following result is an analogue of \cite[Lemma 2.1]{NT:hunt} and \cite[Lemma 2.2]{AC:12}. The former deals with $p\in(1,\infty)$ and $\fX=\C^d$, the latter with $p=2$ and a general Hilbert space $\fX$. The former, like our present study, is also written in the formalism of the norms $\rho$, while the latter makes use of the explicit form of the reducing operators available for $p=2$. Nevertheless, our argument below, passing through certain quotient norms, is more closely modelled after the one in \cite{AC:12}.

\begin{proposition}\label{prop:ave}
Let $p\in[1,\infty)$ and $\fX$ be a Banach space.
Let $V$ be an $\bddlin(\fX)$-valued weight, and let $a,b\in L^0$ be scalar-valued functions such that $aV\in L^p_{\so}(\bddlin(\fX))$ and $bV^{-*}\in L^{p'}_{\so}(\bddlin(\fX^*))$. Let
\begin{equation*}
  N:=\sup_{\fx\in\fX\setminus\{0\}}\frac{\rho_{L^p(aV)}(\fx)}{\rho_{L^{p'}(bV^{-*})}^*(\fx)},
\end{equation*}
and consider the operator
\begin{equation}\label{eq:Pf}
  Pf(x):=a(x)\int b(y)f(y)dy,
\end{equation}
initially defined on
\begin{equation*}
  L^p_b(V;\fX):=\{f\in L^p(V;\fX): bf\in L^1(\fX)\}.
\end{equation*}
Then
\begin{enumerate}[\rm(i)]
  \item\label{it:LpbDense} $L^p_b(V;\fX)\subset L^p(V;\fX)$ is dense.
  \item\label{it:P<N} If $N<\infty$, then $P$ extends to a bounded linear operator on $L^p(V;\fX)$ with
  $$\Norm{P}{L^p(V;\fX)\to L^p(V;\fX)}  \leq N.$$
  \item\label{it:N<P} If $\int ab=1$ and $P$ extends to a bounded linear operator on $L^p(V;\fX)$, then
  $$N=\Norm{P}{L^p(V;\fX)\to L^p(V;\fX)}.$$
\end{enumerate}
In particular,
if $V$ is a $p$-weight and $V^{-*}$ is a $p'$-weight, then
\begin{equation}\label{eq:ave}
  \Norm{f\mapsto\one_Q\ave{f}_Q}{L^p(V;\fX)\to L^p(V;\fX)}
  =\sup_{\fx\in\fX\setminus\{0\}}\frac{\rho_{\aveL^p(Q,V)}(\fx)}{\rho_{\aveL^{p'}(Q,V^{-*})}^*(\fx)}.
\end{equation}
\end{proposition}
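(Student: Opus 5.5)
For \eqref{it:LpbDense}, I will reuse the truncation idea from the proof of Proposition~\ref{prop:Ap}. Given $f\in L^p(V;\fX)$, the function $Vf$ lies in $L^p(\fX)$, and $b$ is measurable. I set
\begin{equation*}
E_k:=\{x:\abs{x}\le k,\ \abs{b(x)}\le k,\ \Norm{V(x)^{-1}}{}\text{-type bound}\}
\end{equation*}
and work with $f_k:=\one_{E_k}f$. The most convenient choice is to truncate where $\Norm{f(x)}{}\le k$ and $\abs{b(x)}\le k$, inside a ball of radius $k$. Then $bf_k\in L^\infty$ with bounded support, hence $bf_k\in L^1(\fX)$. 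Since $\one_{E_k}\uparrow 1$ almost everywhere, dominated convergence applied to $\one_{E_k}Vf\to Vf$ in $L^p(\fX)$ gives $f_k\to f$ in $L^p(V)$.

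For \eqref{it:P<N}, fix $f\in L^p_b(V;\fX)$ and put $\fx:=\int bf\in\fX$, a Bochner integral. Then
\begin{equation*}
\Norm{Pf}{L^p(V)}=\Norm{aV\fx}{L^p(\fX)}=\rho_{L^p(aV)}(\fx)\le N\rho^*_{L^{p'}(bV^{-*})}(\fx).
\end{equation*}
It remains to bound the dual norm by $\Norm{f}{L^p(V)}$. For $\fx^*\in\fX^*$ I write
\begin{equation*}
\abs{\pair{\fx^*}{\fx}}=\Babs{\int\pair{b(y)V(y)^{-*}\fx^*}{V(y)f(y)}\,dy},
\end{equation*}
which is at most $\rho_{L^{p'}(bV^{-*})}(\fx^*)\Norm{Vf}{L^p}$ by H\"older. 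Taking the supremum over $\fx^*$ gives $\rho^*_{L^{p'}(bV^{-*})}(\fx)\le\Norm{f}{L^p(V)}$. So $\Norm{Pf}{}\le N\Norm{f}{}$ on a dense subspace, and $P$ extends by density.

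For \eqref{it:N<P}, I need $N\le\Norm{P}{}$. Fix $\fx$. Since $P(a\fx)=a\fx\int ab=a\fx$, the vector $a\fx$ is in the range of $P$, and this is where $\int ab=1$ is used. The idea is to show that $\rho_{L^p(aV)}(\fx)=\Norm{a\fx}{L^p(V)}\le\Norm{P}{}\inf\{\Norm{f}{L^p(V)}: f\in L^p_b,\ \int bf=\fx\}$. This holds because $Pf=a\int bf=a\fx$ for any such $f$. The remaining step is a duality claim: the infimum above, a quotient norm, equals $\rho^*_{L^{p'}(bV^{-*})}(\fx)$. I expect this to be the main obstacle.

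To prove the claim, I transfer to $L^p(\fX)$ via $g=Vf$. The map $T:g\mapsto\int bV^{-1}g$, defined on a dense subspace of $L^p(\fX)$, has adjoint $\fx^*\mapsto bV^{-*}\fx^*$ into $L^{p'}(\fX^*)$. Hahn--Banach then gives that the quotient norm of $\fx$ is $\sup_{\fx^*}\abs{\pair{\fx^*}{\fx}}/\Norm{T^*\fx^*}{}=\rho^*(\fx)$. Technically, I would take the closure of the range and handle the non-closed-range issue by an approximate infimum ($\le\rho^*+\eps$), separating the point $\fx$ from the convex image of the ball $\{\Norm{g}{}\le 1\}$ under $T$. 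Because that image need not be closed, the separation is really done against its closure, which only changes the infimum by approximating $\fx$. This introduces an approximation error that the continuity of $P$ absorbs. If I fail to make this work, the fallback is to prove only $\ge$ via (ii) and the upper bound via approximate minimizers.

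Finally, \eqref{eq:ave} follows by taking $a=\one_Q$ and $b=\one_Q/\abs{Q}$, so that $\int ab=1$. Normalising $\rho$ by $\abs{Q}$ produces the $\aveL$ quantities, since the factors $\abs{Q}^{1/p}\abs{Q}^{1/p'}/\abs{Q}=1$ cancel. Both directions then combine (ii) and (iii), with the convention that if $N=\infty$ the operator is unbounded.
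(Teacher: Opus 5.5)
Your proposal is correct. Parts (i), (ii) and the specialization to \eqref{eq:ave} match the paper up to harmless choices: you truncate $\Norm{f}{}$ and $\abs{b}$ separately rather than $\Norm{bf}{}$, and you take $a=\one_Q$, $b=\abs{Q}^{-1}\one_Q$ instead of $a=b=\abs{Q}^{-1/2}\one_Q$.

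For (iii) you take a genuinely different route. The paper stays inside $L^p(V;\fX)$.
\begin{enumerate}
\item It proves that $\{a\fx:\fx\in\fX\}$ is closed, so the extended operator has the form $Pf=a\fx_f$.
\item It uses $\int ab=1$ to split $f=a\fx_f+f'$ with $f'\in\operatorname{Ker}P$.
\item It writes $\Norm{P}{}$ in terms of the quotient norms $\Norm{a\fx}{L^p(V;\fX)/\operatorname{Ker}P}$.
\item It identifies the annihilator exactly, $(\operatorname{Ker}P)^\perp=\{b\fx^*:\fx^*\in\fX^*\}$.
\end{enumerate}
This gives both inequalities at once, together with the structural fact that this annihilator consists of functions, even without the Radon--Nikod\'ym property.

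You instead transfer to $L^p(\fX)$ and consider $T:g\mapsto\int bV^{-1}g$ on the dense subspace $V(L^p_b)$. This map is bounded by Lemma \ref{lem:LpVso} applied to $bV^{-*}$. Hahn--Banach separation (the bipolar theorem) and the norming of $L^p(\fX)$ by $L^{p'}(\fX^*)$ then show that $\fx\in t\,\overline{T(\{\Norm{g}{}\le 1\})}$ for every $t>\rho^*_{L^{p'}(bV^{-*})}(\fx)$. For each such approximate preimage $g$ you have $\rho_{L^p(aV)}(Tg)=\Norm{P(V^{-1}g)}{L^p(V;\fX)}\le\Norm{P}{}\,t$. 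The reverse inequality $\Norm{P}{}\le N$ comes from (ii).

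What your route buys:
\begin{itemize}
\item It needs no closed-range or annihilator computation.
\item It never actually uses $\int ab=1$, despite your remark to the contrary. So it shows $N\le\Norm{P}{}$ for every bounded $P$ of this form.
\end{itemize}

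Two small corrections:
\begin{itemize}
\item The approximation error is absorbed by the continuity of $\fx\mapsto a\fx$ from $\fX$ into $L^p(V;\fX)$, not by the continuity of $P$. Concretely, $\rho_{L^p(aV)}(\fy)\le\Norm{aV}{L^p_{\so}}\Norm{\fy}{}$ by Lemma \ref{lem:LpVso}.
\item Your ``fallback'' is unnecessary, since the main argument already closes.
\end{itemize}
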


\begin{proof}
The last claim \eqref{eq:ave} is recognised as the special case $a=b=\abs{Q}^{-\frac12}\one_Q$, so it remains to prove the other claims.

\eqref{it:LpbDense}: Given $f\in L^p(V;\fX)$, we have $bf\in L^0(\fX)$. For $k\in\N\setminus\{0\}$, let
\begin{equation*}
  f_k:=\one_{B(0,k)}\one_{\{\Norm{bf}{\fX}\leq k\}} f.
\end{equation*}
Then $bf_k$ is measurable and $\Norm{bf}{\fX}\leq k\one_{B(0,k)}$, hence $bf_k\in L^1(\fX)$. Since $f_k\to f$ pointwise and $\Norm{Vf_k}{\fX}\leq\Norm{Vf}{\fX}$, we have $f_k\to f$ in $L^p(V;\fX)$ by dominated convergence. This proves the density claimed in \eqref{it:LpbDense}.

\eqref{it:P<N}: By \eqref{it:LpbDense}, it is enough to prove an estimate for $f\in L^p_b(V;\fX)$, where $Pf$ is given by the explicit formula \eqref{eq:Pf}. Let $e_f:=\int bf\in \fX$. Then
\begin{equation*}
\begin{split}
  \Norm{Pf}{L^p(V;\fX)}
  &=\Norm{aV\fx_f}{L^p(\fX)}
  =\rho_{L^p(aV)}(\fx_f)
  \leq N\rho_{L^{p'}(bV^{-*})}^*(\fx_f) \\
  &=N\sup_{\fx^*\in\fX^*\setminus\{0\}}\frac{\abs{\pair{\fx_f}{\fx^*}}}{\rho_{L^{p'}(bV^{-*})}(\fx^*)},
\end{split}
\end{equation*}
where
\begin{equation*}
\begin{split}
  \abs{\pair{\fx_f}{\fx^*}}
  &=\Babs{\Bpair{\int bf }{\fx^*}}
  =\Babs{\int \pair{Vf}{bV^{-*}\fx^*}} \\
  &\leq\Norm{Vf}{L^p(\fX)}\Norm{bV^{-*}\fx^*}{L^{p'}(\fX^*)}
  =\Norm{f}{L^p(V;\fX)}\cdot\rho_{L^{p'}(bV^{-*})}(\fx^*).
\end{split}
\end{equation*}
Substituting back, we obtain $\Norm{Pf}{L^p(V;\fX)}\leq N\Norm{f}{L^p(V;\fX)}$, and thus \eqref{it:P<N}.

\eqref{it:N<P}:
From \eqref{eq:Pf}, it is clear that $P$ maps $L^p_b(V;\fX)$ into
$\{a\fx:\fx\in\fX\}\subseteq L^p(V;\fX),$
where the inclusion follows from the assumption that $aV\in L^p_{\operatorname{so}}(\bddlin(\fX))$.
We check that this is a closed subspace: Indeed, if $f_n=a\fx_n\to f$ in $L^p(V;\fX)$, then $Vf_n=aV\fx_n\to Vf$ in $L^p(\fX)$; hence  a subsequence converges almost everywhere. Since $V^{-1}$ exists almost everywhere, it follows that $a\fx_n\to f$ almost everywhere. Thus, a.e.\ on $\{a=0\}$, we have $f=0$, while $\fx_n\to a^{-1}f$ a.e.\ on $\{a\neq 0\}$. Since $\fx_n$ are constant functions, their a.e.\ limit must also be a constant, say $\fx$. Hence $a^{-1}f=\fx$ a.e.\ on $a\neq 0$ and $f=0$ a.e.\ on $\{a=0\}$. These can be combined to the statement that $f=a\fx$ almost everywhere, which confirms the closedness of $\{a\fx:\fx\in\fX\}$ in $L^p(V;\fX)$.

If $P:L^p_b(V;\fX)\to\{a\fx:\fx\in\fX\}$ extends to a bounded linear operator on $L^p(V;\fX)$, the density proved in \eqref{it:LpbDense} shows that $P:L^p(V;\fX)\to\{a\fx:\fx\in\fX\}$. Hence, for every $f\in L^p(V;\fX)$, there is $\fx_f\in\fX$ such that $Pf=a\fx_f$. Under the assumption that $\int ab=1$, it follows in particular that $a$ cannot vanish almost everywhere, and hence such $\fx_f$ is unique.

We then observe that
\begin{equation}\label{eq:AC1}
\begin{split}
  \Norm{Pf}{L^p(V;\fX)\to L^p(V;\fX)}
  &=\sup_{f\in L^p(V;\fX)\setminus\{0\}}\frac{\Norm{aV\fx_f}{L^p(\fX)}}{\Norm{f}{L^p(V;\fX)}} \\
  &=\sup_{\fx\in\fX\setminus\{0\}}
  \sup_{\genfrac{}{}{0pt}{}{f\in L^p(V;\fX)} {\fx_f=\fx}}\frac{\rho_{L^p(aV)}(\fx)}{\Norm{f}{L^p(V;\fX)}}.
\end{split}
\end{equation}

Given any $f\in L^p(V;\fX)$ with $\fx_f=\fx$, we consider $f':=f-a\fx$. For the second term, we have $b(a\fx)\in L^1(\fX)$ since $\int ab=1$, and $V(a\fx)\in L^p(\fX)$ since $aV\in L^p_{\operatorname{so}}(\bddlin(\fX))$. Thus $a\fx\in L^p_b(V;\fX)$, and $P(a\fx)$ can be computed by the explicit formula \eqref{eq:Pf}. It follows from the assumption $\int ab=1$ and $\fx_f=\fx$ that
\begin{equation*}
  P(a\fx)=a\int ba\fx=a\fx,\qquad
  Pf'=Pf-P(a\fx)=a\fx_f-a\fx=0.
\end{equation*}
Hence an arbitrary $f\in L^p(V;\fX)$ with $\fx_f=fx$ can be written as $f=a\fx+f'$, where $f'\in\operatorname{Ker}P$. Thus the inner supremum on the right of \eqref{eq:AC1} can be written as
\begin{equation}\label{eq:AC2}
\begin{split}
  \sup_{\genfrac{}{}{0pt}{}{f\in L^p(V;\fX)} {\fx_f=\fx}} \big( \Norm{f}{L^p(V;\fX)}\big)^{-1}
 &=\Big(\inf_{\genfrac{}{}{0pt}{}{f'\in L^p(V;\fX)} {P f'=0}}\Norm{a\fx+f'}{L^p(V;\fX)}\Big)^{-1} \\
 &=\big(\Norm{a\fx}{L^p(V;\fX)/ \operatorname{Ker}P }\big)^{-1}.
\end{split}
\end{equation}
Moreover,
\begin{equation}\label{eq:axQuot}
  \Norm{a\fx}{L^p(V;\fX)/ \operatorname{Ker}P }
  =\sup\Big\{\abs{\Lambda(a\fx)}:\Norm{\Lambda}{(L^p(V;\fX)/\operatorname{Ker}P)^*}\leq 1\Big\},
\end{equation}
where, by general functional analysis (see e.g.\ \cite[Proposition B.1.4]{HNVW1}), the dual of the quotient is the annihilator
\begin{equation*}
\begin{split}
  (L^p(V;\fX)/\operatorname{Ker}P)^*
  =&\,(\operatorname{Ker}P)^\perp \\
  :=&\,\{\Lambda\in (L^p(V;\fX))^*:\Lambda(f)=0\ \forall\ f\in\operatorname{Ker}P\}.
\end{split}
\end{equation*}
We claim that
\begin{equation}\label{eq:kerPerp}
  (\operatorname{Ker}P)^\perp
  =\{b\fx^*:\fx^*\in\fX^*\}\subset L^{p'}(V^{-*};\fX^*),
\end{equation}
where $L^{p'}(V^{-*};\fX^*)$ is  isometrically identified with a subspace of the dual $(L^p(V;\fX))^*$ with respect to the duality $$\pair{f}{g}=\int\pair{f(y)}{g(y)}dy=\int\pair{V(y)f(y)}{V(y)^{-*}g(y)}dy.$$

Assuming \eqref{eq:kerPerp} for the moment, we can continue from \eqref{eq:axQuot} with
\begin{equation*}
\begin{split}
  \Norm{a\fx}{L^p(V;\fX)/ \operatorname{Ker}P }
  &=\sup\Big\{ \Babs{\int\pair{a\fx}{b\fx^*}}:  \Norm{b\fx^*}{L^{p'}(V^{-*};\fX^*)}\leq 1\Big\} \\
  &=\sup\Big\{ \abs{\pair{\fx}{\fx^*}}:  \rho_{L^{p'}(bV^{-*})}(\fx^*)\leq 1\Big\}
  =\rho_{L^{p'}(bV^{-*})}^*(\fx).
\end{split}
\end{equation*}
Substituting this into \eqref{eq:AC2}, and then \eqref{eq:AC2} into \eqref{eq:AC1}, we obtain the identity claimed in \eqref{it:N<P}.

It only remains to verify claim \eqref{eq:kerPerp}.
We first observe that, for each $\Lambda\in (L^p(V;\fX))^*$, we can define $\fx^*_\Lambda\in \fX^*$ by setting $\pair{\fx}{\fx_\Lambda^*}:=\Lambda(a\fx)$
for $\fx\in\fX$, and hence
\begin{equation*}
  \Norm{\fx_\Lambda^*}{\fX^*}\leq\sup_{\Norm{\fx}{\fX}\leq 1}\Norm{\Lambda}{}\Norm{a\fx}{L^p(V;\fX)}=\Norm{\Lambda}{}\Norm{aV}{L^p_{\so}(\bddlin(\fX))}.
\end{equation*}

For arbitrary $f\in L^p(V;\fX)$ we already checked that $f':=f-a\fx_f\in\operatorname{Ker} P$. Hence every $\Lambda\in(\operatorname{Ker}P)^\perp\subset (L^p(V;\fX))^*$ satisfies
\begin{equation*}
  0=\Lambda(f')=\Lambda(f)-\Lambda(a\fx_f).
\end{equation*}
In particular, for $f\in L^p_b(V;\fX)$ (for which $\fx_f=\int bf$), it follows that
\begin{equation*}
  \Lambda(f)
  =\Lambda(a\fx_f)
  =\pair{\fx_f}{\fx_\Lambda^*}
  =\Bpair{\int bf}{\fx_\Lambda^*}
  =\int \pair{f}{b\fx_\Lambda^*}.
\end{equation*}
Note that both left and right-hand sides of the previous identity define continuous linear functionals on $f\in L^p(V;\fX)$. Since they agree on the dense (by \eqref{it:LpbDense}) subspace $L^p_b(V;\fX)$, they must agree on all $L^p(V;\fX)$. This proves ``$\subseteq$'' in place of ``$=$'' in \eqref{eq:kerPerp}.

To check ``$\supseteq$'', let $\fx^*\in\fX^*$ and $f\in\operatorname{Ker}P\subseteq L^p(V;\fX)$. Thus, there is a sequence $f_n\in L^p_b(V;\fX)$ with $f\to f_n$ in $L^p(V;\fX)$ and $Pf_n\to Pf=0$ in $L^p(V;\fX)$, hence, for a subsequence, also almost everywhere. Since $Pf_n=a\fx_{f_n}$ and $a$ does not vanish a.e., it follows that $\fx_{f_n}\to 0$ in $\fX$, where $\fx_{f_n}=\int bf_n$. Thus also $\int \pair{bf_n}{\fx^*}=\int \pair{f_n}{b\fx^*}\to 0$. But $b\fx^*\in L^{p'}(V^{-*};\fX^*)$ and $f_n\to f$ in $L^p(V;\fX)$, hence $\int \pair{f}{b\fx^*}=\lim_{n\to\infty}\int \pair{f_n}{b\fx^*}=0$. This confirms  ``$\subseteq$'' in place of ``$=$'' in \eqref{eq:kerPerp}.

Thus, we have completed the verification of \eqref{eq:kerPerp}, and thereby of the proposition.
\end{proof}

\begin{remark}
A curious detail of the proof above is that we manage to identify $(\operatorname{Ker}P)^\perp$, {\em a priori} a subspace of {\em functionals} $(L^p(V;\fX))^*$, with an explicit subspace of {\em functions} $L^{p'}(V^{-*},\fX^*)$, without the need to know whether the full spaces $(L^p(V;\fX))^*$ and  $L^{p'}(V^{-*},\fX^*)$ coincide. Indeed, in the generality of arbitrary Banach spaces that we consider, this coincidence in general fails, as it would require the so-called Radon--Nikod\'ym property of $\fX^*$ (see e.g.\ \cite[Section 1.3.b]{HNVW1}).
\end{remark}

\begin{corollary}\label{cor:ave}
Let $\fX$ be a Banach space, $p\in(1,\infty)$, and $V$ be an $\bddlin(\fX)$-valued $p$-weight such that $V^{-*}$ is a $p'$-weight. Then the averaging operators $f\mapsto\one_Q\ave{f}_Q$ are bounded on $L^p(V;\fX)$, uniformly over all cubes $Q\subset\R^n$, if and only if $V\in\A_p$.
\end{corollary}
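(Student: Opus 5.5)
The plan is to read this off from the identity \eqref{eq:ave} of Proposition \ref{prop:ave}, together with the dual reformulation of the $\A_p$ condition in Proposition \ref{prop:Ap}. Since $V$ is a $p$-weight and $V^{-*}$ is a $p'$-weight, the hypotheses of Proposition \ref{prop:ave} hold with $a=b=\abs{Q}^{-\frac12}\one_Q$. Indeed, $aV\in L^p_{\so}$ and $bV^{-*}\in L^{p'}_{\so}$ by local integrability, and $\int ab=1$.

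First I will check that \eqref{eq:ave} is an identity in $[0,\infty]$ with no a priori boundedness assumed. Part \eqref{it:P<N} gives $\Norm{P}{}\le N$ whenever $N<\infty$. Part \eqref{it:N<P} gives $N=\Norm{P}{}$ whenever $P$ is bounded. Hence $P$ is bounded if and only if $N<\infty$, and in that case the two quantities agree.

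Next I will identify the right-hand side of \eqref{eq:ave} with the $\A_p$ quotient. The normalisations matter here. With $a=b=\abs{Q}^{-1/2}\one_Q$, the quasi-norms appearing in $N$ are
\begin{equation*}
\rho_{L^p(aV)}(\fx)=\abs{Q}^{\frac1p-\frac12}\rho_{\aveL^p(Q,V)}(\fx),
\qquad
\rho_{L^{p'}(bV^{-*})}(\fx^*)=\abs{Q}^{\frac1{p'}-\frac12}\rho_{\aveL^{p'}(Q,V^{-*})}(\fx^*).
\end{equation*}
Taking duals, the factor in the second expression inverts, so $\rho^*_{L^{p'}(bV^{-*})}=\abs{Q}^{\frac12-\frac1{p'}}\rho^*_{\aveL^{p'}(Q,V^{-*})}$. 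In the quotient defining $N$, the total power of $\abs{Q}$ is therefore $\frac1p-\frac12-\frac12+\frac1{p'}=0$. This confirms
\begin{equation*}
N_Q=\sup_{\fx\ne0}\frac{\rho_{\aveL^p(Q,V)}(\fx)}{\rho^*_{\aveL^{p'}(Q,V^{-*})}(\fx)}.
\end{equation*}

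Finally, by Proposition \ref{prop:Ap} (equivalently Corollary \ref{cor:Ap}), we have $\sup_Q N_Q=[V]_{\A_p}$. This holds in the sense that $\sup_Q N_Q<\infty$ exactly when $V\in\A_p$, and then the two are equal. Consequently the uniform bound $\sup_Q\Norm{f\mapsto\one_Q\ave{f}_Q}{L^p(V)\to L^p(V)}<\infty$ holds if and only if $V\in\A_p$, and the supremum of the operator norms equals $[V]_{\A_p}$.

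The only genuinely delicate point is the first step: ruling out the case where $P$ is bounded but $N=\infty$, which \eqref{eq:ave} covers only through part \eqref{it:N<P}. Everything else is bookkeeping.
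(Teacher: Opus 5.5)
Your proposal is correct and follows the paper's proof: use \eqref{eq:ave} from Proposition \ref{prop:ave} to turn uniform boundedness of the averaging operators into the inequality $\rho_{\aveL^p(Q,V)}(\fx)\leq C\rho^*_{\aveL^{p'}(Q,V^{-*})}(\fx)$, then apply Corollary \ref{cor:Ap} (i.e.\ Proposition \ref{prop:Ap}). Your additional checks make explicit what the paper leaves implicit: reading \eqref{eq:ave} as an identity in $[0,\infty]$ via parts (ii) and (iii) of Proposition \ref{prop:ave}, and verifying that the powers of $\abs{Q}$ cancel for $a=b=\abs{Q}^{-\frac12}\one_Q$.
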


\begin{proof}
By \eqref{eq:ave} of Proposition \ref{prop:ave}, we find that
\begin{equation*}
    \Norm{f\mapsto\one_Q\ave{f}_Q}{L^p(V;\fX)\to L^p(V;\fX)}\leq C
\end{equation*}
for all cubes $Q\subset\R^n$ if and only if
\begin{equation*}
    \rho_{\aveL^p(Q,V)}(\fx)\leq C\rho_{\aveL^{p'}(Q,V^{-*})}^*(\fx)
\end{equation*}
for all cubes $Q\subset\R^n$ and all $\fx\in\fX$. By Corollary \ref{cor:Ap}, this latter condition is equivalent to $V\in\A_p$.
\end{proof}

As a further corollary, we obtain the boundedness of a class of sparse operators, previously considered by \cite{LP:22} in the context of $\A_2$-weighted $L^2$ spaces of Hilbert space -valued functions, in the more general setting of $\A_p$-weighted $L^p$ spaces of Banach space-valued functions. We should note, however, that our Corollary \ref{cor:sparse} below is not strictly an extension of \cite[Theorem 2.1]{LP:22}, since these authors obtain a somewhat sharper control, requiring the $\A_2$ property only for the cubes that appear in the sparse collection. It might seem at first that this is the case in our argument as well, but the fact that we apply the reverse H\"older inequality (Proposition \ref{prop:RHI}) makes implicit use of the $\A_p$ condition for the subcubes of these cubes as well.

\begin{corollary}\label{cor:sparse}
Let $p\in(1,\infty)$, let $V\in\A_p(\R^n;\bddlin(\fX))$, let $\mathscr S\subset\mathscr D$ be a sparse collection of dyadic cubes of $\R^n$, and for each $Q\in\mathscr S$, let $a_Q$ and $b_Q$ be scalar functions supported in $Q$ and bounded in absolute value by $1$. Then
\begin{equation*}
  Tf:=\sum_{Q\in\mathscr S}a_Q\fint_Q b_Qf
\end{equation*}
defines a bounded operator on $L^p(V;\fX)$.
\end{corollary}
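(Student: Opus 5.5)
We plan to prove the bound by duality, reducing it to scalar sparse estimates via the averaging norms $\rho_{\aveL^p(Q,V)}$ and $\rho_{\aveL^{p'}(Q,V^{-*})}$. Fix $f\in L^p(V;\fX)$ (say with $V^{-1}$-truncation as in Proposition \ref{prop:ave}, so all integrals exist) and $g\in L^{p'}(V^{-*};\fX^*)$, i.e.\ $G:=V^{-*}g\in L^{p'}(\fX^*)$; since $L^{p'}(V^{-*};\fX^*)$ norms $L^p(V;\fX)$ isometrically (via $F=Vf$, $G=V^{-*}g$ and norming of $L^p(\fX)$ by $L^{p'}(\fX^*)$), it suffices to bound $\abs{\pair{Tf}{g}}\lesssim \Norm{f}{L^p(V)}\Norm{g}{L^{p'}(V^{-*})}$.

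For each $Q\in\mathscr S$ write $\fx_Q:=\fint_Q b_Qf\in\fX$ and $\fx_Q^*:=\fint_Q \bar a_Q g\in\fX^*$ (up to the normalisation $|Q|$), so that the $Q$-term equals $\abs{Q}\,\pair{\fx_Q}{\fx_Q^*}$. By the definition of the dual norm, $\abs{\pair{\fx_Q}{\fx_Q^*}}\le \rho_{\aveL^{p'}(Q,V^{-*})}^*(\fx_Q)\,\rho_{\aveL^{p'}(Q,V^{-*})}(\fx_Q^*)$. Next I estimate each factor by a scalar average with a slightly smaller exponent. For the dual side, pairing $\fx_Q$ with $\fy^*$ gives $\abs{\pair{\fx_Q}{\fy^*}}\le\fint_Q\Norm{Vf}{}\Norm{V^{-*}\fy^*}{}\le \ave{\Norm{Vf}{}^{s}}_Q^{1/s}\rho_{\aveL^{s'}(Q,V^{-*})}(\fy^*)$ for $s<p$ with $s'=p'+\eta$, where $\eta$ is from Proposition \ref{prop:RHI}; the reverse H\"older inequality replaces $\rho_{\aveL^{p'+\eta}}$ by $\rho_{\aveL^{p'}}$, so $\rho^*_{\aveL^{p'}(Q,V^{-*})}(\fx_Q)\lesssim \ave{\Norm{Vf}{}^{s}}_Q^{1/s}$. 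For the other factor, use Definition \ref{def:Ap}: $\rho_{\aveL^{p'}(Q,V^{-*})}(\fx_Q^*)\le[V]_{\A_p}\rho^*_{\aveL^p(Q,V)}(\fx_Q^*)$, and symmetrically, pairing with $\fy$ and using the reverse H\"older inequality for $\rho_{\aveL^{p+\eps}(Q,V)}$, get $\rho^*_{\aveL^p(Q,V)}(\fx_Q^*)\lesssim \ave{\Norm{V^{-*}g}{}^{t}}_Q^{1/t}$ with $t=(p+\eps)'<p'$.

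Thus $\abs{\pair{Tf}{g}}\lesssim\sum_{Q\in\mathscr S}\abs{Q}\ave{F^s}_Q^{1/s}\ave{G^t}_Q^{1/t}$ with scalar $F=\Norm{Vf}{}\in L^p$, $G=\Norm{V^{-*}g}{}\in L^{p'}$, $s<p$, $t<p'$. Using sparseness, $\abs{Q}\le C\abs{E_Q}$ with disjoint $E_Q\subset Q$, the sum is $\lesssim\int M_s F\cdot M_t G\le\Norm{M_sF}{L^p}\Norm{M_tG}{L^{p'}}\lesssim\Norm{F}{L^p}\Norm{G}{L^{p'}}$ by the boundedness of $M_s=(M(\cdot)^s)^{1/s}$ on $L^p$ for $s<p$. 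Finally extend from the dense truncated class by density.

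The main obstacle is the middle step: the $\A_p$ condition alone only controls dual norms at the endpoint exponents, and a naive H\"older estimate would give $\ave{F^p}_Q^{1/p}\ave{G^{p'}}_Q^{1/p'}$, whose sparse sum is not bounded; the gain of exponents below $p$ and $p'$ must come from the reverse H\"older inequality of Proposition \ref{prop:RHI}, applied on both the $V$ and $V^{-*}$ sides, and I must check carefully that the pairing estimates place $p'+\eta$ (resp.\ $p+\eps$) on the weight side so that Proposition \ref{prop:RHI} applies in the right direction.
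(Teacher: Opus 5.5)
Your per-cube estimate and the sparse summation are correct, and they are essentially the paper's argument with the bookkeeping rearranged. The paper splits $\pair{\fx_Q}{\fx_Q^*}$ using the pair $\rho_{\aveL^{p+\eps}(Q,V)}$, $\rho^*_{\aveL^{p+\eps}(Q,V)}$. It bounds the $g$-factor by plain H\"older by $\Norm{V^{-*}g}{\aveL^{(p+\eps)'}(Q)}$. It handles the $f$-factor through Proposition~\ref{prop:RHI}, \eqref{eq:preRHI} and Proposition~\ref{prop:ave} on $L^u(V)$ with $u=(p'+\eps)'$. You split with $\rho^*_{\aveL^{p'}(Q,V^{-*})}$, $\rho_{\aveL^{p'}(Q,V^{-*})}$, use Definition~\ref{def:Ap} once on the $g$-side, and get the reverse H\"older gain on both sides directly from the pairings. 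This amounts to Proposition~\ref{prop:ave}\eqref{it:P<N} inlined, and is slightly more economical.

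What is missing is the case of an infinite collection $\mathscr S$. Your duality reduction and the identity $\pair{Tf}{g}=\sum_{Q}\abs{Q}\pair{\fx_Q}{\fx_Q^*}$ presuppose that $Tf$ is already an element of $L^p(V;\fX)$ and that the pairing commutes with the sum. ``Extend by density'' does not supply this: even for truncated $f$ you have not said in what sense $\sum_{Q\in\mathscr S}a_Q\fint_Q b_Qf$ converges, so there is no operator yet on the dense class to extend.

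The paper treats this as a separate step. Since every subfamily of $\mathscr S$ is sparse with the same constant, the a priori bound is uniform over finite $\mathscr S_k\subseteq\mathscr S$. The paper then shows that $(T_kf)$ is Cauchy in $L^p(V)$. It dominates $\abs{\pair{(T_j-T_k)f}{g}}$ by a maximal function restricted to cubes of $\mathscr S_j\setminus\mathscr S_k$, split into large, far and small cubes as in \eqref{eq:cube-cases}. The small cubes genuinely need sparseness: the union of small cubes of $\mathscr S$ in a fixed box has measure tending to zero, whereas the maximal function over all small cubes tends to $\Norm{Vf}{}$ rather than to $0$.

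Within your framework there is a shortcut. Your $f$-side estimate, combined with $\rho_{\aveL^{p+\eps}(Q,V)}\lesssim\rho_{\aveL^p(Q,V)}\le[V]_{\A_p}\rho^*_{\aveL^{p'}(Q,V^{-*})}$ (Proposition~\ref{prop:RHI} and Corollary~\ref{cor:Ap}), also bounds $\sum_{Q}\int_Q\Norm{V\fx_Q}{}h$ for scalar $0\le h\in L^{p'}$. Hence $\sum_{Q\in\mathscr S}\one_Q\Norm{V\fx_Q}{}\in L^p$ for truncated $f$. This gives a.e.\ absolute convergence of the series and, by dominated convergence, unconditional convergence in $L^p(V)$. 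The uniform bounds on partial sums then pass this convergence to all $f\in L^p(V)$. Either this or the paper's tail argument has to be added.
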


\begin{proof} 
Let us begin by observing that each individual term in the series of $Tf$ defines a bounded operator on $L^p(V)$ by Proposition \ref{prop:ave}. Hence, so does any finite sum of these terms. Our plan is to first prove that such finite sums satisfy uniform bounds that are independent of the number of terms, and then use this to deduce the convergence of the infinite series to a bounded operator.

\subsubsection*{A priori bounds for finite sums}

Let first $\mathscr S$ be finite, so that $Tf\in L^p(V)$ is well defined. We estimate its norm by dualising with $g\in L^{p'}(V^{-*})$; cf.~\cite[Proposition 1.3.1.]{HNVW1}. Hence, we are led to study
\begin{equation*}
    \pair{Tf}{g}
    =\sum_{Q\in\mathscr S}\abs{Q}\Bpair{\fint_Q b_Qf}{\fint_Q a_Q g},
\end{equation*}
and we begin with the estimate
\begin{equation}\label{eq:sparse-dual}
  \abs{\pair{Tf}{g}}
  \leq\sum_{Q\in\mathscr S}\abs{Q}\rho_{\aveL^v(Q,V)}\Big(\fint_Q b_Q f\Big)\rho_{\aveL^v(Q,V)}^*\Big(\fint_Q a_Q g\Big).
\end{equation}
The last term is
\begin{equation*}
  \rho_{\aveL^v(Q,V)}^*\Big(\fint_Q a_Q g\Big)
  =\sup_{\fx\in\fX\setminus\{0\}}\frac{1}{\rho_{\aveL^v(Q,V)}(\fx)}\Babs{\Bpair{\fx}{\fint_Q a_Q g}},
\end{equation*}
where
\begin{equation*}
  \Babs{\Bpair{\fx}{\fint_Q a_Q g}}
  =\Babs{\fint_Q a_Q\pair{V\fx}{V^{-*}g}}
  \leq\Norm{V\fx}{\aveL^v(Q)}\Norm{V^{-*}g}{\aveL^{v'}(Q)},
\end{equation*}
and hence
\begin{equation}\label{eq:rho-g}
  \rho_{\aveL^v(Q,V)}^*\Big(\fint_Q a_Q g\Big)\leq\Norm{V^{-*}g}{\aveL^{v'}(Q)}.
\end{equation}

Let now $v=p+\eps$. By Proposition \ref{prop:RHI} and \eqref{eq:preRHI}, we have
\begin{equation*}
  \rho_{\aveL^v(Q,V)}\Big(\fint_Q b_Q f\Big)\lesssim \rho_{\aveL^u(Q,V)}\Big(\fint_Q b_Q f\Big)
\end{equation*}
for any $u\in(0,\infty)$.
On the other hand, by Proposition \ref{prop:ave},
\begin{equation*}
\begin{split}
  \rho_{\aveL^u(Q,V)}\Big(\fint_Q b_Q f\Big)
  &=\BNorm{\frac{\one_Q}{\abs{Q}^{\frac1u}} \int \frac{b_Q}{\abs{Q}^{\frac{1}{u'}}} \frac{f}{\abs{Q}^{\frac 1u}}   }{L^u(V)} \\
  &\leq\sup_{\fx\in\fX\setminus\{0\}}\frac{\rho_{L^u(\one_Q\abs{Q}^{-\frac1u}V)}(\fx)}{\rho_{L^{u'}(b_Q\abs{Q}^{-\frac{1}{u'}}V^{-*})}^*(\fx)}
    \Norm{\one_Q\abs{Q}^{-\frac1u}f}{L^u(V)} \\
  &=\sup_{\fx\in\fX\setminus\{0\}}\frac{\rho_{\aveL^u(Q,V)}(\fx)}{\rho_{\aveL^{u'}(Q,b_QV^{-*})}^*(\fx)}
    \Norm{f}{\aveL^u(Q,V)}. \\
\end{split}
\end{equation*}

Taking $u=(p'+\eps)'$, thus $u'=p'+\eps$, it follows that
\begin{equation*}
\begin{split}
    \rho_{\aveL^{u'}(Q,b_QV^{-*})}(\fx^*)
    &\leq\rho_{\aveL^{u'}(Q,V^{-*})}(\fx^*)\qquad\text{since }\abs{b_Q}\leq\one_Q, \\
    &\lesssim\rho_{\aveL^{p'}(Q,V^{-*})}(\fx^*)\qquad\text{by Proposition \ref{prop:RHI}},
\end{split}
\end{equation*}
and hence $\rho_{\aveL^{u'}(Q,b_QV^{-*})}^*(\fx)\gtrsim \rho_{\aveL^{p'}(Q,V^{-*})}^*(\fx)$. Thus
\begin{equation*}
  \frac{\rho_{\aveL^u(Q,V)}(\fx)}{\rho_{\aveL^{u'}(Q,b_QV^{-*})}^*(\fx)}
  \lesssim\frac{\rho_{\aveL^p(Q,V)}(\fx)}{\rho_{\aveL^{p'}(Q,V^{-*})}^*(\fx)}\lesssim 1
\end{equation*}
by the $\A_p$ condition in the last step. Thus, we have checked that
\begin{equation}\label{eq:rho-f}
  \rho_{\aveL^v(Q,V)}\Big(\fint_Q b_Q f\Big)\lesssim\Norm{f}{\aveL^u(Q,V)},\qquad
  \begin{cases} v = p+\eps, \\ u=(p'+\eps)'.\end{cases}
\end{equation}
Substituting \eqref{eq:rho-g} and \eqref{eq:rho-f} into \eqref{eq:sparse-dual}, and using the sparseness assumption $\abs{Q}\lesssim\abs{E(Q)}$ for some disjoint subsets $E(Q)\subseteq Q$, we obtain
\begin{equation}\label{eq:sparse-final}
\begin{split}
  \abs{\pair{Tf}{g}}
  &\lesssim\sum_{Q\in\mathscr S}\abs{E(Q)}\Norm{f}{\aveL^u(Q,V)}\Norm{g}{\aveL^{v'}(Q,V^{-*})} \\
  &\leq\int_{\R^n}M_u(Vf)M_{v'}(V^{-*}g) \\
  &\leq\Norm{M_u(Vf)}{L^p}\Norm{M_{v'}(V^{-*}g)}{L^{p'}} \\
  &\lesssim\Norm{Vf}{L^p} \Norm{V^{-*}g}{L^{p'}} =\Norm{f}{L^p(V)} \Norm{g}{L^{p'}(V^{-*})}
\end{split}
\end{equation}
by the maximal theorem, since $p>u$ and $p'>v'$. This shows that $\Norm{Tf}{L^p(V)}\lesssim\Norm{f}{L^p(V)}$ by duality.

\subsubsection*{Convergence of the infinite series}

We then turn to the general case of infinite $\mathscr S$. We will show that the series defining $Tf$ converges unconditionally in $L^p(V)$, i.e., that all $$T_k f:=\sum_{Q\in\mathscr S_k}a_Q\fint_Q b_Q f$$
with $k\in\N\setminus\{0\}$
form a Cauchy sequence in $L^p(V)$ whenever $\mathscr S_k\subseteq\mathscr S$ are finite subsets with $\mathscr S_k\uparrow\mathscr S$. Thus, we need to show that $T_j f-T_k f=\sum_{Q\in\mathscr S_j\setminus \mathscr S_k}a_Q\fint_Q b_Q f$ converges to zero in $L^p(V)$ as $j>k\to\infty$.

For all $M,N\in\N$, there are only finitely many dyadic cubes $Q\in\mathscr S$ such that $Q\subseteq[-2^M,2^M]$ and $2^{-N}\leq\ell(Q)\leq 2^M$. Hence, for all $k\geq k(M,N)$, all such cubes will be contained in $\mathscr S_k$ and hence not contained in $\mathscr S_j\setminus\mathscr S_k$ for $j>k\geq k(M,N)$. In other words, $\mathscr S_j\setminus \mathscr S_k$ only contains cubes that are
\begin{equation}\label{eq:cube-cases}
  \begin{cases} \text{large:} & \ell(Q)>2^M,\quad\text{or} \\
    \text{far:} & Q\subseteq[-2^M,2^M]^c,\quad\text{or} \\
    \text{small:} & Q\subseteq[-2^M,2^M]\quad\text{and}\quad \ell(Q)<2^{-N}. \\
  \end{cases}
\end{equation}

We now apply \eqref{eq:sparse-final} with $\mathscr S_j\setminus\mathscr S_k$ in place of $\mathscr S$, observing that we can replace the maximal function by a slightly smaller object, where we only take the supremum over the cubes that actually appear in the sum:
\begin{equation}\label{eq:TjCauchy}
\begin{split}
  \abs{\pair{T_j f-T_k f}{g}}
  &\lesssim\sum_{Q\in\mathscr S_j\setminus\mathscr S_k}\abs{E(Q)}\Norm{f}{\aveL^u(Q,V)}\Norm{g}{\aveL^{v'}(Q,V^{-*})} \\
  &\leq\int_{\R^n}M_u^{\mathscr S_j\setminus\mathscr S_k}(Vf)M_{v'}(V^{-*}g) \\
  &\lesssim\Norm{M_u^{\mathscr S_j\setminus\mathscr S_k}(Vf)}{L^p} \Norm{g}{L^{p'}(V^{-*})},
\end{split}
\end{equation}
where (writing simply $\phi:=Vf\in L^p$)
\begin{equation*}
\begin{split}
   M_u^{\mathscr S_j\setminus\mathscr S_k}\phi(x)
   &:=\sup_{x\in Q\in\mathscr S_j\setminus\mathscr S_k}\Norm{\phi}{\aveL^u(Q)}
   \leq M_{u,M}^{\operatorname{large}}\phi(x)
   +M_{u,M}^{\operatorname{far}}\phi(x)
   +M_{u,M,N,\mathscr S}^{\operatorname{small}}\phi(x)
\end{split}
\end{equation*}
and the last three maximal operators are defined by taking the supremum over the respective collections of cubes defined in \eqref{eq:cube-cases}. It is evident that each of them is pointwise dominated by $M_u\phi\in L^p$; hence, by dominated convergence, to show their $L^p$ convergence to zero, it is enough to show pointwise convergence.

Concerning $M_{u,M}^{\operatorname{large}}\phi$, we note that
\begin{equation*}
  \Norm{\phi}{\aveL^u(Q)}\leq  \Norm{\phi}{\aveL^p(Q)}\leq \abs{Q}^{-\frac1p}\Norm{\phi}{L^p(\R^n)}
  \leq 2^{-\frac{nM}{p}}\Norm{\phi}{L^p(\R^n)}
\end{equation*}
for $\ell(Q)>2^M$, and hence
\begin{equation*}
  M_{u,M}^{\operatorname{large}}\phi(x)\leq 2^{-\frac{nM}{p}}\Norm{\phi}{L^p(\R^n)}\to 0
\end{equation*}
as $M\to\infty$. For the ``far'' case, it is even simpler to see that
\begin{equation*}
  M_{u,M}^{\operatorname{far}}\phi(x)\leq \one_{[-2^M,2^M]^c}(x)M_u\phi(x)\to 0.
\end{equation*}

Finally, we turn to the case of small cubes. This requires a little more care and in particular the fact that we are dealing with subsets of a fixed collection $\mathscr S$ of sparse cubes only. (Note that the full maximal function over all cubes, when restricted to small cubes, converges to $\abs{\phi(x)}$ instead of $0$ by Lebesgue's differentiation theorem!)

The maximal function $M_{u,M,N,\mathscr S}^{\operatorname{small}}\phi$ is supported in the union $U_{M,N}$ of cubes $Q\in\mathscr S$ with $Q\subseteq[-2^M,2^M]$ and $\ell(Q)<2^{-N}$. The measure of this union is dominated by
\begin{equation*}
  \sum_{\genfrac{}{}{0pt}{}{Q\in\mathscr S: \ell(Q)<2^{-N}}{Q\subseteq[-2^M,2^M]}}
  \abs{Q}
  \lesssim  \sum_{\genfrac{}{}{0pt}{}{Q\in\mathscr S} {Q\subseteq[-2^M,2^M]}}
  \abs{E(Q)}\leq\abs{[-2^M,2^M]}=2^{n(M+1)},
\end{equation*}
and hence by the tail (with respect to the condition $\ell(Q)<2^{-N}$) of a convergent series. Thus, $\abs{U_{M,N}}\to 0$ as $N\to\infty$. Since clearly $U_{M,N}\supseteq U_{M,N+1}$, it follows that $U_{M,N}\downarrow U_{M,\infty}$ as $N\to\infty$, where $\abs{U_{M,\infty}}=0$. Hence
\begin{equation*}
  M_{u,M,N,\mathscr S}^{\operatorname{small}}\phi(x)\leq \one_{U_{M,N}}(x)M_u\phi(x)\to 0
\end{equation*}
almost everywhere as $N\to\infty$.

To summarise, given $\eps>0$, we can first choose $M$ so large that
\begin{equation*}
  \Norm{M_{u,M}^{\operatorname{large}}\phi}{L^p}<\frac{\eps}{3},
  \qquad\Norm{M_{u,M}^{\operatorname{large}}\phi}{L^p}<\frac{\eps}{3},
\end{equation*}
and then $N$ so large that
\begin{equation*}
  \Norm{M_{u,M,N,\mathscr S}^{\operatorname{small}}\phi}{L^p}<\frac{\eps}{3}
\end{equation*}
to see that
\begin{equation*}
  \Norm{M_u^{\mathscr S_j\setminus \mathscr S_k}\phi}{L^p}<\eps
\end{equation*}
for all $j>k\geq k(M,N)$. In view of \eqref{eq:TjCauchy}, we find that $\{T_k f\}_{k\in\N\setminus\{0\}}$ is a Cauchy sequence in $L^p(V)$, which we wanted to prove.
\end{proof}

\section{Unbounded commutators}

The aim of this section is to prove the following statement:

\begin{theorem}\label{thm:commuUnbd}
Let $\fX$ be an infinite-dimensional Banach space and let $p\in(1,\infty)$. Then there exists a function $B\in\BMO_{\so}(\R;\bddlin(\fX))$ with $B^*\in\BMO_{\so}(\R;\bddlin(\fX^*))$ such that its commutator $[B,\HT]$ with the Hilbert transform
\begin{equation*}
  \HT f(x):=\lim_{\eps\to 0}\frac{1}{\pi}\Big(\int_{-\infty}^{x-\eps}+\int_{x+\eps}^\infty\Big)\frac{f(y)dy}{x-y}
\end{equation*}
is not bounded on $L^p(\R;\fX)$.
\end{theorem}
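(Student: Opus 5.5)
We split into two cases, exactly as announced in Section~\ref{ss:Lp}: either $\fX$ fails the UMD property, or $\fX$ is $K$-convex. Every Banach space falls into at least one of these cases, since UMD spaces are $K$-convex.

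\emph{Case 1: $\fX$ is not UMD.} By the Burkholder--Bourgain theorem, $\HT$ is unbounded on $L^p(\R;\fX)$. We take $B$ to be a scalar-times-identity function, $B=b\cdot I$ with $b(x)=\log\abs{x}$, or a bounded variant of it. Then $B\in\BMO_{\so}$ and $B^*=b\cdot I\in\BMO_{\so}(\R;\bddlin(\fX^*))$. Since $b$ is scalar, $[B,\HT]f=[b,\HT]f$ acts coordinatewise. Unboundedness on $L^p(\R;\fX)$ should then follow by a localisation argument. Choose $f$ supported near a point where $b$ is close to a linear function, or use the standard identity expressing $\HT$ through commutators with $e^{i\lambda x}$-type or characteristic-type symbols.

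A cleaner alternative is to take $b=\one_{(0,\infty)}$, which lies in $\BMO$ (even $L^\infty$). Then $[b,\HT]f=\one_{(0,\infty)}\HT(\one_{(-\infty,0)}f)-\one_{(-\infty,0)}\HT(\one_{(0,\infty)}f)$. Boundedness of this for all $f$ gives boundedness of the off-diagonal Hilbert kernel $\int_0^\infty f(-y)/(x+y)\,dy$, which is Hilbert-type but positive and always bounded, so that choice fails. Instead I would use the known fact that boundedness of $[b,\HT]$ on $L^p(\fX)$ for a suitable scalar $b$ with nondegenerate oscillation implies UMD. This can be done by dilation and translation averaging: apply the commutator with $b(x)=\log\abs{x}$ near $x_0$ and blow up. The scaling limit $\delta^{-1}[b,\HT]$ localised at $x_0\ne0$ produces $[x,\HT]$, which is a rank-one operator, so that is not it either. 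I would rather cite the vector-valued result that $[b,\HT]$ bounded for some nonconstant $b\in\BMO$ forces $\fX$ to be UMD. This is plausible via the representation $\HT=\frac{d}{dt}e^{-tb}\HT e^{tb}$-type arguments (Coifman--Rochberg--Weiss in reverse), and it is the step I would check most carefully.

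\emph{Case 2: $\fX$ is $K$-convex.} By Figiel--Tomczak-Jaegermann, $\fX$ contains subspaces $E_k$ that are $(1+\varepsilon)$-isomorphic to $\ell^2_{n_k}$ for every $n_k$, and these subspaces are complemented by projections $P_k$ with uniformly bounded norms. For each $k$, take the Nazarov--Pisier--Treil--Volberg finite-dimensional example $B_k:\R\to\bddlin(\ell^2_{n_k})$. It has $\Norm{B_k}{\BMO_{\so}}\le1$, also for $B_k^*$, while $\Norm{[B_k,\HT]}{L^p\to L^p}\ge k$. For $p\neq2$ I would transfer the example using the Hilbertian $L^p$ versions, or prove it via extrapolation in finite dimensions.

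Transport $B_k$ to $\widetilde B_k:=\iota_kB_k\iota_k^{-1}P_k$ on $\fX$. Then combine the pieces into a single $B$, either by placing them on disjoint far-separated intervals, $B=\sum_k \widetilde B_k(\cdot-x_k)\one_{I_k}$ after truncation, or by taking $B:=\sum_k k^{-1/2}\widetilde B_k$ acting on almost-disjoint blocks. The spatial-separation version keeps the $\BMO$ norms uniformly bounded, because each piece is bounded in $\BMO$ and the pieces are localised. The $\BMO_{\so}$ bounds must be checked vector by vector, using Lemma~\ref{lem:ApIsom}-style isomorphism constants and $\Norm{P_k}{}\lesssim1$. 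The commutator restricted to functions supported in $I_k$ with values in $E_k$ then has norm $\gtrsim k$.

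\emph{Main obstacle.} The main obstacle is Case 1, namely justifying that a specific scalar-times-identity $B$ yields an unbounded commutator whenever $\fX$ is not UMD. I would try first to derive boundedness of $\HT$ from boundedness of $[b\,I,\HT]$ for $b=\log\abs{x}$, by a conjugation/analytic-family argument in the parameter $t$ of $e^{itb}$.
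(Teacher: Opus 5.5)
The gap is in your Case~1: you never exhibit a scalar $b$ with $[b,\HT]$ unbounded when $\fX$ fails UMD, and none of your fallbacks works.

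\emph{The result you propose to cite is false.} Boundedness of $[b,\HT]$ on $L^p(\R;\fX)$ for \emph{some} nonconstant $b\in\BMO(\R)$ does not force UMD. Your own example $b=\one_{(0,\infty)}$ already shows this. More generally, every $b$ that is $C^1$ with bounded derivative on finitely many bounded intervals, and constant on the two unbounded ones, gives a commutator bounded on $L^p(\R;\fX)$ for every Banach space.

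\emph{Your preferred candidate $b=\log\abs{x}$ fails as well.} Its commutator kernel $\frac{\log\abs{x}-\log\abs{y}}{\pi(x-y)}$ is dominated in absolute value by $\frac{\abs{\log(\abs{x}/\abs{y})}}{\pi\abs{\abs{x}-\abs{y}}}$. This is a positive kernel, homogeneous of degree $-1$ in $(\abs{x},\abs{y})$, with $\int_0^\infty\frac{\abs{\log t}}{\abs{1-t}}t^{-1/p}\,dt<\infty$. By Schur's test it is bounded on $L^p$, and a positive dominating kernel transfers to $L^p(\R;\fX)$ for arbitrary $\fX$. So $[\log\abs{x},\HT]$ is bounded for \emph{every} $\fX$.

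\emph{The Coifman--Rochberg--Weiss device cannot be reversed.} One has $\frac{d}{dt}(e^{-tb}\HT e^{tb})=e^{-tb}[\HT,b]e^{tb}$, not $\HT$. That argument only passes from (weighted) bounds for $\HT$ to bounds for commutators.

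What is missing is a $b$ that oscillates infinitely often at a fixed scale. Testing the commutator on functions living where $b=-1$ against functions living where $b=+1$ then reproduces a discrete Hilbert transform. The paper does this as follows.
\begin{itemize}
\item Take $b\in L^\infty(\R)$ with $b=1$ on $[m-\frac14,m+\frac14]$ for even $m$ and $b=-1$ there for odd $m$. Then $B=b\,I$ and $B^*=b\,I$ are trivially in $\BMO_{\so}$.
\item Fix a bump $\varphi$ of integral one supported in $[-\frac14,\frac14]$, and set $f=\sum_{j\ \mathrm{odd}}a_j\varphi(\cdot-j)$ and $g=\sum_{k\ \mathrm{even}}c_k\varphi(\cdot-k)$, with $a_j\in\fX$, $c_k\in\fX^*$.
\item Since $bf=-f$ and $bg=g$, one gets $\pair{[b,\HT]f}{g}=2\pair{\HT f}{g}=2\sum_{j,k}d_{k-j}\pair{a_j}{c_k}$, where $d_m=\frac{1}{\pi m}+O(m^{-2})$.
\item Boundedness of $[b,\HT]$ therefore gives boundedness of the odd-to-even convolution with $(k-j)^{-1}$. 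After re-indexing, this is convolution with $(m+\frac12)^{-1}$ on $\ell^p(\Z;\fX)$.
\item That kernel differs from the discrete Hilbert kernel $\one_{\Z\setminus\{0\}}(m)\,m^{-1}$ by an $\ell^1(\Z)$ kernel. Boundedness of the discrete Hilbert transform on $\ell^p(\Z;\fX)$ is equivalent to UMD (Berkson--Gillespie--Muhly).
\end{itemize}

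In Case~2 your outline matches the paper, but two steps need repair.
\begin{itemize}
\item The $L^2\to L^p$ transfer is simplest via $B=B^*$. This makes $[B,\HT]$ self-adjoint, so its $L^p$ and $L^{p'}$ norms agree, and Riesz--Thorin yields $\Norm{[B,\HT]}{L^2\to L^2}\le\Norm{[B,\HT]}{L^p\to L^p}$.
\item The sharp truncation $\widetilde B_k\one_{I_k}$ need not lie in $\BMO$; already $\log\abs{x}\,\one_{(0,1)}\notin\BMO$. You must first subtract the mean of each piece on its interval, then extend it across the neighbouring intervals. The paper does this with Whitney-type averages, as in \eqref{eq:bCombo} and Lemma~\ref{lem:bCombo}; a Lipschitz cutoff would also work. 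Only then does the uniform $\BMO_{\so}$ bound for the combined $B$ follow.
\end{itemize}
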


While not making direct reference to operator-weighted spaces, this result is closely related, and this connection will be elaborated in the following section, where we return to the setting of $L^p(V)$. In the special case that $p=2$ and $\fX$ is a Hilbert space, Theorem \ref{thm:commuUnbd} is contained in \cite[Theorem 1.2]{GPTV:00}, \cite[Theorem 1.2]{NPTV:02}, or \cite[Theorem 1.3]{GPTV:04}. Formally, the said results state a dimensional growth of the commutator norms in finite-dimensional Hilbert spaces, but it is easy to obtain the infinite-dimensional version from this by noting that an infinite-dimensional Hilbert space can be written as an $\ell^2$ direct sum containing all the finite-dimensional versions.

To construct a function $B$ as in Theorem \ref{thm:commuUnbd}, we will use two different methods depending on the Banach space $\fX$. The first method works if $\fX$ is {\em not} a UMD space \cite[Definition 4.2.1]{HNVW1}, while the second method requires $\fX$ to be $K$-convex \cite[Definition 4.3.9]{HNVW1}. Since every UMD space is $K$-convex \cite[Proposition 4.3.10]{HNVW1}, the two cases cover all Banach spaces, even with some overlap, since not every $K$-convex space has UMD.

\subsection{Scalar commutators and the UMD property}

For Banach spaces that fail the UMD property, Theorem \ref{thm:commuUnbd} will be a consequence of the following:

\begin{proposition}\label{prop:commuUMD}
Let $\fX$ be an infinite-dimensional Banach space and let $p\in(1,\infty)$. Then the following conditions are equivalent:
\begin{enumerate}[\rm(i)]
  \item\label{it:UMD} $\fX$ is a UMD space;
  \item\label{it:BMO} $[b,\HT]$ is bounded on $L^p(\R;\fX)$ for all scalar-valued $b\in\BMO(\R)$;
  \item\label{it:Linfty} $[b,\HT]$ is bounded on $L^p(\R;\fX)$ for all scalar-valued $b\in L^\infty(\R)$.
\end{enumerate}
\end{proposition}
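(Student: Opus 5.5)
We will prove the cycle \eqref{it:UMD} $\Rightarrow$ \eqref{it:BMO} $\Rightarrow$ \eqref{it:Linfty} $\Rightarrow$ \eqref{it:UMD}. The middle implication is trivial, since $L^\infty(\R)\subset\BMO(\R)$.

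For \eqref{it:UMD} $\Rightarrow$ \eqref{it:BMO}, we use that on a UMD space $\fX$ the Hilbert transform is bounded on $L^p(\R;\fX)$ (Burkholder--Bourgain, \cite[Theorem 5.1.1]{HNVW1}). We then run the classical Coifman--Rochberg--Weiss argument. For $z\in\C$ with $|z|$ small, the scalar weight $w_z=e^{\operatorname{Re}(z)pb}$ lies in $A_p$ with uniformly bounded characteristic, by the John--Nirenberg inequality. The conjugated operators $T_z:=e^{zb}\HT e^{-zb}$ are then bounded on $L^p(\R;\fX)$ uniformly in $|z|\le\delta$, and the commutator is recovered as
\begin{equation*}
[b,\HT]=\frac{d}{dz}T_z\Big|_{z=0}=\frac{1}{2\pi i}\oint_{|z|=\delta}\frac{T_z}{z^2}\,dz .
\end{equation*}
What remains is boundedness of $\HT$ on the scalar-weighted space $L^p(w;\fX)$ for $w\in A_p$ and UMD $\fX$. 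This is known: one route is the Rubio de Francia extrapolation adapted to UMD-valued functions, another is sparse domination or dyadic representation of $\HT$ combined with the UMD property. We will cite it from the literature, e.g.\ the weighted vector-valued theory in \cite{HNVW3}, rather than reprove it.

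For \eqref{it:Linfty} $\Rightarrow$ \eqref{it:UMD}, the main step, we take $b:=\one_{(0,\infty)}$, or more generally $b=\one_E$. For $f$ supported in $(-\infty,0)$ we have
\begin{equation*}
[b,\HT]f=b\HT f-\HT(bf)=\one_{(0,\infty)}\HT f .
\end{equation*}
So boundedness of $[b,\HT]$ controls the piece of $\HT$ that maps functions on the left half-line to the right half-line. By translation and dilation invariance of $\HT$, together with the boundedness of all $[\one_E,\HT]$ for general sets $E$, we aim to control $\HT$ itself. A natural approach is the Whitney-type decomposition $\HT=\sum_I \one_{I_+}\HT\one_{I_-}+\cdots$, but summing such pieces naively loses a logarithm.

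We therefore plan a cleaner argument. Suppose that for all measurable $E$,
\begin{equation*}
\Norm{[\one_E,\HT]}{L^p(\R;\fX)\to L^p(\R;\fX)}\le C .
\end{equation*}
By the closed graph/uniform boundedness principle this is automatic from \eqref{it:Linfty}, applied to the Banach space $L^\infty$ of multipliers. Take $b=\sum_k \eps_k\one_{I_k}$ with random signs. Then $[b,\HT]$ equals the "off-diagonal" part $\sum_{j\ne k}(\eps_j-\eps_k)\one_{I_j}\HT\one_{I_k}$. We choose the $I_k$ to be dyadic intervals adapted to a Haar-type decomposition, so that these off-diagonal pieces reproduce, up to bounded errors, the operator that detects the UMD constant. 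An alternative, and probably easier, route is to compare $\sum_{j\neq k}$ with $\HT$ minus its diagonal $\sum_k\one_{I_k}\HT\one_{I_k}$. We then use scale invariance to show the diagonal part is controlled by the off-diagonal part at a finer level. From this we expect to deduce $\HT$ bounded on $L^p(\R;\fX)$, and hence UMD by Bourgain's theorem \cite{Bou:83}.

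The main obstacle is this last deduction: extracting full boundedness of $\HT$ from commutators with bounded symbols without a logarithmic loss. If the direct approach stalls, we would instead look for a known result to cite. Such a result would identify boundedness of $[b,\HT]$ for $b\in L^\infty$ with the UMD property, typically via its dyadic or martingale analogue. That analogue says $\fX$ is UMD if and only if commutators of $L^\infty$ multipliers with dyadic martingale transforms are bounded.
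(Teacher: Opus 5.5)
\textbf{The gap is in (iii)$\Rightarrow$(i).} Your implications (i)$\Rightarrow$(ii) and (ii)$\Rightarrow$(iii) are fine and agree with the paper: the Coifman--Rochberg--Weiss contour integral plus boundedness of $\HT$ on $L^p(w;\fX)$ for scalar $w\in A_p$ under UMD. For complex-valued $b$ the relevant weight is $e^{p\operatorname{Re}(zb)}$, not $e^{p\operatorname{Re}(z)b}$. Part (iii)$\Rightarrow$(i), however, you leave open, and neither route you sketch closes it.

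First, $b=\one_{(0,\infty)}$ carries no information. The operator $\one_{(0,\infty)}\HT\one_{(-\infty,0)}$ has kernel $\frac{1}{\pi(x-y)}>0$, and positive-kernel operators bounded on $L^p(\R)$ are bounded on $L^p(\R;\fX)$ for every Banach space $\fX$. Indeed, the paper shows right after this proposition that every piecewise $C^1$ function with finitely many breakpoints, constant near $\pm\infty$, gives a bounded commutator on every $\fX$. So $b$ must oscillate infinitely often.

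Second, splitting $\HT$ into $\sum_{j\neq k}\one_{I_j}\HT\one_{I_k}$ plus the diagonal $\sum_k\one_{I_k}\HT\one_{I_k}$, and controlling the diagonal by scale invariance, is circular. For equal-length intervals the diagonal part has norm $\Norm{\one_{[0,1)}\HT\one_{[0,1)}}{}$. By dilation and translation invariance this equals $\Norm{\HT}{}$: use $[-R,R)$ and let $R\to\infty$. Iterating over dyadic scales only gives a sum of positive-kernel pieces, with the logarithmic loss you noted. So you cannot reach boundedness of the continuous $\HT$ this way, and "look for a result to cite" is not a proof.

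\textbf{The missing idea is to target the discrete Hilbert transform on $\ell^p(\Z;\fX)$.} Its boundedness is equivalent to UMD by \cite[Theorem 2.8]{BGM:86}. The paper takes one periodic $b$, equal to $1$ on $[m-\frac14,m+\frac14]$ for even $m$ and $-1$ for odd $m$. It tests with $f=\sum_{j\text{ odd}}a_j\varphi(\cdot-j)$ and $g=\sum_{k\text{ even}}c_k\varphi(\cdot-k)$, where $\varphi$ is a bump supported in $[-\frac14,\frac14]$. Then $bf=-f$ and $bg=g$, so $\pair{[b,\HT]f}{g}=2\pair{\HT f}{g}=2\sum_{j,k}d_{k-j}\pair{a_j}{c_k}$ with $\abs{d_m-m^{-1}}\lesssim m^{-2}$. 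This bounds convolution with $(m+\frac12)^{-1}$ on $\ell^p(\Z;\fX)$, and subtracting an $\ell^1$ kernel gives the discrete Hilbert transform.

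Your random-set idea can be rescued the same way. The closed graph theorem gives $\Norm{[b,\HT]}{}\le C\Norm{b}{L^\infty}$. Let $E$ be any union of the intervals $I_k=[k-\frac12,k+\frac12)$; then $\one_E\HT\one_{E^c}=\one_E[\one_E,\HT]\one_{E^c}$ has norm at most $C$. Averaging over random $E$ (each $I_k$ included independently with probability $\frac12$) gives $\frac14\sum_{j\neq k}\one_{I_j}\HT\one_{I_k}$, with norm at most $4C$. Testing this on the lattice bumps above yields the full discrete Hilbert kernel up to an $\ell^1$ error. Either way, the conclusion must go through the discrete Hilbert transform, not through $\HT$ itself.
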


The statement is perhaps expected, in view of the well-known characterisation of the UMD condition by the boundedness of the Hilbert transform on $L^p(\R;\fX)$. While this will be used in the proof below, observe that some work still needs to be done, since the (un)boundedness of $\HT$ is not the same as the (un)boundedness of a commutator $[b,\HT]$, and their equivalence will in any case require some conditions on the function $b$. Proposition \ref{prop:commuUMD} continues the tradition of characterising the UMD condition by the boundedness of various operators, like the Hilbert transform \cite{Bou:83,Burk:83}, imaginary powers $(-\triangle)^{is}$ of the Laplacian \cite{Guerre:91}, or other Fourier multipliers with sufficient structure \cite{GMS}. (The mentioned results can also be found in \cite[Theorem 5.1.1]{HNVW1}, \cite[Corollary 10.5.2]{HNVW2}, and \cite[Corollary 13.3.10]{HNVW3}, respectively.) In contrast to all these examples, the commutators $[b,\HT]$ in Proposition \ref{prop:commuUMD} are {\em not} translation-invariant operators.

\begin{proof}[Proof of Proposition \ref{prop:commuUMD}]
\eqref{it:UMD} $\Rightarrow$ \eqref{it:BMO}: This can be proved by repeating the classical Cauchy integral trick of \cite{CRW:76}, where the scalar-valuedness of $b$ is used, while the vector-valuedness of $f$ poses no problem. An easy identity followed by the Cauchy integral formula shows that
\begin{equation*}
  [b,\HT]f =\frac{d}{dz}(e^{zb}\HT(e^{-zb}f))\Big|_{z=0}
  =\frac{1}{2\pi i}\oint_{\abs{z}=\eps} e^{zb}\HT(e^{-zb}f)\,\frac{dz}{z^2}.
\end{equation*}
The assumption \eqref{it:UMD} implies that $\HT$ is bounded on $L^p(v;\fX)$ for all scalar-valued weights $v\in\A_p(\R)$ (see \cite[Corollary 11.3.27]{HNVW3}). If $\eps>0$ is small enough (depending on $\Norm{b}{\BMO(\R)}$), it is well known and easy to check that $e^{\Re(zb)}\in\A_p(\R)$, uniformly in $\abs{z}=\eps$, as already observed in \cite{CRW:76}. Combining these observations, it follows that
\begin{equation*}
\begin{split}
  \Norm{[b,\HT]f}{L^p(\R;\fX)}
  &\leq\frac{1}{2\pi}\oint_{\abs{z}=\eps} \Norm{e^{\Re(zb)}\HT(e^{-zb}f)}{L^p(\R;\fX)}\,\frac{\abs{dz}}{\abs{z}^2} \\
  &\lesssim\oint_{\abs{z}=\eps} \Norm{e^{\Re(zb)}(e^{-zb}f)}{L^p(\R;\fX)}\,\frac{\abs{dz}}{\abs{z}^2} \\
  &=\oint_{\abs{z}=\eps} \Norm{f}{L^p(\R;\fX)}\,\frac{\abs{dz}}{\abs{z}^2} \\
  &\lesssim\frac{1}{\eps}\Norm{f}{L^p(\R;\fX)}\approx\Norm{b}{\BMO(\R)}\Norm{f}{L^p(\R;\fX)}.
\end{split}
\end{equation*}

\eqref{it:BMO} $\Rightarrow$ \eqref{it:Linfty} is obvious. Let us also note that \eqref{it:UMD} $\Rightarrow$ \eqref{it:Linfty} can be seen directly without the need for the Cauchy integral trick or the weighted theory, using only the unweighted boundedness of $\HT$ under the UMD condition:
\begin{equation*}
\begin{split}
  \Norm{[b,\HT]f}{L^p(\R;\fX)}
  &\leq\Norm{b\HT f}{L^p(\R;\fX)}+\Norm{\HT(bf)}{L^p(\R;\fX)}
  \lesssim\Norm{b}{L^\infty(\R)}\Norm{f}{L^p(\R;\fX)}.
\end{split}
\end{equation*}

\eqref{it:Linfty} $\Rightarrow$ \eqref{it:UMD}:
Let us fix a function $b$ such that
\begin{equation}\label{eq:bChoice}
  b(x)=\begin{cases} 1 & \text{if }x\in[m-\frac14,m+\frac14]\text{ for an even }m\in\Z, \\
  -1 & \text{if }x\in[m-\frac14,m+\frac14]\text{ for an odd }m\in\Z,\end{cases}
\end{equation}
where $\Z$ denotes the set of all integers.
Let us fix a bump $\varphi$ of integral one supported in $[-\frac14,\frac14]$, and consider
\begin{equation*}
  f(y)=\sum_{\genfrac{}{}{0pt}{}{j\in\Z} {\text{odd}}}a_j\varphi(y-j),\quad
  g(x)=\sum_{\genfrac{}{}{0pt}{}{k\in\Z} {\text{even}}}c_k\varphi(x-k),
\end{equation*}
so that $\Norm{f}{L^p(\R;\fX)}\approx\Norm{\{a_j\}}{\ell^p(\Z_{\textup{odd}};\fX)}$ and $\Norm{g}{L^{p'}(\R;\fX)}\approx\Norm{\{c_j\}}{\ell^{p'}(\Z_{\textup{even}};\fX^*)}$.

Then $bf=-f$ and $bg=g$, hence
\begin{equation}\label{eq:bHvsH}
\begin{split}
    \pair{[b,\HT]f}{g}
    &=\pair{\HT f}{bg}-\pair{\HT(bf)}{g} \\
    &=2\pair{\HT f}{g}
    =\sum_{\genfrac{}{}{0pt}{}
    {j\in\Z_{\text{odd}}} {k\in\Z_{\text{even}}}}2d_{k-j}\pair{a_j}{c_k},     \\
\end{split}
\end{equation}
where the factor
\begin{equation*}
\begin{split}
  d_{k-j}:=\iint\frac{1}{x-y}\varphi(y-j)\varphi(x-k)\,dx \,dy
  =\iint\frac{\varphi(v)\varphi(u)}{(k+u)-(j+v)}\,du \,dv
\end{split}
\end{equation*}
differs from $(k-j)^{-1}=(k-j)^{-1}\iint\varphi(u)\varphi(v)dudv$ by
\begin{equation*}
  \Babs{\iint \frac{v-u}{(k-j+u-v)(k-j)}\varphi(u)\varphi(v)dudv} \lesssim\frac{1}{(k-j)^2}.
\end{equation*}

By \eqref{eq:bHvsH} and assumption \eqref{it:Linfty}, the mapping
\begin{equation*}
  \{a_j\}_{j\in\Z_{\text{odd}}}\mapsto\Big\{\sum_{j\in\Z_{\text{odd}}} d_{k-j}a_j\Big\}_{k\in\Z_{\text{even}}}
\end{equation*}
is bounded from $\ell^p(\Z_{\text{odd}};\fX)$ to $\ell^p(\Z_{\text{even}};\fX)$. Since the same is clearly true with $(k-j)^{-2}$ in place of $d_{k-j}$, it is also true with $(k-j)^{-1}$ in place of $d_{k-j}$, by the above estimate on their difference.

By a change of variables, it follows that the discrete convolution by $\{(k+\frac12)^{-1}\}_{k\in\Z}$ is bounded on $\ell^p(\Z;\fX)$. The difference of this kernel from $\{\one_{\Z\setminus\{0\}}k^{-1}\}_{k\in\Z}$ is dominated by $\{(1+k^2)^{-1}\}_{k\in\Z}\in\ell^1(\Z)$, which clearly induces a bounded convolution operator on $\ell^p(\Z;\fX)$. Hence, the same is true of $\{\one_{\Z\setminus\{0\}}k^{-1}\}_{k\in\Z}$. But the convolution with this sequence is the discrete Hilbert transform, whose boundedness on $\ell^p(\Z;\fX)$ is equivalent to the UMD property of $\fX$ by \cite[Theorem 2.8]{BGM:86}. This completes the proof.
\end{proof}

\begin{remark}
The proof of \eqref{it:Linfty} $\Rightarrow$ \eqref{it:UMD} shows that \eqref{it:Linfty} could be further weakened by requiring only that $[b,\HT]$ is bounded for a single function $b$ satisfying \eqref{eq:bChoice}; in particular $b$ may be taken to be $C^\infty$ and periodic, or alternatively piecewise constant and periodic. On the other hand, it {\em cannot} be taken significantly simpler than this, as shown by the following:
\end{remark}

\begin{proposition}
Suppose that $b:\R\to\C$ satisfies the following properties for finitely many points $a_1<a_2<\ldots<a_n$ of $\R$:
\begin{enumerate}[\rm(i)]
  \item $b\in C^1((a_i,a_{i+1}))$ with one-sided limits at the end-points, for each $i=1,\ldots,n-1$;
  \item $b$ has (possibly different) constant values on each of the unbounded intervals $(-\infty,a_0)$ and $(a_n,\infty)$.
\end{enumerate}
Then $[b,\HT]$ is bounded on $L^p(\R;\fX)$ for every $p\in(1,\infty)$ and every Banach space $\fX$.
\end{proposition}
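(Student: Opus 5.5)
We show that the kernel of $[b,\HT]$ is bounded by an integrable operator kernel plus a finite sum of pieces that are harmless even without UMD. Write
\begin{equation*}
  [b,\HT]f(x)=\frac1\pi\int\frac{b(x)-b(y)}{x-y}f(y)\,dy,
\end{equation*}
so it suffices to bound the scalar kernel $K(x,y):=\frac{b(x)-b(y)}{x-y}$. Any operator $Tf(x)=\int K(x,y)f(y)\,dy$ whose scalar kernel satisfies $\sup_x\int\abs{K(x,y)}\,dy<\infty$ and $\sup_y\int\abs{K(x,y)}\,dx<\infty$ is bounded on $L^p(\R;\fX)$ for every Banach space $\fX$. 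This is Schur's test applied to the scalar function $\Norm{f(\cdot)}{\fX}$ via $\Norm{Tf(x)}{}\leq\int\abs{K}\Norm{f(y)}{}\,dy$. More generally, positive operators bounded on $L^p(\R)$ (for instance the Hardy operators $f\mapsto \frac1x\int_0^x f$ and their adjoints, and kernels homogeneous of degree $-1$ satisfying $\int_0^\infty \abs{k(1,t)}t^{-1/p}\,dt<\infty$) transfer to $\fX$-valued functions by the same domination. So the plan is to split $K$ into pieces, each dominated in absolute value by a kernel of one of these positive types.

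First we normalise. Subtracting a constant from $b$ does not change the commutator, so we assume $b=0$ on $(a_n,\infty)$ and $b=c$ on $(-\infty,a_1)$. Next we decompose $b=c\one_{(-\infty,a_1)}+\sum_i b_i\one_{(a_i,a_{i+1})}$, where each $b_i$ is $C^1$ on the closed interval (extended by one-sided limits). By linearity of $b\mapsto[b,\HT]$ it suffices to treat two types of pieces: $b=\one_I\beta$ with $I=[\alpha,\gamma]$ bounded and $\beta\in C^1(\bar I)$, and $b=\one_{(-\infty,a)}$. For the half-line, translate to $a=0$. Then $K(x,y)=\frac{\one_{x<0}-\one_{y<0}}{x-y}$ is nonzero only when $x$ and $y$ have opposite signs, where $\abs{K}=\frac{1}{\abs{x}+\abs{y}}$. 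This is exactly the Hilbert (Carleman) operator $\int_0^\infty \frac{g(t)}{s+t}\,dt$ between the half-lines, which is positive and bounded on $L^p$ for $1<p<\infty$. We therefore get boundedness by domination.

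For the bounded piece $b=\one_I\beta$, split according to whether $x,y$ lie in $I$.
\begin{itemize}
  \item If both lie in $I$, then $\abs{K}\leq\Norm{\beta'}{\infty}\one_{I\times I}$, which satisfies Schur's test since $I$ is bounded.
  \item If exactly one lies in $I$, say $x\in I$ and $y\notin I$, then $K=\beta(x)/(x-y)$. For $\operatorname{dist}(y,I)\geq \abs{I}$ we have $\abs{K}\lesssim \one_I(x)/(1+\abs{y})$; this kernel has row integrals $\int_{\abs{y}>R}\frac{dy}{1+\abs{y}}$ that diverge, so we cannot use Schur's test naively. Instead we use $L^p$ boundedness directly: $\Norm{\one_I(x)\int_{\text{far}}\frac{\Norm{f(y)}{}}{\abs{x-y}}\,dy}{L^p}\lesssim \Norm{f}{L^p}\,\Norm{\abs{y}^{-1}\one_{\abs{y}>1}}{L^{p'}}$ by Hölder, which is finite since $p'>1$. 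The dual configuration is handled the same way.
  \item If $y$ is near the endpoint $\gamma$ (the case $\alpha$ is symmetric), write $\beta(x)=\beta(\gamma)+O(\abs{x-\gamma})$. The $O$-part gives $\abs{K}\lesssim\frac{\abs{x-\gamma}}{\abs{x-y}}\leq 1$ on bounded sets, since $x$ and $y$ are on opposite sides of $\gamma$; this is bounded again. The constant part $\beta(\gamma)\frac{\one_{x<\gamma}}{x-y}$ with $y>\gamma$ is the Carleman kernel across $\gamma$ restricted to bounded sets, handled as above.
\end{itemize}

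The main obstacle is precisely the jump discontinuities and the nondecaying tails. The naive Schur test fails there, and boundedness relies on the Carleman/Hardy-type positive operators being bounded on scalar $L^p$ for $1<p<\infty$. What makes the argument work for arbitrary Banach spaces $\fX$ is that these kernels are nonnegative after taking absolute values, so they transfer to $\fX$-valued functions by pointwise domination. We only need to verify carefully that every piece of $K$ is dominated in absolute value by such a positive kernel; no cancellation is ever used, so UMD plays no role.
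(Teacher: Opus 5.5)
Your proof is correct. It rests on the same principle as the paper's: dominate the commutator kernel pointwise by nonnegative kernels of operators bounded on scalar $L^p$, which then act boundedly on $L^p(\R;\fX)$ for any $\fX$. You organise it differently, though.

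\textbf{The paper's route.} It partitions $\R$ into the intervals $I_0,\dots,I_n$ cut out by the break points and writes $[b,\HT]f=\sum_{i,j}\one_{I_i}[b,\HT](\one_{I_j}f)$. A diagonal block either vanishes (on the unbounded intervals $b$ is constant) or has kernel bounded by $\|\one_{I_i}b'\|_\infty$ on a bounded square. Each off-diagonal block is split as $\one_{I_i}b\,\HT(\one_{I_j}f)-\one_{I_i}\HT(\one_{I_j}bf)$. One observation then handles all of them at once: for disjoint $I_i,I_j$, the operator $\one_{I_i}\HT\one_{I_j}$ has a sign-definite kernel and is bounded on scalar $L^p$ because $\HT$ is. The factor $\|b\|_\infty$ absorbs the multiplier.

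\textbf{Your route.} You decompose $b$ linearly and exhibit the dominating positive operators explicitly: the Carleman operator $g\mapsto\int_0^\infty g(t)(s+t)^{-1}\,dt$ via Hilbert's inequality, H\"older for the tails, and Schur on bounded boxes. You thus never invoke the $L^p$ boundedness of $\HT$ itself. This is more elementary and self-contained. It also identifies $[\one_{(-\infty,0)},\HT]$, up to the factor $-\frac1\pi$, with the Carleman operator between the two half-lines. The price is a longer case analysis.

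\textbf{A simplification.} Part of your case analysis is unnecessary. For $x\in[\alpha,\gamma]$ and every $y>\gamma$ you have $|K(x,y)|\le\|\beta\|_\infty/((\gamma-x)+(y-\gamma))$, which is already a Carleman kernel across $\gamma$; the same holds across $\alpha$. So neither the near/far split nor the expansion $\beta(x)=\beta(\gamma)+O(|x-\gamma|)$ is needed. The $C^1$ hypothesis then enters only on $I\times I$, just as in the paper.
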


\begin{proof}
Let $I_0:=(-\infty,a_0)$, $I_i:=(a_i,a_{i+1})$ for $i\in\{1,\ldots,n-1\}$, and $I_n:=(a_n,\infty)$. Moreover, let $c_i$ be the constant value of $b$ on $I_i$ for $i\in\{0,n\}$. Then
\begin{equation*}
\begin{split}
      [b,\HT]f &=\sum_{i,j=0}^n\one_{I_i}[b,\HT](\one_{I_j}f).
\end{split}
\end{equation*}

For the terms with $i=j\in\{0,n\}$, we note that
\begin{equation*}
  \one_{I_i}[b,\HT](\one_{I_i}f)
  =\one_{I_i}[c_i,\HT](\one_{I_i}f)=0.
\end{equation*}
For $i=j\in\{1,\ldots,n-1\}$, we note that $[b,\HT]$ is an integral operator with kernel $(b(x)-b(y))/(x-y)$; for $x,y\in I_i$, this is uniformly bounded by $\Norm{\one_{I_i}b'}{\infty}<\infty$. Hence
\begin{equation*}
\begin{split}
  \Norm{\one_{I_i}[b,\HT](\one_{I_i} f)}{p}
  &\leq\abs{I_i}^{\frac1p}\Norm{[b,\HT](\one_{I_i} f)}{\infty} \\
  &\leq\abs{I_i}^{\frac1p}\Norm{\one_{I_i} b'}{\infty}\Norm{\one_{I_i} f}{1}
    \leq\abs{I_i}\Norm{\one_{I_i} b'}{\infty}\Norm{f}{p}.
\end{split}
\end{equation*}

For the remaining terms $i\neq j$, we note that the restriction of the Hilbert transform $f\mapsto\one_{I_i}\HT(\one_{I_j} f)$ on complementary intervals $I_i\cap I_j=\varnothing$ has a kernel that does not change sign, and hence it extends boundedly from $L^p(\R)$ to $L^p(\R;\fX)$ for an arbitrary Banach space $\fX$ (see \cite[Theorem 2.1.3]{HNVW1}). Thus,
\begin{equation*}
   \Norm{\one_{I_i}b\HT(\one_{I_j}f)}{p}
   \leq\Norm{b}{\infty}\Norm{\one_{I_i}\HT(\one_{I_j}f)}{p}
   \lesssim\Norm{b}{\infty}\Norm{f}{p}
\end{equation*}
and the estimate with $\HT b$ in place of $b\HT$ is similar. Also note that
$$\Norm{b}{\infty}=\max\{\abs{c_0},\abs{c_n},\Norm{\one_{I_i} b}{\infty}:i=1,\ldots,n-1\}$$
is finite for the functions of the type that we consider.
\end{proof}

\subsection{Operator commutators and $K$-convexity}

We now turn to the more delicate part in the proof of Theorem \ref{thm:commuUnbd}. While Proposition \ref{prop:commuUMD} provides us with a required function $B=b$ for non-UMD spaces, it also shows that a scalar-valued function will not suffice in the case of UMD spaces, and hence a genuinely operator-valued construction will be necessary. While additional steps will be needed, it turns out that we can make use of the examples already available in finite-dimensional Hilbert spaces. For concreteness, we recall the following precise statement:

\begin{theorem}[{\cite[Theorem 1.2]{NPTV:02}}]\label{thm:NPTV}
Let $E$ be a $d$-dimensional Hilbert space. Then there exists a function $B=B^*$ with
\begin{equation*}
  \Norm{B}{\BMO_{\so}(\R;\bddlin(E))}\leq 1,\qquad
  \Norm{[B,\HT]}{\bddlin(L^2(\R;E))}\gtrsim \log d,
\end{equation*}
where $\HT$ is the Hilbert transform and the implied constant is absolute.
\end{theorem}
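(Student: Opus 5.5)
The approach is to encode a classical finite-dimensional object of logarithmic size, namely the norm of the \emph{triangular truncation} on $d\times d$ matrices (which is $\approx\log d$, attained e.g. by the finite Hilbert matrix $\{(j-k)^{-1}\}_{j\neq k}$), into a commutator $[B,\HT]$. The point is that the commutator sees a ``one-sided'' part of $B$, while the strong-operator $\BMO$ norm sees $B$ only through orthogonality. I will work first with a dyadic model and transfer to $\HT$ at the end.

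\emph{Construction.} Fix an orthonormal basis $(e_k)_{k=1}^d$ of $E$ and a chain of dyadic intervals $I_1\supset I_2\supset\dots\supset I_d$ with $\abs{I_{k+1}}=\frac12\abs{I_k}$. Set
\begin{equation*}
  B:=\sum_{k=1}^d h_{I_k}\abs{I_k}^{\frac12}\,T_k,
\end{equation*}
where $h_{I_k}$ are $L^2$-normalised Haar functions and $T_k=T_k^*$ are rank-two self-adjoint operators (for instance $e_k\otimes a_k+a_k\otimes e_k$, with $a_k$ the $k$-th column of a fixed contraction $A$) chosen so that the off-diagonal entries of $A$ appear with a ``time ordering'' given by $k$. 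For $\Norm{B}{\BMO_{\so}}\lesssim 1$ I need, for each fixed vector $\fx$ and interval $J$,
\begin{equation*}
  \fint_J\Norm{(B-\ave{B}_J)\fx}{}^2\approx\frac{1}{\abs{J}}\sum_{I_k\subseteq J}\abs{I_k}\,\Norm{T_k\fx}{}^2 ,
\end{equation*}
and the right-hand side is summable because the ranges of the $T_k$ are essentially orthogonal and $\Norm{A}{}\le1$. This is exactly where the \emph{strong-operator} (vector) $\BMO$ is much weaker than the operator-norm Carleson condition, and it is the source of the gap.

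\emph{Lower bound in the dyadic model.} Decompose $[B,\cdot]$ of a dyadic shift, or equivalently of a martingale transform $T_\sigma$, into paraproducts:
\begin{equation*}
  [B,T_\sigma]=\pi_B T_\sigma-T_\sigma\pi_B+(\text{terms bounded by }\Norm{B}{\BMO_{\so}}).
\end{equation*}
The bounded terms are handled by an explicit estimate using the same orthogonality. The main paraproduct part, tested on $f=\sum_j \one_{I_j}\,c_j e_j$ and $g$ of similar shape, should give a bilinear form $\sum_{j<k}A_{kj}\,c_j\bar c'_k$, that is, the triangular truncation of $A$. Choosing $A$ to be the normalised finite Hilbert matrix then gives $\gtrsim\log d$ times $\Norm{f}{2}\Norm{g}{2}$.

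\emph{Transfer to $\HT$.} Finally I transfer from the dyadic model to $\HT$. Either I average over dilated and translated copies of $B$ and use that $\HT$ is an average of dyadic shifts (Petermichl's representation), or, more robustly, I replace the Haar functions by smooth wavelets with compact Fourier support on alternating half-lines. Then $\HT$ acts on each building block as $\pm i$ times the identity, and the commutator reduces directly to the sign-split paraproduct above.

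\emph{Main obstacle.} The hardest step will be ensuring that the error terms, both the ``bounded'' paraproduct parts and the transference errors, are genuinely $O(1)$ rather than also of size $\log d$. Only the $\BMO_{\so}$ bound is available, and the usual estimate $\Norm{\pi_B}{}\lesssim\Norm{B}{\BMO}$ uses the operator-norm Carleson condition, which here is itself of size $\log d$. I would try first to bound these terms by testing only against the structured $f,g$ above, where orthogonality of the $e_k$ makes all sums diagonal, rather than proving full operator bounds.
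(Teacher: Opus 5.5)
The paper does not prove this statement; it quotes it from \cite{NPTV:02}. Your sketch has a genuine gap, and it lies in the main term rather than the error terms: the $B$ you propose provably gives $\Norm{[B,\HT]}{}=O(1)$.

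\textbf{The chain construction gives no gap.} On a single chain $I_1\supset\dots\supset I_d$ with $\abs{I_{k+1}}=\frac12\abs{I_k}$, the strong-operator and operator-norm conditions agree up to a factor $2$. For every dyadic $J$,
\begin{equation*}
  \frac{1}{\abs{J}}\sum_{I_k\subseteq J}\abs{I_k}\,\Norm{T_k}{}^2\leq 2\sup_k\Norm{T_k}{}^2 .
\end{equation*}
So the operator-norm Carleson constant is $O(1)$, not of size $\log d$ as your last paragraph asserts. Since
\begin{equation*}
  \Norm{\pi_Bf}{2}^2=\sum_I\Norm{B_I\ave{f}_I}{}^2\leq\sum_I\Norm{B_I}{}^2\ave{\Norm{f}{}}_I^2 ,
\end{equation*}
the scalar Carleson embedding gives $\Norm{\pi_B}{}+\Norm{\pi_{B^*}}{}\lesssim\sup_k\Norm{T_k}{}$. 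The identity $[B,T_\sigma]=[\pi_B,T_\sigma]+[\pi_{B^*}^*,T_\sigma]$ is exact, because the diagonal part $\sum_I\ave{B}_If_Ih_I$ commutes with $T_\sigma$. Hence every martingale-transform commutator is bounded independently of $d$. Concretely, on your test functions the coupling between $e_j$ and $e_{j'}$ carries a factor $2^{-\abs{j-j'}/2}$. What you would see is the triangular truncation of a geometrically banded matrix, which is bounded, not the truncation of the Hilbert matrix.

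\textbf{The explicit choice of $T_k$ makes $B$ bounded, and bounded $B$ cannot work.} For $T_k=e_k\otimes a_k+a_k\otimes e_k$ with $\Norm{A}{}\leq1$, one has $\Norm{\sum_kc_kT_k}{}\leq2\sup_k\abs{c_k}$ for all scalars $c_k$. This follows from
\begin{equation*}
  \Norm{\textstyle\sum_kc_k\pair{\cdot}{a_k}e_k}{}\leq\sup_k\abs{c_k}\,\Norm{A^*}{},\qquad
  \Norm{\textstyle\sum_kc_k\pair{\cdot}{e_k}a_k}{}\leq\sup_k\abs{c_k}\,\Norm{A}{} .
\end{equation*}
So $\Norm{B}{L^\infty}\lesssim1$ for both your Haar and wavelet versions, and $\Norm{[B,\HT]}{}\leq2\Norm{B}{L^\infty}$ regardless of the transference. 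This gives a necessary condition any correct example must build in: $\esssup_x\Norm{B(x)}{\bddlin(E)}\gtrsim\log d$ while $\Norm{B}{\BMO_{\so}}\leq1$.

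\textbf{Spreading over full generations does not rescue it.} Take $B=\sum_kr_kT_k$ with Rademacher functions $r_k$, and let $d_jf$ be the martingale differences of $f$ (with $d_0f=\mathbb{E}f$). Then
\begin{equation*}
  \Norm{\pi_Bf}{2}^2=\sum_j\mathbb{E}\pair{\big(\textstyle\sum_{k>j}T_k^*T_k\big)d_jf}{d_jf}\leq\Norm{B}{\BMO_{\so}}^2\Norm{f}{2}^2 ,
\end{equation*}
so the dyadic commutators are again bounded.

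\textbf{What is actually missing.} In the dyadic model, the Haar coefficients $B_I$ must depend on the \emph{position} of $I$ within its generation. Orthogonality between different intervals of a full tree must keep $\sup_{\Norm{\fx}{}\leq1}\frac{1}{\abs{J}}\sum_{I\subseteq J}\Norm{B_I\fx}{}^2\leq1$. At the same time $\Norm{\pi_B}{}$, whose square is the constant of the matrix Carleson embedding with weights $B_I^*B_I$, must reach order $\log d$. Building such a configuration and passing from it to the continuous $[B,\HT]$ is the real content of \cite{NPTV:02} (compare the transfer method of \cite{GPTV:00}). Your proposal has no substitute for either step.
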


\begin{corollary}\label{cor:NPTV}
In the situation of Theorem \ref{thm:NPTV}, we also have
\begin{equation*}
  \Norm{[B,\HT]}{\bddlin(L^p(\R;E))}\gtrsim \log d\qquad\forall\ p\in(1,\infty).
\end{equation*}
\end{corollary}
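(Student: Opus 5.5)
The plan is to deduce the $L^p$ lower bound from the $L^2$ bound of Theorem \ref{thm:NPTV} by interpolation, exploiting that $E$ is a Hilbert space so the $E$-valued scale $L^p(\R;E)$ interpolates like the scalar one. Fix $p\in(1,\infty)$ and let $B$ be the function from Theorem \ref{thm:NPTV}. Write $T:=[B,\HT]$ and, for each $q\in(1,\infty)$, let $N_q:=\Norm{T}{\bddlin(L^q(\R;E))}\in[0,\infty]$. If $N_p=\infty$ there is nothing to prove, so assume $N_p<\infty$.

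First, I will use duality to reduce to $p\in(1,2)$. Since $B=B^*$ and $\HT^*=-\HT$, the adjoint of $T$ with respect to the pairing $\int(f,g)_E$ is $T^*=[\HT^*,B^*]=-[\HT,B]=[B,\HT]=T$. Hence $N_{p'}=N_p$, and it suffices to treat $p<2$, with $p'>2$ then also giving $N_{p'}<\infty$.

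Next, I will apply Riesz--Thorin interpolation between $L^p(\R;E)$ and $L^{p'}(\R;E)$ at $\theta=\frac12$, which yields $L^2(\R;E)$. This gives
\[
  N_2\leq N_p^{1/2}N_{p'}^{1/2}=N_p .
\]
For a Hilbert (indeed any Banach) target space the complex interpolation identity $[L^p(\R;E),L^{p'}(\R;E)]_{1/2}=L^2(\R;E)$ holds isometrically (cf.\ \cite[Theorem 2.2.6]{HNVW1}). One technical point needs care: $T$ must be defined on a common dense class, for instance $E$-valued simple functions or compactly supported test functions. It is enough to prove the bound on such functions. For these, $Tf$ is well defined as long as $B$ is locally in every $L^q$, which holds by John--Nirenberg, since $E$ is finite-dimensional and so $B$ is a matrix BMO function. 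Combined with Theorem \ref{thm:NPTV}, the displayed inequality gives $N_p\geq N_2\gtrsim\log d$.

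The main obstacle is the legitimacy of interpolating when $N_p$ is only assumed finite on the dense class, together with the $d$-dependence hidden in the isomorphism $E\simeq\ell^2_d$. The latter is harmless, because the interpolation is carried out directly on $E$-valued $L^p$ spaces with isometric identification. If justifying the complex interpolation turns out to be awkward, a fallback is the scalar trick: choose an orthonormal basis of $E$, view $T$ as a $d\times d$ matrix of scalar operators, and apply Riesz--Thorin to the bilinear form $\int(Tf,g)_E$ via the Stein analytic family on simple functions. This is the standard proof and gives the same constant $1$.
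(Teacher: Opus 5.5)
Your proposal is correct and follows the paper's proof essentially verbatim. Self-adjointness of $[B,\HT]$, which comes from $B=B^*$ and $\HT^*=-\HT$, gives $\Norm{[B,\HT]}{\bddlin(L^p(\R;E))}=\Norm{[B,\HT]}{\bddlin(L^{p'}(\R;E))}$; Riesz--Thorin at $\theta=\frac12$ then bounds the $L^2$ norm by the $L^p$ norm, and Theorem \ref{thm:NPTV} finishes (your extra remarks on the dense class and the reduction to $p<2$ are harmless but unnecessary).
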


\begin{proof}
The adjoint of $[B,\HT]$ with respect to the standard duality of $L^2(\R;E)$ is
\begin{equation*}
  [B,\HT]^*=[\HT^*,B^*]=[-\HT,B]=[B,\HT];
\end{equation*}
thus, this operator is self-adjoint. Since $(L^p(\R;E))^*=L^{p'}(\R;E)$, we find that
\begin{equation*}
  \Norm{[B,\HT]}{\bddlin(L^p(\R;E))}
  =\Norm{[B,\HT]^*}{\bddlin((L^p(\R;E)^*)}
  =\Norm{[B,\HT]}{\bddlin(L^{p'}(\R;E))}.
\end{equation*}
Since $\frac12=\frac12(\frac{1}{p}+\frac{1}{p'})$, it follows from the Riesz--Thorin interpolation theorem that
\begin{equation*}
\begin{split}
  \Norm{[B,\HT]}{\bddlin(L^2(\R;E))}
  &\leq\Norm{[B,\HT]}{\bddlin(L^p(\R;E))}^{\frac12}\Norm{[B,\HT]}{\bddlin(L^{p'}(\R;E))}^{\frac12} \\
  &=\Norm{[B,\HT]}{\bddlin(L^p(\R;E))}.
\end{split}
\end{equation*}
The claim of the corollary is now immediate from Theorem \ref{thm:NPTV}.
\end{proof}

To proceed, we will need to embed the Hilbert space examples into a given Banach space $\fX$ in a quantitatively controlled manner. This will be accomplished with the help of the following result, which depends on the notion of the {\em $K$-convexity} (see \cite[Definition 4.3.9]{HNVW1} or \cite[Section 7.4.a]{HNVW2}) of a Banach space.

\begin{theorem}[\cite{FTJ:79}]\label{thm:FTJ}
Let $\fX$ be a $K$-convex normed space. Then there are positive constants $c,C$ depending only on $\fX$ with the following properties.
Every finite-dimensional subspace $F\subset\fX$ with $\dim F=n$ has a further subspace $G\subset F$ such that $\dim G\geq c n^c$ and
\begin{enumerate}[\rm(i)]
  \item\label{it:G=H} there is a Hilbert space $H$ and an isomorphism $\iota:G\to H$ with
\begin{equation*}
     \Norm{\iota}{}\Norm{\iota^{-1}}{}\leq 2;
\end{equation*}
  \item\label{it:P<C} there is a projection $P$ of $\fX$ onto $G$ with $\Norm{P}{}\leq C$.
\end{enumerate}
\end{theorem}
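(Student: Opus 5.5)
Since this is a cited result from \cite{FTJ:79}, the plan is to reconstruct its proof from three standard ingredients of local Banach space theory: the $\ell$-position (Lewis' lemma), Pisier's $K$-convexity estimate for the $\ell$-norm, and the Dvoretzky--Milman random subspace argument. Throughout, $g_i$ denote independent standard Gaussians, $(e_i)$ is the standard basis of $\ell_2^n$, and for $u:\ell_2^n\to F$ we write $\ell(u):=(\mathbb E\Norm{\sum_i g_iue_i}{}^2)^{1/2}$, with $\ell^*$ the trace-dual norm.

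\emph{Step 1: Euclidean structure on $F$.} Given $F\subset\fX$ with $\dim F=n$, choose by Lewis' lemma an isomorphism $u:\ell_2^n\to F$ satisfying
\begin{equation*}
  \ell(u)\,\ell^*(u^{-1})=n.
\end{equation*}
The point of using $\ell^*(u^{-1})$, rather than an intrinsic quantity of $F$ alone, is that it can be computed through functionals on the whole space $\fX$. By Hahn--Banach we extend $(u^{-1})^*e_i\in F^*$ to norm-preserving $\fx_i^*\in\fX^*$. Then
\begin{equation*}
  v:=\sum_i \fx_i^*\otimes e_i:\fX\to\ell_2^n
\end{equation*}
extends $u^{-1}$ and has controlled $\ell^*$-type quantities.

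\emph{Step 2: $K$-convexity controls the $\ell$-norms.} Pisier's theorem says that $K$-convexity of $\fX$ bounds the Gaussian (equivalently Rademacher) projection on $L^2(\fX)$. This yields
\begin{equation*}
  \ell(u)\,\ell(u^{-1*})\lesssim K(\fX)\, n\,(1+\log n).
\end{equation*}
Moreover, since a $K$-convex space has nontrivial type $q>1$, the logarithm can be improved. Renormalising $u$ so that $\ell(u)\approx\sqrt n$, we get
\begin{equation*}
  M:=\mathbb E\Norm{u\theta}{}\approx 1,\qquad M^*:=\mathbb E\Norm{u^{-1*}\theta}{}\lesssim K(\fX),
\end{equation*}
with $\theta$ uniform on the Euclidean sphere. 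The Lipschitz constants satisfy $b:=\Norm{u}{}$ and $b^*:=\Norm{u^{-1*}}{}$, and type $q$ gives $b\lesssim n^{\frac12-\frac1q}$, which is polynomially smaller than $\sqrt n$.

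\emph{Step 3: random subspace.} Apply Milman's concentration argument simultaneously to the norm $\fx\mapsto\Norm{u\fx}{}$ and to the dual norm $\Norm{u^{-1*}\cdot}{}$ on $\ell_2^n$. A random $k$-dimensional subspace $E\subset\ell_2^n$ with
\begin{equation*}
  k\approx\min\{n(M/b)^2,\;n(M^*/b^*)^2\}\gtrsim n^{c}
\end{equation*}
has both restrictions $2$-equivalent to multiples $M|\cdot|$ and $M^*|\cdot|$ of the Euclidean norm. Set $G:=u(E)$ and $\iota:=Mu^{-1}|_G$; this gives \eqref{it:G=H}. For the projection, let $\pi_E$ be the orthogonal projection onto $E$ and define
\begin{equation*}
  P:=u\,\pi_E\, v:\fX\to G.
\end{equation*}
Then $P$ is the identity on $G$, because $v$ extends $u^{-1}$. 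For its norm, testing against $\fx^*$ and using the near-Euclidean behaviour of $\Norm{u^{-1*}\cdot}{}$ on $E$ gives
\begin{equation*}
  \Norm{P}{}\lesssim M^*\cdot M^{-1}\cdot\text{const},
\end{equation*}
which is bounded by a constant depending only on $K(\fX)$. This gives \eqref{it:P<C}.

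\emph{Main obstacle.} The delicate step is Step 2, together with the dual half of Step 3. The dimension bound $k\ge cn^c$ is the easier half: it needs the Lipschitz constant $b^*$ of the dual norm to be polynomially smaller than $\sqrt n\,M^*$, and I would obtain this from the nontrivial type of $\fX^*$, which $K$-convexity also guarantees. The harder half is that the extension-based estimate for $\Norm{P}{}$ works through arbitrary $\fx\in\fX$, not only $\fx\in F$, while \eqref{it:P<C} needs it to stay bounded \emph{uniformly in $n$}. My first attempt would be to bound $\Norm{P}{}$ via trace duality, $\Norm{P}{}\le\ell(u\pi_E)\,\ell^*(\pi_E v)/k$ up to constants, and then use the $K$-convexity constant once more to control $\ell^*(\pi_E v)$ through $\ell(\pi_E u^{-1*})$. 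It is exactly in this control that $K$-convexity, rather than mere finite cotype, is essential.
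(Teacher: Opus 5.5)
There are two genuine gaps, one in the construction of the projection and one in the dimension count.

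\emph{The projection.} You correctly flag this as the main obstacle, but you do not resolve it, and Steps 1 and 3 control the wrong object. Extending the functionals $(u^{-1})^*e_i$ one at a time by Hahn--Banach gives $v$ with $v|_F=u^{-1}$. However, $v^*y=\sum_i y_i\fx_i^*$ only satisfies $\Norm{v^*y}{\fX^*}\ge\Norm{(u^{-1})^*y}{F^*}$, because restriction to $F$ is a quotient map $\fX^*\to F^*$, and there is no reverse bound. So neither $\ell^*_{\fX}(v)$ nor $\Norm{v}{}$ is controlled. The projection estimate is $\Norm{Px}{}\le\sup_{y\in E}\frac{\Norm{uy}{}}{|y|}\cdot\sup_{y\in E,\,|y|\le1}\Norm{v^*y}{\fX^*}\cdot\Norm{x}{}$, so what must be bounded on $E$ is $y\mapsto\Norm{v^*y}{\fX^*}$. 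Your random subspace only makes $\Norm{(u^{-1})^*\cdot}{F^*}$ Euclidean, which yields lower bounds only.

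Your fallback $\Norm{P}{}\lesssim\ell(u\pi_E)\ell^*(\pi_Ev)/k$ is not a valid inequality in general. Trace duality gives only the bound without $1/k$, and $\ell(u\pi_E)\ell^*(\pi_Ev)\ge\operatorname{tr}(\pi_E)=k$ always. In any case, $\ell^*_{\fX}(\pi_Ev)$ cannot be computed from $F$ when the extension is arbitrary.

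Your remark that $\ell^*(u^{-1})$ ``can be computed through functionals on the whole space'' is the right intuition, but it needs a \emph{joint} extension:
\begin{itemize}
\item The $\ell$-norm of $S\in\bddlin(\ell_2^n,F)$ is the same whether computed in $F$ or in $\fX$. So Hahn--Banach extends the functional $S\mapsto\operatorname{tr}(u^{-1}S)$ from $(\bddlin(\ell_2^n,F),\ell)$ to $(\bddlin(\ell_2^n,\fX),\ell)$ without increasing its norm.
\item Every functional on the larger space has the form $S\mapsto\operatorname{tr}(vS)$ for some $v:\fX\to\ell_2^n$. This $v$ extends $u^{-1}$ and satisfies $\ell^*_{\fX}(v)=\ell^*_F(u^{-1})=n/\ell(u)$.
\item $K$-convexity of the whole space $\fX$, not just of $F$ (which is all your Step 2 uses), then gives $\ell(v^*)\le K(\fX)\,n/\ell(u)$.
\item Milman's theorem applied simultaneously to $\Norm{u\cdot}{}$ and $\Norm{v^*\cdot}{\fX^*}$ yields $\Norm{P}{}\lesssim K(\fX)$.
\end{itemize}
Incidentally, no $\log n$ arises in Step 2: $K(F)\le K(\fX)$ gives $\ell(u)\ell((u^{-1})^*)\le K(\fX)n$ directly.

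\emph{The dimension.} The claim that type $q$ gives $b\lesssim n^{1/2-1/q}$ cannot hold: for $q<2$ the exponent is negative, while $b\ge M\approx 1$. With cotype in place of type this is the Figiel--Lindenstrauss--Milman bound in John's position, but it fails globally in the $\ell$-position.

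For example, take the $K$-convex (indeed Hilbertian) space $\ell_2\oplus_\infty\R$ and its subspace $F=\{(x,\lambda x_1):x\in\R^n\}$ with $\lambda=\sqrt{n/(C\log n)}$. By symmetry, its $\ell$-position is essentially the identity, since the term $\lambda|x_1|$ is active only with probability about $n^{-C/2}$. Hence $b/M\approx\lambda$ and $n(M/b)^2\approx\log n$.

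The fix is to discard bad directions before applying Milman's theorem:
\begin{itemize}
\item Choose orthonormal $x_j$ greedily with $\Norm{ux_j}{}=\Norm{u|_{\{x_1,\ldots,x_{j-1}\}^\perp}}{}$. Cotype $q$ of $\fX$ gives $m^{1/q}\Norm{ux_m}{}\lesssim\ell(u)$, so $u$ has Lipschitz constant $\lesssim\ell(u)n^{-1/q}$ on a subspace of dimension $n/2$.
\item Repeat this for $v^*$ inside that subspace, using the finite cotype of $\fX^*$ (available because $\fX$ has nontrivial type).
\item The means do not collapse, thanks to the bounds $\ell(uQ)\ge\frac{\operatorname{rank}Q}{n}\ell(u)$ and $\ell(v^*Q)\ge\frac{\operatorname{rank}Q}{n}\ell^*_F(u^{-1})$ for orthogonal projections $Q$. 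Both follow from $\ell(u)\ell^*(u^{-1})=n$ and trace duality.
\item The simultaneous Milman argument on the resulting subspace of dimension $n/4$ then gives $\dim G\gtrsim n^c$.
\end{itemize}
The paper sidesteps all of this by citing \cite[Corollary 8.3]{FTJ:79}, where finite cotype of $\fX$ and $\fX^*$ enters and $\Norm{P}{}\le 27\mathscr K_n(\fX)$. It combines this with \cite[Proposition 9.1]{FTJ:79}, which says $K$-convexity is equivalent to $\sup_n\mathscr K_n(\fX)<\infty$.
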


\begin{proof}
While this is implicitly contained in \cite{FTJ:79}, we indicate the required details to obtain the above formulation from the results explicitly stated in \cite{FTJ:79}.
First, \cite[Corollary 8.3]{FTJ:79} contains a similar statement except that the $K$-convexity is replaced by the assumption that both $\fX$ and $\fX^*$ have some finite {\em cotype} $q,q^*<\infty$ (see \cite[Definition 7.1.1]{HNVW2}) and conclusion \eqref{it:P<C} is replaced by $\Norm{P}{}\leq 27\mathscr K_n(\fX)$ for a quantity $\mathscr K_n(\fX)$ defined at the beginning of \cite[Section 8]{FTJ:79}.

Now, suppose that $\fX$ is $K$-convex, as assumed in Theorem \ref{thm:FTJ}. Then $\fX$ has some non-trivial type $p>1$ and some finite cotype $q<\infty$ by \cite[Proposition 7.4.12]{HNVW2}, and then also $\fX^*$ has finite cotype $q^*=p'<\infty$ by \cite[Proposition 7.1.13]{HNVW2}. Thus $\fX$ satisfies the assumptions, and hence the conclusions, of \cite[Corollary 8.3]{FTJ:79}. Moreover, \cite[Proposition 9.1]{FTJ:79} shows that the $K$-convexity is equivalent to $\sup_{n\in\N}\mathscr K_n(\fX)<\infty$. Thus, the conclusions of \cite[Corollary 8.3]{FTJ:79} take exactly the form stated in Theorem \ref{thm:FTJ}, with $C=27\sup_{n\in\N}\mathscr K_n(\fX)<\infty$ depending only on $\fX$.
\end{proof}

\begin{corollary}\label{cor:FTJ}
Let $\fX$ be a $K$-convex normed space with $\dim\fX=\infty$.
Then it has subspaces $G$ of any given finite dimension with properties \eqref{it:G=H} and \eqref{it:P<C}.
\end{corollary}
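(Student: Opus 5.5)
Given a target dimension $m\in\N$, the plan is to apply Theorem \ref{thm:FTJ} to a sufficiently large finite-dimensional subspace and then cut the resulting almost-Hilbertian subspace down to dimension exactly $m$.

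Since $\dim\fX=\infty$, $\fX$ has finite-dimensional subspaces $F$ of every dimension $n$. Choose $n$ so large that $c n^c\geq m$, which is possible because $c>0$. Theorem \ref{thm:FTJ} then yields $G_0\subset F$ with $\dim G_0\geq m$, an isomorphism $\iota_0:G_0\to H_0$ onto a Hilbert space with $\Norm{\iota_0}{}\Norm{\iota_0^{-1}}{}\leq 2$, and a projection $P_0$ of $\fX$ onto $G_0$ with $\Norm{P_0}{}\leq C$.

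To get dimension exactly $m$, transport the problem into $H_0$. Let $K\subset H_0$ be any $m$-dimensional subspace and let $\pi_K$ be the orthogonal projection of $H_0$ onto $K$, so $\Norm{\pi_K}{}\leq 1$. Set
$$G:=\iota_0^{-1}(K),\qquad \iota:=\iota_0|_G:G\to K,\qquad P:=\iota_0^{-1}\pi_K\iota_0P_0.$$

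We then verify the two required properties.
\begin{enumerate}[\rm(i)]
  \item $K$ is a Hilbert space and $\Norm{\iota}{}\Norm{\iota^{-1}}{}\leq\Norm{\iota_0}{}\Norm{\iota_0^{-1}}{}\leq 2$, so property \eqref{it:G=H} holds.
  \item $P$ maps $\fX$ into $G$. For $g\in G$ we have $P_0g=g$ and $\pi_K\iota_0 g=\iota_0 g$, so $Pg=g$ and $P$ is a projection onto $G$. Its norm satisfies
  $$\Norm{P}{}\leq\Norm{\iota_0^{-1}}{}\Norm{\iota_0}{}\Norm{P_0}{}\leq 2C,$$
  so property \eqref{it:P<C} holds with the constant $2C$.
\end{enumerate}

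The only subtlety is that the projection constant deteriorates from $C$ to $2C$. This is harmless because $2C$ still depends only on $\fX$, and the corollary is to be read with constants depending only on $\fX$.

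Alternatively, one can keep the constant $C$ by rescaling: replace the norm of $H_0$ so that $\iota_0$ becomes an isometry up to the factor $2$ in a single direction. In any case, the existence of subspaces of arbitrary finite dimension with uniform constants is what is needed later, when embedding the Hilbertian examples of Theorem \ref{thm:NPTV}.
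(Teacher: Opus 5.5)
Your proposal is correct and follows the paper's proof essentially step for step: take $n$ large enough, pull back an $m$-dimensional subspace of the Hilbert space via $\iota_0^{-1}$, use $\iota_0^{-1}\pi_K\iota_0P_0$ as the projection, and accept the constant $2C$. Your closing remark about keeping $C$ by rescaling is unjustified, since rescaling the norm of $H_0$ leaves $\Norm{\iota_0}{}\Norm{\iota_0^{-1}}{}$ unchanged, but it is not needed for the argument.
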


\begin{proof}
Since we can take any $n$ in Theorem \ref{thm:FTJ}, the said theorem guarantees that we can find $G$ with $\dim G\geq cn^c$ {\em larger than} any given number. It remains to see that we can find $G$ with $\dim G$ {\em equal to} any given number $m$. To see this, let $G\subset\fX$ and $H$ be the spaces provided by Theorem \ref{thm:FTJ} with $\dim G=\dim H>m$. Let $H'\subset H$ be a Hilbert space with $\dim H'=m$, and let $Q$ be the orthogonal projection of $H$ onto $H'$. If $\iota:G\to H$ is the isomorphism provided by Theorem \ref{thm:FTJ}, then $G':=\iota^{-1}H'\subset\iota^{-1}H=G\subset\fX$ is a subspace with $\dim G'=\dim H'=m$, and the restriction $\iota':=\iota |_{G'}$ is an isomorphism of $\iota':G'\to H'$ with
\begin{equation*}
  \Norm{\iota'}{}\Norm{(\iota')^{-1}}{}\leq\Norm{\iota}{}\Norm{\iota^{-1}}{}\leq 2.
\end{equation*}
Moreover, $P':=\iota^{-1}\circ Q\circ\iota\circ P:\fX\to G'$ is a projection of norm
\begin{equation*}
  \Norm{P'}{}\leq\Norm{\iota^{-1}}{}\Norm{Q}{}\Norm{\iota}{}\Norm{P}{}
  \leq \Norm{\iota^{-1}}{}\cdot 1\cdot \Norm{\iota}{} \cdot C \leq 2C.
\end{equation*}
Hence $G'\subset\fX$ with the given dimension $\dim G'=m$ satisfies \eqref{it:G=H} and \eqref{it:P<C} of Theorem \ref{thm:FTJ} with $2C$ in place of $C$, but this is just another constant that depends only on $\fX$.
\end{proof}

In the following lemma, we embed a sequence of finite-dimensional examples into a given $K$-convex space $\fX$, taking $\fX_1=\fX_2=\fX$ in the statement. We prove the result slightly more generally with two spaces $\fX_j$, since this requires almost no additional effort but will be convenient for a subsequent discussion.

\begin{lemma}\label{lem:badBk}
For $j=1,2$, let $\fX_j$ be infinite-dimensional $K$-convex Banach spaces. Then there exist functions $B_k\in\BMO_{\so}(\R;\bddlin(\fX_1,\fX_2))$ with $B_k^*\in\BMO_{\so}(\R;\bddlin(\fX_2^*,\fX_1^*))$ such that
\begin{equation*}
  \Norm{B_k}{\BMO_{\so}(\R;\bddlin(\fX_1,\fX_2))}\leq 1,\qquad   \Norm{B_k^*}{\BMO_{\so}(\R;\bddlin(\fX_2^*,\fX_1^*))}\leq 1,
\end{equation*}
and
\begin{equation*}
  \Norm{[B_k,\HT]}{\bddlin(L^p(\R;\fX_1),L^p(\R;\fX_2))}>k.
\end{equation*}
\end{lemma}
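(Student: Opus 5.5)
The plan is to transplant the finite-dimensional Hilbertian examples of Theorem \ref{thm:NPTV} into $\fX_1$ and $\fX_2$ using the subspaces provided by Corollary \ref{cor:FTJ}. Given $k$, fix a dimension $d$ so large that $c_0\log d>k\cdot K$, where $c_0$ comes from Corollary \ref{cor:NPTV} and $K$ is a constant depending only on the $K$-convexity constants of $\fX_1,\fX_2$ (determined below). By Corollary \ref{cor:FTJ}, for $j=1,2$ choose subspaces $G_j\subset\fX_j$ with $\dim G_j=d$, isomorphisms $\iota_j:G_j\to H_j$ onto $d$-dimensional Hilbert spaces with $\Norm{\iota_j}{}\Norm{\iota_j^{-1}}{}\le 2$ (normalised so that $\Norm{\iota_j}{}\le 1$ and $\Norm{\iota_j^{-1}}{}\le 2$), and projections $P_j:\fX_j\to G_j$ with $\Norm{P_j}{}\le C$. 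Identify $H_1$ and $H_2$ with a single $d$-dimensional Hilbert space $E$ via a unitary map, and let $B^E$ be the function of Theorem \ref{thm:NPTV}. Define
\begin{equation*}
  \tilde B_k:=\iota_2^{-1}B^E\iota_1P_1:\R\to\bddlin(\fX_1,\fX_2).
\end{equation*}
Strong measurability is clear, since everything factors through the finite-dimensional space $E$.

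\emph{BMO bounds.} Mean oscillations of $\tilde B_k\fx$ are controlled by those of $B^E(\iota_1P_1\fx)$ times $\Norm{\iota_2^{-1}}{}$, so $\Norm{\tilde B_k}{\BMO_{\so}}\le 2C$. The adjoint $\tilde B_k^*=P_1^*\iota_1^*(B^E)^*\iota_2^{-*}$ obeys the same bound, because $(B^E)^*=B^E$ and all the finite-dimensional factors have controlled norms. (I take the paper's $\BMO_{\so}$ norm to be a supremum over unit vectors of vector-valued BMO norms, so these composition estimates are immediate.) Setting $B_k:=\tilde B_k/(2C)$ gives the normalised bounds $\le 1$.

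\emph{Lower bound for the commutator.} This is the main step. For $f\in L^p(\R;G_1)$ we have $P_1f=f$, so $[\tilde B_k,\HT]f=\iota_2^{-1}[B^E,\HT](\iota_1 f)$, using that $\HT$ commutes with the fixed operators $\iota_j$, $P_1$. The map $f\mapsto\iota_1f$ is an isomorphism of $L^p(\R;G_1)$ onto $L^p(\R;E)$ with distortion at most $2$, so
\begin{equation*}
  \Norm{[\tilde B_k,\HT]}{\bddlin(L^p(\R;\fX_1),L^p(\R;\fX_2))}\ge \Norm{\iota_1^{-1}}{}^{-1}\Norm{\iota_2}{}^{-1}\,\Norm{[B^E,\HT]}{\bddlin(L^p(\R;E))}\ge \tfrac14\,c_0\log d
\end{equation*}
by Corollary \ref{cor:NPTV}. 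After dividing by $2C$, the norm exceeds $k$ once $\log d>8Ck/c_0$, which fixes $K=8C$.

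The only delicate point is checking that $\HT$ commutes with the constant operators, so that the commutator genuinely factors as above. This holds on $L^p(\R;G_1)$, where $G_1$ is finite-dimensional and the Hilbert transform acts coordinatewise. Beyond that, the proof is bookkeeping of the constants $2$ and $C$, which do not depend on $d$; the logarithmic growth in $d$ then beats every fixed $k$.
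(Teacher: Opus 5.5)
Your proposal is correct and follows the paper's proof essentially step by step: you use the same Figiel--Tomczak-Jaegermann subspaces $G_j$ with $\Norm{\iota_j}{}\le 1$, $\Norm{\iota_j^{-1}}{}\le 2$, and the same transplanted function $\tilde B=\iota_2^{-1}B\iota_1P_1$ (composed with the inclusion of $G_2$), with $\BMO_{\so}$ bounds $2C$ for it and its adjoint. You also obtain the lower bound for $[\tilde B,\HT]$ by testing on $L^p(\R;G_1)$, and normalise by $2C$, exactly as the paper does (your factor $\tfrac14$ can be sharpened to $\tfrac12$, as in the paper, but this is immaterial).
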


\begin{proof}
By Corollary \ref{cor:FTJ}, both spaces $\fX_j$ have subspaces $G_j$ with the properties \eqref{it:G=H} and \eqref{it:P<C} of Theorem \ref{thm:FTJ}, and $\dim G_1=\dim G_2=N$ can be taken equal and as large as we like. Let $\iota_j$, $H_j$, $P_j$, and $C_j$ have the same meaning as in the said properties with $G_j$ in place of $G$. By redefining the norm on $H_j$ with a multiplicative constant, we may assume, in place of $\Norm{\iota_j}{}\Norm{\iota_j^{-1}}{}\leq 2$, that $\Norm{\iota_j}{}=1$ and $\Norm{\iota_j^{-1}}{}\leq 2$. Since two Hilbert spaces of the same dimension are isometrically isomorphic, we may further take $H:=H_1=H_2$. Let $C:=\max(C_1,C_2)$.

 By Corollary \ref{cor:NPTV}, there is a function $B\in\BMO_{\so}(\R;\bddlin(H))$ with $B^*(=B)\in\BMO_{\so}(\R;\bddlin(H))$ of norm at most one in these spaces such that
 \begin{equation*}
  \Norm{[B,\HT]}{\bddlin(L^p(\R;H))}\gtrsim\log\dim H=\log\dim G_j=\log N.
\end{equation*}
For all $x\in\R$, we define
\begin{equation*}
  \tilde B(x) :=J_2\circ\iota^{-1}_2\circ B(x)\circ\iota_1\circ P_1 \in\bddlin(\fX_1,\fX_2),
\end{equation*}
where $J_2:G_2\to\fX_2$ is the injection $\fx\mapsto\fx$.
The adjoint is
\begin{equation*}
  \tilde B(x)^* =P_1^*\circ \iota_1^{*}\circ B(x)^*\circ\iota_2^{-*}\circ J_2^* \in\bddlin(\fX_2^*,\fX_1^*).
\end{equation*}

For $\fx\in\fX_1$,
\begin{equation*}
\begin{split}
  \Norm{\tilde B(\cdot)\fx}{\BMO(\R;\fX_2)}
  &\leq\Norm{J_2}{}\Norm{\iota_2^{-1}}{}\Norm{B(\cdot)(\iota_1(P_1\fx))}{\BMO(\R;H)} \\
  &\leq 1\cdot 2\cdot\Norm{B}{\BMO_{\so}(\R;\bddlin(H))}\Norm{\iota_1(P_1\fx))}{H},
\end{split}
\end{equation*}
where $\Norm{\iota_1(P_1\fx))}{H}\leq\Norm{\iota_1}{}\Norm{P_1}{}\Norm{\fx}{\fX_1}\leq C\Norm{\fx}{\fX_1}$.
Thus
\begin{equation}\label{eq:Bso}
  \Norm{\tilde B}{\BMO_{\so}(\R;\bddlin(\fX_1,\fX_2))}
  \leq 2C\Norm{B}{\BMO_{\so}(\R;\bddlin(H))}.
\end{equation}

Similarly, for $\fx^*\in\fX_2^*$,
\begin{equation*}
\begin{split}
  \Norm{\tilde B(\cdot)^*\fx^*}{\BMO(\R;\fX_1^*)}
  &\leq\Norm{P_1^*}{}\Norm{\iota_1^{*}}{}\Norm{\tilde B(\cdot)^*(\iota_2^{-*}(J_2^*\fx^*))}{\BMO(\R;H)} \\
  &\leq\Norm{P_1}{}\Norm{\iota_1}{}\Norm{\tilde B^*}{\BMO_{\so}(\R;\bddlin(H))}\Norm{\iota_2^{-*}(J_2^*\fx^*))}{H},
\end{split}
\end{equation*}
where $\Norm{P_1}{}\Norm{\iota_1}{}\leq C\cdot 1$, and
\begin{equation*}
  \Norm{\iota_2^{-*}(J_2^*\fx^*))}{H}\leq\Norm{\iota_2^{-*}}{}\Norm{J_2^*}{}\Norm{\fx^*}{\fX_2^*}
  =\Norm{\iota_2^{-1}}{}\Norm{J_2}{}\Norm{\fx^*}{\fX_2^*}
  \leq 2\Norm{\fx^*}{\fX^*}.
\end{equation*}
Thus
\begin{equation}\label{eq:Bstar}
  \Norm{\tilde B^*}{\BMO_{\so}(\R;\bddlin(\fX_1^*))}
  \leq 2C\Norm{B^*}{\BMO_{\so}(\R;\bddlin(H))}.
\end{equation}

Moreover,
\begin{equation*}
\begin{split}
  \Norm{[B,\HT]}{\bddlin(L^p(\R;H))}
  &=\Norm{\iota_2\circ\iota_2^{-1}[B,\HT]\iota_1\circ\iota_1^{-1} }{\bddlin(L^p(\R;H))} \\
  &\leq\Norm{\iota_2}{} \Norm{\iota_2^{-1} [B,\HT] \iota }{\bddlin(L^p(\R;G_1),L^p(\R;G_2))} \Norm{\iota_1^{-1}}{} \\
  &\leq 1\cdot \Norm{J_2\iota_2^{-1} [B,\HT] \iota_1 P_1}{\bddlin(L^p(\R;\fX_1),L^p(\R;\fX_2))} \cdot 2,
\end{split}
\end{equation*}
where the last estimate follows from the fact that $\iota_2^{-1} [B,\HT] \iota_1$ is the restriction of $J_2\iota^{-1}_2 [B,\HT] \iota_1 P_1$ to $L^p(\R;G_1)$. Since the operators $\iota_j$, $J_j$, and $P_j$ commute with $\HT$, it follows that
\begin{equation*}
  J_2\iota_2^{-1} [B,\HT] \iota_1 P_1
  =[J_2\iota_2^{-1} B\iota_1 P_1,\HT] =[\tilde B,\HT].
\end{equation*}
Thus, we obtain
\begin{equation}\label{eq:BHT}
  \log N\lesssim\Norm{[B,\HT]}{L^p(\R;H)}\leq 2\Norm{[\tilde B,\HT]}{\bddlin(L^p(\R;\fX_1),L^p(\R;\fX_2))}.
\end{equation}
From \eqref{eq:Bso}, \eqref{eq:Bstar}, and \eqref{eq:BHT}, it follows that $B_k:=\frac{1}{2C}\tilde B$ has the required properties, as soon as $N=\dim G_1=\dim G_2$ is chosen large enough, as we may by Corollary \ref{cor:FTJ}.
\end{proof}

We next wish to combine together the different functions $B_k$. For this, we will use the following construction. Given a sequence of integrable functions $\{b_k\}_{k\in\Z}$, where each $b_k$ is defined (at least) on $[4k,4k+1]$, we define a new function $b$ on $\R$ as follows, where $k\in\Z$ and $j\geq 0$ are arbitrary and $I_j:=[2^{-j-1},2^{-j})$:
\begin{equation}\label{eq:bCombo}
  b(x):=\begin{cases} b_k(x), & x\in 4k+[0,1], \\
  \ave{b_k}_{4k+I_j}, & x\in 4k-I_j, \\
  \ave{b_k}_{4k+1-I_j}, & x\in 4k+1+I_j, \\
  0, & x\in 4k+[2,3]. \end{cases}
\end{equation}

\begin{lemma}
Equation \eqref{eq:bCombo} defines $b$ uniquely at every $x\in\R$.
\end{lemma}

\begin{proof}
Clearly, the first case of \eqref{eq:bCombo} defines $b$ on $[4k,4k+1]$, and the last one on $[4k+2,4k+3]$.
Noting that $\bigcup_{j\geq 0}I_j=(0,1)$, the second case defines $b$ on $(4k-1,4k)$, the third one on $(4k+1,4k+2)$.
As $k\in\Z$ varies, this uniquely defines $b$ at every point $x\in\R$.
\end{proof}

Note that the values of $b$ on $(4k-1,4k)$ and $(4k+1,4k+2)$ are Whitney-type extensions of $b_k$ over the boundary points $4k$ and $4k+1$, respectively. While definition \eqref{eq:bCombo} can be made for any integrable functions $b_k$, it is particularly useful under the additional assumption that $\ave{b_k}_{[4k,4k+1]}=0$, in which case the definition of $b$ on $[4k+2,4k+3]$ is not as arbitrary as it might otherwise seem.

\begin{lemma}\label{lem:bCombo}
For any $k\in\Z$, let $J_k:=[4k,4k+1]$ and let $b_k\in\BMO(J_k;\fX)$ satisfy $\ave{b_k}_{J_k}=0$ and $\Norm{b_k}{\BMO(J_k;\fX)}\leq 1$. Let $b$ be a function on $\R$ defined by \eqref{eq:bCombo}. Then $b\in\BMO(\R;\fX)$ and $\Norm{b}{\BMO(\R;\fX)}\lesssim 1$
with the implicit positive constant independent of $b$.
\end{lemma}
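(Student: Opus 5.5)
We must show that for every interval $I\subset\R$ there is a vector $\fx_I\in\fX$ with $\fint_I\Norm{b-\fx_I}{}\lesssim 1$. The plan is to split by the length and position of $I$ relative to the blocks $K_k:=[4k-1,4k+2]$. On each $K_k$ the function $b$ consists of $b_k$ on $J_k$ together with its two Whitney-type extensions, and $b$ vanishes on the gaps $4k+[2,3]$.

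First, two elementary facts about $b_k$, both derived from $\Norm{b_k}{\BMO(J_k)}\le1$ and $\ave{b_k}_{J_k}=0$.

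(a) The averages over consecutive dyadic pieces are close:
\begin{equation*}
\Norm{\ave{b_k}_{4k+I_j}-\ave{b_k}_{4k+I_{j+1}}}{}\lesssim1,
\end{equation*}
since both pieces lie in $4k+[0,2^{-j})$ and have comparable lengths. Consequently
\begin{equation*}
\Norm{\ave{b_k}_{4k+I_j}}{}\lesssim 1+j,
\end{equation*}
by telescoping from $I_0$, whose average is $O(1)$ because $\ave{b_k}_{J_k}=0$. The same bounds hold at the endpoint $4k+1$.

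(b) For every interval $L\subseteq J_k$,
\begin{equation*}
\fint_L\Norm{b_k}{}\lesssim 1+\log(1/|L|),
\end{equation*}
which is the standard logarithmic growth of averages of BMO functions.

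Next, the main case: intervals $I$ meeting some $K_k$ with $|I|\le 1/2$. Then $I$ meets at most one block together with possibly a zero gap. If $I\subseteq J_k$, the BMO bound is immediate. If $I$ straddles the endpoint $4k$, say $I\subseteq[4k-r,4k+r]$ with $r\approx|I|$, then on the left part $b$ is piecewise constant, with values $\ave{b_k}_{4k+I_j}$ for $2^{-j}\lesssim r$. We take $\fx_I:=\ave{b_k}_{[4k,4k+r]}$. By (a) and the nested-interval BMO estimate,
\begin{equation*}
\Norm{\ave{b_k}_{4k+I_j}-\fx_I}{}\lesssim 1+\log\bigl(r2^{j}\bigr),
\end{equation*}
and the $j$-th piece has weight $2^{-j}/r$, so the sum $\sum_j (2^{-j}/r)(1+\log(r2^j))$ is $O(1)$. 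The right part is controlled by $\fint_{[4k,4k+r]}\Norm{b_k-\fx_I}{}\lesssim1$. If instead $I$ meets a gap near $4k+2$, it lies in a region where $b$ takes the constant values $\ave{b_k}_{4k+1-I_j}$ for small $j$ and $0$ in the gap; all of these are $O(1)$ by (a), so $\fx_I=0$ works. Intervals lying entirely inside $4k-I_j$ regions, away from the endpoint, are handled the same way, since they cover boundedly many consecutive $j$, or lie in $(4k-1,4k-\delta)$ where a comparison as above applies.

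Finally, large intervals with $|I|>1/2$: take $\fx_I=0$. Then $\fint_I\Norm{b}{}$ is bounded by averaging over the $\lesssim|I|$ blocks $K_k$ that $I$ meets, each enlarged to length $\gtrsim1$, and it suffices to show
\begin{equation*}
\int_{K_k}\Norm{b}{}\lesssim1.
\end{equation*}
This follows from $\int_{J_k}\Norm{b_k}{}\lesssim1$ (by (b) with $L=J_k$) and
\begin{equation*}
\sum_j 2^{-j-1}\Norm{\ave{b_k}_{4k+I_j}}{}\lesssim\sum_j2^{-j}(1+j)<\infty.
\end{equation*}

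The main obstacle is the straddling case, specifically the logarithmic bookkeeping that shows the Whitney extension does not amplify oscillation. I would organize this step through the dyadic chain estimate in (a) rather than any general extension theorem.
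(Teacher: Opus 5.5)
Your proof is correct. Its case split (intervals inside $J_k$, intervals straddling $4k$ or $4k+1$, intervals touching a zero gap, intervals inside a Whitney region, long intervals) closely mirrors the paper's. The genuine difference is how you control the sum over the Whitney pieces. You bound each $\Norm{\ave{b_k}_{4k+I_j}-\fx_I}{}$ through a chain of nested dyadic averages, obtaining $1+\log(r2^j)$, and then sum against the geometric weights $2^{-j}/r$.

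The paper avoids logarithms altogether. It first reflects: $I$ is replaced by $I\cup(-I)$ in the straddling case, or by the comparable interval $-\hat I_i=-(0,2^{-i})$ when $I$ lies in the Whitney region and meets more than two pieces. It then takes $c=\ave{b_0}_{[0,2^{-i}]}$ and uses $\Norm{\ave{b_0}_{I_j}-c}{}\leq\fint_{I_j}\Norm{b_0-c}{}$. Hence $\sum_{j\geq i}\abs{I_j}\Norm{\ave{b_0}_{I_j}-c}{}\leq\int_0^{2^{-i}}\Norm{b_0-c}{}$ collapses at once to a single mean oscillation of $b_0$. Put differently, on $(-1,0)$ the function $b$ is the reflection of the conditional expectation of $b_0$ onto the partition $\{I_j\}_{j\geq0}$. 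Conditional expectation cannot increase the $L^1$-distance to a constant over a union of its atoms. This removes what you call the main obstacle and gives explicit constants. Your route needs only the standard logarithmic growth of BMO averages and the convergence of $\sum_m 2^{-m}(1+m)$, at the price of more bookkeeping.

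The same inequality gives your block bound directly: $\int_{K_k}\Norm{b}{}\leq 3\int_{J_k}\Norm{b_k}{}$. The paper instead treats long intervals by observing that $\ave{b}_{[i,i+1]}=0$ for every $i\in\Z$ and reusing the short-interval estimates.

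One sentence of yours should be tightened. For $I\subseteq(4k-1,4k)$ the correct dichotomy is as follows. Either $I$ meets at most two consecutive pieces $4k-I_j$, and (a) suffices. Or $I$ contains a whole piece $4k-I_{i+1}$, where $i$ is the smallest index met. Then $\abs{I}\geq 2^{-i-2}$ and $I\subseteq(4k-2^{-i},4k)$, so your straddling computation with $r=2^{-i}$ applies. The phrase about $(4k-1,4k-\delta)$ does not by itself say this.
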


\begin{proof}
We need to estimate $\inf_{c\in\fX}\fint_I\Norm{b(x)-c}{}\,dx$ for all finite intervals $I\subset\R$. We split this into several cases.

Case $I\subseteq[4k,4k+1]$ for some $k\in\Z$ is clear, since this is just the condition that $\Norm{b_k}{\BMO(I_k;\fX)}\leq 1$.

Case $I\subseteq 4k-[2^{-j-1},2^{-j}]$ for some $k\in\Z$ and $j\geq-1$ is also clear, as $b_k$ is essentially constant on $I$. Note that we intentionally include the case $j=-1$ here, i.e., $I\subseteq [4k-2,4k-1]=[4(k-1)+2,4(k-1)+3]$, in which case $b=0$ on $I$.

Case: $I\subseteq (4k-2,4k)$ meets exactly two intervals $4k-I_j$, where $j\geq-1$ (again, intentionally starting from this value). Let $k=0$ (without loss of generality), and $j=i,i+1$. To simplify writing, in the special case $j=-1$, we abuse notation and define $\ave{b_0}_{I_{-1}}:=0$. (Recall that $I_{-1}=[1,2)$, but $b_0$ is only defined on $[0,1]$, so this abuse should not cause any ambiguity.) With this convention, let $c=\ave{b_0}_{I_i}$. Then
\begin{equation*}
\begin{split}
  \fint_I\Norm{b-c}{}
  &=\frac{1}{\abs{I}}\sum_{j=i}^{i+1}\abs{I\cap(-I_j)}\Norm{\ave{b_0}_{I_j}-c}{} \\
  &=\frac{\abs{I\cap(-I_{j+1})}}{\abs{I}}\Norm{\ave{b_0}_{I_{i+1}}-\ave{b_0}_{I_i}}{}
  \lesssim 1\cdot\Norm{b_0}{\BMO(J_0;\fX)}\leq 1
\end{split}
\end{equation*}
by a well-known estimate for averages over close-by, comparable intervals.

Case: $I\subseteq (4k-2,4k)$ meets more than two intervals $4k-I_j$ with $j\geq -1$. Let again $k=0$, and let $i$ be the minimal index such that $I$ meets $-I_i$. Then
\begin{equation*}
 [2^{-i-2},2^{-i-1}) =  -I_{i+1}\subseteq I\subseteq \bigcup_{j=i}^\infty(-I_j)=-(0,2^{-i})=:-\hat I_i.
\end{equation*}
Thus $\abs{I}\geq\abs{I_{i+1}}=\frac14\abs{\hat I_i}$.

If $i=-1$, we again abuse notation and define $\ave{b_0}_{\hat I_{-1}}:=0$, keeping also the earlier abuse $\ave{b_0}_{I_{-1}}:=0$.
Then, with $c=\ave{b_0}_{\hat I_i}$, it follows that
\begin{equation*}
  \fint_I\Norm{b-c}{}
  \leq 4\fint_{-\hat I_i}\Norm{b-c}{}
  =\frac{4}{\abs{\hat I_i}}\sum_{j=i}^\infty\abs{I_j}\Norm{\ave{b_0}_{I_j}-c}{}=:A_i.
\end{equation*}
By the conventions that we made, if $i=-1$, then $\Norm{\ave{b_0}_{I_{-1}}-c}{}=\Norm{0-0}{}=0$, and hence the term $j=-1$ does not contribute. Moreover, we have $\abs{\hat I_{-1}}\geq\abs{\hat I_0}$. Hence $A_{-1}\leq A_0$, and so it suffices to estimate $A_i$ with $i\geq 0$. In this case, all $\ave{b_0}_{I_j}$ have their usual meaning and $\hat I_i\subseteq[0,1]$; thus, we can estimate
\begin{equation*}
  A_i\leq\frac{4}{\abs{\hat I_i}}\sum_{j=i}^\infty\abs{I_j}\fint_{I_j}\Norm{b_0-c}{}
  =4\fint_{\hat I_i}\Norm{b_0-c}{}\leq 4\Norm{b_0}{\BMO(J_0;\fX)}\leq 4.
\end{equation*}
Thus we have verified the case under consideration.

Case $I\subseteq [4k-2,4k]$ is now covered by the previous three cases.

Case $I\subseteq [4k+1,4k+3]$ is symmetric.

Case: $I\subseteq[4k-1,4k+1]$. We may assume that $I$ meets both $(4k-1,4k)$ and $[4k,4k+1]$, since otherwise this is already covered. Let $k=0$ (without loss of generality). Let $\tilde I:=I\cup(-I)$ so that $I\subseteq\tilde I$ and $\abs{\tilde I}\leq\abs{I}+\abs{-I}=2\abs{I}$. Let $i$ be the smallest index such that $\tilde I$ meets $I_i$. Then
\begin{equation*}
   \tilde I_{i+1}=[-2^{-i-1},2^{-i-1}]\subseteq\tilde I\subseteq[-2^{-i},2^{-i}]=:\tilde I_i,
\end{equation*}
and hence $\abs{\tilde I_i}=2\abs{\tilde I_{i+1}}\leq 2\abs{\tilde I}\leq 4\abs{I}$. Hence, with $c=\ave{b_0}_{[0,2^{-i}]}$,
\begin{equation*}
\begin{split}
  \fint_I\Norm{b-c}{}
  &\leq 4\fint_{\tilde I_i}\Norm{b-c}{} \\
  &=\frac{4}{\abs{\tilde I_i}}\Big(\int_0^{2^{-i}}\Norm{b_0-c}{}+\sum_{j=i}^\infty\abs{I_j}\Norm{\ave{b_0}_{I_j}-c}{}\Big) \\
  &\leq\frac{4}{\abs{\tilde I_i}}\Big(\int_0^{2^{-i}}\Norm{b_0-c}{}+\sum_{j=i}^\infty\abs{I_j}\fint_{I_j}\Norm{b_0-c}{}\Big) \\
  &=\frac{8}{\abs{\tilde I_i}}\int_0^{2^{-i}}\Norm{b_0-c}{}
  =4\fint_0^{2^{-i}}\Norm{b_0-c}{} \\
  &\leq 4\Norm{b_0}{\BMO(J_0;\fX)}\leq 4.
\end{split}
\end{equation*}

Case $I\subseteq[4k,4k+2]$ is symmetric to the previous one.

Since each $h\in\Z$ has one of the forms $4k-2$, $4k-1$, $4k$, and $4k+1$, we have  now covered all cases, where $I$ is contained in an interval of the form $[h,h+2]$ for some $h\in\Z$. Clearly, every interval of length $\abs{I}\leq 1$ is of this type. (If $h\leq\inf I<h+1$, then $I\subset[h,h+2]$.) Thus, it only remains to consider:

Case: $\abs{I}>1$. Let $m:=\floor{\inf I}$, $n:=\ceil{\sup I}$, and $J:=[m,n]$. Then $\abs{J}\leq\abs{I}+2\leq 3\abs{I}$. With $c=0$, we have
\begin{equation*}
  \fint_I\Norm{b-c}{}
  \leq 3\fint_J\Norm{b}{}
  =\frac{3}{\abs{J}}\sum_{i=m}^{n-1}\int_i^{i+1}\Norm{b}{}.
\end{equation*}
Recall that each $b_k$ has $\ave{b_k}_{[4k,4k+1]}=0$ by assumption, and then $b$ has $\ave{b}_{[i,i+1]}=0$ for all $i\in\Z$ by its construction \eqref{eq:bCombo}. Hence
\begin{equation*}
  \int_i^{i+1}\Norm{b}{}
  =\fint_{[i,i+1]}\Norm{b-\ave{b}_{[i,i+1]}}{}
  \leq 2\inf_{c\in\fX}\fint_{[i,i+1]}\Norm{b-c}{}
  \lesssim 1
\end{equation*}
by the cases of short intervals that we have already handled. Substituting back, it follows that
\begin{equation*}
  \fint_I\Norm{b-c}{}
  \lesssim\frac{1}{\abs{J}}\sum_{i=m}^{n-1}1=\frac{1}{\abs{J}}(n-m)=1,
\end{equation*}
recalling that $J=[m,n]$.

We have now completed the proof of the lemma.
\end{proof}

We are now ready to present the function that proves Theorem \ref{thm:commuUnbd} for any $K$-convex space $\fX$.

\begin{proposition}\label{prop:badB}
For $j=1,2$, let $\fX_j$ be infinite-dimensional $K$-convex Banach spaces. Then there exists a function
\begin{equation*}
   B\in\BMO_{\so}(\R;\bddlin(\fX_1,\fX_2))
\end{equation*}
with $B^*\in\BMO_{\so}(\R;\bddlin(\fX_2^*,\fX_1^*))$ such that $[B,\HT]$ is not bounded from $L^p(\R;\fX_1)$ to $L^p(\R;\fX_2)$.
\end{proposition}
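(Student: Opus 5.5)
We combine the functions $B_k$ of Lemma \ref{lem:badBk} into a single function via \eqref{eq:bCombo}, and use Lemma \ref{lem:bCombo} for the BMO bounds.

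\textbf{Step 1: localization.} Since $[B_k,\HT]$ is unchanged when a constant operator is subtracted from $B_k$, we normalise $B_k$ to have mean zero on a unit interval. Boundedness of $[B_k,\HT]$ on $L^p$ is also unaffected by translations and dilations. The first point is to reduce to $B_k$ that is essentially supported on $[0,1]$, or rather to show that the large norm is already witnessed locally. For this, we choose for each $k$ a compactly supported $f_k\in L^p(\R;\fX_1)$ with $\Norm{[B_k,\HT]f_k}{}>k\Norm{f_k}{}$. We may take $f_k$ with compact support, by density, and by truncating we may also assume the relevant part of $[B_k,\HT]f_k$ lives on a compact interval.

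\textbf{Step 2: rescaling.} Next we rescale so that this happens inside $[0,1]$. Concretely, we dilate by a factor $\lambda_k$ so that the supports fit into $[0,1]$; BMO norms and commutator norms are dilation invariant. We then subtract $\ave{B_k}_{[0,1]}$ and translate to $J_k=[4k,4k+1]$.

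\textbf{Step 3: gluing.} We define $B$ by \eqref{eq:bCombo}, applied strongly, i.e.\ to $B(\cdot)\fx$ for each $\fx$. Since averages commute with $\fx\mapsto B\fx$ and with adjoints, this gives $B(\cdot)\fx$ equal to the glued function of the $B_k(\cdot)\fx$, and $B(\cdot)^*\fx^*$ equal to the glued function of the $B_k(\cdot)^*\fx^*$. Lemma \ref{lem:bCombo} then yields $B\in\BMO_{\so}$ and $B^*\in\BMO_{\so}$ with norms $\lesssim 1$. Here we use that the restriction of $B_k$ to $J_k$ has BMO norm $\leq 1$ and mean zero.

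\textbf{Step 4: the lower bound (main obstacle).} This is the comparison of $[B,\HT]f_k$ with $[B_k,\HT]f_k$ for $f_k$ supported in $J_k$. The difference is $[B-B_k,\HT]f_k=(B-B_k)\HT f_k-\HT((B-B_k)f_k)$. The second term vanishes, since $B=B_k$ on $J_k\supseteq\supp f_k$. The first term is $(B-B_k)\HT f_k$, which is supported off $J_k$. To control it, I plan to restrict attention to the output on a subinterval, estimating $\Norm{\one_{J'}[B,\HT]f_k}{}$ with $J'\subset J_k$, where $B=B_k$. Thus we need the witness for largeness in Step 1 to be of the form $\Norm{\one_{J'}[B_k,\HT](\one_{J'}f)}{}$ with $J'$ compact.

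To justify this, I would argue as follows. Since $[B_k,\HT]$ is unbounded, or at least has norm $>k$, and norms are approximated by compactly supported $f$ and by truncating the output to a large interval, monotone convergence gives an interval $J'$ with $\Norm{\one_{J'}[B_k,\HT]\one_{J'}}{}>k$. After rescaling, we take $J'=J_k$ and use the BMO bound of $B_k$ restricted to $J_k$, which is $\leq 1$ by dilation invariance.

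Then $\Norm{[B,\HT]}{}\geq\Norm{\one_{J_k}[B,\HT]\one_{J_k}}{}=\Norm{\one_{J_k}[B_k,\HT]\one_{J_k}}{}>k$ for every $k$, so $[B,\HT]$ is unbounded. A further subtlety is that $B_k$ need only be in BMO, so we must check that $[B_k,\HT]f$ is defined for simple compactly supported $f$. This holds since $B_k\fx\in L^p_{\loc}$ by John--Nirenberg.
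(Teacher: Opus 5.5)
Your proposal is correct and follows essentially the same route as the paper: rescale and translate each $B_k$ from Lemma \ref{lem:badBk} so that its large commutator norm is witnessed by test functions supported in $J_k=[4k,4k+1]$, subtract the mean over $J_k$, and glue via \eqref{eq:bCombo}. You then get the $\BMO_{\so}$ bounds for $B$ and $B^*$ from Lemma \ref{lem:bCombo} and conclude from $B=\tilde B_k$ on $J_k$; the only difference is cosmetic, since the paper localizes the output by pairing with a compactly supported $g_k\in L^{p'}(\R;\fX_2^*)$, i.e.\ $\abs{\pair{[B_k,\HT]f_k}{g_k}}>k$, instead of using your monotone-convergence truncation $\one_{J'}$, and these two devices are equivalent.
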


\begin{proof}
Let $B_k\in\BMO_{\so}(\R;\bddlin(\fX_1,\fX_2))$ be the functions provided by Lemma \ref{lem:badBk}. By the estimate $\Norm{[B_k,\HT]}{\bddlin(L^p(\R;\fX_1),L^p(\R;\fX_2))}>k$, we can find compactly supported $f_k\in L^p(\R;\fX_1)$ and $g_k\in L^{p'}(\R;\fX_2^*)$ such that
\begin{equation*}
  \Norm{f_k}{L^p(\R;\fX_1)}=\Norm{g_k}{L^{p'}(\R;\fX_2^*)}=1
\end{equation*}
and
\begin{equation*}
  \abs{\pair{[B_k,\HT]f_k}{g_k}}>k.
\end{equation*}
By the translation and the dilation invariances of the Hilbert transform, it follows that
\begin{equation*}
  \pair{[B_k,\HT]f_k}{g_k}
  =\pair{[\tilde B_k,\HT]\tilde f_k}{\tilde g_k},
\end{equation*}
where
\begin{equation*}
  \tilde f_k(x):=r_k^{-\frac1p}f_k\Big(\frac{x-x_k}{r_k}\Big),\quad
  \tilde g_k(x):=r_k^{-\frac{1}{p'}}g_k\Big(\frac{x-x_k}{r_k}\Big),
\end{equation*}
\begin{equation*}
  \tilde B_k(x):=B_k\Big(\frac{x-x_k}{r_k}\Big),
\end{equation*}
and the $x_k$ and $r_k$ are arbitrary. These satisfy $\Norm{\tilde f_k}{p}=\Norm{f_k}{p}$ and $\Norm{\tilde g_k}{p'}=\Norm{g_k}{p'}$, and also $\tilde B_k$ and $\tilde B_k^*$ have the same $\BMO_{\so}$ norms as $B_k$ and $B_k^*$, respectively.

Let $x_k:=4k+\frac12$. Since $f_k$ and $g_k$ are compactly supported, a sufficiently small choice of $r_k$ guarantees that $\tilde f_k$ and $\tilde g_k$ are supported in $[4k,4k+1]$. After fixing these $x_k$ and $r_k$, we will finally replace $\tilde B_k$ by $\tilde B_k-\ave{\tilde B_k}_{[4k,4k+1]}$, noting that this affects neither the $\BMO_{\so}$ norms nor the commutator $[\tilde B_k,\HT]$. Thus, without loss of generality, we may assume that
\begin{equation}\label{eq:intBkWlog}
   \ave{\tilde B_k}_{[4k,4k+1]}=0.
\end{equation}

We then define a function $B$ by applying the algorithm \eqref{eq:bCombo} to the functions $\tilde B_k|_{[4k,4k+1]}$, where we may define simply $\tilde B_k:=0$ for $k\leq 0$. Since the algorithm \eqref{eq:bCombo} is linear, it is clear that, for each $\fx\in\fX_1$, the function $B(\cdot)\fx$ is the result of the same algorithm applied to the functions $\tilde B_k(\cdot)\fx |_{[4k,4k+1]}$, which satisfy
\begin{equation*}
  \Norm{\tilde B_k(\cdot)\fx}{\BMO([4k,4k+1];\fX_2)}
  \leq\Norm{\tilde B_k(\cdot)\fx}{\BMO_{\so}(\R;\bddlin(\fX_2))}\Norm{\fx}{\fX_1}\leq 1
\end{equation*}
for $\Norm{\fx}{\fX_1}\leq 1$. Hence Lemma \ref{lem:bCombo} (here we need \eqref{eq:intBkWlog}) guarantees that $ \Norm{B(\cdot)\fx}{\BMO(\R;\fX_2)}\lesssim 1$
for the same $\fx$, and thus $\Norm{B}{\BMO_{\so}(\R;\bddlin(\fX_1,\fX_2))}\lesssim 1$. Similarly, we also obtain
\begin{equation*}
  \Norm{B^*}{\BMO_{\so}(\R;\bddlin(\fX_2^*,\fX_1^*))}\lesssim 1.
\end{equation*}

It remains to check the unboundedness of $[B,\HT]$. Testing with $\tilde f_k,\tilde g_k$ from above, we find that
\begin{equation*}
\begin{split}
  \Norm{[B,\HT]}{\bddlin(L^p(\R;\fX_1),L^p(\R;\fX_2))}
  &\geq\abs{\pair{[B,\HT]\tilde f_k}{\tilde g_k}} \\
  &=\abs{\pair{[\tilde B_k,\HT]\tilde f_k}{\tilde g_k}}>k,
\end{split}
\end{equation*}
where the identity follows from the fact that $\tilde f_k$ and $\tilde g_k$ are supported in $[4k,4k+1]$, where $B=B_k$ by construction. Since the previous estimate is valid for every $k\in\N$, we see that $[B,\HT]$ cannot be bounded.
\end{proof}

\begin{proof}[Proof of Theorem \ref{thm:commuUnbd}]
If $\fX$ is non-UMD (resp.\ $K$-convex), then the required $B$ is produced by Proposition \ref{prop:commuUMD} (resp. \ref{prop:badB}, taking $\fX_1=\fX_2=\fX$). Since every UMD space is $K$-convex (see \cite[Proposition 4.3.10]{HNVW1}), these two cases cover all Banach spaces.
\end{proof}

\section{Unbounded Hilbert transforms}

The main result of this section is the following theorem, which shows the insufficiency of the $\A_p$ condition to guarantee the boundedness of the Hilbert transform on the operator-weighed $L^p(V;\fX)$ space, whenever $p\in(1,\infty)$ and $\fX$ is an arbitrary Banach space. In the special case that $p=2$ and $\fX$ is an infinite-dimensional Hilbert space, this is contained in \cite[Theorem 1.3]{GPTV:00} or \cite[Theorem 1.1]{GPTV:04}. Formally, these results show the existence of a sequence of finite-dimensional weights, whose $\A_2$ constants remain bounded while the norms of the weighted Hilbert transforms grow without limit as the dimension increases to infinity. However, since an infinite-dimensional Hilbert space can be written as an $\ell^2$ direct sum containing all the finite-dimensional versions, the infinite-dimensional counterexample readily follows.

\begin{theorem}\label{thm:Hunbd}
Let $p\in(1,\infty)$ and $\fX$ be an arbitrary Banach space.
Then there exists $V\in\A_p(\R;\bddlin(\fX))$ such that the Hilbert transform is unbounded on $L^p(V;\fX)$.
\end{theorem}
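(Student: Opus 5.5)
The plan is to split into two cases according to the geometry of $\fX$, exactly as in the proof of Theorem \ref{thm:commuUnbd}. Implicitly $\fX$ is infinite-dimensional, since for $\fX=\C^m$ the matrix $A_p$ theory gives boundedness.

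\emph{Case 1: $\fX$ is not UMD.} Take $V\equiv I$. Trivially $V\in\A_p$ with $[V]_{\A_p}=1$, since $\rho_{\aveL^p(Q,I)}(\fx)=\Norm{\fx}{}$ and $\rho^*_{\aveL^{p'}(Q,I)}(\fx)=\Norm{\fx}{}$. The Hilbert transform is unbounded on $L^p(I;\fX)=L^p(\R;\fX)$ by the Burkholder--Bourgain characterisation of UMD.

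\emph{Case 2: $\fX$ is UMD, hence $K$-convex.} Here $\HT$ is bounded on unweighted $L^p(\R;\fX)$. I will transfer the unbounded commutators of Section~\ref{thm:commuUnbd}'s construction into a weight of ``unipotent'' form $V=I+B$. The key algebraic point is the following: if $B(x)B(y)=0$ for all $x,y$, then $V^{-1}=I-B$. By Proposition on isometries $L^p(V)\cong L^p(\R;\fX)$, the boundedness of $\HT$ on $L^p(V)$ is equivalent to that of
\begin{equation*}
  V\HT V^{-1}=\HT+[B,\HT]-B\HT B=\HT+[B,\HT]
\end{equation*}
on $L^p(\R;\fX)$. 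Since $\HT$ is bounded in this case, unboundedness of $[B,\HT]$ gives unboundedness of $\HT$ on $L^p(V)$.

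To build such a $B$, I will rerun Lemma \ref{lem:badBk} and Proposition \ref{prop:badB}, but choose the subspaces inductively so that the nilpotency holds. Using Corollary \ref{cor:FTJ}, first pick $G_{2,k}$ with projection $P_{2,k}$. Then pick $G_{1,k}$ inside the complemented, still infinite-dimensional $K$-convex subspace $\ker P_{2,k}$ (intersected with the kernels of all earlier projections). Its projection $P_{1,k}$ is precomposed with $(I-P_{2,k})$ and with the complements of the earlier projections, so that $P_{1,k}$ annihilates every $G_{2,k'}$. Each block is
\begin{equation*}
  \tilde B_k=J_{2,k}\iota_{2,k}^{-1}B^{(k)}\iota_{1,k}P_{1,k},
\end{equation*}
and these blocks are glued via \eqref{eq:bCombo}. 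This yields $B$ with $B$ and $B^*$ in $\BMO_{\so}$ and $[B,\HT]$ unbounded. Moreover $B(x)B(y)=0$, since the range of every block lies in $\bigoplus G_{2,k'}$, which is killed by every $P_{1,k}$.

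It remains to verify $V=I+B\in\A_p$, which I expect to be the main obstacle. With $A_Q:=I+\ave{B}_Q$, whose inverse is $I-\ave{B}_Q$, one has
\begin{equation*}
  \rho_{\aveL^p(Q,V)}(\fx)\le\Norm{A_Q\fx}{}+\Big(\fint_Q\Norm{(B-\ave{B}_Q)\fx}{}^p\Big)^{1/p}.
\end{equation*}
By vector-valued John--Nirenberg, the second term is $\lesssim\Norm{\mathcal P\fx}{}$, where $\mathcal P$ is a projection that factors $B$ on the right ($B=B\mathcal P$) and satisfies $\mathcal P B=0$. Then $\mathcal P A_Q\fx=\mathcal P\fx$, so $\rho_{\aveL^p(Q,V)}(\fx)\lesssim\Norm{A_Q\fx}{}$. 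The reverse bound $\Norm{A_Q\fx}{}\lesssim\rho_{\aveL^p(Q,V)}(\fx)$ follows by averaging $V\fx$ over $Q$. Dually, the same argument gives $\rho_{\aveL^{p'}(Q,V^{-*})}(\fx^*)\approx\Norm{A_Q^{-*}\fx^*}{}$, whose dual norm is $\Norm{A_Q\fx}{}$. This yields the $\A_p$ condition in the form of Corollary \ref{cor:Ap}.

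The delicate point is that $\mathcal P$ must be a \emph{single} bounded projection, whereas a large cube $Q$ sees infinitely many blocks. I would first try to make the blocks' first projections jointly bounded, via $\mathcal P=\sum_k P_{1,k}$, by choosing the $G_{1,k}$ so that this sum is a bounded projection. If this fails, I would fall back on the localisation in \eqref{eq:bCombo}: since $\ave{B}_{[i,i+1]}=0$ and $B$ vanishes on $[4k+2,4k+3]$, a cube of length $>1$ meets each block only through its own bounded projection $P_{1,k}$. The contributions of the different blocks can then be summed using the $\ell^p$-average over unit intervals rather than a single projection.
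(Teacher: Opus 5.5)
Your case split and the overall shape of Case 2 match the paper. In Case 2 that means unipotent block weights $I+B_k$, built from the Nazarov--Pisier--Treil--Volberg examples embedded via Figiel--Tomczak-Jaegermann and glued by \eqref{eq:bCombo}. However, the step you single out as delicate cannot be done in the way you try first. A bounded projection $\mathcal P$ with $B=B\mathcal P$ and $\mathcal P B=0$ is nothing but a splitting $\fX=\mathcal P\fX\oplus\ker\mathcal P$ with respect to which $V=I+B$ becomes the lower-triangular weight \eqref{eq:VfromB}. For your candidate $\mathcal P=\sum_k P_{1,k}$, the range contains every $G_{1,k}$ and the kernel contains every $G_{2,k}$. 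So boundedness of $\mathcal P$ would decompose $\fX$ into two infinite-dimensional complemented subspaces. This is exactly the soft route of Remark~\ref{rem:naiveFails}, which the paper explains is unavailable in general, since it is open whether every UMD space is decomposable. The single-projection option is therefore not a first attempt that might succeed; you must use your fallback.

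The fallback is the right idea, and made precise it is the paper's proof of Proposition~\ref{prop:badV}. Every interval with $\abs{J}\le 1$ lies in some $[4k-2,4k+3]$, where only block $k$ is present. For $\abs{J}>1$ one uses the unit-interval bounds \eqref{eq:Vk=I}. These need no projection at all, only $\ave{B}_{[h,h+1]}=0$ and John--Nirenberg, and they give $\rho_{\aveL^{p'}(J,V^{-*})}(\fx^*)\lesssim\Norm{\fx^*}{}\lesssim\rho^*_{\aveL^p(J,V)}(\fx^*)$.

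Once you argue this way, the global nilpotency $B(x)B(y)=0$ across different blocks is unnecessary, and so is the whole inductive choice of subspaces. The identity $V^{-1}=I-B$ needs only $B(x)^2=0$. Testing $V\HT V^{-1}$ on $f,g$ supported in $J_k$ only involves $B(x)B(y)$ with $x,y\in J_k$.

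Dropping the induction is also advisable, because as written it has three problems:
\begin{enumerate}
\item It does not force later $G_{2,k'}$ into $\ker P_{1,k}$.
\item A product of complementary projections does not project onto an intersection of kernels unless the projections commute.
\item The norms $\Norm{P_{1,k}}{}$ and $\Norm{P_{2,k}}{}$ grow with $k$. Your per-block constants would then be uniform only after renormalising $B_k$ by these norms and choosing $N_k$ afterwards.
\end{enumerate}
Instead, take $G_{1,k}$ and $G_{2,k}$ to be the two halves of a single Hilbertian $G_k$ with one projection of norm at most $C$, as the paper does. Then your reducing-operator argument becomes a valid and rather clean substitute for the paper's single-block route through Lemmas~\ref{lem:Ap<BMO}, \ref{lem:ApIsom} and~\ref{lem:ApDirSum}. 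That argument uses $A_Q=I+\ave{B}_Q$, with $\mathcal P=P_{1,k}$ on the primal side and $P_{2,k}$ on the dual side.

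Your use of UMD to control the $\HT$ terms is harmless given your case split. The paper's test functions $(f_k,0)$ and $(0,g_k)$ make those terms vanish identically, so $K$-convexity alone suffices there.
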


As in Theorem \ref{thm:commuUnbd}, we will consider two cases for the Banach space $\fX$. However, this time the case that $\fX$ is not UMD is entirely trivial: in this case, the Hilbert transform is unbounded on $L^p(\R;\fX)$, and hence we can simply take $V\equiv I$, the identity operator on $\fX$.

In the case that $\fX$ is $K$-convex, the proof of Theorem \ref{thm:Hunbd} will make use of the constructions in the proof of Theorem \ref{thm:commuUnbd} through a close link of the two types of (un)boundedness results that goes back to \cite{GPTV:01}, and will be reviewed shortly. However, while the essence of this link remains in general Banach spaces, there is a devil in the details, which prevents a soft deduction of Theorem \ref{thm:Hunbd} from Theorem \ref{thm:commuUnbd} that one can do in a Hilbert space (and more generally, but not always), and seems to necessitate revisiting also several details of the proof.

The following lemma gives the basic construction of $\A_p$ weights from BMO functions. The case that $p=2$ and $\fX_1=\fX_2$ is a Hilbert space goes back to \cite[Lemma 2.1]{GPTV:01}. To streamline the statement of the lemma, we introduce the $L^p$ seminorm modulo constant functions,
\begin{equation*}
  \Norm{b}{\aveL^p_0(Q;\fX)}:=\Big(\fint_Q\Norm{b(x)-\ave{b}_Q}{\fX}^p \,dx\Big)^{\frac1p},
\end{equation*}
and its strong-operator version
\begin{equation*}
  \Norm{B}{\aveL^p_{0,\so}(Q,\bddlin(\fX_1,\fX_2))}:=\sup_{\Norm{\fx_1}{\fX_1}\leq 1}\Norm{B(\cdot)\fx_1}{\aveL^p_0(Q;\fX_2)}.
\end{equation*}

\begin{lemma}\label{lem:Ap<BMO}
Let $p\in(1,\infty)$. For $j=1,2$, let $\fX_j$ be Banach spaces and
\begin{equation*}
   B\in L^p_{\so}(Q;\bddlin(\fX_1,\fX_2))
\end{equation*}
be a function whose pointwise adjoint satisfies $B^*\in L^{p'}_{\so}(Q;\bddlin(\fX_2^*,\fX_1^*))$. Let $\fX:=\fX_1\oplus\fX_2$, and consider the $\bddlin(\fX)$-valued weight
\begin{equation}\label{eq:VfromB}
  V(x)=\begin{pmatrix} I & 0 \\ B(x) & I \end{pmatrix}.
\end{equation}
Then, for all $\fx^*\in\fX^*$ and $\fx\in\fX$, the following bounds are valid:
\begin{equation}\label{eq:Ap<BMOspec}
  \rho_{\aveL^{p'}(Q,V^{-*})}(\fx^*)\leq (1+\Norm{B^*}{\aveL^{p'}_{\so}(Q;\bddlin(\fX_2^*,\fX_1^*))})\Norm{\fx^*}{\fX^*},
\end{equation}
\begin{equation}\label{eq:Ap<BMOspec2}
   \rho_{\aveL^p(Q,V)}(\fx)\leq (1+\Norm{B}{\aveL^p_{\so}(Q,\bddlin(\fX_1,\fX_2))})\Norm{\fx}{\fX},
\end{equation}
and
\begin{equation}\label{eq:Ap<BMO}
  \frac{\rho_{\aveL^{p'}(Q,V^{-*})}(\fx^*)}{\rho_{\aveL^p(Q,V)}^*(\fx^*)}
  \leq 1  + \Norm{B}{\aveL^p_{0,\so}(Q;\bddlin(\fX_1,\fX_2))}
  + \Norm{B^*}{\aveL^{p'}_{0,\so}(Q;\bddlin(\fX_2^*,\fX_1^*))}.
\end{equation}
In particular, if
\begin{equation*}
   B\in\BMO_{\so}(\R^n;\bddlin(\fX_1,\fX_2)), \quad B^*\in\BMO_{\so}(\R^n;\bddlin(\fX_2^*,\fX_1^*))
\end{equation*}
then $V\in\A_p(\R^n;\bddlin(\fX))$ and
\begin{equation*}
  [V]_{\A_p}\lesssim 1+\Norm{B}{\BMO_{\so}(\R^n;\bddlin(\fX_1,\fX_2))}
  +\Norm{B^*}{\BMO_{\so}(\R^n;\bddlin(\fX_2^*,\fX_1^*))}.
\end{equation*}
\end{lemma}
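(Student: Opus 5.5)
Compute explicitly. We have $V^{-1}=\begin{pmatrix} I&0\\ -B&I\end{pmatrix}$, hence $V^{-*}=\begin{pmatrix} I & -B^*\\ 0 & I\end{pmatrix}$ acting on $\fX^*=\fX_1^*\oplus\fX_2^*$. We equip the direct sums with the $\ell^1$-type norm; other choices change only constants. For $\fx^*=(\fx_1^*,\fx_2^*)$ this gives $V^{-*}\fx^*=(\fx_1^*-B^*\fx_2^*,\fx_2^*)$. Taking the $\aveL^{p'}(Q)$ norm and applying the triangle inequality yields \eqref{eq:Ap<BMOspec}. In the same way, $V\fx=(\fx_1,B\fx_1+\fx_2)$ gives \eqref{eq:Ap<BMOspec2}.

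The key idea for \eqref{eq:Ap<BMO} is to exploit the invariance of the ratio under constant conjugation. Let $B_Q:=\ave{B}_Q\in\bddlin(\fX_1,\fX_2)$, which is defined through the strong integrability of $B$. Put $U:=\begin{pmatrix} I&0\\ B_Q& I\end{pmatrix}$, a constant invertible operator, and set $\tilde V:=VU^{-1}=\begin{pmatrix} I&0\\ B-B_Q&I\end{pmatrix}$. Then
\[
\rho_{\aveL^p(Q,V)}(\fx)=\rho_{\aveL^p(Q,\tilde V)}(U\fx),\qquad
\rho_{\aveL^{p'}(Q,V^{-*})}(\fx^*)=\rho_{\aveL^{p'}(Q,\tilde V^{-*})}(U^{-*}\fx^*).
\]
Dualising the first identity gives $\rho^*_{\aveL^p(Q,V)}(\fx^*)=\rho^*_{\aveL^p(Q,\tilde V)}(U^{-*}\fx^*)$, by the substitution $\fy=U\fx$ together with $\pair{\fx^*}{U^{-1}\fy}=\pair{U^{-*}\fx^*}{\fy}$. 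Hence the ratio in \eqref{eq:Ap<BMO} for $V$ at $\fx^*$ equals the ratio for $\tilde V$ at $U^{-*}\fx^*$. It therefore suffices to prove \eqref{eq:Ap<BMO} for $B-B_Q$ in place of $B$, for which the $\aveL^p_{\so}$ and $\aveL^p_{0,\so}$ norms coincide. For the adjoint, note that $\ave{B^*}_Q=B_Q^*$ in the weak sense, so $\tilde V^{-*}$ involves $B^*-\ave{B^*}_Q$ as well.

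Now apply \eqref{eq:Ap<BMOspec} to $\tilde V$, which gives $\rho_{\aveL^{p'}(Q,\tilde V^{-*})}(\fy^*)\le(1+\beta^*)\Norm{\fy^*}{}$, where $\beta^*$ is the oscillation norm of $B^*$. For the denominator we need the lower bound $\rho^*_{\aveL^p(Q,\tilde V)}(\fy^*)\gtrsim \Norm{\fy^*}{}/(1+\beta)$. By the definition of the dual norm,
\[
\rho^*(\fy^*)\ge\sup_{\fy}\frac{\abs{\pair{\fy^*}{\fy}}}{\rho_{\aveL^p(Q,\tilde V)}(\fy)}\ge\sup_{\fy}\frac{\abs{\pair{\fy^*}{\fy}}}{(1+\beta)\Norm{\fy}{}}=\frac{\Norm{\fy^*}{}}{1+\beta},
\]
where the middle inequality is \eqref{eq:Ap<BMOspec2} applied to $\tilde V$. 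The ratio is then at most $(1+\beta)(1+\beta^*)$. This is a product rather than the claimed sum, so I will refine the argument to obtain the additive bound. One option is Proposition \ref{prop:Ap}, arguing blockwise. A cleaner route is to split $\fy^*=(\fy_1^*,\fy_2^*)$ and pair $\fy^*$ with the particular test vectors $\fy=(\fy_1,0)$ and $(0,\fy_2)$. Then $\rho(0,\fy_2)=\Norm{\fy_2}{}$ exactly, and $\rho(\fy_1,0)\le(1+\beta)\Norm{\fy_1}{}$. This gives $\rho^*(\fy^*)\ge\max\big(\Norm{\fy_2^*}{},\Norm{\fy_1^*}{}/(1+\beta)\big)$.

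The numerator is bounded by $\Norm{\fy_1^*}{}+\beta^*\Norm{\fy_2^*}{}+\Norm{\fy_2^*}{}$. The terms involving $\fy_2^*$ are controlled by $(1+\beta^*)\rho^*$. The term $\Norm{\fy_1^*}{}$ still costs a factor $(1+\beta)$, but because it is added rather than multiplied, the total is bounded by $(1+\beta)\rho^*+(1+\beta^*)\rho^*\le 2(1+\beta+\beta^*)\rho^*$. This is the additive bound up to the constant $2$, which is an artefact of the norm choice and can be absorbed or removed by a more careful normalisation. This bookkeeping is the main obstacle.

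For the final claim, observe that for $p>1$ the $\aveL^p_{0,\so}$ and $\aveL^{p'}_{0,\so}$ oscillations are bounded by the $\BMO_{\so}$ norms. This follows from the John--Nirenberg inequality for vector-valued $\BMO$, applied to each $B(\cdot)\fx_1$ and $B^*(\cdot)\fx_2^*$. Taking the supremum over cubes $Q$ in \eqref{eq:Ap<BMO}, together with the weight and local integrability properties supplied by \eqref{eq:Ap<BMOspec} and \eqref{eq:Ap<BMOspec2}, then gives $V\in\A_p$ with the stated bound.
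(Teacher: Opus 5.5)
Your argument is correct, and for \eqref{eq:Ap<BMO} it takes a genuinely different route from the paper.

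The paper argues by duality. It writes $\rho_{\aveL^{p'}(Q,V^{-*})}(\fx^*)$ as a supremum of $\fint_Q\pair{V^{-*}\fx^*}{f}$ over simple $f$ in the unit ball of $\aveL^p(Q;\fX)$. It rewrites this as $\pair{\fx^*}{\fx_f}$ with $\fx_f:=\fint_Q V^{-1}f$, which reduces everything to bounding $\rho_{\aveL^p(Q,V)}(\fx_f)$, i.e.\ the norm of $f\mapsto V\ave{V^{-1}f}_Q$ on $\aveL^p(Q;\fX)$. The oscillations then appear automatically from the lower-left entry $B(x)-B(y)=(B(x)-\ave{B}_Q)-(B(y)-\ave{B}_Q)$ of $V(x)V(y)^{-1}$.

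You instead use the invariance of the ratio under $V\mapsto VU^{-1}$, $\fx^*\mapsto U^{-*}\fx^*$ for a constant invertible $U$, and reduce to $\ave{B}_Q=0$. After that you need only the triangle inequality and testing $\rho^*$ against explicit vectors. Your route is more elementary: it needs no norming of $L^{p'}(Q;\fX^*)$ by $L^p(Q;\fX)$ and no Bochner integral of $V^{-1}f$. It also isolates a reusable structural fact. The paper's route ties the estimate directly to the averaging operators of Proposition \ref{prop:ave}.

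The only slack is the extra $1$ in your bound $2+\beta+\beta^*$, compared with the stated $1+\beta+\beta^*$. It comes from giving $\fX^*$ the $\ell^1$ norm, which is not the dual of the $\ell^1$ norm on $\fX$. Give $\fX^*$ the dual norm instead. The identity $\tilde V^{-*}\fy^*=\fy^*-(\tilde B^*\fy_2^*,0)$ then bounds the numerator by $\Norm{\fy^*}{\fX^*}+\beta^*\Norm{\fy_2^*}{}$. Combine your crude estimate $\Norm{\fy^*}{\fX^*}\le(1+\beta)\rho^*(\fy^*)$ with $\Norm{\fy_2^*}{}\le\rho^*(\fy^*)$, which comes from the test vectors $(0,\fy_2)$. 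This gives exactly $1+\beta+\beta^*$, so the test vectors $(\fy_1,0)$ are not even needed.
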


Note that \eqref{eq:Ap<BMOspec} and \eqref{eq:Ap<BMOspec2} have $\aveL^p$ norms, while \eqref{eq:Ap<BMO} has $\aveL^p_0$.

\begin{proof}
Note that the last assertion concerning $\A_p$ and $\BMO$ is immediate from \eqref{eq:Ap<BMO}, the relevant definitions, and the John--Nirenberg inequality. Thus we concentrate on proving \eqref{eq:Ap<BMOspec}, \eqref{eq:Ap<BMOspec2}, and \eqref{eq:Ap<BMO}.

For the matrix weight $V$ as defined, it is immediate to compute
\begin{equation*}
  V^{-1}=\begin{pmatrix} I & 0 \\ -B & I \end{pmatrix},\qquad
  V^{-*}=\begin{pmatrix} I & -B^* \\ 0 & I \end{pmatrix}.
\end{equation*}
The assumptions on $B$ and $B^*$ imply in particular the qualitative conditions
\begin{equation*}
  V\in L^p_{\so}(Q;\bddlin(\fX)),\qquad
  V^{-*}\in L^{p'}_{\so}(Q;\bddlin(\fX^*)),
\end{equation*}
so that the left-hand side of \eqref{eq:Ap<BMO} is well defined, as well as
\begin{equation}\label{eq:Vinv}
    V^{-1}\in L^{p}_{\so}(Q;\bddlin(\fX)),
\end{equation}
which will be used shortly.

The bound \eqref{eq:Ap<BMOspec} follows simply by taking the $\aveL^{p'}(Q)$ norms of
\begin{equation*}
 \Norm{V^{-*}(x)\fx^*}{\fX^*}\leq\Norm{\fx^*}{\fX^*}+\Norm{B^*(x)\fx_2^*}{\fX_1^*},
\end{equation*}
and noting that $\Norm{\fx_2^*}{\fX_2^*}\leq\Norm{\fx^*}{\fX^*}$.
Similarly, \eqref{eq:Ap<BMOspec2} is obtained by taking the $\aveL^{p}(Q)$ norms
\begin{equation*}
  \Norm{V(x)\fx}{\fX}\leq\Norm{\fx}{\fX}+\Norm{B(x)\fx_1}{\fX_2}.
\end{equation*}
Thus \eqref{eq:Ap<BMOspec} and \eqref{eq:Ap<BMOspec2} are proved, and we turn to the proof of \eqref{eq:Ap<BMO}.

The numerator of \eqref{eq:Ap<BMO} can be expressed as
\begin{equation*}
  \rho_{\aveL^{p'}(Q,V^{-*})}(\fx^*)
  =\Big(\fint_Q\Norm{V^{-*}(\cdot)\fx^*}{}^{p'}\Big)^{\frac{1}{p'}}
  =\sup_{\genfrac{}{}{0pt}{}
  {\Norm{f}{\aveL^p(Q;\fX)}\leq 1}
  {f\text{ simple}}}\Babs{\fint_Q\pair{V^{-*}(\cdot)\fx^*}{f}}.
\end{equation*}
Thanks to \eqref{eq:Vinv}, the integral on the right-hand side may be written as
\begin{equation*}
  \fint_Q\pair{V^{-*}(\cdot)\fx^*}{f}
  =\fint_Q\pair{\fx^*}{V^{-1}f}
  =\Bpair{\fx^*}{\fint_Q V^{-1}f}
  =:\pair{\fx^*}{\fx_f}.
\end{equation*}
Hence, by the definition of the dual norm,
\begin{equation*}
  \Babs{\fint_Q\pair{V^{-*}(\cdot)\fx^*}{f}}
  \leq\rho_{\aveL^p(Q,V)}^*(\fx^*)\rho_{\aveL^p(Q,V)}(\fx_f),
\end{equation*}
where
\begin{equation*}
\begin{split}
  \rho_{\aveL^p(Q,V)}(\fx_f)
  &=\Big(\fint_Q\Norm{V(x)\fx_f}{}^p \,dx\Big)^{\frac1p} \\
  &=\Big(\fint_Q\Norm{V(x)\fint_Q V(y)^{-1}f(y)dy}{}^p \,dx\Big)^{\frac1p} \\
  &=\sup_{\Norm{g}{\aveL^{p'}(Q;\fX^*)}\leq 1}\Babs{\fint_Q\fint_Q \pair{V(x)V(y)^{-1}f(y)}{g(x)} \,dy\, dx}.
\end{split}
\end{equation*}
For $V$ as in the assumptions, we have
\begin{equation*}
\begin{split}
  V(x)V(y)^{-1} &=\begin{pmatrix} I & 0 \\ B(x)  & I \end{pmatrix}
  \begin{pmatrix} I & 0 \\ -B(y)  & I \end{pmatrix} = \begin{pmatrix} I & 0 \\ B(x)-B(y) & I \end{pmatrix} \\
 & = \begin{pmatrix} I & 0 \\ 0 & I \end{pmatrix} + \begin{pmatrix} 0 & 0 \\ B(x)-\ave{B}_Q  & 0 \end{pmatrix}
  - \begin{pmatrix} 0 & 0 \\ B(y)-\ave{B}_Q  & 0 \end{pmatrix}.
\end{split}
\end{equation*}
Hence
\begin{equation*}
\begin{split}
  \fint_Q &\fint_Q \pair{V(x)V(y)^{-1}f(y)}{g(x)} \,dy\, dx \\
  &=\fint_Q\fint_Q \pair{f(y)}{g(x)} \,dy \,dx
  +\fint_Q\fint_Q \pair{(B(x)-\ave{B}_Q)f_1(y)}{g_2(x)} \,dy \,dx \\
  &\qquad-\fint_Q\fint_Q \pair{f_1(y)}{(B^*(y)-\ave{B^*}_Q)g_2(x)}\, dy \,dx\\
  &=:{\rm I}+{\rm II}-{\rm III},
\end{split}
\end{equation*}
where $f=(f_1, f_2)$ and $g=(g_1,g_2)$. We can estimate
\begin{equation*}
\begin{split}
  \abs{\rm II} &\leq\fint_Q\Big(\fint_Q\Norm{[B(x)-\ave{B}_Q]f_1(y)}{\fX_2}^p \,dx\Big)^{\frac1p}\Norm{g_2}{\aveL^{p'}(Q;\fX_2^*)}\, dy \\
  &\leq\fint_Q\Norm{B}{\aveL_{0,\so}^p(Q;\bddlin(\fX_1,\fX_2))}
  \Norm{f_1(y)}{\fX_1}\Norm{g_2}{\aveL^{p'}(Q;\fX_2^*)}\, dy \\
  &=\Norm{B}{\aveL_{0,\so}^p(Q;\bddlin(\fX_1,\fX_2))}
  \Norm{f_1}{\aveL^1(Q;\fX_1)}\Norm{g_2}{\aveL^{p'}(Q;\fX_2^*)}.
\end{split}
\end{equation*}
Similarly,
\begin{equation*}
  \abs{\mathrm{III}}\leq\Norm{B^*}{\aveL_{0,\so}^{p'}(\R^n;\bddlin(\fX_2^*,\fX_1^*))}
  \Norm{f_1}{\aveL^p(Q;\fX_1)}\Norm{g_2}{\aveL^1(Q;\fX_2^*)}
\end{equation*}
and of course
\begin{equation*}
  \abs{\mathrm{I}}\leq\Norm{f}{\aveL^1(Q;\fX)}\Norm{g}{\aveL^1(Q;\fX^*)}.
\end{equation*}

Substituting back and using H\"older's inequality, we find that
\begin{equation*}
\begin{split}
  \rho_{\aveL^p(Q,V)}(\fx_f)
  &\leq(1+\Norm{B}{\aveL_{0,\so}^p(\R^n;\bddlin(\fX_1,\fX_2))}
  +\Norm{B^*}{\aveL_{0,\so}^{p'}(\R^n;\bddlin(\fX_2^*\fX_1^*))})
  \Norm{f}{ \aveL^p(Q;\fX)} \Norm{g}{\aveL^{p'}(Q;\fX^*)},
\end{split}
\end{equation*}
where the last factor is bounded by $1$ by the conditions on $f$ and $g$. Substituting further back, this proves the claimed bound \eqref{eq:Ap<BMO}.
\end{proof}

\begin{remark}\label{rem:naiveFails}
Before proceeding with the proof of Theorem \ref{thm:Hunbd} for a general $K$-convex Banach space $\fX$,
let us indicate an easier argument under an additional assumption that the Banach space $\fX$ is {\em decomposable}, meaning that it admits a representation $\fX=\fX_1\oplus\fX_2$ as a direct sum of two {\em infinite-dimensional} Banach spaces $\{\fX_j\}_{j=1}^2$. Let $B\in\BMO_{\so}(\R;\bddlin(\fX_1,\fX_2))$ with $B^*\in\BMO_{\so}(\R;\bddlin(\fX_2^*,\fX_1^*))$ be a function, provided by Proposition \ref{prop:badB}, such that $[B,\HT]$ is not bounded from $L^p(\R;\fX_1)$ to $L^p(\R;\fX_2)$. Let $V:\R\to\bddlin(\fX)$ be defined by \eqref{eq:VfromB}; thus $V\in\A_p(\R;\bddlin(\fX))$ by Lemma \ref{lem:Ap<BMO}.

Note that $\HT$ is bounded on $L^p(V;\fX)$ if and only if $V\HT V^{-1}$ is bounded on $L^p(\R;\fX)$. Relative to the direct sum splitting $\fX=\fX_1\oplus\fX_2$ and the induced splitting $L^p(\R;\fX)=L^p(\R;\fX_1)\oplus L^p(\R;\fX_2)$, this latter operator is given by
\begin{equation*}
   V\HT V^{-1}=\begin{pmatrix} I & 0 \\ B & I \end{pmatrix}\HT\begin{pmatrix} I & 0 \\ -B & I \end{pmatrix}
   =\begin{pmatrix} \HT & 0 \\ [B,\HT] & \HT \end{pmatrix}.
\end{equation*}
Thus, a necessary condition for the boundedness of $\HT:L^p(V;\fX)\to L^p(V;\fX)$ is the boundedness of $[B,\HT]:L^p(\R;\fX_1)\to L^p(\R;\fX_2)$, which was specifically excluded by Proposition \ref{prop:badB}. Hence, the proof of Theorem \ref{thm:Hunbd} can be completed in this soft way in the special case of {\em decomposable} Banach spaces.

While this property holds for most classical Banach spaces, it does not hold in general. Indeed, there even exist {\em hereditarily indecomposable} (H.I.) Banach spaces $\fX$, meaning that not only $\fX$ itself but all of its infinite-dimensional closed subspaces are indecomposable (i.e., not decomposable). The first such example is due to Gowers and Maurey \cite{GM:93}, and others have been found since then. Notably, Ferenczi \cite{Fere:97} has constructed an H.I.\ space $\fX$ that is {\em uniformly convex} and hence in particular $K$-convex. (See the end of this Remark for this implication.) Thus, the above construction of $V$ is not possible in arbitrary $K$-convex spaces, and not even in the smaller class of uniformly convex spaces.

To complete the proof of Theorem \ref{thm:Hunbd}, we would actually only need this construction in the still smaller class of UMD spaces. Whether every UMD space is decomposable appears to be open. If this turns out to be the case, then a simpler proof of Theorem \ref{thm:Hunbd} will be available, and we completed it already. However, in the lack of knowing such a result, we adopt a different approach below, which does not require decomposability.

The idea is the following: Instead of using the finite-dimensional functions $B_k$ to build an infinite-dimensional $B$ and then using this to build the weight $V$ by Lemma \ref{lem:Ap<BMO} (as we did above, assuming decomposability), we use each finite-dimensional function $B_k$ to build a finite-dimensional weight $V_k$ by Lemma \ref{lem:Ap<BMO}, and then we use the finite-dimensional weights $V_k$ to build an infinite-dimensional weight $V$ in a similar way as the functions $B_k$ were used to build $B$. This turns out to work, as we will see below.

[The implication from uniform convexity to $K$-convexity, which we mentioned above, is well known, but seems difficult to quote as a single result. It can be obtained, e.g., from the following chain: if $\fX$ is uniformly convex, then $\fX^*$ is {\em uniformly smooth} (by \cite[Proposition 10.21]{Pisier:book}), thus $\fX^*$ admits an equivalent $p$-smooth norm for some $p>1$ (see \cite[Definition 10.18 and 10.30, and Theorem 10.25]{Pisier:book}), and therefore $\fX^*$ has type $p>1$ (by \cite[Proposition 10.39]{Pisier:book}); hence $\fX^*$ is $K$-convex (see \cite[Theorem 7.4.23]{HNVW2}) and thus so is $\fX$ (by \cite[Proposition 7.4.5]{HNVW2}).]
\end{remark}

Having explained in Remark \ref{rem:naiveFails} why a soft construction of the weight is not available in general, we now turn to the details of the alternative approach that was also sketched above.

\begin{proposition}\label{prop:badV}
Let $\fX$ be an infinite-dimensional $K$-convex Banach space. Then there exists $V\in\A_p(\R;\bddlin(\fX))$ such that the Hilbert transform is not bounded on $L^p(V;\fX)$.
\end{proposition}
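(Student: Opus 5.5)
Following the strategy sketched at the end of Remark \ref{rem:naiveFails}, I would build finite-dimensional bad weights inside $\fX$ and glue them along the real line, never splitting $\fX$ itself into two infinite-dimensional pieces.

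\textbf{Local blocks.} For each $k$, apply Corollary \ref{cor:FTJ} twice to find two finite-dimensional subspaces $G_1^k,G_2^k\subset\fX$ of a large common dimension $N_k$, each $2$-isomorphic to a Hilbert space and complemented by projections of norm $\lesssim 1$. I also want $G_1^k\oplus G_2^k$ to be a complemented direct sum in $\fX$ with uniform constants. To arrange this, first take one subspace $G$ of dimension $2N_k$ with properties \eqref{it:G=H} and \eqref{it:P<C}. Then split $H=\iota G$ orthogonally into two halves and pull them back. This gives $\fX=G_1^k\oplus G_2^k\oplus Z_k$ with $Z_k:=\ker P$, and all projections uniformly bounded by a constant depending only on $\fX$.

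\textbf{Local weight.} As in Lemma \ref{lem:badBk}, transfer the Nazarov--Pisier--Treil--Volberg function (Corollary \ref{cor:NPTV}) to get $B_k:\R\to\bddlin(G_1^k,G_2^k)$. It satisfies
\begin{itemize}
\item $\BMO_{\so}$ norms of $B_k$ and $B_k^*$ bounded by $1$;
\item $\|[B_k,\HT]\|\gtrsim\log N_k$.
\end{itemize}
Rescale and translate it to live on $[4k,4k+1]$ with mean zero there, as in Proposition \ref{prop:badB}. Set
\[
V_k=I+J_2B_kP_1,
\]
the operator equal to $\begin{pmatrix} I&0\\ B_k&I\end{pmatrix}$ on $G_1^k\oplus G_2^k$ and the identity on $Z_k$. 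By Lemma \ref{lem:Ap<BMO}, combined with Lemmas \ref{lem:ApDirSum} and \ref{lem:ApIsom}, $V_k\in\A_p(\R;\bddlin(\fX))$ uniformly in $k$.

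\textbf{Global weight.} Define $V:=I+\tilde B$, where $\tilde B$ is obtained by applying the combination \eqref{eq:bCombo} to the $\bddlin(\fX)$-valued functions $J_2B_kP_1$ (with the projections depending on $k$). Thus $V=V_k$ on $[4k,4k+1]$, $V=I$ on $[4k+2,4k+3]$, and in the Whitney regions $V$ equals $V_k$ with $B_k$ replaced by its averages. Since $\tilde B^2=0$ pointwise, we have $V^{-1}=I-\tilde B$ and $V^{-*}=I-\tilde B^*$.

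\textbf{Unboundedness.} Test with $f,g$ taking values in $G_1^k$ and $G_2^{k*}$ (the latter composed with the projection) and supported in $[4k,4k+1]$. Then
\[
\pair{V\HT V^{-1}f}{g}=\pair{[B_k,\HT]f}{g}\gtrsim\log N_k,
\]
exactly as in Remark \ref{rem:naiveFails}. Hence $\HT$ is unbounded on $L^p(V;\fX)$.

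\textbf{Main obstacle: verifying $V\in\A_p$.} This does not follow from Lemma \ref{lem:Ap<BMO} directly, because $\tilde B$ is not of block form with respect to a single splitting. However, the proof of \eqref{eq:Ap<BMO} only used three facts:
\begin{itemize}
\item $V(x)V(y)^{-1}=I+(\tilde B(x)-\ave{\tilde B}_Q)-(\tilde B(y)-\ave{\tilde B}_Q)$;
\item the identity $\tilde B(x)\tilde B(y)=0$;
\item $\aveL^p_0$ bounds on $\tilde B$ and $\aveL^{p'}_0$ bounds on $\tilde B^*$.
\end{itemize}
The first fact requires the product identity, which in turn needs $P_1^jJ_2^k=0$ for all $j,k$. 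That condition fails when the subspaces vary with $k$. I would therefore verify the cubewise estimate by cases instead.

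\begin{itemize}
\item For intervals $Q$ meeting only one block region $[4k-1,4k+2]$, everything is single-block and Lemma \ref{lem:Ap<BMO} applies via the $\BMO$ bound of Lemma \ref{lem:bCombo}.
\item For long intervals meeting several blocks, $x$ and $y$ may lie in different blocks. Here I would not use the product identity. Instead I bound crudely: $\rho_{\aveL^{p'}(Q,V^{-*})}(\fx^*)\lesssim\|\fx^*\|$ by \eqref{eq:Ap<BMOspec}, using that $\tilde B$ has uniformly bounded $\aveL^p$ norms on long intervals (mean zero per unit interval plus John--Nirenberg). For the dual side, $\rho^*_{\aveL^p(Q,V)}(\fx^*)\gtrsim\|\fx^*\|$, because $\rho_{\aveL^p(Q,V)}(\fx)\lesssim\|\fx\|$ by \eqref{eq:Ap<BMOspec2} and $Q$ contains a unit interval where $V=I$.
\end{itemize}

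Together these two cases give the $\A_p$ bound for all cubes, completing the proof.
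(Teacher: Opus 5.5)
Your proposal is correct and follows essentially the paper's route. The shared steps are:
\begin{itemize}
\item a single Figiel--Tomczak-Jaegermann subspace of dimension $2N_k$, split into two Hilbertian halves;
\item the transferred Nazarov--Pisier--Treil--Volberg blocks, Whitney-extended and glued via \eqref{eq:bCombo} (the paper sets $V:=V_k$ on $[4k-1,4k+2]$, which is exactly your $I+\tilde B$);
\item the $\A_p$ condition checked on intervals seeing a single block by the uniform bound for $V_k$, and on the remaining, necessarily long, intervals by the crude bounds $\rho_{\aveL^{p'}(J,V^{-*})}(\fx^*)\lesssim\Norm{\fx^*}{}\lesssim\rho^*_{\aveL^{p}(J,V)}(\fx^*)$, which come from $\tilde B$ having mean zero on every unit interval;
\item unboundedness tested on $[4k,4k+1]$.
\end{itemize}
One minor point: the upper bound $\rho_{\aveL^p(Q,V)}(\fx)\lesssim\Norm{\fx}{}$ alone already gives $\rho^*_{\aveL^p(Q,V)}(\fx^*)\gtrsim\Norm{\fx^*}{}$, so your remark that $Q$ contains a unit interval where $V=I$ is not needed.
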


\begin{proof}
The construction of $V$ involves a variation of the construction of $B$ in Proposition \ref{prop:badB}. By Corollary \ref{cor:FTJ}, for every $k\geq 1$, the space $\fX$ has a subspace $G_k$ with the properties \eqref{it:G=H} and \eqref{it:P<C} of Theorem \ref{thm:FTJ}, and $\dim G_k=2N_k$ can be taken {\em even} and as large as we like. Let $\iota_k$, $P_k$, $H_k$,
and $C$ have the same meaning as in Theorem \ref{thm:FTJ} with $G_k$ in place of $G$; note that $C$ depends only on $\fX$, and is hence independent of $k$. Since $\dim H_k=\dim G_k=2N$, we can write it as a direct sum $H_k=E_k\otimes E_k$ of two copies of the same $N_k$-dimensional Hilbert space $E_k$.

\subsubsection*{Auxiliary $\BMO$ functions}

By Corollary \ref{cor:NPTV}, there is a function $$B_k\in\BMO_{\so}(\R;\bddlin(E_k))$$ with $B^*(=B)\in\BMO_{\so}(\R;\bddlin(E_k))$ of norm at most one in these spaces such that
\begin{equation*}
  \Norm{[B_k,\mathcal H]}{\bddlin(L^p(\R;E_k))}\gtrsim\log\dim E_k=\log N_k.
\end{equation*}
Replacing $B_k$ by its translated and dilated version with the same estimates, as in the proof of Proposition \ref{prop:badB}, we obtain the existence of $f_k\in L^p(\R;E_k)$ and $g_k\in L^{p'}(\R;E_k)$ of norm $1$, both supported in $J_k:=[4k,4k+1]$, such that
\begin{equation}\label{eq:BkH>Nk}
  \abs{\pair{[B_k,\mathcal H]f_k}{g_k}}\gtrsim \log N_k.
\end{equation}
Moreover, subtracting a constant, which does not affect the commutator nor the $\BMO$ norms, we may assume that $\ave{B_k}_{J_k}=0$.

For each $h\geq 1$, we now apply the algorithm \eqref{eq:bCombo} to the sequence $\{b_k=\delta_{h,k} B_h\}_{k\in\Z}$ to produce a function $b=:\tilde B_h$, which agrees with $B_h$ on $J_h$, consists of Whitney averages of $B_h|_{J_j}$ on the two adjacent unit intervals, and vanishes everywhere else. Applying Lemma \ref{lem:bCombo} to each $b_k(\cdot)\fx$ and $b_k(\cdot)^*\fx$, with $\Norm{\fx}{E}\leq 1$, in place of $b_k$, we deduce that
\begin{equation}\label{eq:tildeBkBMO}
  \Norm{\tilde B_k}{\BMO_{\so}(\R;\bddlin(E_k))}\lesssim1,\quad   \Norm{\tilde B_k^*}{\BMO_{\so}(\R;\bddlin(E_k))}\lesssim 1,
\end{equation}
while \eqref{eq:BkH>Nk} still holds with $\tilde B_k$ in place of $B_k$, since the supports of $f_k$ and $g_k$ ensure that the left-hand side only sees the values of $B_k$ on $J_k$, where it agrees with $\tilde B_k$.

\subsubsection*{Initial weighs $\tilde V_k$ on $H_k$}

We now define a weight $\tilde V_k:\R\to\bddlin(H_k)=\bddlin(E_k\oplus E_k)$ by
setting, for any $x\in\R$,
\begin{equation}\label{eq:tildeVk}
  \tilde V_k(x):=\begin{pmatrix} I & 0 \\ \tilde B_k(x) & I \end{pmatrix}.
\end{equation}
From \eqref{eq:tildeBkBMO} and Lemma \ref{lem:Ap<BMO}, we deduce that $\tilde V_k\in\A_p(\R;\bddlin(H_k))$ with
\begin{equation}\label{eq:tildeVkAp}
  [\tilde V_k]_{\A_p(\R;\bddlin(H_k))}\lesssim 1.
\end{equation}
Since $\tilde B_k(x)\equiv 0$ for $x\in\R\setminus 3J_k$, it follows that
\begin{equation*}
  \tilde V_k(x)\equiv I\quad\forall\ x\in\R\setminus 3J_k.
\end{equation*}

Recall that $\tilde B_k=B_k$ on $J_k$, while $\tilde B_k$ is obtained from the Whitney averages of $B_k$ on the two adjacent intervals $J_k\pm 1$. Hence, for each of these intervals $J\in \{J_k,J_k\pm 1\}$, we have
\begin{equation*}
  \ave{\tilde B_k}_J=\ave{B_k}_{J_k}=0,
\end{equation*}
where the last identity was part of the construction of $B_k$.

By Lemma \ref{lem:Ap<BMO} again, we have the following additional special bounds
\begin{equation*}
\begin{split}
  \frac{\rho_{\aveL^{p'}(J,\tilde V_k^{-*})}(\fy)}{\Norm{\fy}{H_k}}
  &\leq 1+\Norm{\tilde B_k^*}{\aveL^{p'}_{\so}(J;\bddlin(E_k))}  \\
  &= 1+\Norm{\tilde B_k^*}{\aveL^{p'}_{0,\so}(J;\bddlin(E_k))} \qquad\text{since }\ave{\tilde B_k^*}_{J}=0 \\
  &\lesssim 1+\Norm{\tilde B_k^*}{\BMO_{\so}(\R;\bddlin(E_k))} \lesssim 1
\end{split}
\end{equation*}
and
\begin{equation*}
\begin{split}
  \frac{\rho_{\aveL^p(J,\tilde V_k)}(\fy)}{\Norm{\fy}{H_k}}
  &\leq 1+\Norm{\tilde B_k}{\aveL^{p}_{\so}(J;\bddlin(E_k))}  \\
  &= 1+\Norm{\tilde B_k}{\aveL^{p}_{0,\so}(J;\bddlin(E_k))} \qquad\text{since }\ave{\tilde B_k}_{J}=0 \\
  &\lesssim1+\Norm{\tilde B_k}{\BMO_{\so}(\R;\bddlin(E_k))}
  \lesssim 1;
\end{split}
\end{equation*}
thus
\begin{equation}\label{eq:specVk}
  \rho_{\aveL^{p'}(J,\tilde V_k^{-*})}(\fy)
  +\rho_{\aveL^p(J,\tilde V_k)}(\fy)  \lesssim\Norm{\fy}{H_k}\quad
  \text{for }J\in \{J_k,J_k\pm 1\}.
\end{equation}

\subsubsection*{Lifted weighs $V_k$ on the original space $\fX$}

We now lift these weights to the original space $\fX$ by setting
\begin{equation}\label{eq:VkDef}
  V_k:=I_k\oplus \iota_k^{-1} \tilde V_k \iota_k\quad
  \text{with respect to}\quad\fX=(I-P_k)\fX\oplus P_k\fX,
\end{equation}
where $I_k$ is the identity on $(I-P_k)\fX$, and we recall that $P_k\fX=G_k$.

From Lemma \ref{lem:ApIsom}, the bound $\Norm{\iota_k^{-1}}{}\Norm{\iota_k}{}\leq 2$, and \eqref{eq:tildeVkAp}, it follows that
\begin{equation*}
  [\iota_k^{-1}\tilde V_k\iota_k]_{\A_p(\R;\bddlin(G_k))}
  \leq\Norm{\iota_k^{-1}}{}\Norm{\iota_k}{}[\tilde V_k]_{\A_p(\R;\bddlin(H_k))}\lesssim 1.
\end{equation*}
Then Lemma \ref{lem:ApDirSum} shows that
\begin{equation}\label{eq:VkAp}
    [V_k]_{\A_p(\R;\bddlin(\fX))}\approx[I_k]_{\A_p(\R;\bddlin((I-P_k)\fX))}+[\iota_k^{-1}\tilde V_k\iota_k]_{\A_p(\R;\bddlin(G_k))} \lesssim 1.
\end{equation}
Noting that
\begin{equation*}
  V_k^{-*}:=I_k^*\oplus \iota_k^{*} \tilde V_k^{-*} \iota_k^{-*}\quad
  \text{with respect to}\quad\fX^*=(I-P_k^*)\fX^*\oplus P_k^*\fX^*,
\end{equation*}
we also deduce, for $J\in\{J_k,J_k\pm 1\}$ and $\fx^*\in\fX^*$, that
\begin{equation*}
\begin{split}
  \rho_{\aveL^{p'}(J,V_k^{-*})}(\fx^*)
  &\lesssim\Norm{(I-P_k^*)\fx^*}{\fX^*}+\Norm{\iota_k^*}{}\rho_{\aveL^{p'}(\tilde V_k^{-*})}(\iota_k^{-*}P_k^*\fx^*) \\
  &\lesssim\Norm{\fx^*}{\fX^*}+\Norm{\iota_k^*}{}\Norm{\iota_k^{-*}P_k^*\fx^*}{H_k} \qquad\text{by \eqref{eq:specVk}} \\
  &\lesssim(1+\Norm{\iota_k^*}{}\Norm{\iota_k^{-*}}{})\Norm{\fx^*}{\fX^*}\lesssim \Norm{\fx^*}{\fX^*}.
\end{split}
\end{equation*}
Similarly,
\begin{equation*}
\begin{split}
  \rho_{\aveL^p(J,V_k)}(\fx)
  &\lesssim\Norm{(I-P_k)\fx}{\fX}+\Norm{\iota_k^{-1}}{}\rho_{\aveL^p(J,\tilde V_k)}(\iota_k P_k\fx) \\
  &\lesssim\Norm{\fx}{\fX}+\Norm{\iota_k^{-1}}{}\Norm{\iota_k P_k\fx}{H_k} \qquad\text{by \eqref{eq:specVk}}\\
  &\lesssim(1+\Norm{\iota_k^{-1}}{}\Norm{\iota_k}{})\Norm{\fx}{\fX}\lesssim \Norm{\fx}{\fX}.
\end{split}
\end{equation*}
Thus we have shown that
\begin{equation}\label{eq:Vk=I}
 \begin{cases} \rho_{\aveL^{p'}(J,V_k^{-*})}(\fx^*) \lesssim\Norm{\fx^*}{\fX^*} & \forall\ \fx^*\in\fX^*, \\
   \quad\  \rho_{\aveL^p(J,V_k)}(\fx) \lesssim\Norm{\fx}{\fX} & \forall\ \fx\in\fX
 \end{cases}
\end{equation}
for $J\in\{J_k,J_k\pm 1\}$. If $J=[h,h+1]$ with $h\in\Z$ in any other unit interval besides the previous three, then $V_k\equiv I$ on $J$, so that both the left and the right-hand sides of \eqref{eq:Vk=I} reduce to just $\Norm{\fx^*}{\fX^*}$, and \eqref{eq:Vk=I} holds trivially also in this case. Thus \eqref{eq:Vk=I} is valid for all $J=[h,h+1]$, $h\in\Z$.

\subsubsection*{The combined weight $V$}

We now define a single weight $V$ on $\R$ by setting
\begin{equation}\label{eq:V=Vk}
  V(x):=\begin{cases} V_k(x) & \text{if }x\in[4k-1,4k+2]\text{ for some }k\geq 1, \\
  I & \text{otherwise}.\end{cases}
\end{equation}
Note that we actually have $V(x)=V_k(x)$ if $x\in[4k-2,4k+3]$, since $V_k(x)=I$ outside $[4k-1,4k+2]$. To show that $V\in\A_p(\R;\bddlin(\fX))$, we need to verify that
\begin{equation}\label{eq:VAp2show}
  \rho_{\aveL^{p'}(J;V^{-*})}(\fx^*)\lesssim\rho_{\aveL^p(J;V)}^*(\fx^*)
\end{equation}
for all finite intervals $J\subset\R$.

Suppose first that $\abs{J}\leq 1$. With $m:=\floor{\inf J}\in\Z$, it follows that $J\subset[m,m+2]$. Now $m\in 4k+\{-2,-1,0,1\}$ for some $k\in\Z$, and hence $J\subset[m,m+2]\subset[4k-2,4k+3]$. Since $V\equiv V_k$ on this interval, \eqref{eq:VAp2show} follows from \eqref{eq:VkAp}.

Hence it only remains to consider $\abs{J}>1$. With $m:=\floor{\inf J}$ and $n:=\ceil{\sup J}$, we have $J\subseteq[m,n]=:\hat J$ and $\abs{\hat J}\leq 2+\abs{J}\leq 3\abs{J}$. Then
\begin{equation*}
\begin{split}
  \rho_{\aveL^{p'}(J,V^{-*})}(\fx^*)
  &=\Big(\frac{1}{\abs{J}}\int_J\Norm{V^{-*}(\cdot)\fx^*}{\fX^*}^{p'}\Big)^{\frac{1}{p'}} \\
  &\leq\Big(\frac{3}{\abs{\hat J}}\sum_{h=m}^{n-1}\int_{[h,h+1]}\Norm{V^{-*}(\cdot)\fx^*}{\fX^*}^{p'}\Big)^{\frac{1}{p'}} \\
  &\lesssim\Big(\frac{1}{\abs{\hat J}}\sum_{h=m}^{n-1}\Norm{\fx^*}{\fX^*}^{p'}\Big)^{\frac{1}{p'}}\qquad\text{by \eqref{eq:Vk=I}} \\
  &=\Big(\frac{1}{\abs{\hat J}}(n-m)\Big)^{\frac{1}{p'}}\Norm{\fx^*}{\fX^*}=\Norm{\fx^*}{\fX^*},
\end{split}
\end{equation*}
where, in the application of \eqref{eq:Vk=I}, we observed that $V\equiv V_k$ for some $k=k(h)$ on each interval $[h,h+1]$, while \eqref{eq:Vk=I} is valid for all pairs of $k$ and $h$, as observed in the discussion following \eqref{eq:Vk=I}.

Similarly,
\begin{equation*}
  \rho_{\aveL^{p}(J,V)}(\fx)\lesssim\Norm{\fx}{\fX}
\end{equation*}
by the other estimate in \eqref{eq:Vk=I}. The definition of the dual norm and combination with the previous estimate shows that
\begin{equation*}
  \rho_{\aveL^{p'}(J,V^{-*})}(\fx^*)
  \lesssim\Norm{\fx^*}{\fX^*}\lesssim\rho_{\aveL^{p}(J,V)}^*(\fx^*),
\end{equation*}
which is the required estimate \eqref{eq:VAp2show} for the large intervals $\abs{J}>1$ and hence
shows that $V\in\A_p(\R;\bddlin(\fX))$.

\subsubsection*{The unboundedness of $\HT$ on $L^p(V)$}

It remains to show that $\HT$ is not bounded on $L^p(V)$. For this purpose, we recall the functions $f_k\in L^p(J_k,E_k)$ and $g_k\in L^{p'}(J_k,E_k)$ from \eqref{eq:BkH>Nk}. Let us then consider the $H_k=E_k\oplus E_k$-valued functions $(f_k,0)$ and $(0,g_k)$, and finally
\begin{equation}\label{eq:fkgk}
\begin{split}
    \tilde f_k &:=j_k\iota_k^{-1}(f_k,0)\in L^p(J_k;\fX), \\
    \tilde g_k &:=P_k^*\iota_k^*(0,g_k)\in L^{p'}(J_k;\fX^*),
\end{split}
\end{equation}
recalling that
\begin{itemize}
  \item $P_k:\fX\to G_k$ are uniformly bounded projections,
  \item $j_k:G_k\to\fX$ is the injection $\fx\mapsto\fx$ of $G_k\subseteq\fX$ into $\fX$,
  \item $\iota_k:G_k\to H_k$ are isomorphisms with $\Norm{\iota_k}{}\Norm{\iota_k^{-1}}{}\leq 2$; thus
  \item $\iota_k^*:H_k^*(=H_k)\to G_k^*$ are the adjoint isomorphisms,
  \item $P_k^*:G_k^*\to\fX^*$ are uniformly bounded.
\end{itemize}
We also recall that, by \eqref{eq:V=Vk} and \eqref{eq:VkDef}, on the interval $J_k$, we have
\begin{equation*}
  V^{\pm 1}=V_k^{\pm 1}=(I-P_k)+j_k\iota_k^{-1}\tilde V_k^{\pm 1}\iota_k P_k.
\end{equation*}
Hence
\begin{equation}\label{eq:VHVvsTilde}
\begin{split}
  &\pair{V\HT V^{-1}\tilde f_k}{\tilde g_k} \\
  &\quad=\pair{\iota_k P_k V_k\HT V_k^{-1}j_k\iota_k^{-1}(f_k,0)}{(0,g_k)} \\
  &\quad=\pair{\iota_k P_k j_k\iota_k^{-1}\tilde V_k\iota_k P_k \HT j_k\iota_k^{-1} \tilde V_k^{-1}\iota_k P_k j_k\iota_k^{-1}(f_k,0)}{(0,g_k)} \\
  &\quad=\pair{\tilde V_k  \HT  \tilde V_k^{-1}(f_k,0)}{(0,g_k)},
\end{split}
\end{equation}
where we used $\iota_k P_k j_k\iota_k^{-1}=\iota_k\iota_k^{-1}=I$ and the fact that the constant operators $\iota_k P_k$ or $j_k\iota_k^{-1}$ commute with $\HT$.

Next, from the definition of $\tilde V_k$ in \eqref{eq:tildeVk}, we obtain
\begin{equation}\label{eq:VHVtildeVsBH}
\begin{split}
  \pair{\tilde V_k  \HT  \tilde V_k^{-1}(f_k,0)}{(0,g_k)}
  &=\Bpair{\begin{pmatrix} I & 0 \\ \tilde B_k & I \end{pmatrix}\HT\begin{pmatrix} I & 0 \\ -\tilde B_k & I \end{pmatrix}
  \begin{pmatrix} f_k \\ 0 \end{pmatrix}}{\begin{pmatrix} 0 \\ g_k \end{pmatrix}} \\
  &=\Bpair{\begin{pmatrix} \HT & 0 \\ [\tilde B_k,\HT] & \HT \end{pmatrix}
  \begin{pmatrix} f_k \\ 0 \end{pmatrix}}{\begin{pmatrix} 0 \\ g_k \end{pmatrix}} \\
  &=\pair{[\tilde B_k,\HT]f_k}{g_k}=\pair{[B_k,\HT]f_k}{g_k},
\end{split}
\end{equation}
using in the last step the fact that $\tilde B_k=B_k$ on $J_k$, since $\tilde B_k$ was obtained from $\{\delta_{k,h}B_h\}_{h\in\Z}$ by the algorithm \eqref{eq:bCombo}.

A combination of \eqref{eq:VHVvsTilde}, \eqref{eq:VHVtildeVsBH}, and \eqref{eq:BkH>Nk} shows that
\begin{equation*}
  \abs{\pair{V\HT V^{-1}\tilde f_k}{\tilde g_k}}\gtrsim\log N_k.
\end{equation*}
On the other hand, from \eqref{eq:fkgk} and the normalisations of $f_k$ and $g_k$, we find that
\begin{equation*}
\begin{split}
  \Norm{\tilde f_k}{L^p(\R;\fX)}\Norm{\tilde g_k}{L^{p'}(\R;\fX^*)}
  &\leq\Norm{\iota_k^{-1}}{}\Norm{f_k}{L^p(\R;E_k)}\cdot\Norm{P_k^*}{}\Norm{\iota_k^*}{}\Norm{g_k}{L^{p'}(\R;E_k)} \\
  &=\Norm{\iota_k^{-1}}{}\Norm{\iota_k}{}\Norm{P_k}{}\leq 2C.
\end{split}
\end{equation*}
The last two displays clearly show the unboundedness of $V\HT V^{-1}$ on $L^p(\R;\fX)$, and hence of $\HT$ on $L^p(V;\fX)$. Since we already checked that $V\in\A_p(\R;\bddlin(\fX))$, the proof is complete.
\end{proof}

\begin{proof}[Proof of Theorem \ref{thm:Hunbd}]
If $\fX$ is not a UMD space, the Hilbert transform is already unbounded on $L^p(\R;\fX)$, so it suffices to take $V\equiv I$, the identity operator. If $\fX$ is $K$-convex, then the required weight is provided by Proposition \ref{prop:badV}. Since every UMD space is $K$-convex (see \cite[Proposition 4.3.10]{HNVW1}), these two cases cover all Banach spaces.
\end{proof}

\section{Another example of unboundedness}

In \cite[second half of page 22]{BHYY:II}, the following estimate is proved for {\em matrix} weights $V\in\A_p$ (written as $V=W^{\frac 1p}$ in \cite{BHYY:II}):
\begin{equation}\label{eq:p22}
  \fint_Q\Big(\fint_Q\Norm{V(x)f(y)}{}\,dy\Big)^p \,dx
  \lesssim\fint_Q\Norm{V(y)f(y)}{}^p \,dy.
\end{equation}
Indeed, after writing $V(x)f(y)=V(x)V(y)^{-1}V(y)f(y)$, the proof is an easy estimate using nothing but H\"older's inequality and one of the equivalent definitions of $V\in\A_p$ for matrix weights. Nevertheless, the said equivalent condition is the one that does not extend to the operator-valued setting. Despite the apparent simplicity of \eqref{eq:p22}, we will show the following:

\begin{example}\label{ex:p22}
For every $p\in(1,\infty)$, estimate \eqref{eq:p22} fails, in general, for operator-valued weights $V\in\A_p(\R;\bddlin(\ell^p))$.
\end{example}

The example uses an idea from \cite[paragraph after Corollary 4.7]{Lauzon}.

\begin{proof}[Proof of Example \ref{ex:p22}]
We consider a diagonal-valued weight $V(x)=\operatorname{diag}(v_i(x))_{i=1}^\infty$, where all $v_i\in\A_p(\R)$ are scalar weights with uniformly bounded $\A_p$ constants. Let $\fx=(\lambda_i)_{i=1}^\infty\in\ell^p$. Then
\begin{equation}\label{uniform-ap}
\begin{split}
  \rho_{\aveL^p(Q,V)}(\fx)^p
  &=\fint_Q\Norm{V(x)\fx}{\ell^p}^p \,dx
  =\sum_{i=1}^\infty\fint_Q\abs{v_i(x)}^p \,dx \abs{\lambda_i}^p \\
  &\lesssim\sum_{i=1}^\infty \Big(\fint_Q\abs{v_i(x)}^{-p'} \,dx\Big)^{-\frac{p}{p'}} \abs{\lambda_i}^p
  \quad\text{since $v_i\in\A_p$ uniformly} \\
  &=\sup\Big\{\Babs{\sum_{i=1}^\infty \Big(\fint_Q\abs{v_i(x)}^{-p'} \,dx\Big)^{-\frac{1}{p'}} \lambda_i\mu_i}^p:
  \sum_{i=1}^\infty\abs{\mu_i}^{p'}\leq 1\Big\} \\
  &=\sup\Big\{\Babs{\sum_{i=1}^\infty  \lambda_i\sigma_i}^p:
  \sum_{i=1}^\infty \Big(\fint_Q\abs{v_i(x)}^{-p'} \,dx\Big) \abs{\sigma_i}^{p'}\leq 1\Big\} \\
  &=\sup\Big\{\abs{\pair{\fx}{\fx^*}}^p:
  \fint_Q\Norm{V(x)^{-1}\fx^*}{\ell^{p'}}^{p'} \,dx \leq 1\Big\}
  =\rho_{\aveL^{p'}(Q,V^{-*})}^*(\fx)^p,
\end{split}
\end{equation}
introducing a dual vector $\fx^*=(\sigma_i)_{i=1}^\infty$ on the last line. (To make sure that $\fx^*\in\ell^{p'}$, the suprema in the previous computation may be taken over finitely non-zero sequences of $\mu_i$ and $\sigma_i$ only.) Thus $V\in\A_p$.

For our example, we will make a more concrete choice as follows. Recall that $w(x)=\abs{x}^\gamma\in A_p(\R)$ if and only if $-1<\gamma<p-1$; hence $v(x)=\abs{x}^\beta=w(x)^{\frac 1p}\in\A_p(\R)$ if and only if $-\frac{1}{p}<\beta<\frac{1}{p'}$. If further $\phi(x)=\abs{x}^\alpha$, then $\phi\in L^p_{\loc}(v)$, i.e., $v\phi\in L^p_{\loc}(\R)$ if and only if $\alpha+\beta>-\frac{1}{p}$. For a small $\eps>0$, we choose $\beta=\frac{1}{p'}-\eps$ and $\alpha=-1+2\eps$.

For $j\in\N$ and $k=0,\ldots,2^j-1$, we define
\begin{equation*}
  x_{2^j+k}:=2^{-j}k\in[0,1),\qquad \lambda_{2^j+k}^{(N)}:=2^{-j(\frac{1}{p}+\eps)}\one_{[0,N]}(j),
\end{equation*}
and functions $v_i(x)=v(x-x_i)=\abs{x-x_i}^{\frac{1}{p'}-\eps}$ and
\begin{equation*}
  f_i(x)=\one_{[0,1)}(x)\phi(x-x_i)=\one_{[0,1)}(x)\abs{x-x_i}^{-1+2\eps}.
\end{equation*}
We then wish to show the failure of \eqref{eq:p22} for $V(x)=\operatorname{diag}(v_i(x))_{i=1}^\infty$ and $f^{(N)}(x)=(\lambda_i^{(N)} f_i(x))_{i=1}^\infty$ in place of $f$, in the sense that there is no uniform estimate with respect to the truncation parameter $N$. Since $f_i\in L^1(\R)$ for each $i$, the truncation parameter ensures that $f^{(N)}\in L^1(\R;\ell^p)$ is well defined.

We consider $Q=[0,1)$. Then
\begin{equation*}
\begin{split}
  \operatorname{RHS}\eqref{eq:p22}
  &=\sum_{i=1}^\infty (\lambda_i^{(N)})^{p}\int_0^1 \abs{v_i(x)f_i(x)}^p \,dx
  \lesssim\sum_{i=1}^\infty(\lambda_i^{(N)})^p\int_0^1 \abs{x}^{(-\frac1p+\eps)p}\,dx \\
  &\approx\sum_{i=1}^\infty(\lambda_i^{(N)})^p
  =\sum_{j=0}^N 2^j\cdot 2^{-j(\frac 1p+\eps)p}
  =\sum_{j=0}^N 2^{-j\eps p}
  \approx 1,
\end{split}
\end{equation*}
where we are allowing dependence of the implicit constants on both $p$ and~$\eps$, but not on $N$.
Here, and thereafter, $\operatorname{RHS}$\eqref{eq:p22}
(resp.\ $\operatorname{LHS}$\eqref{eq:p22})
means the right-hand (resp.\ left-hand) side
of \eqref{eq:p22}.

We then turn to the left-hand side of \eqref{eq:p22}. For fixed $x,y\in[0,1)$, we consider
\begin{equation}\label{eq:Vf-lp}
\begin{split}
  \Norm{V(x)f^{(N)}(y)}{\ell^p}^p
  &=\sum_{i=1}^\infty (\lambda_i^{(N)} v_i(x)f_i(y))^p \\
  &=\sum_{j=0}^N 2^{-j(\frac1p+\eps)p}
  \sum_{k=0}^{2^j-1}\abs{x-2^{-j}k}^{(\frac{1}{p'}-\eps)p} \abs{y-2^{-j}k}^{(-1+2\eps)p}.
\end{split}
\end{equation}
For each $j\geq 3$, we consider two cases:

\textit{Case 1: $\abs{x-y}\geq 2\cdot 2^{-j}$.} In this case, let $k$ be such that $k2^{-j}$ is closest to $y$. Hence $\abs{y-k2^{-j}}\leq 2^{-j}$ and
\begin{equation*}
  \abs{x-k2^{-j}}\geq\abs{x-y}-\abs{y-k2^{-j}}\geq\abs{x-y}-2^{-j}\geq\frac12\abs{x-y}.
\end{equation*}

\textit{Case 2: $\abs{x-y}< 2\cdot 2^{-j}$.} In this case, let $k$ be such that $3\cdot 2^{-j}\leq\abs{y-k2^{-j}}<4\cdot 2^{-j}$. Then
\begin{equation*}
  \abs{x-k2^{-j}}\geq\abs{y-k2^{-j}}-\abs{x-y}\geq 3\cdot 2^{-j}-\abs{x-y}>\Big(\frac 32-1\Big)\abs{x-y}.
\end{equation*}

Thus, in each case, we find a $k$ such that $\abs{x-2^{-j}k}\gtrsim\abs{x-y}$ and $\abs{y-2^{-j}k}\lesssim 2^{-j}$; hence
\begin{equation*}
   \abs{x-2^{-j}k}^{(\frac{1}{p'}-\eps)p} \abs{y-2^{-j}k}^{(-1+2\eps)p}
   \gtrsim \abs{x-y}^{(\frac{1}{p'}-\eps)p} 2^{j(1-2\eps)p}.
\end{equation*}
Obviously, this is also a lower bound for the whole sum over $k$.

Substituting this into \eqref{eq:Vf-lp}, we find that
\begin{equation*}
\begin{split}
  \Norm{V(x)f^{(N)}(y)}{\ell^p}^p
  &\gtrsim\sum_{j=3}^N 2^{-j(\frac1p+\eps)p}\abs{x-y}^{(\frac{1}{p'}-\eps)p} 2^{j(1-2\eps)p} \\
  &=\abs{x-y}^{p-1-\eps p}\sum_{j=3}^N 2^{j(p-1-3\eps p)} \\
  &\geq\abs{x-y}^{p-1-\eps p} 2^{N(p-1-3\eps p)}.
\end{split}
\end{equation*}
Thus
\begin{equation*}
  \operatorname{RHS}\eqref{eq:p22}
  \gtrsim 2^{N(p-1-3\eps p)}
  \int_0^1 \Big( \int_0^1\abs{x-y}^{\frac{1}{p'}-\eps} \,dy\Big)^p \,dx
  \approx 2^{N(p-1-3\eps p)}.
\end{equation*}
Choosing $\eps\in(0,\frac{1}{3p'})$, this contradicts \eqref{eq:p22} as $N\to\infty$, recalling that $\operatorname{LHS}\eqref{eq:p22}\lesssim 1$ uniformly in $N$.
\end{proof}

\part{Besov and Triebel--Lizorkin spaces}\label{part:BTL}

\section{Main definitions}

In this part, we develop the real-variable theory of
Besov and Triebel--Lizorkin spaces associated with
the $\A_p$ weights.
We begin with the following definition of
Littlewood--Paley functions.

\begin{definition}
Let $\sqrt{2}<\alpha<\beta<\pi$.
We say that $\varphi\in\mathscr S(\R^n)$ is an $(\alpha,\beta)$-Littlewood--Paley function if
\begin{equation*}
  1_{[\frac{1}{\alpha},\alpha]}(\abs{\xi})\lesssim\abs{\hat\varphi(\xi)}\lesssim 1_{[\frac{1}{\beta},\beta]}(\abs{\xi}).
\end{equation*}
We say that $\varphi$ is a Littlewood--Paley function if it is an $(\alpha,\beta)$-Littlewood--Paley function for some $\sqrt{2}<\alpha<\beta<\pi$.

For a Littlewood--Paley function $\varphi$ and $j\in\Z$, we denote
\begin{equation*}
 \varphi_j(x):=2^{jn}\varphi(2^j x).
\end{equation*}
\end{definition}
Most of the time, we could simply consider Littlewood--Paley functions with some fixed $(\alpha,\beta)$, say $(\frac53,2)$ as in \cite[Eqs.\ (3.2) and (3.3)]{BHYY:I} or \cite[Eq.\ (3.2)]{BHYY:SCM}. The advantage of the slightly more general definition is the following simple observation:

\begin{remark}
Whenever $\varphi$ exists a Littlewood--Paley function, there exists another Littlewood--Paley function $\chi$ such that $\chi_j*\varphi_j=\varphi_j$. Indeed, if $\varphi$ is an $(\alpha,\beta)$-Littlewood--Paley function, it suffices to take $\chi$ to be a $(\beta,\beta')$-Littlewood--Paley function for some $\beta'\in(\beta,\pi)$ with the additional property that $\hat\chi(\xi)=1$ for $\abs{\xi}\in[\frac{1}{\beta},\beta]$.
\end{remark}

Similarly to \cite{FR:04,FR:21,Rou:03}, we will also
introduced and study average-weighted spaces related to
a family of quasi-norms.
Let $\rho=\{\rho_Q\}_{Q\in\mathscr D}$ be a family of quasi-norms on a Banach space $\fX$.
For each $j\in\Z$, we define piecewise variable quasi-norms of a function $f:\R^n\to\fX$ by
\begin{equation*}
  \begin{cases}  (\rho_{\mathscr D_j}f)(x) :=\rho_{Q}(f(x)), \\
  \Norm{f}{\aveL^p(\mathscr D_j)}(x) :=\Norm{f}{\aveL^p(Q)},
  \end{cases}\qquad x\in Q\in\mathscr D_j.
\end{equation*}
Recall that the \emph{notation} $\mathscr{S}'_\infty(\mathbb{R}^n;\fX)$ denotes
the set of all bounded linear functionals $f:\mathscr{S}_\infty\to\fX$.
Now we can define the following
weighted Besov/Triebel--Lizorkin norms by pointwise and averaging weights.

\begin{definition}\label{def:space}
Let $p\in(0,\infty)$, $q\in(0,\infty]$, and
$s\in\mathbb{R}$. Assume that $V$ is a weight,
$\rho=\{\rho_Q\}_{Q\in\mathscr D}$ is a family of quasi-norms, and
\begin{equation*}
  A\in\{B,F\},\qquad (L^p\ell^q)_A:=\begin{cases} L^p\ell^q, & A=F, \\ \ell^q L^p, & A=B.\end{cases}
\end{equation*}
\begin{enumerate}[{\rm(i)}]
  \item Assume that $\varphi$ is an $(\alpha,\beta)$-Littlewood--Paley function,
$\sqrt{2}<\alpha<\beta<\pi$.
The \emph{homogeneous operator-weighted
Besov space $\dot{B}^s_{p,q}(V,\varphi)$} and the
\emph{homogeneous operator-weighted
Triebel--Lizorkin space $\dot{F}^s_{p,q}(V,\varphi)$}
are defined by setting
\begin{align*}
\dot{A}^s_{p,q}(V,\varphi)
:=\Big\{f\in\mathscr S_\infty'(\R^n;\fX):
\|f\|_{\dot{A}^s_{p,q}(V,\varphi)}<\infty\Big\},
\end{align*}
where, for any $f\in\mathscr{S}_\infty'(\mathbb{R}^n;\fX)$,
\begin{align*}
\Norm{f}{\dot A^s_{p,q}(V,\varphi)}
  &:=\BNorm{\Big\{2^{js}\Norm{V(\varphi_j*f)}{\fX}\Big\}_{j\in\Z}}{(L^p\ell^q)_A}.
\end{align*}
The \emph{homogeneous averaging operator-weighted
Besov space $\dot{B}^s_{p,q}(V,\rho)$} and the
\emph{homogeneous averaging operator-weighted
Triebel--Lizorkin space $\dot{F}^s_{p,q}(V,\rho)$}
are defined by setting
\begin{align*}
\dot{A}^s_{p,q}(\rho,\varphi)
:=\Big\{f\in\mathscr S_\infty'(\R^n;\fX):
\|f\|_{\dot{A}^s_{p,q}(\rho,\varphi)}<\infty\Big\},
\end{align*}
where, for any $f\in\mathscr{S}_\infty'(\mathbb{R}^n;\fX)$,
\begin{align*}
  \Norm{f}{\dot A^s_{p,q}(\rho,\varphi)}
  &:=\BNorm{\Big\{2^{js}\rho_{\mathscr D_j}(\varphi_j*f)\Big\}_{j\in\Z}}{(L^p\ell^q)_A}.
\end{align*}
  \item\label{def:space:it2}  For any sequence
  $t=\{t_Q\}_{Q\in\mathscr{D}}$ of vectors $t_Q\in\fX$,
 define
\begin{equation}\label{df-tj}
  t_j:=\sum_{Q\in\mathscr D_j} t_Q\frac{\one_Q}{\abs{Q}^{\frac12}},
  \quad j\in\mathbb{Z},
\end{equation}
and
\begin{equation*}
\Norm{t}{\dot a^s_{p,q}(V)}
=\BNorm{\Big\{2^{js}\Norm{Vt_j}{\fX}\Big\}_{j\in\mathbb{Z}}}
{(L^p\ell^q)_A},\quad
\Norm{t}{\dot a^s_{p,q}(\rho)}=
\BNorm{\Big\{2^{js}\rho_{\mathscr D_j}(t_j)\Big\}_{j\in\mathbb{Z}}}
{(L^p\ell^q)_A}.
\end{equation*}
The \emph{homogeneous operator-weighted
Besov sequence space $\dot{b}^s_{p,q}(V)$} and the
\emph{homogeneous operator-weighted
Triebel--Lizorkin sequence space $\dot{f}^s_{p,q}(V)$}
are defined by setting
$$\dot{a}^s_{p,q}(V):=\Big\{t=(t_Q):\Norm{t}{\dot{a}^s_{p,q}(V)}<\infty\Big\}.$$
The \emph{homogeneous averaging operator-weighted
Besov sequence space $\dot{b}^s_{p,q}(\rho)$} and the
\emph{homogeneous averaging operator-weighted
Triebel--Lizorkin sequence space $\dot{f}^s_{p,q}(\rho)$}
are defined by setting
$$\dot{a}^s_{p,q}(\rho):=
\Big\{t=(t_Q):\Norm{t}{\dot{a}^s_{p,q}(\rho)}<\infty\Big\}.$$
\end{enumerate}
\end{definition}

The main example of $\rho$
is $\{\rho_{\aveL^p(Q,V)}\}_{Q\in\D}$
and its relatives studied above,
but parts of the real-variable theory can be conveniently developed
in a more general framework (see Definition \ref{def:uabc} below).

\section{Fourier analysis preliminaries}

In this section, we establish some basic tools
in Fourier analysis, including the Calder\'on reproducing
formula and the sampling type inequality in infinite-dimensional
target spaces. They will play key roles in developing
the real-variable theory of operator-valued Besov
and Triebel--Lizorkin spaces.

We begin with the following Calder\'on reproducing formula.

\begin{lemma}
Let $\sqrt{2}<\alpha<\beta<\pi$.
Let $\varphi$ be an $(\alpha,\beta)$-Littlewood--Paley function, and let $\gamma\in\mathscr S(\R^n)$ be such that $\hat\gamma(\xi)=1$ for $\abs{\xi}\leq \beta$ and $\supp\hat\gamma\subset\{\xi\in\R^n:\abs{\xi}<\pi\}$. Then for all $f\in\mathscr S'_\infty(\R^n;\fX)$ and all $j\in\Z$ and $x,y\in\R^n$,
\begin{equation}\label{eq:316}
  \varphi_j*f(x)=\sum_{R\in\mathscr D_j}2^{-jn}\gamma_j(x-x_R-y)(\varphi_j*f)(x_R+y),
\end{equation}
where $\gamma_j(x):=2^{jn}\gamma(2^j x)$.
\end{lemma}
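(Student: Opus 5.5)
Fix $j\in\Z$ and $y\in\R^n$. The identity \eqref{eq:316} is a Shannon-type sampling formula for the band-limited function $g:=\varphi_j*f$, whose Fourier transform lives in $\{\abs{\xi}\leq 2^j\beta\}$. By dilation, setting $g(2^{-j}\cdot)$, it suffices to treat $j=0$. Then the points $x_R$, $R\in\mathscr D_0$, form the lattice $\Z^n$ (up to a fixed shift), and the claim reads
\begin{equation*}
  g(x)=\sum_{k\in\Z^n}\gamma(x-k-y)\,g(k+y).
\end{equation*}
Translating by $y$, that is, replacing $g$ by $g(\cdot+y)$, we may also assume $y=0$.

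First I reduce to the scalar case. Since $f\in\mathscr S'_\infty(\R^n;\fX)$, for every $\fx^*\in\fX^*$ the pairing $\pair{g}{\fx^*}=\varphi*\pair{f}{\fx^*}$ is a scalar band-limited function. If the series on the right converges absolutely in $\fX$, then it suffices to prove the scalar identity for $\pair{g}{\fx^*}$ and conclude by Hahn--Banach. For absolute convergence I use that $g$ is smooth with at most polynomial growth, $\Norm{g(z)}{\fX}\lesssim(1+\abs{z})^M$. This follows from the continuity of $f$ on $\mathscr S_\infty$: $g(z)=f(\varphi(z-\cdot))$, and the Schwartz seminorms of $\varphi(z-\cdot)$ grow polynomially in $z$. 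Together with the rapid decay of $\gamma$, this gives absolute and locally uniform convergence.

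For the scalar identity with $h:=\pair{g}{\fx^*}$, I argue on the Fourier side. The tempered distribution $\hat h$ is supported in $\{\abs{\xi}\leq\beta\}$, and $\beta<\pi$. By Poisson summation, the distribution $\sum_k h(k)\delta_k$ has Fourier transform $\sum_{m\in\Z^n}\hat h(\cdot-2\pi m)$, a sum of translates with disjoint supports because $\beta<\pi$. Convolving $\sum_k h(k)\delta_k$ with $\gamma$ multiplies this Fourier transform by $\hat\gamma$. Since $\supp\hat\gamma\subset\{\abs{\xi}<\pi\}$ and $\hat\gamma=1$ on $\{\abs{\xi}\leq\beta\}$, only the term $m=0$ survives: for $m\neq0$, the translate $\supp\hat h(\cdot-2\pi m)$ lies in $\abs{\xi}\geq 2\pi-\beta>\pi$. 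We are left with $\hat\gamma\hat h=\hat h$, so $\gamma*\sum_k h(k)\delta_k=h$, which is the claim.

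The main obstacle is justifying Poisson summation for a merely tempered band-limited $h$ rather than a Schwartz function. I would handle it by testing against $\psi\in\mathscr S$: the function $\hat h$ is a compactly supported distribution, and periodization commutes with the pairing. Alternatively, one can insert a cutoff $h\,\theta(\eps\cdot)$, apply the Schwartz-class formula with a slightly larger band, and let $\eps\to0$. This works because the support of $\hat\theta_\eps$ shrinks, the margin $\beta<\pi$ absorbs it, and the polynomial growth bound gives dominated convergence in the sum.
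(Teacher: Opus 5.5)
Your proof is correct and follows the paper's route: you reduce to the scalar case by pairing with $\fx^*\in\fX^*$, and you conclude because $\fX^*$ separates the points of $\fX$. The only differences are that you prove the scalar sampling identity yourself via Poisson summation, where the paper simply cites \cite[Lemma 3.4]{BHYY:SCM}, and that you explicitly verify the absolute convergence of the series in $\fX$ (via the polynomial growth of $\varphi_j*f$), which the paper uses tacitly when it moves $\fx^*$ inside the sum.
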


\begin{proof}
For scalar-valued $f$, this is \cite[Lemma 3.4]{BHYY:SCM}, applied to the distribution $\varphi_j*f$ with Fourier support in $2^j\supp\hat\varphi\subseteq\{\xi\in\R^n:\abs{\xi}\leq\beta\cdot 2^j\}$. The general case is obtained either by repeating the same proof or by the following reduction to the scalar case: For $f\in\mathscr S'_\infty(\R^n;\fX)$ and $\fx^*\in\fX^*$, we have $\pair{\fx^*}{f}:=\fx^*\circ f\in\mathscr S_\infty'(\R^n)$, and hence \eqref{eq:316} holds with $\pair{\fx^*}{f}$ in place of $f$ by the scalar-valued result \cite[Lemma 3.4]{BHYY:SCM}. This may be written as
\begin{equation*}
  \pair{\fx^*}{\varphi_j*f(x)}=\Bpair{\fx^*}{\sum_{R\in\mathscr D_j}2^{-jn}\gamma_j(x-x_R-y)(\varphi_j*f)(x_R+y)}
\end{equation*}
for all $j\in\Z$ and $x,y\in\R^n$ as in the statement and for all $\fx^*\in\fX^*$.
Since the dual elements $\fx^*$ separate the points of $\fX$, this implies \eqref{eq:316} as written.
\end{proof}

We now turn to the sampling type inequality.
Recall that a \emph{seminorm} is like a norm except that it is allowed to have a nontrivial null space.
Moreover, we need the following concept of
$u$-seminorms.

\begin{definition}\label{def:semi}
Let $u\in(0,1]$. A \emph{$u$-(semi)norm}
$\rho$ is like a (semi)norm except that the triangle inequality is replaced by $$\rho(\fx+\fz)^u\leq\rho(\fx)^u+\rho(\fz)^u
\quad\forall\,\fx,\fz\in\fX.$$
\end{definition}

It easily follows that any $u$-(semi)norm is also
an $s$-(semi)norm for all $s\in(0,u)$.
In particular, any norm is also a $u$-norm for all $u\in(0,1)$.
Based on this, we now derive the following sampling type inequality.

\begin{lemma}\label{lem:extraPhi}
Let $\varphi$ be a Littlewood--Paley function.
If $f\in\mathscr S_\infty'(\R^n;\fX)$ and $\rho$ is a $u$-seminorm on $\fX$, then
for any $j\in\Z$ and $x\in\mathbb{R}^n$,
\begin{equation*}
  \rho(\varphi_j*f(x))^u
  \lesssim\big(\phi_j*\rho(\varphi_j*f)^u\big)(x),
\end{equation*}
where $\phi(x):=(1+\abs{x})^{-M}$ with $M>0$ taken as large as we like and where
the implicit positive constant is independent of $j$, $f$, and $x$.

Under the same assumptions, if $P\in\mathscr D_j$, we even have
\begin{equation*}
   \sup_{x\in P}\rho(\varphi_j*f(x))^u
  \lesssim \inf_{y\in P}\big(\phi_j*\rho(\varphi_j*f)^u\big)(y),
\end{equation*}
where the implicit positive constant is independent of $j$, $f$, and $P$.
\end{lemma}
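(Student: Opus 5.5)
The plan is to use the Calder\'on reproducing formula \eqref{eq:316} with an averaging trick in the auxiliary variable $y$, in the spirit of the classical Peetre/Plancherel--Polya argument. The only property of $\rho$ needed is the $u$-triangle inequality, which survives countable sums after passing to $u$-th powers. Fix $j$, $x$, and $y\in Q_0:=2^{-j}[0,1)^n$. By \eqref{eq:316} and the $u$-subadditivity of $\rho$ (extended to the convergent series by rapid decay of $\gamma$; if needed first test against $\fx^*$ or truncate, using that $\rho$ is continuous enough—alternatively assume $\rho$ is dominated by a multiple of the norm, which is the case of interest),
\begin{equation*}
  \rho(\varphi_j*f(x))^u\leq\sum_{R\in\mathscr D_j}2^{-jnu}\abs{\gamma_j(x-x_R-y)}^u\rho(\varphi_j*f(x_R+y))^u .
\end{equation*}
Since $\abs{\gamma_j(z)}\lesssim 2^{jn}(1+2^j\abs{z})^{-Mu^{-1}}$ for any $M$, the coefficient is $\lesssim(1+2^j\abs{x-x_R-y})^{-M}\approx(1+2^j\abs{x-x_R-y-t})^{-M}$ uniformly for $t$ in a cube of side $2^{-j}$, since $1+2^j\abs{\cdot}$ changes by a bounded factor under shifts of size $\lesssim 2^{-j}$.

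Next, average the inequality over $y\in Q_0$, i.e.\ apply $2^{jn}\int_{Q_0}dy$. The left side is independent of $y$. On the right, substitute $z=x_R+y$; as $R$ ranges over $\mathscr D_j$ and $y$ over $Q_0$, the point $z$ sweeps $\R^n$ exactly once, with $x-x_R-y=x-z$. Hence
\begin{equation*}
  \rho(\varphi_j*f(x))^u\lesssim\int_{\R^n}2^{jn}(1+2^j\abs{x-z})^{-M}\rho(\varphi_j*f(z))^u\,dz=\big(\phi_j*\rho(\varphi_j*f)^u\big)(x).
\end{equation*}
Measurability of $z\mapsto\rho(\varphi_j*f(z))$ follows from the smoothness of $\varphi_j*f$ together with continuity of $\rho$ with respect to the norm. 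This gives the first claim.

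For the second claim, take $x\in P$ and $y'\in P$, $P\in\mathscr D_j$. Then $\abs{x-y'}\lesssim 2^{-j}$, so
\begin{equation*}
  (1+2^j\abs{x-z})^{-M}\lesssim(1+2^j\abs{y'-z})^{-M}
\end{equation*}
uniformly in $z$, which yields $\phi_j*g(x)\lesssim\phi_j*g(y')$ for nonnegative $g$. Taking the supremum over $x\in P$ and the infimum over $y'\in P$ finishes the proof.

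The main obstacle is justifying the $u$-triangle inequality for the infinite series in \eqref{eq:316}, since $\rho$ is only a $u$-seminorm with no assumed continuity. I would handle this by noting that in the intended applications $\rho\lesssim\Norm{\cdot}{}$, so partial sums converge in $\fX$ and $\rho$ is continuous. Otherwise I would add this mild assumption, or argue via lower semicontinuity.
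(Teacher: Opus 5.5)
Your proposal is correct and is essentially the paper's own proof: apply the $u$-triangle inequality to the Calder\'on reproducing formula \eqref{eq:316}, average over $y\in[0,2^{-j})^n$ so that $z=x_R+y$ sweeps $\R^n$ exactly once, use the rapid decay of $\gamma$ to get the kernel $\phi_j$, and deduce the second claim from $\phi_j(x-z)\approx\phi_j(y-z)$ for $x,y\in P$. Your caveat, that some continuity of $\rho$ is needed (for subadditivity along the infinite series and for measurability of $z\mapsto\rho(\varphi_j*f(z))$), addresses a point the paper passes over silently; it is harmless here, since every $\rho$ to which the paper applies the lemma is continuous on $\fX$.
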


\begin{proof}
Let $j\in\Z$, $x\in\R^n$ be arbitrary, and $y\in[0,2^{-j})^n$.
We apply $\rho(\cdot)^u$ to both sides of \eqref{eq:316} and use the $u$-triangle inequality to find that
\begin{equation*}
\begin{split}
  \rho(\varphi_j*f(x))^u
  &\leq\sum_{R\in\mathscr D_j}(2^{-jn}\abs{\gamma_j(x-x_R-y)})^u\rho((\varphi_j*f)(x_R+y))^u.
\end{split}
\end{equation*}
Averaging over $y\in[0,2^{-j})^n$ and using the decay $\abs{\gamma(x)}\lesssim(1+\abs{x})^{-M/u}$, we obtain
\begin{equation*}
\begin{split}
  \rho(\varphi_j*f(x))^u
  &\lesssim\sum_{R\in\mathscr D_j}2^{jn}\int_R(1+2^j\abs{x-z})^{-M} \rho((\varphi_j*f)(z))^u \,dz \\
  &=\int_{\R^n}\phi_j(x-z)\rho((\varphi_j*f)(z))^u \,dz
  =(\phi_j*\rho((\varphi_j*f))^u)(x),
\end{split}
\end{equation*}
which is the first claimed inequality.

If $x,y\in P\in\mathscr D_j$ and $z\in\R^n$, then $\phi_j(x-z)\approx\phi_j(y-z)$, so that the right-hand side of the previous bound is comparable to $(\phi_j*\rho((\varphi_j*f))^u)(y)$, and we also obtain the second claim.
\end{proof}

\section{Relations between pointwise and average-weighted spaces}\label{sec:BTL}

In this section, we study the relationships between pointwise-weighted spaces and
average-weighted spaces with
$$\rho=\rho_{\aveL^r(\mathscr D,V)}=\{\rho_{\aveL^r(Q,V)}\}_{Q\in\mathscr D}.$$
Since we always work with the same Littlewood--Paley function
$\varphi$, to simplify the presentation,
we omit the $\varphi$ and denote
$\dot{A}^{s}_{p,q}(\cdots,\varphi)$ simply as $\dot{A}^{s}_{p,q}(\cdots)$
throughout this section.

\subsection{Besov spaces}

We first study this problem for Besov spaces.
For Besov sequence spaces, we have the following result.

\begin{theorem}\label{b-arho-av}
Let $p\in(0,\infty)$, $q\in(0,\infty]$, $s\in\mathbb{R}$,
and $V$ be a weight.
\begin{enumerate}[{\rm(i)}]
  \item\label{b-arho-av1} For any sequence $t=\{t_Q\}_{Q\in\mathscr{D}}$
  of vectors in $\fX$,
  \begin{align*}
\|t\|_{\dot{b}^s_{p,q}(V)}=
\|t\|_{\dot{b}^s_{p,q}(\rho_{\aveL^p(\mathscr{D},V)})}.
\end{align*}
  \item\label{b-arho-av2} Suppose $r\in(0,\infty)$. The equivalence
  \begin{align*}
\|t\|_{\dot{b}^s_{p,q}(V)}\approx
\|t\|_{\dot{b}^s_{p,q}(\rho_{\aveL^r(\mathscr{D},V)})}
\end{align*}
holds for all sequences $t=\{t_Q\}_{Q\in\mathscr{D}}$ of vectors
in $\fX$ if and only if, for all cubes $Q\in\mathscr{D}$ and all
$\fx\in\fX$,
\begin{align}\label{b-arho-ave1}
\rho_{\aveL^p(Q,V)}(\fx)\approx\rho_{\aveL^r(Q,V)}(\fx)
\end{align}
with the positive equivalence constants independent of $Q$ and $\fx$.
\end{enumerate}
\end{theorem}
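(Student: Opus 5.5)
The plan is to compute both norms level by level, using that $t_j$ is constant on each cube $Q\in\mathscr D_j$. For fixed $j$, $t_j=\abs{Q}^{-\frac12}t_Q$ on $Q\in\mathscr D_j$, so
\begin{equation*}
  \Norm{Vt_j}{L^p}^p=\sum_{Q\in\mathscr D_j}\int_Q\Norm{V(x)t_Q}{\fX}^p\abs{Q}^{-\frac p2}\,dx
  =\sum_{Q\in\mathscr D_j}\abs{Q}\,\rho_{\aveL^p(Q,V)}\big(\abs{Q}^{-\frac12}t_Q\big)^p .
\end{equation*}
On the other hand, $\rho_{\mathscr D_j}(t_j)(x)=\rho_{\aveL^p(Q,V)}(\abs{Q}^{-\frac12}t_Q)$ for $x\in Q$, so $\Norm{\rho_{\mathscr D_j}(t_j)}{L^p}^p$ is the same sum. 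Hence the inner $L^p$ norms coincide for every $j$, and taking the $\ell^q$ norm with weights $2^{js}$ gives the identity in \eqref{b-arho-av1}. Since the Besov norm is $\ell^q L^p$, only these level-wise $L^p$ norms enter, and no maximal-function or Fourier-analytic input is required.

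For \eqref{b-arho-av2}, by part \eqref{b-arho-av1} the equivalence to be characterised is
\begin{equation*}
  \Norm{t}{\dot b^s_{p,q}(\rho_{\aveL^p(\mathscr D,V)})}\approx\Norm{t}{\dot b^s_{p,q}(\rho_{\aveL^r(\mathscr D,V)})}.
\end{equation*}
Sufficiency is immediate. If \eqref{b-arho-ave1} holds, then $\rho_{\mathscr D_j}$ for the two families agree up to constants pointwise, so the two norms are comparable. Here the $\rho$ are homogeneous, so the factor $\abs{Q}^{-\frac12}$ is harmless.

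For necessity, fix $Q\in\mathscr D_j$ and $\fx\in\fX$, and test with the sequence $t$ having $t_Q=\fx$ and all other entries zero. Then both sides reduce to a single term. For the $p$-family we get $2^{js}\abs{Q}^{\frac1p-\frac12}\rho_{\aveL^p(Q,V)}(\fx)$, and for the $r$-family $2^{js}\abs{Q}^{\frac1p-\frac12}\rho_{\aveL^r(Q,V)}(\fx)$. This holds for every $q$, since only one level is nonzero. Cancelling the common factors yields \eqref{b-arho-ave1} with the same constants.

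There is no real obstacle. The only point needing care is bookkeeping: one must check that the normalisation $\abs{Q}^{-\frac12}$ and the piecewise definition of $\rho_{\mathscr D_j}$ match exactly, so that part \eqref{b-arho-av1} is an identity rather than an equivalence. One should also note that \eqref{b-arho-ave1} is needed only for dyadic $Q$, consistent with the statement.
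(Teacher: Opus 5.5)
Your proposal is correct and follows the paper's proof. For (i), both arguments compute the $L^p$ norm level by level, using that $t_j$ is constant on each $Q\in\mathscr D_j$. For (ii), both reduce via (i) to comparing the two average-weighted norms, with necessity obtained by testing single-entry sequences and sufficiency by the pointwise comparison.
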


\begin{proof}
By a direct calculation, we obtain, for any $j\in\mathbb{Z}$,
\begin{align*}
\BNorm{\sum_{Q\in\mathscr{D}_j}\Norm{Vt_Q}{\fX}
\frac{{\bf1}_Q}{|Q|^{\frac12}}}{L^p}^p
&=\sum_{Q\in\mathscr{D}_j}
|Q|^{-\frac p2}\int_{Q}\Norm{V(x)t_Q}{\fX}^p\,dx\\
&=\sum_{Q\in\mathscr{D}_j}|Q|^{-\frac p2}|Q|\rho_{\aveL^p(Q,V)}(t_Q)^p
=\BNorm{\sum_{Q\in\mathscr{D}_j}\rho_{\aveL^p(Q,V)}(t_Q)
\frac{{\bf1}_Q}{|Q|^{\frac12}}}{L^p}^p.
\end{align*}
This, together with the definitions of
both sequence spaces [namely Definition \ref{def:space}\eqref{def:space:it2}],
finishes the proof of \eqref{b-arho-av1}.

On the other hand, by testing the sequence
$t=\{t_Q\}_{Q\in\mathscr{D}}$ with
$t_Q\ne{\bf0}$ only when $Q=R$ for a given cube $R\in\mathscr{D}$,
we conclude that \eqref{b-arho-ave1} holds for all $Q$ and $\fX$ if and only if
\begin{align*}
\|t\|_{\dot{b}^s_{p,q}(\rho_{\aveL^p(\mathscr{D},V)})}\approx
\|t\|_{\dot{b}^s_{p,q}(\rho_{\aveL^r(\mathscr{D},V)})}.
\end{align*}
Hence, from \eqref{b-arho-av1}, it follows that
\eqref{b-arho-av2} holds. This then finishes the proof of
Theorem \ref{b-arho-av}.
\end{proof}

\begin{corollary}\label{cor:equib}
Let $p,r\in(0,\infty)$, $q\in(0,\infty]$, and $s\in\mathbb{R}$.
If $V\in\mathscr{A}_{\max\{p,r\}}$, then
\eqref{b-arho-ave1} holds for all
$Q\in\mathscr{D}$ and $\fx\in\fX$.
As a consequence, for any sequence $t=\{t_Q\}_{Q\in\mathscr{D}}$
  of vectors in $\fX$,
  \begin{align*}
\|t\|_{\dot{b}^s_{p,q}(V)}\approx
\|t\|_{\dot{b}^s_{p,q}(\rho_{\aveL^r(\mathscr{D},V)})},
\end{align*}
where the positive equivalence constants are independent of $t$.
\end{corollary}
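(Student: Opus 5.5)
We reduce the statement to Theorem~\ref{b-arho-av}\eqref{b-arho-av2}: once \eqref{b-arho-ave1} is established for all $Q\in\mathscr D$ and $\fx\in\fX$, the equivalence of the sequence-space quasi-norms follows at once from that theorem. So the whole task is to show $\rho_{\aveL^p(Q,V)}(\fx)\approx\rho_{\aveL^r(Q,V)}(\fx)$ uniformly in $Q$ and $\fx$, under the hypothesis $V\in\A_{\max\{p,r\}}$.

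We split into the cases $r\le p$ and $r>p$. In either case, one direction is trivial by H\"older's (Jensen's) inequality on the probability space $(Q,dx/\abs{Q})$. Precisely, $\rho_{\aveL^{\min\{p,r\}}(Q,V)}(\fx)\le\rho_{\aveL^{\max\{p,r\}}(Q,V)}(\fx)$, and this holds without any weight assumption.

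For the nontrivial direction, set $m:=\max\{p,r\}$, so that $V\in\A_m$. Apply Corollary~\ref{cor:Ap1} with $m$ in place of $p$ and $\min\{p,r\}$ in place of $r$. This gives $\rho_{\aveL^m(Q,V)}(\fx)\lesssim\rho_{\aveL^{\min\{p,r\}}(Q,V)}(\fx)$, with the implicit constant depending only on $[V]_{\A_m}$ and the exponents. Combining the two directions yields \eqref{b-arho-ave1}.

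One hypothesis needs checking before invoking Corollary~\ref{cor:Ap1}: $V$ must be an $m$-weight, and when $m>1$, $V^{-*}$ must be an $m'$-weight. Both are part of the definition of $V\in\A_m$ in Definition~\ref{def:Ap}, so they hold automatically. In particular, all the quasi-norms above are finite, since by H\"older an $m$-weight is also a $\min\{p,r\}$-weight.

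No step is a genuine obstacle. The only point requiring care is to apply Corollary~\ref{cor:Ap1} with the larger exponent as the $\A$-index, rather than with $p$ itself.
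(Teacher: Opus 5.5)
Your proposal is correct and is essentially the paper's argument. H\"older's inequality gives $\max\{\rho_{\aveL^p(Q,V)}(\fx),\rho_{\aveL^r(Q,V)}(\fx)\}\le\rho_{\aveL^{\max\{p,r\}}(Q,V)}(\fx)$, Corollary~\ref{cor:Ap1} with $\A$-index $\max\{p,r\}$ bounds this by a constant times $\min\{\rho_{\aveL^p(Q,V)}(\fx),\rho_{\aveL^r(Q,V)}(\fx)\}$, and the sequence-space equivalence then follows from Theorem~\ref{b-arho-av}\eqref{b-arho-av2}, exactly as you say.
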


\begin{proof}
Since $V\in\A_{\max\{p,r\}}$, we deduce
\begin{align*}
\max\Big\{\rho_{\aveL^p(Q,V)}(\fx),\rho_{\aveL^r(Q,V)}(\fx)\Big\}
&\le \rho_{\aveL^{\max\{p,r\}}(Q,V)}(\fx)
\quad\text{by H\"older's inequality}\\
&\lesssim\min\Big\{\rho_{\aveL^p(Q,V)}(\fx),\rho_{\aveL^r(Q,V)}(\fx)\Big\}
\quad\text{by Corollary \ref{cor:Ap1}}.
\end{align*}
This shows Corollary \ref{cor:equib}.
\end{proof}

Now, we turn to investigate the relationship between
Besov spaces $\dot{B}^s_{p,q}(V)$ and
$\dot{B}^s_{p,q}(\rho_{\aveL^r(\D,V)})$.

\begin{theorem}\label{thm:equiB}
Let $p,r\in(0,\infty)$, $q\in(0,\infty]$, and $s\in\mathbb{R}$.
If $V\in\mathscr{A}_{\max\{p,r\}}$, then,
for any $f\in\mathscr{S}'_\infty(\mathbb{R}^n;\fX)$,
\begin{equation*}
\Norm{f}{\dot B^s_{p,q}(V)}\approx
\Norm{f}{\dot B^s_{p,q}(\rho_{\aveL^r(\mathscr D,V)})},
\end{equation*}
where the positive equivalence constants are independent of $f$.
\end{theorem}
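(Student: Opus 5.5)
Since the Besov quasi-norm is $\ell^q$ over $j$ of $L^p$ norms at each fixed level $j$, it suffices to prove, uniformly in $j\in\Z$ and $f$, the single-scale equivalence
\begin{equation*}
\Norm{V(\varphi_j*f)}{L^p(\fX)}\approx\Norm{\rho_{\mathscr D_j}(\varphi_j*f)}{L^p},\qquad \rho=\rho_{\aveL^r(\mathscr D,V)}.
\end{equation*}
Corollary \ref{cor:equib} gives $\rho_{\aveL^r(Q,V)}\approx\rho_{\aveL^p(Q,V)}$ uniformly, so we may take $r=p$. The remaining difficulty is that $g:=\varphi_j*f$ is not constant on the cubes $Q\in\mathscr D_j$. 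We handle this by replacing $g$ on each $Q$ with a constant vector, using the sampling inequality of Lemma \ref{lem:extraPhi} and the cube-comparison bound of Lemma \ref{lem:QvsR}.

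For the bound ``$\lesssim$'', write
\begin{equation*}
\int\Norm{V g}{}^p=\sum_{Q\in\mathscr D_j}\int_Q\Norm{V(x)g(x)}{}^p\,dx\le\sum_Q\int_Q\sup_{z\in Q}\Norm{V(x)g(z)}{}^p\,dx.
\end{equation*}
For each $x\in Q$ we apply Lemma \ref{lem:extraPhi} with the $u$-seminorm $\fx\mapsto\Norm{V(x)\fx}{}$, taking $u=\min\{1,p\}$. This gives
\begin{equation*}
\sup_{z\in Q}\Norm{V(x)g(z)}{}^u\lesssim\int\phi_j(x_Q-w)\Norm{V(x)g(w)}{}^u\,dw.
\end{equation*}
We then raise this to the power $p/u\ge1$ and integrate over $x\in Q$, using Minkowski's integral inequality for the $L^{p/u}$ norm in $x$. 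The integrand produces $\rho_{\aveL^p(Q,V)}(g(w))^u$. Splitting $w\in R\in\mathscr D_j$, Lemma \ref{lem:QvsR} gives $\rho_{\aveL^p(Q,V)}(g(w))\lesssim(1+2^j|x_Q-x_R|)^c\rho_{\aveL^p(R,V)}(g(w))$, and the factor $(1+2^j|x_Q-x_R|)^{cu}$ is absorbed by choosing the decay $M$ of $\phi$ large. What remains is
\begin{equation*}
\sum_Q|Q|\Big(\sum_R 2^{jn}(1+2^j|x_Q-x_R|)^{-M'}\int_R\rho_{\mathscr D_j}(g)(w)^u\,dw\Big)^{p/u},
\end{equation*}
which is a discrete convolution with an $\ell^1$ kernel on the sequence $\{\fint_R\rho_{\mathscr D_j}(g)^u\}_R$. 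Young's inequality on $\ell^{p/u}$ and Jensen's inequality then bound this by $\int\rho_{\mathscr D_j}(g)^p$.

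For the converse bound we run the same argument with the roles reversed. On each $Q$,
\begin{equation*}
\int_Q\rho_{\aveL^p(Q,V)}(g(y))^p\,dy\le|Q|\sup_{y\in Q}\rho_{\aveL^p(Q,V)}(g(y))^p.
\end{equation*}
Lemma \ref{lem:extraPhi}, now applied with the $u$-seminorm $\rho_{\aveL^p(Q,V)}$, bounds this supremum by a $\phi_j$-average of $\rho_{\aveL^p(Q,V)}(g(w))^u$. We expand $\rho_{\aveL^p(Q,V)}(\cdot)^p=\fint_Q\Norm{V(x)\,\cdot\,}{}^p\,dx$ and again compare cubes with Lemma \ref{lem:QvsR}. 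This reduces the estimate to the same convolution bound, but now in terms of $\int\Norm{Vg}{}^p$, where the inner averages $\fint_R\Norm{V(x)g(w)}{}^p\,dx$ must still be converted into pointwise values $\Norm{V(w)g(w)}{}^p$.

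That conversion is the main obstacle. Integrating over $w\in R$ produces the double average $\fint_R\fint_R\Norm{V(x)g(w)}{}^p\,dx\,dw$, which is not a priori controlled by $\fint_R\Norm{V(w)g(w)}{}^p$; Example \ref{ex:p22} warns that naive estimates of this kind fail for operator weights. The plan is to avoid it by keeping the order ``$\sup$ in $y$ first''. We write
\begin{equation*}
\rho_{\aveL^p(Q,V)}(g(y))^p=\fint_Q\Norm{V(x)g(y)}{}^p\,dx,
\end{equation*}
and apply Lemma \ref{lem:extraPhi} with the seminorm $\Norm{V(x)\,\cdot\,}{}$, sampling $g$ at points $z$ near $x$ rather than near $y$. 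This is possible because the reproducing formula \eqref{eq:316} holds for arbitrary $y$, so the second claim of Lemma \ref{lem:extraPhi} gives
\begin{equation*}
\sup_{y\in Q}\Norm{V(x)g(y)}{}^u\lesssim(\phi_j*\Norm{V(x)g}{}^u)(x).
\end{equation*}
This still evaluates $V$ at $x$ while $g$ is evaluated at other points. If this route does not close, the fallback is to use $V\in\A_{p}$ through Corollary \ref{cor:Ap} (for $p>1$, via duality with $\rho^*_{\aveL^{p'}(Q,V^{-*})}$) to convert the average in $x$ into an average of $\Norm{V(w)g(w)}{}$, combined with the reverse H\"older inequality of Proposition \ref{prop:RHI} to gain integrability slack.
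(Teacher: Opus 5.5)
Your proof of $\Norm{V(\varphi_j*f)}{L^p(\R^n;\fX)}\lesssim\Norm{\rho_{\mathscr D_j}(\varphi_j*f)}{L^p}$ is correct, and it is essentially the paper's Proposition \ref{prop:BV<Brho}. One caveat: you may replace $\rho_{\aveL^r}$ by $\rho_{\aveL^p}$ in the norm, but you may not replace the hypothesis by $V\in\A_p$. The classes are not nested in this normalization: for $1<p<r$, the scalar weight $\abs{x}^\beta$ on $\R$ with $\frac1{p'}<\beta<\frac1{r'}$ lies in $\A_r\setminus\A_p$. So every use of the $\A$ condition should go through $\rho_{\aveL^w}$ with $w:=\max\{p,r\}$.

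The genuine gap is the converse bound $\Norm{\rho_{\mathscr D_j}(g)}{L^p}\lesssim\Norm{Vg}{L^p(\R^n;\fX)}$, with $g:=\varphi_j*f$, which the proposal does not prove. Both of your routes sample with a vector-valued seminorm, either $\rho_{\aveL^p(Q,V)}$ or $\Norm{V(x)\,\cdot\,}{}$. In each, the weight is frozen independently of the sampling point. After the cube comparison you are back at $\fint_R\fint_R\Norm{V(x)g(z)}{}^p\,dx\,dz=\fint_R\rho_{\aveL^p(R,V)}(g(z))^p\,dz$, which is exactly the quantity you set out to bound, so the argument is circular. For functions that are not band-limited, such double averages are genuinely not controlled by $\fint_R\Norm{Vg}{}^p$ (Example \ref{ex:p22}). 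Your fallback names the $\A_p$ duality but gives no mechanism for evaluating $V$ and $g$ at the same point.

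The missing idea, used in Lemma \ref{lem:dualNorm} and Proposition \ref{prop:Arho<AV}, is to sample with a \emph{scalar} seminorm and insert the weight only afterwards.

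If $w\le 1$, no sampling is needed: Proposition \ref{prop:Ap<1}\eqref{it:Ap<1,f(y)} gives $\rho_{\aveL^w(Q,V)}(g(x))\lesssim\Norm{V(x)g(x)}{}$ for a.e.\ $x\in Q$.

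If $w>1$, Corollary \ref{cor:Ap} gives $\rho_{\aveL^w(Q,V)}(g(x))\lesssim\rho^*_{\aveL^{w'}(Q,V^{-*})}(g(x))$, and the dual norm is bounded as follows.
\begin{enumerate}[\rm(a)]
  \item Fix $\fx^*\in\fX^*$ and apply Lemma \ref{lem:extraPhi} to the seminorm $\abs{\pair{\cdot}{\fx^*}}$ with a small exponent $t$.
  \item Only then insert the weight: $\abs{\pair{g(z)}{\fx^*}}\le\Norm{V(z)g(z)}{}\Norm{V(z)^{-*}\fx^*}{}$. Now $V$ and $g$ are evaluated at the same point $z$.
  \item Apply H\"older on each $R\in\mathscr D_j$ with $\frac1t=\frac1u+\frac1{w'}$. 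This produces $\Norm{Vg}{\aveL^u(R;\fX)}\rho_{\aveL^{w'}(R,V^{-*})}(\fx^*)$.
  \item Corollary \ref{cor:QvsR} replaces $R$ by $Q$ at a polynomial cost, which is absorbed by the decay of $\phi_j$.
  \item Divide by $\rho_{\aveL^{w'}(Q,V^{-*})}(\fx^*)$ and take the supremum over $\fx^*$. This yields $\rho^*_{\aveL^{w'}(Q,V^{-*})}(g(x))\lesssim(\phi_j*\Norm{Vg}{}^u)^{1/u}(x)$ for $x\in Q$.
  \item Choose $t$ so small that $u<p$; the boundedness of $h\mapsto\phi_j*h$ on $L^{p/u}$ then finishes the estimate.
\end{enumerate}
No reverse H\"older inequality is needed; the slack comes from the small sampling exponent. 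The decisive point is that the supremum over $\fx^*$ is taken only after the weight has been inserted at the sampling point, which no vector-valued sampling seminorm can achieve.
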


Indeed, for each side of the above equivalence,
we can prove some more general results,
which are also of independent interest.
On the one hand,
the following result about dominating the pointwise weighted norm by the averaging weighted norm is inspired by \cite[Eq.~(3.14)]{AC:12}, where a similar estimate for operator-weighted Bergman spaces of analytic functions in the unit disc was obtained. Note that the components $\varphi_j*f$ of the Littlewood--Paley decomposition, having compactly supported Fourier transforms, are also analytic. In the proof of \cite[Eq.~(3.14)]{AC:12}, the analyticity of $f$ was used via the resulting subharmonicity of $z\mapsto\Norm{W^{\frac12}(\zeta)f(z)}{}^2$, meaning that pointwise values of this function (in particular at $z=\zeta$) are dominated by its averages over discs. Our substitute for this is Lemma \ref{lem:extraPhi}, where the pointwise values of $\rho(\varphi*f)^u$ are dominated by its convolution averages.

\begin{proposition}\label{prop:BV<Brho}
Let $p\in(0,\infty)$,
$V$ be a weight, and $\rho=\{\rho_{Q}\}_{Q\in\mathscr{D}}$
be a sequence of quasi-norms such that
\begin{align}\label{prop:BV<Brhoe1}
\rho_{\aveL^p(Q,V)}(\fx)\lesssim
\rho_Q(\fx)\quad\text{for all}\ Q\in\mathscr{D}\ \text{and}\ \fx\in\fX
\end{align}
and
$\rho$ satisfies that
\begin{equation}\label{eq:weakDb}
  \frac{\rho_{Q}(\fx)}{\rho_{R}(\fx)}
  \lesssim\Big(1+\frac{\abs{x_Q-x_R}}{\ell(Q)}\Big)^N
\end{equation}
for some $N\geq 0$, all $\fx\in\fX\setminus\{0\}$, and all cubes $Q,R\subset\R^n$ of equal size $\ell(Q)=\ell(R)$,
where the implicit positive constants are independent of $\fx$, $Q$, and $R$.
Then for all $j\in\Z$, $Q\in\mathscr D_j$, and $y\in Q$, we have
\begin{equation}\label{eq:BVQ<BrhoQ}
   \Big(\fint_Q\Norm{V(x)(\varphi_j*f)(x)}{}^p \,dx\Big)^{\frac1p}
   \lesssim\Big(\phi_j*\rho_{\mathscr D_j}(\varphi_j*f)^u\Big)^{\frac1u}(y),
\end{equation}
where $\phi(x):=(1+\abs{x})^{-M}$, $M\in(0,\infty)$ can be chosen as large as we like,
and $u\in(0,1]$ can be chosen as small as we like. As a consequence,
\begin{equation*}
  \Norm{f}{\dot B^s_{p,q}(V)}
  \lesssim\Norm{f}{\dot B^s_{p,q}(\rho)},
\end{equation*}
where the implicit positive constant is independent of $f$.
\end{proposition}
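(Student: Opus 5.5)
We use the Calder\'on reproducing formula \eqref{eq:316}, in the form of Lemma \ref{lem:extraPhi}, with the weighted norm $\fx\mapsto\Norm{V(x)\fx}{}$ for a \emph{fixed} $x$. Fix $j$, $Q\in\mathscr D_j$ and $x\in Q$; the map $\fx\mapsto\Norm{V(x)\fx}{}$ is a seminorm, hence a $u$-seminorm for every $u\in(0,1]$. Instead of Lemma \ref{lem:extraPhi} as stated, we start one step earlier. Apply \eqref{eq:316}, raise to the power $u$ using the $u$-triangle inequality, and average over the shift $y\in[0,2^{-j})^n$. This gives
\begin{equation*}
  \Norm{V(x)\varphi_j*f(x)}{}^u\lesssim\sum_{R\in\mathscr D_j}(1+2^j\abs{x-x_R})^{-M'}\fint_R\Norm{V(x)\varphi_j*f(z)}{}^u\,dz,
\end{equation*}
with $M'$ large.

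Next we take $\aveL^p(Q)$-norms in $x$. Choose $u\le\min(1,p)$ so that $\Norm{\cdot}{\aveL^{p/u}(Q)}$ is a norm. Since $1+2^j\abs{x-x_R}\approx 1+2^j\abs{x_Q-x_R}$ for $x\in Q$, Minkowski's integral inequality gives
\begin{equation*}
  \Big(\fint_Q\Norm{V(x)\varphi_j*f(x)}{}^p\,dx\Big)^{\frac up}\lesssim\sum_{R}(1+2^j\abs{x_Q-x_R})^{-M'}\fint_R\rho_{\aveL^p(Q,V)}(\varphi_j*f(z))^u\,dz.
\end{equation*}
Here the key point is that, for fixed $z$, the inner $x$-integral is exactly $\rho_{\aveL^p(Q,V)}(\varphi_j*f(z))^u$.

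Now we convert to $\rho_R$. By \eqref{prop:BV<Brhoe1}, followed by \eqref{eq:weakDb} with the equal-size cubes $Q,R$,
\begin{equation*}
  \rho_{\aveL^p(Q,V)}(\fx)\lesssim\rho_Q(\fx)\lesssim(1+2^j\abs{x_Q-x_R})^N\rho_R(\fx).
\end{equation*}
For $z\in R$ we have $\rho_R(\varphi_j*f(z))=\rho_{\mathscr D_j}(\varphi_j*f)(z)$. Taking $M'$ large enough to absorb the factor $(1+2^j\abs{x_Q-x_R})^{Nu}$, the sum becomes
\begin{equation*}
  \sum_R 2^{jn}\int_R(1+2^j\abs{x_Q-x_R})^{-M}\rho_{\mathscr D_j}(\varphi_j*f)(z)^u\,dz\approx\big(\phi_j*\rho_{\mathscr D_j}(\varphi_j*f)^u\big)(y)
\end{equation*}
for any $y\in Q$, since $1+2^j\abs{x_Q-x_R}\approx 1+2^j\abs{y-z}$. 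This proves \eqref{eq:BVQ<BrhoQ}.

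The delicate point is measurability and the Minkowski/Fubini interchange. Here $V(x)$ acts on the vector-valued function $z\mapsto\varphi_j*f(z)$, which is continuous (even smooth) and so has separable range. Lemma \ref{lem:prod} then supplies joint measurability. Finiteness is not needed for an inequality between nonnegative quantities.

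For the consequence, raise \eqref{eq:BVQ<BrhoQ} to the power $p$ and integrate over $Q$; the left side becomes $\int_Q\Norm{V\varphi_j*f}{}^p$, and summing over $Q\in\mathscr D_j$ gives
\begin{equation*}
  \Norm{V\varphi_j*f}{L^p}\lesssim\Norm{(\phi_j*g^u)^{1/u}}{L^p},\qquad g:=\rho_{\mathscr D_j}(\varphi_j*f).
\end{equation*}
Since $\phi_j$ is an $L^1$-normalized kernel, Young's inequality in $L^{p/u}$ (a Banach space because $p/u\ge1$) gives
\begin{equation*}
  \Norm{\phi_j*g^u}{L^{p/u}}\lesssim\Norm{g^u}{L^{p/u}}=\Norm{g}{L^p}^u.
\end{equation*}
Multiplying by $2^{js}$ and taking the $\ell^q$ norm over $j$ then yields $\Norm{f}{\dot B^s_{p,q}(V)}\lesssim\Norm{f}{\dot B^s_{p,q}(\rho)}$.
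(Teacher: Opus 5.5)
Your proposal is correct and follows essentially the same route as the paper. You apply the sampling/reproducing estimate behind Lemma~\ref{lem:extraPhi} to the seminorm $\fx\mapsto\Norm{V(x)\fx}{}$ with $x$ fixed, take $\aveL^p(Q)$ norms via Minkowski's inequality with $u\le\min\{1,p\}$ to obtain $\rho_{\aveL^p(Q,V)}(\varphi_j*f(z))^u$, pass to $\rho_R$ via \eqref{prop:BV<Brhoe1} and \eqref{eq:weakDb} with the polynomial factor absorbed into the kernel, and conclude the Besov bound by Young's inequality in $L^{p/u}$. The differences are cosmetic: you unpack the proof of Lemma~\ref{lem:extraPhi} instead of quoting it, and you add a measurability remark that the paper leaves implicit.
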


\begin{proof}
With $\rho(\fx)=\Norm{V(x)\fx}{}$, Lemma \ref{lem:extraPhi} gives, for all 
$x\in Q\in\mathscr D_j$,
\begin{equation*}
\begin{split}
\Norm{V(x)(\varphi_j*f)(x)}{}
&\lesssim\big(\phi_j'*\Norm{V(x)(\varphi_j*f)(\cdot)}{\fX}^u)^{\frac1u}(x_Q) \\
  &=\Big(\int_{\R^n}\phi_j'(x_Q-z)\Norm{V(x)(\varphi_j*f)(z)}{\fX}^u \,dz\Big)^{\frac1u},
\end{split}
\end{equation*}
where $u\in(0,1]$, $\phi'(x):=(1+\abs{x})^{-M'}$, and $M'>0$ may be chosen as large as we like.

Choosing $u\in(0,\min(p,1)]$, taking $\aveL^p(Q)$ norms of both sides, and using Minkowski's inequality, we obtain
\begin{equation*}
\begin{split}
  \Big(\fint_Q \Norm{V(x)(\varphi_j*f)(x)}{\fX}^p \,dx\Big)^{\frac1p} &\lesssim\Big(\int_{\R^n}\phi_j'(x_Q-z)\Big[\fint_Q\Norm{V(x)(\varphi_j*f)(z)}{\fX}^p \,dx\Big]^{\frac{u}{p}} \,dz\Big)^{\frac1u} \\
  &=\Big(\int_{\R^n}\phi_j'(x_Q-z)\rho_{\aveL^p(Q,V)}(\varphi_j*f(z))^u \,dz\Big)^{\frac1u} \\
  &\lesssim \Big(\int_{\R^n}\phi_j'(x-z)\rho_Q(\varphi_j*f(z))^u \, dz\Big)^{\frac1u}
  \quad\text{by \eqref{prop:BV<Brhoe1}}.
\end{split}
\end{equation*}
Since $\rho$ satisfies \eqref{eq:weakDb}, we deduce that
\begin{equation}\label{eq:applyWeakDb}
\begin{split}
  \int_{\R^n} &\phi_j'(x-z)\rho_Q(\varphi_j*f(z))^u \,dz \\
  &\lesssim\sum_{R\in\mathscr D_j}\int_R\phi_j'(x-z)
  (1+2^j\abs{x_Q-x_R})^{Nu}\rho_R(\varphi_j*f(z))^u \,dy \\
  &\approx\int_{\R^n}\phi_j(y-z)\rho_{\mathscr D_j}(\varphi*f)(z)^u \,dz,
\end{split}
\end{equation}
where $\phi(z):=\phi'(z)(1+\abs{z})^{Nu}=(1+\abs{z})^{-M}$ and $M=M'-Nu$ can still be chosen as large as we like by taking $M'$ large enough. Thus we have proved \eqref{eq:BVQ<BrhoQ}.

To conclude the proof, we observe that
\begin{equation*}
\begin{split}
   \Norm{V(\varphi_j*f)}{L^p}^p
   &=\sum_{Q\in\mathscr D_j}\abs{Q}\fint_Q\Norm{V(x)(\varphi_j*f)(x)}{}^p \,dx
   \lesssim\sum_{Q\in\mathscr D_j}\abs{Q}\fint_Q \Big(\phi_j*
   \rho_{\mathscr D_j}(\varphi_j*f)^u\Big)^{\frac pu}(y)\,dy \\
   &=\Norm{\phi_j*\rho_{\mathscr D_j}(\varphi_j*f)^u}{L^{\frac pu}}^{\frac pu}
   \leq\Big(\Norm{\phi_j}{L^1} \Norm{\rho_{\mathscr D_j}(\varphi_j*f)^u }
   {L^{\frac pu}}\Big)^{\frac pu}\lesssim \Norm{\rho_{\mathscr D_j}(\varphi_j*f) }{L^{p}}^p.
\end{split}
\end{equation*}
Taking the $p$-th roots, multiplying by $2^{js}$ and taking the $\ell^q$ norms over $j\in\Z$, we obtain the claim of the proposition.
\end{proof}

On the other hand, the following converse to Proposition \ref{prop:BV<Brho} is inspired by the second part of \cite[Theorem 3.1]{AC:12} and the key ingredient in \cite[Lemma 3.2]{AC:12}. The idea borrowed from there is the following: In order to estimate $\rho_{\aveL^p(Q,V)}(\fx)$, in the presence of the $\A_p$ condition, it suffices to estimate the dual norm $\rho_{\aveL^{p'}(Q,V^{-*})}^*(\fx)$. Estimating the latter, by definition, is about estimating the pairings $\pair{\fx}{\fx^*}=\pair{V(\cdot)\fx}{V(\cdot)^{-*}\fx^*}$, where we can introduce the pointwise weight as just indicated. The remaining technical details of the argument differ from \cite{AC:12}, but this basic idea is the same. The estimation of the dual norms is handled in the following lemma, after which the proof of Theorem \ref{thm:equiB} is relatively easy.

\begin{lemma}\label{lem:dualNorm}
Let $p\in(0,\infty)$ and $V$ be a $p$-weight.
If $V^{-*}$ is a $p'$-weight and $V$ satisfies, for some $N\geq 0$,
\begin{equation}\label{eq:weakDbDual}
  \rho_{L^{p'}(R,V^{-*})}(\fx^*)
  \lesssim\Big(1+\frac{\abs{x_Q-x_R}}{\ell(Q)}\Big)^N\rho_{L^{p'}(Q,V^{-*})}(\fx^*)
\end{equation}
for all $\fx^*\in\fX^*$ and all cubes $Q,R$ of equal size
with the implicit positive constant independent of $\fx^*$, $Q$, and $R$,
then, for all $f\in\mathscr S_\infty'(\R^n;\fX)$, $j\in\Z$, and
$x,y\in Q\in\mathscr D_j$,
\begin{equation*}
  \rho_{\aveL^{p'}(Q,V^{-*})}^*(\varphi_j*f(x))
  \lesssim(\phi_j*\Norm{V(\varphi_j*f)}{}^u)^{\frac1u}(y)
\end{equation*}
with the implicit positive constant independent
of $j$, $Q$, and $f$,
where $\phi_j(z)=2^{jn}\phi(2^j z)$ with $\phi(z)=(1+\abs{z})^{-M}$
and $M>0$ can be chosen as large as we like, while $u>0$ can be chosen as small as we like.
\end{lemma}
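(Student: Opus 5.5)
We estimate the dual norm through its definition. Fix $j$, $Q\in\mathscr D_j$, $x,y\in Q$, and $\fx^*\in\fX^*\setminus\{0\}$. Since
\begin{equation*}
\rho_{\aveL^{p'}(Q,V^{-*})}^*(\varphi_j*f(x))=\sup_{\fx^*}\frac{\abs{\pair{\varphi_j*f(x)}{\fx^*}}}{\rho_{\aveL^{p'}(Q,V^{-*})}(\fx^*)},
\end{equation*}
it suffices to prove, uniformly in $\fx^*$,
\begin{equation*}
\abs{\pair{\varphi_j*f(x)}{\fx^*}}\lesssim\rho_{\aveL^{p'}(Q,V^{-*})}(\fx^*)\,(\phi_j*\Norm{V(\varphi_j*f)}{}^u)^{\frac1u}(y).
\end{equation*}
The scalar function $g:=\pair{\varphi_j*f}{\fx^*}$ is itself a Littlewood--Paley piece of the scalar distribution $\pair{f}{\fx^*}\in\mathscr S_\infty'(\R^n)$. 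We may therefore apply Lemma \ref{lem:extraPhi} with $\rho=\abs{\cdot}$ and a small exponent $v\in(0,1]$ to obtain
\begin{equation*}
\abs{g(x)}^v\lesssim\sum_{R\in\mathscr D_j}\int_R\phi'_j(x-z)\abs{g(z)}^v\,dz .
\end{equation*}

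On each $R\in\mathscr D_j$ we insert the pointwise weight, writing $g(z)=\pair{V(z)\varphi_j*f(z)}{V(z)^{-*}\fx^*}$. This gives
\begin{equation*}
\abs{g(z)}\le\Norm{V(z)\varphi_j*f(z)}{}\,\Norm{V(z)^{-*}\fx^*}{}.
\end{equation*}
Since $\phi'_j(x-z)\approx\phi'_j(x-x_R)$ for $z\in R$, Hölder's inequality on $R$ with exponents $r$ and $r'$ chosen so that $vr'=p'$ yields
\begin{equation*}
\int_R\phi'_j(x-z)\abs{g(z)}^v\,dz\lesssim\phi'_j(x-x_R)\,\abs{R}\Big(\fint_R\Norm{V\varphi_j*f}{}^{vr}\Big)^{\frac1r}\rho_{\aveL^{p'}(R,V^{-*})}(\fx^*)^v .
\end{equation*}
This requires $v<p'$, which holds automatically if $p\le1$ (where $p'=\infty$). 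In that case we simply take the $L^\infty$ norm, which is the case $r=1$.

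By \eqref{eq:weakDbDual} we have $\rho_{\aveL^{p'}(R,V^{-*})}(\fx^*)\lesssim(1+2^j\abs{x_Q-x_R})^N\rho_{\aveL^{p'}(Q,V^{-*})}(\fx^*)$. The polynomial factor is absorbed into $\phi'_j$ by taking $M'$ large, exactly as in \eqref{eq:applyWeakDb}. Writing $u:=vr$, the remaining object is
\begin{equation*}
\sum_R\phi_j(x-x_R)\abs{R}\Big(\fint_R\Norm{V\varphi_j*f}{}^u\Big)^{\frac vu}.
\end{equation*}

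\emph{Main obstacle.} The exponent $v/u=1/r<1$ lies outside the averages, so we cannot directly combine the sum into $\phi_j*\Norm{V\varphi_j*f}{}^u$. Our first attempt is to make $r$ close to $1$, i.e. $v$ just below $p'$; but then $u=vr$ cannot be chosen small. We instead handle this with the $\ell^{1/r}$ versus $\ell^1$ structure. Write $a_R:=\phi_j(x-x_R)^{\theta}\fint_R\Norm{V\varphi_j*f}{}^u$ for a suitable splitting $\phi_j=\phi_j^{\theta}\phi_j^{1-\theta}$, and apply Hölder's inequality in the sum over $R$ with exponents $r$ and $r'$. The leftover factor $\sum_R\phi_j(x-x_R)^{\ldots}\abs{R}$ is $O(1)$ because $M$ is large, and the main term becomes $\big(\sum_R\phi_j(x-x_R)\abs{R}\fint_R\Norm{V\varphi_j*f}{}^u\big)^{1/r}\approx(\phi_j*\Norm{V\varphi_j*f}{}^u)(y)^{v/u}$. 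Here we use $\phi_j(x-\cdot)\approx\phi_j(y-\cdot)$ for $x,y\in Q$.

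Taking $v$-th roots then gives the claim with exponent $u$. Any smaller $u$ is also allowed, since it suffices to apply the previous step with a smaller $v$, and Jensen's inequality gives monotonicity in $u$ of $(\phi_j*h^u)^{1/u}$ up to constants, as $\Norm{\phi}{L^1}\approx1$.
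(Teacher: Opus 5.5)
Your proof is correct and follows the paper's argument essentially step for step: Lemma~\ref{lem:extraPhi} applied to the scalar pieces $\pair{\varphi_j*f}{\fx^*}$ with a small exponent $v$, insertion of the pointwise weight, H\"older on each $R\in\mathscr D_j$ to produce $\rho_{\aveL^{p'}(R,V^{-*})}(\fx^*)$ together with an $\aveL^u(R)$ average where $\frac1u=\frac1v-\frac1{p'}$, absorption of the factor from \eqref{eq:weakDbDual} into the decay of $\phi_j$, and finally H\"older in the sum over $R$ against the weights $\phi_j(y-x_R)\abs{R}$, whose total mass is $\approx 1$. Your ``main obstacle'' is really just this last H\"older (equivalently, Jensen) step, which is exactly what the paper does. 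The discarded aside about taking $r$ close to $1$ has $r$ and $r'$ mixed up: $r\to1$ corresponds to $v\ll p'$ and gives $u\approx v$ small, whereas $v$ just below $p'$ forces $r\to\infty$; this plays no role in the final argument.
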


\begin{proof}
We apply Lemma \ref{lem:extraPhi} with the seminorm $\rho(\fx)=\abs{\pair{\fx}{\fx^*}}$ for a fixed $\fx^*\in\fX^*$. For $x,y\in Q\in\mathscr D_j$, this gives
\begin{equation*}
\begin{split}
  \abs{\pair{\varphi_j*f(x)}{\fx^*}}
  &\lesssim\big(\phi_j'*\abs{\pair{\varphi_j*f(\cdot)}{\fx^*}}^r)^{\frac1r}(y) =\Big(\int_{\R^n}\phi_j'(y-z)\abs{\pair{\varphi_j*f(z)}{\fx^*}}^r \,dz\Big)^{\frac1r},
\end{split}
\end{equation*}
where $r\in(0,1]$, $\phi'(x):=(1+\abs{x})^{-M'}$, and $M'>0$ may be chosen as large as we like.

Next, we note that
\begin{equation*}
\begin{split}
   \abs{\pair{\varphi_j*f(z)}{\fx^*}}
   &=\abs{\pair{V(z)(\varphi_j*f)(z)}{V(z)^{-*}\fx^*}}
   \leq\Norm{V(z)(\varphi_j*f)(z)}{\fX}\Norm{V(z)^{-*}\fx^*}{\fX^*}.
\end{split}
\end{equation*}
Hence
\begin{equation*}
  \abs{\pair{\varphi_j*f(x)}{\fx^*}}
  \lesssim\Big(\sum_{R\in\mathscr D_j}\phi_j'(y-x_R)\abs{R}\fint_{R}\Norm{V(z)(\varphi_j*f)(z)}{\fX}^r
  \Norm{V(z)^{-*}\fx^*}{\fX^*}^r \,dz\Big)^{\frac1r}.
\end{equation*}

Let $\frac{1}{u}:=\frac{1}{r}-\frac{1}{p'}$ and $\frac{1}{v}:=\frac{1}{r}-\frac{1}{p}$, recalling that $p':=\infty$ for $p\in(0,1]$.
Since $r>0$ can be chosen as small as we like, hence $\frac1r$ as large as we like, we see that both $\frac1u$ and $\frac1v$ can be chosen as large as we like, and hence both $u,v>0$ as small as we like.
Then
\begin{equation*}
\begin{split}
  &\fint_{R} \Norm{V(z)(\varphi_j*f)(z)}{\fX}^r
  \Norm{V(z)^{-*}\fx^*}{\fX^*}^r \,dz \\ &\quad\leq\Norm{V(\varphi_j*f)}{\aveL^u(R;\fX)}^r\big(\rho_{\aveL^{p'}(R,V^{-*})}(\fx^*)\big)^r \\
  &\quad\lesssim \Norm{V(\varphi_j*f)}{\aveL^u(R;\fX)}^r\big(1+2^j\abs{x_Q-x_R})^{Nr}\big(\rho_{\aveL^{p'}(Q,V^{-*})}(\fx^*)\big)^r.
\end{split}
\end{equation*}

Choosing $M':=M+Nr$, we obtain
\begin{equation*}
  \phi_j'(y-x_R)\big(1+2^j\abs{x_Q-x_R}\big)^{Nr}\approx\phi_j(y-x_R)\approx\phi_j(y-x),
\end{equation*}
where $\phi(z):=(1+\abs{z})^{-M}$.
Substituting back, we have
\begin{equation*}
\begin{split}
  \frac{\abs{\pair{\varphi_j*f(x)}{\fx^*}}}{\rho_{L^{p'}(Q,V^{-*})}(\fx^*)}
  &\lesssim \Big(\sum_{R\in\mathscr D_j}\phi_j(y-x_R)\abs{R}\cdot\Norm{V(\varphi_j*f)}{\aveL^u(R;\fX)}^r\Big)^{\frac1r} \\
  &\leq \Big(\sum_{R\in\mathscr D_j}\phi_j(y-x_R)\abs{R}\cdot\Norm{V(\varphi_j*f)}{\aveL^u(R;\fX)}^u\Big)^{\frac1u} \Big(\sum_{R\in\mathscr D_j}\phi_j(y-x_R)\abs{R}\Big)^{\frac1r-\frac 1u} \\
  &=: I\times II, \\
\end{split}
\end{equation*}
where $II\approx\Norm{\phi_j}{L^1}^{\frac1r-\frac1u}\approx 1$, and
\begin{equation*}
  I^u =\sum_{R\in\mathscr D_j}\phi_j(y-x_R)\int_R \Norm{V(z)(\varphi_j*f)(z)}{}^u \,dz \approx\int_{\R^n}\phi_j(y-z)\Norm{V(z)(\varphi_j*f)(z)}{}^u \,dz.
\end{equation*}
Hence
\begin{equation*}
   \frac{\abs{\pair{\varphi_j*f(x)}{\fx^*}}}{\rho_{L^{p'}(Q,V^{-*})}(\fx^*)}
   \lesssim(\phi_j*\Norm{V(\varphi_j*f)}{}^u)^{\frac1u}(y).
\end{equation*}

Taking the supremum over $\fx^*\in\fX^*\setminus\{0\}$ and using the definition of the dual norm, we obtain
\begin{equation*}
  \rho_{\aveL^{p'}(Q,V^{-*})}^*(\varphi_j*f(x)) \\
  \lesssim(\phi_j*\Norm{V(\varphi_j*f)}{}^u)^{\frac1u}(y),
\end{equation*}
which completes the proof of Lemma \ref{lem:dualNorm}.
\end{proof}

We now show the converse of Proposition \ref{prop:BV<Brho}.
Indeed, we can prove this for both Besov and Triebel--Lizorkin spaces in a unified way.

\begin{proposition}\label{prop:Arho<AV}
Let $p,r\in(0,\infty)$, $q\in(0,\infty]$, and $s\in\mathbb{R}$.
If $V\in\mathscr{A}_{\max\{p,r\}}$, then,
for any $f\in\mathscr{S}'_\infty(\mathbb{R}^n;\fX)$,
\begin{equation*}
  \Norm{f}{\dot A^s_{p,q}(\rho_{\aveL^r(\mathscr D,V)})}
  \lesssim\Norm{f}{\dot A^s_{p,q}(V)},
\end{equation*}
where the implicit positive constant is independent of $f$.
\end{proposition}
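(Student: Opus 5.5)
Set $\tilde p:=\max\{p,r\}$. The plan is to compare $\rho_{\aveL^r(Q,V)}$ with a dual norm that Lemma \ref{lem:dualNorm} can control. Then we integrate with the Fefferman--Stein maximal inequality.

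\emph{Step 1: reduce to a dual norm.} By H\"older's inequality and Corollary \ref{cor:Ap},
\begin{equation*}
  \rho_{\aveL^r(Q,V)}(\fx)\le\rho_{\aveL^{\tilde p}(Q,V)}(\fx)\lesssim
  \begin{cases}\rho^*_{\aveL^{\tilde p'}(Q,V^{-*})}(\fx), & \tilde p>1,\\
  \essinf_{y\in Q}\Norm{V(y)\fx}{}, & \tilde p\le1.\end{cases}
\end{equation*}
In the case $\tilde p\le 1$ the last term equals $\rho^*_{\aveL^{\infty}(Q,V^{-*})}(\fx)$, or is at least dominated by it; this is the natural reading of Lemma \ref{lem:dualNorm} with $\tilde p'=\infty$. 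Alternatively, in that case one can apply Lemma \ref{lem:extraPhi} directly to the seminorm $\Norm{V(y)\cdot}{}$ with $y\in Q$.

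\emph{Step 2: check the hypothesis of Lemma \ref{lem:dualNorm}.} For $\tilde p>1$, Corollary \ref{cor:QvsR} with $\tilde p$ in place of $p$ gives \eqref{eq:weakDbDual}, with $N=n$, for cubes of equal size. For $\tilde p\le1$ the dual norm $\esssup$-type quantity needs the same growth bound. This follows from the doubling estimates in Lemma \ref{lem:elemDb}, but I expect this endpoint bookkeeping to be the main obstacle. The fallback is to bound the $\aveL^{\infty}(Q,V^{-*})$ dual norm at $\fx$ by $\inf_{y\in Q}\Norm{V(y)\fx}{}$ and rerun the proof of Lemma \ref{lem:dualNorm} pointwise in $y$.

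\emph{Step 3: apply Lemma \ref{lem:dualNorm}.} The lemma, applied with exponent $\tilde p$, gives for $x\in Q\in\mathscr D_j$
\begin{equation*}
  \rho_{\aveL^r(Q,V)}(\varphi_j*f(x))\lesssim \big(\phi_j*\Norm{V(\varphi_j*f)}{}^u\big)^{1/u}(x).
\end{equation*}
Here $u$ may be taken as small as we like and $M$ as large as we like.

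\emph{Step 4: integrate.} Choose $M>n$ so that $\phi_j*g\lesssim \mathcal M g$, where $\mathcal M$ is the Hardy--Littlewood maximal operator. This gives the pointwise bound
\begin{equation*}
  2^{js}\rho_{\mathscr D_j}(\varphi_j*f)\lesssim \big[\mathcal M\big((2^{js}\Norm{V(\varphi_j*f)}{})^u\big)\big]^{1/u}.
\end{equation*}
For $A=B$, take $L^p$ norms with $u<p$, use the $L^{p/u}$-boundedness of $\mathcal M$, and then take $\ell^q$ norms.

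For $A=F$, take $u<\min\{p,q\}$ and apply the Fefferman--Stein vector-valued maximal inequality on $L^{p/u}\ell^{q/u}$. When $q=\infty$ this inequality fails, so instead use the convolution form $\phi_j$ directly, in the standard Peetre-type way. Concretely, $\sup_j$ of the $\phi_j$-convolutions is bounded by $\mathcal M$ applied to $\sup_j$, since $\phi_j*g_j\le \phi_j*\sup_k g_k\lesssim \mathcal M(\sup_k g_k)$. This then needs only the scalar maximal theorem on $L^{p/u}$.

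This yields the claimed estimate.
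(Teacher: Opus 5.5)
Your case $\tilde p>1$ is the paper's argument: Corollary~\ref{cor:Ap}, then Corollary~\ref{cor:QvsR} to get \eqref{eq:weakDbDual} with $N=n$, then Lemma~\ref{lem:dualNorm} and the maximal inequalities. Your treatment of $q=\infty$ through $\sup_j\phi_j*g_j\lesssim\mathcal M(\sup_k g_k)$ makes explicit a point the paper leaves implicit.

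The gap is the case $\tilde p\le 1$. There you need $\essinf_{y\in Q}\Norm{V(y)\fx}{}\lesssim\rho^*_{\aveL^{\infty}(Q,V^{-*})}(\fx)$, but what holds in general is the reverse. From $\pair{\fx^*}{\fx}=\pair{V(y)^{-*}\fx^*}{V(y)\fx}$ for a.e.\ $y\in Q$ one gets $\rho^*_{\aveL^{\infty}(Q,V^{-*})}(\fx)\le\essinf_{y\in Q}\Norm{V(y)\fx}{}$. Since $\A_{\tilde p}$ gives $\rho_{\aveL^{\tilde p}(Q,V)}\lesssim\essinf_{y\in Q}\Norm{V(y)\,\cdot\,}{}$, your claimed converse would make the $\tilde p$-norm $\rho_{\aveL^{\tilde p}(Q,V)}$ uniformly equivalent to the genuine norm $\rho^*_{\aveL^{\infty}(Q,V^{-*})}$. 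Nothing of the sort is available when $\tilde p<1$, which is why Corollary~\ref{cor:Ap} uses an essential infimum rather than a dual norm in that range. So a bound on $\rho^*_{\aveL^{\infty}(Q,V^{-*})}(\varphi_j*f(x))$ from Lemma~\ref{lem:dualNorm} does not control $\rho_{\aveL^r(Q,V)}(\varphi_j*f(x))$. Your fallback, bounding that dual norm by $\inf_y\Norm{V(y)\fx}{}$, is correct but points in the useless direction.

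The case $\tilde p\le1$ is in fact the easy one and needs no maximal function. Since $\varphi_j*f$ is continuous, hence strongly measurable, Proposition~\ref{prop:Ap<1}\eqref{it:Ap<1,f(y)} applies; it relies on Lemma~\ref{lem:GH} to handle the null set in the essential infimum depending on the vector. It yields
\[
  \rho_{\aveL^r(Q,V)}(\varphi_j*f(x))\le\rho_{\aveL^{\tilde p}(Q,V)}(\varphi_j*f(x))\lesssim\Norm{V(x)(\varphi_j*f)(x)}{\fX}\quad\text{for a.e. }x\in Q\in\mathscr D_j .
\]
Multiplying by $2^{js}$ and taking $(L^p\ell^q)_A$ quasi-norms finishes; this is the paper's argument. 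Your other suggestion, applying Lemma~\ref{lem:extraPhi} to the seminorm $\Norm{V(y)\,\cdot\,}{}$, can be completed along the lines of Proposition~\ref{prop:BV<Brho} (average over $y\in Q$, then compare cubes). However, its last step, passing from $\rho_{\aveL^{\tilde p}(R,V)}(\varphi_j*f(z))$ to $\Norm{V(z)(\varphi_j*f)(z)}{}$, is this same pointwise bound, which already settles the case at the outset.
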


\begin{proof}
Let $w:=\max\{p,r\}$ and fix $j\in\mathbb{Z}$.
By the proof of Corollary \ref{cor:equib}, we
find that the assumption $V\in\mathscr{A}_{w}$ implies
\begin{align}\label{thm:equiB-e1}
\rho_{\aveL^p(Q,V)}(\fx)\approx
\rho_{\aveL^{w}(Q,V)}(\fx)
\approx\rho_{\aveL^r(Q,V)}(\fx)
\end{align}
for all $Q\in\mathscr{D}$ and $\fx\in\fX$.
From Corollary \ref{cor:Ap} for $w>1$ and Proposition \ref{prop:Ap<1}\eqref{it:Ap<1,f(y)} for $w\in(0,1]$, we then conclude that, for a.e.\ $x\in Q\in\mathscr{D}_j$,
\begin{equation*}
\rho_{\aveL^r(Q,V)}(\varphi_j\ast f(x))
\approx
  \rho_{\aveL^w(Q,V)}(\varphi_j*f(x))
  \lesssim
  \begin{cases}\rho_{\aveL^{w'}(Q,V^{-*})}^*(\varphi_j*f(x)), & w>1, \\ \displaystyle
    \Norm{V(x)(\varphi_j*f)(x)}{\fX}, & w\leq 1.
    \end{cases}
\end{equation*}
Hence, for the case $w\le 1$,
we have, for a.e.\ $x\in\mathbb{R}^n$,
\begin{align*}
\rho_{\aveL^r(\mathscr{D}_j,V)}(\varphi_j*f(x))
\lesssim\|V(x)\varphi_j\ast f(x)\|_{\fX}.
\end{align*}
The claim of the proposition in this case then follows by multiplying both sides by $2^{js}$ and taking the $(L^p\ell^q)_A$ norms.

On the other hand,
we would like to apply Lemma \ref{lem:dualNorm} to deal with the case $w>1$.
By Corollary \ref{cor:QvsR}, the required assumption \eqref{eq:weakDbDual}, with
$p=w$ and $N=n$, follows from the present assumption that $V\in\A_w$.
Thus the said Lemma applies to show that, for any $x\in Q\in\mathscr{D}_j$,
\begin{equation*}
  \rho_{\aveL^{w'}(Q,V^{-*})}^*(\varphi_j*f(x))
  \lesssim(\phi_j*\Norm{V(\varphi_j*f)}{}^u)^{\frac1u}(x).
\end{equation*}
This further implies that
\begin{equation*}
\begin{split}
   \Norm{f}{\dot A^s_{p,q}(\rho_{\aveL^r(\mathscr D,V))}}
   &\lesssim\BNorm{\Big\{2^{jsu}\phi_j*\Norm{V(\varphi_j*f)}{}^u\Big\}  }{(L^{\frac{p}{u}}\ell^{\frac{q}{u}})_A}^{\frac1u} \\
   &\lesssim\BNorm{\Big\{2^{jsu}\Norm{V(\varphi_j*f)}{}^u\Big\}  }{(L^{\frac{p}{u}}\ell^{\frac{q}{u}})_A}^{\frac1u}
   =\Norm{f}{\dot A^s_{p,q}(V)},
\end{split}
\end{equation*}
where the second inequality holds provided that both $\frac pu\in(1,\infty)$ and $\frac qu\in(1,\infty]$. Recalling that $u>0$ in Lemma \ref{lem:dualNorm} can be chosen arbitrarily small, we can clearly satisfy the said requirement. Fixing such a choice, the proof is complete.
\end{proof}

The proof of Theorem \ref{thm:equiB} is now a matter of collecting the pieces:

\begin{proof}[Proof of Theorem \ref{thm:equiB}]
The ``$\gtrsim$'' part is simply the Besov case of Proposition \ref{prop:Arho<AV}.

To prove the ``$\lesssim$" part of the present theorem,
we only need to check all the assumptions of
Proposition \ref{prop:BV<Brho} with $\{\rho_Q\}_{Q\in\mathscr{D}}$
therein replaced by $\{\rho_{\aveL^r(Q,V)}\}_{Q\in\mathscr{D}}$.

Noting that the assumptions of Proposition \ref{prop:Arho<AV} are the same as those of Theorem \ref{thm:equiB}, we can use \eqref{thm:equiB-e1}, which implies both \eqref{prop:BV<Brhoe1} and, in combination with Lemma \ref{lem:QvsR}, also \eqref{eq:weakDb}.

Thus the assumptions, and hence the conclusions, of Proposition \ref{prop:BV<Brho} are satisfied, which completes the proof of the ``$\lesssim$" part holds.
\end{proof}

\subsection{Triebel--Lizorkin spaces}

In this subsection,
we extend Theorem \ref{thm:equiB}
to Triebel--Lizorkin spaces.
We still cannot do this for whole range $q\in(0,\infty]$.
Indeed, for $V\in\mathscr{A}_w$ with $w>0$,
we need the following
\emph{optimal reverse H\"older lifting index} $\varepsilon_V$:
\begin{align*}
\begin{split}
\varepsilon_V:=\sup
\big\{\varepsilon\in(0,\infty)&:
\rho_{\aveL^{w+\varepsilon}(Q,V)}(\fx)\lesssim
\rho_{\aveL^w(Q,V)}(\fx)\\
&\qquad\text{holds for all cubes $Q\subset\mathbb{R}^n$ and
all $\fx\in\fX$}\big\}.
\end{split}
\end{align*}

Note that, by Proposition \ref{prop:RHI}, the assumption that $V\in\A_w$ always guarantees
$\varepsilon_V>0$. Then we have the following equivalences for
Triebel--Lizorkin spaces.

\begin{theorem}\label{thm:equiF}
Let $p,r\in(0,\infty)$,
$V\in\A_{\max\{p,r\}}$ with optimal reverse H\"older lifting index
$\varepsilon_V>0$, and $s\in\mathbb{R}$.
If $q\in(0,\max\{p,r\}+\eps_V)$, then, for any
$f\in\mathscr{S}_\infty'(\mathbb{R}^n;\fX)$,
\begin{equation}\label{thm:equiF-e1}
  \Norm{f}{\dot F^s_{p,q}(V)}
  \approx  \Norm{f}{\dot F^s_{p,q}(\rho_{\aveL^r(\mathscr D,V)})}
\end{equation}
and, for any sequence $t=\{t_Q\}_{Q\in\mathscr{D}}$
of vectors in $\fX$,
\begin{equation}\label{thm:equiF-e2}
  \Norm{t}{\dot f^s_{p,q}(V)}
  \approx  \Norm{t}{\dot f^s_{p,q}(\rho_{\aveL^r(\mathscr D,V)})},
\end{equation}
where the positive equivalence constants are independent of $f$ and $t$.
\end{theorem}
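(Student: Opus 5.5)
One direction, $\Norm{f}{\dot F^s_{p,q}(\rho_{\aveL^r(\mathscr D,V)})}\lesssim\Norm{f}{\dot F^s_{p,q}(V)}$, is already Proposition \ref{prop:Arho<AV} (the $F$ case), valid for all $q$. For sequences the analogous direction is simpler: with $w:=\max\{p,r\}$, the equivalence \eqref{thm:equiB-e1} together with Corollary \ref{cor:Ap} (case $w\le1$ via Proposition \ref{prop:Ap<1}) bounds $\rho_{\aveL^r(Q,V)}(t_Q)$ by $\rho^*_{\aveL^{w'}(Q,V^{-*})}(t_Q)$ or by $\Norm{V(x)t_Q}{}$. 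In the $w\le1$ case this gives the pointwise bound at once. For $w>1$ one uses instead the sampling-free substitute $\rho^*_{\aveL^{w'}(Q,V^{-*})}(t_Q)\le\rho_{\aveL^u(Q,V)}(t_Q)$ for any small $u$ (Corollary \ref{cor:easyAp}), i.e. $\lesssim (M(\Norm{Vt_j}{}^u))^{1/u}$ on $Q$, and then applies Fefferman--Stein with $p/u,q/u>1$. So the real content is the reverse inequality $\Norm{\cdot}{\dot F(V)}\lesssim\Norm{\cdot}{\dot F(\rho)}$, and this is where $q<w+\eps_V$ enters.

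For the reverse direction for functions, I would start from \eqref{eq:BVQ<BrhoQ}-type pointwise control, but localised differently. By Lemma \ref{lem:extraPhi} applied to $\fx\mapsto\Norm{V(x)\fx}{}$, for $x\in Q\in\mathscr D_j$,
\[
\Norm{V(x)\varphi_j*f(x)}{}\lesssim\Big(\sum_{R\in\mathscr D_j}\phi'_j(x_Q-x_R)\abs{R}\sup_{z\in R}\Norm{V(x)\varphi_j*f(z)}{}^u\Big)^{1/u}.
\]
Heuristically, $\sup_{z\in R}$ is replaced by an average as in Lemma \ref{lem:extraPhi}. The difficulty is that $V(x)$ now varies inside $Q$ and one needs to sum in $\ell^q$ over $j$ before integrating in $x$, so one cannot simply average over $Q$ as in the Besov case. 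The plan is to introduce, for each $j$ and $x\in Q\in\mathscr D_j$, the ratio
\[
\gamma_j(x):=\sup_{\fx\ne0}\frac{\Norm{V(x)\fx}{}}{\rho_{\aveL^w(Q,V)}(\fx)},
\]
which is a genuine operator-type supremum. Together with Lemma \ref{lem:QvsR} this yields
\[
\Norm{V(x)\varphi_j*f(x)}{}\lesssim\gamma_j(x)\,\big(\phi_j*\rho_{\mathscr D_j}(\varphi_j*f)^u\big)^{1/u}(x).
\]
Then Hölder in $\ell^q$ and in $L^p$ separates $\gamma_j$ from the Fefferman--Stein-controllable factor.

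The main obstacle is that $\gamma_j$ involves a supremum over $\fx$, which is exactly what fails in infinite dimensions: there is no reducing operator, and the $L^{w+\eps}$ integrability of $\gamma_j$ is not implied by Proposition \ref{prop:RHI}, which is only uniform for fixed $\fx$. I would therefore avoid $\gamma_j$ and keep vectors fixed. Expand $\varphi_j*f(x)$ via the reproducing formula \eqref{eq:316} as $\sum_R c_R(x)\,\fx_R$, where each $\fx_R=\varphi_j*f(x_R+y)$ is a \emph{fixed} vector and $c_R$ is a scalar with rapid decay. Then
\[
\Norm{V(x)\varphi_j*f(x)}{}^u\le\sum_R\abs{c_R(x)}^u\Norm{V(x)\fx_R}{}^u,
\]
and for each fixed vector the functions $h_{R}(x):=\Norm{V(x)\fx_R}{}/\rho_{\aveL^w(R,V)}(\fx_R)$ satisfy the uniform reverse Hölder bound $\fint_R h_R^{w+\eps}\lesssim1$ for every $\eps<\eps_V$ (after also averaging in $y$). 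Write $q<w+\eps$ and fix $\eps<\eps_V$. The estimate of $\Norm{\{2^{js}\Norm{V\varphi_j*f}{}\}}{L^p\ell^q}$ then reduces to a vector-valued inequality of the form
\[
\BNorm{\Big\{\sum_{R\in\mathscr D_j}a_R\,h_R\one_R\Big\}_j}{L^p\ell^q}\lesssim\BNorm{\Big\{\sum_{R}a_R\one_R\Big\}_j}{L^p\ell^q},\qquad \fint_R h_R^{w+\eps}\lesssim1 .
\]
This I would prove by a duality/sparse or stopping-time argument. Its threshold is precisely $q<w+\eps$, which I expect to be the hardest step, resembling the Frazier--Roudenko $q$-restrictions in \cite{FR:21}. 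The sequence-space statement \eqref{thm:equiF-e2} follows from the same inequality with $a_R=2^{js}\rho_{\aveL^w(R,V)}(t_R)\abs{R}^{-1/2}$ and $\fx_R=t_R$, with no sampling needed. Finally, $\rho_{\aveL^w}$ and $\rho_{\aveL^r}$ are interchanged by \eqref{thm:equiB-e1}.
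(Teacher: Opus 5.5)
Your easy direction is the paper's: Proposition \ref{prop:Arho<AV} for functions, and for sequences the chain $\rho_{\aveL^r(Q,V)}\approx\rho_{\aveL^w(Q,V)}\lesssim\rho_{\aveL^u(Q,V)}$ (Corollary \ref{cor:Ap1}) followed by $M_u$ and Fefferman--Stein, which is Proposition \ref{f-arho<av}. For the hard direction you have the right ingredients, namely sampling via \eqref{eq:316} and reverse H\"older only for \emph{fixed} vectors. However, there is a genuine gap: the inequality you defer carries the whole content of the restriction $q<w+\eps_V$, and you leave it unproved.

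Your reduction to it also has a slip. In $\Norm{V(x)\varphi_j*f(x)}{}^u\le\sum_R\abs{c_R(x)}^u\Norm{V(x)\fx_R}{}^u$, the point $x$ lies in some $Q\in\mathscr D_j$, while $R$ runs over all of $\mathscr D_j$, so $h_R\one_R$ is not what actually appears. You must normalise $x\mapsto\Norm{V(x)\fx_R}{}$ by $\rho_{\aveL^w(Q,V)}(\fx_R)$ and apply reverse H\"older on $Q$. Then trade $\rho_{\aveL^w(Q,V)}(\fx_R)$ for $\rho_{\aveL^w(R,V)}(\fx_R)$ via Lemma \ref{lem:QvsR}, at a cost $(1+2^j\abs{x_Q-x_R})^c$ that the decay of $c_R$ absorbs. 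Your inequality with $h_R\one_R$ is exactly right for sequences, where $\Norm{Vt_j}{}=\abs{R}^{-1/2}\Norm{Vt_R}{}$ on $R$.

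The deferred inequality needs no sparse or stopping-time argument; duality alone suffices, and this is how the paper proves Proposition \ref{prop:FV<Frho,q<p}. Take $\eps<\eps_V$ with $q<w+\eps$, take $u<\min\{p,q,1\}$, and set $\sigma:=(\frac{w+\eps}{u})'$. Write $\Norm{\{F_j\}}{L^p\ell^q}^u=\sup\sum_j\int F_j^u g_j$, the supremum over $\Norm{\{g_j\}}{L^{(p/u)'}\ell^{(q/u)'}}\le 1$. Suppose $F_j=A_QH_Q$ on each $Q\in\mathscr D_j$ with $\fint_Q H_Q^{w+\eps}\lesssim 1$. For functions, use Tonelli in the sampling variable $z$ and apply this for each fixed $z$. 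H\"older on $Q$ then gives
\[
\fint_Q H_Q^u g_j\le\Big(\fint_Q H_Q^{w+\eps}\Big)^{\frac{u}{w+\eps}}\Big(\fint_Q g_j^{\sigma}\Big)^{\frac1\sigma}\lesssim\inf_{Q}M_\sigma g_j .
\]
With $a_j:=\sum_{Q\in\mathscr D_j}A_Q\one_Q$, this yields $\sum_j\int F_j^u g_j\lesssim\Norm{\{a_j\}}{L^p\ell^q}^u\,\Norm{\{M_\sigma g_j\}}{L^{(p/u)'}\ell^{(q/u)'}}$. Fefferman--Stein applies if and only if $\sigma<\min\{(p/u)',(q/u)'\}$, i.e.\ $w+\eps>\max\{p,q\}$; since $p\le w$, this is exactly $q<w+\eps$. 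In the function case, one more Fefferman--Stein step removes the convolution with $\phi_j$ hidden in $a_j$.

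This also resolves your worry that you cannot average over $Q$ because the $\ell^q$ sum comes first. After dualising, everything is linear in $g_j$. The oscillation of $V$ on $Q$ then goes into an $\aveL^{w+\eps}(Q)$ norm, which the paper packages as \eqref{eq:BVQ<BrhoQ} with $w+\eps$ in place of $p$. The H\"older conjugate is absorbed by the dual functions through $M_\sigma$.
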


\begin{remark}
If, in Theorem \ref{thm:equiF},
let $p=r$ and $\fX=\mathbb{C}^m$, then
the equivalences \eqref{thm:equiF-e1}
and \eqref{thm:equiF-e2}
hold for all $q\in(0,\infty]$, which were proved in
\cite[Theorem 3.1]{FR:21}.
However, when $\fX$ is infinite-dimensional,
we only show these under the restriction
$q\in(0,r+\varepsilon_V)$
in Theorem \ref{thm:equiF}. When $q=\infty$, we prove that the equivalence
\eqref{thm:equiF-e2} may not be true for general infinite-dimensional
case in Proposition \ref{prop:f}.
It is still unclear whether Theorem \ref{thm:equiF} holds
for the remaining case $q\in[r+\varepsilon_V,\infty)$.
\end{remark}

We first consider the ``$\gtrsim$" part of Theorem \ref{thm:equiF}.
Indeed, this has already been done for \eqref{thm:equiF-e1} in Proposition \ref{prop:Arho<AV}.
Hence, we now deal with the ``$\gtrsim$" part of \eqref{thm:equiF-e2} as follows.

\begin{proposition}\label{f-arho<av}
Let $p,r\in(0,\infty)$, $q\in(0,\infty]$,
and $s\in\mathbb{R}$. If $V\in\mathscr{A}_{\max\{p,r\}}$, then
\begin{align}\label{f-arho<ave1}
\|t\|_{\dot{f}^{s}_{p,q}(\rho_{\aveL^r(\mathscr{D},V)})}
\lesssim\|t\|_{\dot{f}^s_{p,q}(V)}.
\end{align}
\end{proposition}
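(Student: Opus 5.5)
The plan is to reduce \eqref{f-arho<ave1} to a pointwise domination of $\rho_{\aveL^r(\mathscr D_j,V)}(t_j)$ by a small-exponent maximal function of $\Norm{Vt_j}{\fX}$. Then the Fefferman--Stein vector-valued maximal inequality finishes the argument.

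\emph{Step 1 (comparability of exponents).} Set $w:=\max\{p,r\}$ and fix an auxiliary exponent $a\in(0,\min\{p,q,1\})$, interpreting $\min\{p,q\}=p$ when $q=\infty$. Since $V\in\A_w$, Corollary \ref{cor:Ap1} gives
\begin{equation*}
\rho_{\aveL^w(Q,V)}(\fx)\lesssim\rho_{\aveL^a(Q,V)}(\fx)
\end{equation*}
for all $Q\in\mathscr D$ and $\fx\in\fX$, uniformly. Together with H\"older's inequality (as in \eqref{thm:equiB-e1}) this yields
\begin{equation*}
\rho_{\aveL^r(Q,V)}(\fx)\lesssim\rho_{\aveL^a(Q,V)}(\fx)\qquad\forall\ Q\in\mathscr D,\ \fx\in\fX .
\end{equation*}
This step uses neither duality nor the reverse H\"older inequality, only Corollary \ref{cor:Ap1}.

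\emph{Step 2 (pointwise bound).} Fix $j\in\Z$ and $x\in Q\in\mathscr D_j$. By \eqref{df-tj}, $t_j\equiv t_Q\abs{Q}^{-1/2}$ on $Q$. Homogeneity and Step 1 then give
\begin{equation*}
\rho_{\aveL^r(\mathscr D_j,V)}(t_j)(x)=\abs{Q}^{-\frac12}\rho_{\aveL^r(Q,V)}(t_Q)
\lesssim\Big(\fint_Q\Norm{V(z)t_j(z)}{\fX}^a\,dz\Big)^{\frac1a}\le M_a\big(\Norm{Vt_j}{\fX}\big)(x),
\end{equation*}
where $M_a g:=(M(\abs{g}^a))^{1/a}$ and $M$ is the Hardy--Littlewood maximal operator. (The dyadic conditional expectation $E_j$ could replace $M$ here.) This bound is uniform in $j$, $Q$, $x$ and $t$.

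\emph{Step 3 (summation).} Multiply by $2^{js}$ and take the $L^p\ell^q$ quasi-norm:
\begin{equation*}
\Norm{t}{\dot f^s_{p,q}(\rho_{\aveL^r(\mathscr D,V)})}\lesssim\BNorm{\Big\{M\big(2^{jsa}\Norm{Vt_j}{\fX}^a\big)\Big\}_{j}}{L^{\frac pa}\ell^{\frac qa}}^{\frac1a}.
\end{equation*}
Since $p/a>1$ and $q/a\in(1,\infty]$, the Fefferman--Stein inequality bounds the right-hand side by $\Norm{t}{\dot f^s_{p,q}(V)}$; this includes $q=\infty$, where it reduces to the $\ell^\infty$-valued maximal bound. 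The same argument proves the $\dot b$ analogue. The only point to check carefully is that Corollary \ref{cor:Ap1} applies with $p$ replaced by $w$ and with arbitrarily small $r=a$, and it does. The main conceptual point is that no restriction on $q$ arises here; the restriction in Theorem \ref{thm:equiF} belongs to the converse direction.
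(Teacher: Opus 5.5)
Your proof is correct and follows the paper's argument for the case $\max\{p,r\}>1$: reduce $\rho_{\aveL^r(Q,V)}$ to $\rho_{\aveL^u(Q,V)}$ with $u<\min\{p,q\}$ via Corollary~\ref{cor:Ap1}, dominate by $M_u(\Norm{Vt_j}{\fX})$, and apply Fefferman--Stein. The only difference is in the case $\max\{p,r\}\le 1$: the paper treats it separately via Proposition~\ref{prop:Ap<1}\eqref{it:Ap<1inf}, which gives $\rho_{\aveL^r(Q,V)}(t_Q)\lesssim\Norm{V(x)t_Q}{\fX}$ pointwise with no maximal function, while your uniform use of Corollary~\ref{cor:Ap1} (valid for all exponents in $(0,\infty)$) covers that case equally well.
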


\begin{proof}
Let $t:=\{t_Q\}_{Q\in\mathscr{D}}$ and $w:=\max\{p,r\}$.
If $w\in(0,1]$, then,
for any $j\in\mathbb{Z}$ and
almost every $x\in Q\in\mathscr D_j$, we have
\begin{equation*}
   \rho_{\aveL^r(Q,V)}(t_Q)
   \leq\rho_{\aveL^w(Q,V)}(t_Q)
   \lesssim\Norm{V(x)t_Q}{\fX}
\end{equation*}
by Proposition \ref{prop:Ap<1}\eqref{it:Ap<1inf}. Hence $\rho_{\aveL^r(\mathscr D_j,V)}(t_j(\cdot))\lesssim\Norm{V(\cdot)t_j(\cdot)}{\fX}$, which implies \eqref{f-arho<ave1} when $w\in(0,1]$.

Next, we consider $w\in(1,\infty)$.
Note that, for any given $u<\min\{p,q\}$ and any $Q\in\mathscr{D}$,
\begin{align}\label{f-arho<ave2}
\begin{split}
\rho_{\aveL^r(Q,V)}(t_Q)
&\le\rho_{\aveL^w(Q,V)}(t_Q)
\quad\text{by H\"older's inequality}\\
&\lesssim\rho_{\aveL^u(Q,V)}(t_Q)
\quad\text{by Corollary \ref{cor:Ap1}}.
\end{split}
\end{align}
Since $t_j$ is a constant on every $Q\in\mathscr{D}_j$, that is,
$t_j(x)=\frac{t_Q}{|Q|^{\frac12}}$ for all $x\in Q$, it follows that
\begin{align*}
\frac{\rho_{\aveL^u(Q,V)}(t_Q)}{|Q|^{\frac12}}
&=\Bigg(\fint_Q\BNorm{V(y)\frac{t_Q}{|Q|^{\frac12}}}{\fX}^u\,dy\Bigg)^{\frac1u}
=\Big(\fint_Q\Norm{V(y)t_j(y)}{\fX}^u\,dy\Big)^{\frac1u}\\
&\le\inf_{x\in Q}M_{u}(\Norm{Vt_j}{\fX})(x),
\end{align*}
where
$M_u g(z):=\sup_{Q\owns z}\Norm{g}{\aveL^u(Q)}$ is the rescaled maximal operator.

Using this, we further obtain, for all $j\in\mathbb{Z}$,
\begin{align*}
\rho_{\aveL^r(\mathscr{D}_j,V)}(t_j)
&\lesssim\sum_{Q\in\mathscr{D}_j}
\rho_{\aveL^u(Q,V)}(t_Q)\frac{{\bf1}_Q}{|Q|^{\frac12}}
\quad\text{by \eqref{f-arho<ave2}}\\
&\le M_{u}(\Norm{Vt_j}{\fX}).
\end{align*}
Hence, from $u<\min\{p,q\}$ and the Fefferman--Stein
vector-valued maximal inequality, we infer that
\begin{align*}
\Norm{t}{\dot{f}^s_{p,q}(\rho_{\aveL^r(\mathscr{D},V)})}
&=\BNorm{\Big\{2^{js}\rho_{\aveL^p(\mathscr{D}_j,V)}(t_j)
\Big\}_{j\in\mathbb{Z}}}{L^p\ell^q}
\lesssim\BNorm{\Big\{2^{js}M_{u}
(\Norm{Vt_j}{\fX})\Big\}_{j\in\mathbb{Z}}}{L^p\ell^q}\\
&\lesssim\BNorm{\Big\{2^{js}
\Norm{Vt_j}{\fX}\Big\}_{j\in\mathbb{Z}}}{L^p\ell^q}
=\Norm{t}{\dot{f}^s_{p,q}(V)},
\end{align*}
which shows \eqref{f-arho<ave1} when $p\in[1,\infty)$.
Hence, we complete the proof of Proposition \ref{f-arho<av}.
\end{proof}

Next, we deal with the ``$\lesssim$" part of Theorem \ref{thm:equiF} as follows.

\begin{proposition}\label{prop:FV<Frho,q<p}
Let $p,r\in(0,\infty)$,
$V\in\A_{\max\{p,r\}}$ with the optimal reverse H\"older lifting index
$\varepsilon_V>0$, and $s\in\mathbb{R}$.
If $q\in(0,\max\{p,r\}+\eps_V)$, then, for any
$f\in\mathscr{S}_\infty'(\mathbb{R}^n;\fX)$,
\begin{equation}\label{prop:FV<Frhoe1}
  \Norm{f}{\dot F^s_{p,q}(V)}
  \lesssim  \Norm{f}{\dot F^s_{p,q}(\rho_{\aveL^r(\mathscr D,V)})}
\end{equation}
and, for any sequence $t=\{t_Q\}_{Q\in\mathscr{D}}$
of vectors in $\fX$,
\begin{equation}\label{prop:FV<Frhoe2}
  \Norm{t}{\dot f^s_{p,q}(V)}
  \lesssim  \Norm{t}{\dot f^s_{p,q}(\rho_{\aveL^r(\mathscr D,V)})},
\end{equation}
where the implicit positive constants are independent of $f$ and $t$.
\end{proposition}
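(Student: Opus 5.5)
We prove \eqref{prop:FV<Frhoe1} and \eqref{prop:FV<Frhoe2} by the same mechanism, with $w:=\max\{p,r\}$. By \eqref{thm:equiB-e1} we may replace $\rho_{\aveL^r(\mathscr D,V)}$ by $\rho_{\aveL^w(\mathscr D,V)}$ throughout. We also fix $v\in(q,w+\eps_V)$ with $v>w$, so that the reverse H\"older inequality
\begin{equation*}
\rho_{\aveL^v(Q,V)}(\fx)\lesssim\rho_{\aveL^w(Q,V)}(\fx)
\end{equation*}
holds uniformly in $Q$ and $\fx$. This is exactly where the restriction $q<w+\eps_V$ enters.

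\textbf{Sequence case.} Write $g_j:=2^{js}\Norm{Vt_j}{\fX}$ and $h_j:=2^{js}\rho_{\aveL^w(\mathscr D_j,V)}(t_j)$, which is constant on each $Q\in\mathscr D_j$. For $x\in Q\in\mathscr D_j$ we have $\fint_Q g_j^v\lesssim h_j^v$ by the reverse H\"older inequality above. We must bound $\Norm{\{g_j\}}{L^p\ell^q}$ by $\Norm{\{h_j\}}{L^p\ell^q}$, and the two cases below settle this.

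\emph{Case $p\ge q$.} We dualise the $L^{p/q}$ norm of $\sum_j g_j^q$ against a nonnegative $\omega\in L^{(p/q)'}$. For each $j$ and $Q\in\mathscr D_j$, H\"older's inequality with exponents $v/q$ and $(v/q)'$ gives
\begin{equation*}
\int_Q g_j^q\omega\le\abs{Q}\Big(\fint_Q g_j^v\Big)^{q/v}\Big(\fint_Q\omega^{(v/q)'}\Big)^{1/(v/q)'}\lesssim\int_Q h_j^q\,M_{(v/q)'}\omega.
\end{equation*}
Summing over $j$ and applying H\"older, it remains that $M_{(v/q)'}$ be bounded on $L^{(p/q)'}$, which holds if $(v/q)'<(p/q)'$, i.e.\ $v>p$. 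This is true since $v>w\ge p$.

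\emph{Case $p<q$.} Here the duality argument is unavailable. We instead aim for the stronger pointwise-type bound via a good-$\lambda$ or stopping argument, or alternatively reduce to the previous case by the known lattice trick: $L^p\ell^q$ with $p<q$ embeds into $\ell^q L^p$-type estimates locally. I expect this to be the main obstacle. The first thing I would try is a stopping-time (Frazier--Jawerth $\dot f^s_{p,q}$ characterization via ``largest dyadic cube'' sets $E_Q$ where $\sum_{R\supseteq Q}|h_R|^q$ is controlled), applying the $L^v$-reverse H\"older bound on each $Q$ together with an $L^{v/q}$ good-$\lambda$ estimate, replacing $q$ by any $\tilde q<q$ via $\ell^{\tilde q}\subset\ell^q$ where needed.

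\textbf{Function case.} Set $G_j:=2^{js}\Norm{V(\varphi_j*f)}{\fX}$. For $x\in Q\in\mathscr D_j$, \eqref{eq:BVQ<BrhoQ} is used with $\aveL^v$ in place of $\aveL^p$. Its proof only requires \eqref{prop:BV<Brhoe1}, which we get in the form $\rho_{\aveL^v(Q,V)}\lesssim\rho_{\aveL^w(Q,V)}$ by the reverse H\"older inequality, and \eqref{eq:weakDb}, which follows from Lemma~\ref{lem:QvsR}. This gives
\begin{equation*}
\Big(\fint_Q G_j^v\Big)^{1/v}\lesssim H_j(y):=2^{js}\big(\phi_j*\rho_{\mathscr D_j}(\varphi_j*f)^u\big)^{1/u}(y)
\end{equation*}
for all $y\in Q$, uniformly. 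The sequence argument then applies verbatim with $h_j$ replaced by $H_j$, yielding $\Norm{\{G_j\}}{L^p\ell^q}\lesssim\Norm{\{H_j\}}{L^p\ell^q}$. Finally,
\begin{equation*}
\Norm{\{H_j\}}{L^p\ell^q}\lesssim\Norm{f}{\dot F^s_{p,q}(\rho_{\aveL^w(\mathscr D,V)})}
\end{equation*}
follows because $\phi_j*\abs{\cdot}$ is dominated by the Hardy--Littlewood maximal function, and the Fefferman--Stein inequality in $L^{p/u}\ell^{q/u}$ applies once $u<\min\{p,q\}$.
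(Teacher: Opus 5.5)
\textbf{Gap: the case $p<q$ is not proved.} Your argument for $p\ge q$ is correct. The case $p<q$, however, is non-empty for every $p$, since $q$ may be any number in $(p,\max\{p,r\}+\eps_V)$. For that case you offer good-$\lambda$ arguments, stopping times and a ``lattice trick'' as things to try, but no argument, so the proposal is incomplete.

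The diagnosis behind the gap is also mistaken. Duality is not unavailable; only the form you chose fails. Raising to the power $q$ turns the inner norm into $\ell^1$, so the dual collapses to a single function $\omega$. But for $p<q$ the outer exponent $p/q$ then drops below $1$.

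\textbf{The fix.} Raise to a smaller power instead. Fix $u\in(0,\min\{p,q\})$ and write $\Norm{\{G_j\}_{j\in\Z}}{L^p\ell^q}^u=\Norm{\{G_j^u\}_{j\in\Z}}{L^{p/u}\ell^{q/u}}$. Dualise against sequences $\{\omega_j\}_{j\in\Z}$ with $\Norm{\{\omega_j\}_{j\in\Z}}{L^{(p/u)'}\ell^{(q/u)'}}\le 1$. On each $Q\in\mathscr D_j$, apply H\"older's inequality with exponents $v/u$ and $(v/u)'$. Combined with your bound $(\fint_Q G_j^v)^{1/v}\lesssim\inf_Q H_j$ (or its sequence analogue with $g_j,h_j$), this gives
\[
  \int_Q G_j^u\,\omega_j\lesssim\int_Q H_j^u\,M_{(v/u)'}\omega_j .
\]
Sum over $Q$ and $j$ and apply H\"older's inequality in the mixed norm. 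Everything then reduces to the Fefferman--Stein inequality for $M_{(v/u)'}$ on $L^{(p/u)'}\ell^{(q/u)'}$. That inequality holds exactly when $(v/u)'<\min\{(p/u)',(q/u)'\}$, i.e.\ when $v>\max\{p,q\}$. Your choice $v\in(\max\{q,w\},w+\eps_V)$ already guarantees this.

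This single argument covers all $q<w+\eps_V$ at once, and it is the route the paper takes. Your $p\ge q$ computation is the degenerate instance $u=q$, where $\ell^{(q/u)'}=\ell^\infty$ and the scalar maximal theorem suffices. Note where the hypothesis $q<w+\eps_V$ actually enters. It is needed to make the vector-valued maximal operator bounded on the $\ell^{(q/u)'}$-component of the dual space, not merely to make the H\"older step with exponent $v/q$ legitimate.

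The rest of your proposal agrees with the paper: the reduction to $\rho_{\aveL^w(\mathscr D,V)}$, the use of \eqref{eq:BVQ<BrhoQ} with $\aveL^v$ in the function case, and the final Fefferman--Stein step removing the convolutions $\phi_j*$.
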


\begin{proof}
We first show \eqref{prop:FV<Frhoe1}.
Fix $u\in(0,\min\{p,q\})$.
Then $\frac pu,\frac qu\in(1,\infty)$. By the duality
$(L^{\frac pu}\ell^{\frac qu})'
=L^{(\frac pu)'}\ell^{(\frac qu)'}$,
we obtain
\begin{equation*}
\begin{split}
  \Norm{f}{\dot F^s_{p,q}(V)}
  &=\BNorm{\Big\{2^{js}\|V(\varphi_j*f)\|_{\fX}\Big\}_{j\in\mathbb{Z}} }{L^p\ell^q}
  =\BNorm{\Big\{2^{jsu}\|V(\varphi_j*f)\|_{\fX}^u\Big\}_{j\in\mathbb{Z}} }
  {L^{\frac pu}\ell^{\frac qu}}^{\frac1u} \\
  &=\sup\Big(\int_{\R^n}
  \sum_{j\in\Z}2^{jsu}\Norm{V(x)(\varphi_j*f)(x)}{\fX}^u g_j(x)\,dx\Big)^{\frac1u},
\end{split}
\end{equation*}
where the supremum is taken over $\Norm{\{g_j\}_{j\in\mathbb{Z}}}
{L^{(\frac pu)'}\ell^{(\frac qu)'}}\leq 1$.

Define $w:=\max\{p,r\}$ and choose $\varepsilon\in(0,\varepsilon_V)$ such that,
for any $Q\in\mathscr{D}$ and $\fx\in\fX$,
\begin{align}\label{eq:RHIepse1}
\rho_{\aveL^{w+\varepsilon}(Q,V)}(\fx)\lesssim
\rho_{\aveL^w(Q,V)}(\fx).
\end{align}
For each $j\in\Z$,
\begin{equation*}
   \int_{\R^n} \Norm{V(x)(\varphi_j*f)(x)}{\fX}^u g_j(x)\,dx
   =\sum_{Q\in\mathscr D_j}\abs{Q}\fint_Q \Norm{V(x)(\varphi_j*f)(x)}{\fX}^u g_j(x)\,dx,
\end{equation*}
where
\begin{equation*}
\begin{split}
  \fint_Q &\Norm{V(x)(\varphi_j*f)(x)}{\fX}^u g_j(x)\,dx \\
  &\leq\Big(\fint_Q\Norm{V(x)(\varphi_j*f)(x)}{\fX}^{w+\eps}\,dx\Big)^{\frac{u}{w+\eps}}
  \Big(\fint_Q g_j(x)^{(\frac{w+\eps}{u})'}\,dx\Big)^{1-\frac{u}{w+\eps}}=:I^u\times II.
\end{split}
\end{equation*}
Denoting by $M_s g(z):=\sup_{Q\owns z}\Norm{g}{\aveL^s(Q)}$ the \emph{rescaled maximal function}, it holds
\begin{equation*}
  II\leq\inf_{y\in Q} M_{(\frac{w+\eps}{u})'}g(y).
\end{equation*}

To the factor $I$, we wish to apply Proposition \ref{prop:BV<Brho} and more precisely its conclusion \eqref{eq:BVQ<BrhoQ}, but with $w+\varepsilon$ and $\{\rho_{\aveL^r(Q,V)}\}_{Q\in\mathscr{D}}$
in place of $p$ and $\{\rho_Q\}_{Q\in\mathscr{D}}$,
respectively. To this end, we need to verify the assumptions
\eqref{prop:BV<Brhoe1} and \eqref{eq:weakDb}, similarly modified.
Indeed, from $V\in\mathscr{A}_w$, we infer that,
for every $Q\in\mathscr{D}$ and $\fx\in\fX$,
\begin{align*}
\rho_{\aveL^{w+\varepsilon}(Q,V)}(\fx)
&\lesssim\rho_{\aveL^w(Q,V)}(\fx)\quad\text{by \eqref{eq:RHIepse1}}\\
&\approx\rho_{\aveL^r(Q,V)}(\fx)\quad\text{by \eqref{thm:equiB-e1}}
\end{align*}
and, for every $R\in\mathscr{D}$ such that $\ell(R)=\ell(Q)$,
\begin{equation*}
\begin{split}
  \rho_{\aveL^{r}(Q,V)}(\fx)
  &\approx \rho_{\aveL^{w}(Q,V)}(\fx)\quad\text{by \eqref{thm:equiB-e1}}\\
  &\lesssim \Big(1+\frac{\abs{x_Q-x_R}}{\ell(Q)}\Big)^n\rho_{\aveL^{w}(R,V)}(\fx)\quad\text{by Lemma \ref{lem:QvsR}} \\
  &\approx \Big(1+\frac{\abs{x_Q-x_R}}{\ell(Q)}\Big)^n\rho_{\aveL^{r}(R,V)}(\fx)
  \quad\text{by \eqref{thm:equiB-e1}}.
\end{split}
\end{equation*}
This shows that the assumptions \eqref{prop:BV<Brhoe1} and \eqref{eq:weakDb}
are satisfied with $p$ and $\{\rho_Q\}_{Q\in\mathscr{D}}$ replaced,
respectively, by $w+\varepsilon$ and $\{\rho_{\aveL^r(Q,V)}\}_{Q\in\mathscr{D}}$.
Thus, applying \eqref{eq:BVQ<BrhoQ}, we obtain, for all $y\in Q$,
\begin{equation*}
\begin{split}
  I &\lesssim
  \Big(\phi_j*\rho_{\aveL^{r}(\mathscr D_j,V)}(\varphi_j*f)^v\Big)^{\frac1v}(y),
\end{split}
\end{equation*}
where $v$ can be chosen such that $v<\min\{p,q\}$.
Substituting back, it follows that
\begin{equation*}
  \sum_{Q\in\mathscr D_j}\abs{Q}\, I^u\times II
  \lesssim\int_{\R^n}\Big(\phi_j*\rho_{L^{r}(\mathscr D_j,V)}
  (\varphi_j*f)^v\Big)^{\frac uv}(y) \times M_{(\frac{w+\eps}{u})'}g_j(y)\,dy,
\end{equation*}
and hence
\begin{equation*}
\begin{split}
  &\sum_{j\in\Z} 2^{jsu} \sum_{Q\in\mathscr D_j}\abs{Q}\, I^u\times II \\
  &\quad\lesssim\BNorm{\Big\{2^{jsu}
  \Big(\phi_j*\rho_{L^{r}(\mathscr D_j,V)}(\varphi_j*f)^v\Big)^{\frac uv}\Big\}_{j\in\mathbb{Z}}}
  {L^{\frac pu}\ell^{\frac qu}}
  \BNorm{\Big\{M_{(\frac{w+\eps}{u})'}g_j\Big\}_{j\in\mathbb{Z}}}
  {L^{(\frac pu)'}\ell^{(\frac qu)'}} \\
  &\quad=:{\rm A}\cdot\BNorm{\Big\{M_{(\frac{w+\eps}{u})'}g_j\Big\}_{j\in\mathbb{Z}}}
  {L^{(\frac pu)'}\ell^{(\frac qu)'}}
  \lesssim {\rm A}\cdot\Norm{\{g_j\}_{j\in\mathbb{Z}}}{L^{(\frac pu)'}
  \ell^{(\frac qu)'}}\leq {\rm A}
\end{split}
\end{equation*}
by the Fefferman--Stein vector-valued maximal inequality,
since $(\frac{w+\eps}{u})'<\min\{(\frac{p}{u})',(\frac{q}{u})'\}$
or equivalently $w+\eps>\max\{p,q\}$.

Moreover, by the boundedness of $\{f_j\}\mapsto\{\phi_j*f_j\}$
(note that $\abs{\phi_j*f_j}\leq Mf_j$) on $L^{\frac pv}\ell^{\frac qv}$
with $\frac pv,\frac qv\in(1,\infty)$,
\begin{equation*}
\begin{split}
  {\rm A} &=\BNorm{\Big\{2^{jsv}\phi_j*\rho_{L^{r}(\mathscr D_j,V)}(\varphi_j*f)^v\Big\}
  _{j\in\mathbb{Z}}}{L^{\frac pv}\ell^{\frac qv}}^{\frac uv}
  \lesssim\BNorm{\Big\{2^{jsv}\rho_{L^{r}(\mathscr D_j,V)}(\varphi_j*f)^v\Big\}_{j\in\mathbb{Z}}}
  {L^{\frac pv}\ell^{\frac qv}}^{\frac uv} \\
  &=\BNorm{\Big\{2^{js}\rho_{L^{r}(\mathscr D_j,V)}(\varphi_j*f)\Big\}_{j\in\mathbb{Z}}}
  {L^{p}\ell^{q}}^u
  =\Norm{f}{\dot F^s_{p,q}(\rho_{L^r(\mathscr D,V)})}^u.
\end{split}
\end{equation*}
Combining everything, we have proved \eqref{prop:FV<Frhoe1}.

The proof of \eqref{prop:FV<Frhoe2} is almost the same as
above. The only difference is that, for any $j\in\mathbb{Z}$,
the function $t_j$ is constant on every $Q\in\mathscr{D}_j$ (more precisely,
$t_j(x)=\frac{t_Q}{|Q|^{\frac12}}$ for all $x\in Q$).
Therefore, we have, for any $x\in Q\in\mathscr{D}_j$,
\begin{align*}
\Big(\fint_Q\Norm{V(y)t_j(y)}{\fX}^{w+\varepsilon}\,dy
\Big)^{\frac{1}{w+\varepsilon}}
&=\frac{\rho_{\aveL^{w+\varepsilon}(Q,V)}(t_Q)}{|Q|^{\frac12}}\\
&\lesssim\frac{\rho_{\aveL^{w}(Q,V)}(t_Q)}{|Q|^{\frac12}}
\quad\text{by \eqref{eq:RHIepse1}}\\
&\approx\frac{\rho_{\aveL^{r}(Q,V)}(t_Q)}{|Q|^{\frac12}}
\quad\text{by $V\in\A_{w}$ and \eqref{thm:equiB-e1}}\\
&=\rho_{\aveL^r(Q,V)}(t_j(x)).
\end{align*}
Using this to replace the above estimation for $I$, we can
obtain \eqref{prop:FV<Frhoe2}.
Hence, we complete the proof of Proposition \ref{prop:FV<Frho,q<p}.
\end{proof}

\begin{proof}[Proof of Theorem \ref{thm:equiF}]
By the proofs of Theorem \ref{thm:equiB} and
Proposition \ref{f-arho<av}, we prove the ``$\gtrsim$'' part;
from Proposition \ref{prop:FV<Frho,q<p}, we deduce
the ``$\lesssim$'' part.
Thus, we complete the proof of Theorem \ref{thm:equiF}.
\end{proof}

Finally, we turn to show that \eqref{thm:equiF-e2}
may not be true $q=\infty$. We first
show the following equivalent characterization
of the pointwise quasi-norm being dominated by the average quasi-norm,
which is of independent interest.

\begin{proposition}\label{prop:fequi}
Let $p,r\in(0,\infty)$, $V$ be a weight, and
$s\in\mathbb{R}$.
Then
\begin{align}\label{prop:fequi-e1}
\|t\|_{\dot{f}^s_{p,\infty}(V)}
\lesssim\|t\|_{\dot{f}^s_{p,\infty}(\rho_{\aveL^r(\mathscr{D},V)})}
\end{align}
holds for all sequence $t=\{t_Q\}_{Q\in\mathscr{D}}$ of vectors in $\fX$
with the implicit positive constant independent of $f$
if and only if
\begin{align}\label{prop:fequi-e2}
\fint_Q\sup_{R\in\mathscr{D}:x\in R\subset Q}
\Norm{V(x)t_R}{\fX}^p\,dx
\lesssim\fint_Q\sup_{R\in\mathscr{D}:x\in R\subset Q}
\rho_{\aveL^r(R,V)}(t_R)^p\,dx
\end{align}
holds for all $Q\in\mathscr{D}$ and $\{t_R\}_{R\subset Q}$
in $\fX$
with the implicit positive constant independent of $Q$ and $t$.
\end{proposition}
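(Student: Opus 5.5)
The plan is to unwind both sides of \eqref{prop:fequi-e1} into expressions involving dyadic suprema and then localise. Since $t_j$ is constant on each $Q\in\mathscr D_j$, for every $x$ the $\ell^\infty$ norm at $x$ is a supremum over the dyadic cubes containing $x$:
\begin{equation*}
  \sup_j 2^{js}\Norm{V(x)t_j(x)}{\fX}=\sup_{R\ni x}\abs{R}^{-\frac sn-\frac12}\Norm{V(x)t_R}{\fX},
  \qquad
  \sup_j 2^{js}\rho_{\aveL^r(\mathscr D_j,V)}(t_j)(x)=\sup_{R\ni x}\abs{R}^{-\frac sn-\frac12}\rho_{\aveL^r(R,V)}(t_R).
\end{equation*}
Absorbing the scalar factors $\abs{R}^{-\frac sn-\frac12}$ into $t_R$ (which is harmless because both sides are homogeneous in each $t_R$ and the norms are positively homogeneous), it suffices to treat $s=0$ and the normalisation in which $t_j$ is replaced by $\sum_{R\in\mathscr D_j}t_R\one_R$.

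For \eqref{prop:fequi-e1}$\Rightarrow$\eqref{prop:fequi-e2}, fix $Q$ and a family $\{t_R\}_{R\subset Q}$, and extend it by $t_R:=0$ for $R\not\subset Q$. Both sides of \eqref{prop:fequi-e1} are then supported on $Q$, with integrands exactly the suprema over $R\subset Q$ appearing in \eqref{prop:fequi-e2}. Raising to the $p$th power and dividing by $\abs{Q}$ gives \eqref{prop:fequi-e2}.

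For \eqref{prop:fequi-e2}$\Rightarrow$\eqref{prop:fequi-e1}, first reduce by monotone convergence to finitely many nonzero $t_R$; a general sequence is the increasing limit of its truncations to $\{R:\ell(R)\ge 2^{-N},\ R\subset[-2^N,2^N)^n\}$, and both integrands increase monotonically. With finitely many nonzero entries, split $\R^n$ into the $2^n$ dyadic-quadrant regions, in each of which every cube containing a nonzero $t_R$ lies inside a single large dyadic cube $Q_0$. Alternatively, cover the finitely many nonzero cubes by maximal dyadic cubes among them. Concretely, let $\{Q_i\}$ be the maximal cubes carrying a nonzero $t_R$. These are disjoint, and for $x\in Q_i$ every $R\ni x$ with $t_R\ne0$ satisfies $R\subset Q_i$. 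Hence
\begin{equation*}
\int_{\R^n}\sup_{R\ni x}\Norm{V(x)t_R}{}^p\,dx=\sum_i\abs{Q_i}\fint_{Q_i}\sup_{x\in R\subset Q_i}\Norm{V(x)t_R}{}^p\,dx,
\end{equation*}
and similarly for the $\rho$ side. Applying \eqref{prop:fequi-e2} on each $Q_i$ and summing then gives \eqref{prop:fequi-e1} with the same constant.

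The only delicate point is the localisation: the maximal cubes must exist, which is why the reduction to finitely many nonzero $t_R$ is needed. Without it, an infinite increasing chain of cubes may carry nonzero entries. The monotone-convergence passage then has to be checked on both sides, which works since truncation only removes terms from the pointwise suprema. No property of $V$ beyond being a weight is used.
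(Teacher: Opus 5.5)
Your proof is correct and follows essentially the same route as the paper: a monotone-convergence truncation, a localisation to pairwise disjoint dyadic cubes each of which contains every relevant $R$ meeting it, an application of \eqref{prop:fequi-e2} on each such cube (with the scalar factors $\ell(R)^{-(s+\frac n2)}$ absorbed into $t_R$), and summation. The only difference is that the paper truncates just the large scales, $\ell(R)\le 2^M$, and localises with the fixed partition $\mathscr D_{-M}$, which makes your reduction to finitely many nonzero $t_R$ and the maximal-cube selection unnecessary.
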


\begin{remark}
Concerning \eqref{prop:fequi-e2},
there are two remarks.
\begin{enumerate}[{\rm(i)}]
  \item Suppose $\fX=\mathbb{C}^m$ and $r=p$.
  Let $\{A_Q\}_{Q\in\D}$ denote the reducing operators of $V$ (see, for instance, \cite[p.\,490]{FR:21});
that is, $\{A_Q\}_{Q\in\D}$ is a family of positive $m\times m$ matrices such that,
for any $Q\in\D$ and $\fx\in\fX$,
$\rho_{\aveL^p(Q,V)}(\fx)\approx\Norm{A_Q\fx}{\fX}$
with the positive equivalence constants independent of $Q$ and $\fx$.
Then it is easy to show that \eqref{prop:fequi-e2} is
equivalent to
\begin{align}\label{eq:sup}
\sup_{Q\in\D}\fint_Q\sup_{R\in\D:x\in R\subset Q}
\Norm{V(x)A_Q^{-1}}{\mathscr{L}(\fX)}^p\,dx<\infty,
\end{align}
which is \cite[(3.3)]{FR:21} with $r=p$.
Hence, Proposition \ref{prop:f} below
indeed proves that, in the infinite-dimensional case,
\cite[(3.3)]{FR:21} may not be true.
  \item Note that \eqref{eq:sup} and its improvement
  play important roles in the study
  of the boundedness of Calder\'on--Zygmund operators
  in the matrix-weighted case (see \cite{Gold:03}).
  Therefore, the failure of this key estimate
  reveals another essential difficulty in
  the operator-weighted theory.
\end{enumerate}
\end{remark}

\begin{proof}[Proof of Proposition \ref{prop:fequi}]
\eqref{prop:fequi-e2} follows immediately from \eqref{prop:fequi-e1}.
Next, we consider the converse direction
\eqref{prop:fequi-e2} $\Rightarrow$ \eqref{prop:fequi-e1}.
By monotone convergence,  to prove \eqref{prop:fequi-e1}, we only
need to show, for any given $M\in\mathbb{N}$,
\begin{align}\label{prop:fequi-e3}
\BNorm{\Big\{2^{js}\Norm{Vt_j}{\fX}\Big\}_{j\ge-M}}{L^p\ell^\infty}^p
\lesssim\Norm{t}{\dot{f}^s_{p,\infty}(\rho_{\aveL^r(\D,V)})}^p.
\end{align}
Indeed, we have
\begin{align*}
\operatorname{LHS}\eqref{prop:fequi-e3}
&=\int_{\mathbb{R}^n}
\sup_{j\ge-N}\Big|2^{js}\sum_{R\in\mathscr{D}_j}
\Norm{V(x)t_R}{\fX}\frac{{\bf1}_R(x)}{|R|^{\frac12}}\Big|^p\,dx\\
&=\int_{\mathbb{R}^n}\sup_{R\in\mathscr{D}:\ell(R)\le 2^M,x\in R}
\ell(R)^{-(s+\frac n2)p}\Norm{V(x)t_R}{\fX}^p\,dx\\
&=\sum_{Q\in\mathscr{D}_{-M}}
\int_{Q}\sup_{R\in\mathscr{D}:\ell(R)\le 2^M,x\in R}
\ell(R)^{-(s+\frac n2)p}\Norm{V(x)t_R}{\fX}^p\,dx\\
&\lesssim\sum_{Q\in\mathscr{D}_{-M}}
\int_{Q}\sup_{R\in\mathscr{D}:\ell(R)\le 2^M,x\in R}
\ell(R)^{-(s+\frac n2)p}\rho_{\aveL^r(R,V)}(t_R)^p\,dx
\quad \text{by \eqref{prop:fequi-e2}}\\
&=\int_{\mathbb{R}^n}
\sup_{j\ge-N}\Big|2^{js}\sum_{R\in\mathscr{D}_j}
\rho_{\aveL^r(R,V)}(t_R)\frac{{\bf1}_R(x)}{|R|^{\frac12}}\Big|^p\,dx\\
&\le\BNorm{\Big\{2^{js}\rho_{\aveL^r(\mathscr{D}_j,V)}(t_j)
\Big\}_{j\in\mathbb{Z}}}{L^p\ell^\infty}^p
=\operatorname{RHS}\eqref{prop:fequi-e3}.
\end{align*}
This implies \eqref{prop:fequi-e2} $\Rightarrow$ \eqref{prop:fequi-e1}
and hence finishes the proof of Proposition \ref{prop:fequi}.
\end{proof}

\begin{proposition}\label{prop:f}
If $p\in(0,\infty)$ and $r\in[1,\infty)$ such that
$\max\{p,r\}>1$ and if $\fX:=\ell^u(\mathbb{Z})$ for $u\in[1,\infty)$,
then there exists a weight $V\in\mathscr{A}_{\max\{p,r\}}(\mathbb{R};\mathscr{L}(\fX))$
such that \eqref{prop:fequi-e1} does not hold.
\end{proposition}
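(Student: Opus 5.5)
By Proposition \ref{prop:fequi}, it suffices to violate \eqref{prop:fequi-e2}. The plan is to build a diagonal weight $V(x)=\operatorname{diag}(v_i(x))_{i\in\Z}$ on $\fX=\ell^u(\Z)$ and to choose the vectors $t_R=\lambda_R\fy_{i(R)}$ as multiples of unit vectors. The idea is to put, on each dyadic subinterval $R$ of $Q=[0,1)$, a coordinate whose scalar weight has a spike there.

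First I check that $V\in\A_w$ with $w:=\max\{p,r\}>1$. As in the proof of Example \ref{ex:p22}, the computation \eqref{uniform-ap} is done coordinatewise. I will redo it with $\ell^u$ in place of $\ell^p$, using that the $\rho$-norms of diagonal operators on $\ell^u$ split over coordinates. With $\fx=(\lambda_i)$, I would estimate $\rho_{\aveL^w(Q,V)}(\fx)\le(\sum_i\rho_i(\lambda_i)^u)^{1/u}$ by Minkowski when $u\le w$. The other case is analogous, or else one first reduces to the comparison via Corollary \ref{cor:Ap} and dualises coordinatewise. This yields $V\in\A_w$ provided that all $v_i\in\A_w(\R)$ have uniformly bounded constants. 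If the mixed-norm Minkowski bookkeeping becomes messy, I would simply take $u=w$ from the outset for the $\A_w$ verification and then handle general $u$ by the same coordinatewise duality.

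For the concrete choice, I index the coordinates by dyadic intervals $R\subseteq[0,1)$. I take $v_R(x)=|x-c_R|^{\beta}$, where $c_R$ is the centre of $R$ and $-1/w<\beta<0$; the negative power makes $v_R$ singular at $c_R$. Every translate of a power weight lies in $\A_w$ with the same constant, so the uniformity required above holds. Since $r\ge1$, the average $\rho_{\aveL^r(R,V)}(\fy_R)=(\fint_R|x-c_R|^{\beta r})^{1/r}\approx\ell(R)^{\beta}$ is finite, which forces $\beta>-1/r$. In particular $\beta>-1/w$ is automatic and the weight is legitimate. I then set $t_R:=\ell(R)^{-\beta}\fy_R$, so that $\rho_{\aveL^r(R,V)}(t_R)\approx1$ for every $R$. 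The right side of \eqref{prop:fequi-e2} is therefore $\lesssim1$ if we restrict to $R$ with $\ell(R)\ge2^{-N}$.

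On the left side we get $\sup_{x\in R\subset Q,\ \ell(R)\ge2^{-N}}(\ell(R)^{-1}|x-c_R|)^{\beta}$. For a.e.\ $x$, choose the generation $j$ and the interval $R\ni x$ of side $2^{-j}$ at which $|x-c_R|/\ell(R)$ is smallest. The quantity $|x-c_R|/\ell(R)$ is the distance of $2^jx$ to $\Z+\tfrac12$, and its infimum over $j\le N$ tends to $0$ for a.e.\ $x$ as $N\to\infty$ (ergodicity of the doubling map). Because $\beta<0$, the supremum then tends to $+\infty$ a.e., and Fatou gives $\fint_Q\sup\to\infty$. Hence \eqref{prop:fequi-e2} fails uniformly in $N$. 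The truncation keeps each $t$ finite, and taking the supremum over $N$ contradicts \eqref{prop:fequi-e2}.

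The main obstacle is the $\A_w$ verification on $\ell^u$ when $u\ne w$, where the coordinatewise splitting is not an identity. This is also where the hypothesis $\max\{p,r\}>1$ should enter, by keeping the range $\beta\in(-1/r,0)\cap(-1/w,0)$ nonempty together with duality.
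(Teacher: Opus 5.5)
Your reduction to \eqref{prop:fequi-e2} and your divergence step are correct. They mirror the paper, which also finishes with normal numbers: the paper needs $\inf_j\{2^jx\}=0$ a.e., and you need $\inf_j|\{2^jx\}-\frac12|=0$ a.e. Your weight, however, is built differently, and the central step, $V\in\A_w$ with $w:=\max\{p,r\}$, is not established.

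The principle you invoke is false for $u\neq w$. You claim that a diagonal weight on $\ell^u$ whose scalar entries are uniformly $A_w$ lies in $\A_w$. Minkowski does bound $\rho_{\aveL^w(Q,V)}$ from above when $u\le w$, but the lower bound you need for $\rho^*_{\aveL^{w'}(Q,V^{-*})}$ does not split over coordinates.

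Here is a counterexample. Take $u=1$, $0<\gamma<1/w'$, $v_R(x)=|x-c_R|^{\gamma}$ for the $2^j$ dyadic $R\subset[0,1)$ of length $2^{-j}$, and $\fx=\sum_R\fy_R$.
\begin{itemize}
\item On $[0,1)$ one has $\Norm{V(x)\fx}{\ell^1}\gtrsim 2^j$.
\item Bound $\Norm{V(x)^{-*}\fx^*}{\ell^\infty}$ from below by the coordinate of the interval containing $x$. This gives $\rho^*_{\aveL^{w'}([0,1),V^{-*})}(\fx)\lesssim 2^{j(1-\gamma)}$.
\end{itemize}
So the $\A_w$ constant is $\gtrsim 2^{j\gamma}$.

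Two further problems:
\begin{itemize}
\item Your fallback of taking $u=w$ is unavailable, because $u$ is prescribed.
\item Your range $\beta\in(-1/w,0)$ is too large when $u>w$. If $\beta<-1/u$, pick $1/u<\alpha<|\beta|$ and let $\fx$ have coordinates $\ell(R)^{\alpha}$. Then $\fx\in\ell^u$, yet $\Norm{V(x)\fx}{\ell^u}=\infty$ for every $x\in[0,1)$.
\end{itemize}

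What rescues your specific weight is that $\beta<0$. Let $s:=\max\{u,w\}$ and $\beta\in(-1/s,0)$. Then the $v_R^s=|x-c_R|^{\beta s}$ are $A_1$ weights with constant independent of $R$. For an interval $Q$ put $m_R:=\essinf_Q v_R>0$. For $\fx=(\lambda_R)_R$, H\"older's inequality and Minkowski's inequality in $L^{s/u}$ give
\[
  \rho_{\aveL^w(Q,V)}(\fx)\le\rho_{\aveL^s(Q,V)}(\fx)
  \le\Big(\sum_R|\lambda_R|^u\Norm{v_R}{\aveL^s(Q)}^u\Big)^{\frac1u}
  \lesssim\Big(\sum_R m_R^u|\lambda_R|^u\Big)^{\frac1u}.
\]
On the dual side, $\Norm{V(x)^{-*}\fx^*}{\ell^{u'}}\le\Norm{(m_R^{-1}\sigma_R)_R}{\ell^{u'}}$ for a.e.\ $x\in Q$. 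Hence $\rho^*_{\aveL^{w'}(Q,V^{-*})}(\fx)\ge(\sum_R m_R^u|\lambda_R|^u)^{1/u}$, and Corollary \ref{cor:Ap} yields $V\in\A_w$.

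Since also $\Norm{V(y)\fx}{\ell^u}\ge(\sum_R m_R^u|\lambda_R|^u)^{1/u}$ for a.e.\ $y\in Q$, the same bounds verify Proposition \ref{prop:Ap<1} when $w\le 1$. So, contrary to your closing remark, your route never uses $\max\{p,r\}>1$.

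By comparison, the paper takes $V=\bigl(\begin{smallmatrix} I&0\\ B&I\end{smallmatrix}\bigr)$ on $\ell^u(\N)\oplus\ell^u(\Z\setminus\N)$, with $B=\operatorname{diag}(\log|x-2^{-j}k|)$ and $t_R=(s_R,-\ave{B}_Rs_R)$. This makes $\rho_{\aveL^r(R,V)}(t_R)$ a BMO quantity. The property $V\in\A_w$ then comes from Lemma \ref{lem:Ap<BMO}, whose only $\ell^u$ input is a BMO estimate at exponent $u$; this is an identity for diagonal $B$, and John--Nirenberg changes the exponent. That route sidesteps the mixed-norm problem entirely, and it is where $w>1$ is used. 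Your purely diagonal, multiplicative version dispenses with the block structure and the BMO lemma, at the price of the $A_1$ observation above.

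One caveat applies to both constructions. You have $\Norm{V(x)}{\mathscr L(\ell^u)}=\sup_R|x-c_R|^{\beta}=\infty$ for every $x\in[0,1]$. Since $\Norm{t_R}{\ell^u}=\ell(R)^{-\beta}\to 0$, your blow-up genuinely depends on this. The paper's $B(x)$ is equally unbounded at every $x\in[0,1)$, so both weights make sense only in the strong-operator sense.
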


\begin{proof}
Define $w:=\max\{p,r\}$.
From Proposition \ref{prop:fequi},
we deduce that it is sufficient to find  $V\in\mathscr{A}_w(\mathbb{R};\mathscr{L}(\fX))$
such that \eqref{prop:fequi-e2} does not hold.
Denote by $\fX_1:=\ell^{u}(\mathbb{N})$ and
$\fX_2:=\ell^u(\mathbb{Z}\setminus\mathbb{N})$.
Then $\fX=\fX_1\oplus\fX_2$.
For any given $B\in L^w_{\loc,\so}(\mathbb{R};\mathscr{L}(\fX_1,\fX_2))$,
define
\begin{align*}
V:=\begin{pmatrix} I & 0 \\ B & I \end{pmatrix}.
\end{align*}
By Lemma \ref{lem:Ap<BMO}, we find that, if
$B\in\BMO_{\so}(\mathbb{R};\mathscr{L}(\fX_1,\fX_2))$
and $B^*\in\BMO_{\so}(\mathbb{R};\mathscr{L}(\fX_2^*,\fX_1^*))$,
then $V\in\mathscr{A}_w(\mathbb{R};\mathscr{L}(\fX))$.
For given $Q\in\mathscr{D}$ and $\{s_R\}_{R\subset Q}$ in $\fX_1$,
define
$$t_R:=
\begin{pmatrix}
  s_R\\
  -\ave{B}_R s_R
\end{pmatrix}\in\fX\quad\forall\,R\subset Q.
$$
Then we have, for any $R\subset Q$ and $x\in\mathbb{R}$,
\begin{align}\label{prop:fe1}
V(x)t_R
=\begin{pmatrix}
s_R\\
(B(x)-\ave{B}_R)s_R
 \end{pmatrix}
\end{align}
and hence, if $B\in\BMO_{\so}(\mathbb{R};\mathscr{L}(\fX_1,\fX_2))$,
\begin{align}\label{prop:fe2}
\begin{split}
\rho_{\aveL^r(R,V)}(t_R)^p
&=\Bigg\{\fint_R\Big[\Norm{s_R}{\fX_1}^u
+\Norm{(B(x)-\ave{B}_R)s_R}{\fX_2}^u\Big]^{\frac ru}\,dx\Bigg\}^{\frac pr}\\
&\approx\Bigg\{\fint_R\Big[\Norm{s_R}{\fX_1}^r
+\Norm{(B(x)-\ave{B}_R)s_R}{\fX_2}^r\Big]\,dx\Bigg\}^{\frac pr}\\
&\lesssim\Norm{B}{\BMO_{\so}(\mathbb{R};\mathscr{L}(\fX_1,\fX_2))}^p
\Norm{s_R}{\fX_1}^p\quad\text{by the John--Nirenberg inequality}.
\end{split}
\end{align}
Using this, we further obtain, if \eqref{prop:fequi-e2} holds,
then, for all $\{s_R\}_{R\subset Q}$
such that $\|s_R\|_{\fX_1}=1$\ $\forall\, R\subset Q$,
\begin{align*}
\begin{split}
&\fint_Q\sup_{R\in\mathscr{D}:x\in R\subset Q}
\Norm{(B(x)-\ave{B}_R)s_R}{\fX_2}^p\,dx\\
&\quad\le \fint_Q\sup_{R\in\mathscr{D}:x\in R\subset Q}
\Norm{V(x)t_R}{\fX}^p\,dx\quad\text{by (\ref{prop:fe1})}\\
&\quad\lesssim\fint_Q\sup_{R\in\mathscr{D}:x\in R\subset Q}
\rho_{\aveL^r(R,V)}(t_R)^p\,dx\quad\text{by (\ref{prop:fequi-e2})}\\
&\quad\lesssim\|B\|_{\BMO_{\so}(\mathbb{R};\mathscr{L}(\fX_1,\fX_2))}^p
\fint_Q\sup_{R\in\mathscr{D}:x\in R\subset Q}
\Norm{s_R}{\fX_1}^p\,dx\quad\text{by \eqref{prop:fe2}}\\
&\quad=\|B\|_{\BMO_{\so}(\mathbb{R};\mathscr{L}(\fX_1,\fX_2))}^p.
\end{split}
\end{align*}

Since $\fX_1$ and $\fX_2$ are isometrically isomorphic, the above
argument implies that, to disprove \eqref{prop:fequi-e2},
we only need to find $B\in\BMO_{\so}(\mathbb{R};\mathscr{L}(\fX_1))$
satisfying $B^*\in\BMO_{\so}(\mathbb{R};\mathscr{L}(\fX_1^*))$, but
there exist a cube $Q\in\mathscr{D}$ and a unit sequence $\{s_R\}_{R\subset Q}$
in $\fX_1$ such that
\begin{align}\label{prop:fe3}
\fint_Q\sup_{R\in\mathscr{D}:x\in R\subset Q}
\Norm{(B(x)-\ave{B}_R)s_R}{\fX_1}^p\,dx\lesssim
\Norm{B}{\BMO_{\so}(\mathbb{R};\mathscr{L}(\fX_1))}^p
\end{align}
does not hold. We do this by borrowing some ideas from \cite[Remark 5.2]{Pott:07}.
For any $j\in\mathbb{Z}_+$ and $k=0,\ldots,2^{j}-1$, define
\begin{align*}
b_{2^j+k}(\cdot):=\log|x-2^{-j}k|
\quad\text{and}\quad B:=\operatorname{diag}(b_i)_{i=1}^\infty.
\end{align*}
Note that $b_i\in \BMO(\mathbb{R})$ with a uniform norm with respect to $i$.
Thus, for any $\fx:=\{e_i\}_{i=1}^\infty\in\fX_1$ and any cube $Q$,
it holds that
\begin{align*}
\fint_Q\Norm{(B(x)-\ave{B}_Q)\fx}{\fX_1}^u\,dx
=\sum_{i=1}^{\infty}|e_i|^u\fint_Q
|b_i(x)-\ave{b_i}_Q|^u\,dx
\lesssim\sup_{i\in\mathbb{N}}
\Norm{b_i}{\BMO(\R)}^u\Norm{\fx}{\fX_1}^u.
\end{align*}
This, together with the John--Nirenberg inequality,
shows $B\in\BMO_{\so}(\mathbb{R};\mathscr{L}(\fX_1))$.
Similarly, $B^*\in\BMO_{\so}(\mathbb{R};\mathscr{L}(\fX_1^*))$.
Now, let $Q:=[0,1)$ and, if $R=Q_{j,k}:=2^{-j}[k,k+1)$ for some
$j\in\mathbb{Z}_+$ and $k=0,\ldots,2^j-1$,
let $s_R:=\fx_{2^j+k}$
(that is, the vector whose $(2^j+k)$-th element is 1 and all the other elements are 0).
Hence
\begin{align}\label{prop:fe4}
\begin{split}
&\fint_Q\sup_{R\in\mathscr{D}:x\in R\subset Q}
\Norm{(B(x)-\ave{B}_R)s_R}{\fX_1}^p\,dx\\
&\quad\ge\int_{0}^{1}\sup_{j\in\mathbb{Z}_+,k=0,\ldots2^j-1:x\in Q_{j,k}}
|b_{2^j+k}(x)-\ave{b_{2^j+k}}_{Q_{j,k}}|^p\,dx.
\end{split}
\end{align}
Observe that
\begin{align*}
b_{2^j+k}(x)-\ave{b_{2^j+k}}_{Q_{j,k}}
=\log|x-2^{-j}k|+j\log 2+1
=\log\abs{2^j x-k}+1.
\end{align*}
Note that $x\in Q_{j,k}$ if and only if $2^j\in[k,k+1)$, in which case $2^j-k=\{2^j x\}\in[0,1)$ is the fractional part of $2^j x$.
By this and \eqref{prop:fe4}, we conclude that
\begin{align}\label{prop:fe5}
\fint_Q\sup_{R\in\mathscr{D}:x\in R\subset Q}
\Norm{(B(x)-\ave{B}_R)s_R}{\fX_1}^p\,dx
\geq\int_{0}^{1}\sup_{j\in\mathbb{Z}_+}\abs{\log\{2^jx\}+1}^p\,dx.
\end{align}

We now claim that
$\inf_{j\in\mathbb{Z}_+}\{2^jx\}=0$ for almost every $x\in[0,1)$.
Indeed, if this claim holds, then the supremum on the right of \eqref{prop:fe5} is $\infty$ for a.e. $x$, thus the integral diverges, which disproves \eqref{prop:fe3}.

To check this claim, consider the binary expansion $x=\sum_{k=1}^{\infty}\frac{a_k}{2^k}$ with $a_k\in\{0,1\}$.
Hence $\{2^jx\}=\sum_{k=1}^{\infty}\frac{a_{k+j}}{2^k}$ for each $j\in\Z_+$, and we see that $\inf_{j\in\mathbb{Z}_+}\{2^jx\}=0$ if and only if
the binary expansion $(a_k)_{k=1}^\infty$ of $x$ contains arbitrarily long strings of consecutive zeros. This is a special case of the property that $x$ is {\em normal in base $2$}; the fact that a.e. $x\in[0,1)$ has this property is a classical result of Borel \cite[\S 11--12]{Borel}.
Therefore, we disprove \eqref{prop:fe3}
and hence \eqref{prop:fequi-e1}.
This completes the proof of Proposition \ref{prop:f}.
\end{proof}

\section{The $\varphi$ transform of Frazier and Jawerth}

For given Littlewood--Paley functions $\varphi$ and $\psi$, Frazier and Jawerth \cite{FJ:90} introduced the following of transformations, mapping distributions $f\in\mathscr S_\infty'(\R^n)$ to discrete sequences $t=\{t_Q\}_{Q\in\mathscr D}$ of numbers indexed by the dyadic cubes, and vice versa. The same formulas make perfect sense for Banach space valued distributions $f\in\mathscr S_\infty'(\R^n;\fX)$ and sequences $t=\{t_Q\}_{Q\in\mathscr D}$ of vectors in $\fX$:
\begin{equation}\label{eq:SphiTpsi}
   S_\varphi f:=\{\pair{f}{\varphi_Q}\}_{Q\in\mathscr D},\qquad
   T_\psi t=T_\psi\big(\{t_Q\}_{Q\in\mathscr D}\big):=\sum_{Q\in\mathscr D}t_Q\psi_Q,
\end{equation}
where, for any $Q\in\mathscr{D}$ and $x\in\mathbb{R}^n$,
\begin{align*}
\varphi_{Q}(x):=|Q|^{-\frac12}\varphi\Big(\frac{x-x_Q}{\ell(Q)}\Big),
\qquad \psi_{Q}(x):=|Q|^{-\frac12}\varphi\Big(\frac{x-x_Q}{\ell(Q)}\Big).
\qquad
\end{align*}

The goal of this section is to use the transforms \eqref{eq:SphiTpsi} to establish a correspondence between the function space and the sequence space version
of the Besov and Triebel--Lizorkin spaces
introduced in Definition \ref{def:space}.
We first consider the average-weighted spaces.
Indeed, we can prove a $\varphi$-transform characterization and
many other properties of $\dot{A}^s_{p,q}(\rho)$ under the following mild assumptions on $\rho$.

\begin{definition}\label{def:uabc}
Let $u\in(0,1]$ and $a,b,c\in[0,\infty)$.
A family $\rho=\{\rho_Q\}_{Q\in\mathscr D}$ of quasi-norms
is called a \emph{family of $(u,a,b,c)$-norms} if
\begin{enumerate}[(i)]
  \item for each $Q\in\mathscr{D}$, $\rho_Q$ is a $u$-norm
  in the sense of Definition \ref{def:semi},
  \item\label{def:uabc-it2} for each $Q\in\mathscr{D}$, $\rho_Q$ is equivalent to $\Norm{\cdot}{\fX}$,
  where the positive equivalence constants may depend on $Q$,
  \item\label{def:uabc-it3} for all $Q,R\in\mathscr{D}$ and $\fx\in\fX$,
  \begin{equation}\label{eq:strDb}
  \rho_Q(\fx)\lesssim  B_{a,b,c}(Q,R)\rho_R(\fx),
\end{equation}
where $B(a,b,c)$ is as in \eqref{eq:BabD}.
\end{enumerate}
\end{definition}

\begin{remark}\label{rem:uabc}
Let $r\in(0,\infty)$ and $V$ be a weight.
Under the following assumptions,
the family of quasi-norms $\{\rho_{\aveL^r(Q,V)}\}_{Q\in\mathscr{D}}$
is a family of $(u,a,b,c)$-norms.
\begin{enumerate}[{\rm(i)}]
  \item\label{rem:uabc-it1} If $V$ is an $(r,\beta)$-doubling weight
  for $\beta\in[n,\infty)$ in the sense of
Definition \ref{def:dbDim}, then, combining
Lemmas \ref{lem:equifx} and \ref{lem:QvsRdb}, we conclude that
$\{\rho_{\aveL^r(Q,V)}\}_{Q\in\mathscr{D}}$
is a family of $(\min\{1,r\},\frac nr,\frac{\beta-n}{r},\frac \beta r)$-norms.

 \item\label{rem:uabc-it2} If $V\in \mathscr{A}_w$ with $w\ge r$ has doubling
 dimension $\beta\in[n,\infty)$, then $\{\rho_{\aveL^r(Q,V)}\}_{Q\in\mathscr{D}}$
 is a family of $(\min\{1,r\},\frac nw,\frac{\beta-n}{w},\frac \beta w)$-norms.
 Indeed, from H\"older inequality and Corollary \ref{cor:Ap1}, we infer that,
 for any $Q\in\mathscr{D}$ and $\fx\in\fX$,
 \begin{align*}
 \rho_{\aveL^r(Q,V)}(\fx)\le\rho_{\aveL^w(Q,V)}(\fx)
 \lesssim\rho_{\aveL^r(Q,V)}(\fx).
 \end{align*}
 Hence, $\rho_{\aveL^r(Q,V)}(\fx)\approx \rho_{\aveL^w(Q,V)}(\fx)$.
Together with Lemmas \ref{lem:equifx} and \ref{lem:QvsR},
this implies the desired claim.
\end{enumerate}
\end{remark}

\begin{theorem}\label{thm:phi}
Let $p\in(0,\infty)$, $q\in(0,\infty]$,
and $s\in\mathbb{R}$.
Suppose that $\rho=\{\rho_Q\}_{Q\in\mathscr D}$ is a family
of $(u,a,b,c)$-norms for some $u\in(0,1]$ and
$a,b,c\in[0,\infty)$. Then
\begin{enumerate}[\rm(i)]
\item\label{eq:AindepPhi}  $\dot A^s_{p,q}(\rho,\eta)$
is independent of the choice of the Littlewood--Paley function $\eta$.
  \item $S_{\varphi}:\dot A^s_{p,q}(\rho,\tilde{\varphi})\to \dot a^s_{p,q}(\rho)$ is bounded,
  where $\tilde{\varphi}(\cdot):=\varphi(-\cdot)$ denotes the reflection of $\varphi$.
  \item $T_{\psi}:\dot a^s_{p,q}(\rho)\to \dot A^s_{p,q}(\rho,\varphi)$ is bounded.
  \item If, in addition, $\varphi$ and $\psi$ satisfy
\begin{equation}\label{eq:LPpair}
  \sum_{j\in\Z}\hat\varphi(-2^j\xi)\hat\psi(2^j\xi)\equiv 1,\quad\forall\ \xi\in\R^n\setminus\{0\},
\end{equation}
then $T_\psi\circ S_{\varphi}$ acts as the identity on $\dot A^s_{p,q}(\rho,\varphi)$.
\end{enumerate}
The assumptions, and hence the conclusions, hold in particular if
$\rho_Q=\rho_{\aveL^r(Q;V)}$, where
$V$ is an $(r,\beta)$-doubling weight.
\end{theorem}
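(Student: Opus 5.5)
The plan follows the Frazier--Jawerth scheme. The two basic tools are the sampling inequality Lemma \ref{lem:extraPhi} and the strong doubling \eqref{eq:strDb}; together they will replace reducing operators. The key intermediate estimate is the following. For $f\in\mathscr S_\infty'(\R^n;\fX)$, $j\in\Z$ and $Q\in\mathscr D_j$, we want
\begin{equation*}
  \sup_{x\in Q}\rho_Q(\varphi_j*f(x))^{u'}\lesssim\inf_{y\in Q}\big(\phi_j*\rho_{\mathscr D_j}(\varphi_j*f)^{u'}\big)(y),
\end{equation*}
with $u'\le u$ arbitrarily small and $\phi(x)=(1+|x|)^{-M}$ with $M$ arbitrarily large.

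To prove it, apply Lemma \ref{lem:extraPhi} to the fixed $u$-norm $\rho_Q$, which is also a $u'$-norm. This bounds the left side by $\int\phi'_j(y-z)\rho_Q(\varphi_j*f(z))^{u'}dz$. Split the integral over $R\in\mathscr D_j$ and replace $\rho_Q$ by $\rho_R$ using \eqref{eq:strDb} with $\ell(Q)=\ell(R)$. This costs only the factor $(1+2^j|x_Q-x_R|)^{cu'}$, which is absorbed by enlarging $M'$, as in \eqref{eq:applyWeakDb}. Condition \eqref{def:uabc-it2} of Definition \ref{def:uabc} is needed only qualitatively, to make all pairings and series meaningful in $\fX$.

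\textbf{(ii)} Note that $\langle f,\varphi_Q\rangle=|Q|^{1/2}(\tilde\varphi_j*f)(x_Q)$ for $Q\in\mathscr D_j$, up to normalisation. So $\rho_{\mathscr D_j}((S_\varphi f)_j)(x)\lesssim\big(\phi_j*\rho_{\mathscr D_j}(\tilde\varphi_j*f)^{u'}\big)^{1/u'}(x)$ by the key estimate. Choose $u'<\min(p,q)$. Then $\phi_j*g\lesssim Mg$ and the Fefferman--Stein inequality on $L^{p/u'}\ell^{q/u'}$ give the bound in the $F$ case; the $B$ case follows from Young's inequality.

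\textbf{(iii)} Write $\varphi_j*T_\psi t=\sum_{k}\sum_{R\in\mathscr D_k}t_R\,\varphi_j*\psi_R$, where only $|j-k|\le1$ contributes by Fourier support. Then apply $\rho_Q^{u'}$ with the $u'$-triangle inequality, together with the kernel decay $|\varphi_j*\psi_R(x)|\lesssim|R|^{-1/2}(1+2^{k}|x-x_R|)^{-L}$. Use \eqref{eq:strDb} to pass from $\rho_Q$ to $\rho_R$; since $\ell(Q)\approx\ell(R)$, the polynomial factors $B_{a,b,c}$ are absorbed by $L$. The result is the standard bound by $\big(\phi_k*\rho_{\mathscr D_k}(t_k)^{u'}\big)^{1/u'}$. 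Conclude by Fefferman--Stein as in (ii).

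\textbf{(iv)} The Calder\'on reproducing identity $f=\sum_j\psi_j*\tilde\varphi_j*f$ in $\mathscr S_\infty'$ follows from \eqref{eq:LPpair}. Expanding $\tilde\varphi_j*f\cdot\psi_j$ via \eqref{eq:316}-type sampling gives $T_\psi S_\varphi f=f$ exactly as in the scalar case, by testing against $\fx^*\in\fX^*$. Convergence in $\mathscr S'_\infty(\R^n;\fX)$ of $T_\psi t$ for $t\in\dot a^s_{p,q}(\rho)$ needs a growth bound $\|t_R\|\lesssim$ polynomial in $\ell(R)^{\pm1}$ and $(1+|x_R|)$. This bound follows from \eqref{eq:strDb} applied between $R$ and a fixed reference cube, plus the norm equivalence for that one cube.

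\textbf{(i)} Independence of the Littlewood--Paley function then follows. For two such functions $\eta,\varphi$, write $\eta_j*f=\sum_{|i-j|\le1}\eta_j*\psi_i*\tilde\varphi_i*f$ and estimate as in (iii); equivalently, compose (ii) and (iii). The final sentence of the theorem follows from Remark \ref{rem:uabc}\eqref{rem:uabc-it1}.

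The main obstacle is the convergence and well-definedness in (iv) and (iii) in the vector-valued $\mathscr S'_\infty$ setting. Here the growth estimate for $t_R$ must be extracted from the $(u,a,b,c)$-structure alone. I would first try comparing each $\rho_R$ with the fixed $\rho_{[0,1)^n}$ through $B_{a,b,c}$, and then invoke the norm equivalence of Definition \ref{def:uabc}\eqref{def:uabc-it2} for that single cube.
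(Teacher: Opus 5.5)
Your proposal is correct and follows essentially the paper's route. Your key estimate is Proposition \ref{prop:Asup<A}, proved the same way from Lemma \ref{lem:extraPhi} and \eqref{eq:strDb} on equal-size cubes. Parts (ii) and (iii) correspond to Propositions \ref{prop:SphiBd}, \ref{prop:TpsiBd} and Lemma \ref{lem:A<a}. The convergence of $T_\psi t$ is Lemma \ref{lem:TpsiWellDef}; your direct bound $\|t_R\|_{\fX}\approx\rho_{[0,1)^n}(t_R)\lesssim B_{a,b,c}([0,1)^n,R)\,\rho_R(t_R)$ must be combined with the one-coefficient estimate of Lemma \ref{lem:tReasy}, and it is slightly more direct than the paper's dual-norm argument. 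Finally, (i) is obtained, as in the paper, by composing (ii) and (iii) with the identity (iv).

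One harmless slip: since $\beta$ may be close to $\pi$, the Fourier supports of $\varphi_j$ and $\psi_R$ can overlap whenever $|j-k|\le 3$, not only when $|j-k|\le 1$. The paper uses $|i-j_R|<4$, and any fixed finite range suffices for your argument.
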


Note that the last claim of Theorem \ref{thm:phi} is immediate from Remark \ref{rem:uabc}.
Before going to the proof of Theorem \ref{thm:phi},
we now apply this theorem and the relationships between
average and pointwise-weighted spaces studied in Section \ref{sec:BTL}
to obtain the $\varphi$-transform characterization of
pointwise-weighted spaces.

\begin{theorem}\label{thm:phiV}
Let $p\in(0,\infty)$, $V\in\bigcup_{r\ge p}\mathscr{A}_r$, and $s\in\mathbb{R}$.
\begin{enumerate}[{\rm(i)}]
  \item\label{thm:phiVB} If $q\in(0,\infty]$, then
 $\dot B^s_{p,q}(V,\eta)$
is independent of the choice of the Littlewood--Paley function $\eta$ and the operators
  $$S_{\varphi}:\dot B^s_{p,q}(V,\tilde{\varphi})\to \dot b^s_{p,q}(V)
  \quad\text{and}\quad T_{\psi}:\dot b^s_{p,q}(V)\to \dot B^s_{p,q}(V,\varphi)$$
  are bounded. Furthermore, if $\varphi$ and $\psi$ satisfy \eqref{eq:LPpair},
  then $T_\psi\circ S_{\varphi}$ acts as the identity on $\dot B^s_{p,q}(V,\varphi)$.

  \item If $V\in \mathscr{A}_r$ with some $r\in[p,\infty)$
  has optimal reverse H\"older lifting index $\varepsilon_V$ and if $q\in(0,r+\varepsilon_V)$,
  then all the conclusions of \eqref{thm:phiVB} hold with $B$ and $b$
  replaced by $F$ and $f$.
\end{enumerate}
\end{theorem}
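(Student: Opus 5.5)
The plan is to transfer everything from the average-weighted spaces, where Theorem \ref{thm:phi} already gives the full $\varphi$-transform package, to the pointwise-weighted spaces by means of the norm equivalences of Section \ref{sec:BTL}. Fix $r\ge p$ with $V\in\A_r$ and set $\rho:=\{\rho_{\aveL^r(Q,V)}\}_{Q\in\mathscr D}$.

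\emph{Step 1: $\rho$ satisfies the hypotheses of Theorem \ref{thm:phi}.} By Corollary \ref{cor:ApDb}, $V$ is $(r,\beta)$-doubling for some $\beta\in[n,\infty)$. Remark \ref{rem:uabc}\eqref{rem:uabc-it2}, with $w=r$, then shows that $\rho$ is a family of $(u,a,b,c)$-norms. Alternatively one can quote the last sentence of Theorem \ref{thm:phi} directly. Hence all conclusions of Theorem \ref{thm:phi} hold for $\dot A^s_{p,q}(\rho,\cdot)$ and $\dot a^s_{p,q}(\rho)$, for every $q\in(0,\infty]$.

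\emph{Step 2: comparison of the pointwise and average spaces.} Since $\max\{p,r\}=r$, Theorem \ref{thm:equiB} gives, for every Littlewood--Paley function $\eta$,
\[
\Norm{f}{\dot B^s_{p,q}(V,\eta)}\approx\Norm{f}{\dot B^s_{p,q}(\rho,\eta)}
\]
for all $q\in(0,\infty]$. Its proof only used properties of $\eta$ shared by all Littlewood--Paley functions, namely Lemma \ref{lem:extraPhi} and the reproducing formula. Corollary \ref{cor:equib} gives
\[
\Norm{t}{\dot b^s_{p,q}(V)}\approx\Norm{t}{\dot b^s_{p,q}(\rho)}.
\]
In the Triebel--Lizorkin case with $q<r+\eps_V$, Theorem \ref{thm:equiF} provides both equivalences \eqref{thm:equiF-e1} and \eqref{thm:equiF-e2}. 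The one point to check is that the constants in these equivalences do not depend on the choice of $\eta$ beyond its Littlewood--Paley parameters. This matters because we apply the equivalence with $\eta$, with $\varphi$, and with $\tilde\varphi$ separately.

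\emph{Step 3: conclusion.} Independence of $\eta$ follows from the chain
\[
\dot A^s_{p,q}(V,\eta)=\dot A^s_{p,q}(\rho,\eta)=\dot A^s_{p,q}(\rho,\eta')=\dot A^s_{p,q}(V,\eta'),
\]
where the middle equality is Theorem \ref{thm:phi}\eqref{eq:AindepPhi}. Boundedness of $S_\varphi$ is the composition
\[
\dot A^s_{p,q}(V,\tilde\varphi)\to\dot A^s_{p,q}(\rho,\tilde\varphi)\xrightarrow{S_\varphi}\dot a^s_{p,q}(\rho)\to\dot a^s_{p,q}(V).
\]
The argument for $T_\psi$ is symmetric. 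The identity $T_\psi\circ S_\varphi=I$ holds on $\dot A^s_{p,q}(V,\varphi)$ because, as sets, this space coincides with $\dot A^s_{p,q}(\rho,\varphi)$. Note that $\tilde\varphi$ is again Littlewood--Paley, since $|\hat{\tilde\varphi}|=|\hat\varphi(-\cdot)|$ is radially controlled in the same way.

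The main obstacle is the bookkeeping of index ranges rather than any analysis. The equivalences of Section \ref{sec:BTL} must hold with $\max\{p,r\}=r$, and in the $F$-case the restriction $q<r+\eps_V$ must be exactly the one appearing in Theorem \ref{thm:equiF}. No further hypothesis such as $q\le p$ may be needed, and there should be no hidden dependence on $\varphi$ in Propositions \ref{prop:BV<Brho} and \ref{prop:Arho<AV}.
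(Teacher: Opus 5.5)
Your proposal is correct and follows essentially the same route as the paper. The paper's proof uses Remark \ref{rem:uabc}\eqref{rem:uabc-it2} to show that $\{\rho_{\aveL^r(Q,V)}\}_{Q\in\mathscr D}$ is a family of $(u,a,b,c)$-norms, applies Theorem \ref{thm:phi}, and then transfers everything through Theorems \ref{thm:equiB} and \ref{thm:equiF} and Corollary \ref{cor:equib}. Your concern about $\eta$-dependence of the constants is unnecessary: the equivalences are only invoked for finitely many fixed Littlewood--Paley functions ($\eta$, $\varphi$, $\tilde\varphi$), so any dependence of the constants on them is harmless.
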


\begin{proof}
Assume that $V\in A_r$ with $r\ge p$ and the doubling dimension $\beta\in[n,\infty)$.
From Remark \ref{rem:uabc}\eqref{rem:uabc-it2},
we infer that $\{\rho_{\aveL^r(Q,V)}\}_{Q\in\mathscr{D}}$
is a family of $(\min\{1,r\},\frac nr,\frac{\beta-n}{r},\frac{\beta}{r})$-norms.
Hence, all the conclusions of Theorem \ref{thm:phi} hold
for $\rho=\{\rho_{\aveL^r(Q,V)}\}_{Q\in\mathscr{D}}$.
On the other hand, under the assumptions of the present theorem, we have
\begin{align*}
\begin{split}
\|\cdot\|_{\dot{A}^{s}_{p,q}(V,\varphi)}
&\approx\|\cdot\|_{\dot{A}^s_{p,q}(\rho_{\aveL^r(\mathscr{D},V)},\varphi)}
\quad\text{by Theorems \ref{thm:equiB} and \ref{thm:equiF}},\\
\|\cdot\|_{\dot{a}^{s}_{p,q}(V)}
&\approx\|\cdot\|_{\dot{a}^s_{p,q}(\rho_{\aveL^r(\mathscr{D},V)})}
\quad\text{by Corollary \ref{cor:equib} and Theorem \ref{thm:equiF}}.
\end{split}
\end{align*}
This proves the present theorem.
\end{proof}

Now, we provide the necessary ingredients for the proof of Theorem \ref{thm:phi}.
We first record an easy but useful estimate:

\begin{lemma}\label{lem:tReasy}
Let $p\in(0,\infty)$, $q\in(0,\infty]$,
and $s\in\mathbb{R}$. If  $\rho=\{\rho_Q\}_{Q\in\mathscr{D}}$
is a family
of quasi-norms, then for any $R\in\mathscr{D}$ and any sequence $t=\{t_Q\}_{Q\in\mathscr{D}}$
of vectors in $\fX$,
\begin{equation*}
  \rho_R(t_R)\leq\ell(R)^s\abs{R}^{\frac12-\frac1p}\Norm{t}{\dot a^s_{p,q}(\rho)}.
\end{equation*}
\end{lemma}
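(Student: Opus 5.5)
The plan is a direct computation: the $(L^p\ell^q)_A$ norm of the whole sequence $\{2^{js}\rho_{\mathscr D_j}(t_j)\}_j$ dominates the single term with $j$ such that $R\in\mathscr D_j$, and that term in turn dominates the contribution of the single cube $R$.

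First fix $R\in\mathscr D_j$, so that $\ell(R)=2^{-j}$. By the definition \eqref{df-tj}, for $x\in R$ we have $t_j(x)=t_R\abs{R}^{-\frac12}$. By the definition of $\rho_{\mathscr D_j}$, this gives
\begin{equation*}
  \rho_{\mathscr D_j}(t_j)(x)=\rho_R(t_R)\abs{R}^{-\frac12}\qquad\text{for }x\in R,
\end{equation*}
using the homogeneity of the quasi-norm $\rho_R$. Hence
\begin{equation*}
  2^{js}\rho_{\mathscr D_j}(t_j)\geq \ell(R)^{-s}\abs{R}^{-\frac12}\rho_R(t_R)\one_R
\end{equation*}
pointwise, with all quantities nonnegative.

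Next I would use the elementary lattice bounds on the mixed norms. In the Besov case, $\Norm{\{g_i\}}{\ell^qL^p}\geq\Norm{g_j}{L^p}$ for any single index $j$. In the Triebel--Lizorkin case, $\sup_i\abs{g_i}\leq\Norm{\{g_i\}}{\ell^q}$ pointwise, so $\Norm{\{g_i\}}{L^p\ell^q}\geq\Norm{g_j}{L^p}$ as well; both hold for all $q\in(0,\infty]$. Both norms are monotone in the moduli of the entries. Combining this with the pointwise bound above gives
\begin{equation*}
  \Norm{t}{\dot a^s_{p,q}(\rho)}\geq\ell(R)^{-s}\abs{R}^{-\frac12}\rho_R(t_R)\Norm{\one_R}{L^p}=\ell(R)^{-s}\abs{R}^{\frac1p-\frac12}\rho_R(t_R).
\end{equation*}
Rearranging yields the claim.

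There is no real obstacle here. The only points needing care are the sign of the exponent of $\ell(R)$, which comes from $2^{js}=\ell(R)^{-s}$, and the fact that the monotonicity of the mixed quasi-norms holds for all $p,q$, including values below $1$ and $q=\infty$.
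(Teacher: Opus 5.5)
Your proposal is correct and is essentially the paper's own argument: keep only the summand indexed by $R$ in the $\dot a^s_{p,q}(\rho)$ quasi-norm, then evaluate its $L^p$ norm to get $\ell(R)^{-s}\abs{R}^{\frac1p-\frac12}\rho_R(t_R)$. Your remarks on the homogeneity of $\rho_R$ and on the monotonicity of the mixed quasi-norms for all $p,q$ spell out steps the paper leaves implicit.
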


\begin{proof}
By dropping all but one summand in the definition, we find that
\begin{equation*}
  \Norm{t}{\dot a^s_{p,q}(\rho)}\geq\BNorm{\ell(R)^{-s}\rho_R(t_R)
  \frac{\one_R}{\abs{R}^{\frac12}} }{L^p}=\ell(R)^{-s}\abs{R}^{\frac1p-\frac12}\rho_R(t_R),
\end{equation*}
which completes the proof of the lemma.
\end{proof}

Next, we need another quasi-norm of average-weighted spaces.
For any $f\in\mathscr{S}'_\infty(\mathbb{R}^n;\fX)$, define
\begin{align*}
\Norm{f}{\dot A^s_{p,q}(\rho,\sup,\varphi)}
  &:=\BNorm{\Big\{2^{js}\Norm{\rho_{\mathscr D_j}(\varphi_j*f)}{\aveL^\infty(\mathscr D_j)}\Big\}_{j\in\Z}}{(L^p\ell^q)_A}
\end{align*}
and
\begin{align*}
\dot{A}^s_{p,q}(\rho,\sup,\varphi)
:=\Big\{f\in\mathscr{S}'_\infty(\mathbb{R}^n;\fX):
\Norm{f}{\dot A^s_{p,q}(\rho,\sup,\varphi)}<\infty\Big\}.
\end{align*}

\begin{proposition}\label{prop:SphiBd}
For any family of quasi-norms $\rho$
and any $f\in\mathscr{S}'_\infty(\mathbb{R}^n;\fX)$, we have
\begin{equation*}
  \Norm{S_\varphi f}{\dot a^s_{p,q}(\rho)}\leq\Norm{f}{\dot A^s_{p,q}(\rho,\sup,\tilde\varphi)}.
\end{equation*}
\end{proposition}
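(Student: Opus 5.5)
The claim is that the sequence norm is controlled termwise. The natural route is to dominate, for each fixed $j$, the function $\rho_{\mathscr D_j}((S_\varphi f)_j)$ pointwise by the function $\Norm{\rho_{\mathscr D_j}(\tilde\varphi_j*f)}{\aveL^\infty(\mathscr D_j)}$. Once that is done, multiplying by $2^{js}$ and taking the $(L^p\ell^q)_A$ quasi-norm (which is monotone under pointwise domination of nonnegative sequences) gives the inequality with constant $1$.

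The first step is to identify the coefficients. I would expect $\pair{f}{\varphi_Q}=\abs{Q}^{\frac12}(\tilde\varphi_j*f)(x_Q)$ for $Q\in\mathscr D_j$, up to the normalisation conventions for $x_Q$ and for $\varphi_j$ versus the dilation used in $\varphi_Q$. Indeed, with $\ell(Q)=2^{-j}$ we have
\begin{equation*}
\varphi_Q(y)=\abs{Q}^{-\frac12}\varphi(2^j(y-x_Q))=\abs{Q}^{\frac12}\,2^{jn}\tilde\varphi(2^j(x_Q-y))=\abs{Q}^{\frac12}\tilde\varphi_j(x_Q-y).
\end{equation*}
Hence pairing $f$ with $\varphi_Q$ is exactly evaluating the smooth $\fX$-valued function $\tilde\varphi_j*f$ at $x_Q$, multiplied by $\abs{Q}^{\frac12}$. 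For vector-valued $f$, I would justify this identity by pairing with $\fx^*\in\fX^*$, reducing to the scalar identity, and using that $\fX^*$ separates points, just as in the proof of the Calder\'on reproducing formula above.

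With $t_Q:=\pair{f}{\varphi_Q}$, definition \eqref{df-tj} gives, for $x\in Q\in\mathscr D_j$,
\begin{equation*}
t_j(x)=t_Q\abs{Q}^{-\frac12}=(\tilde\varphi_j*f)(x_Q).
\end{equation*}
Therefore
\begin{equation*}
\rho_{\mathscr D_j}(t_j)(x)=\rho_Q\big((\tilde\varphi_j*f)(x_Q)\big)\le\sup_{z\in Q}\rho_Q\big((\tilde\varphi_j*f)(z)\big).
\end{equation*}
Note that $x_Q$ lies in $Q$ (or its closure). Since $\tilde\varphi_j*f$ is continuous and $\rho_Q$ is equivalent to a norm (or at least continuous), the supremum over the closure equals the supremum over $Q$. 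This is essentially the function $\Norm{\rho_{\mathscr D_j}(\tilde\varphi_j*f)}{\aveL^\infty(\mathscr D_j)}(x)$.

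The only delicate point is the meaning of $\aveL^\infty$, which is an essential supremum. Passing from the value at the single point $x_Q$ to an essential supremum over $Q$ requires continuity of $z\mapsto\rho_Q((\tilde\varphi_j*f)(z))$. For a general quasi-norm without any continuity assumption, I would instead interpret the $\aveL^\infty(Q)$ quantity as a genuine supremum over the continuous representative. This is harmless for the families of $(u,a,b,c)$-norms used later, since condition \eqref{def:uabc-it2} of Definition \ref{def:uabc} gives equivalence with $\Norm{\cdot}{\fX}$. Apart from this, the proof is a direct computation.
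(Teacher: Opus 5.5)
Your proposal is correct and follows the paper's own proof. You identify $\pair{f}{\varphi_Q}=\abs{Q}^{\frac12}(\tilde\varphi_j*f)(x_Q)$, bound $\rho_Q$ of this point value by the supremum of $\rho_Q(\tilde\varphi_j*f)$ over $Q$, and use monotonicity of the $(L^p\ell^q)_A$ quasi-norm.

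Your caveat about the essential supremum is a point the paper passes over silently: it simply writes $\sup_{y\in Q}\rho_Q(f*\tilde\varphi_j(y))$ as the $\aveL^\infty$ quantity. Your resolution is adequate for the families actually used later. For those families, continuity of each $\rho_Q$ follows from the $u$-triangle inequality together with the equivalence to $\Norm{\cdot}{\fX}$, not from the equivalence alone.
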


Note that the bound here is just ``$\leq$'' instead of the typical ``$\lesssim$'', reflecting the relatively elementary nature of this estimate.

\begin{proof}[Proof of Proposition \ref{prop:SphiBd}]
\begin{equation*}
  \pair{f}{\varphi_Q}
  =\int f(y)\abs{Q}^{-\frac12}\varphi\Big(\frac{y-x_Q}{\ell(Q)}\Big)\,dy
  =\abs{Q}^{\frac12} f*\tilde\varphi_j(x_Q),
\end{equation*}
where $\tilde\varphi(z):=\varphi(-z)$ is the reflection. Thus
\begin{equation*}
\begin{split}
  \frac{\rho_Q(\pair{f}{\varphi_Q})}{\abs{Q}^{\frac12}}
  =\rho_Q(f*\tilde\varphi_j(x_Q))
\leq\sup_{y\in Q}\rho_Q(f*\tilde\varphi_j(y))
  =\Norm{\rho_{\mathscr D_j}(f*\tilde\varphi_j)}{\aveL^\infty(\mathscr D_j)}(x)
\end{split}
\end{equation*}
for all $x\in Q\in\mathscr D_j$. It follows that
\begin{equation*}
   \sum_{Q\in\mathscr D_j}\rho_Q(t_Q)
  \frac{\one_Q}{\abs{Q}^{\frac12}}
  \leq\Norm{\rho_{\mathscr D_j}(f*\tilde\varphi_j)}{\aveL^\infty(\mathscr D_j)}
\end{equation*}
pointwise, and hence
\begin{equation*}
   \Norm{S_\varphi f}{\dot a^s_{p,q}(\rho)}
   \leq\BNorm{\Big\{2^{js}\Norm{\rho_{\mathscr D_j}(f*\tilde\varphi_j)}{\aveL^\infty(\mathscr D_j)}\Big\}}{(L^p\ell^q)_A}
   =\Norm{f}{\dot A^s_{p,q}(\rho,\sup,\tilde\varphi)}.
\end{equation*}
This finishes the proof of the proposition.
\end{proof}

\begin{proposition}\label{prop:Asup<A}
Let $\rho=\{\rho_Q\}_{Q\in\mathscr D}$ be a family of
$u$-norms with some fixed $u\in(0,1]$ independent of $Q$.
Suppose that $\rho$ satisfies
\eqref{eq:weakDb} for some $N\geq 0$, all $\fx\in\fX\setminus\{0\}$,
and all cubes $Q,R\subset\R^n$ of equal size $\ell(Q)=\ell(R)$.
Then
\begin{equation}\label{eq:Asup<A-pointwise}
  \Norm{\rho_{\mathscr D_j}(\varphi_j*f)}{\aveL^\infty(\mathscr D_j)}\lesssim\big(\phi_j*\rho_{\mathscr D_j}(\varphi_j*f)^u\big)^{\frac1u},
\end{equation}
where $\phi(z)=(1+\abs{z})^{-M}$, $M>0$ can be taken as large as we like,
and the implicit positive constant is independent of $f$. Moreover,
\begin{equation*}
  \Norm{f}{\dot A^s_{p,q}(\rho,\sup,\varphi)}\lesssim\Norm{f}{\dot A^s_{p,q}(\rho,\varphi)},
\end{equation*}
where the implicit positive constant is independent of $f$.
\end{proposition}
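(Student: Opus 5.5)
The plan is to follow the proof of Proposition \ref{prop:BV<Brho} almost verbatim, replacing the pointwise norm $\Norm{V(x)\cdot}{}$ by the $u$-norm $\rho_Q$ itself.

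Fix $j\in\Z$, $Q\in\mathscr D_j$ and $x\in Q$. Apply the second statement of Lemma \ref{lem:extraPhi} to the $u$-norm $\rho_Q$. This gives
\begin{equation*}
  \sup_{y\in Q}\rho_Q(\varphi_j*f(y))^u\lesssim\inf_{y\in Q}\big(\phi'_j*\rho_Q(\varphi_j*f)^u\big)(y),
\end{equation*}
with $\phi'(z)=(1+\abs{z})^{-M'}$ and $M'$ as large as we like. The implicit constant does not depend on $Q$, because the lemma's constant depends only on $u$, $\gamma$ and $M'$, and not on the particular $u$-norm. This uniformity is the one point I would double-check in the lemma's proof. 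The left-hand side is exactly $\Norm{\rho_{\mathscr D_j}(\varphi_j*f)}{\aveL^\infty(\mathscr D_j)}(x)^u$.

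Next I will convert the fixed norm $\rho_Q$ on the right into the piecewise norm $\rho_{\mathscr D_j}$, exactly as in \eqref{eq:applyWeakDb}. I split $\R^n=\bigcup_{R\in\mathscr D_j}R$. For $z\in R$, I use \eqref{eq:weakDb} to get
\begin{equation*}
  \rho_Q(\varphi_j*f(z))^u\lesssim(1+2^j\abs{x_Q-x_R})^{Nu}\rho_R(\varphi_j*f(z))^u .
\end{equation*}
For $y\in Q$ and $z\in R$ one has $1+2^j\abs{x_Q-x_R}\approx 1+2^j\abs{y-z}$. Hence
\begin{equation*}
  \phi'_j(y-z)\,(1+2^j\abs{x_Q-x_R})^{Nu}\lesssim\phi_j(y-z),
\end{equation*}
with $\phi(z)=(1+\abs{z})^{-M}$ and $M=M'-Nu$. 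This yields \eqref{eq:Asup<A-pointwise} at every $y\in Q$, in particular at $y=x$. Since $M'$ is arbitrary, $M$ can still be taken as large as we like.

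For the norm inequality, I raise \eqref{eq:Asup<A-pointwise} to suitable powers and choose $u_0\in(0,u]$ with $u_0<\min\{p,q\}$. This is allowed because a $u$-norm is also a $u_0$-norm, so the preceding argument runs with $u_0$ in place of $u$. Then
\begin{equation*}
  \Norm{f}{\dot A^s_{p,q}(\rho,\sup,\varphi)}\lesssim\BNorm{\Big\{2^{jsu_0}\phi_j*\rho_{\mathscr D_j}(\varphi_j*f)^{u_0}\Big\}_{j\in\Z}}{(L^{p/u_0}\ell^{q/u_0})_A}^{1/u_0}.
\end{equation*}
For $M>n$ we have $\abs{\phi_j*g}\lesssim Mg$, where $M$ denotes the Hardy--Littlewood maximal operator, and $\Norm{\phi_j}{L^1}\lesssim1$.
\begin{itemize}
\item In the Besov case, Young's inequality in $L^{p/u_0}$ at each fixed $j$ removes $\phi_j*$.
\item In the Triebel--Lizorkin case, the Fefferman--Stein inequality on $L^{p/u_0}\ell^{q/u_0}$ removes it, since both exponents exceed $1$.
\end{itemize}
What remains is $\Norm{f}{\dot A^s_{p,q}(\rho,\varphi)}$. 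For $q=\infty$ in the $F$ case, $\ell^\infty$-valued Fefferman--Stein fails. Instead I use the pointwise bound $\sup_j\phi_j*g_j\lesssim M(\sup_j g_j)$ together with the scalar maximal theorem in $L^{p/u_0}$, which is enough.

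The main obstacle is only bookkeeping: keeping the constants in Lemma \ref{lem:extraPhi} uniform over the family $\rho_Q$, and handling the endpoint $q=\infty$ as just described.
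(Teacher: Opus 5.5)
Your proposal is correct and follows essentially the same route as the paper: Lemma~\ref{lem:extraPhi} applied to the $u$-norm $\rho_Q$ (its constant is indeed independent of the particular $u$-norm), the passage from $\rho_Q$ to $\rho_{\mathscr D_j}$ via \eqref{eq:weakDb} exactly as in \eqref{eq:applyWeakDb}, and then the Fefferman--Stein inequality after shrinking $u$ below $\min\{p,q\}$. One small correction: the $\ell^\infty$-valued Fefferman--Stein inequality on $L^{r}$ with $1<r<\infty$ does not fail. It holds by precisely the pointwise bound $\sup_j \phi_j*g_j\lesssim M(\sup_j g_j)$ that you invoke, so your separate treatment of $q=\infty$ is valid but is not actually an exceptional case.
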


\begin{remark}\label{rem:A<Asup}
The converse inequality
\begin{equation*}
  \Norm{f}{\dot A^s_{p,q}(\rho,\varphi)}\leq\Norm{f}{\dot A^s_{p,q}(\rho,\sup,\varphi)}
\end{equation*}
is trivial without any assumptions on $\rho$.
\end{remark}

\begin{proof}[Proof of Proposition \ref{prop:Asup<A}]
Let $x,y\in Q\in\mathscr D_j$. We apply Lemma \ref{lem:extraPhi}
with the $u$-norm $\rho_Q$. Since every $u$-norm is also a $\tilde u$-norm for all $\tilde u\in(0,u)$, we assume that $u\in(0,\min\{p,q\})$ to begin with. (This is only needed later in the proof.) This gives
\begin{equation*}
  \rho_Q(\varphi_j*f(x))^u
  \lesssim\int_{\R^n}\phi_j'(x-z)\rho_Q(\varphi_j*f(z))^u \,dz,
\end{equation*}
where $\phi_j'(z)=(1+\abs{z})^{-M'}$, and $M'>0$ can be taken as large as we like. Moreover,
by \eqref{eq:applyWeakDb}, we obtain
\begin{equation*}
  \int_{\R^n} \phi_j'(x-z)\rho_Q(\varphi_j*f(z))^u \,dz
  \lesssim\int_{\R^n}\phi_j(y-z)\rho_{\mathscr D_j}(\varphi*f)(z)^u \,dz,
\end{equation*}
where $\phi(z):=\phi'(z)(1+\abs{z})^{Nu}=(1+\abs{z})^{-M}$, and $M=M'-Nu$ can still be chosen as large as we like by taking $M'$ large enough. Since the previous bound holds for all $x,y\in Q\in\mathscr D_j$, the claim \eqref{eq:Asup<A-pointwise} follows.

For the second claim, we multiply both sides of \eqref{eq:Asup<A-pointwise} by $2^{js}$ and take the $(L^p\ell^q)_A$ quasi-norms of both sides. This gives
\begin{equation*}
\begin{split}
  \Norm{f}{\dot A^s_{p,q}(\rho,\sup,\varphi)}
  &=\BNorm{\Big\{2^{js}\Norm{\rho_{\mathscr D_j}(\varphi_j*f)}{\aveL^\infty(\mathscr D_j)}\Big\}}{(L^p\ell^q)_A} \lesssim\BNorm{\Big\{2^{js}\big(\phi_j*\rho_{\mathscr D_j}(\varphi_j*f)^u\big)^{\frac1u}\Big\}}{(L^p\ell^q)_A} \\
  &=\BNorm{\Big\{2^{js u}\phi_j*\rho_{\mathscr D_j}(\varphi_j*f)^u\Big\}}{(L^{\frac pu}\ell^{\frac qu})_A}^{\frac1u}
  \lesssim\BNorm{\Big\{2^{js u}\rho_{\mathscr D_j}(\varphi_j*f)^u\Big\}}{(L^{\frac pu}\ell^{\frac qu})_A}^{\frac1u} \\
  &=\BNorm{\Big\{2^{js}\rho_{\mathscr D_j}(\varphi_j*f)\Big\}}{(L^{p}\ell^{q})_A}
  =\Norm{f}{\dot A^s_{p,q}(\rho,\varphi)},
\end{split}
\end{equation*}
where the last inequality follows from the Fefferman--Stein vector-valued maximal inequality, noting that $|\phi_j*f|\lesssim Mf$ and $\frac pu,\frac qu>1$.
\end{proof}

\begin{lemma}\label{lem:phiPsi}
For $\varphi,\psi\in\mathscr S_\infty$, we have the estimates
\begin{equation*}
  \abs{\varphi_i*\psi_j(x)}\lesssim 2^{-\abs{i-j}M}\phi_{\min(i,j)}(x),
\end{equation*}
where $\phi(x):=(1+\abs{x})^{-M}$,  $\phi_i(x)=2^{in}\phi(2^i x)$,
and the implicit positive constant is independent of $i$, $j$, and $x$, and
\begin{equation*}
  \abs{\pair{\varphi_Q}{\psi_R}}
  \lesssim B_{-M}(Q,R):=B_{-M,-M,-M}(Q,R),
\end{equation*}
where the right hand side is defined in \eqref{eq:BabD}
and the implicit positive constant is independent of $Q$ and $R$.
In each case, $M>0$ can be taken as large as we like.
\end{lemma}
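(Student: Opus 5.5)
The plan is the classical argument based on vanishing moments and Taylor expansion, combined with the frequency localisation of the Littlewood--Paley-type functions. Both $\varphi,\psi\in\mathscr S_\infty$ have all moments vanishing, equivalently $\hat\varphi,\hat\psi$ vanish to infinite order at the origin.

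By symmetry of the roles (replacing $\varphi,\psi$ by reflections if needed), and by the dilation identity $\varphi_i*\psi_j(x)=2^{in}(\varphi*\psi_{j-i})(2^ix)$, it suffices to treat $i=0\le j$. The target is then
\begin{equation*}
  \abs{\varphi*\psi_j(x)}\lesssim 2^{-jM}(1+\abs{x})^{-M},
\end{equation*}
and the case $i\ge j$ is handled identically with the roles of the functions swapped.

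For this bound, write
\begin{equation*}
  \varphi*\psi_j(x)=\int\psi_j(y)\big[\varphi(x-y)-T_{L}(x,y)\big]\,dy,
\end{equation*}
where $T_L(x,y)$ is the Taylor polynomial of degree $L-1$ of $\varphi(x-\cdot)$ at $y=0$. Subtracting $T_L$ is allowed because $\psi_j$ annihilates polynomials. Then split the integral into two regions.

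On $\abs{y}\le\frac12(1+\abs{x})$, the Taylor remainder is bounded by $\abs{y}^L(1+\abs{x})^{-M}$, using the Schwartz decay of the derivatives of $\varphi$ near $x$. Integrating against $\abs{\psi_j(y)}$ gives $\int\abs{y}^L\abs{\psi_j(y)}\,dy\lesssim 2^{-jL}$.

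On the complementary region $\abs{y}>\frac12(1+\abs{x})$, bound $\abs{\varphi(x-y)}$ and each Taylor term crudely by a polynomial factor $(1+\abs{x})^{L}(1+\abs{y})^{L}$. This is compensated by the tail of $\psi_j$:
\begin{equation*}
  \int_{\abs{y}>\frac12(1+\abs{x})}(1+\abs{y})^{L}\abs{\psi_j(y)}\,dy\lesssim 2^{-jK}(1+\abs{x})^{-K}\quad\text{for any }K.
\end{equation*}
Here the factor $2^{-jK}$ comes from the fact that $\abs{y}\ge\frac12$ forces $2^j\abs{y}\gtrsim 2^j$. Choosing $L$ and $K$ large relative to $M$ gives the claimed estimate.

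For the second estimate, note that $\pair{\varphi_Q}{\psi_R}$ with $Q\in\mathscr D_i$ and $R\in\mathscr D_j$ equals, up to the normalisation $\abs{Q}^{\frac12}\abs{R}^{\frac12}$, a convolution $\tilde\varphi_i*\psi_j(x_Q-x_R)$, in the same way as in the proof of Proposition \ref{prop:SphiBd}. The first estimate therefore gives
\begin{equation*}
  \abs{\pair{\varphi_Q}{\psi_R}}\lesssim 2^{-\abs{i-j}M}\,2^{-\frac{(i+j)n}{2}}\,2^{\min(i,j)n}\big(1+2^{\min(i,j)}\abs{x_Q-x_R}\big)^{-M}.
\end{equation*}
The remaining step is bookkeeping against \eqref{eq:BabD}. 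The factor $2^{-\frac{(i+j)n}{2}}2^{\min(i,j)n}=2^{-\abs{i-j}n/2}$ is harmless. The factor $2^{-\abs{i-j}M}$ is comparable to $(1+\ell(R)/\ell(Q))^{-M}(1+\ell(Q)/\ell(R))^{-M}$, and $2^{\min(i,j)}=1/\max\{\ell(Q),\ell(R)\}$. Together these give $B_{-M}(Q,R)$, after renaming $M$.

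The only real care needed is the uniformity of the Taylor remainder bound in the near region. There I would use that all derivatives of $\varphi$ satisfy the decay $\abs{\partial^\alpha\varphi(z)}\lesssim(1+\abs{z})^{-M}$, and that $\abs{x-y}\gtrsim\abs{x}$ when $\abs{y}\le\frac12(1+\abs{x})$ and $\abs{x}$ is large. The rest is routine.
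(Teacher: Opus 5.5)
Your proof is correct, and it supplies an argument the paper does not give. The paper only cites \cite[Lemma 2.2]{YY:08} (reproduced as \cite[Lemma 3.31]{BHYY:I}) for the convolution bound and \cite[Corollary 3.32]{BHYY:I} for the bound on $\pair{\varphi_Q}{\psi_R}$, noting that those sources give more precise estimates of which only the crude form with arbitrary $M$ is kept.

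You instead give the standard self-contained proof. You dilate to $i=0\le j$ and use the vanishing moments of the finer-scale factor $\psi_j$ to subtract the degree-$(L-1)$ Taylor polynomial of $\varphi(x-\cdot)$. You then split at $\abs{y}=\frac12(1+\abs{x})$, gaining $2^{-jL}$ from the moments of $\psi_j$ in the near region and $2^{-jK}(1+\abs{x})^{-K}$ from its tail in the far region. Your reduction of the second estimate is the right bookkeeping. It rests on $\pair{\varphi_Q}{\psi_R}=\abs{Q}^{\frac12}\abs{R}^{\frac12}(\tilde\varphi_i*\psi_j)(x_Q-x_R)$, which holds for the paper's bilinear pairing, together with $2^{-\frac{(i+j)n}{2}}2^{\min(i,j)n}=2^{-\abs{i-j}n/2}$ and $2^{\min(i,j)}=1/\max\{\ell(Q),\ell(R)\}$.

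The citation buys brevity and sharper forms of the bounds; your route gives a complete proof of exactly the form the paper later uses.

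Two cosmetic remarks:
\begin{enumerate}
\item No reflection is needed when $i\ge j$. Since $\varphi_i*\psi_j=\psi_j*\varphi_i=2^{jn}(\psi*\varphi_{i-j})(2^j\,\cdot)$, you simply swap the roles of $\varphi$ and $\psi$.
\item The near-region bound needs no largeness of $\abs{x}$. Indeed, $\abs{ty}\le\abs{y}\le\frac12(1+\abs{x})$ gives $1+\abs{x-ty}\ge\frac12(1+\abs{x})$ for all $t\in[0,1]$ and all $x$.
\end{enumerate}
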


\begin{proof}
The first estimate is \cite[Lemma 2.2]{YY:08}, which is also reproduced as \cite[Lemma 3.31]{BHYY:I}, and the second one is \cite[Corollary 3.32]{BHYY:I}. In the cited sources, more precise forms of the upper bound are given; we have omitted the details that are irrelevant to us.
\end{proof}

\begin{lemma}\label{lem:TpsiWellDef}
Suppose that $\rho=\{\rho_Q\}_{Q\in\mathscr D}$ is a family of quasi-norms on $\fX$, each qualitatively equivalent to $\Norm{\ }{\fX}$, and that \eqref{eq:strDb} holds for some $a,b,c\geq 0$. Then for all $t\in\dot a^s_{p,q}(\rho)$, the series $T_\psi t$ as in \eqref{eq:SphiTpsi} converges in $\mathscr S_\infty'(\R^n;\fX)$.
\end{lemma}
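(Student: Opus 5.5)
We will prove convergence of $T_\psi t=\sum_{Q}t_Q\psi_Q$ in $\mathscr S_\infty'(\R^n;\fX)$ by testing against $h\in\mathscr S_\infty(\R^n)$. We will show that $\sum_{Q\in\mathscr D}\Norm{t_Q}{\fX}\abs{\pair{\psi_Q}{h}}<\infty$, with a bound of the form $C\,\Norm{t}{\dot a^s_{p,q}(\rho)}\,P(h)$ for some Schwartz seminorm $P$. Then the series $\sum_Q t_Q\pair{\psi_Q}{h}$ converges absolutely in $\fX$, and its limit defines a bounded linear functional on $\mathscr S_\infty$. Unconditional convergence over any exhaustion of $\mathscr D$ by finite sets follows from the same absolute bound.

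\emph{Step 1: size of a single coefficient.} By Lemma \ref{lem:tReasy}, $\rho_Q(t_Q)\leq\ell(Q)^s\abs{Q}^{\frac12-\frac1p}\Norm{t}{\dot a^s_{p,q}(\rho)}$. To pass to $\Norm{t_Q}{\fX}$, fix a reference cube $R_0:=[0,1)^n$ and use the qualitative equivalence of $\rho_{R_0}$ with $\Norm{\cdot}{\fX}$, which gives $\Norm{t_Q}{\fX}\lesssim_{R_0}\rho_{R_0}(t_Q)$. Then \eqref{eq:strDb} with the roles $(R_0,Q)$ gives $\rho_{R_0}(t_Q)\lesssim B_{a,b,c}(R_0,Q)\rho_Q(t_Q)$. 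Here $B_{a,b,c}(R_0,Q)\lesssim(1+\ell(Q))^{a}(1+\ell(Q)^{-1})^{b}(1+\abs{x_Q})^{c}$ after absorbing harmless factors. Altogether,
\begin{equation*}
  \Norm{t_Q}{\fX}\lesssim \Norm{t}{\dot a^s_{p,q}(\rho)}\,\ell(Q)^{s+\frac n2-\frac np}\,(1+\ell(Q))^{a}(1+\ell(Q)^{-1})^{b}(1+\abs{x_Q})^{c},
\end{equation*}
which is polynomial growth in $\ell(Q)^{\pm1}$ and in $\abs{x_Q}$.

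\emph{Step 2: decay of $\pair{\psi_Q}{h}$.} Since $h\in\mathscr S_\infty$ has all moments vanishing and $\psi$ is a Littlewood--Paley function, the standard estimate (for instance via Lemma \ref{lem:phiPsi}, viewing $h$ at scale $1$ centred at $R_0$, i.e.\ $h$ is a fixed multiple of a $\varphi_{R_0}$-type molecule) gives, for every $M>0$,
\begin{equation*}
  \abs{\pair{\psi_Q}{h}}\lesssim_{M,h} \min\{\ell(Q),\ell(Q)^{-1}\}^{M}(1+\abs{x_Q}/\max\{1,\ell(Q)\})^{-M}\abs{Q}^{\pm\frac12}.
\end{equation*}
This follows from $B_{-M}(Q,R_0)$, where the constant depends on finitely many Schwartz seminorms of $h$. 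If Lemma \ref{lem:phiPsi} is not directly applicable to a general $h$, we will instead prove the bound by the classical argument: Taylor-expand $\psi$ and use the vanishing moments of $h$ when $\ell(Q)$ is large, and use the vanishing moments of $\psi$ and Taylor-expand $h$ when $\ell(Q)$ is small. In both cases the implied constant is a Schwartz seminorm of $h$.

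\emph{Step 3: summation.} We multiply the bounds from Steps 1 and 2 and sum over $Q\in\mathscr D_j$ and then over $j\in\Z$. For fixed $j$, the sum over $Q\in\mathscr D_j$ of $(1+\abs{x_Q})^{c}(1+\abs{x_Q}/\max\{1,2^{-j}\})^{-M}$ is bounded by $\max\{1,2^{-j}\}^{c}\cdot 2^{jn}\max\{1,2^{-j}\}^{n}$, provided $M>c+n$. This is again polynomial in $2^{\pm j}$. Choosing $M$ large enough, the factor $2^{-\abs{j}M}$ absorbs all polynomial growth, and the series over $j$ converges. The main obstacle, such as it is, is Step 2: obtaining a decay bound that is uniform in the position $x_Q$ as well as in the scale, with the constant controlled by a seminorm of $h$, so that the limit is a genuinely continuous functional. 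The rest is bookkeeping of exponents.
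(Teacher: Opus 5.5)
Your proposal is correct and is essentially the paper's proof. You bound $\Norm{t_Q}{\fX}$ using Lemma \ref{lem:tReasy}, together with \eqref{eq:strDb} relative to the reference cube $[0,1)^n$ and the qualitative equivalence of $\rho_{[0,1)^n}$ with $\Norm{\ }{\fX}$; the paper routes this step through dual norms, $\rho_R^*(\fx^*)\lesssim B_{a,b,c}(Q_0,R)\rho_{Q_0}^*(\fx^*)$, while you do it directly, which is equivalent. You then bound $\abs{\pair{h}{\psi_Q}}\lesssim B_{-M}(Q_0,Q)$ by Lemma \ref{lem:phiPsi} and sum with $M$ large. That lemma is stated for arbitrary functions in $\mathscr S_\infty$, so it applies to $h$ directly and your fallback Taylor argument is unnecessary. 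Tracking a Schwartz seminorm of $h$, to get continuity of the limit functional, is a small refinement the paper leaves implicit.
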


\begin{proof}
The claim means that the series
\begin{equation*}
  \sum_{R\in\mathscr D}\Norm{\pair{\varphi}{\psi_R}t_R}{\fX}
\end{equation*}
converges for all $\varphi\in\mathscr S_\infty$. If $Q_0:=[0,1)^n$, we note that $\varphi=\varphi_{Q_0}$. Then
\begin{equation*}
\begin{split}
   \Norm{\pair{\varphi}{\psi_R}t_R}{\fX}
   &=\abs{\pair{\varphi}{\psi_R}}\sup_{\Norm{\fx^*}{\fX^*}\leq 1}\abs{\pair{t_R}{\fx^*}}
   \leq B_{-M}(Q_0,R)\sup_{\Norm{\fx^*}{\fX^*}\leq 1}\rho_R(t_R)\rho_R^*(\fx^*),
\end{split}
\end{equation*}
where
\begin{equation*}
\begin{split}
  \rho_R(t_R)
  &\leq\ell(R)^s\abs{R}^{\frac12-\frac1p}\Norm{t}{\dot a^s_{p,q}(\rho)}
  \approx  B_{\sigma,-\sigma,0}(Q_0,R)\Norm{t}{\dot a^s_{p,q}(\rho)},
  \qquad\sigma:=s+n\Big(\frac12-\frac1p\Big),
\end{split}
\end{equation*}
by Lemma \ref{lem:tReasy}, and
\begin{equation*}
  \rho_R^*(\fx^*)\lesssim B_{a,b,c}(Q_0,R)\rho_{Q_0}^*(\fx^*)
\end{equation*}
by assumption \eqref{eq:strDb} and the definition of the dual norms. Moreover,
\begin{equation*}
  \rho_{Q_0}^*(\fx^*)=\sup_{\fx\in\fX\setminus\{0\}}\frac{\abs{\pair{\fx^*}{\fx}}}{\rho_{Q_0}(\fx)}
  \approx\sup_{\fx\in\fX\setminus\{0\}}\frac{\abs{\pair{\fx^*}{\fx}}}{\Norm{\fx}{\fX}}
  =\Norm{\fx^*}{\fX^*}\leq 1
\end{equation*}
by the assumed equivalence of $\rho_{Q_0}$ and $\Norm{\ }{\fX}$.

Hence,
\begin{equation*}
\begin{split}
  \sum_{R\in\mathscr D}&\Norm{\pair{\varphi}{\psi_R}t_R}{\fX}
  \lesssim\sum_{R\in\mathscr D}B_{-M}(Q_0,R)B_{\sigma,-\sigma,0}(Q_0,R)
  B_{a,b,c}(Q_0,R)\Norm{t}{\dot a^s_{p,q}(\rho)},
\end{split}
\end{equation*}
and this series converges as soon as $M>0$ is chosen large enough.
\end{proof}

\begin{lemma}\label{lem:A<a}
Suppose that $\rho=\{\rho_Q\}_{Q\in\D}$ is
a family of $(u,a,b,c)$-norms for some
$u\in(0,1]$ and $a,b,c\in(0,\infty)$.
For $t\in\dot a^s_{p,q}(\rho)$ and two Littlewood--Paley functions $\varphi,\psi$, we have
\begin{equation*}
  \Norm{\rho_{\mathscr D_i}(\varphi_i*T_\psi t)}{\aveL^\infty(\mathscr D_i)}
  \lesssim\sum_{j:\abs{j-i}\leq 3}\big(\phi_j*\rho_{\mathscr D_j}(t_j)^u\big)^{\frac1u},
\end{equation*}
where $\phi_j(x):=2^{jn}\phi(2^j x)$, $\phi(x):=(1+\abs{x})^{-M}$ with $M>0$ as large as we like, and
the implicit positive constant is independent of
$i$ and $t$.
\end{lemma}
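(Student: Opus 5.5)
We write $\varphi_i*T_\psi t=\sum_{R\in\mathscr D}t_R\,\varphi_i*\psi_R$. Since $\psi$ is a Littlewood--Paley function, $\hat\psi_R$ is supported where $\abs{\xi}\approx 2^{j}$ with $j=-\log_2\ell(R)$, and likewise $\hat\varphi_i$ near $2^i$. With $\sqrt2<\alpha<\beta<\pi$, the two annuli are disjoint unless $\abs{i-j}$ is small; a generous check shows $\abs{i-j}\leq 3$ suffices. Hence only $j$ with $\abs{j-i}\le3$ contribute (convergence of the series in $\mathscr S'_\infty$ is provided by Lemma \ref{lem:TpsiWellDef}), and
\begin{equation*}
  \varphi_i*T_\psi t=\sum_{\abs{j-i}\le 3}\sum_{R\in\mathscr D_j}t_R\,\varphi_i*\psi_R,\qquad \varphi_i*\psi_R(x)=\abs{R}^{\frac12}(\varphi_i*\psi_j)(x-x_R).
\end{equation*}
It therefore suffices to treat each fixed $j$ with $\abs{j-i}\le3$ separately.

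Fix $x,y\in Q\in\mathscr D_i$. By Lemma \ref{lem:phiPsi}, since $\abs{i-j}\le 3$ we have $\abs{\varphi_i*\psi_j(z)}\lesssim\phi'_j(z)$, with $\phi'(z)=(1+\abs z)^{-M'}$ and $M'$ large. Applying the $u$-triangle inequality of $\rho_Q$ (valid for the countable sum by continuity, since $\rho_Q$ is equivalent to $\Norm{\cdot}{}$) gives
\begin{equation*}
  \rho_Q\Big(\sum_{R\in\mathscr D_j}t_R\,\varphi_i*\psi_R(x)\Big)^u\lesssim\sum_{R\in\mathscr D_j}\big(\abs{R}^{\frac12}\phi'_j(x-x_R)\big)^u\rho_Q(t_R)^u .
\end{equation*}
Next we pass from $\rho_Q$ to $\rho_R$ using \eqref{eq:strDb}. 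Because $\ell(Q)\approx\ell(R)$ up to a factor $2^{3}$, the quantity $B_{a,b,c}(Q,R)$ is at most a constant times $(1+2^j\abs{x_Q-x_R})^c$. Moreover $1+2^j\abs{x_Q-x_R}\approx 1+2^j\abs{y-z}$ uniformly for $z\in R$, with constants depending only on $n$.

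This yields
\begin{equation*}
  \rho_Q(\cdots)^u\lesssim\sum_{R\in\mathscr D_j}(1+2^j\abs{y-x_R})^{-M'u+cu}\,\big(\abs{R}^{\frac12}2^{jn}\big)^u\rho_R(t_R)^u .
\end{equation*}
We rewrite the right-hand side in terms of $\rho_{\mathscr D_j}(t_j)^u$. On $R$ we have $\rho_{\mathscr D_j}(t_j)(z)=\rho_R(t_R)\abs{R}^{-1/2}$, so that
\begin{equation*}
  \abs{R}^{\frac u2}\,2^{jnu}\rho_R(t_R)^u=2^{jn u}\abs{R}^{u}\,\rho_{\mathscr D_j}(t_j)^u\Big|_R = 2^{jn}\int_R\rho_{\mathscr D_j}(t_j)^u\,dz\cdot 2^{jn(u-1)}\abs R^{u-1}.
\end{equation*}
Since $\abs R=2^{-jn}$, the last factor $2^{jn(u-1)}\abs R^{u-1}$ equals $1$. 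Choosing $M'$ with $M:=M'u-cu$ as large as desired, the sum becomes $\approx(\phi_j*\rho_{\mathscr D_j}(t_j)^u)(y)$.

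Taking the $u$-th root, then the supremum over $x\in Q$, and finally summing over the seven admissible $j$ (using the quasi-triangle inequality for $\rho_Q$) gives the claim.

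The main obstacle is the bookkeeping in this last step: the power of $\abs R$ must come out exactly right, so that the constant does not depend on $i$. The Fourier-support localization to $\abs{i-j}\le3$ also needs to be checked against the admissible range of $\alpha,\beta$.
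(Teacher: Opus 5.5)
Your proposal is correct and follows the paper's proof essentially step by step. Both arguments use Fourier-support localization to $\abs{i-j}\le 3$ (your deferred check is just $2^{\abs{i-j}}<\pi^2<2^4$), then the $u$-triangle inequality combined with Lemma~\ref{lem:phiPsi}, then the bound \eqref{eq:strDb} with $B_{a,b,c}(Q,R)\lesssim(1+2^j\abs{x_Q-x_R})^c$ for comparable cubes, and finally rewrite the resulting sum as $\phi_j*\rho_{\mathscr D_j}(t_j)^u$. You also justify the $u$-triangle inequality for the infinite sum and pass explicitly from $x$ to $y\in Q$, which controls the supremum in the $\aveL^\infty(\mathscr D_i)$ norm; the paper leaves both of these details implicit.
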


\begin{proof}
By Lemma \ref{lem:TpsiWellDef}, we can meaningfully consider
\begin{equation*}
  \varphi_i*T_{\psi}t(x)=\sum_{R\in\mathscr D}\varphi_i*\psi_R(x) t_R,
\end{equation*}
where
\begin{equation*}
    \psi_R(x)=\frac{1}{\abs{R}^{\frac12}}\psi\Big(\frac{x-x_R}{\ell(R)}\Big)=\abs{R}^{\frac12}\psi_{j_R}(x-x_R)
\end{equation*}
and $j_R:=-\log_2\ell(R)$.
The convolution $\varphi_i*\psi_R$ can be non-zero only if $\supp\hat\varphi_i$ and $\supp\hat\psi_R=\supp\hat\psi_{j_R}$ intersect,
which requires $\pi^{-2}<2^i/2^{j_R}<\pi^2$, i.e.,
\begin{equation*}
  \abs{i-j_R}<\log_2\pi^2<4.
\end{equation*}

Then, for $x\in Q\in\mathscr D_i$,
\begin{equation*}
\begin{split}
  \rho_Q( \varphi_i* T_{\psi}t(x) )^u &\leq\sum_{\genfrac{}{}{0pt}{}{R\in\mathscr D} {\abs{j_R-i}<4}}\big(\abs{R}^{\frac12}\abs{\varphi_i*\psi_{j_R}(x-x_R)}\cdot\rho_Q(t_R)\big)^u  \\
  &\lesssim\sum_{\genfrac{}{}{0pt}{}{R\in\mathscr D} {\abs{j_R-i}<4}}\big(\abs{R}^{\frac12}\phi_i'(x-x_R)\cdot (1+2^i\abs{x_Q-x_R})^c \rho_R(t_R)\big)^u,
\end{split}
\end{equation*}
using Lemma \ref{lem:phiPsi} to produce the factor with $\phi'(z)=(1+\abs{z})^{-M'}$, where $M'>0$
can be as large as we like, and assumption \eqref{eq:strDb} to produce the factor $(1+2^i\abs{x_Q-x_R})^c$. The fact that $\abs{j_R-i}<4$ allows us to ignore several factors involving $\ell(Q)/\ell(R)$ or $\abs{i-j_R}$, and use the numbers $i$ and $j_R$ interchangeably in the upper bound. We can then continue the estimate with
\begin{equation*}
\begin{split}
  \rho_Q( \varphi_i* T_{\psi}t(x) )^u
  &\lesssim\sum_{\genfrac{}{}{0pt}{}{R\in\mathscr D}{\abs{j_R-i}<4}} \Big(1+2^i\abs{x_Q-x_R}\Big)^{(c-M')u}
    \Big(\frac{\rho_R(t_R)}{\abs{R}^{\frac12}}\Big)^u \\
  &\approx\sum_{\abs{j-i}<4}\sum_{R\in\mathscr D_j} \int_R 2^{jn}\Big(1+2^j\abs{x-y}\Big)^{(c-M')u}
    (\rho_{\mathscr D_j}t_j)(y)^u \,dy \\
   &=\sum_{\abs{j-i}<4}\phi_j*(\rho_{\mathscr D_j}t_j)^u(x),
\end{split}
\end{equation*}
where $\phi(x):=(1+\abs{x})^{-M}$ and $M:=(M'-c)u$ may still be as large as we like, by taking $M'$ sufficiently large.
\end{proof}

\begin{proposition}\label{prop:TpsiBd}
Suppose that $\rho=\{\rho_Q\}_{Q\in\D}$ is
a family of $(u,a,b,c)$-norms for some
$u\in(0,1]$ and $a,b,c\in(0,\infty)$.
Then for all $t\in\dot a^s_{p,q}(\rho)$, the series $T_\psi t$ converges in
$\mathscr S_\infty'(\R^n;\fX)$, and
\begin{equation*}
  \Norm{T_\psi t}{\dot A^s_{p,q}(\rho,\sup,\varphi)}
  \lesssim\Norm{t}{\dot a^s_{p,q}(\rho)}.
\end{equation*}
\end{proposition}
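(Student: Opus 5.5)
The plan is to combine Lemma \ref{lem:TpsiWellDef} with the pointwise bound of Lemma \ref{lem:A<a}, and then finish with the Fefferman--Stein vector-valued maximal inequality, in the same way as in the proof of Proposition \ref{prop:Asup<A}.

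Convergence of $T_\psi t$ in $\mathscr S_\infty'(\R^n;\fX)$ is exactly Lemma \ref{lem:TpsiWellDef}. Its hypotheses are contained in the definition of a family of $(u,a,b,c)$-norms: each $\rho_Q$ is qualitatively equivalent to $\Norm{\cdot}{\fX}$ by Definition \ref{def:uabc}\eqref{def:uabc-it2}, and \eqref{eq:strDb} is Definition \ref{def:uabc}\eqref{def:uabc-it3}. So this part needs no further work.

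For the norm estimate, fix $i\in\Z$ and apply Lemma \ref{lem:A<a}, which gives the pointwise bound
\begin{equation*}
  \Norm{\rho_{\mathscr D_i}(\varphi_i*T_\psi t)}{\aveL^\infty(\mathscr D_i)}
  \lesssim\sum_{j:\abs{j-i}\leq 3}\big(\phi_j*\rho_{\mathscr D_j}(t_j)^u\big)^{\frac1u}.
\end{equation*}
Every $u$-norm is also a $\tilde u$-norm for all smaller $\tilde u$, so before invoking the lemma I first shrink $u$ so that $u<\min\{p,q\}$. Next I multiply by $2^{is}$. Since $\abs{j-i}\le 3$, we have $2^{is}\approx 2^{js}$ with constants depending only on $s$. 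There are only seven shifts $k=j-i\in\{-3,\dots,3\}$, so taking the $(L^p\ell^q)_A$ quasi-norm and using the quasi-triangle inequality reduces the problem to bounding, for each fixed $k$,
\begin{equation*}
  \BNorm{\Big\{2^{js}\big(\phi_j*\rho_{\mathscr D_j}(t_j)^u\big)^{\frac1u}\Big\}_{j}}{(L^p\ell^q)_A}
  =\BNorm{\Big\{2^{jsu}\phi_j*\rho_{\mathscr D_j}(t_j)^u\Big\}_{j}}{(L^{\frac pu}\ell^{\frac qu})_A}^{\frac1u}.
\end{equation*}
Here reindexing $j=i+k$ does not change the norm.

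To finish, I use the domination $\abs{\phi_j*g}\lesssim Mg$ for $M>n$. Since $p/u>1$ and $q/u>1$, the Fefferman--Stein inequality applies in the Triebel--Lizorkin case, and the scalar maximal theorem applied term by term applies in the Besov case. This bounds the last display by $\Norm{\{2^{js}\rho_{\mathscr D_j}(t_j)\}}{(L^p\ell^q)_A}=\Norm{t}{\dot a^s_{p,q}(\rho)}$. The case $q=\infty$ is handled the same way, because the maximal operator is bounded on $L^{p/u}\ell^\infty$. The only points needing care are the reduction to a small $u$ and the handling of the finitely many shifts; all the genuine difficulty has already been absorbed into Lemma \ref{lem:A<a}.
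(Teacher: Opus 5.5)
Your proposal is correct and follows essentially the same route as the paper: convergence comes from Lemma \ref{lem:TpsiWellDef}, the pointwise bound from Lemma \ref{lem:A<a} after shrinking $u$ below $\min\{p,q\}$, and the convolutions $\phi_j$ are then removed by the Fefferman--Stein inequality on $(L^{p/u}\ell^{q/u})_A$ over the seven shifts. A minor point: in the Besov case, Young's inequality with $\Norm{\phi_j}{L^1}\approx 1$ and $p/u\geq 1$ already suffices, so the condition $u<q$ is only needed in the Triebel--Lizorkin case.
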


\begin{proof}
Without loss of generality, we may take $u\in(0,\min\{p,q\})$.
The convergence of the series was already established in Lemma \ref{lem:TpsiWellDef}, and Lemma \ref{lem:A<a} gives the pointwise estimate
\begin{equation*}
  \Norm{\rho_{\mathscr D_i}(\varphi_i*T_\psi t)}{\aveL^\infty(\mathscr D_i)}
  \lesssim\sum_{k=-3}^3\big(\phi_{i+k}*\rho_{\mathscr D_{i+k}}(t_{i+k})^u\big)^{\frac1u}.
\end{equation*}
Multiplying by $2^{is}$, taking the $(L^p\ell^q)_A$ quasi-norms of both sides, and proceeding with the usual estimates, we obtain
\begin{equation*}
\begin{split}
  \Norm{T_\psi t}{\dot A^s_{p,q}(\rho,\sup,\varphi)}
  &=\BNorm{\Big\{2^{is}\Norm{\rho_{\mathscr D_i}(\varphi_i*T_\psi t)}{\aveL^\infty(\mathscr D_i)}\Big\}}{(L^p\ell^q)_A} \\
  &\lesssim\sum_{k=-3}^3\BNorm{\Big\{2^{is}\big(\phi_{i+k}*\rho_{\mathscr D_{i+k}}(t_{i+k})^u\big)^{\frac1u}\Big\}}{(L^p\ell^q)_A} \\
  &\approx\BNorm{ \Big\{ 2^{is u}\phi_{i}*\rho_{\mathscr D_{i}}(t_{i})^u\Big\}}{(L^{\frac pu}\ell^{\frac qu})_A}^{\frac 1u}
  \lesssim\BNorm{ \Big\{ 2^{is u}\rho_{\mathscr D_{i}}(t_{i})^u\Big\}}{(L^{\frac pu}\ell^{\frac qu})_A}^{\frac 1u} \\
  &\approx\BNorm{ \Big\{ 2^{is}\rho_{\mathscr D_{i}}(t_{i})\Big\}}{(L^{p}\ell^{q})_A}
  =\Norm{t}{\dot a^s_{p,q}(\rho)};
\end{split}
\end{equation*}
in the last estimate removing the convolutions $\phi_i$, we note that $|\phi_i*f|\lesssim Mf$, so that this can be deduced from the Fefferman--Stein vector-valued maximal inequality, given that $\frac pu,\frac qu>1$.
\end{proof}

\begin{lemma}\label{lem:id}
Let $\varphi,\psi$ be two Littlewood--Paley functions that satisfy \eqref{eq:LPpair}.
Then for all $f\in\mathscr S_\infty'(\R^n)$, we have
\begin{equation*}
  f=\sum_{Q\in\mathscr D}\pair{f}{\varphi_Q}\psi_Q.
\end{equation*}
\end{lemma}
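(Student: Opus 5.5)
The plan is to reduce to the scalar-valued Calder\'on reproducing identity of Frazier--Jawerth, exactly as in the proof of \eqref{eq:316}: pairing with functionals $\fx^*\in\fX^*$ turns the vector-valued claim into the scalar one. One point needs care first. The statement writes $f\in\mathscr S_\infty'(\R^n)$, but the intended use is for $f\in\mathscr S_\infty'(\R^n;\fX)$. I will therefore first record the scalar identity and then lift it.

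\textbf{Scalar identity.} For $f\in\mathscr S_\infty'(\R^n)$ and test function $\theta\in\mathscr S_\infty$, I need
\[
\pair{f}{\theta}=\sum_{Q}\pair{f}{\varphi_Q}\pair{\psi_Q}{\theta},
\]
with the series converging absolutely. I would not re-derive this; it is the classical discrete Calder\'on formula \cite{FJ:90}, also recorded in \cite{BHYY:I}. Its proof runs as follows.
\begin{enumerate}[\rm(i)]
\item By \eqref{eq:LPpair}, $\theta=\sum_j \tilde\varphi_j*\psi_j*\theta$ in $\mathscr S_\infty$. Hence $\pair{f}{\theta}=\sum_j\pair{\varphi_j*f\,}{\,\psi_j*\theta}$, up to reflections matching the convention in \eqref{eq:LPpair}.
\item For each $j$, the function $\varphi_j*f$ has Fourier support in $\{\abs{\xi}<\pi 2^j\}$. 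Sampling on the lattice $2^{-j}\Z^n$ is then exact, so
\[
\int (\varphi_j*f)\,(\psi_j*\theta)=\sum_{Q\in\mathscr D_j}2^{-jn}(\varphi_j*f)(x_Q)\,(\psi_j*\theta)(x_Q).
\]
\item By the computation in Proposition \ref{prop:SphiBd}, this sum equals $\sum_{Q\in\mathscr D_j}\pair{f}{\varphi_Q}\pair{\psi_Q}{\theta}$.
\end{enumerate}

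\textbf{Lifting to $\fX$.} Fix $\fx^*\in\fX^*$ and apply the scalar identity to $\pair{\fx^*}{f}\in\mathscr S_\infty'(\R^n)$. Since $\pair{\fx^*}{\pair{f}{\varphi_Q}}=\pair{\pair{\fx^*}{f}}{\varphi_Q}$, this gives
\[
\Bpair{\fx^*}{\pair{f}{\theta}}=\sum_Q\Bpair{\fx^*}{\pair{f}{\varphi_Q}}\pair{\psi_Q}{\theta}.
\]
The dual elements separate points of $\fX$, so the identity holds in $\fX$ once the $\fX$-valued series $\sum_Q\pair{f}{\varphi_Q}\pair{\psi_Q}{\theta}$ is known to converge.

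\textbf{Main obstacle: convergence in $\fX$.} Weak convergence for each $\fx^*$ is not enough; I need norm (absolute) convergence of the $\fX$-valued series. The plan is to use that $f\in\mathscr S_\infty'(\R^n;\fX)$ is a continuous map into $\fX$. This gives a seminorm bound
\[
\Norm{\pair{f}{\varphi_Q}}{\fX}\lesssim \max\Bigl(\ell(Q)^{-L},\,\ell(Q)^{L}\Bigr)\,(1+\abs{x_Q})^{L}
\]
for some $L$, obtained by estimating the Schwartz seminorms of $\varphi_Q$. Combining this with the decay $\abs{\pair{\psi_Q}{\theta}}\lesssim B_{-M}(Q_0,Q)$ from Lemma \ref{lem:phiPsi}, with $M$ large, the summation is then the same as in Lemma \ref{lem:TpsiWellDef}.

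One further subtlety: the polynomial ambiguity in $\mathscr S_\infty'$ never arises, because everything is tested only against $\mathscr S_\infty$.
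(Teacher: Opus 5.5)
Your proposal is correct, and for the statement as written (scalar $f$) it is the paper's argument: the paper simply cites the Frazier--Jawerth identity \cite[Lemma 2.1]{FJ:85}, with $\varphi(z)$ replaced by $\varphi(-z)$ to match \eqref{eq:LPpair}, and your steps (i)--(iii) sketch exactly that classical proof. The $\fX$-valued lifting you add is not part of this lemma in the paper. The paper instead does it in the proof of Theorem \ref{thm:phi} via \eqref{eq:TS=Id}, where convergence comes from Lemma \ref{lem:TpsiWellDef} for $f$ in the function space; your direct continuity bound on $\Norm{\pair{f}{\varphi_Q}}{\fX}$ gives the same conclusion for every $f\in\mathscr S_\infty'(\R^n;\fX)$.
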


\begin{proof}
This is \cite[Lemma 2.1]{FJ:85} (replacing $\varphi(z)$ by $\varphi(-z)$).
\end{proof}

We now have everything to give:

\begin{proof}[Proof of Theorem \ref{thm:phi}]
We already know from Proposition \ref{prop:Asup<A} and Remark \ref{rem:A<Asup} that
$$\dot A^s_{p,q}(\rho,\sup,\varphi)=
\dot{A}^s_{p,q}(\rho,\varphi),$$
so we will use these spaces interchangeably and omit the ``$\sup$''.
By Propositions \ref{prop:SphiBd} and \ref{prop:TpsiBd}, we know that
\begin{equation*}
  S_{\varphi}:\dot A^s_{p,q}(\rho,\tilde\varphi)\to\dot a^s_{p,q}(\rho)\quad
  \text{and}\quad
  T_\psi:\dot a^s_{p,q}(\rho)\to \dot A^s_{p,q}(\rho,\chi)
\end{equation*}
are bounded for all Littlewood--Paley functions $\varphi,\psi,\chi$. Hence
\begin{equation}\label{eq:Aphi2Achi}
  T_\psi\circ S_{\varphi}:\dot A^s_{p,q}(\rho,\tilde\varphi)
  \to \dot A^s_{p,q}(\rho,\chi)
\end{equation}
is bounded, and Lemma \ref{lem:TpsiWellDef} informs us that
\begin{equation*}
  T_\psi(S_\varphi f)
  =\sum_{Q\in\mathscr D}\pair{f}{\varphi_Q}\psi_Q
\end{equation*}
converges in $\mathscr S_\infty'(\R^n;\fX)$. Hence
\begin{equation}\label{eq:TS=Id}
  \pair{T_\psi(S_\varphi f)}{\fx^*}
  =\sum_{Q\in\mathscr D}\pair{\pair{f}{\fx^*}}{\varphi_Q}\psi_Q
  =T_\psi(S_\varphi \pair{f}{\fx^*})
\end{equation}
converges in $\mathscr S_\infty'(\R^n)$, where $\pair{f}{\fx^*}\in\mathscr S_\infty'(\R^n)$ is the distribution defined by $$\pair{\pair{f}{\fx^*}}{\phi}:=\pair{\pair{f}{\phi}}{\fx^*}$$ for all $\phi\in\mathscr S_\infty(\R^n)$.

Given a Littlewood--Paley function $\varphi$, it is well known how to construct another Littlewood--Paley function $\psi$ so that \eqref{eq:LPpair} holds: indeed, we can take
\begin{equation*}
  \hat\psi(\xi):=
  \frac{\overline{\hat\varphi(-\xi)}}
  {\sum\limits_{k\in\Z}
  \abs{\hat\varphi(2^k\xi)}^2},
\end{equation*}
where the denominator is strictly positive since the $k$-th term is strictly positive for $\abs{\xi}\in[\alpha^{-1},\alpha]2^{-k}$ and the condition that $\alpha>\sqrt{2}$ guarantees that the consecutive intervals overlap. For such $\psi$, Lemma \ref{lem:id} guarantees that the right-hand side of \eqref{eq:TS=Id} is simply $\pair{f}{\fx^*}$. Since this is true for all $\fx^*\in\fX^*$, \eqref{eq:TS=Id} implies that $T_\psi(S_\varphi f)=f$ for all $f\in\dot A^s_{p,q}(\rho,\tilde\varphi)$, and the boundedness \eqref{eq:Aphi2Achi} shows that
\begin{equation*}
  \Norm{f}{\dot A^s_{p,q}(\rho,\chi)}\lesssim\Norm{f}{\dot A^s_{p,q}(\rho,\tilde\varphi)}.
\end{equation*}
Since this is true for any two $\varphi,\chi$, we have proved claim \eqref{eq:AindepPhi}. The other claims then follow since we already proved them for some particular choice of $\dot A^s_{p,q}(\rho,\chi)$, and by \eqref{eq:AindepPhi}, they are all equal.
\end{proof}

Since, under the assumptions of
Theorems \ref{thm:phi} and \ref{thm:phiV}
respectively, the spaces $\dot{A}^s_{p,q}(\rho,\varphi)$
and $\dot{A}^s_{p,q}(V,\varphi)$
do not depend on the choice of
$\varphi$, in what follows we denote them simply by
$\dot{A}^s_{p,q}(\rho)$ and $\dot{A}^s_{p,q}(V)$,
respectively, under these assumptions.

At the end of this section, we provide some useful consequences
of the $\varphi$-transform characterization.
We first consider the embeddings between
Besov--Triebel--Lizorkin spaces
and Schwartz (distribution) spaces.
For any linear space $D$ of functions on $\mathbb{R}^n$, we define
\begin{align}\label{eq:sx}
D\otimes\fX:=
\Big\{\sum_{k=1}^{N}f_k\fx_k:N\in\mathbb{N},\
f_k\in D,\ \fx_k\in\fX\Big\}.
\end{align}

\begin{corollary}
Let $p\in(0,\infty)$, $q\in(0,\infty]$,
and $s\in\mathbb{R}$.
\begin{enumerate}[{\rm(i)}]
  \item\label{it:emb-rho} If $\rho=\{\rho_Q\}_{Q\in\mathscr D}$ is a family
of $(u,a,b,c)$-norms for some $u\in(0,1]$ and
$a,b,c\in[0,\infty)$, then $\mathscr{S}_\infty\otimes\fX\hookrightarrow
\dot{A}^s_{p,q}(\rho)\hookrightarrow \mathscr{S}'_\infty(\mathbb{R}^n;\fX)$.
  \item If $V\in\A_r$ with $r\in[p,\infty)$
  and if $q\in(0,r+\varepsilon_V)$
  when $A=F$, where $\varepsilon_V$ is
  the optimal reverse H\"older lifting index of $V$, then
  $\mathscr{S}_\infty\otimes\fX\hookrightarrow
\dot{A}^s_{p,q}(V)\hookrightarrow \mathscr{S}'_\infty(\mathbb{R}^n;\fX)$.
\end{enumerate}
\end{corollary}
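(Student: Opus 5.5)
By the $\varphi$-transform (Theorem \ref{thm:phi}) I would reduce both embeddings to statements about sequences. Part (ii) then follows from part (i). Indeed, under its hypotheses Remark \ref{rem:uabc}\eqref{rem:uabc-it2} makes $\rho=\{\rho_{\aveL^r(Q,V)}\}_{Q\in\mathscr D}$ a family of $(u,a,b,c)$-norms. In addition, Theorems \ref{thm:equiB} and \ref{thm:equiF} give $\dot A^s_{p,q}(V)=\dot A^s_{p,q}(\rho)$ with equivalent quasi-norms. So everything rests on (i).

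For the right-hand embedding $\dot A^s_{p,q}(\rho)\hookrightarrow\mathscr S_\infty'(\R^n;\fX)$, take $f\in\dot A^s_{p,q}(\rho)$ and $\eta\in\mathscr S_\infty$. Choose $\psi$ with \eqref{eq:LPpair}, so that $f=T_\psi S_\varphi f$ by Theorem \ref{thm:phi}, and set $t=S_\varphi f$, which satisfies $\Norm{t}{\dot a^s_{p,q}(\rho)}\lesssim\Norm{f}{\dot A^s_{p,q}(\rho)}$. The proof of Lemma \ref{lem:TpsiWellDef} then gives
\[
\Norm{\pair{f}{\eta}}{\fX}\le\sum_R\abs{\pair{\eta}{\psi_R}}\Norm{t_R}{\fX}\lesssim\Norm{t}{\dot a^s_{p,q}(\rho)}\sum_R B_{-M}(Q_0,R)B_{\sigma,-\sigma,0}(Q_0,R)B_{a,b,c}(Q_0,R).
\]
Here the only change is that the bound $\abs{\pair{\eta}{\psi_R}}\lesssim B_{-M}(Q_0,R)$ must be made quantitative, with the constant replaced by a fixed Schwartz seminorm of $\eta$ (say $\sup_x(1+\abs x)^{K}\sum_{\abs\gamma\le K}\abs{\partial^\gamma\hat\eta}$ together with vanishing-moment control). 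This follows from the standard almost-orthogonality proof behind Lemma \ref{lem:phiPsi}. The result is $\Norm{\pair{f}{\eta}}{\fX}\lesssim p_K(\eta)\Norm{f}{\dot A^s_{p,q}(\rho)}$, which is the continuous embedding.

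For the left-hand embedding $\mathscr S_\infty\otimes\fX\hookrightarrow\dot A^s_{p,q}(\rho)$, by quasi-triangle inequality it suffices to treat $g\fx$ with $g\in\mathscr S_\infty$ and $\fx\in\fX$. Write $t=S_\varphi(g\fx)$, so that $t_Q=\pair{g}{\varphi_Q}\fx$. By Theorem \ref{thm:phi}(iii) and (iv), $\Norm{g\fx}{\dot A^s_{p,q}(\rho)}\lesssim\Norm{t}{\dot a^s_{p,q}(\rho)}$. Now use \eqref{eq:strDb} together with Definition \ref{def:uabc}\eqref{def:uabc-it2} applied to $Q_0=[0,1)^n$:
\[
\rho_Q(t_Q)\lesssim\abs{\pair{g}{\varphi_Q}}B_{a,b,c}(Q,Q_0)\Norm{\fx}{\fX}\lesssim p_K(g)B_{-M}(Q,Q_0)B_{a,b,c}(Q,Q_0)\Norm{\fx}{\fX},
\]
again using Lemma \ref{lem:phiPsi} in its quantitative form, with $M$ as large as we like.

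The remaining step, and the only real computation, is that the sequence $\rho_Q(t_Q)\lesssim B_{-L}(Q,Q_0)$, for $L$ large, has finite scalar $\dot a^s_{p,q}$ norm. This reduces to a geometric series. At level $j$ the function $\sum_{Q\in\mathscr D_j}\rho_Q(t_Q)\abs{Q}^{-1/2}\one_Q$ is bounded by $2^{jn/2}2^{-\abs j L}(1+2^{\min(j,0)}\abs x)^{-L}$. Its $L^p$ norm, weighted by $2^{js}$, therefore decays geometrically in $\abs j$. For the $F$-case the same bound pointwise in $x$ gives a summable $\ell^q$ sequence dominated by an $L^p$ function. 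I expect the main obstacle to be only the bookkeeping of making Lemma \ref{lem:phiPsi} depend on a Schwartz seminorm of the test function. This is standard, following \cite{BHYY:I}.
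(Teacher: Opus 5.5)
Your proposal is correct and follows the paper's own route: you reduce (ii) to (i) via Remark~\ref{rem:uabc} and Theorems~\ref{thm:equiB} and~\ref{thm:equiF}; you get the embedding into $\mathscr S_\infty'(\R^n;\fX)$ from Theorem~\ref{thm:phi} together with a seminorm-quantified version of Lemma~\ref{lem:TpsiWellDef} (as in \cite[Proposition 3.39]{BHYY:I}); and you get the embedding of $\mathscr S_\infty\otimes\fX$ from \eqref{eq:strDb} and Definition~\ref{def:uabc}\eqref{def:uabc-it2} at $Q_0$ together with the standard almost-orthogonality bound (as in \cite[Proposition 3.2(viii)]{YY:08}). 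One small fix: take the identity $T_\psi S_\varphi(g\fx)=g\fx$ from Lemma~\ref{lem:id} applied to the scalar $g$, not from item (iv) of Theorem~\ref{thm:phi}, since the latter presupposes $g\fx\in\dot A^s_{p,q}(\rho)$, which is exactly what you are proving.
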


\begin{proof}
By Theorems \ref{thm:equiB} and \ref{thm:equiF},
we only need to prove \eqref{it:emb-rho}.
Indeed, this argument is completely analogous to the proof in the
scalar or the matrix-weighted case: the embedding
$\dot{A}^s_{p,q}(\rho)\hookrightarrow \mathscr{S}'_\infty(\mathbb{R}^n;\fX)$
follows from repeating the proof of \cite[Proposition 3.39]{BHYY:I} with
Lemma 3.33 and Theorem 3.29 therein replaced, respectively,
by Lemma \ref{lem:TpsiWellDef} and Theorem \ref{thm:phi};
the embedding $\mathscr{S}_\infty\otimes\fX\hookrightarrow
\dot{A}^s_{p,q}(\rho)$
follows from repeating the proof of \cite[Proposition 3.2(viii)]{YY:08}
and using Definition \ref{def:uabc}\eqref{def:uabc-it3}.
\end{proof}

Finally, we give the following boundedness of the Hilbert transform and the Riesz potential,
which is in sharp contrast with a related result for $L^p(V)$ spaces given in Theorem \ref{thm:Hunbd}.

\begin{corollary}\label{cor:Hilbert}
Let $p\in(0,\infty)$, $q\in(0,\infty]$,
and $s\in\mathbb{R}$.
\begin{enumerate}[\rm(i)]
  \item\label{cor:Hilbert-it1} If $\rho=\{\rho_Q\}_{Q\in\mathscr D}$ is a family
of $(u,a,b,c)$-norms for some $u\in(0,1]$ and
$a,b,c\in[0,\infty)$, then,
  \begin{enumerate}[\rm(a)]
   \item for $n=1$, the Hilbert transform is an isomorphism of $\dot A^s_{p,q}(\rho)$ onto itself, and
   \item for all $n\geq 1$, the operator $(-\Delta)^{\frac t2}$ is an isomorphism of
$\dot A^{s+t}_{p,q}(\rho)$ onto $\dot A^{s}_{p,q}(\rho)$.
  \end{enumerate}
  \item If $V\in\A_r$ with $r\in[p,\infty)$
  and if $q\in(0,r+\varepsilon_V)$
  when $A=F$, where $\varepsilon_V$ is
  the optimal reverse H\"older lifting index of $V$, then
  the conclusions of \eqref{cor:Hilbert-it1} hold
  with $\rho$ replaced by $V$.
\end{enumerate}
\end{corollary}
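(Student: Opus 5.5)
The plan is to reduce everything to the $\varphi$-transform (Theorem \ref{thm:phi}) together with the standard fact that the operators in question are Fourier multipliers that act nicely on Littlewood--Paley pieces. Both operators are Fourier multipliers $m(D)$ on $\mathscr S_\infty'(\R^n;\fX)$: $m(\xi)=-i\,\mathrm{sgn}(\xi)$ for $\HT$ and $m(\xi)=\abs{\xi}^t$ for $(-\Delta)^{\frac t2}$. They are well defined on $\mathscr S_\infty'$ by duality, because $m$ is smooth off the origin and $m(D)$ maps $\mathscr S_\infty$ bijectively onto itself. Their inverses are of the same type, namely $-\HT$ and $(-\Delta)^{-\frac t2}$. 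Hence it suffices to prove boundedness $\dot A^{s+\tau}_{p,q}(\rho)\to\dot A^s_{p,q}(\rho)$ for $m(D)$ with $\tau=0$ (Hilbert transform) or $\tau=t$ (Riesz), and the isomorphism claims follow by applying the same bound to the inverse.

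For boundedness, I would not go through $T_\psi\circ S_\varphi$ but directly use \eqref{eq:AindepPhi} of Theorem \ref{thm:phi}, i.e.\ the independence of the space of the Littlewood--Paley function. Fix $\varphi$ and set $\eta:=2^{-\tau\cdot}$-normalised $\eta$ with $\hat\eta(\xi):=m(\xi)\abs{\xi}^{-\tau}\hat\varphi(\xi)$. Since $\hat\varphi$ is supported in an annulus away from $0$ and $\infty$, $\eta$ is again Schwartz with $\abs{\hat\eta}\approx\abs{\hat\varphi}$ on the annulus, hence an $(\alpha,\beta)$-Littlewood--Paley function (the homogeneous symbol $m\abs{\cdot}^{-\tau}$ has modulus $1$). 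By homogeneity of $m$ of degree $\tau$ we have $\varphi_j*m(D)f=2^{j\tau}\eta_j*f$. Therefore
\[
\Norm{m(D)f}{\dot A^s_{p,q}(\rho,\varphi)}=\BNorm{\{2^{j(s+\tau)}\rho_{\mathscr D_j}(\eta_j*f)\}_j}{(L^p\ell^q)_A}=\Norm{f}{\dot A^{s+\tau}_{p,q}(\rho,\eta)}\approx\Norm{f}{\dot A^{s+\tau}_{p,q}(\rho,\varphi)},
\]
the last step by Theorem \ref{thm:phi}\eqref{eq:AindepPhi}. This even gives an equivalence of norms, so surjectivity and injectivity follow from $m(D)$ being invertible on $\mathscr S_\infty'$ with inverse of the same form.

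The one point needing care, which I expect to be the main (though mild) obstacle, is justifying $\varphi_j*m(D)f=2^{j\tau}\eta_j*f$ for vector-valued distributions. I would verify it scalarly: pair with $\fx^*\in\fX^*$, use the scalar identity on $\pair{f}{\fx^*}\in\mathscr S_\infty'(\R^n)$, and conclude since functionals separate points. This is the same trick as in the Calder\'on reproducing formula proof.

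Part (ii) then follows from (i) combined with Theorem \ref{thm:phiV}, or more directly from Theorems \ref{thm:equiB} and \ref{thm:equiF}. These identify $\dot A^s_{p,q}(V)$ with $\dot A^s_{p,q}(\rho_{\aveL^r(\mathscr D,V)})$ under the stated restrictions on $q$, and Remark \ref{rem:uabc}\eqref{rem:uabc-it2} shows that $\rho_{\aveL^r(\mathscr D,V)}$ is a family of $(u,a,b,c)$-norms.
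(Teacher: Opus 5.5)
Your overall route is the paper's. You write both operators as homogeneous Fourier multipliers, rewrite $\varphi_j*m(D)f$ as $2^{j\tau}$ times a convolution with another Littlewood--Paley function, invoke Theorem \ref{thm:phi}\eqref{eq:AindepPhi}, and transfer to $V$ via Theorems \ref{thm:equiB} and \ref{thm:equiF}.

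However, your auxiliary function is wrong when $\tau\neq0$. With $\hat\eta:=m\abs{\cdot}^{-\tau}\hat\varphi$ one gets $2^{j\tau}\hat\eta(2^{-j}\xi)=2^{j\tau}\abs{\xi}^{-\tau}m(\xi)\hat\varphi(2^{-j}\xi)$. This differs from the Fourier transform $m(\xi)\hat\varphi(2^{-j}\xi)$ of $\varphi_j*m(D)f$ (up to the factor $\hat f$) by the non-constant factor $(2^{j}/\abs{\xi})^{\tau}$. Concretely, for $m(\xi)=\abs{\xi}^t$ your $\eta$ is just $\varphi$ itself. Your identity would then read $\varphi_j*(-\Delta)^{\frac t2}f=2^{jt}\varphi_j*f$, which is false.

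The correct choice, which is what the paper uses, is $\hat\eta:=m\hat\varphi$, i.e.\ $\eta=m(D)\varphi$. Then homogeneity, $m(\xi)=2^{j\tau}m(2^{-j}\xi)$, gives exactly $\hat\varphi(2^{-j}\xi)m(\xi)=2^{j\tau}\hat\eta(2^{-j}\xi)$. This $\eta$ is still a Littlewood--Paley function, though not because $\abs{\hat\eta}=\abs{\hat\varphi}$; in fact $\abs{\hat\eta}=\abs{\xi}^{\tau}\abs{\hat\varphi}$. It works because $m$ is smooth on the annulus $\beta^{-1}\le\abs{\xi}\le\beta$ containing $\supp\hat\varphi$, and $\abs{\xi}^{\tau}$ is bounded above and below there. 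Thus $\hat\eta\in C_c^\infty(\R^n\setminus\{0\})$ satisfies the same two-sided bounds as $\hat\varphi$ up to constants.

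With this one-line correction, the rest of your argument goes through and agrees with the paper's proof. That includes the norm identity, the isomorphism via the inverse multipliers $-\HT$ and $(-\Delta)^{-\frac t2}$, the scalarization via $\fx^*\in\fX^*$, and part (ii).
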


\begin{proof}
By Theorems \ref{thm:equiB} and \ref{thm:equiF}, we only need to
prove \eqref{cor:Hilbert-it1}.
We note that both operators are Fourier multipliers $T_m f=(m\hat f)^{\vee}$ with symbols $m(\xi)=-i\operatorname{sgn}(\xi)$ and $m(\xi)=\abs{\xi}^t$, respectively, that are smooth on $\R^n\setminus\{0\}$, with at most moderate blow-up as $\abs{\xi}\to 0$ or $\infty$. Hence they act on $\mathscr S_\infty(\R^n)$, and thus on $\mathscr S_\infty'(\R^n;\fX)$. For the quantities featuring in the $\dot A^s_{p,q}$ norm, we have
\begin{equation*}
  (\varphi_j*T_m f)^{\wedge}
  =\hat\varphi(2^{-j}\cdot)m\hat f
  =(m(2^j\cdot)\hat\varphi)(2^{-j})\hat f.
\end{equation*}
In the case of $(-\Delta)^{\frac t2}$ resp. the Hilbert transform, we have $m(2^j\cdot)=2^{jt}m$ resp. $m(2^j\cdot) =m=2^{j\cdot 0}m$; hence
\begin{equation*}
  (\varphi_j*T_m f)^{\wedge}=2^{jt}(m\hat\varphi)(2^{-j}\cdot)\hat f=2^{jt}(\psi_j*f)^{\wedge},
\end{equation*}
where $\psi:=(m\hat\varphi)^{\vee}=T_m\varphi$, and we take $t=0$ in the case of the Hilbert transform.

In either case, since $m$ is smooth and bounded away from zero on compact subsets of $\R^n\setminus\{0\}$, it follows that $\psi=T_m\varphi$ is another Littlewood--Paley function. Thus
\begin{equation*}
\begin{split}
  \Norm{T_m f}{\dot A^s_{p,q}(\rho,\varphi)}
  &=\bNorm{\big\{2^{js}\varphi_j*T_m f\big\}_{j\in\mathbb{Z}}}
  {(L^p\ell^q)_A} \\
  &=\bNorm{\big\{ 2^{j(s+t)}\psi_j*f\big\}_{j\in\mathbb{Z}}}
  {(L^p\ell^q)_A}
  =\Norm{f}{\dot A^{s+t}_{p,q}(\rho,\psi)},
\end{split}
\end{equation*}
where $t=0$ in the case of the Hilbert transform. Since the norms on the left and the right are independent (up to constants) of the Littlewood--Paley functions, the claims follow.
\end{proof}

\section{Relation of Triebel--Lizorkin spaces to $L^p(V)$}

In the classical theory of function spaces, one of the key features of Triebel--Lizorkin spaces is that they contain the usual $L^p$ spaces as the special case $\dot F^0_{p,2}(\R^n)=L^p(\R^n)$ for $p\in(1,\infty)$. Under the $\A_p$ condition, this remains true even for matrix-weighted spaces, as shown in slightly different versions in \cite{FR:21,Isra:21,Volberg:97}.
However, this cannot possibly extend to the level of generality that we consider: By Corollary \ref{cor:Hilbert},
the Hilbert transform is bounded on $\dot F^0_{p,q}(\rho_{\aveL^p(\mathscr D,V)})$ for all $p\in(0,\infty)$ and $q\in(0,\infty]$, and on $\dot F^0_{p,q}(V)$ for all $0<q\leq p<\infty$,  whenever $V\in\A_p$.
On the other hand, Theorem \ref{thm:Hunbd} shows that this condition is insufficient for the boundedness on $L^p(V)$. Thus $V\in\A_p$ does not guarantee the coincidence of $L^p(V)$ with any of the Triebel--Lizorkin spaces $\dot F^0_{p,q}(\rho_{\aveL^p(\mathscr D,V)})$ or $\dot F^0_{p,q}(V)$ with $q$ in the indicated ranges.

On the other hand, for functions taking values in a Banach space, even in the unweighted case, the best that one can say in general is the ``sandwich'' result
\begin{equation}\label{eq:HNVW}
  \dot F^0_{p,1}(\R^n;\fX)\subset L^p(\R^n;\fX)\subset \dot F^0_{p,\infty}(\R^n;\fX),\qquad p\in(1,\infty);
\end{equation}
see \cite[Propositions 14.6.13 and 14.7.8]{HNVW3} for the positive result and its sharpness, respectively. We will here prove that \eqref{eq:HNVW} remains valid for the operator-weighted versions if the weight satisfies $V\in\A_p$.

\begin{proposition}\label{prop:Lp<F}
Let $p,r\in(1,\infty)$
 and $V\in\A_{\max\{p,r\}}$. If
 $f\in \dot F^0_{p,1}(\rho_{\aveL^r(\mathscr D,V)})=\dot F^0_{p,1}(V)$, then $f\in L^p(V)$ and
\begin{equation*}
  \Norm{f}{L^p(V)}\lesssim \Norm{f}{\dot F^0_{p,1}(V)}
  \approx \Norm{f}{\dot F^0_{p,1}(\rho_{\aveL^r(\mathscr D,V)})}
\end{equation*}
with the implicit positive constants independent of $f$.
\end{proposition}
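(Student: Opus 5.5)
The plan is to imitate the unweighted argument of \cite[Proposition 14.6.13]{HNVW3}: write $f$ as its Littlewood--Paley series and estimate each piece pointwise by the triangle inequality in $\fX$, which is possible because $q=1$. The equality $\dot F^0_{p,1}(\rho_{\aveL^r(\mathscr D,V)})=\dot F^0_{p,1}(V)$ with equivalent quasi-norms is already available from Theorem \ref{thm:equiF}, since $1<\max\{p,r\}+\eps_V$. So it remains to prove $\Norm{f}{L^p(V)}\lesssim\Norm{f}{\dot F^0_{p,1}(V)}$.

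First I choose a Littlewood--Paley function $\psi$ with $\sum_j\hat\psi(2^{-j}\xi)\equiv1$ on $\R^n\setminus\{0\}$. Theorem \ref{thm:phiV} (independence of the Littlewood--Paley function) lets me use $\psi$ in the norm. I would then set $f_N:=\sum_{\abs{j}\le N}\psi_j*f$. Each $\psi_j*f$ is a smooth, strongly measurable $\fX$-valued function, and pointwise
\begin{equation*}
\Norm{V(x)f_N(x)}{\fX}\le\sum_{j\in\Z}\Norm{V(x)(\psi_j*f)(x)}{\fX}=:G(x),
\end{equation*}
where $\Norm{G}{L^p}=\Norm{f}{\dot F^0_{p,1}(V,\psi)}$.

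Next I show that $\{f_N\}$ is Cauchy in $L^p(V)$. For $M<N$ the difference is dominated by the tail $\sum_{M<\abs{j}\le N}\Norm{V(\psi_j*f)}{\fX}$, and this tends to $0$ in $L^p$ by dominated convergence, with majorant $G\in L^p$. Since $L^p(V)$ is complete (it is isometric to $L^p(\R^n;\fX)$), $f_N\to g$ in $L^p(V)$ and $\Norm{g}{L^p(V)}\le\Norm{G}{L^p}$.

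The main obstacle is to identify $g$ with $f$ in $\mathscr S_\infty'(\R^n;\fX)$. In the unweighted case this comes from the embedding $L^p\subset\mathscr S'$; here one needs that convergence in $L^p(V)$ implies distributional convergence. I would try to obtain this by pairing with $\phi\in\mathscr S_\infty$ and $\fx^*\in\fX^*$:
\begin{equation*}
\int\abs{\pair{h}{\phi\fx^*}}\le\int\Norm{Vh}{}\,\abs{\phi}\,\Norm{V^{-*}\fx^*}{}
\lesssim\Norm{h}{L^p(V)}\sum_{Q\in\mathscr D_0}\Big(\sup_Q\abs{\phi}\Big)\abs{Q}^{1/p'}\rho_{\aveL^{p'}(Q,V^{-*})}(\fx^*).
\end{equation*}
Here I take $p'$ relative to $w=\max\{p,r\}$, first using Hölder to pass to $w$ (via Corollary \ref{cor:Ap1}). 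The sum converges by the polynomial growth in Corollary \ref{cor:QvsR} together with the rapid decay of $\phi$. Hence $h\mapsto h$ is continuous from $L^p(V)$ into $\mathscr S_\infty'(\R^n;\fX)$, so $f_N\to g$ in $\mathscr S_\infty'$. Since $f_N\to f$ in $\mathscr S_\infty'$ by the Calderón reproducing formula (Lemma \ref{lem:id}, tested against $\fx^*$), we get $g=f$ (modulo the usual identification), which gives the claimed bound.
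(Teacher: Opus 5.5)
Your construction is the paper's: with $q=1$ the triangle inequality in $\fX$ makes the partial sums of $\sum_j\psi_j*f$ Cauchy in $L^p(V)$, and the limit is identified with $f$ via $\sum_j\psi_j*f=f$ in $\mathscr S_\infty'$. The paper does this last step in one line. It tacitly uses that convergence in $L^p(V)$ implies convergence in $\mathscr S_\infty'(\R^n;\fX)$.

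Your pairing estimate supplies exactly this, and it is correct when $V\in\A_p$, i.e.\ when $r\le p$. In that case $V^{-*}$ is a $p'$-weight, and Corollary~\ref{cor:QvsR} together with Lemma~\ref{lem:LpVso} gives $\rho_{\aveL^{p'}(Q,V^{-*})}(\fx^*)\lesssim(1+\abs{x_Q})^n\Norm{\fx^*}{\fX^*}$ for $Q\in\mathscr D_0$. The series against $\sup_Q\abs{\phi}$ then converges.

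The gap is your sentence covering $w=\max\{p,r\}>p$, where H\"older on a cube goes the wrong way. The function $V^{-*}\fx^*$ is only known to lie in $L^{w'}_{\loc}$, or in $L^{w'+\eta}_{\loc}$ by Proposition~\ref{prop:RHI}. Pairing it with $Vh$ would therefore require $Vh\in L^{w}_{\loc}$, whereas $h\in L^p(V)$ only gives $Vh\in L^p$ with $p<w$. Corollary~\ref{cor:Ap1} does not help: it compares $\rho_{\aveL^p(Q,V)}(\fx)$ with $\rho_{\aveL^w(Q,V)}(\fx)$ for fixed vectors $\fx$ only, not for functions.

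This cannot be patched, because for $r>p$ both the embedding $L^p(V)\hookrightarrow\mathscr S_\infty'$ and the proposition itself fail.
\begin{itemize}
\item Take $n=1$, $\fX=\C$, $1<p<r$, and $V(x)=\abs{x}^\beta$ with $\frac{1}{p'}<\beta<\frac{1}{r'}$. Then $\abs{x}^{\beta r}\in A_r$, so $V\in\A_r$.
\item Let $f=\delta_0-\psi$ with $\psi\in C_c^\infty([-1,1])$ and $\int\psi=1$. One checks that $\sum_j\abs{\varphi_j*f(x)}\lesssim\abs{x}^{-1}$ for $\abs{x}\le 2$ and $\lesssim\abs{x}^{-2}$ for $\abs{x}\ge 2$.
\item Hence $f\in\dot F^0_{p,1}(V)$, because $\beta p-p>-1$ and $\beta p-2p<-1$.
\item However, $\sum_j\psi_j(x)=0$ for $x\neq 0$, since the sum telescopes. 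So your $L^p(V)$-limit is $g=-\psi$, whereas $f_N\to\delta_0-\psi$ in $\mathscr S_\infty'$.
\item Since $f$ is not represented by any function, even modulo polynomials, $f\notin L^p(V)$.
\end{itemize}

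Thus the statement, and the identification step in the paper's proof, hold only under $V\in\A_p$, i.e.\ $r\le p$. This is also what the sentence preceding the proposition announces, and in that range your argument is a complete proof.
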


\begin{proof}
The equality of the two $\dot F^0_{p,1}$ spaces and the related norm equivalence is a special case of Theorem \ref{thm:equiF} with $q=1\in(0,\max\{p,r\})$. To show that this space embeds into $L^p(V)$, we first note that
\begin{equation*}
  \Norm{f}{\dot F^0_{p,1}(V)}=\BNorm{\sum_{j\in\Z}\Norm{V(\varphi_j*f)}{}}{L^p(\R^n)}
\end{equation*}
In particular, the partial sums of $\sum_{j\in\Z}V(\varphi_j*f)$ form a Cauchy sequence in $L^p(\R^n;\fX)$, which converges to some $g\in L^p(\R^n;\fX)$. Equivalently, $\sum_{j\in\Z}\varphi_j*f$ converges to $h:=V^{-1}g\in L^p(V;\fX)$, where
\begin{equation*}
  \Norm{h}{L^p(V;\fX)}
  =\Norm{g}{L^p(\R^n;\fX)}
  \leq\BNorm{\sum_{j\in\Z}\Norm{V(\varphi_j*f)}{}}{L^p(\R^n)}
  =\Norm{f}{\dot F^0_{p,1}(V)}.
\end{equation*}
Recalling from Theorem \ref{thm:phi} or \ref{thm:phiV} that $\dot F^0_{p,1}(\rho_{\aveL^r(\mathscr D,V)})=\dot F^0_{p,1}(V)$ is independent of the choice of the Littlewood--Paley function implicit in its definition,  we can choose a special Littlewood--Paley function with $\sum_{j\in\Z}\hat\varphi(2^j\xi)\equiv 1$ for $\xi\in\R^n\setminus\{0\}$. Then $\sum_{j\in\Z}\varphi_j*f=f$ in $\mathscr S_\infty'(\R^n;\fX)$, and hence $f=h$ in the sense of $\mathscr S_\infty'(\R^n;\fX)$. Thus the previous display is the claimed embedding.
\end{proof}

\begin{proposition}\label{prop:F<Lp}
Let $p\in(1,\infty)$,
$r\in(0,p]$, and $V\in\A_p$. If $f\in L^p(V)$, then $f\in \dot F^0_{p,\infty}(\rho_{\aveL^r(\mathscr D,V)})$ and
\begin{equation*}
  \Norm{f}{\dot F^0_{p,\infty}(\rho_{\aveL^r(\mathscr D,V)})}\lesssim\Norm{f}{L^p(V)}
\end{equation*}
with the implicit positive constant independent of $f$.
\end{proposition}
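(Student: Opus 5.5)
The goal is the pointwise bound
\[
\sup_{j\in\Z}\rho_{\aveL^r(\mathscr D_j,V)}(\varphi_j*f)(x)\lesssim \mathcal M(Vf)(x)
\]
for some maximal-type operator $\mathcal M$ that is bounded on $L^p(\R^n)$. Taking $L^p$ norms of this bound then gives the claim. Since $r\le p$, Hölder's inequality gives $\rho_{\aveL^r(Q,V)}\le\rho_{\aveL^p(Q,V)}$, so it suffices to treat $r=p$. By Corollary \ref{cor:Ap},
\[
\rho_{\aveL^p(Q,V)}(\fx)\le[V]_{\A_p}\,\rho^*_{\aveL^{p'}(Q,V^{-*})}(\fx),
\]
so it is enough to control the dual norm $\rho^*_{\aveL^{p'}(Q,V^{-*})}(\varphi_j*f(x))$ for $x\in Q\in\mathscr D_j$. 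The idea mirrors Lemma \ref{lem:dualNorm}, but we use the integral representation of $\varphi_j*f$ directly instead of the sampling inequality.

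Fix $\fx^*\in\fX^*$, $j\in\Z$ and $x\in Q\in\mathscr D_j$. We write
\[
\pair{\varphi_j*f(x)}{\fx^*}=\int\varphi_j(x-z)\pair{V(z)f(z)}{V(z)^{-*}\fx^*}\,dz
\]
and decompose $\R^n$ into the cubes $R\in\mathscr D_j$. On each $R$ we use $\abs{\varphi_j(x-z)}\lesssim 2^{jn}(1+2^j\abs{x_Q-x_R})^{-M}$ and apply Hölder's inequality with exponents $u$ and $u'$, where $u<p$ is chosen with $u'=p'+\eta$ as in Proposition \ref{prop:RHI}. This gives a factor $\Norm{Vf}{\aveL^u(R)}\,\rho_{\aveL^{p'+\eta}(R,V^{-*})}(\fx^*)$. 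The reverse Hölder inequality of Proposition \ref{prop:RHI} replaces $p'+\eta$ by $p'$ on $R$. Corollary \ref{cor:QvsR} then moves from $R$ to $Q$ at the cost of $(1+2^j\abs{x_Q-x_R})^{n}$, which is absorbed into the decay by taking $M>2n$. Dividing by $\rho_{\aveL^{p'}(Q,V^{-*})}(\fx^*)$ and taking the supremum over $\fx^*$ yields
\[
\rho^*_{\aveL^{p'}(Q,V^{-*})}(\varphi_j*f(x))\lesssim\sum_{R\in\mathscr D_j}(1+2^j\abs{x_Q-x_R})^{n-M}\Norm{Vf}{\aveL^u(R)}.
\]

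Next, $\Norm{Vf}{\aveL^u(R)}\le\Norm{Vf}{\aveL^u(\hat R)}$ up to a factor $(1+2^j\abs{x_Q-x_R})^{n/u}$, where $\hat R$ is a cube centred at $x_Q$ containing $R$, and $\Norm{Vf}{\aveL^u(\hat R)}\le M_u(\Norm{Vf}{})(x)$. With $M$ large, the sum over $R$ converges uniformly in $j$. Hence
\[
\sup_j\rho_{\aveL^p(\mathscr D_j,V)}(\varphi_j*f)\lesssim M_u(\Norm{Vf}{\fX}),
\]
and since $u<p$, $M_u$ is bounded on $L^p$, which gives $\lesssim\Norm{Vf}{L^p}=\Norm{f}{L^p(V)}$.

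A preliminary point is that $f\in L^p(V)$ actually defines an element of $\mathscr S'_\infty(\R^n;\fX)$, so that $\varphi_j*f$ is given by the absolutely convergent integral above. This follows from the same Hölder-plus-doubling argument, applied with a Schwartz function in place of $\varphi_j$. The step I expect to be the main obstacle is the Hölder step: a naive Hölder inequality with exponents $p,p'$ yields $M_p$, which is not bounded on $L^p$. The reverse Hölder improvement on the dual side, from Proposition \ref{prop:RHI}, is exactly what is needed to lower the exponent to $u<p$, while keeping the polynomial loss in the cube comparison of Corollary \ref{cor:QvsR} summable against the Schwartz decay.
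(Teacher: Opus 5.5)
Your proposal is correct and follows essentially the same route as the paper. You reduce to $r=p$, pass to the dual norm via Corollary \ref{cor:Ap}, write $\pair{f(y)}{\fx^*}=\pair{V(y)f(y)}{V(y)^{-*}\fx^*}$, apply H\"older with exponents $((p'+\eta)',p'+\eta)$, use Proposition \ref{prop:RHI} and Corollary \ref{cor:QvsR}, and finish with $M_u$ for $u=(p'+\eta)'<p$; the only cosmetic difference is that the paper first turns $\sum_{R}\abs{R}\phi_j(x-x_R)\Norm{Vf}{\aveL^u(R)}$ into $(\phi_j*\Norm{Vf}{}^u)^{1/u}(x)$ by a discrete H\"older step, whereas you enlarge each $R$ to a cube centred at $x_Q$, and your remark that $f\in\mathscr S'_\infty(\R^n;\fX)$ addresses a point the paper leaves implicit.
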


\begin{proof}
Let $x\in Q\in\mathscr D_j$ and $\fx^*\in\fX^*$. Then
\begin{equation*}
\begin{split}
  \abs{\pair{\varphi_j*f(x) }{ \fx^*}}
  &=\Babs{\int_{\R^n}\varphi_j(x-y)\pair{f(y)}{\fx^*}\,dy} \\
  &\lesssim\sum_{R\in\mathscr D_j}\abs{R}\phi_j'(x-x_R) \fint_R\abs{\pair{V(y)f(y)}{V^{-*}(y)\fx^*}}\,dy \\
  &\leq\sum_{R\in\mathscr D_j}\abs{R}\phi_j'(x-x_R)\Norm{Vf}{\aveL^{(p'+\eta)'}(R:\fX)}
     \Norm{V^{-*}\fx^*}{\aveL^{p'+\eta}(R;\fX^*)},
\end{split}
\end{equation*}
where $\phi_j'(z)=2^j\phi(2^j z)$ with $\phi(z):=(1+\abs{z})^{-M'}$ dominates $\varphi_j\in\mathscr S_\infty(\R^n)$ for any $M'>0$ and where $\eta>0$ is a number guaranteed by the reverse H\"older property of Proposition \ref{prop:RHI}. Thus
\begin{equation*}
\begin{split}
  \Norm{V^{-*}\fx^*}{\aveL^{p'+\eta}(R;\fX^*)}
  &=\rho_{\aveL^{p'+\eta}(R,V^{-*})}(\fx^*) \\
  &\lesssim\rho_{\aveL^{p'}(R,V^{-*})}(\fx^*)\qquad\text{by Proposition \ref{prop:RHI}} \\
  &\lesssim(1+2^j\abs{x_Q-x_R})^c \rho_{\aveL^{p'}(Q,V^{-*})}(\fx^*).
\end{split}
\end{equation*}
Substituting back and denoting $\phi(z):=\phi(z)(1+\abs{z})^c=(1+\abs{z})^{-M}$, where $M:=M'-c$ is still as large as we like, we obtain
\begin{equation*}
\begin{split}
   \frac{\abs{\pair{\varphi_j*f(x) }{ \fx^*}}}{\rho_{\aveL^{p'}(Q,V^{-*})}(\fx^*)}
   &\lesssim\sum_{R\in\mathscr D_j}\abs{R}\phi_j(x-x_R)\Norm{Vf}{\aveL^{(p'+\eta)'}(R;\fX)} \\
   &\lesssim \Big(\sum_{R\in\mathscr D_j}\abs{R}\phi_j(x-x_R)
   \Norm{Vf}{\aveL^{(p'+\eta)'}(R;\fX)}^{(p'+\eta)'}\Big)^{\frac{1}{(p'+\eta)'}} \\
   &\approx(\phi_j*\Norm{Vf(\cdot)}{}^{(p'+\eta)'})^{\frac{1}{(p'+\eta)'}}(x).
\end{split}
\end{equation*}
Thus, by H\"older's inequality, the $\A_p$ condition, and the definition of the dual norm,
\begin{equation*}
\begin{split}
\rho_{\aveL^r(Q,V)}(\varphi_j\ast f(x))
&\le    \rho_{\aveL^p(Q,V)}(\varphi_j*f(x))
\lesssim\rho_{\aveL^{p'}(Q,V^{-*})}^*(\varphi_j*f(x)) \\
    &\lesssim (\phi_j*\Norm{Vf(\cdot)}{}^{(p'+\eta)'})^{\frac{1}{(p'+\eta)'}}(x)
    \lesssim M_{(p'+\eta)'}(Vf)(x)
\end{split}
\end{equation*}
for all $x\in Q\in\mathscr D_j$. Therefore
\begin{equation*}
\begin{split}
  \Norm{f}{\dot F^0_{p,\infty}(\rho_{\aveL^r(\mathscr D,V)})}
  &=\bNorm{\big\{\rho_{\aveL^r(\mathscr D_j,V)}(\varphi_j*f)\big\}_{j\in\mathbb{Z}}}
  {L^p\ell^\infty} \\
  &\lesssim\Norm{ M_{(p'+\eta)'}(Vf)}{L^p}
  \lesssim\Norm{Vf}{L^p}=\Norm{f}{L^p(V)},
\end{split}
\end{equation*}
where the last estimate was based on the maximal inequality and the fact that $p'+\eta>p'$ and hence $(p'+\eta)'<(p')'=p$ for $p\in(1,\infty)$.
\end{proof}

\section{Almost diagonal operators}

In this section, we study the boundedness of the following class of
almost diagonal operators on Besov and Triebel--Lizorkin spaces.
This is a cornerstone for developing further properties of these spaces
in the subsequent sections.

\begin{definition}\label{def:AD}
A matrix $B=\{b_{QR}\}_{Q,R\in\mathscr D}$ is said to be
\emph{$(D,E,F)$-almost diagonal} if
\begin{equation*}
  \abs{b_{QR}}\lesssim B_{-E,-F,-D}(Q,R)
\end{equation*}
where the right-hand side is defined in \eqref{eq:BabD}.
\end{definition}

\begin{remark}
Definition \ref{def:AD} is equivalent to \cite[Definition 4.1]{BHYY:II}, although it was there formulated slightly differently. The said \cite[Definition 4.1]{BHYY:II} seems to be the first place where this condition was formulated with generic parameters $D,E,F\in\R$. With specific choices of parameters, the definition goes back to \cite[p.~53]{FJ:90}, and reappears in \cite[Definition 8.1]{Rou:03} and \cite[Definition 2.5]{FR:21}, each time with different specific parameters for the particular context.
\end{remark}

Before proceeding to our main results,
we first recall the classical result.
Recall that the scalar sequence space $\dot{a}^s_{p,q}$ is defined to be
the set of all $t=\{t_Q\}_{Q\in\D}$ with each $t_Q\in\mathbb{C}$ such that
\begin{align*}
\|t\|_{\dot{a}^s_{p,q}}
:=\BNorm{\Big\{2^{js}t_j\Big\}_{j\in\mathbb{Z}}}
{(L^p\ell^q)_A}<\infty,
\end{align*}
where $t_j$ is as in \eqref{df-tj}.

\begin{theorem}[{\cite[Theorem 3.3]{FJ:90}}]\label{thm:FJ}
Let $B$ be $(D,E,F)$-almost diagonal with
\begin{equation}\label{eq:FJad}
   D>J,\qquad E>\frac{n}{2}+s,\qquad F>J-\frac{n}{2}-s,
\end{equation}
where
\begin{equation}\label{eq:J}
  J:=\begin{cases} n/\min(1,p,q) & \textup{(Triebel--Lizorkin case)}, \\ n/\min(1,p) & \textup{(Besov case)}.\end{cases}
\end{equation}
Then $B$ is bounded on $\dot a^s_{p,q}$.
\end{theorem}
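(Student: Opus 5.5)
This is the classical Frazier--Jawerth almost diagonal theorem for scalar sequence spaces, and I would follow the standard route through maximal functions. First I reduce to $s=0$. Set $\tilde t_Q:=\ell(Q)^{-s}t_Q$ and $\tilde b_{QR}:=(\ell(R)/\ell(Q))^{s}b_{QR}$. Then $\Norm{t}{\dot a^s_{p,q}}=\Norm{\tilde t}{\dot a^0_{p,q}}$. The factor $(\ell(R)/\ell(Q))^s$ shifts the exponents $E,F$ by $\mp s$, so $\tilde B$ is $(D,E-s,F+s)$-almost diagonal. Hence it suffices to treat $s=0$ with $E>\frac n2$ and $F>J-\frac n2$.

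Next I split $B=B_0+B_1$, where $B_0$ keeps the entries with $\ell(R)\geq\ell(Q)$ and $B_1$ those with $\ell(R)<\ell(Q)$. Fix $r<\min(1,p)$ (resp.\ $r<\min(1,p,q)$ in the Triebel--Lizorkin case) with $D>n/r$ and $F>n/r-\frac n2$; this is possible because the inequalities in \eqref{eq:FJad} are strict. The key pointwise lemma is the usual one. For $j\le i$ and $x\in Q\in\mathscr D_i$,
\[
\sum_{R\in\mathscr D_j}\abs{t_R}\Big(1+\frac{\abs{x_Q-x_R}}{\ell(R)}\Big)^{-D}
\lesssim \Big(M\Big(\sum_{R\in\mathscr D_j}\abs{t_R}^r\one_R\Big)(x)\Big)^{1/r}.
\]
When instead $j>i$, the same sum with $\ell(Q)$ in the denominator is bounded by $2^{(j-i)n/r}$ times the same maximal expression. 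Both follow by splitting the $R$'s into dyadic annuli $\abs{x_Q-x_R}\approx2^k\max(\ell(Q),\ell(R))$, using $\ell^r\subset\ell^1$, and comparing counting to the measure of the annulus, which is controlled by the maximal function.

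With the lemma in hand, the rest is bookkeeping. For $x\in Q\in\mathscr D_i$,
\[
\abs{(Bt)_Q}\abs{Q}^{-1/2}\lesssim \sum_{j\le i}2^{-(i-j)(E-\frac n2)}\big(M(\abs{t_j}^r)\big)^{1/r}(x)+\sum_{j>i}2^{-(j-i)(F+\frac n2-\frac nr)}\big(M(\abs{t_j}^r)\big)^{1/r}(x),
\]
with $t_j$ as in \eqref{df-tj}. Both geometric factors decay, precisely because $E>\frac n2$ and $F>\frac nr-\frac n2$. A discrete convolution with an $\ell^1$ (indeed $\ell^{\min(1,q)}$-summable) kernel in $j$ is bounded on $\ell^q$. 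So the $L^p\ell^q$ norm of $\{(Bt)_i\}$ is controlled by that of $\{(M\abs{t_j}^r)^{1/r}\}$. This is in turn controlled, via the Fefferman--Stein vector-valued maximal inequality on $L^{p/r}\ell^{q/r}$ with $p/r,q/r>1$, by $\Norm{t}{\dot f^0_{p,q}}$. In the Besov case one uses the scalar maximal theorem on $L^{p/r}$ for each $j$ before summing over $j$, so the constraint involves only $\min(1,p)$.

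I expect the main obstacle to be the pointwise maximal lemma with the correct exponent $n/r$ in the case of small $R$ ($j>i$). There the $2^{(j-i)n}$ cubes of $R$ inside a neighbourhood of $Q$ must be counted carefully against the $\ell^r$-to-$\ell^1$ embedding. This is exactly what dictates the conditions $D>J$ and $F>J-\frac n2$.
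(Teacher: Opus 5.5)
Your proposal is correct and is essentially the classical Frazier--Jawerth argument: reduction to $s=0$, splitting according to whether $\ell(R)\ge\ell(Q)$ or $\ell(R)<\ell(Q)$, the pointwise maximal lemma with the factor $2^{(j-i)_+n/r}$, and the Fefferman--Stein inequality on $L^{p/r}\ell^{q/r}$. The paper simply relies on this argument by citing \cite[Theorem 3.3]{FJ:90}, adding only that the Besov case is similar and easier, which your use of the scalar maximal theorem for each fixed $j$ confirms.
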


\begin{proof}
Strictly speaking, \cite[Theorem 3.3]{FJ:90} covers only the Triebel--Lizorkin case, but the Besov case is similar and easier.
\end{proof}

\begin{remark}\label{rem:sharp}
As showed in \cite[Sections 7 and 9]{BHYY:II},
the assumption \eqref{eq:FJad} on $(D,E,F)$
is optimal.
More precisely,
if $a=b$ or $a=f$ and $q\ge\min\{1,p\}$, and if
every $(D,E,F)$-almost diagonal operator $B$ is bounded
on $\dot{a}^s_{p,q}$, then, by
 \cite[Sections 7 and 9]{BHYY:II}, the triple $(D,E,F)$ satisfies
\eqref{eq:FJad}.
\end{remark}

In this section, we deal with the boundedness of $B$
on average and pointwise-weighted spaces separately.
First, in Subsection \ref{sec:ADrho}, we establish the boundedness of
almost diagonal operators on average-weighted spaces $\dot{A}^s_{p,q}(\rho)$
under very mild assumptions on $\rho$.
But, in this setting,
the optimal assumption on $(D,E,F)$ is still unknown
(see Remark \ref{rem:ADrho}).
Next, in Subsection \ref{sec:ADV},
we improve the assumption on $(D,E,F)$ for pointwise-weighted spaces
$\dot{A}^s_{p,q}(V)$ under $V\in\bigcup_{r\ge p}\A_r$,
and we prove that this assumption is optimal in some sense
(see Remark \ref{rem:ADV}).

\subsection{Average-weighted spaces with abstract seminorms}\label{sec:ADrho}

The main result of this subsection is the following
boundedness result.

\begin{theorem}\label{prop:AD}
Let $p\in(0,\infty)$, $q\in(0,\infty]$,
and $s\in\mathbb{R}$.
Suppose $\rho=\{\rho_Q\}_{Q\in\mathscr D}$ is a family of
$(u,a,b,c)$-norms with some $u\in(0,1]$ and $a,b,c\in[0,\infty)$.
If $B$ is $(D,E,F)$-almost diagonal with
\begin{equation*}
  D>J^{(u)}+c,\qquad
  E>\frac{n}{2}+s+a,\qquad
  F>J^{(u)}-\frac{n}{2}-s+b,
\end{equation*}
where
\begin{equation}\label{eq:Jr}
  J^{(u)}:=\begin{cases} n/\min\{p,q,u\} & \textup{(Triebel--Lizorkin case)}, \\
  n/\min\{p,u\} & \textup{(Besov case)},\end{cases}
\end{equation}
then $B$ is bounded on $\dot a^s_{p,q}(\rho)$.
\end{theorem}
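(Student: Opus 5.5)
The plan is to reduce to the scalar Frazier--Jawerth theorem (Theorem \ref{thm:FJ}), after a preliminary $u$-power reduction and with the quasi-norm comparison \eqref{eq:strDb} absorbed into the almost diagonal decay. For a sequence $t$, write $(Bt)_Q=\sum_R b_{QR}t_R$. The $u$-triangle inequality for $\rho_Q$ (the vector series converges absolutely in $\fX$, which I would check using Definition \ref{def:uabc}\eqref{def:uabc-it2}, Lemma \ref{lem:tReasy} and the decay, exactly as in Lemma \ref{lem:TpsiWellDef}), followed by \eqref{eq:strDb}, gives
\begin{equation*}
  \rho_Q((Bt)_Q)^u\leq\sum_R\abs{b_{QR}}^u\rho_Q(t_R)^u
  \lesssim\sum_R\big(\abs{b_{QR}}B_{a,b,c}(Q,R)\big)^u\rho_R(t_R)^u .
\end{equation*}
Since $\abs{b_{QR}}B_{a,b,c}(Q,R)\lesssim B_{-(E-a),-(F-b),-(D-c)}(Q,R)$, the modified matrix $\tilde b_{QR}:=\abs{b_{QR}}B_{a,b,c}(Q,R)$ is $(D-c,E-a,F-b)$-almost diagonal, and the scalar sequence $\tau_R:=\rho_R(t_R)$ satisfies $\Norm{\tau}{\dot a^s_{p,q}}=\Norm{t}{\dot a^s_{p,q}(\rho)}$ by definition. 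So it suffices to bound $\rho_Q((Bt)_Q)\lesssim(\sum_R\tilde b_{QR}^u\tau_R^u)^{1/u}$ in the $\dot a^s_{p,q}$ quasi-norm by $\Norm{\tau}{\dot a^s_{p,q}}$.

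To handle the $u$-th powers, I would pass to the rescaled space: if $\sigma_R:=\tau_R^u$ (with appropriate normalisation $\abs{R}^{1/2}$ factors), then $\Norm{\tau}{\dot a^s_{p,q}}^u$ equals the $\dot a^{\tilde s}_{p/u,q/u}$ quasi-norm of a sequence $\sigma$ with $\tilde s=su+n(\frac u2-\frac12)$, i.e.\ $\sigma_R=\abs{R}^{(1-u)/2}\tau_R^u$, because $t_j$ is a sum of indicators with disjoint supports at each level, so raising to the power $u$ commutes with the sum. The matrix acting on $\sigma$ becomes $\hat b_{QR}:=\tilde b_{QR}^u(\abs{Q}/\abs{R})^{(1-u)/2}$, which is $(u(D-c),\,u(E-a)+\frac{n(1-u)}2\cdot\text{sign adj.},\,\dots)$-almost diagonal; concretely $(\ell(Q)/\ell(R))^{n(1-u)/2}$ shifts $E$ and $F$ by $\mp n(1-u)/2$ after multiplying by $u$. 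Then apply Theorem \ref{thm:FJ} with parameters $(p/u,q/u,\tilde s)$, where now $J$ becomes $n/\min(1,p/u,q/u)=uJ^{(u)}$ (as $u\le1$).

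The remaining work, and the main bookkeeping obstacle, is verifying \eqref{eq:FJad} for the new triple: $u(D-c)>uJ^{(u)}$, i.e.\ $D>J^{(u)}+c$; for $E$ one needs $u(E-a)-\frac{n(1-u)}{2}>\frac{n}{2u}\cdot u+\tilde s$ type inequality, which after substituting $\tilde s$ should reduce exactly to $E>\frac n2+s+a$; similarly for $F$ giving $F>J^{(u)}-\frac n2-s+b$. I expect the exponent signs in the normalisation factor to be the fiddly point, and I would double check them by testing on a single-cube sequence. The Besov case is identical with $J^{(u)}=n/\min\{p,u\}$.
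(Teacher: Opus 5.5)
Your route is the paper's. The $u$-triangle inequality together with $\sup_{\fx\neq0}\rho_Q(\fx)/\rho_R(\fx)\lesssim B_{a,b,c}(Q,R)$ is Lemma~\ref{lem:Brho<newB}, the power trick is Lemma~\ref{lem:rescale}, and the conclusion is Theorem~\ref{thm:FJ} on $\dot a^{su}_{p/u,q/u}$. The paper first lowers $u$ to $\tilde u=\min\{p,q,u\}$ (resp.\ $\min\{p,u\}$) so that the rescaled space has $J=n$. Keeping $u$, as you do, is equally fine, since $n/\min\{1,\frac pu,\frac qu\}=nu/\min\{u,p,q\}=uJ^{(u)}$.

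Your bookkeeping is inconsistent as written, though. With $\sigma_R=\abs{R}^{\frac12(1-u)}\tau_R^u$ the smoothness index is $su$, exactly as in Lemma~\ref{lem:rescale}. The index $su+n(\frac u2-\frac12)$ belongs to the other normalisation $\sigma_R=\tau_R^u$, where the matrix carries no factor $(\abs{Q}/\abs{R})^{\frac12(1-u)}$; combining the two counts that factor twice. Moreover, $(\abs{Q}/\abs{R})^{\frac12(1-u)}\approx B_{-\frac n2(1-u),\frac n2(1-u),0}(Q,R)$ \emph{adds} $\frac n2(1-u)$ to the $E$-decay and \emph{subtracts} it from the $F$-decay. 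So $\hat B$ is $\bigl(u(D-c),\,u(E-a)+\frac n2(1-u),\,u(F-b)-\frac n2(1-u)\bigr)$-almost diagonal. Then \eqref{eq:FJad} for $(\frac pu,\frac qu,su)$ with $J=uJ^{(u)}$ is exactly $D>J^{(u)}+c$, $E>\frac n2+s+a$, $F>J^{(u)}-\frac n2-s+b$. Your displayed $E$-inequality, with $-\frac n2(1-u)$ and your $\tilde s$, would instead force $E>a+s+\frac n{2u}$.

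The step that fails as proposed is the absolute convergence of $\sum_R b_{QR}t_R$, which you need before writing $\rho_Q((Bt)_Q)^u\le\sum_R\abs{b_{QR}}^u\rho_Q(t_R)^u$. It cannot be checked ``exactly as in Lemma~\ref{lem:TpsiWellDef}'': there the coefficients $\pair{\varphi}{\psi_R}$ decay to arbitrary order. Here, inserting the single-entry bound of Lemma~\ref{lem:tReasy} and summing over the roughly $2^{kn}$ cubes $R$ of side $2^{-k}\ell(Q)$ near $Q$ gives terms of size $2^{k(\frac n2+\frac np-s+b-F)}$. That requires $F>\frac n2+\frac np-s+b$, which does not follow from $F>J^{(u)}-\frac n2-s+b$; for instance $p=q=2$, $u=1$ and $F\in(\frac n2-s+b,\,n-s+b]$ is not covered.

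The remedy is the a posteriori argument of Remark~\ref{rem:abso-rho}. First apply the Frazier--Jawerth estimate to the nonnegative matrix $\hat B$ and nonnegative sequence $\sigma$. This gives $\hat B\sigma\in\dot a^{su}_{p/u,q/u}$, so each entry $(\hat B\sigma)_Q=\abs{Q}^{\frac12(1-u)}\sum_R\tilde b_{QR}^u\tau_R^u$ is finite. Hence $\sum_R\rho_Q(b_{QR}t_R)^u<\infty$, and by Definition~\ref{def:uabc}\eqref{def:uabc-it2} and $u\le1$ also $\sum_R\Norm{b_{QR}t_R}{\fX}<\infty$. Only then apply the $u$-triangle inequality, using the continuity of $\rho_Q$.

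Alternatively, a direct check does work if you replace the single-entry bound by the level-wise bound $\bigl(\sum_{R\in\mathscr D_i}\rho_R(t_R)^p\bigr)^{1/p}\le 2^{-i(s+\frac n2-\frac np)}\Norm{t}{\dot a^s_{p,q}(\rho)}$. Combine it with H\"older's inequality for $p\ge1$, or with the $p$-triangle inequality for $p<1$.
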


Before going to the proof of Theorem \ref{prop:AD},
we give some applications to concrete examples of $\rho$ as follows.

\begin{corollary}\label{cor:AD}
Let $p,r\in(0,\infty)$, $q\in(0,\infty]$, and $s\in\mathbb{R}$.
Suppose that $B$ is a $(D,E,F)$-almost diagonal operator.
If one of the following conditions holds:
\begin{enumerate}[{\rm(i)}]
  \item\label{cor:AD-it1} $V$ is an $(r,\beta)$-doubling weight and
\begin{equation*}
  D>J^{(\min\{1,r\})}+\frac{\beta}{r},
  \qquad E>\frac n2+s+\frac nr,\qquad F>J^{(\min\{1,r\})}
  -\frac n2-s+\frac{\beta-n}{r};\qquad\text{or}
\end{equation*}
  \item\label{cor:AD-it2}  $V\in \mathscr{A}_w$ with $w\ge r$ has doubling
 dimension $\beta\in[n,\infty)$, and
\begin{equation*}
  D>J^{(\min\{1,r\})}+\frac{\beta}{w},
  \qquad E>\frac n2+s+\frac nw,\qquad F>J^{(\min\{1,r\})}
  -\frac n2-s+\frac{\beta-n}{w},
\end{equation*}
\end{enumerate}
then $B$ is bounded on $\dot a^s_{p,q}(\rho_{\aveL^r(\mathscr D,V)})$.
\end{corollary}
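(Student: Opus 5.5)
This is a direct consequence of Theorem \ref{prop:AD} combined with Remark \ref{rem:uabc}. The plan is to identify $\rho:=\{\rho_{\aveL^r(Q,V)}\}_{Q\in\mathscr D}$ as a family of $(u,a,b,c)$-norms with explicit parameters. We then substitute these parameters into the hypotheses of Theorem \ref{prop:AD} and check that they reduce to the displayed conditions on $(D,E,F)$.

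\textbf{Case \eqref{cor:AD-it1}.} By Remark \ref{rem:uabc}\eqref{rem:uabc-it1}, $\rho$ is a family of $(\min\{1,r\},\frac nr,\frac{\beta-n}{r},\frac\beta r)$-norms. Taking $u=\min\{1,r\}$, $a=\frac nr$, $b=\frac{\beta-n}r$, $c=\frac\beta r$ in Theorem \ref{prop:AD}, its requirements become
\begin{equation*}
D>J^{(\min\{1,r\})}+\tfrac{\beta}{r},\qquad E>\tfrac n2+s+\tfrac nr,\qquad F>J^{(\min\{1,r\})}-\tfrac n2-s+\tfrac{\beta-n}{r}.
\end{equation*}
These are exactly the assumed conditions, so boundedness on $\dot a^s_{p,q}(\rho)$ follows.

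\textbf{Case \eqref{cor:AD-it2}.} Remark \ref{rem:uabc}\eqref{rem:uabc-it2} gives the parameters $(\min\{1,r\},\frac nw,\frac{\beta-n}{w},\frac\beta w)$. The same substitution yields the stated conditions with $w$ in place of $r$.

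\textbf{Points to check.} The first is a technical mismatch: Theorem \ref{prop:AD} allows $a,b,c\in[0,\infty)$, and here $b=0$ is possible when $\beta=n$. This is permitted by Theorem \ref{prop:AD}, so nothing extra is needed. However, Proposition \ref{prop:TpsiBd} and Lemma \ref{lem:A<a} are stated for $a,b,c\in(0,\infty)$. If Theorem \ref{prop:AD} turns out to depend on them, we can simply increase $b$ slightly, since \eqref{eq:strDb} remains true with larger exponents. The strict inequalities on $F$ leave room for this.

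The second point is the role of $\beta$ in case \eqref{cor:AD-it2}. It enters only through the $(w,\beta)$-doubling bound of Lemma \ref{lem:QvsRdb}. That bound is combined with the equivalence $\rho_{\aveL^r(Q,V)}\approx\rho_{\aveL^w(Q,V)}$ from Corollary \ref{cor:Ap1}, which the remark already records.

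Nothing here is a genuine obstacle, since the real work is contained in Theorem \ref{prop:AD}.
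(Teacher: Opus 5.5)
Your proposal is correct and is exactly the paper's argument: Remark~\ref{rem:uabc} shows that $\{\rho_{\aveL^r(Q,V)}\}_{Q\in\mathscr D}$ is a family of $(\min\{1,r\},\frac nr,\frac{\beta-n}{r},\frac\beta r)$-norms in case~(i) and of $(\min\{1,r\},\frac nw,\frac{\beta-n}{w},\frac\beta w)$-norms in case~(ii), and substituting these parameters into Theorem~\ref{prop:AD} gives precisely the stated conditions. Your worry about needing $a,b,c>0$ is moot, because the proof of Theorem~\ref{prop:AD} uses only Lemma~\ref{lem:Brho<newB} and Theorem~\ref{thm:FJ}, not Lemma~\ref{lem:A<a} or Proposition~\ref{prop:TpsiBd}; your perturbation fallback would work anyway.
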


\begin{proof}
By Remark \ref{rem:uabc}, we find that
\eqref{cor:AD-it1} or \eqref{cor:AD-it2}
implies that all the assumptions of
Theorem \ref{prop:AD} are satisfied.
Thus, the claim of the present corollary is precisely the conclusion of Theorem \ref{prop:AD}.
\end{proof}

\begin{remark}\label{rem:ADrho}
Now we give some comments
on the assumptions to $(D,E,F)$ in Theorem \ref{prop:AD}
and Corollary \ref{cor:AD}.
Note that
\begin{equation*}
    J^{(\min\{1,r\})}=J\quad\text{if}\quad r=p,
\end{equation*}
where $J^{(\min\{1,r\})}$ is defined in \eqref{eq:Jr} and appears in Corollary \ref{cor:AD}, while $J$ is defined in \eqref{eq:J}.
In particular, specializing Corollary \ref{cor:AD}\eqref{cor:AD-it2}
to $w=r=p$ and the case of Triebel--Lizorkin spaces
$$\dot a^s_{p,q}(\rho_{\aveL^p(V;\mathscr D)})
=\dot f^s_{p,q}(\rho_{\aveL^p(V;\mathscr D)}),$$
Corollary \ref{cor:AD}\eqref{cor:AD-it2} has the same assumptions as
\cite[Theorem 2.6]{FR:21} in the matrix-weighted case,
and hence provides its natural extension to the operator-weighted case.
However, a sharper condition is known for matrix-$\A_p$-weights
from \cite[Theorem 5.1]{BHYY:II}.
In Subsection \ref{sec:ADV} below, we will
obtain a corresponding result in the operator-weighted setting.
For a general family $\rho$ of quasi-norms,
the optimal assumptions on $(D,E,F)$
for Theorem \ref{prop:AD} to hold are still unknown.
\end{remark}

Now, we turn to show Theorem \ref{prop:AD}.
In analogy with the familiar rescaling property of the $L^p$ norms,
$\Norm{f}{L^p}=\Norm{\abs{f}^r}{L^{\frac pr}}^{\frac1r}$,
we first record the following version for the $\dot a^s_{p,q}$ norms:

\begin{lemma}\label{lem:rescale}
Let $p\in(0,\infty)$, $q\in(0,\infty]$, and  $s\in\mathbb{R}$.
If $\rho=\{\rho_Q\}_{Q\in\mathscr{D}}$
is a family of quasi-norms and $u\in(0,\infty)$, then,
for any $t=\{t_Q\}_{Q\in\mathscr{D}}$ of vectors
in $\fX$,
\begin{equation*}
  \Norm{t}{\dot a^s_{p,q}(\rho)}
  =\bNorm{\big\{ \rho_Q(t_Q)^u \ell(Q)^{\frac n2(1-u)} \big\}_{Q\in\mathscr{D}}}
  {\dot a^{su}_{\frac pu,\frac qu}}^{\frac1u}.
\end{equation*}
\end{lemma}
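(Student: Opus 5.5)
The identity is a direct computation from the definitions, carried out level by level in $j$. I will fix $j\in\Z$ and compare the $j$-th components of the two sequence norms. On the left, by \eqref{df-tj} and the definition of $\rho_{\mathscr D_j}$, the function
\begin{equation*}
  2^{js}\rho_{\mathscr D_j}(t_j)=2^{js}\sum_{Q\in\mathscr D_j}\rho_Q(t_Q)\frac{\one_Q}{\abs{Q}^{\frac12}}
\end{equation*}
appears. Here I use that the cubes in $\mathscr D_j$ are disjoint, so at each point only one term is present, together with the homogeneity $\rho_Q(\lambda\fx)=\abs{\lambda}\rho_Q(\fx)$ applied with $\lambda=\abs{Q}^{-\frac12}$.

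The key step is to raise this function to the power $u$. Because the cubes are disjoint, the $u$-th power passes inside the sum, and with $\abs{Q}=2^{-jn}$ for $Q\in\mathscr D_j$ I obtain
\begin{equation*}
  \big(2^{js}\rho_{\mathscr D_j}(t_j)\big)^u
  =2^{jsu}\sum_{Q\in\mathscr D_j}\rho_Q(t_Q)^u\abs{Q}^{-\frac u2}\one_Q
  =2^{jsu}\sum_{Q\in\mathscr D_j}\big(\rho_Q(t_Q)^u\abs{Q}^{\frac12-\frac u2}\big)\frac{\one_Q}{\abs{Q}^{\frac12}}.
\end{equation*}
Next I rewrite the factor $\abs{Q}^{\frac12(1-u)}$ as $\ell(Q)^{\frac n2(1-u)}$. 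This shows that the right-hand side is exactly the $j$-th term $2^{j(su)}\tilde t_j$ of the scalar sequence $\tilde t_Q:=\rho_Q(t_Q)^u\ell(Q)^{\frac n2(1-u)}$, taken with smoothness $su$.

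The last step is to use the rescaling of the mixed norms, $\Norm{\{g_j\}}{(L^p\ell^q)_A}=\Norm{\{\abs{g_j}^u\}}{(L^{p/u}\ell^{q/u})_A}^{1/u}$. This holds for both $A=F$ and $A=B$, and also for $q=\infty$, because it only involves the pointwise identity $(\sum_j\abs{g_j}^q)^{u/q}=(\sum_j(\abs{g_j}^u)^{q/u})^{u/q}$ and the analogous identity for $L^p$. Combining the two displays with this identity gives the claimed equality. I do not expect a real obstacle; the only points needing care are the bookkeeping of the exponent of $\abs{Q}$ and the use of disjointness of the cubes, which is what lets the power $u$ pass inside the sum.
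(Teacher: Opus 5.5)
Your computation is correct and is essentially the paper's own proof: the paper also writes the $j$-th level as $\sum_{Q\in\mathscr D_j}\rho_Q(t_Q)\ell(Q)^{-(s+\frac n2)}\one_Q$ and raises it to the power $u$. It then uses the disjointness of the cubes in $\mathscr D_j$ together with the rescaling $\Norm{\{g_j\}}{(L^p\ell^q)_A}=\Norm{\{\abs{g_j}^u\}}{(L^{p/u}\ell^{q/u})_A}^{1/u}$, and regroups the exponent of $\ell(Q)$ as $-(su+\frac n2)+\frac n2(1-u)$; the only difference from your argument is the order in which the level-wise rewriting and the mixed-norm rescaling are presented.
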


\begin{proof} By the definitions of the norms under consideration,
\begin{equation*}
\begin{split}
   \Norm{t}{\dot a^s_{p,q}(\rho)}
   &=\BNorm{\Big\{\sum_{Q\in\mathscr D_j}\frac{\rho_Q(t_Q)}
   {\ell(Q)^{s+\frac n2}}\one_Q\Big\}_{j\in\mathbb{Z}}}{(L^p\ell^q)_A}
   =\BNorm{ \Big\{ \sum_{Q\in\mathscr D_j} \frac{\rho_Q(t_Q)^u }
   { \ell(Q)^{(s+\frac n2)u}} \one_Q\Big\}_{j\in\mathbb{Z}}  }
   {(L^{\frac pu}\ell^{\frac qu})_A}^{\frac1u} \\
  &=\BNorm{ \Big\{ \sum_{Q\in\mathscr D_j} \frac{\rho_Q(t_Q)^u \ell(Q)^{\frac n2(1-u)} }
  { \ell(Q)^{(su+\frac n2)} } \one_Q\Big\}_{j\in\mathbb{Z}}}{(L^{\frac pu}\ell^{\frac qu})_A}^{\frac1u}
  =\bNorm{\big\{ \rho_Q(t_Q)^u \ell(Q)^{\frac n2(1-u)} \big\}_{Q\in\D}}{\dot a^{su}_{\frac pu,\frac qu}}^{\frac1u},
\end{split}
\end{equation*}
which completes the proof of the lemma.
\end{proof}

\begin{lemma}\label{lem:Brho<newB}
Let an infinite matrix $B=\{b_{QR}\}_{Q,R\in\mathscr D}$ of scalars act on sequences $t=\{t_Q\}_{Q\in\mathscr D}$ by
\begin{equation*}
  (Bt)_Q:=\sum_{R\in\mathscr D}b_{QR}t_R.
\end{equation*}
If $\rho=\{\rho_Q\}_{Q\in\mathscr D}$ is a family of $u$-norms, then
\begin{equation*}
  \Norm{B}{\bddlin(\dot a^s_{p,q}(\rho))}
  \leq\Norm{\tilde B}{\bddlin(\dot a^{su}_{\frac pu,\frac qu})}^{\frac1u},
\end{equation*}
where
\begin{equation}\label{eq:tildeB}
  \tilde B:=\{\tilde b_{QR}\}_{Q,R\in\mathscr D},\quad
  \tilde b_{QR}:=\abs{b_{QR}}^u \bNorm{\frac{\rho_Q}{\rho_R}}{}^u
  \Big(\frac{\abs{Q}}{\abs{R}}\Big)^{\frac12(1-u)},
\end{equation}
and
\begin{equation*}
  \bNorm{\frac{\rho_Q}{\rho_R}}{}:=\sup_{\fx\in\fX\setminus\{0\}}\frac{\rho_Q(\fx)}{\rho_R(\fx)}
\end{equation*}
\end{lemma}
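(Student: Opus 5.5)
The strategy is to reduce the vector-valued, quasi-normed problem to the scalar one by means of Lemma \ref{lem:rescale}, after which the boundedness of $\tilde B$ on $\dot a^{su}_{\frac pu,\frac qu}$ does the rest. Fix $t=\{t_Q\}_{Q\in\mathscr D}$. We may assume $\Norm{\tilde B}{\bddlin(\dot a^{su}_{\frac pu,\frac qu})}<\infty$, since otherwise there is nothing to prove. We may also assume that the sums $\sum_R b_{QR}t_R$ converge absolutely in $\fX$, or else interpret them via finite truncations and a limiting argument.

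The first step is a pointwise comparison at the level of each coefficient. Because $\rho_Q$ is a $u$-norm,
\begin{equation*}
  \rho_Q((Bt)_Q)^u\leq\sum_{R\in\mathscr D}\abs{b_{QR}}^u\rho_Q(t_R)^u
  \leq\sum_{R\in\mathscr D}\abs{b_{QR}}^u\bNorm{\frac{\rho_Q}{\rho_R}}{}^u\rho_R(t_R)^u.
\end{equation*}
For an infinite sum this requires some continuity of $\rho_Q$. The natural route is to apply the bound to finite partial sums and pass to the limit, which uses the continuity of $\rho_Q$ with respect to $\Norm{\ }{\fX}$ (or a Fatou-type argument if that continuity is unavailable). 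Next, multiply both sides by $\ell(Q)^{\frac n2(1-u)}$ and insert $\ell(R)^{\frac n2(1-u)}\ell(R)^{-\frac n2(1-u)}$. Since $(\abs{Q}/\abs{R})^{\frac12(1-u)}=(\ell(Q)/\ell(R))^{\frac n2(1-u)}$, this gives
\begin{equation*}
  \rho_Q((Bt)_Q)^u\ell(Q)^{\frac n2(1-u)}\leq\sum_{R}\tilde b_{QR}\,\big[\rho_R(t_R)^u\ell(R)^{\frac n2(1-u)}\big]
  =(\tilde B\tau)_Q,
\end{equation*}
where $\tau:=\{\rho_R(t_R)^u\ell(R)^{\frac n2(1-u)}\}_{R}$ is a nonnegative scalar sequence and $\tilde B$ is as in \eqref{eq:tildeB}.

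The second step is to take norms. The quasi-norm of $\dot a^{su}_{\frac pu,\frac qu}$ is monotone in the absolute values of the entries, so Lemma \ref{lem:rescale} applied to $Bt$ gives
\begin{equation*}
  \Norm{Bt}{\dot a^s_{p,q}(\rho)}=\Norm{\{\rho_Q((Bt)_Q)^u\ell(Q)^{\frac n2(1-u)}\}}{\dot a^{su}_{\frac pu,\frac qu}}^{\frac1u}
  \leq\Norm{\tilde B\tau}{\dot a^{su}_{\frac pu,\frac qu}}^{\frac1u}
  \leq\Norm{\tilde B}{}^{\frac1u}\Norm{\tau}{\dot a^{su}_{\frac pu,\frac qu}}^{\frac1u}.
\end{equation*}
Applying Lemma \ref{lem:rescale} once more, now to $t$, identifies $\Norm{\tau}{\dot a^{su}_{\frac pu,\frac qu}}^{\frac1u}=\Norm{t}{\dot a^s_{p,q}(\rho)}$, which yields the claim.

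The main obstacle is the well-definedness of $(Bt)_Q$ and the passage of the $u$-triangle inequality to infinite sums. I would handle this with truncations. Let $B_N$ be $B$ with only finitely many nonzero entries in each row. Its associated $\tilde B_N$ is entrywise dominated by $\tilde B$, so by positivity and monotonicity the argument above gives uniform bounds for $B_N$. The finiteness of $\sum_R\tilde b_{QR}\tau_R$, which follows because $\tilde B\tau$ lies in the sequence space, then makes $\sum_R\abs{b_{QR}}\rho_Q(t_R)$ summable in the $u$-sense. Together with the equivalence of $\rho_Q$ with $\Norm{\ }{\fX}$ (when assumed), this gives convergence of $(Bt)_Q$ in $\fX$ and lets the estimate pass to the limit.
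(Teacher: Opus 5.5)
Your proposal is correct and follows the paper's proof essentially verbatim. The $u$-triangle inequality together with $\rho_Q(t_R)\le\bNorm{\frac{\rho_Q}{\rho_R}}{}\rho_R(t_R)$ gives the entrywise domination $\rho_Q((Bt)_Q)^u\abs{Q}^{\frac12(1-u)}\le(\tilde B\tau)_Q$, and monotonicity of the scalar quasi-norm plus Lemma \ref{lem:rescale} (applied to $Bt$ and then to $t$) finishes the argument.

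Your extra discussion of convergence of $(Bt)_Q$, and of passing the $u$-triangle inequality to infinite sums, goes beyond what the paper's proof addresses; the paper treats this point separately in Remark \ref{rem:abso-rho}, using the equivalence of $\rho_Q$ with $\Norm{\cdot}{\fX}$ from Definition \ref{def:uabc}, just as you suggest.
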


\begin{proof}
By Lemma \ref{lem:rescale} and a direct calculation, we obtain
\begin{equation}\label{eq:tran}
\begin{split}
  \Norm{Bt}{\dot a^s_{p,q}(\rho)}
  &=\bNorm{\big\{\rho_Q((Bt)_Q)^u \abs{Q}^{\frac12(1-u)} \big\}_{Q\in\D}}
  {\dot a^{su}_{\frac pu,\frac qu}}^{\frac 1u} \\
  &\leq\BNorm{\Big\{\sum_{R\in\mathscr D}\rho_Q(b_{QR}t_R)^u \abs{Q}^{\frac12(1-u)}
  \Big\}_{Q\in\D}}{\dot a^{su}_{\frac pu,\frac qu}}^{\frac 1u} \\
  &\leq\BNorm{\Big\{\sum_{R\in\mathscr D}\abs{b_{QR}}^u\bNorm{\frac{\rho_Q}{\rho_R} }{}^u\Big(\frac{\abs{Q}}{\abs{R}}\Big)^{\frac12(1-u)}
  \rho_R(t_R)^u \abs{R}^{\frac12(1-u)} \Big\}_{Q\in\D}}
  {\dot a^{su}_{\frac pu,\frac qu}}^{\frac 1u}. \\
\end{split}
\end{equation}
Denoting
\begin{equation*}
   \tilde t_R:= \rho_R(t_R)^u \abs{R}^{\frac12(1-u)}
\end{equation*}
and recalling the notation \eqref{eq:tildeB}, it follows that
\begin{equation}\label{eq:tran-2}
   \Norm{Bt}{\dot a^s_{p,q}(\rho)}
   \leq\Norm{\tilde B\tilde t}{\dot a^{su}_{\frac pu,\frac qu}}^{\frac1u}
   \leq\Norm{\tilde B}{\bddlin(\dot a^{su}_{\frac pu,\frac qu})}^{\frac1u}
   \Norm{\tilde t}{\dot a^{su}_{\frac pu,\frac qu}}^{\frac1u}
   =\Norm{\tilde B}{\bddlin(\dot a^{su}_{\frac pu,\frac qu})}
   ^{\frac1u}\Norm{t}{\dot a^{s}_{p,q}(\rho)}
\end{equation}
by Lemma \ref{lem:rescale} again in the last step.
\end{proof}

\begin{proof}[Proof of Theorem \ref{prop:AD}]
Let
\begin{equation*}
  \tilde{u}:=\begin{cases} \min\{p,q,u\} & \text{(Triebel--Lizorkin case)}, \\
  \min\{p,u\} & \text{(Besov case)}.\end{cases}
\end{equation*}
Then $\rho$ is also a family of $\tilde{u}$-norms, and Lemma \ref{lem:Brho<newB} shows that
\begin{equation}\label{eq:B<tildeB}
  \Norm{B}{\bddlin(\dot a^s_{p,q}(\rho))}\leq\Norm{\tilde B}{\bddlin(\dot
  a^{s\tilde{u}}_{\frac p{\tilde u},\frac q{\tilde u}})}^{\frac1{\tilde u}},
\end{equation}
where $\tilde B$ is defined in \eqref{eq:tildeB}. The parameter $J$ in \eqref{eq:J}, but corresponding to the space on the right of \eqref{eq:B<tildeB}, is then
\begin{equation*}
  \tilde J=\left.\begin{cases} n/\min\{1,\frac p{\tilde u},\frac q{\tilde u}\}=
  n{\tilde u}/\min\{p,q,\tilde u\}=n{\tilde u}/{\tilde u} \\
  n/\min(1,\frac p{\tilde u})=n{\tilde u}/\min\{p,\tilde u\}
  =n{\tilde u}/{\tilde u} \end{cases}\right\}=n
\end{equation*}
in the Triebel--Lizorkin and Besov cases, respectively, but both of them actually result in the same value in the end.

Thus, by Theorem \ref{thm:FJ}, the right-hand side of \eqref{eq:B<tildeB} is finite provided that $\tilde B$ is $(\tilde D,\tilde E,\tilde F)$-almost diagonal with
\begin{equation*}
  \tilde D>n,\qquad \tilde E>\frac{n}{2}+s{\tilde u},
  \qquad\tilde F>n-\frac{n}{2}-s{\tilde u}=\frac{n}{2}-s{\tilde u}.
\end{equation*}
Recalling the definition of $\tilde B$ from \eqref{eq:tildeB}, this means that we need
\begin{equation*}
  \abs{b_{QR}}^{\tilde u}\bNorm{\frac{\rho_Q}{\rho_R}}{}^{\tilde u}
  \Big(\frac{\abs{Q}}{\abs{R}}\Big)^{\frac12(1-{\tilde u})}
  \lesssim B_{-\frac n2-s{\tilde u}-\eps,-\frac n2+s{\tilde u}-\eps,-n-\eps}(Q,R),
\end{equation*}
or, taking $\tilde{u}$-th roots of both sides, and denoting
$\frac{\eps}{\tilde u}$ again simply by $\eps$,
\begin{equation}\label{eq:ad2show}
  \abs{b_{QR}}\bNorm{\frac{\rho_Q}{\rho_R}}{}
  \Big(\frac{\abs{Q}}{\abs{R}}\Big)^{\frac12(\frac 1{\tilde u}-1)}
  \lesssim B_{-\frac{n}{2{\tilde u}}-s-\eps,-\frac{n}{2{\tilde u}}+s-\eps,
  -\frac{n}{\tilde u}-\eps}(Q,R).
\end{equation}

Using the assumption that
\begin{equation*}
   \bNorm{\frac{\rho_Q}{\rho_R}}{}
   \lesssim B_{a,b,c}(Q,R),
\end{equation*}
and noting that
\begin{equation*}
  \frac{\abs{Q}}{\abs{R}}= \Big(\frac{\ell(Q)}{\ell(R)}\Big)^n
  \approx\Big(1+\frac{\ell(R)}{\ell(Q)}\Big)^{-n}\Big(1+\frac{\ell(Q)}{\ell(R)}\Big)^n
  \approx B_{-n,n,0}(Q,R),
\end{equation*}
we find that
\begin{equation*}
  \operatorname{LHS}\eqref{eq:ad2show}
  \lesssim\abs{b_{QR}} B_{a-\frac n2(\frac1{\tilde u}-1),b+
  \frac n2(\frac1{\tilde u}-1),c}(Q,R).
\end{equation*}
This is dominated by the right-hand side of \eqref{eq:ad2show} provided that
\begin{equation*}
  \abs{b_{QR}}
  \lesssim B_{-\frac n2-s-a-\eps,-\frac{n}{\tilde u}+
  \frac{n}{2}+s-b-\eps,-\frac n{\tilde u}-c-\eps}(Q,R).
\end{equation*}
Observing that $\frac n{\tilde u}=J^{(u)}$,
this is precisely the almost diagonality condition that we assumed.
\end{proof}

\begin{remark}\label{rem:abso-rho}
The proofs of Lemma \ref{lem:Brho<newB} and
Theorem \ref{prop:AD} tell us a stronger conclusion:
under the assumptions of Theorem \ref{prop:AD},
the series $\sum_{R\in\mathscr{D}}b_{QR}t_R$
is absolutely convergent in $\fX$ for all
$Q\in\mathscr{D}$ and all sequences
$t=\{t_R\}_{R\in\mathscr{D}}\in\dot{a}^s_{p,q}(\rho)$.

Indeed, suppose that the notation is as in the proof of Theorem \ref{prop:AD}.
Then, from the proof of \eqref{eq:B<tildeB}
(more precisely, from \eqref{eq:tran} and
\eqref{eq:tran-2} in the proof of Lemma \ref{lem:Brho<newB}),
we deduce that
\begin{align*}
\BNorm{\Big\{\sum_{R\in\mathscr D}\rho_Q(b_{QR}t_R)^{\tilde u}
\abs{Q}^{\frac12(1-{\tilde u})}
  \Big\}_{Q\in\D}}{\dot a^{s{\tilde u}}_{\frac p{\tilde u},
  \frac q{\tilde u}}}^{\frac 1{\tilde u}}
  \leq\Norm{\tilde B}{\bddlin(\dot a^{s{\tilde u}}_{\frac p{\tilde u},\frac q{\tilde u}})}
   ^{\frac1{\tilde u}}\Norm{t}{\dot a^{s}_{p,q}(\rho)}.
\end{align*}
Therefore, from the assumption that $\rho$ is a family of $(\tilde u,a,b,c)$-norms (see in particular Definition \ref{def:uabc}\eqref{def:uabc-it2}),
we obtain
\begin{align*}
\Big(\sum_{R\in\mathscr{D}}|b_{QR}|\Norm{t_R}{\fX}\Big)^{\tilde{u}}
&\le \sum_{R\in\mathscr{D}}\Norm{b_{QR}t_R}{\fX}^{\tilde{u}}
\sim_Q\sum_{R\in\mathscr{D}}\rho_Q(b_{QR}t_R)^{\tilde{u}}\\
&\le|Q|^{\frac12(\tilde{u}-1)}
\Norm{\tilde B}{\bddlin(\dot a^{s{\tilde u}}_{\frac p{\tilde u},\frac q{\tilde u}})}
   ^{\frac1{\tilde u}}\Norm{t}{\dot a^{s}_{p,q}(\rho)}<\infty.
\end{align*}
This shows the absolute convergence of $\sum_{R\in\mathscr{D}}b_{QR}t_R$. This conclusion will be used later. (Since absolute convergence is a qualitative property, the fact that the implicit constant above, inherited from Definition \ref{def:uabc}\eqref{def:uabc-it2}, is allowed to depend on $Q$, is of no concern.)
\end{remark}

\subsection{Muckenhoupt-weighted spaces}\label{sec:ADV}
Now, we aim to improve the assumptions on $(D,E,F)$
for weighted spaces under the operator $\A_p$ condition.
To this end, we need the following technical lemma
about the almost diagonal operators and ball average operators,
which was proved in the case $\fX=\mathbb{C}^m$ in
\cite[Lemma 5.4]{BHYY:II}.
But the proof of \cite[Lemma 5.4]{BHYY:II} does not
use any special property of finite-dimensional spaces
and hence this estimate still holds for general
$\fX$.

\begin{lemma}\label{ad prelim}
Let $p\in(0,\infty)$, $q\in(0,\infty]$,
and $B$ be $(D,E,F)$-almost diagonal for some $D,E,F\in\mathbb R$.
Then there exists a positive constant $C$ such that,
for any $t:=\{t_Q\}_{Q\in\mathscr D}$ in $\fX$
satisfying that $Bt$ is well defined, any $u\in(0,1]$,
and any sequence $\{H_j\}_{j\in\mathbb{Z}}$
in $L^p_{\loc,\so}(\R^n;\bddlin(\fX))$,
\begin{align*}
\begin{split}
\BNorm{\Big\{\|H_j\left(Bt\right)_j\|_{\fX}\Big\}_{j\in\mathbb Z}}{(L^p\ell^q)_A}^v
&\leq C\sum_{k\in\mathbb{Z}}\sum_{l=0}^\infty
\Bigg[2^{-(E-\frac{n}{2})k_-}2^{-k_+(F+\frac{n}{2}-\frac{n}{u})}2^{-(D-\frac{n}{u})l} \\
&\quad\times\Bigg\|\Bigg\{
\Bigg[\fint_{B(\cdot,2^{l+k_+-i})}\| H_{i-k}(\cdot)t_{i}(y)\|_\fX^u
\,dy\Bigg]^{\frac{1}{u}}\Bigg\}_{i\in\mathbb Z}\Bigg\|_{(L^p\ell^q)_A}\Bigg]^v,
\end{split}
\end{align*}
where $t_i$ and $(Bt)_j$ are defined as in \eqref{df-tj}
and $v:=\min\{1,p,q\}$.
\end{lemma}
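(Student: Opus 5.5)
The plan is to follow the scalar/matrix proof of \cite[Lemma 5.4]{BHYY:II} essentially verbatim. The finite dimension of $\fX$ enters nowhere; the only structural facts used are the triangle inequality in $\fX$ (through the $v$-triangle inequality for the quasi-norm $(L^p\ell^q)_A$) and the fact that $H_j(x)$ is a bounded linear operator, applied pointwise.

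First, fix $j$ and $x\in Q\in\mathscr D_j$. Write
\[
H_j(x)(Bt)_j(x)=\abs{Q}^{-\frac12}\sum_{R}b_{QR}H_j(x)t_R ,
\]
and split the sum over $R$ by setting $\ell(R)=2^{-(j+k)}$, i.e.\ $R\in\mathscr D_{i}$ with $i=j+k$. Then split further into dyadic annuli in the relative distance: for $l\ge0$, take $1+\abs{x_Q-x_R}/\max\{\ell(Q),\ell(R)\}\approx 2^{l}$.

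On each piece, the almost diagonal bound gives $\abs{b_{QR}}\lesssim 2^{-(E)k_-}2^{-Fk_+}2^{-Dl}$, up to harmless normalization factors of $(\ell(Q)/\ell(R))^{n/2}$. These normalizations come from the $\abs{Q}^{-1/2}$ and $\abs{R}^{-1/2}$ in \eqref{df-tj}, and they produce the shifts $E-\frac n2$ and $F+\frac n2$.

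For the sum over $R$ in a given annulus, use $\ell^u\supset\ell^1$ with $u\le1$:
\[
\sum_R \Norm{H_j(x)t_R}{}\le\Big(\sum_R\Norm{H_j(x)t_R}{}^u\Big)^{1/u}.
\]
Rewrite the inner sum as an integral over $y$ of $\Norm{H_j(x)t_i(y)}{\fX}^u$, times $\abs{R}^{u/2}$. All those $R$ lie in the ball $B(x,C2^{l+k_+-i})$. Comparing the measure of this ball with $\abs{R}$ yields a factor $2^{(l+k_+)n/u}$ (more precisely $2^{(l+k_+)n}$ before taking $u$-th roots) multiplying the ball average. This is where $D-\frac nu$ and $F-\frac nu$ appear.

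Reindexing $i=j+k$, i.e.\ $j=i-k$, gives the pointwise bound
\[
\Norm{H_j(x)(Bt)_j(x)}{}\lesssim\sum_{k,l}2^{-(E-\frac n2)k_-}2^{-k_+(F+\frac n2-\frac nu)}2^{-(D-\frac nu)l}\Big[\fint_{B(x,2^{l+k_+-i})}\Norm{H_{i-k}(x)t_i(y)}{}^u\,dy\Big]^{1/u}.
\]
Here $H$ is evaluated at the fixed point $x$, which matches the displayed $H_{i-k}(\cdot)$.

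Finally, take the $(L^p\ell^q)_A$ quasi-norm over $j$ and apply the $v$-triangle inequality with $v=\min\{1,p,q\}$ to pull the double sum over $k,l$ outside the $v$-th power. Shifting the sequence index by $k$ does not change the $\ell^q$ norm, so the result is exactly the claimed estimate.

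Measurability of $x\mapsto\Norm{H_j(x)t_i(y)}{}$ follows from $H_j\in L^p_{\loc,\so}$, since $t_i$ is piecewise constant. The finiteness of $Bt$ is assumed, so the rearrangements of the series are legitimate when done with norms, that is, with nonnegative terms.

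The main obstacle is the bookkeeping of the exponents in the $k_\pm$ cases: checking that the annulus/ball comparison gives exactly $2^{(l+k_+)n/u}$ and that the $\abs{Q}^{-1/2}$, $\abs{R}^{1/2}$ factors combine into $E-\frac n2$ and $F+\frac n2$. I would handle this by treating $k\ge0$ and $k<0$ separately, mirroring \cite{BHYY:II}.
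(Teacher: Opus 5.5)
Your proposal is correct and is essentially the paper's argument. The paper only observes that the proof of \cite[Lemma 5.4]{BHYY:II} never uses finite dimensionality, and your sketch is that proof: split the sum over $R$ by relative scale $k$ and distance annulus $l$, use $\sum_R\|\cdot\|\le(\sum_R\|\cdot\|^u)^{1/u}$, compare the ball with the cube to get the factor $2^{(l+k_+)n}$, and finish with the $v$-triangle inequality. Two cosmetic points: the inclusion should read $\ell^u\subset\ell^1$ (your displayed inequality is the right one), and the constant in $B(x,C2^{l+k_+-i})$ is absorbed by shifting $l$ by a fixed amount, at the cost of a multiplicative constant.
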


\begin{theorem}\label{bdd-AD-ple1}
Let $p\in(0,\infty)$, $V\in\mathscr{A}_r$ with $r\in[p,\infty)$
have doubling dimension $\beta$, and $s\in\mathbb{R}$.
Suppose that $B$ is a $(D,E,F)$-almost diagonal operator
satisfying
\begin{equation}\label{eq:DEF-V}
   D>J+\frac{\beta-n}{r},\qquad E>\frac{n}{2}+s,\qquad F>J-\frac{n}{2}-s
   +\frac{\beta-n}{r},
\end{equation}
where $J$ is as in \eqref{eq:J}.
\begin{enumerate}[\rm (i)]
  \item\label{bdd-AD-ple1-b} If $q\in(0,\infty]$,
  then $B$ is bounded on $\dot{b}^s_{p,q}(V)=\dot{b}^s_{p,q}(\rho_{\aveL^r(\mathscr D,V)})$.
  \item\label{bdd-AD-ple1-f} If $q\in(0,r+\varepsilon_V)$,
  where $\varepsilon_V$ is
the optimal reverse H\"older lifting index of $V$, then
$B$ is bounded on $\dot{f}^s_{p,q}(V)=\dot{f}^s_{p,q}(\rho_{\aveL^r(\mathscr D,V)})$.
\end{enumerate}
\end{theorem}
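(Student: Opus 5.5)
The plan is to apply Lemma \ref{ad prelim} with $H_j:=2^{js}V$ for all $j$. For $u\in(0,1]$ small enough we will estimate the inner quantity
\begin{equation*}
\Big[\fint_{B(x,2^{l+k_+-i})}\Norm{V(x)t_i(y)}{\fX}^u\,dy\Big]^{\frac1u}
\end{equation*}
by a maximal function of the pointwise-weighted sequence $\Norm{V t_i}{\fX}$, losing only the factor $2^{(l+k_+)(\beta-n)/r}$. Summing the geometric factors in Lemma \ref{ad prelim} then consumes exactly the conditions \eqref{eq:DEF-V}. The identifications $\dot b^s_{p,q}(V)=\dot b^s_{p,q}(\rho_{\aveL^r(\mathscr D,V)})$ and $\dot f^s_{p,q}(V)=\dot f^s_{p,q}(\rho_{\aveL^r(\mathscr D,V)})$ come from Corollary \ref{cor:equib} and Theorem \ref{thm:equiF}, the latter under $q<r+\eps_V$. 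The Besov and Triebel--Lizorkin cases then run in parallel.

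\textbf{Key pointwise step.} Fix $x\in P\in\mathscr D_{i-k}$ and the ball $B=B(x,2^{l+k_+-i})$. This ball is covered by $\lesssim 2^{(l+k_+)n}$ cubes $R\in\mathscr D_i$, or by a bounded number of them when $k<0$, since then $B$ is smaller than the cubes of $\mathscr D_i$. On each such $R$, $t_i$ is the constant $t_R\abs{R}^{-1/2}$. Hence we need to bound $\Norm{V(x)t_R}{}$ for $x$ possibly far from $R$.

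We do not evaluate at the point $x$. Instead we average over $x$ in the level-$(i-k_+-l)$ cube $S\ni x$ that contains $B$. Concretely, we raise to the power $p$ (or pass to an $L^p$ average) and compare $\rho_{\aveL^p(S',V)}(t_R)$ with $\rho_{\aveL^p(R,V)}(t_R)$, where $S'$ is a cube containing $x$ and $R$. Since $V\in\A_r$ with $r\ge p$, the estimate \eqref{thm:equiB-e1} lets us replace $\aveL^p$ by $\aveL^r$. Definition \ref{def:dbDim} then gives
\begin{equation*}
\rho_{\aveL^r(S,V)}(\fx)\lesssim 2^{(l+k_+)(\beta-n)/r}\rho_{\aveL^r(R,V)}(\fx).
\end{equation*}
Finally, $\rho_{\aveL^r(R,V)}(t_R)\abs{R}^{-1/2}$ is controlled by $\aveL^u$-averages of $\Norm{Vt_i}{}$ over $R$. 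In the Besov case this is exact via Theorem \ref{b-arho-av}. In the Triebel--Lizorkin case we use Corollary \ref{cor:Ap1}, $\rho_{\aveL^r(R,V)}\lesssim\rho_{\aveL^u(R,V)}$, which yields $M_u(\Norm{Vt_i}{})$, the rescaled maximal function.

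\textbf{Main obstacle.} Lemma \ref{ad prelim} puts $V$ evaluated at the point $x$ inside the norm, not an average over $x$. For operator weights we cannot use reducing operators to pass from $\Norm{V(x)\fx}{}$ to $\rho_Q(\fx)$ pointwise; Example \ref{ex:p22} shows precisely this pointwise-in-$x$ estimate can fail. I would try first to handle it by taking the $L^p$ norm in $x$ before the supremum over $l$. That is, I would interchange and bound
\begin{equation*}
\int\Big(\fint_B\Norm{V(x)t_i(y)}{}^u\,dy\Big)^{p/u}dx
\end{equation*}
cube by cube over the cubes $S$. Since $u\le p$, Jensen in $y$ gives
\begin{equation*}
\fint_S\fint_B\Norm{V(x)t_i(y)}{}^p\,dy\,dx,
\end{equation*}
which is a sum over $R$ of $\rho_{\aveL^p(S,V)}(t_R)^p$ weights. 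This is where the doubling estimate enters, and it suffices for the Besov norm and, for $q\le p$, for $L^p\ell^q$.

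For larger $q<r+\eps_V$ in the Triebel--Lizorkin case, I would mimic the proof of Proposition \ref{prop:FV<Frho,q<p}. There we dualize $L^{p/u}\ell^{q/u}$ against $\{g_j\}$ and apply H\"older on each cube with exponent $r+\eps$ using the reverse H\"older inequality of Proposition \ref{prop:RHI}. This moves the pointwise $V(x)$ onto an $\aveL^{r+\eps}$ average, and the Fefferman--Stein inequality then handles $M_{((r+\eps)/u)'}g_j$. I expect verifying the exponent bookkeeping here, so that only $(\beta-n)/r$ and not a worse loss appears, to be the hardest step.
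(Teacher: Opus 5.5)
Your architecture matches the paper's. You use Lemma~\ref{ad prelim}; your choice $H_j=2^{js}V$ is a valid alternative to the paper's reduction to $s=0$, since the extra factor $2^{-ks}$ shifts the $E$- and $F$-thresholds by exactly $s$. You then bound the ball-average term by $2^{M(\beta-n)/r}$ times the $\rho_{\aveL^r(\mathscr D,V)}$-sequence norm, with $M=l+k_+$. For the Triebel--Lizorkin case you use the duality, H\"older, reverse-H\"older and Fefferman--Stein argument modelled on Proposition~\ref{prop:FV<Frho,q<p}. The bookkeeping you expect to be hardest is routine:
\begin{itemize}
\item Apply Tonelli to move onto $3Q$, $Q\in\mathscr D_{i-M}$.
\item Use H\"older in $x$ with exponent $\frac{r+\eps}{u}$, then \eqref{eq:RHIepse2}.
\item Apply $(r,\beta)$-doubling from $5Q\supset 3Q\cup R$ down to $R\in\mathscr D_i$; this costs exactly $2^{M(\beta-n)/r}$.
\item The Fefferman--Stein step only needs $r+\eps>\max\{p,q\}$.
\end{itemize}

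In the Besov case your Jensen route is valid and simpler than the paper's, which runs the same duality argument for both scales and needs reverse H\"older when $r=p$. Put $h_i(x):=\big(\fint_{B(x,2^{M-i})}\Norm{V(x)t_i(y)}{}^u\,dy\big)^{1/u}$ with $u\le p$. Then $\int h_i^p\le\int\fint_{B(x,2^{M-i})}\Norm{V(x)t_i(y)}{}^p\,dy\,dx$. On each $S\in\mathscr D_{i-M}$ the $x$-integral turns the pointwise $V(x)$ exactly into $\rho_{\aveL^p(S,V)}$. Then $\rho_{\aveL^p}\approx\rho_{\aveL^r}$, doubling, and Theorem~\ref{b-arho-av} give the bound for each fixed $i$, hence in $\ell^qL^p$ for every $q$, with no duality or reverse H\"older.

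Your claim that the same computation covers $L^p\ell^q$ for $q<p$ is wrong. There the $\ell^q$-sum over $i$ sits inside the $L^p$-integral, and every term carries the same pointwise $V(x)$. Per-$i$ $L^p$ bounds do not control $\int\big(\sum_i h_i^q\big)^{p/q}$ when $p/q>1$, and you cannot integrate $V(x)$ out without dualizing. Only $q=p$ reduces to the Besov computation. The fix is to apply your duality argument on the whole range $q<r+\eps_V$, including $q<p$; there its requirement $r+\eps>\max\{p,q\}$ is just $r+\eps>p$.

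A minor point: for $k<0$ the ball $B(x,2^{l-i})$ meets about $2^{ln}$ cubes of $\mathscr D_i$, not boundedly many. This is harmless, since only $M=l+k_+$ enters the estimate.
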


\begin{remark}\label{rem:ADV}
The assumptions on $(D,E,F)$ in Theorem \ref{bdd-AD-ple1}
improve those in Corollary \ref{cor:AD}\eqref{cor:AD-it2} for $w=r$.
Moreover, when $V\in\A_p$,
the assumptions on $(D,E,F)$ in Theorem \ref{bdd-AD-ple1} are optimal
in the following sense.
\begin{enumerate}[{\rm(i)}]
  \item If $p\in(0,1]$, then, by Corollary \ref{cor:ApDb},
  one can choose the doubling dimension $\beta=n$.
In this case, \eqref{eq:DEF-V} reduces to
  \eqref{eq:FJad} and hence, by Remark \ref{rem:sharp},
  is optimal even in the scalar case.
  \item If $p\in(1,\infty)$, then, compared with \eqref{eq:FJad},
  the assumptions on $D$ and $F$ in \eqref{eq:DEF-V} contain an additional
  term ``$\frac{\beta-n}{p}$". This is mainly due
  to our use of \eqref{bdd-AD-ple1-e5} in the proof of Theorem \ref{bdd-AD-ple1}.
  Indeed, we later show that the estimate
  \eqref{bdd-AD-ple1-e5} cannot be improved in the
  infinite-dimensional case; see Proposition \ref{sharp-ad}.
\end{enumerate}
\end{remark}

\begin{proof}[Proof of Theorem \ref{bdd-AD-ple1}]
The equalities of $\dot a^s_{p,q}(V)=\dot a^s_{p,q}(\rho_{\aveL^r(\mathscr D,V)})$ follow from Corollary \ref{cor:equib}  in the Besov case and Theorem \ref{thm:equiF} in the Triebel--Lizorkin case; note that we now assume $r\in[p,\infty)$, and hence $\max\{p,r\}=r$. Indeed, the coincidence of the spaces is important, since our proof will estimate the pointwise-weighted norm $\Norm{Bt}{\dot a^s_{p,q}(V)}$ by the average-weighted norm  $\Norm{t}{\dot a^s_{p,q}(\rho_{\aveL^r(\mathscr D,V)})}$.

Similarly to \cite[Lemma 5.3]{BHYY:II},
we only need to consider the case $s=0$.
More precisely, if we define
$(J_st)_Q:=\ell(Q)^{-s}t_Q$ for any $t:=\{t_Q\}_{Q\in\mathscr{D}}$
and any $Q\in\mathscr{D}$,
and define
$$\widetilde{B}
:=\Big\{\Big[\frac{\ell(R)}{\ell(Q)}\Big]^sb_{QR}\Big\}_{Q,R\in\mathscr{D}}$$
for $B:=\{b_{QR}\}_{Q,R\in\mathscr{D}}$,
then, from definitions, it immediately follows
that $J_s:\dot{a}^s_{p,q}(V)\to\dot{a}^0_{p,q}(V)$
is an isometric isomorphism, $\widetilde{B}$
is a $(D,E-s,F+s)$-almost diagonal operator, and
$J_s\circ B=\widetilde{B}\circ J_s$. Therefore,
once we show that, for a certain triple $(D,E,F)$,
every $(D,E,F)$-almost diagonal operator
is bounded on $\dot{a}^0_{p,q}(V)$,
we can use the above
transformations to further obtain the
boundedness of all $(D,E+s,F-s)$-almost diagonal operators
on  $\dot{a}^s_{p,q}(V)$.

In the remainder of the proof, we always assume $s=0$.
We deal with \eqref{bdd-AD-ple1-b} and \eqref{bdd-AD-ple1-f} in
a unified way. By the assumptions on $(D,E,F)$, we can choose
$u\in(0,\frac{n}{J})$ satisfying
$D>\frac nu+\frac{\beta-n}{r}$, $E>\frac n2$, and $F>\frac nu-\frac n2
+\frac{\beta-n}{r}$.
Applying Lemma \ref{ad prelim} with this $u$ and with $H_j:=V$ for all $j\in\mathbb{Z}$, we find that
\begin{align}\label{bdd-AD-ple1-e4}
\begin{split}
\Norm{B t}{\dot{a}^0_{p,q}(V)}^v
&\lesssim\sum_{k\in\mathbb{Z}}\sum_{l=0}^\infty
\Bigg[2^{-(E-\frac{n}{2})k_-}2^{-(F+\frac{n}{2}-\frac{n}{u})k_+}2^{-(D-\frac{n}{u})l} \\
&\quad\times\Bigg\|\Bigg\{
\Bigg[\fint_{B(\cdot,2^{l+k_+-i})}\| V(\cdot)t_{i}(y)\|_\fX^u
\,dy\Bigg]^{\frac{1}{u}}\Bigg\}_{i\in\mathbb Z}\Bigg\|_{(L^p\ell^q)_A}\Bigg]^v.
\end{split}
\end{align}
Hence, if we can prove that, for any $M\in\mathbb{Z}_+$,
\begin{align}\label{bdd-AD-ple1-e5}
\Bigg\|\Bigg\{
\Bigg[\fint_{B(\cdot,2^{M-i})}\| V(\cdot)t_{i}(y)\|_\fX^u
\,dy\Bigg]^{\frac{1}{u}}\Bigg\}_{i\in\mathbb Z}\Bigg\|_{(L^p\ell^q)_A}
\lesssim2^{\frac{M(\beta-n)}{r}}\|t\|_{\dot{a}^0_{p,q}(\rho_{\aveL^r(\mathscr D,V)})},
\end{align}
then, by \eqref{bdd-AD-ple1-e4} and the assumptions
on $a$ and $(D,E,F)$, we further obtain the boundedness of
$B$ on $\dot{a}^0_{p,q}(V)=\dot{a}^0_{p,q}(\rho_{\aveL^r(\mathscr D,V)})$, recalling that we already observed the coincidence of the two spaces under the assumptions of the theorem that we are proving.

For an arbitrary fixed $i_0\in\Z$, let us denote
\begin{equation*}
  \Norm{t}{(L^p\ell^q)_{\hat A}}=\Norm{\{t_i\}_{i\in\Z}}{(L^p\ell^q)_{\hat A}}:=
  \begin{cases}\Norm{t_{i_0}}{L^p}& \text{if }A=B, \\ \Norm{\{t_i\}_{i\in\Z}}{L^p\ell^q} & \text{if }A=F.\end{cases}
\end{equation*}
Thus $(L^p\ell^q)_{\hat B}=L^p$, we simply ignore all $i$ except $i=i_0$.

To prove \eqref{bdd-AD-ple1-e5}, it suffices to show that
\begin{align}\label{bdd-AD-ple1-e5b}
\Bigg\|\Bigg\{
\Bigg[\fint_{B(\cdot,2^{M-i})}\| V(\cdot)t_{i}(y)\|_\fX^u
\,dy\Bigg]^{\frac{1}{u}}\Bigg\}_{i\in\mathbb Z}\Bigg\|_{(L^p\ell^q)_{\hat A}}
\lesssim2^{\frac{M(\beta-n)}{r}}
\BNorm{\Big\{\rho_{\aveL^r(\mathscr{D}_i,V)}(t_i)\Big\}_{i\in\mathbb{Z}}}{(L^p\ell^{q})_{\hat A}}.
\end{align}
Indeed, this is precisely a restatement of \eqref{bdd-AD-ple1-e5} when $A=F$, while for $A=B$ we obtain \eqref{bdd-AD-ple1-e5} by summing \eqref{bdd-AD-ple1-e5b} over $i_0\in\Z$.

We now show \eqref{bdd-AD-ple1-e5b}. Since
\begin{equation*}
u<\frac{n}{J}=
\begin{cases}
\min\{1,p\}&\text{if $A=B$},\\
\min\{1,p,q\}&\text{if $A=F$},
\end{cases}
\end{equation*}
we have the duality $(L^{(\frac pu)'}\ell^{(\frac qu)'})_{\hat A}'=(L^{\frac pu}\ell^{\frac qu})_{\hat A}$.

Hence, from the Hahn--Banach theorem, we deduce
\begin{equation}\label{bdd-AD-ple1-e5sup}
\begin{split}
\operatorname{LHS}\eqref{bdd-AD-ple1-e5b}
&=\Bigg\|\Bigg\{
\fint_{B(\cdot,2^{M-i})}\| V(\cdot)t_{i}(y)\|_\fX^u
\,dy\Bigg\}_{i\in\mathbb Z}\Bigg\|_{(L^{\frac pu}\ell^{\frac qu})_A}^{\frac1u}\\
&=\sup\Bigg(
\int_{\mathbb{R}^n}\sum_{i\in\mathbb{Z}}
\fint_{B(x,2^{M-i})}\Norm{V(x)t_i(y)}{\fX}^u\,dy g_i(x)\,dx
\Bigg)^{\frac1u},
\end{split}
\end{equation}
where the supremum is taken over all $\{g_i\}_{i\in\mathbb{Z}}$
such that $\Norm{\{g_i\}_{i\in\mathbb{Z}}}{(L^{(\frac pu)'}\ell^{(\frac qu)'})_A}\le 1$. (When $A=B$, we can simply replace the sum over $i\in\Z$ by the single term $i=i_0$ and take the supremum over $\Norm{g_i}{L^{(\frac pu)'}}\leq 1$.)

We now choose some $\eps\in(0,\eps_V)$, where we also demand that
\begin{equation}\label{eq:q-r<eps}
  q-r<\eps<\eps_V\quad\text{if}\quad A=F,
\end{equation}
which is possible, since $\eps_V>0$ and $q<r+\eps_V$ in that case.

Then
\begin{align}\label{eq:RHIepse2}
\rho_{\aveL^{r+\varepsilon}(Q,V)}(\fx)
\lesssim\rho_{\aveL^{r}(Q,V)}(\fx)
\quad\forall\,Q\in\D, \forall\,\fx\in\fX.
\end{align}
Note that, for any $i\in\mathbb{Z}$, $Q\in\mathscr{D}_{i-M}$, and $x\in Q$,
we have $B(x,2^{M-i})\subset 3Q$.
Thus, for any $i\in\mathbb{Z}$,
\begin{align}\label{bdd-AD-ple1-e7}
\begin{split}
&\int_{\mathbb{R}^n}\fint_{B(x,2^{M-i})}\Norm{V(x)t_i(y)}{\fX}^u\,dy g_i(x)\,dx\\
&\quad\lesssim\sum_{Q\in\mathscr{D}_{i-M}}\int_{3Q}
\fint_{3Q}\Norm{V(x)t_i(y)}{\fX}^ug_i(x)\,dx\,dy
\quad \text{by Tonelli's Theorem}\\
&\quad\le\sum_{Q\in\mathscr{D}_{i-M}}\int_{3Q}
\rho_{\aveL^{r+\varepsilon}(3Q,V)}\big(t_i(y)\big)^u
\Bigg[\fint_{3Q}g_i(x)^{(\frac{r+\varepsilon}{u})'}\,dx
\Bigg]^{1-\frac{u}{r+\varepsilon}}\,dy
\end{split}
\end{align}
by H\"older's inequality.
For any $R\in\mathscr{D}_i$ and $Q\in\mathscr{D}_{i-M}$ satisfying $R\cap 3Q\ne \emptyset$,
it holds that $R\subset 5Q$, which further implies
\begin{align*}
\begin{split}
\rho_{\aveL^r(3Q,V)}(t_i)
&\lesssim\rho_{\aveL^r(5Q,V)}(t_i)\\
&\lesssim\Big(\frac{\ell(5Q)}{\ell(R)}\Big)^{\frac{\beta-n}{r}}
\rho_{\aveL^r(R,V)}(t_i)\quad\text{by Definition \ref{def:dbDim}}\\
&\approx2^{\frac{M(\beta-n)}{r}}\rho_{\aveL^r(R,V)}(t_i).
\end{split}
\end{align*}
Hence, for any $Q\in\mathscr{D}_{i-M}$,
\begin{align*}
\begin{split}
\rho_{\aveL^{r+\varepsilon}(3Q,V)}(t_i){\bf 1}_{3Q}
&\lesssim\rho_{\aveL^r(3Q,V)}(t_i){\bf1}_{3Q}\quad\text{by \eqref{eq:RHIepse2}}\\
&=\sum_{R\in\mathscr{D}_i,R\cap 3Q\ne\emptyset}
\rho_{\aveL^r(3Q,V)}(t_i){\bf 1}_{R\cap 3Q}\\
&\lesssim2^{\frac{M(\beta-n)}{r}}
\rho_{\aveL^r(\mathscr{D}_i,V)}(t_i){\bf 1}_{3Q}.
\end{split}
\end{align*}
From the previous two displays, we find that
\begin{equation}\label{bdd-AD-ple1-e8}
\begin{split}
\operatorname{RHS}\eqref{bdd-AD-ple1-e7}
&\lesssim 2^{\frac{M(\beta-n)u}{r}}
\sum_{Q\in\mathscr{D}_{i-M}}\int_{3Q}
\rho_{\aveL^r(\mathscr{D}_i,V)}(t_i(y))^u
M_{(\frac{r+\varepsilon}{u})'}g_i(y)\,dy\\
&\lesssim2^{\frac{M(\beta-n)u}{r}}
\int_{\mathbb{R}^n}\rho_{\aveL^r(\mathscr{D}_i,V)}(t_i(y))^u
M_{(\frac{r+\varepsilon}{u})'}g_i(y)
\quad\text{since }\sum_{Q\in\mathscr{D}_{i-M}}{\bf 1}_{3Q}\lesssim1,
\end{split}
\end{equation}
where $M_sg_i(y):=\sup_{Q\owns y}\|g_i\|_{\aveL^s(Q)}$.
Therefore, from $r+\eps>r\geq p$ and \eqref{eq:q-r<eps}, it follows that
\begin{align*}
\left\{
\begin{aligned}
&\displaystyle{\Big(\frac{r+\varepsilon}{u}\Big)'<\Big(\frac{p}{u}\Big)'}
\qquad\qquad\qquad\text{if $A=B$},\\
&\displaystyle{\Big(\frac{r+\varepsilon}{u}\Big)'<\min\Big\{\Big(\frac{p}{u}\Big)',
\Big(\frac{q}{u}\Big)'\Big\}}\quad\hspace{.17cm}\text{if $A=F$}.
\end{aligned}
\right.
\end{align*}
Thus, from the boundedness of the maximal operator (for the case $A=B$)
or the Fefferman--Stein vector-valued maximal inequality (for the case $A=F$),
we further obtain
\begin{align*}
\operatorname{LHS}\eqref{bdd-AD-ple1-e5b}
&\lesssim\sup 2^{\frac{M(\beta-n)}{r}}
\Bigg(\int_{\mathbb{R}^n}\sum_{i\in\mathbb{Z}}
\rho_{\aveL^r(\mathscr{D}_i,V)}(t_i(y))^u
M_{(\frac{r+\varepsilon}{u})'}g_i(y)\,dy\Bigg)^{\frac1u}\\
&\qquad\qquad\text{by \eqref{bdd-AD-ple1-e5sup}, \eqref{bdd-AD-ple1-e7}, and \eqref{bdd-AD-ple1-e8}} \\
&\le \sup 2^{\frac{M(\beta-n)}{r}}
\BNorm{\Big\{\rho_{\aveL^r(\mathscr{D}_i,V)}(t_i)^u\Big\}
_{i\in\mathbb{Z}}}{(L^{\frac pu}\ell^{\frac qu})_{\hat A}}^{\frac1u}
\BNorm{\Big\{M_{(\frac{r+\varepsilon}{u})'}g_i
\Big\}_{i\in\mathbb{Z}}}{(L^{(\frac pu)'}\ell^{(\frac qu)'})_{\hat A}}^{\frac1u}\\
&\lesssim\sup 2^{\frac{M(\beta-n)}{r}}
\BNorm{\Big\{\rho_{\aveL^r(\mathscr{D}_i,V)}(t_i)\Big\}_{i\in\mathbb{Z}}}{(L^p\ell^{q})_{\hat A}}
\Norm{\{g_i\}_{i\in\mathbb{Z}}}{(L^{(\frac pu)'}\ell^{(\frac qu)'})_{\hat A}}^{\frac1u} \\
&\qquad\qquad\text{by the relevant maximal inequality in each case} \\
&=2^{\frac{M(\beta-n)}{r}}
\BNorm{\Big\{\rho_{\aveL^r(\mathscr{D}_i,V)}(t_i)\Big\}_{i\in\mathbb{Z}}}{(L^p\ell^{q})_{\hat A}},
\end{align*}
since the supremum coming from \eqref{bdd-AD-ple1-e5sup} is over $\Norm{\{g_i\}_{i\in\mathbb{Z}}}{(L^{(\frac pu)'}\ell^{(\frac qu)'})_{\hat A}}\leq 1$. The previous inequality is precisely \eqref{bdd-AD-ple1-e5b}, which completes the proof of Theorem \ref{bdd-AD-ple1}.
\end{proof}

\begin{remark}\label{rem:abso-V}
Combining \cite[Remark 5.5]{BHYY:II} and
the proof of Theorem \ref{bdd-AD-ple1}, we can actually obtain
the following stronger conclusion:
under the assumptions of Theorem \ref{bdd-AD-ple1},
the series $\sum_{R\in\mathscr{D}}b_{QR}t_R$
is absolutely convergent in $\fX$ for all
$Q\in\mathscr{D}$ and all sequences
$t=\{t_R\}_{R\in\mathscr{D}}\in\dot{a}^s_{p,q}(V)$.

To prove this, suppose that the notation
is as in the proof of Theorem \ref{bdd-AD-ple1}.
\cite[Remark 5.5]{BHYY:II} claims that, when $\fX=\mathbb{C}^m$,
$\eqref{bdd-AD-ple1-e4}$ can be improved with the
left-hand side replaced by
\begin{align}\label{rem:abso-V-e1}
\BNorm{\Bigg\{\sum_{Q\in\mathscr{D}_j}
\sum_{R\in\mathscr{D}}
\Norm{V b_{QR}t_R}{\fX}
\frac{{\bf1}_Q}{|Q|^{\frac12}}\Bigg\}_{j\in\mathbb{Z}}}{(L^p\ell^q)_A}^\nu.
\end{align}
But the proof of this claim does not
use any special property of finite-dimensional spaces
and hence this estimate still holds for general
$\fX$. Hence, by the proof of Theorem \ref{bdd-AD-ple1},
we find that \eqref{rem:abso-V-e1} is estimated by
$\|t\|_{\dot{a}^s_{p,q}(V)}^v<\infty.$
This means that, for almost every $x\in Q$,
$\sum_{R\in\D}\Norm{V(x)b_{QR}t_R}{\fX}<\infty$.
From this, we further infer that, for almost every $x\in Q$,
\begin{align*}
\sum_{R\in\mathscr{D}}\Norm{b_{QR}t_R}{\fX}
\le \Norm{V^{-1}(x)}{\mathscr{L}(\fX)}
\sum_{R\in\mathscr{D}}\Norm{V(x)b_{QR}t_R}{\fX}<\infty,
\end{align*}
which shows the absolute convergence of $\sum_{R\in\mathscr{D}}b_{QR}t_R$.
We will use this conclusion later.
\end{remark}

\subsection{Sharpness of the assumptions}

We now claim that the assumptions on
$(D,E,F)$ in Theorem \ref{bdd-AD-ple1} are optimal in some sense.
Note that the key to show Theorem \ref{bdd-AD-ple1} is
\eqref{bdd-AD-ple1-e5}.
Motivated by Example \ref{ex:p22}, we now
prove that the estimate \eqref{bdd-AD-ple1-e5}
cannot be improved in the infinite-dimensional case
and hence, in this sense,
the assumptions in Theorem \ref{bdd-AD-ple1} are optimal.

\begin{proposition}\label{sharp-ad}
Let $p\in(1,\infty)$ and $u\in(0,1]$. If
there exists $a\in\mathbb{R}$ such that, for any
$V\in\mathscr{A}_p(\mathbb{R};\mathscr{L}(\ell^p))$
with doubling dimension $\beta\in(1,p)$ and for any $t\in\dot{b}^0_{p,p}(V,\mathbb{R})$ and $M\in\mathbb{Z}_+$,
\begin{align}\label{sharp-ade1}
\Bigg\|\Bigg[\fint_{B(\cdot,2^{M-i})}
\Norm{V(\cdot)t_i(y)}{\ell^p}^u\,dy
\Bigg]^{\frac1u}\Bigg\|_{L^p\ell^p}
\lesssim2^{Ma}\|t\|_{\dot{b}^0_{p,p}(V,\mathbb{R})},
\end{align}
then $a\ge\frac{\beta-1}{p}$.
\end{proposition}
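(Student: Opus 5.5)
We plan to construct, for each admissible $\beta\in(1,p)$, a diagonal weight $V=\operatorname{diag}(v_i)$ on $\ell^p$ and a test sequence $t$ for which the left-hand side of \eqref{sharp-ade1} grows like $2^{M(\beta-1)/p}$ while $\|t\|_{\dot b^0_{p,p}(V)}$ stays bounded. Letting $M\to\infty$ then forces $a\ge\frac{\beta-1}{p}$. The construction follows Example \ref{ex:p22}. We take $v(x)=\abs{x}^{\gamma}$ with $\gamma:=\frac{\beta-1}{p}\in(0,\frac1{p'})$, so that $v\in\A_p(\R)$, and place translates $v_i(x)=v(x-x_i)$ at centers $x_i$ spread over the relevant scale. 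By the computation \eqref{uniform-ap}, the diagonal weight $V$ belongs to $\A_p(\R;\bddlin(\ell^p))$.

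We first check the doubling dimension. For a single power weight, $\int_S\abs{x-x_i}^{\gamma p}dx\lesssim(\ell(S)/\ell(Q))^{1+\gamma p}\int_Q\abs{x-x_i}^{\gamma p}dx$. The worst case is a small cube $Q$ at the singularity inside a large cube $S$. Since $\rho_{\aveL^p(Q,V)}(\fx)^p=\sum_i\abs{\lambda_i}^p\fint_Q v_i^p$ is a positive combination of these, \eqref{eq:dbRou} holds uniformly with $\beta=1+\gamma p$. We also note that the hypothesis is only an implication for weights of doubling dimension $\beta$, so it suffices to exhibit one such weight.

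The test sequence concentrates on a single scale, say $i=0$. We take $t_R=e_{k(R)}$ for $R\in\mathscr D_0$ meeting a fixed large window, with distinct basis vectors $e_k$ and with the center $x_{k(R)}$ of $v_{k(R)}$ placed at a point of $R$. Then $\|t\|_{\dot b^0_{p,p}(V)}^p=\sum_R\int_R\abs{x-x_{k(R)}}^{\gamma p}dx\approx\#\{R\}$. On the left-hand side, for each $x$ the ball $B(x,2^M)$ contains about $2^M$ cubes $R$, and by disjointness of the coordinates,
\[
\Norm{V(x)t_0(y)}{\ell^p}=\abs{x-x_{k(R)}}^{\gamma}\quad\text{for } y\in R.
\]
Most of these $R$ satisfy $\abs{x-x_{k(R)}}\approx2^M$, so the $u$-average over the ball is $\gtrsim2^{M\gamma}$. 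Hence the left-hand side is $\gtrsim2^{M\gamma}(\#\{R\})^{1/p}$, giving $2^{M\gamma}\lesssim2^{Ma}$ and therefore $a\ge\gamma=\frac{\beta-1}{p}$.

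We expect the main obstacle to be that different basis directions carry differently centered weights. This is the infinite-dimensional feature: in finite dimensions only boundedly many centers would be available, and the growth would be absorbed by the $\A_p$ constant. The technical care lies in verifying that the uniform $\A_p$ and doubling bounds survive infinitely many translates, which follows from \eqref{uniform-ap} and the positivity of the sums. The remaining points are routine: making the window finite, so that $t\in\dot b^0_{p,p}$, and handling boundary effects near its edges.
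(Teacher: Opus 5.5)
Your single-scale test is a genuinely different route from the paper's. The paper fixes one $t$ spread over all scales $j\ge 3$, with coefficients $\lambda_{2^j+k}=2^{j(\beta-1)/p}j^{-1}$ to make it summable, and reads off the growth at $i=M$ with balls of radius $1$. Your lower bound for the left-hand side and your computation of $\|t\|_{\dot b^0_{p,p}(V)}$ are fine. The gap is the weight.

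First, the window cannot be fixed; it must have length $\gtrsim 2^M$. With a window of length $L$, the ball $B(x,2^M)$ meets the support of $t_0$ in a set of measure at most $L$. One then gets $\mathrm{LHS}\lesssim L^{\frac1u-\frac1p}2^{M(\gamma+\frac1p-\frac1u)}\|t\|$, where $\gamma+\frac1p-\frac1u\le\frac{\beta-p}{p}<0$.

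Second, the implicit constant in \eqref{sharp-ade1} may depend on $V$, so $V$ must be one weight for all $M$. You therefore need infinitely many centres $x_k$ with $|x_k|\to\infty$. But then $\|V(x)\|_{\mathscr L(\ell^p)}=\sup_k|x-x_k|^{\gamma}=\infty$ for every $x$, so $V$ is not $\mathscr L(\ell^p)$-valued. Moreover, $V(\cdot)\fx\notin L^p_{\mathrm{loc}}(\R;\ell^p)$ for any $\fx=(\lambda_k)\in\ell^p$ with $\sum_k|\lambda_k|^p(1+|x_k|)^{\gamma p}=\infty$. Hence $V$ is not a $p$-weight, and \eqref{uniform-ap} does not place it in $\A_p(\R;\mathscr L(\ell^p))$. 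The obstacle you single out (uniform $\A_p$ and doubling bounds for infinitely many translates) is not the real one; pointwise boundedness of $V$ is.

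Letting $V=V_M$ depend on $M$ does not rescue the argument either. With finitely many translates, \eqref{sharp-ade1} holds with $a=0$ and a constant depending on their number: bound each diagonal entry by $v_k M_u$, which is bounded on $L^p$ since $v_k^p\in A_p\subset A_{p/u}$. You would only learn that this constant grows.

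The simplest repair is to keep all centres in a bounded set. Dilate your construction by $2^{-M}$: test at scale $i=M$ with balls of radius $1$, using a fresh block of coordinates for each $M$ with centres $2^{-M}k$ in a fixed interval. This is the paper's configuration $x_{2^j+k}=2^{-j}k$, which gives $\|V(x)\|\le(1+|x|)^{\gamma}$. Because those centres are dense in $[0,1)$, $V(x)^{-1}$ is unbounded on $[0,1]$. If you want $V(x)$ and $V(x)^{-1}$ both bounded a.e., put block $M$ in $c_M+[0,4)$ with $c_M=10M$ and use the truncated translates $\min\{|x-c_M-2^{-M}k|,1\}^{\gamma}$. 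These are still uniformly in $\A_p(\R)$, since $A_p$ is closed under minima, and still $(p,\beta)$-doubling. With this change your argument goes through and gives $\mathrm{LHS}/\|t\|\gtrsim 2^{M\gamma}$ directly, slightly cleaner than the paper's $2^{M\gamma}M^{-1}$.
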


\begin{proof}
Let $v(x):=|x|^{\frac{\beta-1}{p}}$ for any $x\in\mathbb{R}$.
Then, since $\beta\in(1,p)$, we infer $v\in\mathscr{A}_p(\mathbb{R})$.
For any $j\in\mathbb{N}$ and $k=0,\ldots,2^{j}-1$, define
\begin{align*}
x_{2^j+k}:=2^{-j}k\quad\text{and}\quad
\lambda_{2^j+k}:=2^{\frac{j(\beta-1)}{p}}j^{-1}.
\end{align*}
Let $v_i(\cdot):=v(\cdot-x_i)$ and $V:=\operatorname{diag}
(v_i)_{i=1}^\infty$.
Then, by \eqref{uniform-ap}, we find that
$V\in\mathscr{A}_p(\mathbb{R};\mathscr{L}(\ell^p))$.
Moreover, using \cite[Lemma 2.40]{BHYY:I}, we conclude that,
for any $i\in\mathbb{N}$, $x\in\mathbb{R}$, and $r\in(0,\infty)$,
\begin{align*}
\int_{B(x,r)}v_i(y)^p\,dy\approx r(r+|x-x_i|)^{\beta-1}.
\end{align*}
Since $\beta>1$, this
implies that, for any $\lambda\in[1,\infty)$,
\begin{align*}
\int_{B(x,\lambda r)}v_i(y)^p\,dy&\approx \lambda
r(\lambda r+|x-x_i|)^{\beta-1}
\le \lambda^{\beta}r(r+|x-x_i|)^{\beta-1}\\
&\approx\lambda^\beta \int_{B(x,r)}v_i(y)^p\,dy.
\end{align*}
Hence, by \eqref{eq:dbRou}, $v_i$ has doubling dimension $\beta$.
Then it follows that, for any cubes $Q\subset S$ and any $\fx=\{e_i\}_{i\in\mathbb{N}}\in\ell^p$
\begin{align*}
\int_{S}\Norm{V(y)\fx}{\ell^p}^p\,dy
&=\sum_{i\in\mathbb{N}}|e_i|^p\int_{S}v_i(y)^p\,dy
\lesssim\Big(\frac{\ell(S)}{\ell(Q)}\Big)^\beta
\sum_{i\in\mathbb{N}}|e_i|^p\int_{Q}v_i(y)^p\,dy\\
&=\Big(\frac{\ell(S)}{\ell(Q)}\Big)^\beta\int_{Q}\Norm{V(y)\fx}{\ell^p}^p\,dy,
\end{align*}
which implies that $V$ also has doubling dimension $\beta$.

For any $i\in\mathbb{N}$, let $\fx_i:=(0,\ldots,0,1,0,\ldots)$,
where the $i$-th element is 1 and all the others are 0.
For any $Q\in\mathscr{D}$, if $Q=Q_{j,k}:=2^{-j}[k,k+1)$ for some $j\ge 3$ and $k=0,\ldots,2^{j}-1$,
define
\begin{align*}
t_Q:=2^{-\frac{j}{2}}\lambda_{2^j+k}
\sum_{\ell=0,\ldots,2^{j}-1,|\ell-k|\le 4}\fx_{2^j+\ell};
\end{align*}
otherwise, define $t_Q:={\bf0}$ (the zero element of $\ell^p$).
Then $t:=\{t_Q\}_{Q\in\mathscr{D}}\in\dot{b}^0_{p,p}(V,\mathbb{R})$.
Indeed,
\begin{align}\label{sharp-ade2}
\begin{split}
\|t\|_{\dot{b}^0_{p,p}(V,\mathbb{R})}^p
&=\sum_{j\in\mathbb{Z}}\sum_{Q\in\mathscr{D}_j}
|Q|^{-\frac p2}
\int_{Q}\|V(x)t_Q\|_{\ell^p}^p\,dx\\
&=\sum_{j=3}^{\infty}
\sum_{k=0}^{2^j-1}\lambda_{2^j+k}^p
\sum_{\ell=0,\ldots,2^{j}-1,|\ell-k|\le 4}
\int_{2^{-j}k}^{2^{-j}(k+1)}v(x-2^{-j}\ell)^p\,dx\\
&=\sum_{j=3}^{\infty}2^{j(\beta-1)}j^{-p}
\sum_{k=0}^{2^j-1}
\sum_{\ell=0,\ldots,2^{j}-1,|\ell-k|\le 4}
\int_{2^{-j}k}^{2^{-j}(k+1)}|x-2^{-j}\ell|^p\,dx.
\end{split}
\end{align}
Observe that, for any $k,\ell=0,\ldots,2^j-1$ with $|k-\ell|\le 4$,
\begin{align*}
\int_{2^{-j}k}^{2^{-j}(k+1)}|x-2^{-j}\ell|^p\,dx
\lesssim\int_{2^{-j}k}^{2^{-j}(k+1)}
[|x-2^{-j}k|^{\beta-1}+
2^{-j(\beta-1)}|k-\ell|^{\beta-1}]\,dx
\lesssim2^{-j\beta}.
\end{align*}
Thus, by \eqref{sharp-ade2}, we obtain
\begin{align*}
\|t\|_{\dot{b}^0_{p,p}(V,\mathbb{R})}^p
\lesssim\sum_{j=3}^{\infty}2^{j(\beta-1)}j^{-p}
2^j2^{-j\beta}
=\sum_{j=3}^{\infty}j^{-p}<\infty,
\end{align*}
which implies $t\in\dot{b}^{0}_{p,p}(V,\mathbb{R})$.

On the other hand, for every $M\in\mathbb{N}\cap[3,\infty)$,
\begin{equation}\label{sharp-ade3}
\operatorname{LHS}(\ref{sharp-ade1})
\gtrsim\Bigg\{\int_{0}^{1}
\Bigg[\int_{0}^{1}\Norm{V(x)t_M(y)}{\ell^p}^u\,dy\Bigg]^{\frac pu}\,dx
\Bigg\}^{\frac1p}.
\end{equation}
We now estimate $\Norm{V(x)t_M(y)}{\ell^p}$.
Note that, for any $x,y\in[0,1)$,
\begin{align}\label{sharp-ade4}
\begin{split}
\Norm{V(x)t_M(y)}{\ell^p}^p
&=\sum_{k=0}^{2^M-1}\lambda_{2^M+k}^p
\BNorm{V(x)\sum_{\ell=0,\ldots,2^M-1,|\ell-k|\le 4}\fx_{2^M+\ell}}{\ell^p}^p
{\bf 1}_{Q_{j,k}}(y)\\
&=2^{M(\beta-1)}M^{-p}
\sum_{k=0}^{2^M-1}\Big[\sum_{\ell=0,\ldots,2^M-1,|\ell-k|\le 4}
|x-2^{-M}\ell|^{p(\beta-1)}\Big]{\bf1}_{Q_{j,k}}(y).
\end{split}
\end{align}
Fix $y\in Q_{j,k}$,
if $|x-y|\ge 2^{-M+1}$, then
\begin{align*}
|x-2^{-M}k|\ge|x-y|-|y-2^{-M}k|
\ge|x-y|-2^{-M}\ge\frac12|x-y|;
\end{align*}
if $|x-y|<2^{-M+1}$, then choose $\ell=0,\ldots,2^{M}-1$
satisfying $|\ell-k|=4$, which further implies
\begin{align*}
|x-2^{-M}\ell|&\ge |2^{-M}\ell-2^{-M}k|
-|2^{-M}k-y|-|y-x|\\
&\ge 2^{-M}(|\ell-k|-1)-|x-y|=3\cdot2^{-M}-|x-y|
>\frac{1}{2}|x-y|.
\end{align*}
Therefore, by \eqref{sharp-ade4}, we obtain, for any $x,y\in[0,1)$,
\begin{align*}
\Norm{V(x)t_M(y)}{\ell^p}^p\gtrsim
2^{M(\beta-1)}M^{-p}|x-y|^{p(\beta-1)}.
\end{align*}
Then we have
\begin{align*}
2^{Ma}&\approx
\operatorname{RHS}\eqref{sharp-ade1}
\gtrsim
\operatorname{LHS}\eqref{sharp-ade1}\\
&\gtrsim2^{M\frac{(\beta-1)}{p}}M^{-1}
\Bigg\{\int_{0}^{1}\Bigg[
\int_{0}^{1}|x-y|^{u(\beta-1)}\,dy\Bigg]^{\frac pu}\,dx\Bigg\}^{\frac1p}
\quad\text{by \eqref{sharp-ade3}}\\
&\approx2^{M\frac{(\beta-1)}{p}}M^{-1}.
\end{align*}
Thus, letting $M\to\infty$, we find $a\ge\frac{\beta-1}{p}$,
which completes the proof of Proposition \ref{sharp-ad}.
\end{proof}

\part{Abstract discretizable spaces}\label{part:discretizable}

\section{An axiomatic framework and examples}

In this remaining part of this work, we extend various further results from the theory of (unweighted, weighted, or matrix-weighted) Besov and Triebel--Lizorkin spaces to the operator-weighted spaces that we have considered.
Having obtained the $\varphi$-transform characterization
and the boundedness of almost diagonal operators on a given function space,
many further real-variable properties can be established in an axiomatic way.
For this purpose, we introduce the following framework.

\begin{definition}\label{def:seq}
Let $d$, $e$, and $f$ be three real numbers.
A quasi-normed space $\dot{\mathbf{A}}(\mathbb{R}^n;\fX)\subset\mathscr{S}_\infty'(\mathbb{R}^n;\fX)$ is
called a \emph{homogeneous $(d,e,f)$-discretizable space} if
the following three conditions hold:
\begin{enumerate}[{\rm(i)}]
  \item $\mathscr{S}_\infty\otimes\fX \subset\dot{\mathbf{A}}(\mathbb{R}^n;\fX)$,
  where $\mathscr{S}_\infty\otimes\fX$ is as in \eqref{eq:sx}.
  \item\label{def:seq-it1} There exists a quasi-norm space $\dot{\mathbf{a}}(\mathbb{R}^n;\fX)$
  consisting of sequences $t = \{t_Q\}_{Q \in \mathcal{D}}$, where each $t_Q \in \mathfrak{X}$,
  such that, for any Littlewood--Paley
functions $\varphi,\psi$ that satisfy \eqref{eq:LPpair},
$$S_{\varphi}:\dot{\mathbf{A}}(\mathbb{R}^n;\fX)\to \dot{\mathbf{a}}(\mathbb{R}^n;\fX)
  \quad\text{and}\quad
  T_{\psi}:\dot{\mathbf{a}}(\mathbb{R}^n;\fX)\to \dot{\mathbf{A}}(\mathbb{R}^n;\fX)$$
  are bounded and $T_\psi\circ S_{\varphi}$ acts as the identity on $\dot{\mathbf{A}}(\mathbb{R}^n;\fX)$.
  \item\label{def:seq-it2} For any triple $(D,E,F)$ that satisfies $D>d$, $E>e$, and $F>f$ and for any $(D,E,F)$-almost diagonal operator $B$,
  the series $\sum_{R\in\mathscr{D}}b_{QR}t_R$
is absolutely convergent in $\fX$ for all
$Q\in\mathscr{D}$ and all sequences
$t=\{t_R\}_{R\in\mathscr{D}}\in\dot{\mathbf{a}}(\mathbb{R}^n;\fX)$,
and $B$ is bounded on $\dot{\mathbf{a}}(\mathbb{R}^n;\fX)$.
\end{enumerate}
\end{definition}

\begin{lemma}\label{lem:seq}
Let $p\in(0,\infty)$, $q\in(0,\infty]$,  and $s\in\mathbb{R}$.
\begin{enumerate}[{\rm(1)}]
  \item\label{ex:seq-rho} If $\rho=\{\rho_{Q}\}_{Q\in\mathscr{D}}$
  is a family of $(u,a,b,c)$-norms (Definition \ref{def:uabc}) with
  $u\in(0,1]$ and $a,b,c\in[0,\infty)$,
  then
$\dot{A}^s_{p,q}(\rho)$ is $(d,e,f)$-discretizable with
\begin{equation*}
  d=J^{(u)}+c,\quad e=\frac{n}{2}+s+a,\quad f=J^{(u)}-\frac{n}{2}-s+b,
\end{equation*}
  where $J^{(u)}$ is as in \eqref{eq:Jr}.

  \item\label{ex:seq-V} If $V\in\A_r$ with $r\in[p,\infty)$ has doubling dimension $\beta\in[n,\infty)$ and optimal reverse H\"older lifting index $\eps_V>0$, then
\begin{equation*}
  \dot{A}^s_{p,q}(V)=\dot{A}^s_{p,q}(\rho_{\aveL^r(\mathscr D,V)}),\quad\text{where}\quad q\in
  \begin{cases} (0,\infty] & \text{if }A=B,  \\ (0,r+\varepsilon_V) & \text{if }A=F,\end{cases}
\end{equation*}
is $(d,e,f)$-discretizable with
\begin{equation*}
  d=J+\frac{\beta-n}{r},\quad e=\frac{n}{2}+s,\quad f=J-\frac{n}{2}-s+\frac{\beta-n}{r},
\end{equation*}
 where $\varepsilon_V$ is the optimal reverse H\"older lifting index of $V$ and $J$ is as in \eqref{eq:J}.
\end{enumerate}
\end{lemma}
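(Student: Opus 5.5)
The plan is to check the three axioms of Definition \ref{def:seq} for each space, using the sequence space $\dot{\mathbf a}=\dot a^s_{p,q}(\rho)$ in case \eqref{ex:seq-rho} and $\dot{\mathbf a}=\dot a^s_{p,q}(V)=\dot a^s_{p,q}(\rho_{\aveL^r(\mathscr D,V)})$ in case \eqref{ex:seq-V}. Every axiom is essentially a previously proved result, so the work is mostly bookkeeping of parameters and hypotheses.

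For \eqref{ex:seq-rho}:
\begin{enumerate}[(a)]
\item Axiom (i), $\mathscr S_\infty\otimes\fX\subset\dot A^s_{p,q}(\rho)$, is part \eqref{it:emb-rho} of the embedding corollary following Theorem \ref{thm:phi}. The same corollary gives $\dot A^s_{p,q}(\rho)\subset\mathscr S_\infty'(\R^n;\fX)$.
\item Axiom \eqref{def:seq-it1} is Theorem \ref{thm:phi}. It gives boundedness of $S_\varphi$ from $\dot A^s_{p,q}(\rho,\tilde\varphi)$ and of $T_\psi$ into $\dot A^s_{p,q}(\rho,\varphi)$. By part \eqref{eq:AindepPhi}, these spaces all coincide with $\dot A^s_{p,q}(\rho)$, so the reflection $\tilde\varphi$ causes no trouble. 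The same theorem gives $T_\psi\circ S_\varphi=\mathrm{id}$ under \eqref{eq:LPpair}.
\item Axiom \eqref{def:seq-it2} is Theorem \ref{prop:AD} with $J^{(u)}$ as in \eqref{eq:Jr}. Its thresholds $D>J^{(u)}+c$, $E>\frac n2+s+a$, $F>J^{(u)}-\frac n2-s+b$ are exactly $D>d$, $E>e$, $F>f$. The absolute convergence of $\sum_R b_{QR}t_R$ is Remark \ref{rem:abso-rho}.
\end{enumerate}

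For \eqref{ex:seq-V}:
\begin{enumerate}[(a)]
\item First I establish the identification $\dot A^s_{p,q}(V)=\dot A^s_{p,q}(\rho_{\aveL^r(\mathscr D,V)})$. Since $r\ge p$, we have $\max\{p,r\}=r$. Theorem \ref{thm:equiB} gives the Besov case for all $q$, and Theorem \ref{thm:equiF} gives the Triebel--Lizorkin case for $q<r+\eps_V$. The sequence-space identity $\dot a^s_{p,q}(V)=\dot a^s_{p,q}(\rho_{\aveL^r(\mathscr D,V)})$ comes from Corollary \ref{cor:equib} and Theorem \ref{thm:equiF}.
\item By Remark \ref{rem:uabc}\eqref{rem:uabc-it2} with $w=r$, the family $\rho_{\aveL^r(\mathscr D,V)}$ is a family of $(u,a,b,c)$-norms. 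Part \eqref{ex:seq-rho} therefore already supplies axioms (i) and \eqref{def:seq-it1}; equivalently one can quote Theorem \ref{thm:phiV}.
\item Axiom \eqref{def:seq-it2} with the sharper parameters comes from Theorem \ref{bdd-AD-ple1}. Its condition \eqref{eq:DEF-V} reads $D>J+\frac{\beta-n}{r}$, $E>\frac n2+s$, $F>J-\frac n2-s+\frac{\beta-n}{r}$, which is exactly $D>d$, $E>e$, $F>f$. Absolute convergence is Remark \ref{rem:abso-V}.
\end{enumerate}

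The only step needing care is confirming that each cited result's hypotheses match the range in the statement, in particular the restriction $q<r+\eps_V$ in the $F$ case. I expect no genuine obstacle.
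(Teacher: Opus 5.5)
Your proposal is correct and follows the paper's proof, which cites Theorems \ref{thm:phi} and \ref{prop:AD} together with Remark \ref{rem:abso-rho} for part (1), and Theorems \ref{thm:phiV} and \ref{bdd-AD-ple1} together with Remark \ref{rem:abso-V} for part (2). Your explicit checks of axiom (i) via the embedding corollary, and of the coincidence of the pointwise- and average-weighted sequence spaces, fill in details that the paper leaves implicit.
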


\begin{proof}
Part \eqref{ex:seq-rho} follows from Theorems \ref{thm:phi}  and \ref{prop:AD} and Remark \ref{rem:abso-rho}, and part \eqref{ex:seq-V} from Theorems \ref{thm:phiV} and
  \ref{bdd-AD-ple1} and Remark \ref{rem:abso-V}.
\end{proof}

In the next Section \ref{sec:molecule}, we will prove a characterization of $(d,e,f)$-discretizable spaces in terms of so-called molecules. We already introduce the relevant concepts here, to show how the various parameters of molecules and discretizable spaces interact with each other.

We first recall some notation related to integer and fractional parts of numbers.
For $r\in\mathbb R$, let
\begin{equation}\label{ceil}
\begin{cases}
\lfloor r\rfloor:=\max\{k\in\mathbb Z:\ k\leq r\},\\
\lfloor\!\lfloor r\rfloor\!\rfloor:=\max\{k\in\mathbb Z:\ k< r\},
\end{cases}
\begin{cases}
\lceil r\rceil:=\min\{k\in\mathbb Z:\ k\geq r\},\\
\lceil\!\lceil r\rceil\!\rceil:=\min\{k\in\mathbb Z:\ k>r\},
\end{cases}
\end{equation}
and
\begin{equation}\label{r**}
\begin{cases}
r^*:=r-\lfloor r\rfloor\in[0,1),\\
r^{**}:=r-\lfloor\!\lfloor r\rfloor\!\rfloor\in(0,1].
\end{cases}
\end{equation}

We introduce the concept of molecules as follows; see, for instance, \cite[Definition 3.4]{BHYY:III}.
Since we are dealing with space of $\fX$-valued distributions, it might appear natural to also make the molecules $\fX$-valued. However, we will only need scalar-valued molecules; with $\fX$-valued coefficients, they will serve as building blocks of $\fX$-valued distributions in the molecular characterization further below.

\begin{definition}\label{def-mole}
Let $K,M\in[0,\infty)$ and $L,N\in\mathbb{R}$.
A function $m_Q$ is called a
{\em $(K,L,M,N)$-molecule on a cube $Q$} if
\begin{enumerate}[\rm(i)]
  \item\label{it:mole-size}
  (size)
  for any $x\in\mathbb{R}^n$,
  $$|m_Q(x)|\le|Q|^{-\frac12}\Big[1+\frac{|x-x_Q|}{\ell(Q)}\Big]^{-K},$$

  \item\label{it:mole-cancel}
  (cancellation)
  for any $\gamma\in\mathbb{Z}_+^n$
  satisfying $|\gamma|\le L$,
\begin{equation}\label{eq:mole-cancel}
  \int_{\mathbb{R}^n}m_Q(x)x^\gamma\,dx=0,
\end{equation}

  \item\label{it:mole-smooth}
  (smoothness)
  for any $\gamma\in\mathbb{Z}_+^n$ satisfying
 $|\gamma|\le\lfloor\!\lfloor N\rfloor\!\rfloor$
 and for any $x\in\mathbb{R}^n$,
  \begin{align*}
  |\partial^{\gamma}m_Q(x)|\le |Q|^{-\frac12-\frac{|\gamma|}{n}}
  \Big[1+\frac{|x-x_Q|}{\ell(Q)}\Big]^{-M}
  \end{align*}
  and, for any $\gamma\in\mathbb{Z}_+^n$ satisfying $|\gamma|=\lfloor\!\lfloor N\rfloor\!\rfloor$
  and for any $x,y\in\mathbb{R}^n$,
  \begin{align*}
  |\partial^{\gamma}m_Q(x)-\partial^\gamma m_Q(y)|&\le |Q|^{-\frac12-\frac{|\gamma|}{n}}
  \Big[\frac{|x-y|}{\ell(Q)}\Big]^{N^{**}}\sup_{|z|\le |x-y|}
  \Big[1+\frac{|x+z-x_Q|}{\ell(Q)}\Big]^{-M}.
  \end{align*}
\end{enumerate}
\end{definition}

\begin{definition}\label{df-a-syan}
Let $\dot{\mathbf{A}}(\mathbb{R}^n;\fX)$
be a homogeneous $(d,e,f)$-discretizable space with parameters $(d,e,f)$.
Assume that $m_Q$ is
a $(K,L,M,N)$-molecule on a cube $Q$.
Then $m_Q$ is called
\begin{enumerate}[\rm(i)]
  \item\label{defit:a-an} an \emph{$\dot{\mathbf{a}}$-analysis
molecule} on $Q$ if
\begin{equation*}
  K>\max\big\{n,d,e+\frac n2\big\},\quad
  L>e-\frac n2,\quad
  M>\max\{d,n\},\quad
  N>f-\frac n2;
\end{equation*}
  \item\label{defit:a-sy} an \emph{$\dot{\mathbf{a}}$-synthesis
molecule} on $Q$ if
\begin{equation*}
  K>\max\big\{n,d,f+\frac n2\big\},\quad
  L>f-\frac n2,\quad
  M>\max\{d,n\},\quad
  N>e-\frac n2.
\end{equation*}
\end{enumerate}

Moreover, when we talk about a family of
$\dot{\mathbf{a}}$-analysis or synthesis molecules, we
understand that they should be the $(K,L,,M,N)$-molecules for some fixed
$(K,L,M,N)$ that satisfies the condition above.
\end{definition}

\begin{example}\label{ex:mole}
Let $p\in(0,\infty)$, $q\in(0,\infty]$,
 and $s\in\mathbb{R}$.
By Lemma \ref{lem:seq}, we conclude that,
under some assumptions, the average-weighted space $\dot{A}^s_{p,q}(\rho)$
and the pointwise-weighted space
$\dot{A}^s_{p,q}(V)$ are $(d,e,f)$-discretizable spaces
for some $d$, $e$, and $f$. Hence, we obtain the following special cases of Definition \ref{df-a-syan}:
\begin{enumerate}[{\rm(i)}]
  \item Let $\rho=\{\rho_{Q}\}_{Q\in\mathscr{D}}$
  be a family of $(u,a,b,c)$-norms with
  $u\in(0,1]$ and $a,b,c\in[0,\infty)$.
  From Lemma \ref{lem:seq}\eqref{ex:seq-rho},
  one can choose
  \begin{align}\label{eq:def-rho}
  d=J^{(u)}+c, \quad e=\frac{n}{2}+s+a,\quad
  f=J^{(u)}-\frac{n}{2}-s+b,
  \end{align}
  where $J^{(u)}$ is as in \eqref{eq:Jr}.
Then a $(K,L,M,N)$-molecule on a cube $Q$ is
\begin{itemize}
\item an \emph{$\dot{a}^s_{p,q}(\rho)$-analysis molecule} if $(K,L,M,N)$ satisfies Definition
\ref{df-a-syan}\eqref{defit:a-an} for $(d,e,f)$ as in \eqref{eq:def-rho}, i.e.,
\begin{equation*}
  \begin{cases}
  K >\max(n,J^{(u)}+c,n+s+a),&
  L>s+a,\\
  M>\max(n,J^{(u)}+c),&
  N>J^{(u)}-n-s+b;
  \end{cases}
\end{equation*}

\item an \emph{$\dot{a}^s_{p,q}(\rho)$-synthesis molecule} if $(K,L,M,N)$ satisfies Definition
\ref{df-a-syan}\eqref{defit:a-sy} for $(d,e,f)$ as in \eqref{eq:def-rho}, i.e.,
\begin{equation*}
  \begin{cases}
  K >\max(n,J^{(u)}+c,J^{(u)}-s+b),&
  L>J^{(u)}-n-s+b,\\
  M>\max(n,J^{(u)}+c),&
  N>s+a.
  \end{cases}
\end{equation*}
\end{itemize}

  \item Let $V\in\A_r$ with $r\in[p,\infty)$ and the doubling
  dimension $\beta\in[n,\infty)$, and let $q\in(0,\varepsilon_V)$
  when $A=F$, where $\varepsilon_V$ is
  the optimal reverse H\"older lifting index of $V$.
  Lemma \ref{lem:seq}\eqref{ex:seq-V} shows that
  one can choose
  \begin{align}\label{eq:def-V}
  d=J+\frac{\beta-n}{r}, \quad e=\frac{n}{2}+s,\quad
  f=J-\frac{n}{2}-s+\frac{\beta-n}{r},
  \end{align}
  where $J$ is as in \eqref{eq:J}.
  Then a $(K,L,M,N)$-molecule on a cube $Q$ is
  \begin{itemize}
  \item   an \emph{$\dot{a}^s_{p,q}(V)=\dot{a}^s_{p,q}(\rho_{\aveL^r(\mathscr D,V)})$-analysis molecule} if $(K,L,M,N)$ satisfies Definition
\ref{df-a-syan}\eqref{defit:a-an} for $(d,e,f)$ as in \eqref{eq:def-V}, i.e.,
\begin{equation*}
  \begin{cases}
  K >\max(n,J+\frac{\beta-n}{r},n+s),&
  L>s,\\
  M>\max(n,J+\frac{\beta-n}{r}),&
  N>(J-n)-s+\frac{\beta-n}{r};
  \end{cases}
\end{equation*}

  \item   an \emph{$\dot{a}^s_{p,q}(V)=\dot{a}^s_{p,q}(\rho_{\aveL^r(\mathscr D,V)})$-analysis molecule} if $(K,L,M,N)$ satisfies Definition
\ref{df-a-syan}\eqref{defit:a-sy} for $(d,e,f)$ as in \eqref{eq:def-V}, i.e.,
\begin{equation*}
  \begin{cases}
  K >\max(n,J+\frac{\beta-n}{r},J-s+\frac{\beta-n}{r}),&
  L>(J-n)-s+\frac{\beta-n}{r},\\
  M>\max(n,J+\frac{\beta-n}{r}),&
  N>s.
  \end{cases}
\end{equation*}
\end{itemize}
\end{enumerate}
\end{example}

For later convenience, we record the following simple observation:

\begin{remark}\label{rem:noCanc}
If $f-\frac n2<0$, then $\dot{\mathbf{a}}$-synthesis molecules are
not required to have any cancellation conditions \eqref{eq:mole-cancel}. Indeed, in this case, Definition \ref{df-a-syan}\eqref{defit:a-sy} allows us to take $L\in(f-\frac n2,0)$ so that the range of multi-indices in Definition \ref{def-mole}\eqref{it:mole-cancel} is empty.
By Example \ref{ex:mole},
for the average-weighted space $\dot{A}^s_{p,q}(\rho)$
and the pointwise-weighted space $\dot{A}^s_{p,q}(V)$ introduced in
Definition \ref{def:space},
the condition $f-\frac n2<0$ requires that the smoothness parameter $s$ is sufficiently large,
i.e.,
\begin{equation}\label{eq:sLarge}
  \begin{cases}
  s>J^{(u)}-n+b &\text{for}\quad \dot{A}^s_{p,q}(\rho),\\
  s>J-n+\frac{\beta-n}{r} &\text{for}\quad \dot{A}^s_{p,q}(V),
  \end{cases}
\end{equation}
where all the assumptions are as in
Example \ref{ex:mole}.
\end{remark}

\section{Molecular characterization}\label{sec:molecule}

The goal of this section is to establish the molecular characterization of operator-weighted Besov and Triebel--Lizorkin spaces within the abstract framework of $(d,e,f)$-discretizable spaces from Definition \ref{def:seq}.
Before we state the molecular characterization,
we need to show that certain dual pairings $\langle f,\Phi\rangle$ are well defined.

\begin{proposition}\label{le-ana}
Let $\dot{\mathbf{A}}(\mathbb{R}^n;\fX)$
be a homogeneous $(d,e,f)$-discretizable space with some real
parameters $(d,e,f)$. If
$f\in\dot{\mathbf{A}}(\mathbb{R}^n;\fX)$ and
$\Phi$ is an $\dot{\mathbf{a}}$-analysis
molecule on $P\in\mathscr{D}$, then, for any Littlewood--Paley
functions $\varphi,\psi$ that satisfy \eqref{eq:LPpair}, the expression
\begin{align*}
\langle f,\Phi\rangle:=\sum_{R\in\mathscr{D}}
\left\langle f,\varphi_R\right\rangle
\langle\psi_R,\Phi\rangle
\end{align*}
is well defined; that is,
the series above converges absolutely and its value is independent of the
choices of $\varphi$ and $\psi$.
\end{proposition}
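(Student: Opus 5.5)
The plan has two parts: absolute convergence, then independence of $(\varphi,\psi)$. For convergence, put $t:=S_\varphi f=\{\pair{f}{\varphi_R}\}_{R\in\mathscr D}$. By Definition \ref{def:seq}\eqref{def:seq-it1}, $t\in\dot{\mathbf a}(\R^n;\fX)$. The key observation is that the scalar coefficients $\pair{\psi_R}{\Phi}$ look like one row of an almost diagonal matrix. More precisely, I will show that for the fixed cube $P$,
\begin{equation*}
  \abs{\pair{\psi_R}{\Phi}}\lesssim B_{-E,-F,-D}(P,R)
\end{equation*}
for some $D>d$, $E>e$, $F>f$. This is the standard estimate for pairing a Littlewood--Paley function, viewed as a molecule with arbitrarily many moments and arbitrary smoothness and decay, against a $(K,L,M,N)$-molecule. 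It is proved as in \cite[Lemma 3.35]{BHYY:I} and \cite[Lemma 3.15]{BHYY:III}, splitting into two cases.
\begin{itemize}
\item When $\ell(R)\le\ell(P)$, use the vanishing moments of $\psi_R$ together with the smoothness $N$ and decay $M$ of $\Phi$. This gives the decay $(\ell(R)/\ell(P))^{N+n/2}$, and $N>f-\frac n2$ yields $F>f$.
\item When $\ell(R)>\ell(P)$, use the cancellation $L$ of $\Phi$ together with the size $K$. This gives $(\ell(P)/\ell(R))^{\min(L+1,\,K-n/2)+n/2}$ up to an $\eps$-loss. The conditions $L>e-\frac n2$ and $K>e+\frac n2$ yield $E>e$.
\item The spatial decay exponent is governed by $\min(K,M)$. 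The conditions $K,M>\max(n,d)$ yield $D>d$.
\end{itemize}

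Next I build the matrix $b_{QR}:=\pair{\Phi}{\psi_R}$ if $Q=P$ and $b_{QR}:=0$ otherwise. It is $(D,E,F)$-almost diagonal, since only the row $Q=P$ is nonzero and there the bound above holds. Definition \ref{def:seq}\eqref{def:seq-it2} then gives absolute convergence in $\fX$ of $\sum_R b_{PR}t_R=\sum_R\pair{f}{\varphi_R}\pair{\psi_R}{\Phi}$, which is the first claim.

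For independence, let $(\varphi,\psi)$ and $(\tilde\varphi,\tilde\psi)$ be two admissible pairs. I will use that $T_\psi S_\varphi f=f=T_{\tilde\psi}S_{\tilde\varphi}f$, which holds by \eqref{def:seq-it1}. Then I expand $\pair{f}{\tilde\varphi_Q}=\sum_R\pair{f}{\varphi_R}\pair{\psi_R}{\tilde\varphi_Q}$, valid because the series converges in $\mathscr S_\infty'$ and $\tilde\varphi_Q\in\mathscr S_\infty$. Substituting, the sum for the pair $(\tilde\varphi,\tilde\psi)$ becomes
\begin{equation*}
  \sum_Q\sum_R \pair{f}{\varphi_R}\pair{\psi_R}{\tilde\varphi_Q}\pair{\tilde\psi_Q}{\Phi}.
\end{equation*}
To justify interchanging the two sums, I first check absolute convergence of the double series. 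Lemma \ref{lem:phiPsi} bounds the inner coefficients by $B_{-M}(Q,R)$ with $M$ as large as desired. Hence $\sum_Q\abs{\pair{\tilde\psi_Q}{\Phi}}\abs{\pair{\psi_R}{\tilde\varphi_Q}}\lesssim B_{-E,-F,-D}(P,R)$ after slightly decreasing the exponents, because almost diagonal matrices compose. So the same one-row argument applies to $\{\pair{f}{\varphi_R}\}$. After interchanging, I sum over $Q$ first, using the scalar identity $\sum_Q\pair{\psi_R}{\tilde\varphi_Q}\pair{\tilde\psi_Q}{\Phi}=\pair{\psi_R}{\Phi}$. This is Lemma \ref{lem:id} applied to the Schwartz function $\psi_R$, tested against $\Phi$, and it needs justification because $\Phi$ is only a molecule and not in $\mathscr S_\infty$. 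I expect this to be the main obstacle. My first attempt will be to note that the partial sums converge to $\psi_R$ in a weighted $L^1$ sense, controlled by the same $B_{-M}$ bounds, which suffices for pairing with the bounded, decaying $\Phi$.
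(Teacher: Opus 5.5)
Your proposal is correct and is essentially the paper's proof. Your one-row matrix is the paper's family $\Phi_P:=\Phi$, $\Phi_S:=0$ ($S\neq P$) fed into Lemma \ref{le-ad}, and independence comes from the same composition via Lemma \ref{ad-com}, the same interchange of sums, and $T_\psi\circ S_\varphi=I$.

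The paper uses the identity $\sum_Q\pair{\psi_R}{\tilde\varphi_Q}\pair{\tilde\psi_Q}{\Phi}=\pair{\psi_R}{\Phi}$ that you flag without justifying it, and your plan closes it. The series $\sum_Q\pair{\psi_R}{\tilde\varphi_Q}\tilde\psi_Q$ converges absolutely in $L^1$, because $\Norm{\tilde\psi_Q}{L^1}=\abs{Q}^{1/2}\Norm{\tilde\psi}{L^1}$. Its $L^1$ limit differs from $\psi_R$ by an integrable polynomial, hence by $0$, and $\Phi\in L^\infty$, so you can pair term by term.

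One small correction: in your case $\ell(R)>\ell(P)$, the size condition contributes the exponent $K-\frac n2$, not $K$ (cf.\ $E=\frac n2+\min\{L,K-n-\alpha\}$ in Lemma \ref{lem:BHYY:III-3.7}). That is exactly why $K>e+\frac n2$ is the condition needed.
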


To prove this proposition, we need some preparations. We recall the following result from \cite{BHYY:III}:

\begin{lemma}[\cite{BHYY:III}, Lemma 3.7]\label{lem:BHYY:III-3.7}
Let $m_Q$ be a $(K_m,L_m, M_m,N_m)$-molecule on a cube $Q$ and let $b_R$ be
a $(K_b,L_b,M_b,N_b)$-molecule on a cube $R$, where $K_m,M_m,K_b,M_b>n$.
Then the matrix $\{\pair{m_Q}{b_R}\}_{Q,R\in\mathscr D}$ is $(D,E,F)$-almost diagonal with
\begin{equation*}
\begin{split}
  D &=\min\{K_m,M_m,K_b,M_b\}, \\
  E &=\tfrac n2+\min\{N_b,L_m,(K_m-n-\alpha)\}_+, \\
  F &=\tfrac n2+\min\{N_m,L_b,(K_b-n-\alpha)\}_+,
\end{split}
\end{equation*}
where $\alpha>0$ can be chosen as we wish.
\end{lemma}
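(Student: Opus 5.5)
Since the statement concerns only scalar-valued molecules, no Banach space structure enters, and I would follow the standard proof of almost diagonality for molecular pairings (as in \cite{FJ:90} and \cite{BHYY:III}). The bound to prove is
\begin{equation*}
  \abs{\pair{m_Q}{b_R}}\lesssim B_{-E,-F,-D}(Q,R).
\end{equation*}
The roles of $(m,Q)$ and $(b,R)$ are symmetric, and swapping them exchanges $E$ and $F$. So it suffices to treat the case $\ell(R)\le\ell(Q)$. In that case the target is
\begin{equation*}
  \abs{\pair{m_Q}{b_R}}\lesssim\Big(\frac{\ell(R)}{\ell(Q)}\Big)^{F}\Big(1+\frac{\abs{x_Q-x_R}}{\ell(Q)}\Big)^{-D}.
\end{equation*}
Here the $\frac n2$ in $F$ comes from the normalisations $\abs{Q}^{-\frac12}\abs{R}^{-\frac12}$ against $\abs{R}$. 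The case $\ell(R)>\ell(Q)$ gives the $E$ bound in the same way, with $N_b,L_m,K_m$ in place of $N_m,L_b,K_b$.

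\textbf{Case $\ell(R)\le\ell(Q)$.} Let $\nu:=\min\{\lfloor L_b\rfloor,\lfloor\!\lfloor N_m\rfloor\!\rfloor\}$. By the cancellation \eqref{eq:mole-cancel} of $b_R$ up to order $L_b$, I write
\begin{equation*}
  \pair{m_Q}{b_R}=\int\Big[m_Q(y)-\sum_{\abs{\gamma}\le\nu}\frac{\partial^\gamma m_Q(x_R)}{\gamma!}(y-x_R)^\gamma\Big]b_R(y)\,dy.
\end{equation*}
I then split the integral into two regions, using the auxiliary parameter $\alpha>0$ as a small loss.
\begin{itemize}
\item Near region $\abs{y-x_R}\le\frac12(\ell(Q)+\abs{x_Q-x_R})$. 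Here I estimate the Taylor remainder by the smoothness of $m_Q$ (the H\"older part of order $N_m^{**}$, or the derivative bound of order $\nu+1$ when $L_b<N_m$). This gives a factor $\big(\abs{y-x_R}/\ell(Q)\big)^{\min\{N_m,L_b\}}$ times the decay $\big(1+\abs{x_Q-x_R}/\ell(Q)\big)^{-M_m}$. The $\sup_{\abs{z}\le\abs{x-y}}$ in the smoothness condition is harmless in this region. Integrating this against the size bound of $b_R$ then produces $(\ell(R)/\ell(Q))^{\min\{N_m,L_b\}}$, using $K_b>n+\min\{N_m,L_b\}$. Where that fails, one simply caps at $K_b-n-\alpha$.
\item Far region. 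Here I bound $m_Q$ and each Taylor term by their sizes (decay $K_m$, $M_m$) and $b_R$ by its size (decay $K_b$). This yields $(\ell(R)/\ell(Q))^{K_b-n-\alpha}$ times $\big(1+\abs{x_Q-x_R}/\ell(Q)\big)^{-\min\{K_m,M_m,K_b\}}$.
\end{itemize}
Combining the two regions gives $F=\frac n2+\min\{N_m,L_b,K_b-n-\alpha\}_+$. The positive part accounts for the trivial size-only estimate used when the minimum is negative. It also gives $D=\min\{K_m,M_m,K_b,M_b\}$.

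\textbf{Main obstacle.} The hardest step is the bookkeeping of the Taylor polynomial terms in the far region. They grow like $\abs{y-x_R}^{\abs\gamma}$, and this growth must be absorbed by the $K_b$-decay of $b_R$. That absorption is exactly where $K_b-n-\alpha$ enters the exponent and why $K_b>n$ is needed. In full detail I would simply defer to \cite[Lemma 3.7]{BHYY:III}, whose scalar proof applies verbatim here.
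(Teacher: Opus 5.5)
The paper gives no proof of this lemma; it only quotes \cite[Lemma 3.7]{BHYY:III}. So your closing deferral matches what the paper does. Your architecture is also the standard one: reduce to $\ell(R)\le\ell(Q)$, subtract a Taylor polynomial of $m_Q$ at $x_R$ using the moments of $b_R$, then split into near and far parts. As a self-contained argument, however, your sketch breaks exactly at the step you call the main obstacle, in two places.

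\textbf{The Taylor degree ignores $K_b$.} Your degree $\nu=\min\{\lfloor L_b\rfloor,\lfloor\!\lfloor N_m\rfloor\!\rfloor\}$ does not depend on $K_b$. Suppose $K_b\le n+\nu$, for instance $L_b=N_m=10$ and $K_b=n+1$. Then for $\abs{\gamma}=\nu$ the size bound of $b_R$ only gives $\int_{\mathrm{far}}\abs{y-x_R}^{\abs{\gamma}}\abs{b_R(y)}\,dy=\infty$. The Taylor terms therefore cannot be ``absorbed'' at the cost of an $\alpha$, and your far-region estimate fails. In the same regime, the vanishing of the Taylor polynomial paired with $b_R$, which your first display relies on, is not even an absolutely convergent statement. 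The cap $K_b-n-\alpha$ must be built into the degree itself. Put $\mu:=\min\{N_m,L_b,K_b-n-\alpha\}$ and, if $\mu>0$, subtract only the Taylor polynomial of degree $k:=\lfloor\!\lfloor\mu\rfloor\!\rfloor$. This is admissible because $k<L_b$, $k<N_m$, and $k<K_b-n$; in other words, you deliberately leave the higher moments of $b_R$ unused.

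\textbf{The near region is too large.} Even with this degree, $\abs{y-x_R}\le\frac12(\ell(Q)+\abs{x_Q-x_R})$ does not work. The remainder exponent you actually get is $k+1$ or $N_m$, and it may exceed $K_b-n$ (e.g.\ $K_b-n=2.5$, $\mu=2.4$, $k+1=3$). Integrating $(\abs{y-x_R}/\ell(Q))^{k+1}$ against $(1+\abs{y-x_R}/\ell(R))^{-K_b}$ out to radius $\sim\abs{x_Q-x_R}$ then produces a factor $(1+\abs{x_Q-x_R}/\ell(Q))^{k+1+n-K_b}$, which eats into the decay $D$. You cannot lower the exponent there, since $\abs{y-x_R}/\ell(Q)$ is not bounded by $1$ on that region.

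The fix is to use the remainder bound only on $\abs{y-x_R}\le\ell(Q)$. There $(\abs{y-x_R}/\ell(Q))^{k+1}\le(\abs{y-x_R}/\ell(Q))^{\mu}$, and the $\sup_{\abs{z}\le\abs{x-y}}$ is still harmless. Moreover $\int\abs{w}^{\mu}(1+\abs{w})^{-K_b}\,dw<\infty$ because $\mu<K_b-n$; this, not the far region, is where $\alpha>0$ is needed. On $\abs{y-x_R}>\ell(Q)$, bound $m_Q$ and the Taylor terms separately by their sizes:
\begin{itemize}
\item The Taylor terms contribute $(\ell(R)/\ell(Q))^{K_b-n}(1+\abs{x_Q-x_R}/\ell(Q))^{-M_m}$, precisely because $k<K_b-n$.
\item $m_Q$ contributes $(\ell(R)/\ell(Q))^{K_b-n}(1+\abs{x_Q-x_R}/\ell(Q))^{-\min\{K_m,K_b\}}$, after a further split at $\abs{y-x_R}\approx\abs{x_Q-x_R}/2$.
\end{itemize}

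A minor correction: whether you use the H\"older bound or the derivative bound of one order higher is decided by whether the Taylor degree is below $\lfloor\!\lfloor N_m\rfloor\!\rfloor$, not by comparing $L_b$ with $N_m$.
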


\begin{lemma}\label{le-ad}
Let $\dot{\mathbf{A}}(\mathbb{R}^n;\fX)$
be a homogeneous $(d,e,f)$-discretizable space with some real
parameters $(d,e,f)$ and let $\varphi,\psi$ be Littlewood--Paley functions.
Let $\{m_Q\}_{Q\in\mathscr{D}}$ and $\{b_Q\}_{Q\in\mathscr{D}}$ be families of $\dot{\mathbf{a}}$-analysis and $\dot{\mathbf{a}}$-synthesis molecules, respectively. Then both matrices
\begin{enumerate}[\rm(i)]
   \item\label{it:mQ} $\{\langle m_Q,\psi_R\rangle\}_{Q,R\in\mathscr{D}}$
   \item\label{it:bQ} $\{\langle\varphi_Q,b_R\rangle\}_{Q,R\in\mathscr{D}}$
\end{enumerate}
are $(D,E,F)$-almost diagonal for some
\begin{equation*}
  D>\max\{d,n\},\qquad E>e,\qquad F>f.
\end{equation*}
In particular, they are bounded on $\dot{\mathbf{a}}(\mathbb{R}^n;\fX)$.
\end{lemma}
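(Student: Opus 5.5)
The plan is to reduce both claims to Lemma \ref{lem:BHYY:III-3.7}, viewing the Littlewood--Paley functions themselves as molecules with essentially arbitrary parameters. Since $\varphi,\psi\in\mathscr S_\infty$, every $\varphi_Q$ (resp.\ $\psi_R$) is, up to a fixed multiplicative constant, a $(K,L,M,N)$-molecule on $Q$ (resp.\ $R$) for any prescribed finite $K,M,N$ and $L$. The Schwartz decay gives size and smoothness of every order, and the vanishing of all moments comes from $\hat\varphi$ vanishing near the origin. The almost diagonal class is insensitive to multiplicative constants, so we may treat $\varphi_Q,\psi_R$ as genuine molecules with parameters as large as we like.

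For \eqref{it:mQ}, we apply Lemma \ref{lem:BHYY:III-3.7} with $m_Q$ the analysis molecule, of parameters $(K_m,L_m,M_m,N_m)$, and $b_R=\psi_R$, with $K_b,L_b,M_b,N_b$ huge. This yields $D=\min\{K_m,M_m\}$, since $K_b,M_b$ are huge. Next, $E=\frac n2+\min\{L_m,K_m-n-\alpha\}_+$, and $F=\frac n2+\min\{N_m,K_b-n-\alpha\}_+=\frac n2+(N_m)_+$. The analysis conditions in Definition \ref{df-a-syan}\eqref{defit:a-an} give three things. First, $K_m,M_m>\max\{d,n\}$, so $D>\max\{d,n\}$. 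Second, $L_m>e-\frac n2$ and $K_m>e+\frac n2$; choosing $\alpha>0$ small so that $K_m-n-\alpha>e-\frac n2$, we get $E>\frac n2+(e-\frac n2)=e$. If $e-\frac n2<0$, the positive part still gives $E\ge\frac n2>e$. Third, $N_m>f-\frac n2$ gives $F\geq\frac n2+(N_m)_+>f$.

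Part \eqref{it:bQ} is symmetric. We take $m_Q=\varphi_Q$ with huge parameters and $b_R$ the synthesis molecule. Then $D=\min\{K_b,M_b\}>\max\{d,n\}$. Next, $E=\frac n2+\min\{N_b,\cdot\}_+$, with the other entries huge, and $N_b>e-\frac n2$ gives $E>e$. Finally, $F=\frac n2+\min\{L_b,K_b-n-\alpha\}_+>f$, using $L_b>f-\frac n2$ and $K_b>f+\frac n2$ with $\alpha$ small.

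The last claim, boundedness on $\dot{\mathbf a}(\R^n;\fX)$, then follows directly from Definition \ref{def:seq}\eqref{def:seq-it2}, since $D>d$, $E>e$, $F>f$. The only step needing care is verifying that dilates of Schwartz functions with vanishing moments satisfy Definition \ref{def-mole} uniformly with arbitrary parameters. This amounts to checking the Hölder-type smoothness bound via the mean value theorem and the decay of derivatives. It is routine, and is the expected main (mild) obstacle. A secondary point is making sure the choice of $\alpha$ is compatible with the strict inequalities, which is handled by the openness of the conditions.
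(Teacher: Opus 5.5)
Your proposal is correct and is essentially the paper's proof. Both parts come from Lemma \ref{lem:BHYY:III-3.7}, with $\psi_R$ (resp.\ $\varphi_Q$) regarded as a molecule with arbitrarily large parameters, so that $D$, $E$, $F$ are read off from the analysis (resp.\ synthesis) conditions of Definition \ref{df-a-syan} with $\alpha>0$ small; boundedness then follows from Definition \ref{def:seq}\eqref{def:seq-it2}, and the hypothesis $K_m,M_m,K_b,M_b>n$ of Lemma \ref{lem:BHYY:III-3.7} is automatic from Definition \ref{df-a-syan}, which you could state explicitly.
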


\begin{proof}
Once we prove the almost diagonality, the boundedness is immediate from Definition \ref{def:seq}\eqref{def:seq-it2}.

We first consider \eqref{it:mQ}. Assume that
$\{m_Q\}_{Q\in\mathscr{D}}$ are $(K,L,M,N)$-molecules
where the parameters satisfy Definition \ref{df-a-syan}\eqref{defit:a-an}, i.e.,
\begin{equation*}
  K>\max\{n,d,e+\tfrac{n}{2}\},\quad
  L>e-\tfrac{n}{2},\quad
  M>\max\{d,n\},\quad
  N>f-\tfrac{n}{2}.
\end{equation*}
On the other hand, each
$\psi_R$ is a $(\widehat{K},\widehat{L},\widehat{M},\widehat{N})$-molecule
on $R$ for arbitrarily large $\widehat{K},\widehat{L},\widehat{M},\widehat{N}$.
Using Lemma \ref{lem:BHYY:III-3.7} with $\psi_R$ in place of $b_R$‚ it follows that
$\{\pair{m_Q}{\psi_R}\}_{Q,R\in\mathscr D}$ is $(D,E,F)$-almost diagonal with
\begin{equation*}
  D =\min\{K,M\}>\max\{d,n\},\qquad
  F=N+\tfrac{n}{2}>f,
\end{equation*}
and
\begin{equation*}
  E=\min\{L+\tfrac{n}{2},K-\tfrac{n}{2}-\alpha\}_+>e,
\end{equation*}
provided that $\alpha>0$ is chosen small enough.

Turning to \eqref{it:bQ}, we now assume that $\{b_R\}_{R\in\mathscr D}$ are $(K,L,M,N)$-molecules with parameters as in Definition \ref{df-a-syan}\eqref{defit:a-sy}, i.e.,
\begin{equation*}
  K>\max\{n,d,f+\tfrac{n}{2}\},\quad
  L>f-\tfrac{n}{2},\quad
  M>\max\{d,n\},\quad
  N>e-\tfrac{n}{2},
\end{equation*}
while each $\varphi_Q$ is a $(\widehat{K},\widehat{L},\widehat{M},\widehat{N})$-molecule for arbitrarily large parameters. Then, by Lemma \ref{lem:BHYY:III-3.7} with $\varphi_Q$ in place of $m_Q$‚ it follows that
$\{\pair{\varphi_Q}{b_R}\}_{Q,R\in\mathscr D}$ is $(D,E,F)$-almost diagonal with
\begin{equation*}
  D=\min\{K,M\}>\max\{d,n\},\qquad E=N+\tfrac{n}{2}>e,
\end{equation*}
and
\begin{equation*}
  F=\min\{L+\tfrac n2,K-\tfrac n2-\alpha\}_+>f
\end{equation*}
provided that $\alpha>0$ is chosen small enough.

This finishes the proof of Lemma \ref{le-ad} in both cases.
\end{proof}

We also need the following
estimate, whose proof is similar to that of
\cite[Theorem D.2]{FJ:90}.

\begin{lemma}\label{ad-com}
If $D_1,D_2\in(n,\infty)$ and
$E_1,E_2,F_1,F_2\in\mathbb{R}$
satisfy $E_1\ne E_2$, $F_1\ne F_2$,
and $E_1+F_2,E_2+F_1\in(\min\{D_1,D_2\},\infty)$, then,
for any $Q,R\in\mathscr{D}$,
\begin{align*}
\sum_{P\in\mathscr{D}}B_{-E_1,-F_1,-D_1}(Q,P)
B_{-E_2,-F_2,-D_2}(P,R)
\lesssim B_{-E,-F,-D}(Q,R),
\end{align*}
where $G:=\min\{G_1,G_2\}$ for any $G\in\{D,E,F\}$ and the
implicit positive constant is independent of $Q$ and $R$.
\end{lemma}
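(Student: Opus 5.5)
We would follow the classical argument of Frazier--Jawerth \cite[Theorem D.2]{FJ:90}: split the sum over $P$ according to the relative sizes of $P$, $Q$, $R$ and reduce each piece to geometric sums in the scale parameter and lattice sums in position. By symmetry of the roles (swapping $Q,R$ and the index pairs), we may assume $\ell(Q)\ge\ell(R)$, write $\ell(Q)=2^{-i}$, $\ell(R)=2^{-k}$ with $i\le k$, and set $\ell(P)=2^{-j}$. Recall that
\begin{equation*}
B_{-E,-F,-D}(Q,P)\approx
\begin{cases}\big(\ell(P)/\ell(Q)\big)^{E}\big(1+\abs{x_Q-x_P}/\ell(Q)\big)^{-D}, & \ell(P)\le\ell(Q),\\
\big(\ell(Q)/\ell(P)\big)^{F}\big(1+\abs{x_Q-x_P}/\ell(P)\big)^{-D}, & \ell(P)>\ell(Q).\end{cases}
\end{equation*}

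First, for a fixed level $j$, we estimate the lattice sum over $P\in\mathscr D_j$ of the product of the two decay factors. The standard estimate (cf.\ \cite[Lemma D.1]{FJ:90}) states that, for $D_1,D_2>n$,
\begin{equation*}
\sum_{P\in\mathscr D_j}\Big(1+\tfrac{\abs{x_Q-x_P}}{\max\{\ell(Q),\ell(P)\}}\Big)^{-D_1}\Big(1+\tfrac{\abs{x_P-x_R}}{\max\{\ell(P),\ell(R)\}}\Big)^{-D_2}
\lesssim\frac{\max\{\ell(Q),\ell(P)\}^n+\max\{\ell(P),\ell(R)\}^n}{\ell(P)^n}\Big(1+\tfrac{\abs{x_Q-x_R}}{\max\{\ell(Q),\ell(P),\ell(R)\}}\Big)^{-D}.
\end{equation*}
This is obtained by splitting $\R^n$ into the half closer to $x_Q$ and the half closer to $x_R$; on each half one decay factor is frozen at the value $\approx(1+\abs{x_Q-x_R}/\max)^{-\min D_i}$, and the other is summed, using $D_i>n$, to give the stated volume ratio.

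Next, we sum over $j$ in three ranges: $j<i$ (where $P$ is larger than both cubes), $i\le j\le k$ (where $P$ lies between them), and $j>k$ (where $P$ is smaller than both). In each range we insert the size factors. For $j<i$ we get $2^{(j-i)F_1}2^{(j-k)F_2}2^{(i-j)n}$, whose geometric sum converges toward $j\to-\infty$ exactly when $F_1+F_2>n$; this follows from $E_1+F_2>\min D>n$ together with the analogous condition, after comparing exponents. The sum is dominated by its endpoint $j=i$, giving $2^{(i-k)F_2}\lesssim B_{-E,-F,-D}(Q,R)$. For $j>k$, the sum is controlled similarly by $E_1+E_2>n$. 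For $i\le j\le k$, the terms are $2^{(i-j)E_1}2^{(j-k)F_2}$ times the volume ratio $2^{(j-i)n}$ or $1$, which gives a geometric sum with ratio governed by $E_1-F_2$ and $n$. Because $E_1\neq E_2$ and $F_1\neq F_2$ are assumed, we avoid the equal-exponent case that would produce a logarithmic factor, and the sum is dominated by one of its endpoints. The hypothesis $E_1+F_2>\min\{D_1,D_2\}$ is what lets the $D$-decay at the scale $\ell(Q)$ absorb the volume factor $2^{(j-i)n}$. The resulting endpoint bounds are $2^{(i-k)\min\{E_1,E_2\}}$-type factors, which are dominated by $B_{-E,-F,-D}(Q,R)$ with $E=\min\{E_1,E_2\}$.

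The main obstacle is this exponent bookkeeping in the middle range. There, the positional decay is measured at varying scales, and we must check that the loss $\big(\max\{\ell(Q),\ell(P),\ell(R)\}/\ell(Q)\big)^{D}$ from changing normalisations is compensated by the assumed inequalities $E_1+F_2,E_2+F_1>\min D$. We would do this by treating the two cases $\abs{x_Q-x_R}\le\ell(Q)$ and $\abs{x_Q-x_R}>\ell(Q)$ separately, following \cite[Appendix D]{FJ:90} line by line.
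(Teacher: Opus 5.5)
Your overall plan (three ranges of $\ell(P)$, a lattice sum at each level $j$, geometric sums in $j$ with $E_1\neq E_2$, $F_1\neq F_2$ excluding logarithms) is the same as the paper's. However, the lattice estimate you state is too weak, and with it the argument does not close.

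You bound the sum over $P\in\mathscr D_j$ by the \emph{sum} of the two volume ratios. What is needed is only the ratio of the \emph{smaller} cube, $\max\{1,\min\{\ell(Q),\ell(R)\}/\ell(P)\}^n$, with the positional decay at scale $\max\{\ell(P),\ell(Q),\ell(R)\}$. This is the form \eqref{ad-come3} of \cite[Lemma D.1]{FJ:90} that the paper uses. In your normalisation $\ell(R)\le\ell(Q)$, your version inserts $(\ell(Q)/\ell(P))^n$ for every $\ell(P)<\ell(Q)$. At the endpoint $\ell(P)=\ell(R)$ this leaves an extra factor $(\ell(Q)/\ell(R))^n$, which is unbounded.

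The remedy you propose, letting the $D$-decay at scale $\ell(Q)$ absorb $2^{(j-i)n}$ via $E_1+F_2>\min\{D_1,D_2\}$, cannot work: for $x_Q=x_R$ the positional factor equals $1$. Your half-space splitting produces exactly this bad volume on the half near $x_Q$ when $\abs{x_Q-x_R}\lesssim\ell(Q)$. In that regime you must instead bound the $Q$-factor by $1$ and sum only the $R$-factor. Note also that in the middle range your own renormalisation loss $(\max\{\ell(Q),\ell(P),\ell(R)\}/\ell(Q))^{D}$ is identically $1$, so the $D$-hypothesis plays no role there.

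The exponent bookkeeping must also be redone. By \eqref{eq:BabD}, $B_{-E,-F,-D}(Q,P)\approx(\ell(P)/\ell(Q))^{F}$ if $\ell(P)\le\ell(Q)$, and $\approx(\ell(Q)/\ell(P))^{E}$ if $\ell(P)>\ell(Q)$. You swapped $E$ and $F$, and then did not use even the swapped rule consistently. That is how the conditions $F_1+F_2>n$ and $E_1+E_2>n$ appear. These are not hypotheses, and your claim that $F_1+F_2>n$ follows from $E_1+F_2,E_2+F_1>\min\{D_1,D_2\}$ is false: take $n=1$, $D_1=D_2=2$, $E_1=3$, $E_2=4$, $F_1=0$, $F_2=\frac12$.

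The correct accounting, with $\ell(R)\le\ell(Q)$, is as follows.
\begin{itemize}
\item For $\ell(P)<\ell(R)$, the terms are $(\ell(P)/\ell(Q))^{F_1}(\ell(P)/\ell(R))^{E_2}(\ell(R)/\ell(P))^{n}$. They are summable because $F_1+E_2>n$, with endpoint value $(\ell(R)/\ell(Q))^{F_1}$.
\item For $\ell(R)\le\ell(P)\le\ell(Q)$, there is neither a volume factor nor a positional loss. The terms $(\ell(P)/\ell(Q))^{F_1}(\ell(R)/\ell(P))^{F_2}$ are geometric with ratio $2^{\pm(F_1-F_2)}\neq 1$, giving $(\ell(R)/\ell(Q))^{\min\{F_1,F_2\}}$.
\item For $\ell(P)>\ell(Q)$, the terms are $(\ell(Q)/\ell(P))^{E_1}(\ell(R)/\ell(P))^{F_2}$ times the loss $(\ell(P)/\ell(Q))^{D}$. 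This loss comes from $(1+\abs{x_Q-x_R}/\ell(P))^{-D}\le(\ell(P)/\ell(Q))^{D}(1+\abs{x_Q-x_R}/\ell(Q))^{-D}$. It is \emph{here} that $E_1+F_2>\min\{D_1,D_2\}$ is used, yielding $(\ell(R)/\ell(Q))^{F_2}$.
\end{itemize}
This is precisely the paper's argument (ranges I, II, III), transposed to the case $\ell(R)\le\ell(Q)$.
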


\begin{proof}
By the symmetry, we only consider the case $\ell(Q)\le\ell(R)$ here.
Therefore, we have
\begin{align*}
&\sum_{P\in\mathscr{D}}B_{-E_1,-F_1,-D_1}(Q,P)
B_{-E_2,-F_2,-D_2}(P,R)\\
&\quad=\sum_{\{P:\ell(P)<\ell(Q)\}}\cdots
+\sum_{\{P:\ell(Q)\le\ell(P)<\ell(R)\}}\cdots
+\sum_{\{P:\ell(R)\le\ell(P)\}}\cdots\notag\\
&\quad=:{\rm I}+{\rm II}+{\rm III}.\notag
\end{align*}

First, we deal with ${\rm I}$. From the definition of $B$, we deduce that
\begin{align}\label{ad-come2}
\begin{split}
{\rm I}&\lesssim\sum_{2^{-j}<\ell(Q)}
\sum_{P\in\mathscr{D}_j}\left[\frac{\ell(P)}{\ell(Q)}\right]^{F_1}
\left[\frac{\ell(P)}{\ell(R)}\right]^{E_2}\\
&\qquad\times\left[1+\frac{|x_Q-x_P|}{\max\{\ell(Q),\ell(P)\}}
\right]^{-D_1}
\left[1+\frac{|x_R-x_P|}{\max\{\ell(R),\ell(P)\}}
\right]^{-D_2}.
\end{split}
\end{align}
Note that, by \cite[Lemma D.1]{FJ:90}, we find that, for any $j\in\mathbb{Z}$,
\begin{align}\label{ad-come3}
\begin{split}
&\sum_{P\in\mathscr{D}_j}\left[1+\frac{|x_Q-x_P|}{\max\{\ell(Q),\ell(P)\}}
\right]^{-D_1}
\left[1+\frac{|x_R-x_P|}{\max\{\ell(R),\ell(P)\}}
\right]^{-D_2}\\
&\quad\lesssim\left[1+\frac{|x_Q-x_R|}{\max\{2^{-j},\ell(R)\}}
\right]^{-D}\max\{1,2^j\ell(Q)\}^n,
\end{split}
\end{align}
where the maximum is $2^j\ell(Q)>1$ by the summation condition in term I.

Substituting \eqref{ad-come3} back to \eqref{ad-come2}, we obtain
\begin{align*}
{\rm I}&\lesssim\left[1+\frac{|x_Q-x_R|}{\max\{\ell(Q),\ell(R)\}}
\right]^{-D}
\sum_{2^{-j}<\ell(Q)}\left[\frac{2^{-j}}{\ell(Q)}\right]^{F_1}
\left[\frac{2^{-j}}{\ell(R)}\right]^{E_2}
\left[2^j\ell(Q)\right]^n\\
&\lesssim\left[\frac{\ell(Q)}{\ell(R)}\right]^{E}
\left[1+\frac{|x_Q-x_R|}{\max\{\ell(Q),\ell(R)\}}
\right]^{-D}\qquad\text{since }F_1+E_2>D>n\\
&\approx B_{-E,-F,-D}(Q,R)\qquad\text{since }\ell(Q)\leq\ell(R).
\end{align*}
This is the desired estimate of ${\rm I}$.

Next, we focus on ${\rm II}$. Applying
the definition of $B$ and \eqref{ad-come3} again and the
assumption that $E_1\ne E_2$, we obtain
\begin{align*}
{\rm II}&\lesssim\sum_{\ell(Q)\le2^{-j}<\ell(R)}
\sum_{P\in\mathscr{D}_j}\left[\frac{\ell(Q)}{\ell(P)}\right]^{E_1}
\left[\frac{\ell(P)}{\ell(R)}\right]^{E_2}\\
&\qquad\times\left[1+\frac{|x_Q-x_P|}{\max\{\ell(Q),\ell(P)\}}
\right]^{-D_1}
\left[1+\frac{|x_R-x_P|}{\max\{\ell(R),\ell(P)\}}
\right]^{-D_2}\\
&\lesssim\sum_{\ell(Q)\le2^{-j}<\ell(R)}
\left[2^j\ell(Q)\right]^{E_1}\left[\frac{2^{-j}}{\ell(R)}\right]^{E_2}
\left[1+\frac{|x_Q-x_R|}{\max\{2^{-j},\ell(R)\}}
\right]^{-D}\\
&\lesssim\left[\frac{\ell(Q)}{\ell(R)}\right]^{E}
\left[1+\frac{|x_Q-x_R|}{\max\{\ell(Q),\ell(R)\}}
\right]^{-D}\approx B_{-E,-F,-D}(Q,R),
\end{align*}
which completes the estimation of ${\rm II}$.

Finally, we estimate ${\rm III}$.
Similarly to the above argument, we have
\begin{align}\label{ad-come4}
{\rm III}\lesssim
\sum_{2^{-j}\ge\ell(R)}
\left[2^j\ell(Q)\right]^{E_1}\left[2^j\ell(R)\right]^{F_2}
\left[1+2^j|x_Q-x_R|\right]^{-D}.
\end{align}
Observe that
\begin{align*}
1+2^j|x_Q-x_R|\ge 2^j\ell(R)
\left[1+\frac{|x_Q-x_R|}{\max\{\ell(Q),\ell(R)\}}\right].
\end{align*}
Substituting this into \eqref{ad-come4}, we conclude that
\begin{align*}
{\rm III}&\lesssim
\left[1+\frac{\abs{x_Q-x_R}}{\max\{\ell(Q),\ell(R)\}}\right]^{-D}
\sum_{2^{-j}\ge\ell(R)}
\left[2^j\ell(Q)\right]^{E_1}\left[2^j\ell(R)\right]^{F_2-D}\\
&\lesssim\left[\frac{\ell(Q)}{\ell(R)}\right]^{E}
\left[1+\frac{|x_Q-x_R|}{\max\{\ell(Q),\ell(R)\}}
\right]^{-D}\qquad\text{since }E_1+F_2>D \\
&\approx B_{-E,-F,-D}(Q,R).
\end{align*}
This finishes the estimation of ${\rm III}$ and hence Lemma \ref{ad-com}.
\end{proof}

\begin{proof}[Proof of Proposition \ref{le-ana}]
Let $(\varphi_1,\psi_1)$ and $(\varphi_2,\psi_2)$
be two pairs of Littlewood--Paley
functions that satisfy \eqref{eq:LPpair}.
Define $\Phi_P:=\Phi$ and $\Phi_{S}:=0$ if $S\ne P$.
Then $\{\Phi_S\}_{S\in\mathscr{D}}$
is a family of $\dot{\mathbf{a}}$-analysis molecules.
Hence, by Lemma \ref{le-ad}, we can
choose $D_1>\max\{n,d\}$, $E_1>e$, and $F_1>f$ such that
$\{\langle \Phi_S,\psi_{Q}^{(2)}\rangle\}_{S,Q\in\mathscr{D}}$
is $(D_1,E_1,F_1)$-almost diagonal. Moreover,
since $\psi^{(1)},\varphi^{(2)}\in\mathscr{S}_\infty(\mathbb{R}^n)$,
we can choose sufficiently large $(D_2,E_2,F_2)$ such that the pairs
$\{(D_i,E_i,F_i)\}_{i=1}^2$ satisfy all the assumptions of
Lemma \ref{ad-com} and
$\{\langle\varphi_Q^{(2)},\psi_R^{(1)}\rangle\}_{Q,R\in\mathscr{D}}$
is $(D_2,E_2,F_2)$-almost diagonal. Thus, from
Lemma \ref{ad-com}, we infer that, for any $S,R\in\mathscr{D}$,
\begin{align*}
t_{SR}:=
\sum_{Q\in\mathscr{D}}\abs{\langle \Phi_S,\psi_{Q}^{(2)}\rangle
\langle\varphi_Q^{(2)},\psi_R^{(1)}\rangle}
\lesssim B_{-E_1,-F_1,-D_1}(S,R).
\end{align*}
This shows that $\{t_{SR}\}_{S,R\in\mathscr{D}}$
is $(D_1,E_1,F_1)$-almost diagonal.
Combining this and Definition \ref{def:seq}, we
further conclude that,
for any $S\in\mathscr{D}$ and $f\in\dot{\mathbf{A}}(\mathbb{R}^n;\fX)$,
the series $\sum_{R\in\mathscr{D}}t_{SR}\langle f,\varphi_R^{(1)}\rangle$
is absolutely convergent in $\fX$; in particular,
\begin{align*}
\sum_{Q,R\in\mathscr{D}}\langle f,\varphi_R^{(1)}\rangle
\langle\psi_R^{(1)},\varphi_Q^{(2)}\rangle
\langle\psi_Q^{(2)},\Phi\rangle
\end{align*}
is absolutely convergent.
Thus,
\begin{align*}
\sum_{R\in\mathscr{D}}\langle f,\varphi_R^{(1)}\rangle
\langle\psi_R^{(1)},\Phi\rangle
&=\sum_{R\in\mathscr{D}}\langle f,\varphi_R^{(1)}\rangle
\Big[\sum_{Q\in\mathscr{D}}\langle\psi_R^{(1)},
\varphi_Q^{(2)}\rangle
\langle\psi_Q^{(2)},\Phi\rangle\Big]\\
&=\sum_{Q\in\mathscr{D}}
\Big[\sum_{R\in\mathscr{D}}
\langle f,\varphi_R^{(1)}\rangle\langle\psi_R^{(1)},
\varphi_Q^{(2)}\rangle\Big]\langle\psi_Q^{(2)},\Phi\rangle \\
&=\sum_{Q\in\mathscr{D}}\langle f,\varphi_Q^{(2)}\rangle
\langle\psi_Q^{(2)},\Phi\rangle.
\end{align*}
This implies that the present lemma holds.
\end{proof}

Now, we are able to prove the
molecular characterization of
$\dot{\mathbf{A}}(\mathbb{R}^n;\fX)$.

\begin{theorem}\label{thm-mole}
Let $\dot{\mathbf{A}}(\mathbb{R}^n;\fX)$
be a homogeneous $(d,e,f)$-discretizable space with some real
parameters $(d,e,f)$.
\begin{enumerate}[\rm(i)]
  \item\label{thmit:an} If $\{m_Q\}_{Q\in\mathscr{D}}$ are
  $\dot{\mathbf{a}}$-analysis molecules on their indicated cubes,
  then there exists a positive constant $C$
  such that, for any $f\in\dot{\mathbf{A}}(\mathbb{R}^n;\fX)$,
  \begin{align*}
  \bNorm{\big\{\langle f,m_Q\rangle\big\}_{Q\in\mathscr{D}}}
  {\dot{\mathbf{a}}(\mathbb{R}^n;\fX)}
  \le C\|f\|_{\dot{\mathbf{A}}(\mathbb{R}^n;\fX)}.
  \end{align*}
  \item\label{thmit:sy} If
  $\{b_Q\}_{Q\in\mathscr{D}}$ are $\dot{\mathbf{a}}$-synthesis molecules on their indicated cubes
  and if $t:=\{t_Q\}_{Q\in\mathscr{D}}\in
  \dot{\mathbf{a}}(\mathbb{R}^n;\fX)$,
  then $$f:=\sum_{Q\in\mathscr{D}}t_Qb_Q
  \in\dot{\mathbf{A}}(\mathbb{R}^n;\fX)$$ and
  there exists a positive constant $C$
  such that
\begin{equation*}
  \|f\|_{\dot{\mathbf{A}}(\mathbb{R}^n;\fX)}
  \le C\|t\|_{\dot{\mathbf{a}}(\mathbb{R}^n;\fX)}.
\end{equation*}
\end{enumerate}
\end{theorem}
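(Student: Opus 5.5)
Both parts reduce, via the $\varphi$-transform structure of Definition \ref{def:seq}\eqref{def:seq-it1}, to the boundedness of almost diagonal matrices on $\dot{\mathbf{a}}(\mathbb{R}^n;\fX)$ supplied by Definition \ref{def:seq}\eqref{def:seq-it2}. Throughout, fix Littlewood--Paley functions $\varphi,\psi$ satisfying \eqref{eq:LPpair}.

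For \eqref{thmit:an}, I would start from the definition of $\pair{f}{m_Q}$ given by Proposition \ref{le-ana}, namely $\pair{f}{m_Q}=\sum_{R}\pair{f}{\varphi_R}\pair{\psi_R}{m_Q}$; this series converges absolutely. Thus the sequence $\{\pair{f}{m_Q}\}_Q$ is $B(S_\varphi f)$ with $b_{QR}:=\pair{m_Q}{\psi_R}$. By Lemma \ref{le-ad}\eqref{it:mQ}, this matrix is $(D,E,F)$-almost diagonal with $D>\max\{d,n\}$, $E>e$, $F>f$. Definition \ref{def:seq}\eqref{def:seq-it2} then gives boundedness of $B$ on $\dot{\mathbf{a}}$, and Definition \ref{def:seq}\eqref{def:seq-it1} gives boundedness of $S_\varphi:\dot{\mathbf{A}}\to\dot{\mathbf{a}}$. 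Composing yields the estimate. One minor point is the conjugation/reflection convention in $\pair{\psi_R}{m_Q}$ versus $\pair{m_Q}{\psi_R}$, but this is irrelevant for the size bounds.

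For \eqref{thmit:sy}, the plan is to show $f=T_\psi(Bt)$ with $b_{QR}:=\pair{\varphi_Q}{b_R}$. By Lemma \ref{le-ad}\eqref{it:bQ} this matrix is almost diagonal with admissible parameters, so $Bt\in\dot{\mathbf{a}}$ with $\Norm{Bt}{}\lesssim\Norm{t}{}$, and the series $(Bt)_Q=\sum_R b_{QR}t_R$ converges absolutely. Then $g:=T_\psi(Bt)\in\dot{\mathbf{A}}$ with $\Norm{g}{\dot{\mathbf{A}}}\lesssim\Norm{t}{\dot{\mathbf{a}}}$.

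The main obstacle is to make sense of $f=\sum_Q t_Qb_Q$ in $\mathscr S_\infty'(\mathbb{R}^n;\fX)$ and to identify it with $g$. For a test function $\phi\in\mathscr S_\infty$, I would use the scalar Calder\'on formula (Lemma \ref{lem:id}) on $b_R$, giving $b_R=\sum_Q\pair{b_R}{\varphi_Q}\psi_Q$ in the appropriate sense. This yields
\[
\pair{f}{\phi}=\sum_R t_R\sum_Q\pair{\varphi_Q}{b_R}\pair{\psi_Q}{\phi}.
\]
To interchange the sums, I need $\sum_{Q,R}\abs{\pair{\psi_Q}{\phi}}\abs{b_{QR}}\Norm{t_R}{\fX}<\infty$. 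Here I would proceed as in the proof of Proposition \ref{le-ana}: view $\phi$ as a molecule on $[0,1)^n$ with arbitrarily good parameters, so that $\{\pair{\phi_S}{\psi_Q}\}$ is almost diagonal with large parameters (setting $\phi_S=0$ for $S\neq[0,1)^n$). Lemma \ref{ad-com} then composes this with $B$ into a $(D,E,F)$-almost diagonal matrix with admissible parameters. Definition \ref{def:seq}\eqref{def:seq-it2} (absolute convergence against $t\in\dot{\mathbf{a}}$) then gives absolute convergence of the double series. After this, Fubini yields $\pair{f}{\phi}=\sum_Q(Bt)_Q\pair{\psi_Q}{\phi}=\pair{g}{\phi}$. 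In particular $\sum_Q t_Qb_Q$ converges in $\mathscr S_\infty'$, it equals $g$, and the norm bound follows.

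The delicate bookkeeping is checking that the parameters in Lemma \ref{ad-com} satisfy its hypotheses ($E_1\neq E_2$, $F_1\neq F_2$, and cross sums exceeding $\min D_i$). Since the test-function side parameters can be taken arbitrarily large, I expect this to go through.
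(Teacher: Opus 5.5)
Your proposal is correct and follows the paper's route. Part (i) is exactly the paper's factorisation $\{\langle f,m_Q\rangle\}_Q=B\circ S_\varphi f$ with $B=\{\langle m_Q,\psi_R\rangle\}$, and part (ii) rests on the same identity $S_\varphi f=\widetilde Bt$ with $\widetilde B=\{\langle\varphi_Q,b_R\rangle\}$.

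Your extra step fills in what the paper leaves implicit when it writes $\|f\|_{\dot{\mathbf A}}\lesssim\|S_\varphi f\|_{\dot{\mathbf a}}$ before knowing $f\in\dot{\mathbf A}$. That step uses Lemma \ref{ad-com} and Definition \ref{def:seq}\eqref{def:seq-it2}, as in the proof of Proposition \ref{le-ana}, to obtain absolute convergence of $\sum_{Q,R}\abs{\langle\psi_Q,\phi\rangle}\abs{\langle\varphi_Q,b_R\rangle}\Norm{t_R}{\fX}$. From this you get $\sum_R t_Rb_R=T_\psi(\widetilde Bt)$ in $\mathscr S_\infty'(\R^n;\fX)$.
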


\begin{proof}
Let $\varphi,\psi$ be two Littlewood--Paley functions that satisfy \eqref{eq:LPpair}.
We first show \eqref{thmit:an}. From Proposition \ref{le-ana}, we infer that
$\{\langle f,m_Q\rangle\}_{Q\in\mathscr{D}}$
is well defined. Using the definition, we obtain,
for any $Q\in\mathscr{D}$,
\begin{align*}
\langle f,m_Q\rangle=\sum_{R\in\mathscr{D}}
\langle f,\varphi_R\rangle\langle \psi_R,m_Q\rangle
=\sum_{R\in\mathscr{D}}\langle m_Q,\psi_R\rangle
(S_\varphi f)_R.
\end{align*}
Moreover, by Lemma \ref{le-ad}\eqref{it:mQ}, we find that
$B:=\{\langle m_Q,\psi_R\rangle\}_{Q,R\in\mathscr{D}}$
is bounded on $\dot{\mathbf{a}}(\mathbb{R}^n;\fX)$.
Consequently,
it follows that
\begin{align*}
\bNorm{\big\{\langle f,m_Q\rangle\big\}_{Q\in\mathscr{D}}}
  {\dot{\mathbf{a}}(\mathbb{R}^n;\fX)}
&=\Norm{B\circ S_\varphi f}{\dot{\mathbf{a}}(\mathbb{R}^n;\fX)}
\lesssim\Norm{S_\varphi f}{\dot{\mathbf{a}}(\mathbb{R}^n;\fX)}\\
&\lesssim\|f\|_{\dot{\mathbf{A}}(\mathbb{R}^n;\fX)}\qquad
\text{by Definition \ref{def:seq}\eqref{def:seq-it1}},
\end{align*}
which confirms \eqref{thmit:an}.

Next, we prove \eqref{thmit:sy}. From Lemma \ref{le-ad}\eqref{it:bQ},
we deduce that the matrix
$\widetilde{B}:=\{\langle\varphi_Q,b_R\rangle\}_{Q,R\in\mathscr{D}}$
is $(D,E,F)$-almost diagonal for some $D>d$, $E>e$, and $F>f$
and hence, by Definition \ref{def:seq}\eqref{def:seq-it2},
for any $Q\in\mathscr{D}$,
the series $\sum_{R\in\mathscr{D}}\langle\varphi_Q,b_R\rangle t_R$
is absolutely convergent.
Therefore, for any $Q\in\mathscr{D}$,
\begin{align*}
(S_\varphi f)_Q
=\langle f,\varphi_Q\rangle=(\widetilde{B}t)_Q.
\end{align*}
Combining this and Definition \ref{def:seq} again,
we further obtain
\begin{align*}
\|f\|_{\dot{\mathbf{A}}(\mathbb{R}^n;\fX)}
&\lesssim\Norm{S_\varphi f}{\dot{\mathbf{a}}(\mathbb{R}^n;\fX)}
=\Norm{\widetilde{B}t}{\dot{\mathbf{a}}(\mathbb{R}^n;\fX)}
\lesssim\|t\|_{\dot{\mathbf{a}}(\mathbb{R}^n;\fX)}.
\end{align*}
This finishes the proof of \eqref{thmit:sy} and hence Theorem \ref{thm-mole}.
\end{proof}

By Lemma \ref{lem:seq} and Example \ref{ex:mole},
we immediately obtain the following applications of Theorem \ref{thm-mole},
which provide the molecular characterizations of
the Besov and the Triebel--Lizorkin spaces introduced in Definition \ref{def:space};
we omit the straightforward details.

\begin{corollary}
Let $p\in(0,\infty)$, $q\in(0,\infty]$,
and  $s\in\mathbb{R}$.
\begin{enumerate}[{\rm(i)}]
  \item If $\rho=\{\rho_{Q}\}_{Q\in\mathscr{D}}$
  is a family of $(u,a,b,c)$-norms with
  $u\in(0,1]$ and $a,b,c\in[0,\infty)$, then
  Theorem \ref{thm-mole} applies to the average-weighted space $\dot{A}^s_{p,q}(\rho)$
  and the $\dot{a}^s_{p,q}(\rho)$-analysis and synthesis molecules.
  \item If $V\in\A_r$ with $r\in[p,\infty)$ has doubling
  dimension $\beta\in[n,\infty)$ and if $q\in(0,r+\varepsilon_V)$
  when $A=F$, where $\varepsilon_V$ is
  the optimal reverse H\"older lifting index of $V$,
  then Theorem \ref{thm-mole} applies to the space $\dot{A}^s_{p,q}(V)=\dot A^s_{p,q}(\rho_{\aveL^r(\mathscr D,V)})$
  and the $\dot{a}^s_{p,q}(V)=\dot a^s_{p,q}(\rho_{\aveL^r(\mathscr D,V)})$-analysis and synthesis molecules.
\end{enumerate}
\end{corollary}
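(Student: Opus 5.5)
The corollary is a direct specialization of Theorem \ref{thm-mole}, so my plan is to verify, in each case, that the space in question is a homogeneous $(d,e,f)$-discretizable space in the sense of Definition \ref{def:seq}. I will identify the triple $(d,e,f)$ explicitly and check that the analysis and synthesis molecules named in the statement are exactly those of Definition \ref{df-a-syan} for that triple. Once this is done, both parts \eqref{thmit:an} and \eqref{thmit:sy} of Theorem \ref{thm-mole} apply verbatim. The associated sequence spaces are $\dot{\mathbf a}=\dot a^s_{p,q}(\rho)$ in the first case and $\dot{\mathbf a}=\dot a^s_{p,q}(V)$ in the second.

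For part (i), I invoke Lemma \ref{lem:seq}\eqref{ex:seq-rho}. It gives discretizability with
\[
d=J^{(u)}+c,\qquad e=\tfrac n2+s+a,\qquad f=J^{(u)}-\tfrac n2-s+b.
\]
Concretely, the three axioms come from the following sources. The inclusion of $\mathscr S_\infty\otimes\fX$ is the embedding corollary after Theorem \ref{thm:phi}. The $\varphi$-transform identity $T_\psi\circ S_\varphi=\mathrm{id}$ with bounded $S_\varphi,T_\psi$ is Theorem \ref{thm:phi}. Boundedness of almost diagonal operators with $D>d$, $E>e$, $F>f$ is Theorem \ref{prop:AD}, and absolute convergence of $\sum_R b_{QR}t_R$ is Remark \ref{rem:abso-rho}. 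Substituting this triple into Definition \ref{df-a-syan} reproduces the conditions listed in Example \ref{ex:mole}(i). Hence the $\dot a^s_{p,q}(\rho)$-analysis and synthesis molecules are precisely the $\dot{\mathbf a}$-molecules of Theorem \ref{thm-mole}.

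For part (ii), I use Lemma \ref{lem:seq}\eqref{ex:seq-V}, with
\[
d=J+\tfrac{\beta-n}{r},\qquad e=\tfrac n2+s,\qquad f=J-\tfrac n2-s+\tfrac{\beta-n}{r}.
\]
The axioms now come from different sources. The identification $\dot A^s_{p,q}(V)=\dot A^s_{p,q}(\rho_{\aveL^r(\mathscr D,V)})$ and its $\varphi$-transform are Theorem \ref{thm:phiV}. This identification is where the restriction $q<r+\varepsilon_V$ for $A=F$ enters, via Theorem \ref{thm:equiF}. The improved almost diagonal bound is Theorem \ref{bdd-AD-ple1}, and absolute convergence is Remark \ref{rem:abso-V}. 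Example \ref{ex:mole}(ii) then records the matching molecule conditions, and Theorem \ref{thm-mole} again gives both claims.

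The only point needing care is bookkeeping rather than analysis. I must check that the hypotheses of Lemma \ref{lem:seq}, namely $r\ge p$, the doubling dimension $\beta$, and the range of $q$, exactly match those of the corollary. I must also confirm that Example \ref{ex:mole}(ii) states $q\in(0,r+\varepsilon_V)$; it appears to state $(0,\varepsilon_V)$. I read that as a typo, and in either case the smaller range is covered. With these checks made, no new estimates are required.
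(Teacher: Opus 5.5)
Your proposal is correct and matches the paper's argument, which simply invokes Lemma \ref{lem:seq} and Example \ref{ex:mole} and then applies Theorem \ref{thm-mole}. Your tracing of the three discretizability axioms is accurate, including the embedding of $\mathscr S_\infty\otimes\fX$, which the paper's proof of Lemma \ref{lem:seq} leaves implicit. You are also right that $(0,\varepsilon_V)$ in Example \ref{ex:mole}(ii) is a typo for $(0,r+\varepsilon_V)$. The molecule conditions there are just Definition \ref{df-a-syan} evaluated at the triple from part (2) of Lemma \ref{lem:seq}, so the full range $q<r+\varepsilon_V$ is covered, not merely the smaller one.
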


\section{Wavelet characterization}

In this section, we first establish the wavelet
characterization of the abstract space $\dot{\mathbf{A}}(\mathbb{R}^n;\fX)$
and, as an application, we give the
wavelet characterization of Besov and Triebel--Lizorkin spaces
introduced in Definition \ref{def:space}.

To begin with, we recall some notation
on the Daubechies wavelet system (see, for example,
\cite{mey92,woj97}). Take a univariate scaling function
$\varphi\in C_{\rm c}^N(\mathbb{R})$ and an associated univariate
wavelet function $\psi\in C_{\rm c}^{N}(\mathbb{R})$ for some
$N\in\mathbb{N}$ and define
$\psi^{(0)}:=\varphi$ and $\psi^{(1)}:=\psi$. For any $\lambda\in \{0,1\}^n$
and $x:=(x_1,\ldots,x_n)\in\mathbb{R}^n$,
let $\Psi^{(\lambda)}(x):=\prod_{i=1}^{n}\psi^{(\lambda_i)}(x_i)$.
Also, assume that, for any $\lambda\in \Lambda_n:=\{0,1\}^n\setminus\{{\bf0}\}$
and $\alpha:=(\alpha_1,\ldots,\alpha_n)\in \mathbb{Z}_+^n$ with $|\alpha|
:=|\alpha_1|+\cdots+|\alpha_n|\le N$,
\begin{align}\label{e1937}
\int_{\mathbb{R}^n}x^\alpha \Psi^{(\lambda)}(x)\,dx=0.
\end{align}
For any $\lambda\in\{0,1\}^n$, $Q\in\mathscr{D}$, and $x\in\mathbb{R}^n$,
define
\begin{align*}
\Psi^{(\lambda)}_Q(x):=|Q|^{-\frac12}\Psi^{(\lambda)}
\Big(\frac{x-x_Q}{\ell(Q)}\Big).
\end{align*}
In many considerations, we only deal with the cancellative wavelets corresponding to $\lambda\in\Lambda_n$, but it is convenient to have the non-cancellative wavelets with $\lambda={\bf 0}$ also included in the definition.

For brevity, let $\Omega_n:=\Lambda_n\times\mathscr{D}$ and,
for any $\omega=(\lambda,Q)\in\Omega_n$,
define $\lambda_\omega:=\lambda$, $Q_\omega:=Q$, and
$\Psi_{\omega}:=\Psi^{(\lambda)}_{Q}$.
It is well known that these functions can be constructed so that, in addition to the properties stated,
$\{\Psi_\omega\}_{\omega\in\Omega_n}$
forms a complete orthonormal basis of $L^2(\mathbb{R}^n)$.
The number $N$ is called the \emph{order} of this wavelet system.

The following proposition is a
consequence of the definitions
of molecules (namely Definition \ref{df-a-syan}) and wavelets and Proposition \ref{le-ana}.

\begin{proposition}\label{prop-wavelet}
Let $\dot{\mathbf{A}}(\mathbb{R}^n;\fX)$
be a $(d,e,f)$-discretizable space for some real parameters
$(d,e,f)$.
Let $N\in\mathbb{N}$ satisfy
\begin{equation}\label{NDau}
N>\max\{e,f\}-\frac n2,
\end{equation}
and let $\Psi^{(\lambda)}_Q$ be Daubechies wavelets of order $N$.
\begin{enumerate}[\rm(i)]
\item\label{it:wave=mole}
The wavelet $\Psi_Q^{(\lambda)}$
is a constant multiple of both an $\dot{\mathbf{a}}$-analysis molecule and
an $\dot{\mathbf{a}}$-synthesis molecule on $Q$
for all $\lambda\in\Lambda_n$ and $Q\in\mathscr D$.

\item\label{it:wave=symole}
If $f-\frac n2<0$, then
$\Psi_Q^{\lambda}$ is a constant multiple of an $\dot{\mathbf a}$-synthesis molecule on~$Q$
for all $\lambda\in\{0,1\}^n$ and $Q\in\mathscr D$.

\item\label{it:f,wave} If $\varphi,\psi$ are Littlewood--Paley
functions satisfying \eqref{eq:LPpair},
then, for any $f\in\dot{\mathbf{A}}(\mathbb{R}^n;\fX)$
and $\omega\in\Omega_n$,
$$
\left\langle f,\Psi_\omega\right\rangle
:=\sum_{R\in\mathscr{Q}}
\left\langle f,\varphi_R\right\rangle
\left\langle\psi_R,\Psi_\omega\right\rangle
$$
is well defined; that is, the series converges absolutely
and its value is independent of the choice of $\varphi$ and $\psi$.
\end{enumerate}
\end{proposition}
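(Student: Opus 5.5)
For \eqref{it:wave=mole}, I will show that a Daubechies wavelet of order $N$ is, up to a constant, a $(K,L,M,N')$-molecule for every $K$ and $M$, with $L=N$ and any $N'$ with $N<N'\le N+1$. The parameters for $K$, $M$ and $L$ come straight from the construction. Since $\Psi^{(\lambda)}$ is compactly supported and bounded, $|\Psi^{(\lambda)}(x)|\lesssim_K(1+|x|)^{-K}$ for every $K$, and after rescaling this is the size condition of Definition \ref{def-mole}\eqref{it:mole-size} on $Q$ with arbitrarily large $K$. The vanishing moments \eqref{e1937} for $\lambda\in\Lambda_n$ and $|\alpha|\le N$ give the cancellation condition \eqref{eq:mole-cancel} with $L=N$.

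Smoothness needs a little more care. Because $\Psi^{(\lambda)}\in C^N_{\rm c}$, its derivatives up to order $N$ are bounded and compactly supported, so they decay like $(1+|x|)^{-M}$ for any $M$. Taking $N'$ with $N<N'\le N+1$, we get $\lfloor\!\lfloor N'\rfloor\!\rfloor=N$ and $(N')^{**}=N'-N$. The Hölder estimate on $\partial^\gamma\Psi$ with $|\gamma|=N$ then needs only boundedness when $|x-y|\ge1$, where the factor $\sup_{|z|\le|x-y|}(1+|x+z|)^{-M}$ is not small but stays comparable to $1$ on the support. For $|x-y|<1$ it needs a modulus of continuity.

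This is the main obstacle. $C^N$ alone does not give Hölder continuity of $\partial^\gamma\Psi$, so I would use instead the molecule with $\lfloor\!\lfloor N'\rfloor\!\rfloor=N-1$, taking $N'=N$. Then $(N')^{**}=1$, and the Lipschitz condition on derivatives of order $N-1$ follows from boundedness of the $N$-th derivatives via the mean value theorem. The localization factor holds because the supremum over $|z|\le|x-y|$ captures the support.

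With this choice, the analysis requirements of Definition \ref{df-a-syan}\eqref{defit:a-an} are $L=N>e-\frac n2$ and $N'=N>f-\frac n2$. The synthesis requirements \eqref{defit:a-sy} are $L=N>f-\frac n2$ and $N'=N>e-\frac n2$. Both pairs follow from \eqref{NDau}. The conditions on $K$ and $M$ are automatic, and the constant multiple comes from normalizing the implicit constants.

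For \eqref{it:wave=symole}, when $f-\frac n2<0$, Remark \ref{rem:noCanc} lets us choose $L\in(f-\frac n2,0)$, so no cancellation is required. The case $\lambda=\mathbf 0$ then goes through by the same size and smoothness argument. Since $\varphi\in C^N_{\rm c}$ as well, the smoothness bound $N>e-\frac n2$ is again available from \eqref{NDau}.

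For \eqref{it:f,wave}, I would apply Proposition \ref{le-ana} with $\Phi$ a constant multiple of $\Psi_\omega$, which is an $\dot{\mathbf a}$-analysis molecule by \eqref{it:wave=mole}. That proposition gives both the absolute convergence and the independence from $\varphi$ and $\psi$ directly, by linearity in $\Phi$.
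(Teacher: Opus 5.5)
Your proposal is correct and matches the paper's argument. Size and smoothness come from $\Psi^{(\lambda)}\in C^N_{\rm c}$, with the molecular smoothness index taken equal to the wavelet order $N$. Since $\lfloor\!\lfloor N\rfloor\!\rfloor=N-1$ and $N^{**}=1$, this only requires a Lipschitz bound on derivatives of order $N-1$, which your mean-value argument supplies. Cancellation with $L=N$ comes from the vanishing moments, Remark~\ref{rem:noCanc} handles $\lambda=\mathbf 0$, and part (iii) is Proposition~\ref{le-ana}. Your detour through $N'\in(N,N+1]$ is unnecessary, but you correctly discard it, and your explicit normalization constant spells out what the paper leaves implicit.
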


Note the difference between cases \eqref{it:wave=mole} and \eqref{it:wave=symole}: The latter makes an additional assumption \eqref{eq:sLarge} and only applies to synthesis molecules, but then allows the full range of $\lambda\in\{0,1\}^n$, including the non-cancellative case $\lambda={\bf 0}$, which is excluded by the assumption that $\lambda\in\Lambda_n$ in case \eqref{it:wave=mole}.

\begin{proof}[Proof of Proposition \ref{prop-wavelet}]
We first consider \eqref{it:wave=mole}.
By Definitions \ref{df-a-syan} and \ref{def-mole}, $\dot{\mathbf{a}}$-analysis and
$\dot{\mathbf{a}}$-synthesis molecules are
$(K,L,M,N)$-molecules with sufficiently large values of the parameters,
as specified in Definition \ref{df-a-syan}.

From $\varphi,\psi \in C_{\rm c}^N(\R)$ and the way that $\Psi_Q^{(\lambda)}$ is formed by scaling and translation, the molecular size condition (Definition \ref{def-mole}\eqref{it:mole-size}) holds with $K$ as large as we want, and the molecular smoothness condition (Definition \ref{def-mole}\eqref{it:mole-smooth} holds with the same $N$ as the order of the wavelets and arbitrarily large $M$. Hence, both $K$ and $M$ (trivially) and $N$ (by assumption \eqref{NDau}) are sufficiently large for both types of molecules.

For $\lambda\in\Lambda_n$, assumption \eqref{e1937} shows that the molecular cancellation condition (Definition \ref{def-mole}\eqref{it:mole-cancel}) also holds with $L=N$, which is sufficiently large for analysis/synthesis molecules (Definition \ref{df-a-syan}) by assumption \eqref{NDau}.

For \eqref{it:wave=symole},
under the additional assumption $f-\frac n2<0$, Remark \ref{rem:noCanc} shows that we can take $L<0$, meaning that no cancellation is needed for synthesis molecules. (This is needed for $\lambda'={\bf 0}$, in which case there is no cancellation.) The fact that $\Psi_Q^{(\lambda)}$ satisfies the other molecular conditions (size and smoothness) with relevant $K,M,N$ is the same as in case \eqref{it:wave=mole}.

Finally, we deal with \eqref{it:f,wave}.
By part \eqref{it:wave=mole}, we know that $\Psi_\omega=\Psi^{(\lambda)}_Q$ (with $\lambda\in\Lambda_n$) is an $\dot{\mathbf{a}}$-analysis molecule. Then the claim of \eqref{it:f,wave} is a direct application of Proposition \ref{le-ana} with $\Phi=\Psi_\omega$.
\end{proof}

Now, we give the wavelet characterization of
$\dot{\mathbf{A}}(\mathbb{R}^n;\fX)$ as follows.

\begin{theorem}\label{thm-wavelet}
Let $\dot{\mathbf{A}}(\mathbb{R}^n;\fX)$
be a $(d,e,f)$-discretizable space for some real parameters
$(d,e,f)$.
If $N\in\mathbb{N}$ satisfies \eqref{NDau}
and $\{\Psi_\omega\}_{\omega\in\Omega_n}$
is a Daubechies wavelet system of order $N$,
then every $f\in\dot{\mathbf{A}}(\mathbb{R}^n;\fX)$ has a series representation
\begin{equation}\label{thm-wavelet-e1}
f=\sum_{\omega\in\Omega_n}
\left\langle f,\Psi_\omega\right\rangle\Psi_\omega
\end{equation}
convergent in $\mathscr{S}_\infty'(\mathbb{R}^n;\fX)$ and
\begin{align}\label{thm-wavelet-e2}
\|f\|_{\dot{\mathbf{A}}(\mathbb{R}^n;\fX)}
&\approx\|f\|_{\dot{\mathbf{A}}(\mathbb{R}^n;\fX)_\mathrm{w}}
:=\sum_{\lambda\in\Lambda_n}
\bNorm{\big\{\langle f,\Psi^{(\lambda)}_Q \rangle
\big\}_{Q\in\mathscr{D}}}{\dot{\mathbf{a}}(\mathbb{R}^n;\fX)},
\end{align}
where the positive equivalence constants are independent of $f$.
\end{theorem}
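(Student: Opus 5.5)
The plan is to reduce everything to the molecular characterization (Theorem \ref{thm-mole}) together with the identity for orthonormal wavelets on $\mathscr S_\infty$. The key steps are (1) a reproducing formula for the wavelet coefficients in terms of $S_\varphi f$, (2) convergence of the wavelet series to $f$ in $\mathscr S_\infty'(\R^n;\fX)$, and (3) the two-sided norm equivalence.

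First, fix Littlewood--Paley functions $\varphi,\psi$ satisfying \eqref{eq:LPpair}. By Proposition \ref{prop-wavelet}\eqref{it:wave=mole}, each $\Psi_\omega$ with $\lambda_\omega\in\Lambda_n$ is a constant multiple of both an $\dot{\mathbf a}$-analysis and an $\dot{\mathbf a}$-synthesis molecule on $Q_\omega$. By Proposition \ref{prop-wavelet}\eqref{it:f,wave}, the pairing $\pair{f}{\Psi_\omega}=\sum_R\pair{f}{\varphi_R}\pair{\psi_R}{\Psi_\omega}$ is well defined.

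The upper bound $\Norm{f}{\dot{\mathbf A}_{\rm w}}\lesssim\Norm{f}{\dot{\mathbf A}}$ is then Theorem \ref{thm-mole}\eqref{thmit:an}, applied separately to each of the finitely many families $\{\Psi^{(\lambda)}_Q\}_{Q}$, $\lambda\in\Lambda_n$, and summed over $\lambda$.

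For the converse bound and for \eqref{thm-wavelet-e1}, set $t^{(\lambda)}_Q:=\pair{f}{\Psi^{(\lambda)}_Q}$. By the first step, $t^{(\lambda)}\in\dot{\mathbf a}(\R^n;\fX)$. Theorem \ref{thm-mole}\eqref{thmit:sy} then gives that $g:=\sum_{\omega}t_{Q_\omega}^{(\lambda_\omega)}\Psi_\omega$ belongs to $\dot{\mathbf A}$, with $\Norm{g}{\dot{\mathbf A}}\lesssim\Norm{f}{\dot{\mathbf A}_{\rm w}}$. The series converges in $\mathscr S'_\infty$; this should be read off from the proof of Theorem \ref{thm-mole}, where $S_\varphi g=\tilde Bt$ is an absolutely convergent series. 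Both claims therefore reduce to showing $g=f$.

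To prove $g=f$, reduce to the scalar case by pairing with $\fx^*\in\fX^*$ and testing against $\eta\in\mathscr S_\infty$. On the $g$ side,
\begin{equation*}
\pair{\pair{g}{\eta}}{\fx^*}=\sum_\omega\sum_R\pair{\pair{f}{\varphi_R}}{\fx^*}\pair{\psi_R}{\Psi_\omega}\pair{\Psi_\omega}{\eta}.
\end{equation*}
The plan is to interchange the sums and use the $L^2$ expansion $\sum_\omega\pair{\psi_R}{\Psi_\omega}\pair{\Psi_\omega}{\eta}=\pair{\psi_R}{\eta}$. This gives $\sum_R\pair{\pair{f}{\varphi_R}}{\fx^*}\pair{\psi_R}{\eta}$, and by Lemma \ref{lem:id} (the Calder\'on identity $T_\psi S_\varphi=\mathrm{id}$ from Definition \ref{def:seq}\eqref{def:seq-it1}) this equals $\pair{\pair{f}{\eta}}{\fx^*}$.

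The main obstacle is justifying that interchange, i.e.\ absolute convergence of the double sum over $(\omega,R)$. I would handle it as in the proof of Proposition \ref{le-ana}. Treat $\{\pair{\Psi_\omega}{\eta}\}$ as a single analysis molecule $\Phi=\eta$; up to rescaling, any $\eta\in\mathscr S_\infty$ is an analysis molecule on $[0,1)^n$. Both $\{\pair{\psi_R}{\Psi_\omega}\}$ and $\{\pair{\Psi_\omega}{\eta}\}$ are almost diagonal by Lemma \ref{le-ad} and Lemma \ref{lem:BHYY:III-3.7}. Lemma \ref{ad-com} then makes the composed kernel $(D,E,F)$-almost diagonal with $D>d$, $E>e$, $F>f$, after perturbing exponents slightly so that $E_1\ne E_2$ and $F_1\ne F_2$. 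Absolute convergence then follows from Definition \ref{def:seq}\eqref{def:seq-it2} applied to $S_\varphi f\in\dot{\mathbf a}$.

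Since $\fx^*$ and $\eta$ are arbitrary, $g=f$. This yields \eqref{thm-wavelet-e1} and $\Norm{f}{\dot{\mathbf A}}\lesssim\Norm{f}{\dot{\mathbf A}_{\rm w}}$.
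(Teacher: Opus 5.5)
Your outline is the paper's own proof. The estimate $\|f\|_{\dot{\mathbf A}(\mathbb R^n;\fX)_{\mathrm w}}\lesssim\|f\|_{\dot{\mathbf A}(\mathbb R^n;\fX)}$ comes from Theorem \ref{thm-mole}\eqref{thmit:an} and the synthesis bound from Theorem \ref{thm-mole}\eqref{thmit:sy}. The wavelet series is identified with $f$ through an absolutely convergent double sum handled as in Proposition \ref{le-ana}, followed by Parseval for $\{\Psi_\omega\}$ and the Calder\'on identity. The step that is not justified as written is your appeal to Lemma \ref{ad-com}, where you only arrange $E_1\neq E_2$ and $F_1\neq F_2$.

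That lemma also requires $E_1+F_2,\,E_2+F_1>\min\{D_1,D_2\}$. Its output has $D=\min\{D_1,D_2\}$, which must exceed $d$ before Definition \ref{def:seq}\eqref{def:seq-it2} can be used.

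\emph{Why this was free in Proposition \ref{le-ana} but not here.} There, the second factor $\{\langle\varphi^{(2)}_Q,\psi^{(1)}_R\rangle\}$ had arbitrarily large parameters. Here both factors $\{\langle\eta,\Psi_\omega\rangle\}$ and $\{\langle\Psi_\omega,\psi_R\rangle\}$ pass through the wavelets. For large $Q_\omega$ only the $C^N$ smoothness of $\Psi_\omega$ is available, so Lemma \ref{lem:BHYY:III-3.7} gives just $E_1,F_2\le\frac n2+N$. You therefore need some $D\in(\max\{d,n\},n+2N)$, i.e.\ $d<n+2N$.

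\emph{This is not an artefact of the lemma.} Take $\ell(R)=1$ and $|x_R|=T\to\infty$. The terms with $\ell(Q_\omega)\approx T$ in $\sum_\omega|\langle\eta,\Psi_\omega\rangle||\langle\Psi_\omega,\psi_R\rangle|$ can only be bounded by $T^{-n-2N}$ from the assumed properties.

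\emph{When your argument is nonetheless complete.} Condition \eqref{NDau} gives only $n+2N>2\max\{e,f\}\ge e+f$, so the argument closes whenever $d\le e+f$. That covers every concrete space in the paper:
in Lemma \ref{lem:seq}\eqref{ex:seq-V} one has $d=e+f$ exactly;
in Lemma \ref{lem:seq}\eqref{ex:seq-rho} one may replace $c$ by $\min\{c,a+b\}$, by comparing $\rho_Q$ and $\rho_R$ through a dyadic cube of side $\approx\ell(Q)+\ell(R)+|x_Q-x_R|$; this changes $d$ but not $e,f$.

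For an abstract $(d,e,f)$-discretizable space you need the extra hypothesis $N>\frac{d-n}{2}$, or else $d\le 2\max\{e,f\}$. The paper's brief ``similarly to the proof of Proposition \ref{le-ana}'' passes over the same point.
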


\begin{proof}
Fix $f\in\dot{\mathbf{A}}(\mathbb{R}^n;\fX)$
and two Littlewood--Paley
functions $\varphi$ and $\psi$
satisfying \eqref{eq:LPpair}.
For any $\lambda\in\Lambda_n$ and $Q\in\mathscr{D}$,
define $t_Q^{(\lambda)}:=\langle f,\Psi_Q^{(\lambda)}\rangle$.
Proposition \ref{prop-wavelet}\eqref{it:f,wave}
shows that $t_Q^{(\lambda)}$ is well defined and
\begin{align*}
t_Q^{(\lambda)}
=\sum_{R\in\mathscr{D}}\langle f,\varphi_R\rangle
\langle \psi_R,\Psi_Q^{(\lambda)}\rangle
=(B^{(\lambda)}\circ S_\varphi f)_Q,
\end{align*}
where $B^{(\lambda)}:=\{\langle\Psi_Q^{(\lambda)},\psi_R\rangle
\}_{Q,R\in\mathscr{D}}$.
From Proposition \ref{prop-wavelet}\eqref{it:wave=mole}
and Lemma \ref{le-ad}\eqref{it:mQ}, we deduce that
$B^{(\lambda)}$ is bounded on $\dot{\mathbf{a}}(\mathbb{R}^n;\fX)$.
Combined with Definition \ref{def:seq}\eqref{def:seq-it1}, this implies that
\begin{align*}
\bNorm{\big\{t_Q^{(\lambda)}\big\}_{Q\in\mathscr{D}}}
{\dot{\mathbf{a}}(\mathbb{R}^n;\fX)}
&=\bNorm{B^{(\lambda)}\circ S_\varphi f}{\dot{\mathbf{a}}(\mathbb{R}^n;\fX)}
\lesssim\|f\|_{\dot{\mathbf{A}}(\mathbb{R}^n;\fX)}.
\end{align*}
Summing over the finitely many $\lambda\in\Lambda_n$, this implies ``$\gtrsim$'' in \eqref{thm-wavelet-e2}.

Conversely, fix $\lambda\in\Lambda_n$, and suppose that
$\{t_Q^{(\lambda)}\}_{Q\in\mathscr{D}}\in\dot{\mathbf{a}}(\mathbb{R}^n;\fX)$.
Therefore, from Proposition \ref{prop-wavelet}\eqref{it:wave=mole} and Theorem
\ref{thm-mole}\eqref{thmit:sy},
it follows that $\sum_{Q\in\mathscr{D}}t_Q^{(\lambda)}\Psi_Q^{(\lambda)}$
converges in $\mathscr{S}_\infty'(\mathbb{R}^n;\fX)$
and
\begin{align}\label{thm-wavelet-e3}
\BNorm{\sum_{Q\in\mathscr{D}}t_Q^{(\lambda)}
\Psi_Q^{(\lambda)}}{\dot{\mathbf{A}}(\mathbb{R}^n;\fX)}
\lesssim\bNorm{\big\{t_Q^{(\lambda)}\big\}_{Q\in\mathscr{D}}}
{\dot{\mathbf{a}}(\mathbb{R}^n;\fX)}.
\end{align}
Moreover, similarly to the proof of Proposition \ref{le-ana},
we conclude that, for any $\phi\in\mathscr{S}_\infty(\mathbb{R}^n)$, the series
\begin{align*}
\sum_{Q,R\in\mathscr{D}}\langle f,\varphi_R\rangle
\langle\psi_R,\Psi_Q^{(\lambda)}\rangle
\langle\Psi_Q^{(\lambda)},\phi\rangle
\end{align*}
is absolutely convergent.
Together with the fact that $\{\Psi_\omega\}_{\omega\in\Omega_n}$
is an orthonormal basis of $L^2(\mathbb{R}^n)$ and
the assumed Littlewood--Paley property \eqref{eq:LPpair},
this further implies
\begin{align*}
\sum_{\omega\in\Omega_n}\langle f,\Psi_\omega
\rangle\langle\Psi_\omega,\phi\rangle
&=\sum_{\lambda\in\Lambda_n}
\sum_{Q,R\in\mathscr{D}}\langle f,\varphi_R\rangle
\langle\psi_R,\Psi_Q^{(\lambda)}\rangle
\langle\Psi_Q^{(\lambda)},\phi\rangle\\
&=\sum_{R\in\mathscr{D}}\langle f,\varphi_R\rangle
\sum_{\omega\in\Omega_n}\langle \psi_R,\Psi_\omega\rangle
\langle \Psi_\omega,\phi\rangle\\
&=\sum_{R\in\mathscr{D}}\langle f,\varphi_R\rangle
\langle\psi_R,\phi\rangle=\langle f,\phi\rangle.
\end{align*}
That is, \eqref{thm-wavelet-e1} holds in $\mathscr S_\infty'(\R^n;\fX)$ and
\begin{align*}
\|f\|_{\dot{\mathbf{A}}(\mathbb{R}^n;\fX)}
&=\BNorm{\sum_{\omega\in\Omega_n}
\langle f,\varphi_R\rangle
\langle\psi_R,\phi\rangle}
{\dot{\mathbf{A}}(\mathbb{R}^n;\fX)}
\lesssim\sum_{\lambda\in\Lambda_n}
\BNorm{\sum_{Q\in\mathscr{D}}t_Q^{(\lambda)}
\Psi_Q^{(\lambda)}}{\dot{\mathbf{A}}(\mathbb{R}^n;\fX)}\\
&\lesssim\sum_{\lambda\in\Lambda_n}
\bNorm{\big\{t_Q^{(\lambda)}\big\}_{Q\in\mathscr{D}}}
{\dot{\mathbf{a}}(\mathbb{R}^n;\fX)}\quad\text{by \eqref{thm-wavelet-e3}}\\
&=\|f\|_{\dot{\mathbf{A}}(\mathbb{R}^n;\fX)_{\rm w}}.
\end{align*}
This is the desired upper estimate of \eqref{thm-wavelet-e2}
and hence finishes the proof of Theorem \ref{thm-wavelet}.
\end{proof}

Combining Lemma \ref{lem:seq} and Example \ref{ex:mole},
Theorem \ref{thm-wavelet} yields the
following wavelet characterizations of
the function spaces introduced in Definition \ref{def:space};
we omit the details.

\begin{corollary}\label{cor:wavelet}
Let $p\in(0,\infty)$, $q\in(0,\infty]$,
 and $s\in\mathbb{R}$.
\begin{enumerate}[{\rm(i)}]
  \item Let $\rho=\{\rho_{Q}\}_{Q\in\mathscr{D}}$
  be a family of $(u,a,b,c)$-norms with
  $u\in(0,1]$ and $a,b,c\in[0,\infty)$.
  If $N\in\mathbb{N}$ satisfies
  \begin{align}\label{eq:N-wavelet}
  N>\max\big\{s+a,J^{(u)}-n-s+b\big\}
  \end{align}
  with $J^{(u)}$ as in \eqref{eq:Jr},
  then Proposition \ref{prop-wavelet} and Theorem \ref{thm-wavelet} apply to $\dot{A}^s_{p,q}(\rho)$ in place of $\dot{\mathbf{A}}(\R^n;\fX)$ with condition $f-\frac{n}{2}<0$ of Proposition \ref{prop-wavelet}\eqref{it:wave=symole} replaced by the $\dot A^s_{p,q}(\rho)$ case of \eqref{eq:sLarge}.

  \item Let $V\in\A_r$ with $r\in[p,\infty)$ and the doubling
  dimension $\beta\in[n,\infty)$, and let $q\in(0,r+\varepsilon_V)$
  when $A=F$, where $\varepsilon_V$ is
  the optimal reverse H\"older lifting index of $V$. If
  \begin{align}\label{eq:N-wavelet-V}
  N>\max\big\{s,J-n-s+\frac{\beta-n}{r}\big\}
  \end{align}
  with $J$ as in \eqref{eq:J},
  then Proposition \ref{prop-wavelet} and Theorem \ref{thm-wavelet} apply to the space $\dot{A}^s_{p,q}(V)=\dot A^s_{p,q}(\rho_{\aveL^r(\mathscr D,V)})$ in place of $\dot{\mathbf{A}}(\R^n;\fX)$ with condition $f-\frac{n}{2}<0$ of Proposition \ref{prop-wavelet}\eqref{it:wave=symole} replaced by the $\dot A^s_{p,q}(V)$ case of \eqref{eq:sLarge}.
\end{enumerate}
\end{corollary}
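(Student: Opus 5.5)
The plan is to invoke Theorem \ref{thm-wavelet} and Proposition \ref{prop-wavelet} with $\dot{\mathbf A}(\R^n;\fX)$ taken to be $\dot A^s_{p,q}(\rho)$ or $\dot A^s_{p,q}(V)$. Lemma \ref{lem:seq} supplies the discretizability and the parameters $(d,e,f)$. After that, the only task is to convert the abstract conditions \eqref{NDau} and $f-\frac n2<0$ into concrete conditions on $s$ and the weight parameters.

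\emph{Case (i).} By Lemma \ref{lem:seq}\eqref{ex:seq-rho}, $\dot A^s_{p,q}(\rho)$ is $(d,e,f)$-discretizable with
\[
d=J^{(u)}+c,\qquad e=\tfrac n2+s+a,\qquad f=J^{(u)}-\tfrac n2-s+b.
\]
Then $\max\{e,f\}-\frac n2=\max\{s+a,\,J^{(u)}-n-s+b\}$, so \eqref{NDau} is exactly \eqref{eq:N-wavelet}. Likewise $f-\frac n2<0$ is equivalent to $s>J^{(u)}-n+b$, which is the first line of \eqref{eq:sLarge}, as already noted in Remark \ref{rem:noCanc}. With these identifications, Proposition \ref{prop-wavelet} and Theorem \ref{thm-wavelet} apply verbatim.

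\emph{Case (ii).} By Lemma \ref{lem:seq}\eqref{ex:seq-V}, under $V\in\A_r$ with $r\ge p$, doubling dimension $\beta$, and $q<r+\eps_V$ in the $F$ case, we have $\dot A^s_{p,q}(V)=\dot A^s_{p,q}(\rho_{\aveL^r(\mathscr D,V)})$. This space is $(d,e,f)$-discretizable with
\[
d=J+\tfrac{\beta-n}{r},\qquad e=\tfrac n2+s,\qquad f=J-\tfrac n2-s+\tfrac{\beta-n}{r}.
\]
Then $\max\{e,f\}-\frac n2=\max\{s,\,J-n-s+\frac{\beta-n}r\}$, which matches \eqref{eq:N-wavelet-V}. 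Also $f-\frac n2<0$ is equivalent to $s>J-n+\frac{\beta-n}r$, the second line of \eqref{eq:sLarge}. The coincidence of the two spaces means the wavelet norm can be computed in either sequence space. Moreover $\|\cdot\|_{\dot a^s_{p,q}(V)}\approx\|\cdot\|_{\dot a^s_{p,q}(\rho_{\aveL^r})}$ by Corollary \ref{cor:equib} and Theorem \ref{thm:equiF}.

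No step is a real obstacle. The one point to check carefully is that the $\dot{\mathbf a}$ appearing in Definition \ref{def:seq} is the sequence space $\dot a^s_{p,q}(\rho)$ (resp.\ $\dot a^s_{p,q}(V)$), with absolute convergence supplied by Remarks \ref{rem:abso-rho} and \ref{rem:abso-V}. This is exactly what Lemma \ref{lem:seq} records, so the verification is bookkeeping.
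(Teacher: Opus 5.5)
Your proposal is correct and matches the paper's (omitted) argument: invoke Lemma \ref{lem:seq} to get the discretizability parameters $(d,e,f)$, then translate \eqref{NDau} and $f-\frac n2<0$ into \eqref{eq:N-wavelet}, \eqref{eq:N-wavelet-V}, and the two cases of \eqref{eq:sLarge}, exactly as you computed. The paper also points to Example \ref{ex:mole}, but that example is the same parameter bookkeeping you carry out directly.
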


\section{Calder\'on--Zygmund operators}

In this section, we study the boundedness of
Calder\'on--Zygmund operators and the related $T(1)$ theorem.
First, we give a general extension
criterion of a given operator $T\in\mathcal{L}(\mathscr{S},\mathscr{S}')$
in Subsection \ref{sec:CZO-1}.
Then we establish the $T(1)$ theorem
of Calder\'on--Zygmund operators in Subsection \ref{sec:CZO-2}.
Finally, in Subsection \ref{sec:CZO-3},
we present some concrete examples of Calder\'on--Zygmund operators
and corresponding $T(1)$ theorems.

\subsection{Extension of weakly defined operators}\label{sec:CZO-1}

In what follows, we use the \emph{symbol} $C_{\mathrm{c}}^\infty$ to denote
the set of all functions $f\in C^\infty$ on $\mathbb R^n$ with compact support.
We need the following definition of atoms. As in the case of molecules, despite dealing with $\fX$-valued distributions, our atoms will be scalar-valued, and they be equipped with $\fX$-valued coefficients when decomposing $\fX$-valued distributions.

\begin{definition}
Let $L,N\in(0,\infty)$.
A function $a_Q\in C_{\mathrm{c}}^\infty$ is called an \emph{$(L,N)$-atom} on a cube $Q$ if
\begin{enumerate}[\rm(i)]
\item $\operatorname{supp}a_Q\subset3Q$,
\item $\int_{\mathbb{R}^n}x^\gamma a_Q(x)\,dx=0$
for any $\gamma\in\mathbb{Z}_+^n$ with $|\gamma|\leq L$,
\item $|D^\gamma a_Q(x)|\leq|Q|^{-\frac12-\frac{|\gamma|}{n}}$
for any $x\in\mathbb{R}^n$ and $\gamma\in\mathbb{Z}_+^n$ with $|\gamma|\leq N$.
\end{enumerate}
\end{definition}

Furthermore, denote by $I_\fX$ the identity operator
on $\fX$. Then, for any linear operator $T$ on a \emph{space} of scalar-valued functions $D$ and any
$f:=\sum_{k=1}^{N}f_k\fx_k\in D\otimes\fX$,
we define
\begin{align*}
(T\otimes I_\fX)(f):=\sum_{k=1}^{N}T(f_k)\fx_k.
\end{align*}

Then we have the following extension criterion
for general linear operators.

\begin{proposition}\label{ext}
Let $\dot{\mathbf{A}}(\mathbb{R}^n;\fX)$
be a homogeneous $(d,e,f)$-discretizable space with some real
parameters $(d,e,f)$, and let $L,N\in(0,\infty)$.
\begin{enumerate}[\rm(i)]
\item\label{it:ext1} If $T\in\mathcal L(\mathscr S,\mathscr S')$
maps $(L,N)$-atoms to $\dot{\mathbf{a}}$-synthesis molecules,
then there exists an operator
$\widetilde T\in\mathcal L(\dot{\mathbf{A}}(\mathbb{R}^n;\fX))$
that agrees with $T\otimes I_{\fX}$ on $\mathscr{S}_\infty\otimes\fX$.
\item\label{it:ext2} If, in addition, there exists an operator $T_2\in\mathcal L(L^2)$
that agrees with $T$ on $\mathscr S$,
then $\widetilde T$ agrees with $T\otimes I_\fX$ on
$(\mathscr S\otimes\fX)\cap\dot{\mathbf{A}}(\mathbb{R}^n;\fX)$.
\end{enumerate}
\end{proposition}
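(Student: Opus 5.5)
The plan is to follow the scheme of the matrix-weighted extension results (e.g.\ \cite{BHYY:III}) and define $\widetilde T$ through the $\varphi$-transform and an atomic decomposition of the Littlewood--Paley building blocks. Fix Littlewood--Paley functions $\varphi,\psi$ satisfying \eqref{eq:LPpair}. The first step is to write $\psi_R$ as a convergent combination of $(L,N)$-atoms. We may choose $\psi$ with compact support (or, following the standard trick, decompose $\psi_R=\sum_{P}c_{RP}a_P$ with $a_P$ atoms and $\{c_{RP}\}$ almost diagonal with arbitrarily good parameters). The simplest route is to use that $\psi_R$ is a molecule with arbitrarily large parameters and to decompose it into atoms $a_{P}$ on cubes $P$ with $\ell(P)=\ell(R)$, with coefficients decaying rapidly in $|x_P-x_R|/\ell(R)$. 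Then $T\psi_R=\sum_P c_{RP}Ta_P$, and each $Ta_P$ is by hypothesis an $\dot{\mathbf a}$-synthesis molecule on $P$.

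Next, define for $f\in\dot{\mathbf A}(\R^n;\fX)$
\begin{equation*}
\widetilde Tf:=\sum_{R\in\mathscr D}\pair{f}{\varphi_R}\,T\psi_R
=\sum_{P\in\mathscr D}\Big(\sum_{R}c_{RP}(S_\varphi f)_R\Big)Ta_P .
\end{equation*}
The inner matrix $\{c_{RP}\}$ is almost diagonal with parameters as large as we like, so by Definition \ref{def:seq}\eqref{def:seq-it2} the inner series converges absolutely and $\{\sum_R c_{RP}(S_\varphi f)_R\}_P\in\dot{\mathbf a}$ with norm $\lesssim\Norm{S_\varphi f}{\dot{\mathbf a}}\lesssim\Norm{f}{\dot{\mathbf A}}$. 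Theorem \ref{thm-mole}\eqref{thmit:sy} applied to the synthesis molecules $\{Ta_P\}$ then gives convergence in $\mathscr S_\infty'(\R^n;\fX)$ and $\Norm{\widetilde Tf}{\dot{\mathbf A}}\lesssim\Norm{f}{\dot{\mathbf A}}$. A point to check is that the rearrangement of the double sum is legitimate; I would justify this by pairing with $\phi\in\mathscr S_\infty$, using Lemma \ref{le-ad}/Lemma \ref{ad-com} to get absolute convergence as in the proof of Proposition \ref{le-ana}.

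Then verify that $\widetilde T=T\otimes I_\fX$ on $\mathscr S_\infty\otimes\fX$. By linearity it suffices to treat $g\fx$ with $g\in\mathscr S_\infty$. Here $g=\sum_R\pair{g}{\varphi_R}\psi_R$ by Lemma \ref{lem:id}, and this series actually converges in $\mathscr S$ (rapidly decaying coefficients), so continuity of $T:\mathscr S\to\mathscr S'$ gives $Tg=\sum_R\pair{g}{\varphi_R}T\psi_R$. Multiplying by $\fx$ yields the claim. This convergence in $\mathscr S$ topology, rather than merely in $\mathscr S_\infty'$, is the main technical obstacle; I would handle it by estimating Schwartz seminorms of the tails using the decay of $\pair{g}{\varphi_R}$ in both $\ell(R)\to0$ and $\ell(R)\to\infty$ together with the cancellation of $g$.

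For \eqref{it:ext2}, take $g\fx$ with $g\in\mathscr S$ and $g\fx\in\dot{\mathbf A}$. The same reproducing identity holds in $L^2$, since $\sum_R\pair{g}{\varphi_R}\psi_R=g$ in $L^2$ by the Calder\'on formula and $L^2$ boundedness of the frame operators. Boundedness of $T_2$ then gives $T g=T_2g=\sum_R\pair{g}{\varphi_R}T_2\psi_R$ in $L^2$, hence in $\mathscr S'$, and in particular in $\mathscr S_\infty'$. This matches $\widetilde T(g\fx)$ coefficientwise.
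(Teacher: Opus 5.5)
The overall strategy is right and is the paper's: decompose each $\psi_R$ into $(L,N)$-atoms on cubes $P$ with $\ell(P)=\ell(R)$ and rapidly decaying coefficients, use the hypothesis to turn $T$ of each atom into an $\dot{\mathbf a}$-synthesis molecule, convert this into almost diagonality, and identify $\widetilde T$ with $T$ through the reproducing formula.

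\textbf{The gap.} Your regrouping $\widetilde Tf=\sum_P\bigl(\sum_R c_{RP}(S_\varphi f)_R\bigr)Ta_P$ does not work. In the decomposition you are invoking (the one from \cite[Proposition 6.5]{BHYY:III}, which the paper uses), the atoms are pieces of $\psi_R$ cut off near $P$ and corrected for moments. They therefore depend on both $P$ and $R$: $\psi_R=\sum_P b_{PR}a_P^R$. There is no single family $\{a_P\}_P$ to regroup against. Consequently you cannot apply Definition~\ref{def:seq}\eqref{def:seq-it2} to the transposed matrix $\{c_{RP}\}$, nor Theorem~\ref{thm-mole}\eqref{thmit:sy} to ``the synthesis molecules $\{Ta_P\}$''. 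Your other option, choosing $\psi$ compactly supported, is impossible: $\hat\psi$ is compactly supported, so a nonzero $\psi$ cannot be. The atomic decomposition is genuinely needed.

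\textbf{The repair.} What is missing is an estimate uniform in $R$. The paper gets it by pairing:
\[
\pair{\varphi_Q}{T\psi_R}=\sum_P\pair{\varphi_Q}{T(a_P^R)}\,b_{PR}.
\]
Lemma~\ref{le-ad}\eqref{it:bQ} gives $|\pair{\varphi_Q}{T(a_P^R)}|\lesssim B_{-E,-F,-D}(Q,P)$ with $D>\max\{n,d\}$, $E>e$, $F>f$, uniformly in $R$. Lemma~\ref{ad-com} then makes $B_T:=\{\pair{\varphi_Q}{T\psi_R}\}_{Q,R}$ a $(D,E,F)$-almost diagonal matrix, and one sets $\widetilde T:=T_\psi\circ B_T\circ S_\varphi$.

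Alternatively, you can keep your definition. Check that $T\psi_R=\sum_P b_{PR}T(a_P^R)$ is a fixed constant multiple of a synthesis molecule on $R$: a rapidly decaying combination of molecules on same-size neighbours of $R$ is a molecule on $R$, by Peetre's inequality. Then apply Theorem~\ref{thm-mole}\eqref{thmit:sy} to $\{T\psi_R\}_R$ directly; this yields the same operator.

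\textbf{The rest of your plan.} With either fix, the remaining steps are sound.
\begin{itemize}
\item The $\mathscr S$-convergence of $\sum_R\pair{g}{\varphi_R}\psi_R$ for $g\in\mathscr S_\infty$, combined with continuity of $T$, is what lies behind \cite[Lemma 6.1]{BHYY:III}. The paper cites that lemma together with $T_\psi\circ S_\varphi=I$ from \cite[Lemma 2.1]{YY:10}. Your seminorm sketch for this convergence does go through.
\item Your $L^2$ frame argument for \eqref{it:ext2} plays the role of \cite[(6.3)]{BHYY:III}.
\item One small point in \eqref{it:ext2}: do not reduce to elementary tensors $g\fx\in\dot{\mathbf A}$. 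The summands of a representation of an element of $(\mathscr S\otimes\fX)\cap\dot{\mathbf A}$ need not lie in $\dot{\mathbf A}$. The reduction is also unnecessary: your argument, like the paper's identity $S_\varphi\circ T=B_T\circ S_\varphi$ on $\mathscr S\otimes\fX$, is linear and applies to the whole finite sum.
\end{itemize}
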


\begin{proof}
First, we prove \eqref{it:ext1}.
Let $\varphi,\psi$ be two Littlewood--Paley functions that satisfy \eqref{eq:LPpair}. Then, from the proof of
\cite[Proposition 6.5]{BHYY:III}, we deduce that,
for any $R\in\mathscr{D}$, there exists
a sequence $\{a_P^R\}_{\{P\in\mathscr{D}:\ell(P)=\ell(R)\}}$,
where each $a_P^R$ is an $(L,N)$-atom on $P$, such that, for any $Q\in\mathscr{D}$,
\begin{equation}\label{eq:phiTpsi}
\left\langle\varphi_Q,T\psi_R\right\rangle
=\sum_{P\in\mathscr{D}}\big\langle\varphi_Q,T\big(a_P^R\big)\big\rangle b_{PR},
\end{equation}
where, for any $P,R\in\mathscr{Q}$,
$$
b_{PR}:=
\begin{cases}
\displaystyle{\Big[1+\frac{|x_R-x_P|}{\ell(R)}\Big]^{-D'}}&\text{if}\ \ell(P)=\ell(R),\\
0&\text{otherwise}
\end{cases}
$$
and $D'$ can be arbitrarily large. In particular, $(b_{PR})_{PR\in\mathscr D}$ is $(D',E',F')$-almost diagonal for any $D',E',F'$.
Moreover,
\begin{itemize}
  \item each $a^R_P$ is an $(L,N)$-atom on $P$, as we already observed,
  \item hence each $T(a^P_R)$ is an $\dot{\mathbf{a}}$-synthesis molecules by our assumption on the operator $T$,
  \item thus, by Lemma \ref{le-ad}\eqref{it:bQ},
\begin{equation*}
  \abs{\pair{\varphi_Q}{T\big(a_P^R\big)}}\lesssim B_{-E,-F,-D}(Q,P),
\end{equation*}
where the parameters $(D,E,F)$ satisfy
$D>\max\{n,d\}$, $E>e$, and $F>f$.
\end{itemize}
Substituting into \eqref{eq:phiTpsi}, we find that
\begin{equation*}
\begin{split}
  \abs{\left\langle\varphi_Q,T\psi_R\right\rangle}
  &\lesssim\sum_{P\in\mathscr D} B_{-E,-F,-D}(Q,P)B_{-E',-F',-D'}(P,R) \\
  &\lesssim B_{-E,-F,-D}(Q,R)\qquad\text{by Lemma \ref{ad-com}},
\end{split}
\end{equation*}
where the assumptions of Lemma \ref{ad-com} are easily satisfied, since $D>n$, and $D',E',F'$ can be chosen as large as needed.
Thus $B_T:=\{\langle\varphi_Q,T\psi_R\rangle
\}_{Q,R\in\mathscr{D}}$ is also $(D,E,F)$-almost diagonal,
and hence, by Definition \ref{def:seq}\eqref{def:seq-it2},
it is bounded on $\dot{\mathbf{a}}(\mathbb{R}^n;\fX)$.

Together with Definition \ref{def:seq}\eqref{def:seq-it1}, this
 implies that
 \begin{equation}\label{eq:tildeT=TBS}
  \widetilde{T}:=T_\psi\circ B_T\circ S_\varphi\in\mathcal{L}(\dot{\mathbf{A}}(\mathbb{R}^n;\fX)).
\end{equation}
Moreover,
\begin{equation}\label{eq:BS=ST}
\begin{split}
  B_T\circ S_\varphi=S_\varphi\circ T\quad &\text{on}\quad\mathscr{S}_\infty
  \quad\text{by \cite[Lemma 6.1]{BHYY:III}, and} \\
  &\text{on}\quad\mathscr{S}_\infty\otimes\fX
  \quad\text{by linearity}.
\end{split}
\end{equation}
On the other hand, we know from \cite[Lemma 2.1]{YY:10} that
\begin{equation}\label{eq:TS=I}
  T_\psi\circ S_\varphi=I\quad\text{on}\quad \mathscr S_\infty'.
\end{equation}
Combining these identities, it follows that, on $\mathscr{S}_\infty\otimes\fX$,
\begin{equation}\label{ext-e1}
\begin{split}
  \widetilde{T}
  &=T_\psi\circ B_T\circ S_\varphi\quad\text{by \eqref{eq:tildeT=TBS}} \\
  &=T_\psi\circ S_\varphi\circ T\quad\text{by \eqref{eq:BS=ST}} \\
  &=T\quad\text{by \eqref{eq:TS=I}}.
\end{split}
\end{equation}
This finishes the proof of \eqref{it:ext1}.

Moreover, applying the assumption $T_2\in\mathcal{L}(L^2)$
and \cite[(6.3)]{BHYY:III}, we conclude that
$S_\varphi\circ T=B_T\circ S_\varphi$ on $\mathscr{S}$,
which implies $S_\varphi\circ T
=B_T\otimes S_\varphi$ on $\mathscr{S}\otimes\fX$.
Consequently, from Definition \ref{def:seq}\eqref{def:seq-it1}, we further infer that
\eqref{ext-e1} also holds
on $(\mathscr{S}\otimes\fX)\cap\dot{\mathbf{A}}(\mathbb{R}^n;\fX)$,
which then completes the proof of \eqref{it:ext2} and hence Proposition
\ref{ext}.
\end{proof}

\subsection{$T(1)$ theorem for Calder\'on--Zygmund operators}\label{sec:CZO-2}

We now turn to the actual discussion of Calder\'on--Zygmund operators.
The following classical definitions are standard.
Let $\mathcal D :=C_{\mathrm{c}}^\infty $ with the usual inductive limit topology.
We denote by $\mathcal D' $ the space of
all continuous linear functionals on $\mathcal D$,
equipped with the weak-$*$ topology.
If $T\in\mathcal L(\mathscr S,\mathscr S')$,
then, by the well-known Schwartz kernel theorem,
we find that there exists $\mathcal K\in\mathscr S'(\mathbb R^n\times\mathbb R^n)$
such that
$\langle T\varphi,\psi\rangle
=\langle\mathcal K,\varphi\otimes\psi\rangle$ for all $\varphi,\psi\in\mathscr S$.
This $\mathcal K$ is called the \emph{Schwartz kernel} of $T$.

The \emph{adjoint operator} $T^*\in\mathcal L(\mathscr S,\mathscr S')$
  of $T$ is defined by setting
$\langle T\varphi,\psi\rangle=\langle T^*\psi,\varphi\rangle$
for all $\varphi,\psi\in\mathscr S$.
Then, from the definition, we easily infer that,
if $\mathcal K\in\mathscr S'(\mathbb R^n\times\mathbb R^n)$
is the Schwartz kernel of $T$, then
the Schwartz kernel $\mathcal{K}^*$ of $T^*$
satisfies $\langle K,\varphi\otimes\psi\rangle=\langle K^*,\psi\otimes\varphi\rangle$
for all $\varphi,\psi\in\mathscr{S}$.

Inspired by \cite{ftw88,Torres:91},
to establish the boundedness of
Calder\'on--Zygmund operators on Besov--Triebel--Lizorkin spaces,
we use the following kernel estimates from \cite[Definition 6.11]{BHYY:III}.

\begin{definition}\label{CZK}
Let $ E,F\in\mathbb{R}$, $T\in\mathcal L(\mathscr S,\mathscr S')$,
and $\mathcal K\in\mathscr S'(\mathbb R^n\times\mathbb R^n)$ be its Schwartz kernel.
We say that $T\in\operatorname{CZK}^0(E;F)$
if the restriction of $\mathcal K$ to
$\{(x,y)\in\mathbb R^n\times\mathbb R^n:\ x\neq y\}$
is a continuous function such that all derivatives below
exist as continuous functions and there exists a positive constant $C$ such that,
for any $\alpha\in\mathbb{Z}_+^n$ with $|\alpha|\leq\lfloor\!\lfloor E\rfloor\!\rfloor_+$
and for any $x,y\in\mathbb R^n$ with $x\neq y$,
\begin{equation}\label{CZK0}
|\partial_x^\alpha\mathcal K(x,y)|
\leq C|x-y|^{-n-|\alpha|},
\end{equation}
that, for any $\alpha\in\mathbb{Z}_+^n$ with $|\alpha|=\lfloor\!\lfloor E\rfloor\!\rfloor$
and for any $x,y,u\in\mathbb R^n$ with $|u|<\frac12|x-y|$,
\begin{equation}\label{CZKx}
|\partial_x^\alpha\mathcal K(x,y)-\partial_x^\alpha\mathcal K(x+u,y)|
\leq C|u|^{E^{**}}|x-y|^{-n-E},
\end{equation}
and that, for any $\alpha,\beta\in\mathbb{Z}_+^n$ with $|\alpha|\leq\lfloor\!\lfloor E\rfloor\!\rfloor_+$
and $|\beta|=\lfloor\!\lfloor F-|\alpha|\rfloor\!\rfloor$
and for any $x,y,v\in\mathbb R^n$ with $|v|<\frac12|x-y|$,
\begin{align}\label{CZKy}
&\left|\partial_x^\alpha\partial_y^\beta\mathcal K(x,y)
-\partial_x^\alpha\partial_y^\beta\mathcal K(x,y+v)\right|
\leq C|v|^{(F-|\alpha|)^{**}}|x-y|^{-n-|\alpha|-(F-|\alpha|)},
\end{align}
where $\lfloor\!\lfloor E\rfloor\!\rfloor$ and $E^{**}$ are as in \eqref{ceil} and \eqref{r**}.

We say that $T\in\operatorname{CZK}^1(E;F)$
if $T\in\operatorname{CZK}^0(E;F)$ and, in addition,
there exists a positive constant $C$ such that,
for any $\alpha,\beta\in\mathbb{Z}_+^n$ with
$|\alpha|=\lfloor\!\lfloor E\rfloor\!\rfloor$ and $|\beta|=\lfloor\!\lfloor F-E\rfloor\!\rfloor$
and for any $x,y,u,v\in\mathbb R^n$ with $|u|+|v|<\frac12|x-y|$,
\begin{align}\label{CZKxy}
\begin{split}
&\left|\partial_x^\alpha\partial_y^\beta\mathcal K(x,y)
-\partial_x^\alpha\partial_y^\beta\mathcal K(x+u,y)
-\partial_x^\alpha\partial_y^\beta\mathcal K(x,y+v)
+\partial_x^\alpha\partial_y^\beta\mathcal K(x+u,y+v)\right|\\
&\quad\leq C|u|^{E^{**}}|v|^{(F-E)^{**}}
|x-y|^{-n-E-(F-E)}.
\end{split}
\end{align}
\end{definition}

\begin{remark}
Let $T\in\mathcal L(\mathscr S,\mathscr S')$.
We have the following observations.
\begin{enumerate}[{\rm(i)}]
  \item If $F\leq 0<E$, then the conditions
  \eqref{CZKy} and \eqref{CZKxy} are both void and hence,
  for any $\sigma\in\{0,1\}$,
  $\operatorname{CZK}^\sigma(E;F)=\operatorname{CZK}^0(E;0)$.
  Hence, the set $\operatorname{CZK}^0(E;0)$
  means the set of operators with Calder\'on--Zygmund estimates of order
$E$ in the $x$ variable, with no assumptions on the $y$ variable.
Furthermore, if $E\in(0,1]$, then the operator
$T\in\operatorname{CZK}^0(E;0)$ satisfying
$T^*\in\operatorname{CZO}^0(F;0)$ is precisely the classical
Calder\'on--Zygmund operator presented in \cite[Definition 5.11]{d01}.
  \item For more examples of operators in $\operatorname{CZK}^\sigma(E;F)$,
  we refer to \cite[Remark 6.13]{BHYY:III}.
\end{enumerate}
\end{remark}

To study the boundedness
of Calder\'on--Zygmund type operators
on $\dot{\mathbf{A}}(\mathbb{R}^n;\fX)$,
we extend the domain of operators
introduced in Definition \ref{CZK}
to include polynomials (which are not in $\mathscr{S})$
in the following way:

\begin{remark}\label{rem:Tx-gamma-def}
Let $T\in\operatorname{CZK}^\sigma(E;F)$ with
$\sigma\in\{0,1\}$ and let $E,F\in\mathbb{R}$.
By \cite[Lemma 6.9]{BHYY:III}
(see also \cite[Lemma 2.2.12]{Torres:91}),
we find that, if $E\in(0,\infty)$, then
$T(x^\gamma)$ can be defined for any
$\gamma\in\mathbb{Z}_+^n$ such that
$|\gamma|\le\lfloor\!\lfloor E\rfloor\!\rfloor$;
if $F\in(0,\infty)$, then
$T^*(x^\theta)$ can be defined for any
$\theta\in\mathbb{Z}_+^n$ such that
$|\gamma|\le\lfloor\!\lfloor F\rfloor\!\rfloor$.
\end{remark}

Then we introduce the
Calder\'on--Zygmund conditions
following \cite[Definition 6.17]{BHYY:III}.

\begin{definition}\label{def:cz}
Let $\sigma\in\{0, 1\}$, $ E,F\in\mathbb{R}$, and $G,H\in\mathbb{Z}$
satisfy $G\le \lfloor\!\lfloor E\rfloor\!\rfloor$ and
$H\le \lfloor\!\lfloor F\rfloor\!\rfloor$.
We say that $T\in\operatorname{CZO}^\sigma(E,F,G,H)$
if $T\in\mathcal L(\mathscr S,\mathscr S')$ and
\begin{enumerate}[\rm(i)]
\item $T$ satisfies the \emph{weak boundedness property}
(which is denoted by $T\in\operatorname{WBP}$); that is,
for any bounded subset $\mathcal B$ of $\mathcal D$,
there exists a positive constant $C=C(\mathcal B)$ such that,
for any $\varphi,\eta\in\mathcal B$, $h\in\mathbb R^n$, and $r\in(0,\infty)$,
\begin{equation*}
\Big|\Big\langle T \Big[\varphi\Big(\frac{\cdot-h}{r}\Big)\Big],
\eta \Big(\frac{\cdot-h}{r}\Big)\Big\rangle\Big|\leq Cr^n,
\end{equation*}
\item $T\in\operatorname{CZK}^\sigma(E;F)$,
\item\label{it:Ty}
$T(y^\gamma)=0$ for all $\gamma\in\mathbb{Z}_+^n$ with $|\gamma|\leq G$, (This is void if $G<0$.)
\item\label{it:Tsy} $T^*(x^\theta)=0$ for all $\theta\in\mathbb{Z}_+^n$ with $|\theta|\leq H$. (This is void if $H<0$.)
\end{enumerate}
\end{definition}

\begin{remark}
Definition \ref{def:cz} is identical to \cite[Definition 6.17]{BHYY:III} except for the fact that we have added the parameter restrictions $G\le \lfloor\!\lfloor E\rfloor\!\rfloor$ and $H\le \lfloor\!\lfloor F\rfloor\!\rfloor$, which are needed to ensure that $T(y^\gamma)$ and $T^*(x^\theta)$ in \eqref{it:Ty} and \eqref{it:Tsy} are well defined for all $\gamma,\theta$ appearing there according to Remark \ref{rem:Tx-gamma-def}. We point out that the same restriction should have been made in \cite[Definition 6.17]{BHYY:III} as well. However, its omission is a relatively harmless oversight, since this additional restriction is valid in all concrete instances where \cite[Definition 6.17]{BHYY:III} is used in \cite{BHYY:III}. Notably, this applies to \cite[Proposition 6.19]{BHYY:III}, which is restated below as Lemma \ref{T1EFGH}.
\end{remark}

We now state the $T(1)$ theorem as follows.

\begin{theorem}\label{T1 BF}
Let $\dot{\mathbf{A}}(\mathbb{R}^n;\fX)$
be a homogeneous $(d,e,f)$-discretizable space with some real
parameters $(d,e,f)$.
Suppose $T\in\operatorname{CZO}^\sigma(E,F,G,H)$
in the sense of Definition \ref{def:cz}, where
$\sigma\in\{0,1\}$ and the parameters $(E,F,G,H)$ satisfy
\begin{equation*}
\begin{split}
 &\sigma\geq\mathbf{1}_{[0,\infty)}\big(e-\frac{n}{2}\big),\quad
E>\big(e-\frac n2\big)_+,\quad
F>\max\big\{n,d,f+\frac n2\big\}-n,\\
&G\geq\big\lfloor e-\frac n2\big\rfloor_+,\quad \text{and}\quad
H\geq\big\lfloor f-\frac n2\big\rfloor.
\end{split}
\end{equation*}

Then the following two statements hold:
\begin{enumerate}[\rm(i)]
\item\label{it:T1 BF1} There exists an operator
$\widetilde T\in\mathcal L(\dot{\mathbf{A}}(\mathbb{R}^n;\fX))$
that agrees with $T\otimes I_\fX$ on $\mathscr{S}_\infty\otimes\fX$.
\item\label{it:T1 BF2} If further $H\geq0$,
then there exists $\widetilde T
\in\mathcal L(\dot{\mathbf{A}}(\mathbb{R}^n;\fX))$
that agrees with $T\otimes I_\fX$ on
$(\mathscr S\otimes\fX)\cap\dot{\mathbf{A}}(\mathbb{R}^n;\fX)$.
\end{enumerate}
\end{theorem}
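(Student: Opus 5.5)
The plan is to reduce Theorem \ref{T1 BF} to Proposition \ref{ext}. By that proposition, it suffices to verify one thing: for some $L,N\in(0,\infty)$, the operator $T$ maps every $(L,N)$-atom to a constant multiple of an $\dot{\mathbf a}$-synthesis molecule. The constant must be uniform, so that the family $\{T(a_P)\}$ has fixed parameters $(K,L,M,N)$ as in Definition \ref{df-a-syan}\eqref{defit:a-sy}. Recall these requirements:
\begin{equation*}
K>\max\{n,d,f+\tfrac n2\},\quad L_{\rm mol}>f-\tfrac n2,\quad M>\max\{d,n\},\quad N_{\rm mol}>e-\tfrac n2 .
\end{equation*}
Part \eqref{it:T1 BF1} then follows from Proposition \ref{ext}\eqref{it:ext1}.

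For the atom-to-molecule step I would invoke the scalar result already available in \cite{BHYY:III}, namely \cite[Proposition 6.19]{BHYY:III}, restated later as Lemma \ref{T1EFGH}. It states that if $T\in\operatorname{CZO}^\sigma(E,F,G,H)$, then $T$ maps sufficiently regular atoms to $(K,L,M,N)$-molecules, with the following dictionary:
\begin{itemize}
\item size decay $K$ and $M$ up to $n+F$ (or slightly less);
\item cancellation order $L\le H$, coming from $T^*(x^\theta)=0$;
\item smoothness $N<E$, using $T(y^\gamma)=0$ for $|\gamma|\le G$ when $N$ is positive.
\end{itemize}
The regularity $\sigma=1$ is needed exactly when one requires the molecular smoothness $N\ge 0$.

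With this dictionary, I would match parameters as follows.
\begin{itemize}
\item The target $N_{\rm mol}>e-\tfrac n2$ is achievable with $N_{\rm mol}<E$ because $E>(e-\tfrac n2)_+$. It needs $G\ge\lfloor e-\tfrac n2\rfloor_+$, and $\sigma=1$ when $e-\tfrac n2\ge0$; this is precisely the hypothesis $\sigma\ge\mathbf 1_{[0,\infty)}(e-\tfrac n2)$.
\item The cancellation $L_{\rm mol}>f-\tfrac n2$ with $L_{\rm mol}\le H$ uses $H\ge\lfloor f-\tfrac n2\rfloor$; by the convention $|\gamma|\le L$, this gives vanishing moments up to order $\lfloor\!\lfloor\cdot\rfloor\!\rfloor$ as needed.
\item The decay $K,M<n+F$ with $K,M>\max\{n,d,f+\tfrac n2\}$ is possible since $F>\max\{n,d,f+\tfrac n2\}-n$.
\end{itemize}
The atom parameters $(L,N)$ are taken large enough as required by the scalar lemma. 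Since the molecule constants are uniform over atoms, one fixed $(K,L,M,N)$ family results.

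For part \eqref{it:T1 BF2}, the extra input of Proposition \ref{ext}\eqref{it:ext2} is an $L^2$-bounded extension $T_2$ of $T$. When $H\ge0$, we have $T^*(1)=0$. Also, $T(1)=0$ holds when $G\ge0$, which is always true since $G\ge\lfloor e-\frac n2\rfloor_+\ge0$. Together with $T\in\operatorname{WBP}$ and the kernel estimates ($E>0$, $F>0$, which follow from the hypotheses), the classical David--Journé $T(1)$ theorem gives $T\in\mathcal L(L^2)$. Proposition \ref{ext}\eqref{it:ext2} then concludes.

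The main obstacle is the bookkeeping in the atom-to-molecule lemma, especially the boundary cases:
\begin{itemize}
\item when $e-\tfrac n2$ or $f-\tfrac n2$ is an integer or negative, so that the $\lfloor\cdot\rfloor$ versus $\lfloor\!\lfloor\cdot\rfloor\!\rfloor$ conventions must align with the strict inequalities in Definition \ref{df-a-syan};
\item confirming that when $e-\tfrac n2<0$, no smoothness (and hence $\sigma=0$) suffices.
\end{itemize}
If the quoted lemma does not literally cover a case, I would redo its proof: split $T a_P$ into the regions near $3P$ (controlled by WBP and $L^2$-boundedness) and far from $3P$ (kernel Taylor expansions against the atom's moments).
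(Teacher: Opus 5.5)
Your proposal is correct and follows the paper's proof. Like the paper, you pick $(K,L,M,N)$ just above the synthesis thresholds with $K,M<F+n$, verify the hypotheses of Lemma \ref{T1EFGH}, and apply Proposition \ref{ext}\eqref{it:ext1}; for \eqref{it:T1 BF2} you obtain $T_2\in\mathcal L(L^2)$ from David--Journ\'e using $T(1)=0=T^*(1)$.

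One small correction to your bookkeeping: the lemma needs $H\ge\lfloor L\rfloor$ and $G\ge\lfloor N\rfloor_+$, not $L\le H$. Taken literally, $L\le H$ can be incompatible with $L>f-\frac n2$. The correct conditions are secured, as in the paper, by taking $L\in(f-\frac n2,\lceil\!\lceil f-\frac n2\rceil\!\rceil)$ and $N<\lceil\!\lceil e-\frac n2\rceil\!\rceil$, or $N=0$ if $e-\frac n2<0$. The remaining condition $F>\lfloor L\rfloor$ is automatic from $F>f-\frac n2$.
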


We need the following estimate from \cite{BHYY:III}.

\begin{lemma}[{\cite[Proposition 6.19]{BHYY:III}}]\label{T1EFGH}
Suppose that $T\in\operatorname{CZO}^\sigma(E,F,G,H)$
in the sense of Definition \ref{def:cz}.
Then $T$ maps sufficiently regular atoms to $(K,L,M,N)$-molecules
provided that the following conditions are satisfied:
\begin{align*}
\sigma&\geq\mathbf{1}_{(0,\infty)}(N),\quad
\begin{cases}
E\geq N,\\
E>\lfloor N\rfloor_+,
\end{cases}\quad
\begin{cases}
F\geq (K\vee M)-n,\\
F>\lfloor L\rfloor,
\end{cases}\quad
G\geq\lfloor N\rfloor_+,\quad\text{and}\quad
H\geq\lfloor L\rfloor.
\end{align*}
\end{lemma}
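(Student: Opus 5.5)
The plan is to prove that, for a suitably regular $(L_a,N_a)$-atom $a_Q$ (with $L_a,N_a$ large depending on $K,L,M,N$), the function $m_Q:=Ta_Q$ is a constant multiple of a $(K,L,M,N)$-molecule. The constant must be uniform in $Q$. I will check the size, cancellation and smoothness conditions of Definition \ref{def-mole} one at a time.

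\emph{Reduction to $Q_0=[0,1)^n$.} The first step is a reduction to the unit cube. If $T\in\operatorname{CZO}^\sigma(E,F,G,H)$, then the conjugates
\[
T_{h,r}:=\tau_h\delta_r T\delta_r^{-1}\tau_h^{-1}
\]
by translation and $L^2$-normalized dilation lie in the same class, with the same constants. This holds because the kernel bounds \eqref{CZK0}--\eqref{CZKxy} and the WBP are invariant under these operations, and because the moment conditions $T(y^\gamma)=0$ and $T^*(x^\theta)=0$ are preserved, polynomials of degree $\le G$ (resp.\ $\le H$) being mapped to polynomials of the same degree. The molecule conditions transform in exactly the matching way. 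Hence it suffices to treat $Q=Q_0$, with bounds depending only on the constants of $T$.

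\emph{Size and decay away from the atom.} For $|x|\ge 4\sqrt n$ I write
\[
Ta(x)=\int\mathcal K(x,y)a(y)\,dy .
\]
I subtract the Taylor polynomial of $\mathcal K(x,\cdot)$ at $0$ of order $\lfloor\!\lfloor F\rfloor\!\rfloor$, which is allowed because the atom has enough vanishing moments. The remainder is controlled by \eqref{CZKy} (with $\alpha=0$), giving
\[
|Ta(x)|\lesssim |x|^{-n-F}\le (1+|x|)^{-(K\vee M)}
\]
by the assumption $F\ge (K\vee M)-n$.

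For the derivatives $\partial^\alpha Ta$ with $|\alpha|\le\lfloor\!\lfloor N\rfloor\!\rfloor$, the same argument applies to $\partial_x^\alpha\mathcal K$. Here \eqref{CZKy} with general $\alpha$ gives decay $|x|^{-n-|\alpha|-(F-|\alpha|)}=|x|^{-n-F}$, which is again enough for the $M$-decay.

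For the Hölder difference at the top order $|\alpha|=\lfloor\!\lfloor N\rfloor\!\rfloor$ when $N>0$, I need the mixed estimate \eqref{CZKxy}. This is why the assumption $\sigma\ge\mathbf 1_{(0,\infty)}(N)$ is required: it makes the difference in $x$ and the $y$-Taylor cancellation hold simultaneously.

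\emph{Local estimates near $3Q_0$.} This is the step I expect to be the main obstacle. For $|x|\le 4\sqrt n$, fix a smooth cutoff $\eta$ equal to $1$ on a large ball, and let $P_x(y)$ be the Taylor polynomial of $a$ at $x$ of degree $\lfloor N\rfloor_+$. Since $G\ge\lfloor N\rfloor_+$, the hypothesis $T(y^\gamma)=0$ lets me replace $Ta(x)$ by $T\big(a-P_x\eta\big)(x)$ plus a tail term. The tail $T\big(P_x(1-\eta)\big)(x)$ is absolutely convergent and smooth by \eqref{CZK0}.

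The term $a-P_x\eta$ vanishes to order $\lfloor N\rfloor_++1$ at $x$, so its pairing with the kernel is absolutely integrable. Near the diagonal I would use the WBP, applied to smooth bumps localized at scale $|x-y|$, through the usual dyadic decomposition of Torres \cite{Torres:91}. This yields boundedness of $Ta$ on compacts.

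Differentiating in $x$ up to order $\lfloor\!\lfloor N\rfloor\!\rfloor$ uses \eqref{CZK0} (valid since $E\ge N$), while the final Hölder modulus $N^{**}$ comes from \eqref{CZKx} (valid since $E>\lfloor N\rfloor_+$). The delicate point is to organise the split into near and far parts so that only the $x$-smoothness of $\mathcal K$ of order $E$ is used. I would follow the proof in \cite[Proposition 6.19]{BHYY:III} and \cite[Lemma 2.2.12]{Torres:91} closely here.

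\emph{Cancellation.} For $|\theta|\le L$, I have
\[
\int Ta(x)\,x^\theta\,dx=\langle a,T^*(x^\theta)\rangle=0,
\]
using $H\ge\lfloor L\rfloor$ and the definition of $T^*(x^\theta)$ from Remark \ref{rem:Tx-gamma-def}. The pairing is justified because $Ta$ has decay $|x|^{-n-F}$ with $F>\lfloor L\rfloor$, so $x^\theta Ta\in L^1$. The identity itself follows by approximating $x^\theta$ by $x^\theta\eta(\cdot/R)$ and letting $R\to\infty$.

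Together, the size, smoothness and cancellation estimates show that $Ta_Q$ is a uniform constant multiple of a $(K,L,M,N)$-molecule on $Q$.
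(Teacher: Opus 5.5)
Your reduction to $Q_0$, the far-field decay and smoothness, and the cancellation step follow the standard argument behind \cite[Proposition 6.19]{BHYY:III}; the paper has no proof of its own and only quotes that result. The local step near $3Q_0$, however, is the real content of the lemma, and it fails as written.

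The hypothesis $T(y^\gamma)=0$ concerns a distribution defined only modulo polynomials of degree $\le|\gamma|$: it acts on $\mathscr D_{|\gamma|}$. So it gives no information about the value of $T(P_x\eta)$ at a point, and replacing $Ta(x)$ by $T(a-P_x\eta)(x)$ plus a tail is false.

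The identity operator shows this. $T=I$ lies in every class $\operatorname{CZO}^\sigma(E,F,G,H)$: its kernel vanishes off the diagonal, it has the WBP, and $y^\gamma$ vanishes modulo polynomials of degree $\le|\gamma|$. Yet your recipe gives $Ta(x)=(a-P_x\eta)(x)+0=0\neq a(x)$. What $T(y^\gamma)=0$ actually gives is that, near $x$, $T(P_x\eta)$ equals an \emph{undetermined} polynomial of degree $\le\lfloor N\rfloor_+$ minus a regularized tail. For $T=I$ that polynomial is $P_x$ itself.

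Your tail $\int\mathcal K(x,y)P_x(y)(1-\eta(y))\,dy$ is also not absolutely convergent. Condition \eqref{CZK0} only gives $|\mathcal K(x,y)|\lesssim|x-y|^{-n}$, so the integral diverges already for $P_x\equiv 1$. Only the regularized tail converges, namely with $\mathcal K(z,y)$ replaced by $\mathcal K(z,y)-\sum_{|\beta|\le\lfloor N\rfloor_+}\partial_x^\beta\mathcal K(x_0,y)(z-x_0)^\beta/\beta!$. It is \eqref{CZKx} with $E>\lfloor N\rfloor_+$, not \eqref{CZK0}, that makes it converge and gives it smoothness in $z$.

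What is needed is to freeze the expansion point. Write $a=P_{x_0}\eta+(a-P_{x_0}\eta)$, where $P_{x_0}$ is the Taylor polynomial of $a$ at a fixed point $x_0$. Then estimate only quantities that annihilate polynomials of degree $\le\lfloor N\rfloor_+$: finite differences of $Ta$ of order $\lfloor N\rfloor_++1$ near $x_0$, or differences of $\partial^\alpha Ta$ with $|\alpha|=\lfloor N\rfloor_+$.

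The term $T(a-P_{x_0}\eta)$ must be split into two parts. The part separated from the evaluation points is handled by the kernel bounds. The part near them is a bump of height $\lesssim|h|^{\lfloor N\rfloor_++1}$ at scale $|h|$. The WBP controls it only weakly, through pairings with bumps, so pointwise values have to be reached via averages over shrinking balls at Lebesgue points. The undetermined polynomial must then be bounded separately, for example by testing against unit-scale bumps with the WBP, or by matching with your far-field bounds. Deferring this to \cite{BHYY:III,Torres:91} leaves exactly this part unproved.

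A smaller point concerns the far field. Condition \eqref{CZKxy} is stated only for $|\alpha|=\lfloor\!\lfloor E\rfloor\!\rfloor$. When $\lfloor\!\lfloor N\rfloor\!\rfloor<\lfloor\!\lfloor E\rfloor\!\rfloor$, use the mean value theorem with the bounds at order $\lfloor\!\lfloor N\rfloor\!\rfloor+1$ instead. When $F\le E$, \eqref{CZKxy} is void and \eqref{CZKx} alone suffices.
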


\begin{proof}[Proof of Theorem \ref{T1 BF}]
By the assumptions on $\sigma$, $E$, $F$, $G$, and $H$,
we can choose $(K,L,M,N)$ satisfying:
\begin{align*}
K\in\big(\max\big\{n,d,f+\frac n2\big\},F+n\big),
\quad L\in\big(f-\frac n2,
\big\lceil\!\big\lceil f-\frac n2\big\rceil\!\big\rceil\big),
\quad M\in(\max\{d,n\},F+n),
\end{align*}
and
\begin{align*}
\left\{
\begin{aligned}
&\displaystyle{N:=0\in\big(e-\frac n2,E\big)}
\hspace{1.47cm}\text{if}\
\displaystyle{e-\frac n2<0},\\
&\displaystyle{N\in\big(e-\frac n2,
\big\lceil\!\big\lceil e-\frac n2\big\rceil\!\big\rceil\wedge E\big)}
\hspace{.5cm}\text{if}\ \displaystyle{e-\frac n2\geq 0}.
\end{aligned}\right.
\end{align*}
Note that the lower bounds in the choice of these parameters ensure that $(K,L,M,N)$-molecules are $\dot{\mathbf{a}}$-synthesis molecules in the sense of Definition \ref{df-a-syan}\eqref{defit:a-sy}.

From the choice of the parameters and some basic calculations, we infer that
$\sigma$, $(E,F,G,H)$, and $(K,L,M,N)$ satisfy the
assumptions on Lemma \ref{T1EFGH}.
Therefore, using Lemma \ref{T1EFGH}, we find that
$T$ maps sufficiently regular atoms to
$(K,L,M,N)$-molecules, which are
$\dot{\mathbf{a}}$-synthesis molecules by what we observed.
Thus, Proposition \ref{ext}\eqref{it:ext1} completes the proof of Theorem \ref{T1 BF}\eqref{it:T1 BF1}.

Now, we prove case \eqref{it:T1 BF2} of the theorem.
From the assumptions on $\sigma$ and $(E,F,G,H)$,
we deduce that $E,F>0$ and $G,H\geq 0$,
which further implies
\begin{equation*}
  T\in\operatorname{WPB}\cap\operatorname{CZK}^0(E;0),\quad
  T^*\in\operatorname{CZK}^0(F;0),\quad \text{and}\quad T(1)=0=T^*(1)
\end{equation*}
for these $E$ and $F$.
Therefore, by the well-known $T(1)$ theorem of David and Journ\'e \cite{dj84},
we conclude that there exists $T_2\in\mathcal L(L^2)$
that agrees with $T$ on $\mathscr S$.
Thus, the assumptions of Proposition \ref{ext}\eqref{it:ext2} are satisfied, and the said proposition
 implies the existence of the operator $\widetilde T$ with the additional property as desired.
This finishes the proof of Theorem \ref{T1 BF}.
\end{proof}

By considering special cases of $\dot{\mathbf{A}}(\mathbb{R}^n;\fX)$,
we obtain the following $T(1)$ theorem for the Besov and Triebel--Lizorkin spaces of Definition \ref{def:space}.
This is a direct consequence of Lemma \ref{lem:seq} and Example \ref{ex:mole}, and we omit the straightforward details.

\begin{corollary}\label{cor:T1}
Let $p\in(0,\infty)$, $q\in(0,\infty]$,
and  $s\in\mathbb{R}$.
\begin{enumerate}[{\rm(i)}]
  \item Let $\rho=\{\rho_{Q}\}_{Q\in\mathscr{D}}$
  be a family of $(u,a,b,c)$-norms with
  $u\in(0,1]$ and $a,b,c\in[0,\infty)$.
  If $T\in\operatorname{CZO}^\sigma(E,F,G,H)$
in the sense of Definition \ref{def:cz}, where
$\sigma\in\{0,1\}$ and the parameters $(E,F,G,H)$ satisfy
\begin{equation*}
\begin{split}
  &\sigma\geq\mathbf{1}_{[0,\infty)}(s+a),\quad
  E>(s+a)_+,\quad
  F>J^{(u)}-n+\max\{c,b-s\},\\
  &G\geq\lfloor s+a\rfloor_+,\quad \text{and}\quad
  H\geq\big\lfloor J^{(u)}-n-s+b\big\rfloor,
\end{split}
\end{equation*}
where $J^{(u)}$ is as in \eqref{eq:Jr},
  then Theorem \ref{T1 BF} applies to $\dot{A}^s_{p,q}(\rho)$ in place of $\dot{\mathbf A}(\R^n;\fX)$.

  \item\label{cor:T1-V} Let $V\in\A_r$ with $r\in[p,\infty)$ has doubling
  dimension $\beta\in[n,\infty)$, and let $q\in(0,r+\varepsilon_V)$
  when $A=F$, where $\varepsilon_V$ is
  the optimal reverse H\"older lifting index of $V$.
  If $T\in\operatorname{CZO}^\sigma(E,F,G,H)$
in the sense of Definition \ref{def:cz}, where
$\sigma\in\{0,1\}$ and the parameters $(E,F,G,H)$ satisfy
\begin{equation}\label{eq:T1-V}
\begin{split}
&\sigma\geq\mathbf{1}_{[0,\infty)}(s),\quad
E>s_+,\quad
F>J-n+s_-+\frac{\beta-n}{r},\\
&G\geq\lfloor s\rfloor_+,\quad \text{and}\quad
H\geq\Big\lfloor J-n-s+\frac{\beta-n}{r}\Big\rfloor,
\end{split}
\end{equation}
where $J$ is as in \eqref{eq:J},
  then Theorem \ref{T1 BF} applies to $\dot{A}^s_{p,q}(V)=\dot A^s_{p,q}(\rho_{\aveL^r(\mathscr D,V)})$ in place of $\dot{\mathbf A}(\R^n;\fX)$.
\end{enumerate}
\end{corollary}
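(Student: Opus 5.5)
The plan is to deduce Corollary~\ref{cor:T1} directly from Theorem~\ref{T1 BF}, using Lemma~\ref{lem:seq} to identify each space as a $(d,e,f)$-discretizable space with explicit parameters. We then check that the hypotheses on $(\sigma,E,F,G,H)$ stated in the corollary imply those of Theorem~\ref{T1 BF} for these $(d,e,f)$. No new analysis is needed; the work is bookkeeping of the parameters.

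For part (i), Lemma~\ref{lem:seq}\eqref{ex:seq-rho} gives
\[
d=J^{(u)}+c,\qquad e=\tfrac n2+s+a,\qquad f=J^{(u)}-\tfrac n2-s+b .
\]
Hence $e-\frac n2=s+a$ and $f-\frac n2=J^{(u)}-n-s+b$. Substituting these, the conditions on $\sigma$, $E$, $G$ and $H$ in Theorem~\ref{T1 BF} become exactly those listed in the corollary. For $F$ the theorem requires
\[
F>\max\bigl\{n,\ J^{(u)}+c,\ J^{(u)}-s+b\bigr\}-n=\max\bigl\{0,\ J^{(u)}-n+\max\{c,b-s\}\bigr\}.
\]
Since $J^{(u)}\ge n$ and $c\ge0$, we have $J^{(u)}-n+\max\{c,b-s\}\ge0$, so the outer maximum with $0$ is redundant. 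This matches the stated condition.

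For part \eqref{cor:T1-V}, Lemma~\ref{lem:seq}\eqref{ex:seq-V} (valid under the stated restriction on $q$ when $A=F$) gives the identity $\dot A^s_{p,q}(V)=\dot A^s_{p,q}(\rho_{\aveL^r(\mathscr D,V)})$ together with
\[
d=J+\tfrac{\beta-n}{r},\qquad e=\tfrac n2+s,\qquad f=J-\tfrac n2-s+\tfrac{\beta-n}{r}.
\]
Then $e-\frac n2=s$, which yields the conditions on $\sigma$, $E$ and $G$. Likewise $f-\frac n2=J-n-s+\frac{\beta-n}{r}$ yields the condition on $H$. For $F$, we compute
\[
\max\bigl\{n,d,f+\tfrac n2\bigr\}-n=\max\bigl\{0,\ J-n+\tfrac{\beta-n}{r}+\max\{0,-s\}\bigr\}=J-n+s_-+\tfrac{\beta-n}{r},
\]
using $J\ge n$ and $\beta\ge n$. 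This is \eqref{eq:T1-V}.

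The only step needing care is this maximum computation, specifically confirming that the term $n$ never dominates, which follows from $J,J^{(u)}\ge n$. With the hypotheses verified, Theorem~\ref{T1 BF} applies, including its part \eqref{it:T1 BF2} whenever $H\ge0$.
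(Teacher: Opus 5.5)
Your proposal is correct and is the paper's own argument. The paper deduces Corollary~\ref{cor:T1} by inserting the $(d,e,f)$ from Lemma~\ref{lem:seq} into Theorem~\ref{T1 BF} and omits the details; your parameter bookkeeping supplies them correctly, including the observation that $J^{(u)},J\ge n$, $c\ge 0$ and $\beta\ge n$ make the term $n$ in $\max\{n,d,f+\frac n2\}$ irrelevant.
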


\begin{remark}
Even in the unweighted case $V\equiv I$, the result of Corollary \ref{cor:T1} on the boundedness of general Calder\'on--Zygmund operators on $\fX$-valued Besov and Triebel--Lizorkin spaces $\dot A^s_{p,q}(\R^n;\fX)$ seems to be largely new. In this direction, we are only aware the results of Kaiser \cite{Kaiser:09}, which deals with the Besov spaces $\dot B^s_{p,q}(\R^n;\fX)$ in the range $p,q\in[1,\infty]$. Noting that $J=\beta=n$ in this case, the conditions \eqref{eq:T1-V} reduce to
\begin{equation*}
  \sigma\geq\one_{[0,\infty)}(s),\quad E>s_+,\quad F>s_-,\quad G\geq\floor{s}_+,\quad H\geq\floor{-s},
\end{equation*}
and we note in particular that, for spaces of positive smoothness $s>0$, the condition on $H$ is trivial, i.e., there exist no conditions of the type $T^*(x^\theta)=0$ in this case. These assumptions essentially agree with those of \cite[Theorem 3.1]{Kaiser:09} in the case of $\fX$-valued extensions of scalar-valued operators $T\in\mathcal L(\mathscr S_\infty,\mathscr S_\infty')$ that we consider. However, \cite[Theorem 3.1]{Kaiser:09} deals with more general operators $T\in\mathcal L(\mathscr S_\infty(\R^n;\fX),\mathscr S_\infty'(\R^n;\fY))$ with $\mathcal L(\fX,\fY)$-valued kernels for two Banach spaces $\fX,\fY$. Hence, while Corollary \ref{cor:T1} goes beyond \cite[Theorem 3.1]{Kaiser:09} in several directions (a larger range of indices $p,q$, both Besov and Triebel--Lizorkin spaces, and operator-weighted versions), the two results are not strictly comparable.
\end{remark}

\subsection{Examples}\label{sec:CZO-3}

When specialized to $\fX=\C^m$ and matrix weights, Corollary \ref{cor:T1} falls slightly short of reproducing the best available result in this setting, which is the recent \cite[Theorem 6.18]{BHYY:III}. Namely, compared to the related condition \cite[(6.20)]{BHYY:III}, the present \eqref{eq:T1-V} has the additional nonnegative term $\frac{\beta-n}{r}$ in the lower bounds for $F$ and $H$. Nevertheless, Corollary \ref{cor:T1} is strong enough to recover the main results on Calder\'on--Zygmund operators on matrix-weighted Besov and Triebel--Lizorkin spaces known before \cite{BHYY:III}, including the still quite recent \cite{FR:21}. To facilitate this comparison, we recall the following \cite[Definition 5.6]{FR:21}:

\begin{definition}\label{def-Lsmooth}
Let $L\in\mathbb{N}$.
A operator $T: \mathscr{S}\to\mathscr{S}'$
is called an \emph{$L$-smooth Calder\'on--Zygmund operator}
if
\begin{enumerate}[{\rm(i)}]
  \item there exists $K\in\mathscr{S}'$
such that, for any $f\in
\mathcal{S}$, $T(f):=K\ast f$,
  \item $K$ coincides with a
  function belonging to $C^{L}(\mathbb{R}^n\setminus\{{\bf 0}\})$
  in the sense that, for any given $f\in\mathscr{S}$ with
  compact support and for any $x\notin\operatorname{supp}f$,
  $$
  T(f)(x)=\int_{\R^n}K(x-y)f(y)\,dy,
  $$
  \item there exists a positive
  constant $C$ such that,
  for any $\gamma\in\mathbb{Z}_+^n$ with $|\gamma|\leq L$
  and for any $x\in\mathbb{R}^n\setminus\{{\bf 0}\}$,
  \begin{equation}\label{con-T-size}
  \left|\partial^{\gamma}K(x)\right|
  \leq\frac{C}{|x|^{n+|\gamma|}},
  \end{equation}
  \item for any $0 < R_1 < R_2 < \infty$,
  $$\int_{R_1 < |x| < R_2} K(x) \, dx = 0.$$
\end{enumerate}
\end{definition}

In \cite{FR:21,Rou:03}, the authors first present results for general Calder\'on--Zygmund operators in the spirit of Corollary \ref{cor:T1}, and then give a separate treatment for convolution-type operators. However, the need for this latter seems to come from a slight misunderstanding in \cite[Remark 9.12]{Rou:03}: ``Note that the condition $T^*(y^\gamma) = 0$, $\abs{\gamma}\leq N$, can be very restrictive; for example, the Hilbert transform does not satisfy this condition for $\abs{\gamma}>0$.'' This is repeated in \cite[pp.~526--527]{FR:21}.

However, if one interprets $T^*(x^\gamma)$ in the usual way as a distribution modulo polynomials of order $\abs{\gamma}$, then we do have $T^*(x^\gamma)=0$ {\em for all multi-indices $\gamma$} for the Hilbert, the Riesz, and other classical convolution transforms with sufficiently regular kernels. Indeed, by \cite[Proposition 2.2.17]{Torres:91},
every $L$-smooth Calder\'on--Zygmund operator $T$ satisfies
$T(x^\gamma)=0$ (resp.\, $T^*(x^\theta)=0$)
as a distribution on $\mathscr D_{|\gamma|}$
(resp.\,$\mathscr D_{|\theta|}$), where
 \begin{equation*}
  \mathscr D_{\ell}=\Big\{\varphi\in\mathscr D:\int_{\mathbb{R}^n} x^{\gamma}\varphi(x)\,dx=0,\ \forall\ \abs{\gamma}\leq\ell\Big\},\quad \ell\in\N
\end{equation*}
for all $\gamma,\theta\in\mathbb{Z}_+^n$ with $|\gamma|,|\theta|\le L-1$.

This fact seems to have been partially overlooked in \cite{Rou:03,FR:21}. Instead of needing a separate treatment for operators of Definition \ref{def-Lsmooth}, these can be readily dealt with as a special case of the results above:

\begin{corollary}\label{cor:L-smooth}
Assume $L\in\mathbb{N}$ and $T$ is an
$L$-smooth Calder\'on--Zygmund operator.
Then $T\in\operatorname{CZO}^\sigma(L,L,L-1,L-1)$
for $\sigma\in\{0,1\}$. As a consequence,
for every $p\in(0,\infty)$, $q\in(0,\infty]$,
and $s\in\mathbb{R}$, the following two
results hold.
\begin{enumerate}[{\rm(i)}]
  \item\label{it:Lrho} Let $\rho=\{\rho_{Q}\}_{Q\in\mathscr{D}}$
  be a family of $(u,a,b,c)$-norms with
  $u\in(0,1]$ and $a,b,c\in[0,\infty)$.
  If
  \begin{align*}
  L>\max\big\{(s+a)_+,J^{(u)}-n+\max\{c,b-s\}\big\},
  \end{align*}
  where $J^{(u)}$ is as in \eqref{eq:Jr}, then
  there exists $\widetilde T
\in\mathcal L(\dot{A}^s_{p,q}(\rho))$
that agrees with $T\otimes I_\fX$ on
$(\mathscr S\otimes\fX)\cap\dot{A}^s_{p,q}(\rho)$.
  \item\label{it:LV} Let $V\in\A_r$ with $r\in[p,\infty)$ and the doubling
  dimension $\beta\in[n,\infty)$, and let $q\in(0,r+\varepsilon_V)$
  when $A=F$, where $\varepsilon_V$ is
  the optimal reverse H\"older lifting index of $V$.
  If
  \begin{align*}
  L>\max\Big\{s_+,J-n+s_-+\frac{\beta-n}{r}\Big\},
  \end{align*}
  where $J$ is as in \eqref{eq:J},
  then there exists $\widetilde T
\in\mathcal L(\dot{A}^s_{p,q}(V))$
that agrees with $T\otimes I_\fX$ on
$(\mathscr S\otimes\fX)\cap\dot{A}^s_{p,q}(V)$.
\end{enumerate}
\end{corollary}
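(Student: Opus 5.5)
The plan is to first verify that an $L$-smooth Calder\'on--Zygmund operator $T$ belongs to $\operatorname{CZO}^\sigma(L,L,L-1,L-1)$ for both $\sigma\in\{0,1\}$, and then to read off \eqref{it:Lrho} and \eqref{it:LV} from Corollary \ref{cor:T1}, in its version \eqref{it:T1 BF2} of Theorem \ref{T1 BF}.

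\emph{Kernel estimates.} The Schwartz kernel is $\mathcal K(x,y)=K(x-y)$. Hence $\partial_x^\alpha\partial_y^\beta\mathcal K(x,y)=(-1)^{|\beta|}(\partial^{\alpha+\beta}K)(x-y)$, and \eqref{con-T-size} gives $|\partial_x^\alpha\partial_y^\beta \mathcal K|\lesssim|x-y|^{-n-|\alpha|-|\beta|}$ whenever $|\alpha|+|\beta|\le L$. With $E=F=L\in\N$ we have $\lfloor\!\lfloor L\rfloor\!\rfloor=L-1$ and $L^{**}=1$. So \eqref{CZK0} is immediate. For \eqref{CZKx}, \eqref{CZKy} and \eqref{CZKxy}, the differences involve derivatives of total order $L-1$, for instance $|\alpha|+|\beta|=(L-1)+\lfloor\!\lfloor F-|\alpha|\rfloor\!\rfloor$ and its analogues. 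I will estimate them by the mean value theorem using an $L$-th derivative. Since $|u|,|v|<\frac12|x-y|$, every intermediate point $z$ satisfies $|z|\approx|x-y|$, which yields the required factors $|u|^{1}$, $|v|^{1}$ and the correct power of $|x-y|$. The mixed condition \eqref{CZKxy} has $|\beta|=\lfloor\!\lfloor 0\rfloor\!\rfloor=-1$, so it is void (or trivially satisfied); therefore $\sigma=1$ is also fine.

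\emph{Weak boundedness and cancellation.} For the weak boundedness property I will dilate: $\langle T[\varphi(\frac{\cdot-h}{r})],\eta(\frac{\cdot-h}{r})\rangle=r^n\langle T_r\varphi,\eta\rangle$. Here $T_r$ has kernel $r^nK(r\cdot)$, which obeys the same bounds \eqref{con-T-size} and the same vanishing-mean condition (iv). Writing $\langle K,\eta*\tilde\varphi\rangle$, I will split the pairing into $|x|<1$ and $|x|\ge1$. On $|x|<1$ the cancellation (iv), together with the size bound applied after subtracting $g(0)$, bounds the principal value by $\|\nabla g\|_\infty$. On $|x|\ge 1$ the support is bounded and the kernel is bounded. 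One technical point is that $K$ equals the principal value distribution plus possibly $c\delta$, and $c$ is a fixed constant; I will treat this as routine. The conditions $T(y^\gamma)=0$ and $T^*(x^\theta)=0$ for $|\gamma|,|\theta|\le L-1$ are exactly \cite[Proposition 2.2.17]{Torres:91} as quoted above; note that $G=H=L-1\le\lfloor\!\lfloor L\rfloor\!\rfloor$ as required. This step is the main conceptual one, but it can be quoted. I will check that the interpretation there (as distributions on $\mathscr D_{|\gamma|}$) matches that of Remark \ref{rem:Tx-gamma-def}.

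\emph{Applying the corollary.} For \eqref{it:LV} I plug $E=F=L$ and $G=H=L-1$ into \eqref{eq:T1-V}. The conditions $E>s_+$ and $F>J-n+s_-+\frac{\beta-n}r$ are the hypothesis. I take $\sigma=1$, so $\sigma\ge\mathbf 1_{[0,\infty)}(s)$ holds. For the integer conditions: $L>s_+$ gives $L-1\ge\lfloor s\rfloor_+$, since $L$ is an integer exceeding $s_+$. The condition $H=L-1\ge\lfloor J-n-s+\frac{\beta-n}r\rfloor$ follows from $L>J-n+s_-+\frac{\beta-n}r\ge J-n-s+\frac{\beta-n}r$. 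Finally $H=L-1\ge0$ because $L\ge1$, which gives part \eqref{it:T1 BF2}. Case \eqref{it:Lrho} is identical, using the $\rho$-version of Corollary \ref{cor:T1} and $\max\{c,b-s\}\ge b-s$. I expect the only delicate points to be the bookkeeping of the $\lfloor\!\lfloor\cdot\rfloor\!\rfloor$ indices in \eqref{CZKy} and the weak boundedness property.
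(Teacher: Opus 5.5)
Your proposal is correct and follows the paper's proof step by step. You use the mean value theorem for \eqref{CZK0}--\eqref{CZKy} with $E=F=L$, note that \eqref{CZKxy} is void, invoke \cite[Proposition 2.2.17]{Torres:91} for the cancellation conditions with $G=H=L-1$, and then apply Corollary \ref{cor:T1} through Theorem \ref{T1 BF}\eqref{it:T1 BF2}, spelling out the integer bookkeeping ($L-1\ge\lfloor s\rfloor_+$, $L-1\ge\lfloor J-n-s+\frac{\beta-n}{r}\rfloor$, $H\ge 0$) that the paper leaves implicit.

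The one real difference is that you prove the weak boundedness property directly, by rescaling and splitting the principal value, whereas the paper cites \cite{m87} and \cite[p.\,35]{s70}. Your claim that $K$ differs from $\mathrm{p.v.}\,K$ by at most $c\delta$ does not follow from Definition \ref{def-Lsmooth} as literally written. A term $\partial^\gamma\delta$ with $|\gamma|\ge 1$ is not excluded there, and it would break the weak boundedness property. The claim does follow from the standard extra requirement $\hat K\in L^\infty$, which the paper's citations tacitly presuppose as well.
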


\begin{proof}
If we show $T\in\operatorname{CZO}^\sigma(L,L,L-1,L-1)$
with $\sigma\in\{0,1\}$, then
the remaining results follow from Corollary \ref{cor:T1}
directly.
Hence, we now turn to prove  $T\in\operatorname{CZO}^\sigma(L,L,L-1,L-1)$.
Indeed, by the discussion preceding the present corollary,
the cancelation conditions \eqref{it:Ty}
and \eqref{it:Tsy} of Defintition\eqref{def:cz} hold with
$G=H=L-1$.
Suppose $K$ is the kernel in the sense of Definition \ref{def-Lsmooth}.
Applying \eqref{con-T-size} and the mean value theorem,
the size conditions \eqref{CZK0}, \eqref{CZKx}, and \eqref{CZKy}
hold with $\mathcal{K}(x,y):=K(x,y)$ and $E=F=L$ and, if we take $E=F$, then
condition \eqref{CZKxy} is void. This means that
$T\in\operatorname{CZK}^\sigma(L;L)$ for $\sigma\in\{0,1\}$.
Finally, as pointed out in \cite{m87} (see also \cite[p.\,35]{s70}),
$T\in \operatorname{WBP}$.
Therefore, $T\in\operatorname{CZO}^\sigma(L,L,L-1,L-1)$,
$\sigma\in\{0,1\}$, in the sense of Definition \ref{def:cz}.
This then finishes the proof of Corollary \ref{cor:L-smooth}.
\end{proof}

\begin{remark}
Corollary \ref{cor:L-smooth} has several applications as follows.
\begin{enumerate}[{\rm(i)}]
  \item Since the Hilbert transform $\mathcal{H}$
  and every Riesz transform $\mathcal{R}_j$, $j\in\{1,\ldots,n\}$,
  satisfy Definition \ref{def-Lsmooth} for arbitrarily large $L\in\mathbb{N}$,
the boundedness conclusion of Corollary \ref{cor:L-smooth} holds automatically for
$\mathcal{H}$ and $\{R_j\}_{j=1}^n$.
Different from the proof of Corollary \ref{cor:Hilbert},
we therefore provide an alternative proof of the boundedness of
the Hilbert transform on both $\dot{A}^s_{p,q}(\rho)$ and $\dot{A}^s_{p,q}(V)$.
  \item If $p\in(0,\infty)$, $q\in(0,\infty]$, $s\in\mathbb{R}$, and
  $V\in\mathscr{A}_p$ has doubling dimension $\beta\in[n,\infty)$,
  and if we let $\rho:=\{\rho_{\aveL^p(Q,V)}\}_{Q\in\D}$, then,
  from Remark \ref{rem:uabc}\eqref{rem:uabc-it1}, we infer that
  the ranges of $L$ in \eqref{it:Lrho} and \eqref{it:LV} of
  Corollary \ref{cor:L-smooth} become, respectively,
  \begin{align}\label{eq:Lrho}
  L>\max\Bigg\{\Big(s+\frac np\Big)_+,J-n+\max\Big\{\frac{\beta}{p},
  \frac{\beta-n}{p}-s\Big\}\Bigg\}
  \end{align}
  and
  \begin{align}\label{eq:LV}
  L>\max\Big\{s_+,J-n+s_-+\frac{\beta-n}{p}\Big\}.
  \end{align}
  These ranges coincide only if $s\le-\frac np$. When
  $s>-\frac{n}{p}$, \eqref{eq:LV} is strictly  larger than \eqref{eq:Lrho}.
  On the other hand, the range \eqref{eq:Lrho} coincides with that in \cite[Theorem 5.8]{FR:21}.
  Hence, in the Triebel--Lizorkin case, by the equivalence between $\dot{F}^s_{p,q}(\rho)$ and
  $\dot{F}^s_{p,q}(V)$ when $\fX=\mathbb{C}^m$, Corollary
  \ref{cor:L-smooth}\eqref{it:Lrho} coincides with \cite[Theorem 5.8]{FR:21};
  furthermore, if $q\in(0,p+\varepsilon_V)$ (the optimal
  reverse H\"older lifting index of $V$), then
  Corollary \ref{cor:L-smooth}\eqref{it:LV} improves \cite[Theorem 5.8]{FR:21}.
  Finally, in the Besov case, Corollary \ref{cor:L-smooth}\eqref{it:LV}
  extends \cite[Corollary 9.15]{Rou:03} and \cite[Theorem 4.2(2)]{FR:04}
  from $s\in[0,1)$ to arbitrary $s\in\mathbb{R}$.
\end{enumerate}
\end{remark}

Moreover, Frazier and Roudenko \cite{Rou:03,FR:04}
also consider a class of \emph{non-convolution Calder\'on--Zygmund operators}
$\operatorname{CZO}(N+\varepsilon)$ with $N\in\mathbb{Z}_+$ and $\varepsilon\in(0,1]$,
which is precisely the class $\operatorname{CZK}^0(N+\varepsilon;N+\varepsilon)$
in the sense of Definition \ref{CZK} (see \cite[p.\,303]{Rou:03} for details).
Hence, applying Corollary \ref{cor:T1}\eqref{cor:T1-V}
and the discussion preceding Corollary \ref{cor:L-smooth},
we immediately obtain the following
$T(1)$ theorem for $\operatorname{CZO}(N+\varepsilon)$
on $\dot{A}^s_{p,q}(V)$; we omit details.

\begin{corollary}\label{cor:N}
Let $N\in\mathbb{Z}_+$, $\varepsilon\in(0,1]$,
and $T\in\operatorname{CZO}(N+\varepsilon)$.
Suppose $p\in(0,\infty)$, $q\in(0,\infty]$,
and $s\in\mathbb{R}$.
Let $V\in\A_r$ with $r\in[p,\infty)$ and the doubling
  dimension $\beta\in[n,\infty)$, and let $q\in(0,r+\varepsilon_V)$
  when $A=F$, where $\varepsilon_V$ is
  the optimal reverse H\"older lifting index of $V$.
If $N$, $\varepsilon$, and $T$ satisfy
\begin{align*}
\begin{split}
&T\in\operatorname{WBP},\quad
N+\varepsilon>\max\Big\{s_+,J-n+s_-+\frac{\beta-n}{r}\Big\},\\
& N\ge\max\Big\{\lfloor s\rfloor_+,\Big\lfloor
J-n-s+\frac{\beta-n}{r}\Big\rfloor\Big\},
\quad T(x^\gamma)=0\quad\forall\, |\gamma|\le\lfloor s\rfloor_+,
\end{split}
\end{align*}
and $T^*(x^\theta)=0$ for all $|\theta|\le\lfloor
J-n-s+\frac{\beta-n}{r}\rfloor$ (this assumption is void if the upper bound is negative),
then
\begin{enumerate}[\rm(i)]
\item there exists an operator
$\widetilde T\in\mathcal L(\dot{A}^s_{p,q}(V))$
that agrees with $T\otimes I_\fX$ on $\mathscr{S}_\infty\otimes\fX$,

\item if further $\lfloor
J-n-s+\frac{\beta-n}{r}\rfloor\ge 0$,
then there exists $\widetilde T
\in\mathcal L(\dot{A}^s_{p,q}(V))$
that agrees with $T\otimes I_\fX$ on
$(\mathscr S\otimes\fX)\cap\dot{A}^s_{p,q}(V)$.
\end{enumerate}
\end{corollary}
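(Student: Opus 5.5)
The plan is to deduce Corollary \ref{cor:N} directly from Corollary \ref{cor:T1}\eqref{cor:T1-V}. The single task is to find $\sigma\in\{0,1\}$ and $(E,F,G,H)$ with $T\in\operatorname{CZO}^\sigma(E,F,G,H)$ in the sense of Definition \ref{def:cz} that also satisfy \eqref{eq:T1-V}.

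For the parameters, I take $E=F=N+\varepsilon$, $G=\lfloor s\rfloor_+$ and $H=\lfloor J-n-s+\frac{\beta-n}{r}\rfloor$. By the identification recalled before the corollary, $T\in\operatorname{CZO}(N+\varepsilon)$ means exactly $T\in\operatorname{CZK}^0(N+\varepsilon;N+\varepsilon)$. Since $E=F$, the mixed condition \eqref{CZKxy} asks for $|\beta|=\lfloor\!\lfloor 0\rfloor\!\rfloor=-1$, so it is void. Hence $T\in\operatorname{CZK}^1(E;F)$ as well, and I may take $\sigma=1$. This settles the requirement $\sigma\ge\mathbf 1_{[0,\infty)}(s)$ for every $s$.

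I then check \eqref{eq:T1-V} term by term. The bounds $E>s_+$ and $F>J-n+s_-+\frac{\beta-n}{r}$ both follow from the hypothesis on $N+\varepsilon$, since $\max\{s_+,\,J-n+s_-+\frac{\beta-n}{r}\}$ dominates each. The conditions on $G$ and $H$ hold with equality by the choice above.

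Next come the restrictions $G\le\lfloor\!\lfloor E\rfloor\!\rfloor$ and $H\le\lfloor\!\lfloor F\rfloor\!\rfloor$ in Definition \ref{def:cz}. Because $\varepsilon>0$, we have $\lfloor\!\lfloor N+\varepsilon\rfloor\!\rfloor\ge N$, so both follow from the hypothesis $N\ge\max\{\lfloor s\rfloor_+,\lfloor J-n-s+\frac{\beta-n}{r}\rfloor\}$. By Remark \ref{rem:Tx-gamma-def}, this also guarantees that $T(x^\gamma)$ and $T^*(x^\theta)$ are well defined for the required multi-indices. The vanishing conditions \eqref{it:Ty} and \eqref{it:Tsy} are then exactly the hypotheses on $T(x^\gamma)$ and $T^*(x^\theta)$, and $T\in\operatorname{WBP}$ is assumed. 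Therefore $T\in\operatorname{CZO}^1(E,F,G,H)$.

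Corollary \ref{cor:T1}\eqref{cor:T1-V} now applies, and so does Theorem \ref{T1 BF} for $\dot A^s_{p,q}(V)$. Part \eqref{it:T1 BF1} of that theorem gives the first conclusion. Part \eqref{it:T1 BF2} needs $H\ge0$, which is exactly the extra assumption $\lfloor J-n-s+\frac{\beta-n}{r}\rfloor\ge0$, and it gives the second conclusion.

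The main obstacle is bookkeeping rather than analysis. One must confirm that the $\operatorname{CZO}(N+\varepsilon)$ class of \cite{Rou:03} really matches $\operatorname{CZK}^0(N+\varepsilon;N+\varepsilon)$ for the derivatives in both variables. One must also handle the boundary case where $N+\varepsilon$ is an integer (that is, $\varepsilon=1$). There $\lfloor\!\lfloor N+1\rfloor\!\rfloor=N$, so the Hölder-type conditions \eqref{CZKx} and \eqref{CZKy} have exponent $E^{**}=1$, a Lipschitz bound on the $N$-th derivatives, and this must be checked against the definition in \cite{Rou:03}.
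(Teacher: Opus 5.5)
Your proposal is correct and follows the same route as the paper, which obtains Corollary \ref{cor:N} directly from Corollary \ref{cor:T1}\eqref{cor:T1-V}. It relies on the same two facts: the identification $\operatorname{CZO}(N+\varepsilon)=\operatorname{CZK}^0(N+\varepsilon;N+\varepsilon)$, and the observation (made in the proof of Corollary \ref{cor:L-smooth}) that \eqref{CZKxy} is void when $E=F$. Your explicit choice $\sigma=1$, $E=F=N+\varepsilon$, $G=\lfloor s\rfloor_+$, $H=\lfloor J-n-s+\frac{\beta-n}{r}\rfloor$, together with $\lfloor\!\lfloor N+\varepsilon\rfloor\!\rfloor=N$ for $\varepsilon\in(0,1]$, supplies exactly the details the paper omits.
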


\begin{remark}
If, in Corollary \ref{cor:N}, $r=p\in(0,1]$ (resp.\,$\in[1,\infty)$)
and $s\in[0,1)$, then
the assumptions on $N$, $\varepsilon$, and $T$
coincide with those of \cite[Theorem 4.2(1)]{FR:04}
(resp.\,\cite[Corollaries 9.10 and 9.11]{Rou:03});
hence, we extend their results from Besov spaces and
$\fX=\mathbb{C}^m$ to general
$\dot{A}^s_{p,q}(V)\subset\mathscr{S}_\infty'(\mathbb{R}^n;\fX)$.
\end{remark}

\section{Trace and extension operators}

In this section, we study the trace and the extension operators.
Due to the specific structure of these two classes of operators,
it seems difficult to obtain satisfactory results within the
general framework $\dot{\mathbf{A}}(\mathbb{R}^n;\fX)$.
Hence, we focus on the pointwise and the average-weighted spaces
in this section directly.

\subsection{Sequence spaces}

We first consider a toy model in the sequence spaces $\dot a^s_{p,q}$. The underlying key estimate is the following lemma in the unweighted, scalar-valued case. It is implicitly contained in the proofs in \cite[Section 5]{BHYY:III}, but we detail it here for convenience of reference.

\begin{lemma}\label{lem:bfTrace}
Given a sequence $u=\{u_I\}_{I\in\mathscr D(\R^{n-1})}$ and $k\in\Z$, let $\hat u^{(k)}=\{\hat u^{(k)}_Q\}_{Q\in\mathscr D(\R^{n})}$ be defined by
\begin{equation}\label{eq:hat-u-k}
  \hat u^{(k)}_Q:=\begin{cases} \ell(I)^{\frac12}u_I &
  \text{if}\quad Q=Q(I,k):=I\times[k\ell(I),(k+1)\ell(I)) \\ 0 & \text{else}.\end{cases}
\end{equation}
Then
\begin{equation}\label{eq:bfTrace}
  \Norm{\hat u^{(k)}}{\dot a^{s}_{p,q}(\R^n)}\approx\Norm{u}{\dot b^{s-\frac1p}_{p,r}(\R^{n-1})},
\end{equation}
where
\begin{equation}\label{eq:r=q,p}
  r=\begin{cases} q, & \text{in the Besov case }(a=b), \\
  p, & \text{in the Triebel--Lizorkin case }(a=f)\end{cases}
\end{equation}
and the positive equivalence constants are independent of $u$ and $k$.
\end{lemma}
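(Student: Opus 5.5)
The plan is to compute both sides explicitly, since $\hat u^{(k)}$ is supported on a single "layer" of cubes for each generation. For $j\in\Z$, the function $\hat u^{(k)}_j$ from \eqref{df-tj} on $\R^n$ is
\begin{equation*}
  \hat u^{(k)}_j(x',x_n)=\sum_{I\in\mathscr D_j(\R^{n-1})}\ell(I)^{\frac12}u_I\,\frac{\one_{Q(I,k)}(x',x_n)}{\abs{I}^{\frac12}\ell(I)^{\frac12}}
  =\sum_{I\in\mathscr D_j(\R^{n-1})}u_I\frac{\one_I(x')}{\abs{I}^{\frac12}}\one_{[k2^{-j},(k+1)2^{-j})}(x_n),
\end{equation*}
because $\abs{Q(I,k)}^{\frac12}=\abs{I}^{\frac12}\ell(I)^{\frac12}$. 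Writing $u_j(x')$ for the $(n-1)$-dimensional analogue of \eqref{df-tj}, this is $\hat u^{(k)}_j(x',x_n)=u_j(x')\one_{E_j}(x_n)$ with $E_j:=[k2^{-j},(k+1)2^{-j})$.

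\emph{Besov case.} Here $\Norm{\hat u^{(k)}_j}{L^p(\R^n)}=\abs{E_j}^{1/p}\Norm{u_j}{L^p(\R^{n-1})}=2^{-j/p}\Norm{u_j}{L^p(\R^{n-1})}$. Multiplying by $2^{js}$ and taking $\ell^q$ norms gives exactly $\Norm{u}{\dot b^{s-1/p}_{p,q}(\R^{n-1})}$, with equality and independently of $k$.

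\emph{Triebel--Lizorkin case.} This is the main step, since one has to convert $L^p\ell^q$ into $\ell^pL^p$; the key observation is that for fixed $x_n$ the sets $E_j$ are essentially disjoint across $j$. For $k\ge 0$ the intervals $E_j$ shrink toward $0$ and overlap for different $j$, so I will not use literal disjointness. Instead I will use the elementary two-sided estimate for $G_j\ge0$:
\begin{equation*}
  \int_{\R}\Big(\sum_j G_j^q\one_{E_j}(x_n)\Big)^{\frac pq}dx_n\approx\sum_j 2^{-j}G_j^p,
\end{equation*}
applied pointwise in $x'$ with $G_j=2^{js}\abs{u_j(x')}$, and then integrated over $x'$ to obtain $\sum_j2^{j(s-1/p)p}\Norm{u_j}{L^p}^p=\Norm{u}{\dot b^{s-1/p}_{p,p}}^p$.

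To prove the displayed estimate, I will set $F_j:=E_j\setminus\bigcup_{i>j}E_i$ for the lower bound. For $k\ne0,-1$, a point $x_n$ with $\abs{x_n}\approx 2^{-j}\abs k$ lies in at most $O(1)$ of the $E_i$, because $E_i\ni x_n$ forces $2^{-i}\approx \abs{x_n}/\abs k$. Hence the sum has boundedly many nonzero terms, the $\ell^q$ and $\ell^p$ sums are comparable, and integrating gives the claim with constants independent of $k$. For $k\in\{0,-1\}$, the intervals are nested, $E_j\subset E_{j-1}$. At $x_n\approx2^{-m}$ the sum becomes $\sum_{j\le m}G_j^q$, which is not bounded by a single term. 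This is exactly the Fefferman--Jawerth style estimate: the bound $\gtrsim$ follows by restricting to $F_m=E_m\setminus E_{m+1}$ of measure $\approx 2^{-m}$ and keeping the term $j=m$. The bound $\lesssim$ follows from the standard lemma $\int(\sum_{j\le m(t)}G_j^q)^{p/q}\lesssim\sum 2^{-j}G_j^p$, where $m(t)$ denotes the index $m$ with $t\in F_m$. I will prove that lemma by first taking $q\le p$ without loss of generality (pass to $q=\infty$, then use $\sup\le\ell^p$ sum), then writing $\sup_{j\le m}G_j^p\le\sum_{j\le m}G_j^p$ and summing $\sum_m2^{-m}\sum_{j\le m}G_j^p\approx\sum_j2^{-j}G_j^p$ via the geometric series. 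For $q<\infty$ I will use $(\sum_{j\le m}G_j^q)^{1/q}\le$ a geometrically weighted sup trick, or simply note that the $\ell^q$ sum is dominated by $\sup_j G_j 2^{-\epsilon(m-j)}$-type bounds. If needed, I will handle it with the inequality $\sum_{j\le m}a_j\le \sum_{j\le m}2^{(m-j)\delta}\cdots$ together with Hölder.
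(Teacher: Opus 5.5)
Your argument is correct, and in the Triebel--Lizorkin case it takes a different route from the paper's; the Besov case is the same exact computation. The paper never works with the slabs $[k2^{-j},(k+1)2^{-j})$ directly.
\begin{itemize}
\item It shrinks each $Q(I,k)$ to its middle third $E(I,k)=I\times[(k+\frac13)\ell(I),(k+\frac23)\ell(I))$.
\item These sets are pairwise disjoint over all $I\in\mathscr D(\R^{n-1})$, for every $k$. So the $\ell^q$-sum collapses to the $\ell^p$-sum pointwise, and $\|u\|_{\dot b^{s-1/p}_{p,p}}^p$ appears as an exact identity.
\item It then returns from $\one_{E_Q}$ to $\one_Q$ via \cite[Proposition 2.7]{FJ:90}. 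That constant depends only on $|E_Q|/|Q|=\frac13$, hence not on $k$.
\end{itemize}

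Your factorization $\hat u^{(k)}_j=u_j(x')\one_{E_j}(x_n)$ instead reduces everything to a one-dimensional inequality in $x_n$. This avoids the Fefferman--Stein machinery hidden in that proposition. It even gives an identity for $k\notin\{0,-1\}$: there the $E_j$ are in fact pairwise disjoint, not just boundedly overlapping, including $k\ge 1$, contrary to your remark that they overlap for $k\ge0$. The price is a case split and a discrete Hardy-type inequality for the two nested cases $k\in\{0,-1\}$.

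That inequality, $\sum_m 2^{-m}\big(\sum_{j\le m}G_j^q\big)^{p/q}\lesssim\sum_j 2^{-j}G_j^p$, is true, but your sketch of it needs tidying.
\begin{itemize}
\item The reduction to $q\le p$ is legitimate, since larger $q$ only makes the left side smaller. But the parenthetical ``pass to $q=\infty$'' goes the wrong way: the $\ell^\infty$ quantity is the smaller one and cannot control finite $q$.
\item The pointwise bound of $(\sum_{j\le m}G_j^q)^{1/q}$ by $\sup_j 2^{-\epsilon(m-j)}G_j$ is false. Take $G_j=1$ for $m-N\le j\le m$ and $0$ otherwise, with $N$ large.
\end{itemize}
Only your last option works. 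Here is the clean version.
\begin{itemize}
\item If $q\ge p$ (including $q=\infty$), use $(\sum_{j\le m}G_j^q)^{p/q}\le\sum_{j\le m}G_j^p$ and $\sum_{m\ge j}2^{-m}=2\cdot 2^{-j}$.
\item If $q<p$, fix $\delta\in(0,1)$. Apply H\"older with exponents $p/q$ and $(p/q)'$ to $\sum_{j\le m}2^{-(m-j)\delta q/p}\cdot 2^{(m-j)\delta q/p}G_j^q$. This gives $(\sum_{j\le m}G_j^q)^{p/q}\lesssim_\delta\sum_{j\le m}2^{(m-j)\delta}G_j^p$. Then $\sum_{m\ge j}2^{-m}2^{(m-j)\delta}\approx 2^{-j}$ finishes the bound.
\end{itemize}
With this, the constants in the nested cases depend only on $p,q$. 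The full equivalence is therefore uniform in $k$, as required.
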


\begin{proof}
We prove the Besov and the Triebel--Lizorkin cases separately. In the Besov case, we will obtain an exact identity
\begin{equation*}
  \Norm{\hat u^{(k)}}{\dot b^{s}_{p,q}(\R^n)}=\Norm{u}{\dot b^{s-\frac1p}_{p,q}(\R^{n-1})},
\end{equation*}
whose proof is a direct computation:
\begin{equation}\label{eq:bTrace-pf}
\begin{split}
  \Norm{u}{\dot b^{s-\frac1p}_{p,q}(\R^{n-1})}^q
  &=\sum_{j\in\Z}2^{j(s-\frac1p)q} 2^{j(n-1)(\frac12-\frac1p)q}
    \Big(\sum_{I\in\mathscr D_j(\R^{n-1})}\abs{u_I}^p\Big)^{\frac qp} \\
  &=\sum_{j\in\Z}2^{jsq} 2^{jn(\frac12-\frac1p)q}
    \Big(\sum_{I\in\mathscr D_j(\R^{n-1})}\abs{\ell(I)^{\frac12}u_I}^p\Big)^{\frac qp}     \\
  &=\sum_{j\in\Z}2^{jsq} 2^{jn(\frac12-\frac1p)q}
    \Big(\sum_{Q\in\mathscr D_j(\R^n)}\abs{\hat u_Q^{(k)}}^p\Big)^{\frac qp}
    =\Norm{\hat u^{(k)}}{\dot b^s_{p,q}(\R^n)}^q.
\end{split}
\end{equation}

We then turn to the Triebel--Lizorkin case of \eqref{eq:bfTrace}, restated for concreteness as
\begin{equation}\label{eq:fTrace}
  \Norm{\hat u^{(k)}}{\dot f^{s}_{p,q}(\R^n)}\approx
  \Norm{u}{\dot b^{s-\frac1p}_{p,p}(\R^{n-1})}.
\end{equation}
To prove this, we define
\begin{equation*}
  E(I,k):=I\times[(k+\tfrac13)\ell(I),(k+\tfrac23)\ell(I))\subset Q(I,k)\subset\R^n,\qquad
  I\in\mathscr D(\R^{n-1}),\quad k\in\Z,
\end{equation*}
which obviously satisfies $\abs{E(I,k)}=\frac13\abs{Q(I,k)}$.
We then apply case $p=q$ of the previous computation \eqref{eq:bTrace-pf} to find that
\begin{equation*}
\begin{split}
  \Norm{u}{\dot b^{s-\frac1p}_{p,p}(\R^{n-1})}^p
  &=\sum_{j\in\Z}2^{jsp} 2^{jn(\frac12-\frac1p)p}
    \sum_{I\in\mathscr D_j(\R^{n-1})}\abs{\ell(I)^{\frac12}u_I}^p \\
  &=3\int_{\R^n}\sum_{j\in\Z}2^{jsp} 2^{jn(\frac12-\frac1p)p}
    \sum_{I\in\mathscr D_j(\R^{n-1})}\abs{\ell(I)^{\frac12}u_I}^p \frac{\one_{E(I,k)}(x)}{\abs{Q(I,k)}}\,dx \\
  &=3\int_{\R^n}\sum_{j\in\Z}2^{jsp}
    \sum_{I\in\mathscr D_j(\R^{n-1})} \abs{\ell(I)^{\frac12}u_I}^p
    \frac{\one_{E(I,k)}(x)}{\abs{Q(I,k)}^{\frac p2}}\,dx.
\end{split}
\end{equation*}
As observed in \cite[(5.25) et seq.]{BHYY:III}, the sets $\{E(I,k)\}_{I\in\mathscr D(\R^{n-1})}$ are pairwise disjoint, so there is at most one non-zero term in the previous double sum for each fixed $x\in\R^n$. Thus
\begin{equation}\label{eq:fTrace-step}
\begin{split}
  \Norm{u}{\dot b^{s-\frac1p}_{p,p}(\R^{n-1})}^p
  &=3\int_{\R^n}\Big(\sum_{j\in\Z} \Babs{2^{js}\sum_{I\in\mathscr D_j(\R^{n-1})}
      \ell(I)^{\frac12}u_I \frac{\one_{E(I,k)}}{\abs{Q(I,k)}^{\frac12}}}^q\Big)^{\frac pq}\,dx \\
  &=3\int_{\R^n}\Big(\sum_{j\in\Z} \Babs{2^{js}\sum_{Q\in\mathscr D_j(\R^{n})}
      \hat u_Q^{(k)} \frac{\one_{E_Q}}{\abs{Q}^{\frac12}}}^q\Big)^{\frac pq}\,dx,
\end{split}
\end{equation}
where
\begin{equation*}
  E_Q:=Q'\times[(h+\frac13)\ell(Q),(h+\frac23)\ell(Q))\quad\text{for all}\quad
  Q=Q'\times[h\ell(Q),(h+1)\ell(Q))\in\mathscr D(\R^n).
\end{equation*}
Since $E_Q\subset Q$ and $\abs{E_Q}=\frac13\abs{Q}\approx\abs{Q}$ for all $Q\in\mathscr D(\R^n)$, it follows from \cite[Proposition 2.7]{FJ:90} that
\begin{equation*}
  \operatorname{RHS}\eqref{eq:fTrace-step}\approx\Norm{\hat u_Q^{(k)}}{\dot f^s_{p,q}(\R^n)}^p.
\end{equation*}
Substituting this into \eqref{eq:fTrace-step}, we complete the proof of \eqref{eq:fTrace} and hence Lemma \ref{lem:bfTrace}.
\end{proof}

\begin{lemma}\label{lem:bfTrace-rho}
Let $p\in(0,\infty)$, $q\in(0,\infty]$,
and $d=\{d_{I}\}_{I\in\mathscr{D}(\mathbb{R}^{n-1})}$
and $\rho=\{\rho_Q\}_{Q\in\mathscr{D}(\mathbb{R}^n)}$
be two families of quasi-norms on a Banach space $\fX$.
For each $k\in\Z$, the following statements are equivalent:
\begin{enumerate}[\rm(i)]
  \item\label{it:seq-test} For all $I\in\mathscr{D}(\mathbb{R}^{n-1})$ and $\fx\in\fX$,
\begin{equation}\label{eq:sec-test}
  d_I(\fx)\lesssim\rho_{Q(I,k)}(\fx)
\end{equation}
with the implicit positive constant independent of $I$ and $\fx$.
  \item\label{it:seq-trace} For all $t=\{t_I\}_{I\in\mathscr D(\R^{n-1})}\in\fX^{\mathscr D(\R^{n-1})}$ and $\hat t^{(k)}$ as in \eqref{eq:hat-u-k}
\begin{equation}\label{eq:sec-trace}
   \Norm{t}{\dot b^{s-\frac1p}_{p,r}(d,\R^{n-1})}\lesssim \Norm{\hat t^{(k)}}{\dot a^{s}_{p,q}(\rho,\R^n)}
\end{equation}
with $r$ as in \eqref{eq:r=q,p}
and the implicit positive constant independent of $t$.
\end{enumerate}
A similar equivalence holds with both instances of ``$\lesssim$'' replaced by ``$\gtrsim$''.
\end{lemma}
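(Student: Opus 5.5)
The plan is to reduce everything to the scalar, unweighted Lemma \ref{lem:bfTrace}, applied to suitable scalar sequences built from $t$. The basic observation is that, by the definitions of $\dot a^s_{p,q}(\rho)$ and $\dot b^{s-\frac1p}_{p,r}(d)$, the weighted quasi-norms depend on $t$ only through the scalars $\rho_Q(t_Q)$ and $d_I(t_I)$. Concretely, $\Norm{\hat t^{(k)}}{\dot a^s_{p,q}(\rho,\R^n)}=\Norm{\{\rho_Q(\hat t^{(k)}_Q)\}_Q}{\dot a^s_{p,q}(\R^n)}$, and similarly $\Norm{t}{\dot b^{s-\frac1p}_{p,r}(d,\R^{n-1})}=\Norm{\{d_I(t_I)\}_I}{\dot b^{s-\frac1p}_{p,r}(\R^{n-1})}$. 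Since $\hat t^{(k)}_{Q(I,k)}=\ell(I)^{\frac12}t_I$ and the quasi-norms are homogeneous, we get $\rho_{Q(I,k)}(\hat t^{(k)}_{Q(I,k)})=\ell(I)^{\frac12}\rho_{Q(I,k)}(t_I)$. Consequently $\{\rho_Q(\hat t^{(k)}_Q)\}_Q=\hat v^{(k)}$ with $v_I:=\rho_{Q(I,k)}(t_I)$, and Lemma \ref{lem:bfTrace} gives
\begin{equation*}
\Norm{\hat t^{(k)}}{\dot a^s_{p,q}(\rho,\R^n)}\approx\Norm{\{\rho_{Q(I,k)}(t_I)\}_I}{\dot b^{s-\frac1p}_{p,r}(\R^{n-1})}.
\end{equation*}
This identity is the only real content of the argument.

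For (i)$\Rightarrow$(ii), the hypothesis gives the pointwise bound $d_I(t_I)\lesssim\rho_{Q(I,k)}(t_I)$ for every $I$. The scalar sequence quasi-norm $\dot b^{s-\frac1p}_{p,r}$ is monotone in the absolute values of the entries, so this bound transfers to the norms. Combining with the display above yields \eqref{eq:sec-trace}.

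For (ii)$\Rightarrow$(i), test \eqref{eq:sec-trace} on sequences supported at a single cube: fix $I_0$ and $\fx$, and set $t_{I_0}=\fx$, $t_I=0$ otherwise. Then both sides reduce to a single term. The left side equals $\ell(I_0)^{-(s-\frac1p)}\abs{I_0}^{\frac1p-\frac12}d_{I_0}(\fx)$ by the computation of Lemma \ref{lem:tReasy}, which is an equality for one-term sequences. The right side, via the display above, is the same power of $\ell(I_0)$ times $\rho_{Q(I_0,k)}(\fx)$. One should check that the normalising powers of $\ell(I_0)$ agree exactly. They do, since Lemma \ref{lem:bfTrace} is an equivalence uniform in the sequence and hence in particular for one-term sequences, so no separate computation is needed. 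Cancelling the common factor gives \eqref{eq:sec-test}.

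The ``$\gtrsim$'' version follows verbatim with the inequalities reversed, since monotonicity and the single-term test are symmetric. No step is a genuine obstacle. The only care needed is the homogeneity bookkeeping of the factor $\ell(I)^{\frac12}$ in \eqref{eq:hat-u-k}, which is exactly why one should apply Lemma \ref{lem:bfTrace} to the scalar sequence $v$ rather than compare norms by hand.
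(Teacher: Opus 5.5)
Your proposal is correct and follows the paper's proof. You reduce to the scalar Lemma~\ref{lem:bfTrace} via the sequences $u_I=d_I(t_I)$ and $v_I=\rho_{Q(I,k)}(t_I)$, use monotonicity of the scalar quasi-norm for (i)$\Rightarrow$(ii), and test on one-term sequences $t_I=\delta_{I,J}\fx$ for the converse. The only cosmetic difference is that the paper evaluates the right-hand side of \eqref{eq:sec-trace} for the one-term sequence directly instead of routing it through Lemma~\ref{lem:bfTrace}, which does not affect the argument.
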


\begin{proof}
Let $u_I:=d_I(t_I)$ and $v_I:=\rho_{Q(I,k)}(t_I)$.
Then
\begin{equation}\label{eq:tuv}
  \Norm{t}{\dot b^{s-\frac1p}_{p,r}(d,\R^{n-1})}
  =\Norm{u}{\dot b^{s-\frac1p}_{p,r}(\R^{n-1})}, \qquad
  \Norm{\hat t^{(k)}}{\dot a^{s}_{p,q}(\rho,\R^n)}
  =\Norm{\hat v^{(k)}}{\dot a^{s}_{p,q}(\R^n)}.
\end{equation}
For \eqref{it:seq-test} $\Rightarrow$ \eqref{it:seq-trace}, we have
\begin{equation*}
\begin{split}
  \Norm{u}{\dot b^{s-\frac1p}_{p,r}(\R^{n-1})}
  &\lesssim\Norm{v}{\dot b^{s-\frac1p}_{p,r}(\R^{n-1})}\quad\text{by assumption \eqref{it:seq-test}} \\
  &\approx\Norm{\hat v^{(k)}}{\dot a^{s}_{p,q}(\R^n)}\quad\text{by Lemma \ref{lem:bfTrace}}.
\end{split}
\end{equation*}
Together with \eqref{eq:tuv}, this proves \eqref{it:seq-trace}.

For \eqref{it:seq-trace} $\Rightarrow$ \eqref{it:seq-test}, take $t_I=\delta_{I,J}\fx$ with arbitrary $J\in\mathscr D(\R^{n-1})$ and $\fx\in\fX$. Then
\begin{equation*}
\begin{split}
  \operatorname{LHS}\eqref{eq:sec-trace}
  &=\ell(J)^{-(s-\frac1p)}\ell(J)^{(n-1)(\frac1p-\frac12)}d_J(\fx)
  =\ell(J)^{-s}\ell(J)^{n(\frac1p-\frac12)}\ell(J)^{\frac12}d_J(\fx), \\
  \operatorname{RHS}\eqref{eq:sec-trace}
  &=\ell(J)^{-s}\ell(J)^{n(\frac1p-\frac12)}\rho_{Q(J,k)}(\ell(J)^{\frac12}\fx)
  =\ell(J)^{-s}\ell(J)^{n(\frac1p-\frac12)}\ell(J)^{\frac12}\rho_{Q(J,k)}(\fx).
\end{split}
\end{equation*}
Cancelling out the common factor, we see that \eqref{it:seq-trace} implies $d_J(\fx)\lesssim \rho_{Q(J,k)}(\fx)$, i.e., \eqref{it:seq-test}.

The proof of the similar equivalence with both ``$\lesssim$'' replaced by ``$\gtrsim$'' is exactly the same, making the same replacements in the proof.
\end{proof}

\begin{remark}\label{rem:sec-test-0}
If the family $\rho=\{\rho_Q\}_{Q\in\mathscr D(\R^n)}$ satisfies an estimate of the form
\begin{equation}\label{eq:weak-db}
  \frac{\rho_Q(\fx)}{\rho_R(\fx)}\lesssim\Big(1+\frac{\abs{c_Q-c_R}}{\ell(Q)}\Big)^c
\end{equation}
for all $\fx\in\fX$ and all pairs of cubes of equal size $\ell(Q)=\ell(R)$ with the implicit
positive constant independent of $\fx$,
$Q$, and $R$, then condition \eqref{eq:sec-test} is equivalent (up to the dependence of the implicit constant on $k$) to
\begin{equation}\label{eq:sec-test-0}
  d_I(\fx)\lesssim\rho_{Q(I,0)}(\fx)
\end{equation}
with the implicit positive constant independent of $\fx$ and $I$.
Indeed, \eqref{eq:sec-test-0} implies \eqref{eq:sec-test} with constant $(1+\abs{k})^c$ and vice versa. The same applies to ``$\gtrsim$'' in place of ``$\lesssim$" in both \eqref{eq:sec-test} and \eqref{eq:sec-test-0}.
\end{remark}

\subsection{Function spaces}

We now turn to the trace and the extension operators acting between function spaces over $\R^n$ and $\R^{n-1}$.

For a sufficiently nice function $\Psi$ on $\R^n$, its \emph{trace} on $\R^{n-1}$ is simply
\begin{equation}\label{eq:trace-naive}
  (\operatorname{Tr}\Psi)(x'):=\Psi(x',0),\qquad x'\in\R^{n-1}.
\end{equation}
For general elements $f\in\dot A^s_{p,q}(\rho,\R^n)$, the idea is to expand $f$ in terms sufficiently regular wavelets $\Psi_Q^{(\lambda)}$ and apply \eqref{eq:trace-naive} component-wise to each $\Psi_Q^{(\lambda)}=\Psi^{(\lambda',\lambda_n)}_{Q(I,k)}$.
Then it is useful to note that
\begin{equation}\label{eq:trace-PsiQ}
\begin{split}
  \operatorname{Tr}\Psi_{Q(I,k)}^{(\lambda',\lambda_n)}
  &:=\Psi_{Q(I,k)}^{(\lambda',\lambda_n)}(\cdot,0),\qquad
  \lambda'\in\{0,1\}^{n-1},\quad\lambda_n\in\{0,1\}, \\
  &=\psi_{[k,k+1)\ell(I)}^{(\lambda_n)}(0)\cdot \Psi_{I}^{(\lambda')}
  =\ell(I)^{-\frac12}\psi^{(\lambda_n)}(-k)\cdot\Psi_{I}^{(\lambda')}.
\end{split}
\end{equation}
is a constant multiple of a wavelet $\Psi_I^{(\lambda')}$ on $\R^{n-1}$.

The case of the extension is slightly more tricky. In principle, given a function $F$ on $\R^{n-1}$‚ we want to construct a function $\widetilde F=\operatorname{Ext}F$ on $\R^n$ such that $\widetilde F(x',0)=F(x')$. Obviously, there is a great deal of flexibility in the choice $\widetilde F$. Starting with the wavelets first, we make the following choice:

\begin{lemma}\mbox{}
\begin{enumerate}[\rm(i)]
  \item\label{it:k0} For any system of Daubechies wavelets, there is $k_0\in\Z$ such that $\psi^{(0)}(-k_0)=\varphi(-k_0)\neq 0$.
  \item\label{it:tr-ext} Using the number $k_0$ from \eqref{it:k0} and defining
\begin{equation}\label{eq:def-ext}
  \operatorname{Ext}\Psi_I^{(\lambda')}:=\frac{\ell(I)^{\frac12}}{\psi^{(0)}(-k_0)}
  \Psi^{(\lambda',0)}_{Q(I,k_0)},
  \qquad I\in\mathscr D(\R^{n-1}),\quad\lambda'\in\Lambda_{n-1},
\end{equation}
it follows that
\begin{align}\label{tr-ext}
  (\operatorname{Tr}\circ\operatorname{Ext})\Psi^{(\lambda')}_I=\Psi_I^{(\lambda')}.
\end{align}
\end{enumerate}
\end{lemma}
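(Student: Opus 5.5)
For \eqref{it:k0}, I will argue by contradiction using the partition of unity property of the scaling function. For the Daubechies scaling function $\varphi=\psi^{(0)}\in C^N_{\rm c}(\R)$ with $N\geq 1$, the integer translates satisfy
\begin{equation*}
  \sum_{k\in\Z}\varphi(x-k)=\int_\R\varphi=1\qquad\forall\,x\in\R,
\end{equation*}
where $\int\varphi=1$ is the standard normalisation; in any case $\int\varphi\neq0$ because $\hat\varphi(0)\neq0$ for a multiresolution scaling function. Only finitely many terms of the sum are nonzero, by the compact support. Evaluating at $x=0$ gives $\sum_{k}\varphi(-k)=\int\varphi\neq 0$. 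Hence not all $\varphi(-k)$ can vanish, and some $k_0$ has $\varphi(-k_0)\neq 0$.

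To justify the partition of unity, I will use that the $n=1$ wavelet system has cancellation \eqref{e1937} of order at least $0$. Then $\hat\psi(2\pi m)=0$, and together with the two-scale relation $\hat\varphi(\xi)=m_0(\xi/2)\hat\varphi(\xi/2)$, $m_0(\pi)=0$, this yields $\hat\varphi(2\pi m)=0$ for $m\neq0$. Poisson summation then shows that $\sum_k\varphi(x-k)$ is the constant $\hat\varphi(0)$. If the paper prefers to avoid these internals, I will simply cite this classical fact from \cite{mey92,woj97}. This is the only step requiring outside input, and I expect it to be the main, though mild, obstacle.

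For \eqref{it:tr-ext}, I will compute directly from \eqref{eq:trace-PsiQ} with $\lambda_n=0$ and $k=k_0$:
\begin{equation*}
  \operatorname{Tr}\Psi^{(\lambda',0)}_{Q(I,k_0)}=\ell(I)^{-\frac12}\psi^{(0)}(-k_0)\,\Psi^{(\lambda')}_I .
\end{equation*}
Since $\operatorname{Tr}$ is linear, multiplying by the constant $\ell(I)^{\frac12}/\psi^{(0)}(-k_0)$ from \eqref{eq:def-ext} cancels both factors exactly and gives \eqref{tr-ext}.

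I will also double-check \eqref{eq:trace-PsiQ} once: it holds because $Q(I,k)$ has $n$-th side $[k\ell(I),(k+1)\ell(I))$, whose one-dimensional normalised wavelet evaluated at $0$ is $\ell(I)^{-1/2}\psi^{(\lambda_n)}(-k)$, with the paper's convention of placing the wavelet at the corner $x_Q$.
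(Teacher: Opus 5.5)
Your proof is correct. Part (ii) is the same one-line computation as in the paper: apply \eqref{eq:trace-PsiQ} with $\lambda_n=0$ and $k=k_0$, and the prefactor in \eqref{eq:def-ext} cancels $\ell(I)^{-\frac12}\psi^{(0)}(-k_0)$.

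Part (i) takes a different route. The paper gives no argument there and simply cites \cite[Remark 5.2]{BHYY:III}. You instead derive it from the partition of unity $\sum_{k\in\Z}\varphi(x-k)\equiv\hat\varphi(0)\neq0$, evaluated at $x=0$. This makes the lemma self-contained and shows it holds for any continuous, compactly supported orthonormal scaling function. The price is importing some multiresolution machinery that the citation keeps hidden.

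One intermediate claim in your justification is false, though inessential. In general $\hat\psi(2\pi m)\neq0$ for some $m\neq0$. Indeed, with $\hat f(\xi)=\int f(x)e^{-ix\xi}\,dx$, orthonormality of the translates $\psi(\cdot-k)$ forces $\sum_{m\in\Z}|\hat\psi(2\pi m)|^2=1$. The cancellation of $\psi$ only gives $\hat\psi(0)=0$.

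What you actually need is $m_0(\pi)=0$, which you already invoke. It also follows from $|m_0(\xi)|^2+|m_0(\xi+\pi)|^2=1$ together with $m_0(0)=1$. Write $m=2^jl$ with $l$ odd and iterate the two-scale relation: $\hat\varphi(2\pi m)=\hat\varphi(\pi l)\prod_{i=0}^{j}m_0(2^i\pi l)$. This vanishes because the factor $m_0(\pi l)=m_0(\pi)$ is zero by $2\pi$-periodicity.

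Since $\varphi$ is continuous with compact support, $\sum_k\varphi(x-k)$ is a continuous $1$-periodic function with Fourier coefficients $\hat\varphi(2\pi m)$. Hence it equals $\hat\varphi(0)$ at every point, and your evaluation at $x=0$ is legitimate.
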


\begin{proof}
Part \eqref{it:k0} is \cite[Remark 5.2]{BHYY:III}. Part \eqref{it:tr-ext} then follows from the computation
\begin{equation*}
\begin{split}
  (\operatorname{Tr}\circ\operatorname{Ext})\Psi^{(\lambda')}_I
  &=\frac{\ell(I)^{\frac12}}{\psi^{(0)}(-k_0)}\operatorname{Tr}\Psi^{(\lambda',0)}_{Q(I,k_0)}
    \quad\text{by \eqref{eq:def-ext}} \\
   &=\frac{\ell(I)^{\frac12}}{\psi^{(0)}(-k_0)}\frac{\psi^{(0)}(-k_0)}
   {\ell(I)^{\frac12}}\Psi^{(\lambda')}_{I}
   =\Psi^{(\lambda')}_I     \quad\text{by \eqref{eq:trace-PsiQ}}.
\end{split}
\end{equation*}
This completes the proof.
\end{proof}

\begin{theorem}\label{thm:ext-trace}
Let $p\in(0,\infty)$, $q\in(0,\infty]$,
$d=\{d_{I}\}_{I\in\mathscr{D}(\mathbb{R}^{n-1})}$
be a family of $(u_d,a_d,b_d,c_d)$-norms,
and $\rho=\{\rho_Q\}_{Q\in\mathscr{D}(\mathbb{R}^n)}$
be a family of $(u_\rho,a_\rho,b_\rho\,c_\rho)$-norms,
where $u_{i}\in(0,1]$ and $a_i,b_i,c_i\in[0,\infty)$
for $i\in\{d,\rho\}$.
Let $N\in\mathbb{N}$ be sufficiently large
such that \eqref{eq:N-wavelet} holds for the parameters
corresponding to
both spaces
\begin{equation*}
    \dot{A}^{s}_{p,q}(\rho,\R^n),\qquad
    \dot{B}^{s-\frac1p}_{p,r}(d,\R^{n-1}),\quad\text{where}\quad
    r=\begin{cases} q & \text{if }A=B, \\ p & \text{if }A=F,\end{cases}
\end{equation*}
and let $\{\Psi_Q^{(\lambda)}\}_{(\lambda,Q)\in\Omega_n}$ be a Daubechies wavelet system of order $N$.
Then the following assertions hold:
\begin{enumerate}[\rm(i)]
  \item\label{it:ext-thm} With $\operatorname{Ext}\Psi_I^{(\lambda')}$ defined as in \eqref{eq:def-ext}, the rule
\begin{equation*}
  \operatorname{Ext}:f\mapsto
  \sum_{\lambda'\in\Lambda_{n-1}}
  \sum_{I\in\mathscr D(\R^{n-1})}\pair{f}{\Psi_I^{(\lambda')}}
     \operatorname{Ext}\Psi_{I}^{(\lambda')}
\end{equation*}
gives a well-defined bounded linear operator
\begin{equation*}
  \operatorname{Ext}:\
  \dot B^{s-\frac{1}{p}}_{p,r}(d,\R^{n-1})\to
  \dot A^{s}_{p,q}(\rho,\R^n)
\end{equation*}
if and only if
\begin{equation}\label{eq:ext-test}
d_I(\fx)\gtrsim \rho_{Q(I,0)}(\fx),
\qquad\forall\ I\in\mathscr{D}(\mathbb{R}^{n-1}),\ \fx\in\fX
\end{equation}
with the implicit positive constant independent of $I$ and $\fx$.

  \item\label{it:trace-thm} Suppose further that
\begin{equation}\label{eq:trace-s}
   s>\frac1p+(n-1)\Big(\frac{1}{\max\{p,u_d\}}-1\Big)   +b_d.
\end{equation}
  With $\operatorname{Tr}\Psi_Q^{(\lambda)}$ defined as in \eqref{eq:trace-PsiQ}, the rule
\begin{equation*}
  \operatorname{Tr}:f\mapsto
  \sum_{Q\in\mathscr D(\R^n)}\pair{f}{\Psi_Q^{(\lambda)}}\operatorname{Tr}\Psi_Q^{(\lambda)}
\end{equation*}
gives a well-defined bounded linear operator
\begin{equation*}
  \operatorname{Tr}:\ \dot A^{s}_{p,q}(\rho,\R^n)\to\dot B^{s-\frac{1}{p}}_{p,r}(d,\R^{n-1})
\end{equation*}
if and only if
\begin{equation}\label{eq:trace-test}
d_I(\fx)\lesssim \rho_{Q(I,0)}(\fx),\qquad
\forall\ I\in\mathscr{D}(\mathbb{R}^{n-1}),\ \fx\in\fX
\end{equation}
with the implicit positive constant independent of $I$ and $\fx$.

  \item\label{it:tr-ext-comp} Under all assumptions \eqref{eq:ext-test}, \eqref{eq:trace-s}, and \eqref{eq:trace-test}, the composition $\operatorname{Tr}\circ\operatorname{Ext}$ is the identity on $\dot B^{s-\frac1p}_{p,r}(\rho)$.
\end{enumerate}
\end{theorem}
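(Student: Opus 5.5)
The plan is to transfer everything to the sequence level through the wavelet characterization (Theorem \ref{thm-wavelet} via Corollary \ref{cor:wavelet}) and then invoke Lemma \ref{lem:bfTrace-rho} together with Remark \ref{rem:sec-test-0}. Since $N$ satisfies \eqref{eq:N-wavelet} for both spaces, Lemma \ref{lem:seq} shows that $\dot A^s_{p,q}(\rho,\R^n)$ and $\dot B^{s-1/p}_{p,r}(d,\R^{n-1})$ are discretizable. Consequently:
\begin{itemize}
\item the wavelet coefficients of $f$ lie in the corresponding sequence spaces with comparable norms;
\item wavelets are synthesis molecules, so Theorem \ref{thm-mole}\eqref{thmit:sy} applies to sums $\sum t_Q\Psi_Q^{(\lambda)}$.
\end{itemize}
Both families $\rho$ and $d$ satisfy \eqref{eq:weak-db} by \eqref{eq:strDb}, with $B_{a,b,c}$ reducing to a power of the relative distance for equal-size cubes. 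Hence the tests at $Q(I,k)$ and at $Q(I,0)$ are equivalent.

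For \eqref{it:ext-thm}, write $t_I^{(\lambda')}:=\pair{f}{\Psi_I^{(\lambda')}}$, which is well defined by Proposition \ref{prop-wavelet}\eqref{it:f,wave}. Then
\[
\operatorname{Ext}f=\sum_{\lambda'}\sum_{I}\frac{1}{\psi^{(0)}(-k_0)}\,\hat t^{(\lambda',k_0)}_{Q(I,k_0)}\,\Psi^{(\lambda',0)}_{Q(I,k_0)},
\]
with $\hat t$ as in \eqref{eq:hat-u-k}. Each $\Psi^{(\lambda',0)}_Q$ with $\lambda'\neq 0$ has cancellation in the first $n-1$ variables, so it is an analysis and synthesis molecule, as in Proposition \ref{prop-wavelet}\eqref{it:wave=mole}. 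The "if" direction then runs as follows:
\begin{enumerate}
\item Theorem \ref{thm-mole}\eqref{thmit:sy} bounds $\|\operatorname{Ext}f\|$ by $\sum_{\lambda'}\|\hat t^{(\lambda',k_0)}\|_{\dot a^s_{p,q}(\rho)}$.
\item Lemma \ref{lem:bfTrace-rho} in its "$\gtrsim$" form, using \eqref{eq:ext-test} and Remark \ref{rem:sec-test-0}, bounds this by $\|t\|_{\dot b^{s-1/p}_{p,r}(d)}$.
\item Theorem \ref{thm-wavelet} bounds that by $\|f\|$.
\end{enumerate}
For "only if", test with $f=\fx\Psi^{(\lambda')}_J$. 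Then $\operatorname{Ext}f$ is a single wavelet on $Q(J,k_0)$. Orthonormality of the $\mathbb R^n$ wavelets gives $\|\operatorname{Ext}f\|\approx\|\hat t\|$ through Theorem \ref{thm-wavelet}, and the one-term computation in Lemma \ref{lem:bfTrace-rho} yields $\rho_{Q(J,k_0)}(\fx)\lesssim d_J(\fx)$.

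For \eqref{it:trace-thm}, expand $f$ in all wavelets $\Psi^{(\lambda',\lambda_n)}_{Q(I,k)}$ and apply \eqref{eq:trace-PsiQ}. The result is
\[
\operatorname{Tr}f=\sum_{k}\sum_{(\lambda',\lambda_n)}\psi^{(\lambda_n)}(-k)\sum_I \ell(I)^{-1/2}\pair{f}{\Psi_{Q(I,k)}}\Psi_I^{(\lambda')}.
\]
Only finitely many $k$ contribute, because $\psi^{(\lambda_n)}$ is compactly supported. Two features need attention.
\begin{itemize}
\item The case $\lambda'=0$ gives the noncancellative $\Psi^{(0)}_I$. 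Condition \eqref{eq:trace-s} is exactly $f_d-\frac{n-1}{2}<0$ for $d$, so by Proposition \ref{prop-wavelet}\eqref{it:wave=symole} these are still synthesis molecules.
\item The case $\lambda=0$ does not occur in the $\R^n$ expansion.
\end{itemize}
For each fixed $k$ and $\lambda$, the coefficient sequence $u_I=\ell(I)^{-1/2}\pair{f}{\Psi_{Q(I,k)}}$ satisfies $\hat u^{(k)}=$ the restriction of the wavelet coefficients to the cubes $Q(I,k)$. Its $\dot a^s_{p,q}(\rho)$ norm is at most the full coefficient norm, which is $\lesssim\|f\|$. Lemma \ref{lem:bfTrace-rho} in its "$\lesssim$" form, together with \eqref{eq:trace-test}, then controls $\|u\|_{\dot b^{s-1/p}_{p,r}(d)}$. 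Finally, Theorem \ref{thm-mole}\eqref{thmit:sy} on $\R^{n-1}$ bounds $\operatorname{Tr}f$, summing over the finitely many $k,\lambda$ with the quasi-triangle inequality. For "only if", test with $f=\fx\Psi^{(\lambda',0)}_{Q(J,k_0)}$, where $\lambda'\ne0$. Then $\operatorname{Tr}f=\ell(J)^{-1/2}\psi^{(0)}(-k_0)\fx\Psi^{(\lambda')}_J$, and the one-term norms computed as in Lemma \ref{lem:bfTrace-rho} give $d_J(\fx)\lesssim\rho_{Q(J,k_0)}(\fx)$.

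Part \eqref{it:tr-ext-comp} follows from \eqref{tr-ext}, linearity, and the convergence just established: applying $\operatorname{Tr}$ to the series defining $\operatorname{Ext}f$ term by term reproduces the wavelet expansion \eqref{thm-wavelet-e1} of $f$ on $\R^{n-1}$.

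The main obstacle is rigor in the well-definedness of $\operatorname{Tr}$. The naive formula \eqref{eq:trace-naive} is defined only on nice functions, so I would \emph{define} $\operatorname{Tr}$ through the series and check its convergence in $\mathscr S_\infty'(\R^{n-1};\fX)$. The key point is that the $\lambda'=0$ terms require \eqref{eq:trace-s}: it provides the absence of cancellation needed for Theorem \ref{thm-mole}, which would otherwise fail.
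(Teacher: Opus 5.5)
Your proposal is correct and follows the paper's proof essentially step by step. It uses the same reduction from $Q(I,0)$ to $Q(I,k)$ via Remark~\ref{rem:sec-test-0}, and the same factorization of $\operatorname{Ext}$ and $\operatorname{Tr}$ into wavelet analysis, the lifting/restriction map of Lemma~\ref{lem:bfTrace-rho}, and wavelet synthesis, with \eqref{eq:trace-s} used exactly to make the non-cancellative $\Psi_I^{(0)}$ synthesis molecules. Your ``only if'' parts use the same single-wavelet tests, which you spell out in more detail than the paper, and your Ext step correctly invokes \eqref{eq:ext-test} where the paper's text mistakenly cites \eqref{eq:trace-test}.

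One caveat, shared with the paper: the identification of \eqref{eq:trace-s} with $f_d-\frac{n-1}{2}<0$ holds only with $\min\{p,u_d\}$ (as dictated by \eqref{eq:Jr}). The $\max\{p,u_d\}$ in the statement should therefore be read as a typo.
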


\begin{proof}
We begin with some observations that are relevant to both parts \eqref{it:ext-thm} and \eqref{it:trace-thm}.

First, the assumption that $\rho$ is a family of $(u_\rho,a_\rho,b_\rho,c_\rho)$-norms (Definition \ref{def:uabc}) implies condition \eqref{eq:weak-db} of Remark \ref{rem:sec-test-0} with $c=c_\rho$. Thus, by the said remark, both \eqref{eq:ext-test} and \eqref{eq:trace-test} are equivalent to versions with $Q(I,k)$ in place of $Q(I,0)$, allowing the implicit positive constants to depend on $k\in\Z$. These are precisely assumption \eqref{eq:sec-test} of Lemma \ref{lem:bfTrace-rho} and its reverse version, giving us access to that lemma.

Second, by the assumption on $N$, Corollary \ref{cor:wavelet} (referring to Proposition \ref{prop-wavelet}\eqref{it:wave=mole}) guarantees that
\begin{equation}\label{eq:PsiQ=mole}
 \text{$\Psi_Q^{(\lambda)}$ ($Q\in\mathscr D(\R^n)$, $\lambda\in\Lambda_n$) are analysis and synthesis molecules for $\dot A^s_{p,q}(\rho,\R^n)$}
\end{equation}
and that
\begin{equation}\label{eq:PsiI=mole}
 \text{$\Psi_I^{(\lambda')}$ ($I\in\mathscr D(\R^{n-1})$, $\lambda\in\Lambda_{n-1}$) are analysis and synthesis molecules for $\dot B^{s-\frac1p}_{p,r}(d,\R^{n-1})$}.
\end{equation}
Moreover, the additional assumption \eqref{eq:trace-s} in part \eqref{it:trace-thm} is exactly the assumption \eqref{eq:sLarge} corresponding to $(u_d,a_d,b_d,c_d)$-norms in dimension $n-1$ in place of $(u,a,b,c)$-norms in dimension $n$. Thus, under this assumption, Corollary \ref{cor:wavelet} (referring to Proposition \ref{prop-wavelet}\eqref{it:wave=symole}) further guarantees that
\begin{equation}\label{eq:Psi0=mole}
 \text{$\Psi_I^{(0)}$ ($I\in\mathscr D(\R^{n-1})$) are synthesis molecules for $\dot B^{s-\frac1p}_{p,r}(d,\R^{n-1})$}.
\end{equation}
Below, we will repeatedly use the mapping properties of analysis and synthesis molecules established in Theorem \ref{thm-mole}. We will do this without quoting Theorem \ref{thm-mole}, only referring to the relevant property among \eqref{eq:PsiQ=mole}, \eqref{eq:PsiI=mole}, and \eqref{eq:Psi0=mole}.

The proofs of the ``only if'' claims in both parts \eqref{it:ext-thm} and \eqref{it:trace-thm} follow routinely by testing the boundedness of $\operatorname{Ext}$ on functions of the form $f=\Psi_I^{(\lambda')}\fx$ and the boundedness of $\operatorname{Tr}$ on functions of the form $f=\Psi_Q^{(\lambda)}\fx$, and we omit details. We then consider the ``if'' claims of each part in turn:

\eqref{it:ext-thm}:
From \eqref{eq:def-ext}, we see that the extension operator
\begin{equation*}
  \operatorname{Ext}f=\frac{1}{\psi^{(0)}(-k_0)}\sum_{\lambda'\in\Lambda_{n-1}}\sum_{I\in\mathscr D(\R^{n-1})}
    \ell(I)^{\frac12} \pair{f}{\Psi_I^{(\lambda')}} \Psi^{(\lambda',0)}_{Q(I,k_0)}
\end{equation*}
is a finite linear combination over compositions of the following maps:
\begin{enumerate}[\rm(a)]
  \item the wavelet analysis map $f\mapsto\{u_I:=\pair{f}{\Psi_I^{(\lambda')}}\}_{I\in\mathscr D(\R^{n-1})}$, which is bounded from $\dot B^{s-\frac1p}_{p,r}(d,\R^{n-1})$ to $\dot b^{s-\frac1p}_{p,q}(\rho,\R^n)$ by the analysis part \eqref{eq:PsiI=mole};
  \item the map $u=\{u_I\}_{I\in\mathscr D(\R^{n-1})}\mapsto\hat u^{(k_0)}=:\{t_Q\}_{Q\in\mathscr D(\R^n)}$, which is bounded from $\dot b^{s-\frac1p}_{p,r}(d,\R^{n-1})$ to $\dot a^{s}_{p,q}(\rho,\R^n)$ by Lemma \ref{lem:bfTrace-rho} and Remark \ref{rem:sec-test-0}, under the assumption \eqref{eq:trace-test};
  \item the wavelet synthesis map $\displaystyle\{t_Q\}_{Q\in\mathscr D(\R^{n})}\mapsto \sum_{Q\in\mathscr D(\R^{n})}t_Q\Psi_Q^{(\lambda',0)}$, which is bounded from $\dot a^s_{p,q}(\rho,\R^{n})$ to $\dot A^s_{p,q}(\rho,\R^{n})$ by the synthesis part of \eqref{eq:PsiQ=mole}.
\end{enumerate}
As a finite linear combination of compositions of these bounded maps, $\operatorname{Ext}$ defines a bounded linear operator from $\dot B^{s-\frac1p}_{p,r}(d,\R^{n-1})$ to $\dot A^s_{p,q}(\rho,\R^n)$.

\eqref{it:trace-thm}: We note from \eqref{eq:trace-PsiQ} that $\operatorname{Tr}\Psi_{Q(I,k)}^{(\lambda)}$ is non-zero only if $\psi^{(\lambda_n)}(-k)\neq 0$. Recalling that the Daubechies wavelets are compactly supported, this happens only if $\abs{k}\leq K$ for some finite $K$. Thus
\begin{equation*}
\begin{split}
  \operatorname{Tr}f
  &=\sum_{\abs{k}\leq K}\sum_{\lambda\in\Lambda_n}\sum_{I\in\mathscr D(\R^{n-1})}
  \pair{f}{\Psi_{Q(I,k)}^{(\lambda)}}\operatorname{Tr}\Psi_{Q(I,k)}^{(\lambda)} \\
  &=\sum_{\abs{k}\leq K}\sum_{\lambda\in\Lambda_n}
      \psi^{(\lambda_n)}(-k)\sum_{I\in\mathscr D(\R^{n-1})}\ell(I)^{-\frac12}\pair{f}{\Psi_{Q(I,k)}^{(\lambda)}}
      \Psi_I^{(\lambda')}\quad\text{by \eqref{eq:trace-PsiQ}}
\end{split}
\end{equation*}
is a finite linear combination over compositions of the following maps:
\begin{enumerate}[\rm(a)]
  \item the wavelet analysis map $f\mapsto\{t_Q:=\pair{f}{\Psi_Q^{(\lambda)}}\}_{Q\in\mathscr D(\R^n)}$, which is bounded from $\dot A^s_{p,q}(\rho,\R^n)$ to $\dot a^s_{p,q}(\rho,\R^n)$ by the analysis part of \eqref{eq:PsiQ=mole};
  \item the truncation map $\{t_Q\}_{Q\in\mathscr D(\R^n)}\mapsto \{ \one_{\mathscr Q_{k_0}}(Q)t_Q\}_{Q\in\mathscr D(\R^n)}$, where $\mathscr Q_{k_0}=\{Q(I,k):I\in\mathscr D(\R^{n-1})\}$, which is obviously bounded on $\dot a^s_{p,q}(\rho,\R^n)$;
  \item the map $\{ \one_{\mathscr Q_{k_0}}(Q)t_Q\}_{Q\in\mathscr D(\R^n)}\mapsto \{u_I:= \ell(I)^{-\frac12} t_{Q(I,k)} \}_{ I\in\mathscr D(\R^{n-1})}$, which is bounded from (a subspace of) $\dot a^s_{p,q}(\rho,\R^n)$ to $\dot b^{s-\frac1p}_{p,r}(d,\R^{n-1})$ by Lemma \ref{lem:bfTrace-rho} and Remark \ref{rem:sec-test-0}, under the assumption \eqref{eq:trace-test};
  \item the wavelet synthesis map
  $$\displaystyle\{u_I\}_{I\in\mathscr D(\R^{n-1})}\mapsto \sum_{I\in\mathscr D(\R^{n-1})}u_I\Psi_I^{(\lambda')},$$
  which is bounded from $\dot b^{s-\frac1p}_{p,r}(d,\R^{n-1})$ to $\dot B^{s-\frac1p}_{p,r}(d,\R^{n-1})$ by the synthesis part of \eqref{eq:PsiI=mole} and \eqref{eq:Psi0=mole}; we note that the latter is needed, since the partial vector $\lambda'\in\{0,1\}^{n-1}$ can be identically $0$ even if $\lambda\in\Lambda_n:=\{0,1\}^n\setminus\{\mathbf{0}\}$.
\end{enumerate}
As a finite linear combination of compositions of these bounded maps, $\operatorname{Tr}$ defines a bounded linear operator from $\dot A^s_{p,q}(\rho,\R^n)$ to $\dot B^{s-\frac1p}_{p,r}(d,\R^{n-1})$.

\eqref{it:tr-ext-comp}:
Let $f\in\dot{B}^{s-\frac1p}_{p,r}(d,\mathbb{R}^{n-1})$.
Noting that $$\{\operatorname{Ext}\Psi_I^{(\lambda')}
\}_{\lambda'\in\Lambda_{n-1},I\in\mathscr{D}(\mathbb{R}^{n-1})}$$
is a subset of the wavelet system $\{\Psi_Q^{\lambda}\}_{\lambda\in\Lambda_n,
Q\in\mathscr{D}(\mathbb{R}^n)}$
on $\mathbb{R}^n$, we infer that
\begin{align*}
(\operatorname{Tr}\circ\operatorname{Ext})
f&=\operatorname{Tr}
\Big(\sum_{\lambda'\in\Lambda_{n-1}}\sum_{I\in\mathscr{D}(\mathbb{R}^{n-1})}
\langle f,\Psi_I^{(\lambda')}\rangle
\operatorname{Ext}\Psi_I^{(\lambda')}\Big)\\
&=\sum_{\lambda'\in\Lambda_{n-1}}\sum_{I\in\mathscr{D}(\mathbb{R}^{n-1})}
\langle f,\Psi_I^{(\lambda')}\rangle
(\operatorname{Tr}\circ\operatorname{Ext})\Psi_I^{(\lambda')}\\
&=\sum_{\lambda'\in\Lambda_{n-1}}\sum_{I\in\mathscr{D}(\mathbb{R}^{n-1})}
\langle f,\Psi_I^{(\lambda')}\rangle\Psi_I^{(\lambda')}=f
\qquad\text{by \eqref{tr-ext}}
\end{align*}
holds in $\mathscr{S}'_\infty(\mathbb{R}^{n-1};\fX)$.
This completes the proof of Theorem \ref{thm:ext-trace}.
\end{proof}

Finally, we obtain the corresponding boundedness of the trace and the extension operators on pointwise-weighted spaces as follows.

\begin{corollary}\label{cor:ext-trace}
Let $p\in(0,\infty)$,
and let $V\in \A_v(\mathbb{R}^n)$ and $W\in\A_w(\mathbb{R}^{n-1})$ with $v,w\in[p,\infty)$
have doubling dimensions $\beta_V,\beta_W$.
Let $N\in\mathbb{N}$ be sufficiently large
such that \eqref{eq:N-wavelet-V} holds for the parameters
corresponding to
both spaces
\begin{equation*}
  \dot{A}^{s}_{p,q}(V,\R^n),\quad
  \dot{B}^{s-\frac1p}_{p,r}(W,\R^{n-1}),\quad\text{where}\quad
  \begin{cases} q\in(0,\infty]\quad\&\quad r=q & \text{if }A=B, \\
  q\in(0,v+\varepsilon_V)\quad\&\quad r=p & \text{if }A=F,
   \end{cases}
\end{equation*}
where $\varepsilon_V$ is the optimal reverse H\"older lifting index of $V$, and let $\{\Psi_Q^{(\lambda)}\}_{(\lambda,Q)\in\Omega_n}$ be a Daubechies wavelet system of order $N$.
For $\operatorname{Ext}$ and $\operatorname{Tr}$ as in Theorem \ref{thm:ext-trace}, the following results hold:
\begin{enumerate}[\rm(1)]
  \item $\operatorname{Ext}$ defines a bounded linear operator
\begin{equation*}
  \operatorname{Ext}:\ \dot B^{s-\frac{1}{p}}_{p,r}(W,\R^{n-1})
   \to\dot A^{s}_{p,q}(V,\R^n)
\end{equation*}
if and only if
\begin{equation}\label{eq:ext-test-V}
\rho_{\aveL^w(I,W)}(\fx)\gtrsim \rho_{\aveL^v(Q(I,0),V)}(\fx)\qquad
\forall\ I\in\mathscr{D}(\mathbb{R}^{n-1}),\ \fx\in\fX
\end{equation}
with the implicit positive constant independent of $I$ and $\fx$.

  \item Suppose further that
\begin{equation}\label{eq:traceVs}
   s>\frac1p+(n-1)\Big(\frac{1}{\max\{1,p\}}-1\Big)
   +\frac{\beta_W-n+1}{w}.
\end{equation}
  Then $\operatorname{Tr}$ defines a bounded linear operator
\begin{equation*}
  \operatorname{Tr}:\ \dot A^{s}_{p,q}(V,\R^n)\to\dot B^{s-\frac{1}{p}}_{p,r}(W,\R^{n-1})
\end{equation*}
if and only if
\begin{equation}\label{eq:trace-test-V}
  \rho_{\aveL^w(I,W)}(\fx)\lesssim \rho_{\aveL^v(Q(I,0),V)}(\fx)\qquad
  \forall\ I\in\mathscr{D}(\mathbb{R}^{n-1}),\ \fx\in\fX\end{equation}
  with the implicit positive constant independent of $I$ and $\fx$.

  \item Under all the assumptions \eqref{eq:ext-test-V}, \eqref{eq:traceVs}, and \eqref{eq:trace-test-V}, the composition $\operatorname{Tr}\circ\operatorname{Ext}$ is the identity on $\dot B^{s-\frac1p}_{p,r}(W,\R^{n-1})$.
\end{enumerate}
\end{corollary}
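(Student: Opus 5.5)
The plan is to reduce Corollary \ref{cor:ext-trace} to Theorem \ref{thm:ext-trace}, applied with the families $\rho:=\{\rho_{\aveL^v(Q,V)}\}_{Q\in\mathscr D(\R^n)}$ and $d:=\{\rho_{\aveL^w(I,W)}\}_{I\in\mathscr D(\R^{n-1})}$. Three things are needed: these are families of $(u,a,b,c)$-norms with suitable parameters; the averaged spaces coincide with the pointwise-weighted ones; and the testing conditions and hypotheses of the theorem translate into those of the corollary.

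First, since $V\in\A_v$ with $v\ge p$, Remark \ref{rem:uabc}\eqref{rem:uabc-it2} (with $w=r=v$) shows that $\rho$ is a family of $(\min\{1,v\},\frac nv,\frac{\beta_V-n}{v},\frac{\beta_V}{v})$-norms. Likewise, in dimension $n-1$, $d$ is a family of $(\min\{1,w\},\frac{n-1}{w},\frac{\beta_W-n+1}{w},\frac{\beta_W}{w})$-norms. Next, Theorems \ref{thm:equiB} and \ref{thm:equiF} identify the spaces:
\[
\dot A^s_{p,q}(V,\R^n)=\dot A^s_{p,q}(\rho,\R^n)\quad(\text{for }A=F\text{ using }q<v+\varepsilon_V),
\]
and $\dot B^{s-\frac1p}_{p,r}(W,\R^{n-1})=\dot B^{s-\frac1p}_{p,r}(d,\R^{n-1})$, the Besov case needing no restriction on $r$. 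Thus $\operatorname{Ext}$ and $\operatorname{Tr}$ are bounded between the $V,W$ spaces if and only if they are bounded between the $\rho,d$ spaces. The conditions \eqref{eq:ext-test} and \eqref{eq:trace-test} then read literally as \eqref{eq:ext-test-V} and \eqref{eq:trace-test-V}.

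It remains to match the side hypotheses, which I expect to be the main bookkeeping obstacle. For the wavelet order, the corollary assumes \eqref{eq:N-wavelet-V}, whereas Theorem \ref{thm:ext-trace} asks for \eqref{eq:N-wavelet} with the $(u,a,b,c)$ parameters above. These are not identical, because $a=\frac nv$ appears. The cleanest route is therefore to rerun the proof of Theorem \ref{thm:ext-trace} with the abstract discretizable framework. By Lemma \ref{lem:seq}\eqref{ex:seq-V}, the pointwise spaces are $(d,e,f)$-discretizable with $e=\frac n2+s$ and $f=J-\frac n2-s+\frac{\beta-n}{r}$. Then Corollary \ref{cor:wavelet}(ii) under \eqref{eq:N-wavelet-V} gives that the wavelets are analysis and synthesis molecules, which is all the proof of the theorem uses together with Lemma \ref{lem:bfTrace-rho}. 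Lemma \ref{lem:bfTrace-rho} concerns only the sequence spaces $\dot a(\rho)$ and $\dot b(d)$, and these equal $\dot a(V)$ and $\dot b(W)$ by Corollary \ref{cor:equib} and Theorem \ref{thm:equiF}.

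Similarly, the trace condition \eqref{eq:trace-s} should be replaced by the $V$-case of \eqref{eq:sLarge} in dimension $n-1$, namely
\[
s-\tfrac1p>J_{n-1}-(n-1)+\tfrac{\beta_W-n+1}{w},\qquad J_{n-1}=\tfrac{n-1}{\min\{1,p\}}\ (r=p\text{ or }q\text{ in Besov}).
\]
This is exactly \eqref{eq:traceVs}, since $J_{n-1}-(n-1)=(n-1)\bigl(\frac1{\min\{1,p\}}-1\bigr)$. Under it, $\Psi^{(0)}_I$ are synthesis molecules by Proposition \ref{prop-wavelet}\eqref{it:wave=symole}. With these substitutions, each step (a)–(d) of the proof of Theorem \ref{thm:ext-trace} goes through verbatim, as do the "only if" testing arguments and the identity $\operatorname{Tr}\circ\operatorname{Ext}=I$ via \eqref{tr-ext}.
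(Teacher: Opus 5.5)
Your argument is correct, and it differs from the paper's in one substantive respect. The paper specializes Theorem~\ref{thm:ext-trace} directly to $\rho=\{\rho_{\aveL^v(Q,V)}\}$ and $d=\{\rho_{\aveL^w(I,W)}\}$, asserting that \eqref{eq:N-wavelet-V} is the special case of \eqref{eq:N-wavelet} for these parameters. As you observed, that assertion is not literally true. With $a_\rho=\frac nv$, condition \eqref{eq:N-wavelet} demands $N>s+\frac nv$, and similarly $N>s-\frac1p+\frac{n-1}{w}$ for the $W$-space; \eqref{eq:N-wavelet-V} does not supply either bound. So the black-box specialization only covers the corollary under a stronger wavelet-order hypothesis.

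Your route reruns steps (a)--(d) of the proof of Theorem~\ref{thm:ext-trace}, but with two changes:
\begin{itemize}
\item the molecular mapping properties are taken from the pointwise discretizable structure of Lemma~\ref{lem:seq}\eqref{ex:seq-V} via Corollary~\ref{cor:wavelet}(ii), ultimately resting on the sharper almost-diagonal Theorem~\ref{bdd-AD-ple1};
\item the averaged families enter only in the sequence-space step (Lemma~\ref{lem:bfTrace-rho} and Remark~\ref{rem:sec-test-0}), after the identifications $\dot a^s_{p,q}(V)=\dot a^s_{p,q}(\rho)$ and $\dot b^{s-\frac1p}_{p,r}(W)=\dot b^{s-\frac1p}_{p,r}(d)$.
\end{itemize}
This yields the statement under exactly \eqref{eq:N-wavelet-V}. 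The price is only that you re-verify the steps instead of quoting the theorem. The trace smoothness condition comes out the same either way, since \eqref{eq:sLarge} involves $b$ but not $a$, and $J^{(u_d)}=J$ in dimension $n-1$ because $w\ge p$.

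One caveat concerns \eqref{eq:traceVs}. As printed it has $\frac{1}{\max\{1,p\}}-1$, whereas what you derive is $\frac{1}{\min\{1,p\}}-1=(\frac1p-1)_+$. The latter is also what Proposition~\ref{prop-wavelet}\eqref{it:wave=symole} needs for the synthesis step with $\Psi_I^{(0)}$. The paper's own proof computes $\min\{p,u_d\}=\min\{p,1\}$, so the printed $\max$ (here and in \eqref{eq:trace-s}) is a typo. Still, you should say explicitly that you read it this way, rather than claiming your condition is ``exactly'' \eqref{eq:traceVs}. The literal version is strictly weaker for $p\neq1$ (with $n\ge2$), and under it the $\Psi_I^{(0)}$ step would not be justified.
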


\begin{proof}
By Theorems \ref{thm:equiB} and \ref{thm:equiF}, we have the coincidence of spaces
\begin{equation*}
 \dot A^s_{p,q}(V,\R^n)=\dot A^s_{p,q}(\rho,\R^n),\quad\text{where}\quad
  \rho:=\{\rho_Q:=\rho_{\aveL^v(Q,V)}\}_{Q\in\mathscr{D}(\mathbb{R}^n)},
\end{equation*}
and
\begin{equation*}
 \dot B^{s-\frac1p}_{p,r}(W,\R^n)=\dot B^{s-\frac1p}_{p,r}(d,\R^{n-1}),\quad\text{where}\quad
  d:=\{d_I:=\rho_{\aveL^w(I,W)}\}_{I\in\mathscr{D}(\mathbb{R}^{n-1})},
\end{equation*}
By Remark \ref{rem:uabc}, these are families of $(u_i,a_i,b_i,c_i)$-norms, $i\in\{\rho,d\}$, where in particular
\begin{equation*}
  (u_d,a_d,b_d,c_d)
  =(\min\{1,w\},\frac{n-1}{w},\frac{\beta_W-(n-1)}{w},\frac{\beta_W}{w}).
\end{equation*}
Since $w\in[p,\infty)$ by assumption, it follows that $\min\{p,u_d\}=\min\{p,1,w\}=\min\{p,1\}$. Thus assumption \eqref{eq:traceVs} of the corollary coincides with assumption \eqref{eq:trace-s} of Theorem \ref{thm:ext-trace} for the special case of the quasi-norms now under consideration. Similarly, the assumption \eqref{eq:N-wavelet-V} on $N$ is the special case, for these parameters, of assumption \eqref{eq:N-wavelet} made in Theorem \ref{thm:ext-trace}. Thus Corollary \ref{cor:ext-trace} follows by specialising Theorem \ref{thm:ext-trace} to this case and using the equivalence of the pointwise-weighted and average-weighted spaces already mentioned.
\end{proof}


\begin{thebibliography}{99}

\bibitem{AC:12}
A. Aleman and O. Constantin,
The Bergman projection on vector-valued $L^2$-spaces with operator-valued weights,
J. Funct. Anal. 262 (2012), 2359--2378.

\bibitem{AB:book}
C. D. Aliprantis and K. C. Border,
Infinite-Dimensional Analysis. A Hitchhiker's Guide,
Stud. Econom. Theory 4,
Springer-Verlag, Berlin, 1994.



\bibitem{AH:11}
S. A. Argyros and R. G. Haydon,
A hereditarily indecomposable {$\mathscr{L}_\infty$}-space that
solves the scalar-plus-compact problem,
Acta Math. 206 (2011), 1--54

\bibitem{BX:study}
T. Bai and J. Xu, Non-regular pseudo-differential operators on matrix weighted
  {B}esov-{T}riebel-{L}izorkin spaces, J. Math. Study
  57 (2024), 84--100.

\bibitem{BX:Iran}
T. Bai and J. Xu,
Pseudo-differential operators on matrix weighted
  {B}esov-{T}riebel-{L}izorkin spaces,
Bull. Iranian Math. Soc. 50 (2024),
Paper No. 31, 26 pp.


\bibitem{BGM:86}
E. Berkson, T. A. Gillespie and P. S. Muhly,
Abstract spectral decompositions guaranteed by the {H}ilbert
  transform,
Proc. London Math. Soc. (3) 53 (1986), 489--517.

\bibitem{Borel}
E. Borel,
Les probabilit\'es d\'enombrables et leurs applications arithm\'etiques,
Palermo Rend. 27 (1909), 247--271.

\bibitem{Bou:83}
J. Bourgain, Some remarks on {B}anach spaces in which martingale difference
  sequences are unconditional, Ark. Mat. 21 (1983), 163--168.

\bibitem{BCHYY}
F. Bu, Y. Chen, T. Hyt\"onen, D. Yang and W. Yuan,
Real-variable theory of matrix-weighted multi-parameter Besov--Triebel--Lizorkin-type spaces,
preprint (2026), arXiv:2603.25137.

\bibitem{BHYY:I}
F. Bu, T. Hyt\"onen, D. Yang and W. Yuan,
Matrix-weighted {Besov}-type and {Triebel--Lizorkin}-type spaces {I}:
  {$A_p$}-dimensions of matrix weights and $\phi$-transform characterizations,
  Math. Ann. 391 (2025), 6105--6185.

\bibitem{BHYY:II}
F. Bu, T. Hyt\"onen, D. Yang and W. Yuan,
Matrix-weighted {B}esov-type and {T}riebel--{L}izorkin-type spaces
  {II}: {S}harp boundedness of almost diagonal operators,
J. Lond. Math. Soc. (2) 111 (2025),
Paper No. e70094, 59 pp.

\bibitem{BHYY:III}
F. Bu, T. Hyt\"onen, D. Yang and W. Yuan,
{Matrix-weighted {B}esov-type and {T}riebel-{L}izorkin-type spaces
  {III}: characterizations of molecules and wavelets, trace theorems, and
  boundedness of pseudo-differential operators and {C}alder\'on-{Z}ygmund
  operators}, Math. Z. 308 (2024), Paper No. 32, 67 pp.

\bibitem{BHYY:SCM}
F. Bu, T. Hyt\"onen, D. Yang and W. Yuan,
{B}esov--{T}riebel--{L}izorkin-type spaces with matrix {$A_\infty$}
  weights, Sci. China Math. 69 (2026), 383--460.


\bibitem{byyz26}
F. Bu, D. Yang, W. Yuan and M. Zhang,
Matrix-weighted Besov--Triebel--Lizorkin spaces of optimal scale:
real-variable characterizations,
invariance on integrable index, and Sobolev-type embedding,
J. Differential Equations 463 (2026), Paper No. 114140, 101 pp.

\bibitem{Bui:82}
H. Q. Bui, Weighted {B}esov and {T}riebel spaces: interpolation by the real
  method, Hiroshima Math. J. 12 (1982), 581--605.

\bibitem{Burk:83}
D. L. Burkholder,
A geometric condition that implies the existence of certain singular
  integrals of {B}anach-space-valued functions,
  in: Conference on Harmonic Analysis in Honor of {A}ntoni
  {Z}ygmund, {V}ol. {I}, {II} ({C}hicago, {I}ll., 1981),
  Wadsworth Math. Ser., pp. 270--286,
  Wadsworth, Belmont, CA, 1983.

\bibitem{CG:01}
M. Christ and M. Goldberg,
Vector {$A_2$} weights and a {H}ardy-{L}ittlewood maximal function,
Trans. Amer. Math. Soc. 353 (2001), 1995--2002.


\bibitem{CF:74}
R. R. Coifman and C. Fefferman,
Weighted norm inequalities for maximal functions and singular
  integrals, Studia Math. 51 (1974), 241--250.


\bibitem{CRW:76}
R. R. Coifman, R. Rochberg and G. Weiss,
Factorization theorems for {H}ardy spaces in several variables,
Ann. of Math. (2) 103 (1976), 611--635.

\bibitem{dj84}
G. David and J.-L. Journ\'e,
A boundedness criterion for generalized {C}alder\'on-{Z}ygmund
  operators, Ann. of Math. (2) 120 (1984), 371--397.

\bibitem{d01}
J. Duoandikoetxea, Fourier Analysis,
Grad. Stud. Math. 29,
American Mathematical Society, Providence, RI, 2001.

\bibitem{Fere:97}
V. Ferenczi, A uniformly convex hereditarily indecomposable {B}anach space,
Israel J. Math. 102 (1997), 199--225.

\bibitem{FTJ:79}
T. Figiel and N. Tomczak-Jaegermann,
Projections onto {H}ilbertian subspaces of {B}anach spaces,
Israel J. Math. 33 (1979), 155--171.


\bibitem{FJ:85}
M. Frazier and B. Jawerth,
Decomposition of {B}esov spaces,
Indiana Univ. Math. J. 34 (1985), 777--799.

\bibitem{FJ:90}
M. Frazier and B. Jawerth,
A discrete transform and decompositions of distribution spaces,
J. Funct. Anal. 93 (1990), 34--170.

\bibitem{FR:04}
M. Frazier and S. Roudenko,
Matrix-weighted {B}esov spaces and conditions of {$A_p$} type for
  {$0<p\leq1$},
Indiana Univ. Math. J. 53 (2004), 1225--1254.

\bibitem{FR:08}
M. Frazier and S. Roudenko,
Traces and extensions of matrix-weighted {B}esov spaces,
Bull. Lond. Math. Soc. 40 (2008), 181--192.

\bibitem{FR:21}
M. Frazier and S. Roudenko,
Littlewood--{P}aley theory for matrix-weighted function spaces,
Math. Ann. 380 (2021), 487--537.

\bibitem{ftw88}
M. Frazier, R. Torres and G. Weiss,
The boundedness of {C}alder\'on-{Z}ygmund operators on the spaces
  {$\dot F^{\alpha,q}_p$},
Rev. Mat. Iberoamericana 4 (1988), 41--72.

\bibitem{Gehring}
F. W. Gehring,
The {$L^{p}$}-integrability of the partial derivatives of a
  quasiconformal mapping, Acta Math. 130 (1973), 265--277.

\bibitem{GMS}
S. Geiss, S. Montgomery-Smith and E. Saksman,
On singular integral and martingale transforms,
Trans. Amer. Math. Soc. 362 (2010), 553--575.

\bibitem{GPTV:00}
T. A. Gillespie, S. Pott, S. Treil and A. Volberg,
The transfer method in estimates for vector {H}ankel operators,
Algebra i Analiz 12 (2000), 178--193.

\bibitem{GPTV:01}
T. A. Gillespie, S. Pott, S. Treil and A. Volberg,
Logarithmic growth for matrix martingale transforms,
J. London Math. Soc. (2) 64 (2001), 624--636.

\bibitem{GPTV:04}
T. A. Gillespie, S. Pott, S. Treil and A. Volberg,
Logarithmic growth for weighted {H}ilbert transforms and vector
  {H}ankel operators,
J. Operator Theory 52 (2004), 103--112.

\bibitem{Gold:03}
M. Goldberg,
Matrix {$A_p$} weights via maximal functions,
Pacific J. Math. 211 (2003), 201--220.

\bibitem{GM:93}
W. T. Gowers and B. Maurey,
The unconditional basic sequence problem,
J. Amer. Math. Soc. 6 (1993), 851--874.

\bibitem{Guerre:91}
S. Guerre-Delabri\`ere,
Some remarks on complex powers of {$(-\Delta)$} and {UMD} spaces,
Illinois J. Math. 35 (1991), 401--407.

\bibitem{HMW}
R. Hunt, B. Muckenhoupt and R. Wheeden,
Weighted norm inequalities for the conjugate function and {H}ilbert
  transform, Trans. Amer. Math. Soc. 176 (1973), 227--251.

\bibitem{HNVW1}
T. Hyt\"onen, J. van Neerven, M. Veraar and L. Weis,
Analysis in Banach Spaces. Vol. I. Martingales and Littlewood--Paley Theory,
Ergeb. Math. Grenzgeb. (3) 63, Springer, Cham, 2016.

\bibitem{HNVW2}
T. Hyt\"onen, J. van Neerven, M. Veraar and L. Weis,
Analysis in Banach Spaces. Vol. II. Probabilistic Methods and Operator Theory,
Ergeb. Math. Grenzgeb. (3) 67, Springer, Cham, 2017.

\bibitem{HNVW3}
T. Hyt\"onen, J. van Neerven, M. Veraar and L. Weis,
Analysis in Banach Spaces. Vol. III. Harmonic Analysis and Spectral Theory,
Ergeb. Math. Grenzgeb. (3) 76,
Springer, Cham, 2023.

\bibitem{Isra:21}
J. Isralowitz, Matrix weighted Triebel--Lizorkin bounds:
a simple stopping time proof, Proc. Amer. Math. Soc. 149 (2021), 4145--4158.

\bibitem{John:48}
F. John, Extremum problems with inequalities as subsidiary conditions, in:
Studies and Essays Presented to R. Courant on his 60th Birthday, January 8, 1948,
pp.\,187--204, Interscience Publishers, New York, 1948.

\bibitem{Kaiser:09}
C. Kaiser,
Calder\'on--Zygmund operators with operator-valued kernel on homogeneous Besov spaces,
Math. Nachr. 282 (2009), 69--85.

\bibitem{KT:book}
E. Klein and A. C. Thompson,
Theory of Correspondences.
Including Applications to Mathematical Economics,
Canad. Math. Soc. Ser. Monogr. Adv. Texts,
Wiley-Intersci. Publ.,
John Wiley \& Sons, Inc., New York, 1984.

\bibitem{Lauzon}
M. Lauzon, Maximal functions and {C}alderon-{Z}ygmund theory for vector-valued
  functions with operator weights,
  Indiana Univ. Math. J. 56 (2007), 1723--1748.

\bibitem{Lerner:simple}
A. K. Lerner, A simple proof of the {$A_2$} conjecture.
Int. Math. Res. Not. IMRN 2013 (2013), 3159--3170.

\bibitem{LYY:24}
Z. Li, D. Yang and W. Yuan,
Matrix-weighted {B}esov--{T}riebel--{L}izorkin spaces with logarithmic
  smoothness, Bull. Sci. Math. 193 (2024), Paper No. 103445, 54 pp.

\bibitem{LP:22}
A. Limani and S. Pott,
Sparse {L}erner operators in infinite dimensions, preprint (2021), arXiv:2103.17005.

\bibitem{m87}
Y. Meyer,
Principe d'incertitude, bases hilbertiennes et alg\`ebres d'op\'erateurs,
in: S\'eminaire Bourbaki, Vol. 1985/86,
Ast\'erisque, No. 145--146 (1987), 4, 209--223.

\bibitem{mey92}
Y. Meyer.
Wavelets and Operators,
Cambridge Stud. Adv. Math. 37,
Cambridge University Press, Cambridge, 1992.

\bibitem{Muck:72}
B. Muckenhoupt,
Weighted norm inequalities for the {H}ardy maximal function,
Trans. Amer. Math. Soc. 165 (1972), 207--226.

\bibitem{NPTV:17}
F. Nazarov, S. Petermichl, S. Treil and A. Volberg,
Convex body domination and weighted estimates with matrix weights,
Adv. Math. 318 (2017), 279--306.

\bibitem{NPTV:02}
F. Nazarov, G. Pisier, S. Treil and A. Volberg,
Sharp estimates in vector {C}arleson imbedding theorem and for vector
  paraproducts, J. Reine Angew. Math. 542 (2002), 147--171.

\bibitem{NT:hunt}
F. L. Nazarov and S. R. Treil,
The hunt for a Bellman function: Applications to estimates for
singular integral operators and to other
classical problems of harmonic analysis,
Algebra i Analiz 8 (1996), 32--162;
translation in St. Petersburg Math. J. 8 (1997), 721--824.

\bibitem{Page:70}
L. B. Page,
Bounded and compact vectorial {H}ankel operators,
Trans. Amer. Math. Soc. 150 (1970), 529--539.

\bibitem{Pisier:book}
G. Pisier, Martingales in {B}anach Spaces,
Cambridge Stud. Adv. Math. 155,
Cambridge University Press, Cambridge, 2016.

\bibitem{Pott:07}
S. Pott, A sufficient condition for the boundedness of operator-weighted
  martingale transforms and {H}ilbert transform,
Studia Math. 182 (2007), 99--111.

\bibitem{Rou:03}
S. Roudenko,
Matrix-weighted {B}esov spaces.
Trans. Amer. Math. Soc. 355 (2003), 273--314.

\bibitem{Rou:04}
S. Roudenko,
Duality of matrix-weighted {B}esov spaces,
Studia Math. 160 (2004), 129--156.

\bibitem{Rudin:FA}
W. Rudin,
Functional Analysis, Second edition,
Internat. Ser. Pure Appl. Math.,
McGraw-Hill, Inc., New York, 1991.

\bibitem{Sarason:67}
D. Sarason.
Generalized interpolation in {$H\sp{\infty }$},
Trans. Amer. Math. Soc. 127 (1967), 179--203.

\bibitem{s70}
E. M. Stein,
Singular Integrals and Differentiability Properties of Functions,
Princeton Math. Ser. No. 30,
Princeton University Press, Princeton, NJ, 1970.

\bibitem{Torres:91}
R. H. Torres,
Boundedness results for operators with singular kernels on
  distribution spaces,
Mem. Amer. Math. Soc. 90 (1991), no. 442, viii+172.

\bibitem{TV:angle}
S. Treil and A. Volberg,
Wavelets and the angle between past and future,
J. Funct. Anal. 143 (1997), 269--308.

\bibitem{Volberg:97}
A. Volberg, Matrix {$A_p$} weights via {$S$}-functions,
J. Amer. Math. Soc. 10 (1997), 445--466.

\bibitem{WYZ:24}
Q. Wang, D. Yang and Y. Zhang,
Real-variable characterizations and their applications of
  matrix-weighted {T}riebel--{L}izorkin spaces,
J. Math. Anal. Appl. 529 (2024), Paper No. 127629, 37 pp.

\bibitem{woj97}
P. Wojtaszczyk,
A Mathematical Introduction to Wavelets,
London Math. Soc. Stud. Texts 37,
Cambridge University Press, Cambridge, 1997.

\bibitem{YY:08}
D. Yang and W. Yuan,
A new class of function spaces connecting {T}riebel--{L}izorkin spaces
  and {$Q$} spaces, J. Funct. Anal. 255 (2008), 2760--2809.

\bibitem{YY:10}
D. Yang and W. Yuan,
New Besov-type spaces and Triebel-Lizorkin-type spaces including $Q$ spaces.
Math. Z. 265 (2010), 451--480.

\end{thebibliography}
\end{document}